\documentclass[11pt]{amsart}
\usepackage{amscd,amssymb,graphics,color,a4wide,hyperref,mathtools}
\usepackage{graphicx}
\usepackage{psfrag} 
\usepackage{marvosym}
\usepackage{tikz}
\usepackage{tkz-euclide}
\usepackage{multicol}
\usetikzlibrary{matrix}
\usetikzlibrary{mindmap,trees,calc}

\usepackage{color}
\usepackage{mathrsfs}
\usepackage[all]{xy}

\footskip 1cm
\textheight 22.5cm


\DeclareFontFamily{U}{rsf}{}
\DeclareFontShape{U}{rsf}{m}{n}{
  <5> <6> rsfs5 <7> <8> <9> rsfs7 <10->  rsfs10}{}
\DeclareMathAlphabet{\mathscr}{U}{rsf}{m}{n}

\newcommand\dashto{\mathrel{
  -\mkern-6mu{\to}\mkern-20mu{\color{white}\bullet}\mkern12mu
}}

\newtheorem{theorem}{Theorem}[section]
\newtheorem{lemma}[theorem]{Lemma}
\newtheorem{proposition}[theorem]{Proposition}
\newtheorem{corollary}[theorem]{Corollary}

\theoremstyle{definition}
\newtheorem{definition}[theorem]{Definition}
\newtheorem{construction}[theorem]{Construction}
\newtheorem{example}[theorem]{Example}

\newtheorem*{acknowledgement}{Acknowledgement}
\newtheorem{setting}[theorem]{Setting}

\theoremstyle{remark}
\newtheorem{remark}[theorem]{Remark}
\numberwithin{equation}{section}

\usepackage{color}
\definecolor{forestgreen}{rgb}{0.13, 0.55, 0.13}



\newcommand{\NN} {\mathbb{N}}
\newcommand{\ZZ} {\mathbb{Z}}
\newcommand{\QQ} {\mathbb{Q}}
\newcommand{\RR} {\mathbb{R}}

\newcommand{\CC} {\mathbb{C}}

\newcommand{\HH} {\mathbb{H}}
\newcommand{\PP} {\mathbb{P}}

\newcommand{\GG} {\mathbb{G}}
\newcommand{\DD} {\mathbb{D}}

\newcommand {\shB} {\mathcal{B}}
\newcommand {\shC} {\mathcal{C}}

\newcommand {\shF} {\mathcal{F}}

\newcommand {\shH} {\mathcal{H}}

\newcommand {\shL} {\mathcal{L}}
\newcommand {\shM} {\mathcal{M}}

\newcommand {\shO} {\mathcal{O}}

\newcommand {\shS} {\mathcal{S}}
\newcommand {\shT} {\mathcal{T}}

\newcommand {\shX} {\mathcal{X}}
\newcommand {\shY} {\mathcal{Y}}
\newcommand {\shZ} {\mathcal{Z}}


\newcommand {\Aut} {\operatorname{Aut}}

\newcommand {\Bir} {\operatorname{Bir}}

\newcommand{\contr}{\operatorname{contr}}

\newcommand {\codim} {\operatorname{codim}}

\newcommand {\DNV} {\operatorname{DNV}}

\newcommand{\YP}{Y_\mathscr{P}}
\newcommand{\YT}{Y_\mathscr{T}}

\newcommand{\YYP}{\shY_\mathscr{P}}
\newcommand{\YYT}{\shY_\mathscr{T}}

\newcommand {\Ex} {\operatorname{Ex}}

\newcommand {\GL} {\operatorname{GL}}

\newcommand {\Og} {\operatorname{O}}

\newcommand {\id} {\operatorname{id}}

\newcommand {\Int} {\operatorname{Int}}

\renewcommand {\ker } {\operatorname{ker}}

\newcommand {\Morifan}{\operatorname{MF}}
\newcommand {\Mov} {\operatorname{Mov}}

\newcommand {\Mod}{\operatorname{Mod}}
\newcommand {\N} {\operatorname{N}}

\newcommand {\Nef}{\operatorname{Nef•}}
\newcommand{\NE}{\operatorname{NE}}
\renewcommand{\O} {\mathcal{O}}

\newcommand{\PMod}{\operatorname{PMod}}
\renewcommand{\P} {\mathscr{P}}

\newcommand {\Pic} {\operatorname{Pic}}

\newcommand {\rank} {\operatorname{rank}}

\newcommand{ \Relint}{\operatorname{Relint}}

\newcommand {\Sing} {\operatorname{Sing}}

\newcommand {\Spec} {\operatorname{Spec}}
\newcommand {\Spf} {\operatorname{Spf}}

\newcommand {\T} {\shT}

\def\mydate{\ifcase\month \or January\or February\or March\or
April\or May\or June\or July\or August\or September\or October\or 
November\or December\fi \space\number\day,\space\number\year}


\newlength{\picwidth} \setlength{\picwidth}{.75\textwidth}
\newlength{\miniwidth} \setlength{\miniwidth}{.5\textwidth}
\newlength{\nanowidth} \setlength{\nanowidth}{.33\textwidth}
\newlength{\melowidth} \setlength{\melowidth}{.88\textwidth}
\newlength{\leftminiwidth} \setlength{\leftminiwidth}{.45\textwidth}
\newlength{\rightminiwidth} \setlength{\rightminiwidth}{.45\textwidth}
\newlength{\minipagewidth} \setlength{\minipagewidth}{.45\textwidth}

\setcounter{section}{-1}

\begin{document}
\title
[Mori fan of the DNV family
  ]
{The Mori fan of the Dolgachev-Nikulin-Voisin family in genus $2$}
\author{Klaus Hulek}
\address{Institut f\"ur Algebraische Geometrie, Leibniz Universit\"at Hannover, Welfengarten 1, 30167 Hannover,
Germany}
\email{hulek@math.uni-hannover.de}

\author{Carsten Liese}
\address{Institut f\"ur Algebraische Geometrie, Leibniz Universit\"at Hannover, Welfengarten 1, 30167 Hannover,
Germany}
\email{liese@math.uni-hannover.de}
\maketitle
\begin{abstract} In this paper we  study the Mori fan of the Dolgachev-Nikulin-Voisin family in degree $2$ as well as the associated secondary fan. The main result is an enumeration of all maximal dimensional cones  of the two fans.
\end{abstract}

\tableofcontents
\bigskip

\section{Introduction}

To construct modular compactifications of the moduli space $\shF_{2d}$ of polarized $K3$ surfaces of degree $2d$ is a notoriously  difficult problem. This has been studied  from various aspects, such
as Hodge theory, locally symmetric domains, GIT and log-geometry. For small degree $d=1,2,3$ these $K3$ surfaces can be studied via concrete geometric models, namely $2:1$ covers of the 
projective plane $\PP^2$ branched along a sextic curve, degree $4$ surfaces in $\PP^3$ and complete $(2,3)$ intersections in $\PP^4$  respectively. Various authors have used this approach to construct compactifications
of  $\shF_{2d}$ in these degrees and to relate the various models to each other. Here we would like to mention in particular the work of  
Shah \cite{Shah}), Friedman \cite{Frie83}, \cite{Loo86}), Laza-O'Grady \cite{log16},   
\cite{log18}, \cite{logabel},  and Laza \cite{Laz12}, Thompson \cite{tho14} and Alexeev-Engel-Thompson \cite{ATk3}. 
 
Some years ago Gross, Hacking, Keel and Siebert \cite{theta15} introduced a new approach. This is based on two main concepts: mirror symmetry and the minimal model program (MMP). 
They start by considering the mirror family of  $\shF_{2d}$. This is a $1$-dimensional family of lattice polarized $K3$ surfaces which in the literature is called the 
Dolgachev-Nikulin-Voisin family of degree $2d$. The base of this family 
is a modular curve which, if $d$ is squarefree, has exactly one cusp. The lattice polarization is given by the lattice $\check{M}_{2d}=U\oplus 2E_8(-1)\oplus\langle-2d\rangle $ where $U$ is the hyperbolic plane and $E_8(-1)$ is the negative defnite $E_8$ lattice. The first step in their programme is to extend the Dolgachev-Nikulin-Voisin family over the cusp and to consider the various models by which this can be done. This allows them to define the Mori fan of the 
Dolgachev-Nikulin-Voisin family, wich is a fan in $\N^1(\shY/S)_{\RR}$, where $\shY \to S$ is a  model of the Dolgachev-Nikulin-Voisin family, a scheme of dimension $19+d$, with base  $S=\Spec \CC[[t]]$.
The second step is to use 
a  piecewise linear section of the restriction map $r: \Pic(\shY) \to \Pic(\shY_{\eta})$, where $\shY_{\eta}$  the Dolgachev-Nikulin-Voisin family over a punctured neighbourhood of the cusp, to obtain a fan 
in the hyperbolic space  $({\check{M}_{2d}})_{\RR}$.  
An important aspect of their work is the construction of a universal family (at least over a neighbourhood of the $0$-dimensional cups).
A detailed description of the GHKS programme for $K3$ surfaces is 
contained in  \cite{theta15}. 
Yuecheng Zhu \cite{zhu18} has carried this out  in the 
case of polarized abelian varieties and has shown that this approach can be used to recover the second Voronoi compactifiation of $\mathcal A_g$, which is known to be a modular compactification by the work of Alexeev \cite{aleAB} and Olsson \cite{OPM}.

Our aim is to start a concrete investigation of the GHKS approach for small degree. 
To be precise, we want to understand the first step in the GHKS programme in degree $2$.
As it turns out this is a nontrivial problem in its own right. The main result of our paper is a concrete description of the Mori fan
of the  Dolgachev-Nikulin-Voisin family in degree $2$. Concretely,  we enumerate the maximal dimensional cones and prove the following:

\begin{theorem}
Let $\shY\to S$ be a model of the Dolgachev-Nikulin-Voisin family. Then $\Morifan(\shY/S)$ has $3460$ maximal cones. Of these $753$ are associated to a model of class $\mathscr{T}$ and $2707$ are associated to a model of class $\mathscr{P}$. The number of orbits of maximal cones of $\Morifan(\shY/S)$ under the natural action of the birational group $\Bir(\shY/S)$ is $588$.
The orbits decompose into $457$ orbits of models of class $\mathscr{P}$ and $131$ orbits of models of class $\mathscr{T}$.
\end{theorem}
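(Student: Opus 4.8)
The statement is a finite enumeration, so the plan is to reduce the geometry to a combinatorial problem that can be searched exhaustively, and then to organise that search by the group $\Bir(\shY/S)$. First I would invoke the dictionary between maximal cones of $\Morifan(\shY/S)$ and the relative minimal models of $\shY/S$: each maximal cone is the relative nef cone of one such model, the cones tile the movable cone $\Mov(\shY/S)$, and two maximal cones share a wall exactly when the corresponding models differ by a flop. This converts the count of maximal cones into a count of relative minimal models over $S$, sorted into the two classes $\mathscr{P}$ and $\mathscr{T}$, and the orbit count into a count of models up to $\Bir(\shY/S)$-equivalence.

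Second, I would translate each model into purely combinatorial data. Since $\shY\to S$ is a Kulikov-type degeneration of the Dolgachev--Nikulin--Voisin family, its central fibre is controlled by the integral-affine structure on the base of the GHKS mirror, a sphere whose singular locus is fixed by the polarisation lattice $\check{M}_2=U\oplus 2E_8(-1)\oplus\langle-2\rangle$. Under this translation the models of class $\mathscr{T}$ and of class $\mathscr{P}$ become two disjoint combinatorial types of admissible subdivision of that integral-affine sphere, so that the two classes partition the maximal cones. The birational action becomes the action of the finite symmetry group of this decorated sphere, which I would pin down explicitly; orbits of maximal cones then correspond precisely to orbits of admissible subdivisions of each type.

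Third, I would carry out the enumeration class by class. For each of $\mathscr{P}$ and $\mathscr{T}$ I would generate a complete list of orbit representatives of admissible subdivisions, check admissibility and mutual non-isomorphism, and compute the stabiliser of each representative in the symmetry group so as to recover its orbit length. Summing the orbit lengths over the $457$ representatives of class $\mathscr{P}$ and the $131$ of class $\mathscr{T}$ produces the totals $2707$ and $753$, hence $3460$ maximal cones in all, while the orbit count $457+131=588$ is immediate. In practice this step is computer-assisted, since the configuration space is far too large to list by hand.

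The main obstacle is the \emph{completeness and faithfulness of the reduction}, not the arithmetic of the final tally. I must guarantee that every relative minimal model genuinely occurs among the combinatorial objects enumerated, that no spurious or repeated object is counted, that the combinatorial isomorphism relation matches $\Bir(\shY/S)$-equivalence exactly (so that stabiliser orders, and hence orbit lengths, are correct), and that the search is \emph{provably exhaustive} rather than merely extensive. Controlling these points simultaneously, and in particular ruling out models missed at the boundary between the classes $\mathscr{P}$ and $\mathscr{T}$, is where the real work lies.
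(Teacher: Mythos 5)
Your first paragraph matches the paper's setup exactly: maximal cones correspond to marked models, walls to flops, the cones tile $\Mov(\shY/S)$, and orbits of cones under $\Bir(\shY/S)$ correspond to isomorphism classes of models (Theorem \ref{ThmGHKS}, Propositions \ref{prop:intconesflops} and \ref{prop:actionbir}). The genuine gap is your second step, the translation of models into ``admissible subdivisions of the integral-affine sphere'', and this gap is not a technicality — it is the entire content of the paper. Concretely: in degree $2$ every model of class $\mathscr{P}$ has the \emph{same} dual intersection complex, namely the triangulation $\mathscr{P}$ of $S^2$ by two triangles, because type I flops do not change the dual complex (Remark \ref{dic:change}); so no notion of subdivision of the sphere can separate the $457$ models of class $\mathscr{P}$. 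The data that does separate them is finer — in the paper, the triple of curve structures of the components (Definition \ref{def:curvestructure}), together with the theorems that these triples classify models up to isomorphism (Propositions \ref{isomorphismprop} and \ref{cstructureT}). Moreover, even after fixing a faithful combinatorial model, the objects to be counted are the \emph{projective} central fibres: the set $\Mod_2$ of surfaces reachable from $Y_\mathscr{P}$ or $Y_\mathscr{T}$ by type I modifications is strictly larger than $\PMod_2$, and the counts $457$ and $131$ depend on discarding non-projective configurations via the ample-divisor criteria of Sections \ref{sec:curvestructures}--\ref{sec:projmodel} (Propositions \ref{projTcurves}, \ref{nondegproj2}--\ref{nondegproj5}). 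Your framework contains no projectivity criterion, so a ``provably exhaustive'' search as you describe it would enumerate the wrong set.

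The same problem afflicts your orbit count. You assume $\Bir(\shY/S)$ acts through the finite symmetry group of the combinatorial object and that orbit lengths are read off from combinatorial stabilisers. In the paper this is the technical heart: one must prove that every model of the DNV family in degree $2$ lies in $\PMod_2$ (Corollary \ref{cor:typeItypeII} — a statement special to degree $2$), that birational self-maps factor into type I and type II flops (Corollary \ref{factor}), that a permutation of components is induced by a regular automorphism exactly when the model is symmetric (Corollaries \ref{autocoro}, \ref{s6}, Propositions \ref{symmbirat}, \ref{prop:syminvolution}), and hence that orbits of maximal cones have length $1$, $3$ or $6$ according to whether the model is $\YYP$, symmetric, or asymmetric (Proposition \ref{orbitmain}), with the symmetric models enumerated separately (Proposition \ref{numbsym}: $11$ in each class). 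Only then does the arithmetic $120\cdot 6+11\cdot 3=753$ and $446\cdot 6+10\cdot 3+1=2707$ make sense. You correctly flag ``completeness and faithfulness of the reduction'' as where the real work lies, but that work \emph{is} the proof; as it stands your reduction is unproven and, in the form stated (subdivisions of the sphere), too coarse to be correct.
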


This is a consequence of Theorems \ref{modelsT},  \ref{modelsPtotal},   
and \ref{theo:countingcones}. Here class $\mathscr{P}$ and class $\mathscr{T}$ refer to the combinatorial structure of the central fibre, which in turn 
correspond to the two possible triangulations of the sphere $S^2$ into two triangles. 
The knowledge of the Mori fan is necessary for the next step in the GHKS programme: it is used to construct another fan in a certain $19$-dimensional hyperbolic space and it is this fan which 
determines the toroidal compactification of the moduli space of polarized of polarized $K3$ surfaces.      

We also investigate the so called {\em {secondary fan}} which was used by  
by Hacking, Keel and Yue \cite{HKY} and is a generalisation of the secondary fan  for toric varieties due to Gelfand-Kapranov-Zelevinskij \cite{GKZ}. 
This is a coarsening of the Mori fan. Its relevance  is that in the del Pezzo case the toric variety defined by the secondary fan admits a finite morphism to the moduli space  of stable pairs, as was explained in \cite{keeltalk}.
 Note that for toric varieties, the secondary fan and the Mori fan coincide. 

 As there is no published proof available that the secondary fan is
indeed a fan in the $K3$ setting, we will include a proof of this fact using the techniques of our paper and we will also compute its maximal cones:  

\begin{theorem}Let $\shY\to S$ be a model of the Dolgachev-Nikulin-Voisin family of degree $2$. The secondary fan 
contains precisely $4$ maximal cones. There are $2$ orbits of maximal cones under the natural action of $\Bir(\shY/S)$.
\end{theorem}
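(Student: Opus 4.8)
The plan is to obtain the secondary fan as an explicit coarsening of the Mori fan $\Morifan(\shY/S)$, whose complete list of maximal cones is furnished by Theorems \ref{modelsT}, \ref{modelsPtotal} and \ref{theo:countingcones}, and then to read off both the four maximal cones and the two orbits from that list. Following \cite{HKY}, I would define the secondary fan by decreeing that two maximal cones of $\Morifan(\shY/S)$ lie in a common maximal secondary cone exactly when the associated models induce the same coherent subdivision of the central fibre, equivalently when they are connected by a chain of flops. The walls of the secondary fan are then precisely those walls of the Mori fan across which the combinatorial type of the central fibre genuinely changes, while the flopping walls are forgotten. Since in the toric case no wall is of flopping type, this definition is consistent with the coincidence of the two fans in that setting.

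The first and principal task is to prove that this coarsening is again a fan. Concretely, one must show that the union of the maximal Mori cones in each flop-equivalence class is a convex polyhedral cone and that these unions, together with their faces, satisfy the fan axioms. I regard convexity as the crux, and as the exact point at which the absence of a published proof is felt: a priori there is no reason for a union of neighbouring nef chambers to be convex. I would attack it along one of two lines. The first identifies each such union with a movable cone of the corresponding contraction of $\shY/S$ and invokes the convexity of the movable cone $\Mov$ together with its chamber decomposition into the nef cones of the small $\QQ$-factorial modifications. The second, more in keeping with the explicit methods of this paper, argues directly from the classification: within a fixed flop-equivalence class every flopping wall is interior, while the finitely many remaining walls cut the class out as an intersection of half-spaces and hence exhibit it as a convex polyhedral cone. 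The fan axioms then follow, since the intersection of two maximal secondary cones is supported on secondary walls inherited from $\Morifan(\shY/S)$ and is therefore a common face.

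Granting the fan property, the count of maximal cones becomes a bookkeeping exercise over the explicit classification. Every maximal Mori cone records the combinatorial type of its central fibre, which is one of the two triangulations of $S^2$ into two triangles, that is, class $\mathscr{P}$ or class $\mathscr{T}$; collapsing the $3460$ maximal Mori cones under flop-equivalence groups them onto the maximal secondary cones. I would verify directly from the tables underlying Theorems \ref{modelsT} and \ref{modelsPtotal} that this collapse produces exactly four classes, recording for each whether it is of class $\mathscr{P}$ or of class $\mathscr{T}$.

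For the orbit count I would let $\Bir(\shY/S)$ act on the four maximal secondary cones. Since this group permutes the maximal cones of $\Morifan(\shY/S)$ and preserves flop-equivalence (flops being birational), the action descends to the secondary fan. As $\Bir(\shY/S)$ visibly preserves the combinatorial type of the central fibre, the two orbits should be exactly the class $\mathscr{P}$ cones and the class $\mathscr{T}$ cones; I would confirm that each type forms a single orbit by exhibiting the explicit birational self-maps already used in the orbit analysis behind the main theorem, which completes the count.
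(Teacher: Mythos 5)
Your two ``equivalent'' definitions of the coarsening are not in fact equivalent, and the difference is precisely where the number $4$ comes from. Grouping Mori cones by the combinatorial type of the central fibre would lump all cones of class $\mathscr{T}$ together and yield only two classes; in reality the class-$\mathscr{T}$ cones split into \emph{three} distinct maximal secondary cones (Proposition \ref{numberconessec}), i.e.\ into three connected components of the space obtained from $|\Morifan(\shY/S)|$ by deleting the type II walls. For the same reason the count of maximal secondary cones is not a bookkeeping exercise over the tables behind Theorems \ref{modelsT} and \ref{modelsPtotal}: those tables count isomorphism classes of models and their Mori cones, but say nothing about which cones can be joined by chains of type I walls. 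What is actually needed is flop-theoretic: every model of class $\mathscr{T}$ is reached from $\YYT$ by type I flops (Corollary \ref{cor:seqtypeI}); any cycle of flops through a symmetric model of class $\mathscr{T}$ must contain a type II flop (Lemma \ref{nobadsequence}), which is what separates the three cones associated to $\YYT$ into three different components; and non-symmetric models of class $\mathscr{T}$ admit non-trivial birational self-maps factoring into type I flops (Lemma \ref{lem:typeIflopsT}), which is what forces their six associated cones to distribute among only these three components. None of this is visible in the classification tables.

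You rightly single out convexity as the crux, but neither of your routes proves it. Route one cannot work: if $\shX$ is any small $\QQ$-factorial modification of $\shY$ over $S$, then $\Mov(\shX/S)$ pulls back to all of $\Mov(\shY/S)$, the support of the \emph{entire} Mori fan, while for an honest contraction $\shY\to\shZ$ (such as a type II flopping contraction) anything pulled back from $\shZ$ lies in a proper subspace of $\N^1(\shY/S)$ (dimension $20$ versus $21$ here), whereas the maximal secondary cones are full-dimensional; so no ``corresponding contraction'' has the secondary cone as its movable cone. Route two is circular: a union of full-dimensional chambers glued along prescribed interior walls need not be convex (it can wrap around), and asserting that the remaining walls ``cut the class out as an intersection of half-spaces'' presupposes exactly the convexity to be proved. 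The paper's actual argument (Proposition \ref{mainprop} together with Lemmas \ref{lemma1} and \ref{main2nd}) takes a segment between two points of a component, perturbs it to a nice test segment, factors the induced birational map into $F$-flops along the segment, and shows every flop in this factorization is of type I: for class $\mathscr{P}$ this is Lemma \ref{onetypeIIflop}, while for class $\mathscr{T}$ one must rule out a factorization crossing a type II wall twice (out to class $\mathscr{P}$ and back), which is done by tracking the special component and deriving a contradiction with positivity of $F$ on the transform of the flopped double curve. That argument is the substance of the theorem and is absent from your proposal; by contrast, your treatment of the orbit count via the $S_3$-action is fine once the four cones are correctly in place.
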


This is Theorem \ref{teo:countingsecondary} and Remark \ref{orbits:2nd}.

We shall now briefly describe the structure of this paper. We start in Section \ref{review} by recalling the theory of mirror symmetry for lattice polarized $K3$ surfaces, which is due to Dolgachev, Nikulin and Pinkham. We then recapitulate the 
basics of the degeneration theory of $K3$ surfaces as developed by Carlson, Friedman, Kulikov, Person, Pinkham, Scattone and others. This allows us to define the notion of the 
{\em Dolgachev-Nikulin-Voisin family} of degree $2d>0$ and their models in Definitions \ref{def:DNV} and \ref{def:DNVmodel}, following \cite{theta15}.  We further recall the relation between triangulations of the sphere $S^2$ and
$d$-semistable models in $(-1)$-form of the Dolgachev-Nikulin-Voisin family of degree $2d$ with maximal Picard rank for which we describe the geometry of the special fibres in detail (Construction \ref{constructioncomponents}). 
We also prove that these surfaces and their maximal smoothings are
projective (Propositions \ref{optimalglue} and \ref{max:proj}). Finally we  discuss the $(-1)$-models in degree $2$ in detail.  
In Section \ref{sec:mmpmorifan} we start by recalling some basic facts of the minimal model program and introduce the main object of this paper, the Mori fan of the Dolgachev-Nikulin-Voisin family (Definition \ref{def:morifan}). 
We describe its main properties in 
(Proposition \ref{ThmGHKS}), due to the work of Gross, Hacking, Keel and Siebert. Next we explain the relationship between interior facets and flops (Proposition \ref{prop:intconesflops}) and finally discuss the action of the group of birational automorphisms on the Mori fan (Proposition \ref{prop:actionbir}). 

In Section \ref{sec:curvestructures} we mostly specialise to degree $2$. Corresponding to the  two triangulations of the sphere $S^2$ with two triangles we have two possible $d$-semistable $K3$ surfaces in $(-1)$-form, which we denote $Y_{\P}$ 
and $Y_{\T}$  respectively.  
We shall  consider all models whose central fibres can be transformed by a series of type I flops into 
$Y_{\P}$  or $Y_{\T}$, and call these models of type $\P$ and type $\T$ respectively. 
As we shall see later, see Corollary \ref{cor:typeItypeII},  these are all models in degree $2$.
The main object of this section is a detailed analysis of the configuration of certain curves forming an anticanonical divisor on components  of the central fibre of a given model of the Dolgachev-Nikulin-Voisin family. 
This leads to the notion of 
{\em curve structure}, see Definition \ref{def:curvestructure} and will also provide us with a natural $\QQ$-basis of the Picard group of the normalisations of the central fibre (Proposition \ref{qbasis}).
The main application will be existence theorems of ample line bundles of prescribed degree on the components of the anticanonical divisor (Propositions \ref{degample}, \ref{degample2} and \ref{degample2} ). 
This discussion will become vital in Section \ref{sec:projmodel} where 
we prove
projectivity criteria for models of type $\T$ (Proposition \ref{projTcurves}) and type $\P$ (Propositions \ref{nondegproj2}, \ref{nondegproj3}, \ref{nondegproj4} and \ref{nondegproj5} ).    

Section \ref{sec:flopauto} is in many ways the technical heart of the paper. Here we analyse flops between models of the Dolgachev-Nikulin-Voisin family in some detail and study the action of the birational automorphism group on the Mori fan.
For this we introduce various concepts describing  (augmented) curve structures (see Definitions \ref{def:aloneannex}, \ref{def:completeetc} and Construction \ref{reflection}). This allows us to establish  two crucial 
facts about models of the Dolgachev-Nikulin-Voisin family in degree $2$. The first we have already mentioned above, namely that any such model can be related by type I 
flops to a model of type $\P$ (or equivalently $\T$) (see Corollary \ref{cor:typeItypeII}). The 
second is that any two models of the Dolgachev-Nikulin-Voisin family can be transformed into each other using only type I and type II flops (Corollary \ref{factor}). We note that these results are specific to degree $2$.
Finally, this enables us to study the action of the birational automorphism group of a model of the  Dolgachev-Nikulin-Voisin family on the Mori fan. In Proposition \ref{orbitmain} we determine the possible orbits lengths of maximal 
cones of this fan under the  birational automorphism group, which can be $1$, $3$ or $6$.

In Section \ref{sec:counting} we finally enumerate all models of the Dolgachev-Nikulin-Voisin family in degree $2$ and determine the maximal cones in the Mori fan (Theorem \ref{theo:countingcones}). 
This section is rather combinatorial in nature. To obtain our result we use the tools which we have developed before, in particular we use curve structures: these allow us to characterise the isomorphism classes of the central fibres of projective
models of the Dolgachev-Nikulin-Voisin family. This allows us an explicit enumeration of the models. The second ingredient is the action of the birational automorphism group and its action on the set of maximal cones in the Mori fan which we analysed in the previous section. The main result then follows from a careful enumeration of all models, which we do for types $\P$ (Theorem \ref{modelsP}) and  $\T$ (Theorem \ref{modelsT}) separately. 
In Section \ref{sec:secondaryfan} we finally describe the secondary Mori fan. 
In this section we give a fairly elementary proof that the secondary fan is indeed a fan and compute its maximal cones (Theorem \ref{teo:countingsecondary}).

Throughout the paper we will work over the complex numbers $\CC$.

\begin{acknowledgement}We have benefitted enormously from discussions with Christian Lehn. We thank Mark Gross, Paul Hacking, Sean Keel, Bernd Siebert and Tony Yu Yue for sharing drafts of \cite{theta15} and  \cite{HKY} and  for 
answering our questions. We are grateful to  DFG for financial support under grant Hu 337/7-1.
\end{acknowledgement}

\section{The Dolgachev-Nikulin-Voisin mirror of degree $2d$: definition and construction}\label{review}
\subsection{The Dolgachev-Nikulin-Voisin mirror of degree $2d$}

We first recall the  mirror construction due to Dolgachev  \cite{Dol96} and Pinkham \cite{MR0429876} for polarized $K3$ surfaces, using Nikulin's theory of 
lattice polarized $K3$ surfaces \cite{Nik79}.
For  the basic facts about $K3$ surfaces and their moduli which we will need, we refer the reader to e.g. \cite{Huy}, \cite[Section VIII]{BHPV} and \cite{Handbook}. 
The second cohomology group of a $K3$ surface, together with the cup product (intersection pairing), define a lattice which is  isomorphic
 to the $K3$-lattice 
\[L_{K3}=2E_8(-1)\oplus 3U\] 
where $U$ is the hyperbolic plane and $E_8(-1)$ is the even negative definite  unimodular lattice of rank $8$. 
A polarization on $X$ is an ample line bundle  $\shL$ and, since $K3$ surfaces are regular, we can identify a polarization  with its first Chern class $h=c_1(\shL) \in H^2(X,\ZZ)$.
We assume $h$ to be primitive and of degree $h^{2}=2d >0$. 
We note that the group of isometries of the $K3$ lattice operates transitively on the set of primitive vectors of given positive degree.
Instead of working with the degree of a polarization we will often also use its genus, by which we mean the genus of a general element in the linear system defined by   $\shL$. The genus $g$ and the degree $d$
are related by the 
adjunction formula $2g-2=2d$. Note that degree $2$ coincides with genus $2$.

The moduli theory of $K3$ surfaces builds on the Torelli theorem. To describe this, we first notice that 
the orthogonal complement of $h$ in $L_{K3}$ defines a lattice 
\[L_{2d}\cong 2E_8(-1)\oplus 2U\oplus\langle -2d\rangle.\] 
We obtain the period domain $\Omega_{2d}$ by

\[ \Omega_{2d}=\{x\in \PP({L_{2d}} \otimes \CC)  \mid x^2=0, \langle x\bar{x}\rangle>0\}.\]
This is a $19$-dimensional manifold which has two connected components of which we fix one, say $D_{2d}$. 
A quasi-polarized $K3$ surface is a pair $(X,\shL)$ where $\shL$ is big and nef. 
Recall that a multiple of $\shL$ will embed $X$ as a $K3$ surface with ADE-singularities.
It is a classical application of the Torelli theorem that the moduli space of degree $2d$ polarized $K3$ surfaces with ADE-singularities 
is isomorphic to the quotient 
$$
\shF_{2d}=\Gamma_{2d} \backslash \Omega_{2d} = \Gamma^+_{2d} \backslash D_{2d}
$$ 
where 
$$
\Gamma_{2d}= \{g \in \Og(L_{K3}) \mid g(h)=h \}
$$ 
is the group 
of all isometries of $L_{K3}$ which fix the polarization $h$ and $\Gamma^+_{2d}$ is the subgroup of elements of real spinor norm $1$ (which is equivalent to the property that these elements fix the components of $\Omega_{2d}$.) To obtain the moduli space of all \emph{ polarized} $K3$ surfaces one has to remove finitely many hyperplanes from  $\shF_{2d}$, see e.g. \cite[p. 355]{BHPV}. 

 To simplify the following discussion we will  now assume that $d$ is square free. Then we have a well defined mirror moduli space which was described in \cite[\S 6]{Dol96}.
 This parameterizes lattice polarized $K3$ surfaces of Picard  rank $19$ whose Picard lattice is isomorphic to 
 \[\check{M}_{2d}=U\oplus 2E_8(-1)\oplus\langle -2d\rangle\] 
 and we note that
 \begin{equation}\label{equ:mirror1}
L_{2d}=h^{\perp}_{L_{K3}}=\check{M}_{2d} \oplus U.
  \end{equation}
 By Nikulin's theory the lattice $\check{M}_{2d}$ has a unique primitive embedding into the $K3$ lattice $L_{K3}$ (up to isometries), and here we fix once and for all the obvious 
 embedding, which maps a generator of the  summand
 $\langle -2d\rangle$ to $e- df$  in a summand $U$, where $e,f$ are a basis of $U$ with $e^2=f^2=0$ and $e.f=1$.
Similar to above, this leads to the moduli space 
$$
\check{\shF}_{2d} =\check{ \Gamma}_{2d}\backslash \Omega_{{(\check{M}_{2d})^{\perp}_{L_{K3}}}}
$$
where
$$
 \check{\Gamma}_{2d} = \{g \in \Og(L_{K3})\mid g|_{\check{M}_{2d}} = id \}
 $$ 
 is now the group of 
all isometries of $L_{K3}$ which restrict to the identity on $\check{M}_{2d}$. 
In our case
\begin{equation}\label{equ:mirror2}
{{(\check{M}_{2d})^{\perp}_{L_{K3}}}}  = U \oplus \langle 2d \rangle
   \end{equation}
 and this is dual to relation (\ref{equ:mirror1}).
The period domain $\Omega_{{(\check{M}_{2d})^{\perp}_{L_{K3}}}}$ is $1$-dimensional, more precisely it is two copies of the upper half plane $\HH_1$ (which are interchanged
by the group  $\check{\Gamma}_{2d}$). Hence $\check{\shF}_{2d}$ is a connected (non-compact) modular curve.
Mirror symmetry interchanges the roles of  complex moduli and  K\"ahler moduli. This corresponds to  the fact that  the mirror moduli space $\check{\shF}_{2d}$  is one dimensional 
and that the very general $K3$ surface in $\check{\shF}_{2d}$ has Picard group  $\check{M}_{2d}$.
 
 Since we assumed that  $d$ is square-free it follows from Scattone's calculations in \cite[\S 4]{Sca87},
that there is a unique $0$-dimensional boundary component in the Baily-Borel compactification  of $\shF_{2d}$.
By  \cite[Proposition 7.3]{Dol96} the same is true for  $\check{\shF}_{2d}$.
The mirror family which we are interested in is the universal family over $\check{\shF}_{2d}$ near the cusp. This requires an explanation.
The moduli spaces  $\shF_{2d}$ and $\check{\shF}_{2d}$ do not carry  universal families in the category of schemes (due to the existence of non-trivial automorphisms). 
Nevertheless, this concept can be made precise in the neighbourhood of the cusp and we will do this below where we define the  {\em Dolgachev-Nikulin-Voisin mirror family} in a rigorous way, see Definition \ref{def:DNV}.
 In what follows, we will typically work in the following situation. Let $(R,m)$ be a local complete DVR with residue field $k=R/m$. We will always assume here that $k=\mathbb C$. 
 Let $K= \operatorname{Q}(R)$ be the field of fractions of $R$. We set $S= \operatorname{Spec}(R)$. Typically we will work with the completion 
 $R=\hat{\mathcal O}_{C,p}$ of the local ring of an affine curve. We denote by $0=\operatorname{Spec}(k)$ the closed point of $S$ and by $\eta= \operatorname{Spec}(K)$ the generic point of $S$.
It will essentially be enough to consider the case $R=\mathbb C[[t]]$ of formal power series whose field of fractions is the field  $K=\mathbb C((t))$ of Laurent series. Indeed, if 
$(C,p)$ is a curve germ, then we can choose a local parameter $\pi$, and this defines an isomorphism of $k$-algebras $R \to  \hat{\mathcal O}_{C,p}$.  If $\mathcal Y \to S$ is a scheme over $S$, then we denote the 
generic fibre by ${\mathcal Y}_{\eta}$ and the special  (central) fibre by $\mathcal Y_c$. 
Alternatively, we can also work in the analytic category and consider families $\mathcal Y \to \DD$ over the disc and 
their restriction to the origin $0 \in \DD$ and the punctured disc $\DD^*$ respectively. We will sometimes use the analytic category in proofs.

In this paper we will use the term normal crossing to denote a scheme which is locally (not necessarily globally) normal crossing with reduced components.    
We say that  a normal crossing scheme $Y$ is {\em smoothable} if there exists a regular
scheme $\mathcal Y$, a proper flat map $\shY \to S$ and an isomorphism $\mathcal Y_c \cong Y$. 
In this case the restriction of the normal bundle of $\mathcal Y_c$ in $\mathcal Y$ to the singular locus $D$ of $\mathcal Y_c$ is trivial:
$$
N_D:=N_{\mathcal Y_c/\mathcal Y}|_D= \shO_{\shY}(\mathcal Y_c)|_D \cong {\mathcal O}_D.
$$
 The line bundle $N_D$ is called the {\em infinitesimal normal bundle} and can also be defined purely in terms of the singular scheme $Y$
by using the normal bundle of the components of $D$ in the respective components of $Y$,
see \cite[\S 1]{Frie83}.  We say that $Y$ is $d$-{\em semistable} if $N_D$ is trivial which is a non-trivial condition if we just consider an abstract surface $Y$. The triviality of $N_D$ is a necessary condition for smoothability of $Y$. It was shown by Friedman that it is also sufficient  \cite[Theorem 5.10]{Frie83}.

We will now recall the basic facts about degenerations of $K3$ surfaces in so far as they are relevant for us. We will denote by $Y$ a proper normal crossing surface and by $Y_i$ the components of $Y$. 
If $Y$ is a simple normal crossing (snc) surface, then the components $Y_i$ are smooth. We will, however, also allow self-intersections of the components and we will denote the normalisation
of a component
$Y_i$ by $ Y^\nu_i$. As before, we will denote the singular locus of $Y$ by $D$. We set $D_{ij}=Y_i \cap Y_j$ and consider this as a curve on $Y_i$.  We we also allow $i=j$ and in this case we mean by $D_{ii}$ is 
the self-intersection of the component $Y_i$.  In our case the curves $D_{ij}$ will always be irreducible.  
Intersection numbers $D^2_{ij}$ will always be calculated on the normalisations $Y_i^\nu$. Note that this may depend on the ordering of $\{i,j\}$.

There are three types of degenerations of $K3$ surfaces, classically called type I, II and III. The type of a degeneration is a measure of how far the Hodge structure degenerates.
 Type I are smooth $K3$ surfaces. The building blocks of type II degenerations are rational surfaces and the curves $D_{ij}$ consist of elliptic curves (which still carry some Hodge structure).  
 The components of type III degenerations are rational surfaces, intersecting in curves whose components are also rational. 
 Another characterization of the type can be given in terms of the nilpotency of the monodromy, see e.g. \cite[\S 1.2]{Sca87}.  
 Our interest will be in type III degenerations.  The following definitions are fundamental:
 
 \begin{definition}\cite[5.5]{Frie83}\label{def:dsemistable}
 A $d$-{\em semistable K3 surface of type III} 
 is a normal crossing surface $Y$ such that
 \begin{itemize} 
 \item[(i)] $Y$ is $d$-semistable
 \item[(ii)] $\omega_Y=\O_Y$
 \item[(iii)] $Y=\cup Y_i$ where each $Y_i$ is rational and the preimage of the double curves $\sum_j D_{ij}$ form anticanonical cycles of rational curves on the normalisation $Y^\nu_i$
 \item[(iv)] The dual intersection complex of $Y$ is a triangulation of the $2$-sphere $S^2$.
 \end{itemize}  
 \end{definition}

In the projective situation we make the following 
 
 \begin{definition}\label{def:typeIIIdeg} 
 A {\em type III degeneration} of $K3$ surfaces is a flat, projective scheme $\shY\to S$ over the spectrum  $S$ of a complete DVR, where $\shY$ is a regular $3$-fold whose generic fibre $\shY_{\eta}$  is a $K3$ surface and whose 
 central fibre $\shY_c$ is a type III $d$-semistable $K3$ surface. We will also refer to such a family $\shY\to S$ as a \emph{Kulikov model}.
 \end{definition}
\begin{remark} 
Note that the total space $\shY$ is Calabi-Yau, i.e. $\omega_{\shY}=\shO_\shY$. 
\end{remark}
\begin{remark} 
By definition the central fibre of a Kulikov model is always projective. We note, however, that a priori Definition \ref{def:dsemistable} does not require $Y$ to be projective. 
\end{remark}

\begin{remark}
One can also consider (not necessarly projective) \emph{analytic} smoothings. In the analytic category, a type III degeneration of $K3$ surfaces is a morphism $\shX\to \mathbb{D}$, with $\mathbb{D}$
a small disc, such that  the general fibres are smooth  $K3$ surfaces and the central fibre is a  type III $d$-semistable $K3$ surface as above. In particular, such an $\shX$ is smooth.  
Every $d$-semistable surface admits an analytic smoothing by \cite[Theorem 5.10]{Frie83}.
\end{remark}

Next, we discuss certain modifications of $d$-semistable $K3$ surfaces, the \emph{elementary modifications}. Recall that an \emph{$F$-flopping contraction}  of a threefold $\shY\to S $ with trivial canonicial class, where   $F$ is  a $\QQ$-Cartier divisor,  is a proper  birational contraction $f\colon \shY\to \shZ$  to a normal scheme (or complex analytic space) $\shZ \to S$ such that the exceptional locus is of codimension at least $2$ and the divisor  $-F$ is $f$-ample.  An \emph{$F$-flop} of $\shY\to S$ is a  scheme  $\shY^+\to S$ together with a proper birational morphism $f^+\colon\shY^+\to \shZ$ such that the birational transform $F^+$ of $F$ is $\QQ$-Cartier  on $\shY^+$ and $F^+$ is $f^+$-ample and the exceptional locus of $f^+$ has codimension at least $2$. The induced birational map $\phi\colon\shY\dashrightarrow \shY^+$ is, by abuse of language,  also called a {\em flop} of $\shY$.
The situation can be summarized by the following diagram
\[
\xymatrix{ \shY\ar@{-->}[rr]^\phi\ar[dr]_f&&\shY^+\ar[dl]^{f^{+}}\\
&\shZ.&
}
\] 

If $\shY\to \shZ$ is the contraction of an extremal ray, then the $F$-flop is independent of the choice of $F$, see \cite[\S 6.1]{KoMo}. In this paper, we will always assume that a flop is  given by a contraction of an extremal ray. 

One can also consider flops in the analytic category. Here we will recall  certain types of  analytic flops, the elementary modifications.
 There are three types of these, classically known as type 0,I and II \cite{FrMo}. Here we will only be concerned with elementary modifications of type I and II.
For the convenience of the reader we will recall these here.  Let $Y$ be a $d$-semistable $K3$ surface as above. Let $\shX \to \mathbb{D}$ be an \emph{analytic} smoothing over a small disc.  Let $C \subset Y$ be a smooth rational curve which intersects the double locus $D$ 
in exactly one point, more precisely, transversally in a point of some $D_{ij}$ which is a smooth point of $D$.
The curve $C$ lies on a unique component $Y_i$ and we assume that $C^2=-1$ on the normalisation $Y_i^{\nu}$. Then one can blow up $\shX$ in $C$ and the exceptional divisor will be
isomorpic to $\mathbb P^1 \times \mathbb P^1$ with normal bundle $\shO_{\mathbb P^1 \times \mathbb P^1}(-1,-1)$. The blow-down map contracts one ruling of the exceptional divisor to $C$. Contracting the other ruling gives another 
model $\shX' \to  \mathbb{D}$. The exceptional divisor contracts to a curve $C'$ on $Y_j$ and we have {\em flopped} the curve $C$ to  $Y_j$. The curve $C'$ is again a $(-1)$ curve on $Y_j^{\nu}$. 
Here we allow $Y_j$ and $Y_i$ to coincide.  
This defines  an \emph{elementary modification of type I}, see Figure \ref{emtypeI}. This construction induces a modification $\psi\colon Y\dashrightarrow Y'$ of the central fibre $Y$. Following standard 
terminology we will  also refer to this induced map on the central fibre as an elementary modification of type $I$.

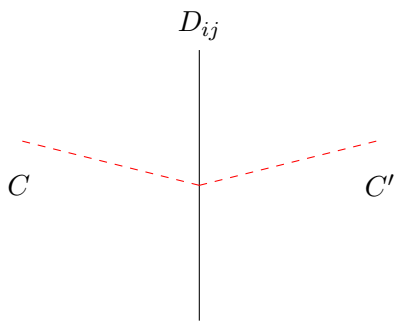
\begin{figure}\centering
\begin{tikzpicture}[scale=0.6]
\draw[] (0,0)--(0,6);

\draw[dashed, color=red] (0,3)--(-4,4)
(0,3)--(4,4);
\node(DIJ)[above] at (0,6) {$D_{ij}$};

\node(C1) at (-4,3) {$C$};
\node(C2) at (4,3) {$C'$};
  \end{tikzpicture}
\caption{An elementary modification of type I}
\label{emtypeI}
\end{figure}

Alternatively, let $C$ be a smooth rational component of the double curve $\sum_j D_{ij}$ with $C^2=-1$ on both $Y_i^{\nu}$ and $Y_j^{\nu}$, where we again allow the components to coincide. Blow up $\shX$ in $C$; the resulting exceptional divisor will again be   $\mathbb P^1 \times \mathbb P^1$.
As before, we can contract the other ruling to obtain a degeneration  $\shX' \to  \mathbb{D}$, yielding  an \emph{elementary modification of type II}, see Figure \ref{emtypeII}.  Again, one obtains a modification $Y'$ of the surface $Y$ to which we will also refer to as elementary modification of type II. \\

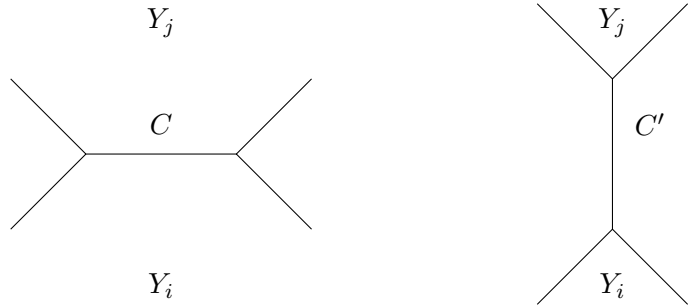
\begin{figure}[]\centering
\begin{tikzpicture}[scale=1]
\draw[] (-5,1)--(-4,0)--(-2,0)--(-1,1) 
(-5,-1)--(-4,0)
(-2,0)--(-1,-1) 

(3,-1)--(3,1)--(2,2)
(3,1)--(4,2)
(3,-1)--(2,-2)
(3,-1)--(4,-2);
\draw[white](0,3)--(2,3);
\node(A) at (-3,1.75) {$Y_j$};
\node(B) at (-3,-1.75) {$Y_i$};
\node(C) at (3,1.75) {$Y_j$};
\node(D) at (3,-1.75) {$Y_i$};
\node (E) at (-3,0.4) {$C$};
\node (F) at (3.5,0.4) {$C'$};
\end{tikzpicture}
\caption{An elementary modification of type II}
\label{emtypeII}
\end{figure}

By the nature of birational geometry for threefolds, for each Kulikov model, there are in general  many birational Kulikov models. One can, however, pick out a special class of such models, namely the 
Kulikov models in $(-1)$-form.  

\begin{definition}\label{def:-1form}
Let $Y$ be a type III  $d$-semistable $K3$ surface. 
Then we say that $Y$  is in $(-1)$-{\em form} if 
for each smooth double curve $D_{ij}=Y_i\cap Y_j$, we have $D^2_{ij}=D^2_{ji}=-1$ and if $D_{ij}$ is singular (and hence a nodal rational curve) and $Y_i$ is singular, then $D^2_{ij}=1$  and $D^2_{ji}=-1$.

\end{definition}
We will call a Kulikov model $\shY\to S$   with central fibre $Y$ in $(-1)$-form a \emph{ Kulikov model in $(-1)$-form}.
Note that by  a theorem of Miranda and Morrison \cite[Main Theorem 1.2]{MiMo} 
any \emph{analytic}  type III Kulikov model can, by a series of elementary modifications of type I 
and II, be brought into $(-1)$-form, but that $(-1)$-forms are still not unique. As discussed above,  one can also think of this sequence of modifications as a sequence of modifications on the central fibre. Thus one may interpret the theorem of Miranda and Morrison as a result on type III $d$-semistable $K3$ surfaces. 
\begin{theorem} \cite[Main Theorem 1.2]{MiMo} \label{teo:-1form}
Let $Y$ be a $d$-semistable $K3$ surface of type III. Then there is a sequence $Y\dashrightarrow Z $  of elementary modifications of type I and II such that $Z$ is a $d$-semistable $K3$ surface in $(-1)$-form.  
\end{theorem}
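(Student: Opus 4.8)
The plan is to reduce the statement to a redistribution problem for self-intersection numbers on the triangulation of $S^2$ dual to $Y$, and then to realise the required moves by the two kinds of elementary modifications. First I would attach to each double curve $D_{ij}$ the ordered pair $(a_{ij}, a_{ji}) := (D_{ij}^2|_{Y_i}, D_{ij}^2|_{Y_j})$. Every edge of a triangulation of $S^2$ lies on exactly two faces, so each smooth $D_{ij}$ meets exactly two triple points; evaluating the $d$-semistability condition along $D_{ij} \cong \PP^1$ then shows that $a_{ij} + a_{ji}$ equals minus the number of triple points on $D_{ij}$, namely $-2$. Thus for smooth double curves the $(-1)$-form of Definition \ref{def:-1form} is exactly the \emph{balanced} configuration $a_{ij} = a_{ji} = -1$, and the task becomes: transfer self-intersection across edges until every smooth edge is balanced, while separately arranging the unbalanced pair $(1,-1)$ at the singular double curves.

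The next step is the local bookkeeping for an elementary modification of type I. Flopping a $(-1)$-curve $C \subset Y_i$ that meets $D_{ij}$ transversally in one point is, on the central fibre, the blow-down of $C$ on $Y_i$ followed by the blow-up of the image point on $Y_j$; since $C \cdot D_{ij} = 1$ this realises $(a_{ij}, a_{ji}) \mapsto (a_{ij} + 1,\, a_{ji} - 1)$, leaves every other double curve and every triple point unchanged, and hence preserves both the triple point relation and (being compatible with these blow-ups) the triviality of $N_D$, so the output is again a type III $d$-semistable $K3$ surface. On a fixed triangulation, therefore, the only obstruction to balancing a given edge is the \emph{existence} of the $(-1)$-curve that a type I move consumes.

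Producing these curves is the geometric core. By Definition \ref{def:dsemistable}(iii) each component $Y_i$ is a rational surface carrying the anticanonical cycle $D = \sum_j D_{ij}$, i.e.\ an anticanonical (Looijenga) pair, and a type I move across $\{i,j\}$ requires precisely a curve $C$ with $C^2 = -1$ and $C \cdot (-K_{Y_i}) = C \cdot D = 1$ meeting the prescribed boundary component $D_{ij}$ once. I would use the abundance of $(-1)$-curves on anticanonical rational surfaces --- via effectivity of $-K_{Y_i}$ and the structure of the Mori cone under successive contractions --- to show that whenever $a_{ij} < -1$ such a $C$ lives on $Y_i$, and symmetrically that whenever $a_{ij} > -1$ a suitable $C$ lives on $Y_j$. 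Running the resulting descent, the monovariant $\sum_{\{i,j\}} |a_{ij} + 1|$ drops strictly under each balancing move, so after finitely many type I modifications every smooth double curve is brought into $(-1)$-form.

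The hard part --- and the reason type II modifications cannot be dispensed with --- is that the flopping curve demanded above may fail to exist on certain small or degenerate components, where the self-intersection is combinatorially locked by the triangulation (for example at low-valence vertices, or precisely along the singular double curves where a component is glued to itself and the required form is the unbalanced $(1,-1)$). At such a configuration I would interpose an elementary modification of type II, which flips an edge of the dual triangulation and thereby changes which components meet; this both liberates the configuration so that the curves needed for the remaining type I moves reappear, and is the mechanism that installs the prescribed $(1,-1)$ form at the singular double curves. The genuinely delicate point in a complete argument is to interleave the type I and type II moves so that the procedure provably terminates rather than cycling --- that is, to exhibit a single monovariant combining the edge-imbalance above with a complexity measure of the triangulation that decreases under both kinds of move. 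This combinatorial control is the substance of the Miranda--Morrison analysis \cite{MiMo}.
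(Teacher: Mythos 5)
Your numerical framework is sound as far as it goes: the triple point formula $a_{ij}+a_{ji}=-2$ on smooth double curves does follow from $d$-semistability together with the fact that each edge of a triangulation of $S^2$ lies on exactly two triangles, the effect $(a_{ij},a_{ji})\mapsto(a_{ij}+1,\,a_{ji}-1)$ of a type I modification is computed correctly, and rephrasing $(-1)$-form as the balanced configuration matches Definition \ref{def:-1form}. But there are two gaps, one of which is fatal as a matter of logic. First, the existence lemma at the core of your descent --- that whenever $a_{ij}<-1$ there is an interior $(-1)$-curve on $Y_i$ meeting $D_{ij}$, by ``abundance of $(-1)$-curves on anticanonical rational surfaces'' --- is not a fact about anticanonical pairs: a component can be a minimal rational surface carrying no $(-1)$-curves at all. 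For instance $\FF_2$ with anticanonical cycle consisting of the negative section $\sigma$, two fibres and a disjoint section is an anticanonical pair with a boundary component of square $-2<-1$ and no interior $(-1)$-curve whatsoever. Minimal components of this kind are not exotic in the present context (the paper's own Corollary \ref{main1} allows Hirzebruch components), so the greedy balancing procedure genuinely gets stuck, and not only at the ``small or degenerate'' configurations you set aside; unsticking it requires importing curves across \emph{other} edges (which disturbs their balance) or flipping the triangulation, i.e.\ exactly the global interleaving problem you postpone.

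Second, and decisively: your final paragraph hands that interleaving/termination problem back to ``the Miranda--Morrison analysis \cite{MiMo}''. The statement to be proved \emph{is} \cite[Main Theorem 1.2]{MiMo}, so this is circular --- you have reduced the theorem to itself. For comparison, the paper does not prove the statement either: Theorem \ref{teo:-1form} is quoted from \cite{MiMo}, the only content added there being the observation (also implicit in your set-up) that analytic elementary modifications of a Kulikov model restrict to elementary modifications of the central fibre, so the analytic theorem can be read as a statement about $d$-semistable surfaces of type III. A genuine blind proof would have to supply the monovariant and the induction that occupy the body of \cite{MiMo}; your write-up correctly locates where that work lives, and correctly identifies why type II modifications are unavoidable, but it does not do the work.
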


For future use we also want to recall the index of the monodromy. Let $Y$ denote a $d$-semistable $K3$ surface in $(-1)$-form, let  $\Gamma$ be its dual graph, a triangulation of the sphere $S^2$. We say $Y$ has \emph{ special $n$-bands of hexagons} if $\Gamma$ is a refinement of another triangulation $\Gamma'$ of $S^2$, and is in fact obtained from $\Gamma'$ by subdividing each triangle of $\Gamma'$ into $n^2$ triangles, see Figure \ref{Fig:bands} for $n=4$.

\begin{centering}\begin{figure}
\begin{tikzpicture}
\draw[] (0,0) --++(60:1)--++(-60:1)--++(-180:1);
\draw[] (1,0) --++(60:1)--++(-60:1)--++(-180:1);
\draw[] (2,0) --++(60:1)--++(-60:1)--++(-180:1);
\draw[] (3,0) --++(60:1)--++(-60:1)--++(-180:1);
\draw[] (0,0)++(60:1)--++(0:3);

\draw[] (0,0)++(60:1)--++(60:1)--++(-60:1)--++(-180:1);
\draw[] (1,0)++(60:1)--++(60:1)--++(-60:1)--++(-180:1);
\draw[] (2,0)++(60:1)--++(60:1)--++(-60:1)--++(-180:1);
\draw[] (0,0)++(60:2)--++(60:1)--++(-60:1)--++(-180:1);
\draw[] (1,0)++(60:2)--++(60:1)--++(-60:1)--++(-180:1);
\draw[] (0,0)++(60:3)--++(60:1)--++(-60:1)--++(-180:1);
\draw[] (0,0)--++(60:4)--++(-60:4)--cycle;

\draw[] (-6,0)--++(60:4)--++(-60:4)-- cycle;

\node(A) at (-2,2){};
\node(B) at (0,2){};
\draw[->] (A) --(B);

\end{tikzpicture}
\caption{ The refinement of a triangle of $\Gamma'$.}
\label{Fig:bands}
\end{figure}
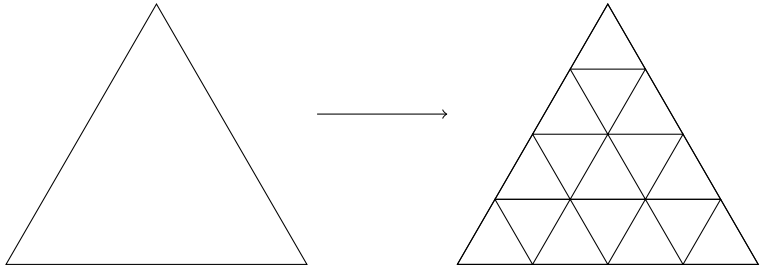
\end{centering}

Now let  $Y$  be any $d$-semistable $K3$ surface of type III. Let $Y'$ denote a $d$-semistable $K3$ surface in $(-1)$-form that is obtained from  $Y$ by a sequence of elementary modifications. Let $k$ be the largest integer such that $Y'$ has  special $k$-bands of hexagons. Note that this number is independent of the choice of $Y'$ by \cite[Theorem 0.5]{FS}.

\begin{definition}
The integer $k$ is  called the \emph{index} of $Y$. If $k=1$,   $Y$ is called {\em primitive}.
\end{definition}
\begin{remark}The index is an invariant of $Y$ that may also be defined in terms of monodromy in the analytic setting. For details see \cite[p. 4]{FS}.
\end{remark}

We finally have to recall some facts about the Picard group of $d$-semistable $K3$ surfaces and their smoothings. 
We first start with a normal crossing surface $Y=\cup Y_i$  fulfilling the conditions of Definition \ref{def:dsemistable} with possibly the exception that  $Y$ is $d$-semistable. Then there 
is an exact sequence
\begin{equation}\label{equ:L}
0 \to L \to \bigoplus_i H^2(Y^{\nu}_i,\mathbb Z) \to \bigoplus_C H^2(C,\mathbb Z)
\end{equation}
where $C$ runs through all  components of preimages of $D$ under the normalisation $Y^\nu\to Y$  
and $L$ is defined as  the kernel of the right hand map given by the differences of the restrictions, see e.g.  \cite[p. 151]{AGIII}. By \cite[Section 3.1]{Laz08}, $\rank L= 18+n$, with $n$ the  number of components of $Y$. Obviously, $\Pic(Y)\subset L$.
Recall also from \cite[\S 4]{Carl} 
the Carlson homomorphism
$$
c_Y: L \to \mathbb C^*.
$$ One way to define it as follows, see \cite[\S 3]{FS}: Let $W_0=W_1\subset W_2$ be the weight filtration of the natural mixed Hodge structure on $H^2(Y)$. We have $L=(W_2/W_0)_\ZZ$.  
Choose a section $s$ of the projection $W_2\to W_2/W_0$ 
preserving the Hodge filtration and a retraction $r\colon W_2\to W_0$ of the inclusion $W_0\subset W_2$. Then $c_Y=r\circ s \text{ mod }(W_0)_\ZZ \colon L\to ({W_0})_\CC/ ({W_0})_\ZZ$.

Its significance is that 
\begin{equation}\label{equ:Carlson}
\Pic(Y)= \ker(c_Y).
\end{equation}
We note that for each component $Y_i$, there is an element $\xi_i=\sum_j D_{ij}-D_{ji} \in L$. By results of Friedman and Scattone \cite[p. 25]{FS}, $Y$ is smoothable if and only if $\xi_i\in \ker(c_Y)$ for all $i$.

We can then consider locally trivial deformations  of $Y$, i.e. deformations $\mathfrak{X}\to B$, with $B$ an analytic set, such that each point of  $\mathfrak{X} $ has a neighbourhood $U$ such that $\mathfrak{X}_{|U}$  is a product. 
By \cite[\S 4]{FS} these are parameterized by the Carlson map $c_Y$.
The $d$-semistable deformations define a divisor in this $20$-dimensional family.
From the relation (\ref{equ:Carlson}) we obtain in particular the following result.

\begin{lemma}\label{degree}
Let $Y=\cup Y_i$ be a $d$-semistable $K3$ surface of type III with $c_{Y}=1$. Then 
\[\Pic(Y)\cong \{ (\shL_i)_i\in \bigoplus_{i}\Pic({Y_i^\nu}) \mid \deg \shL_{i_{|C}}=\deg\shL_{j_{|C}}, C \subset D^\nu\}\]
where $D^\nu$ denotes the preimage of $D$ under the normalisation map $\nu \colon Y^\nu\to Y$  and
$C$ runs through all components of $D$.
\end{lemma}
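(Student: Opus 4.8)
The plan is to identify $\Pic(Y)$ explicitly using the two facts already established in the excerpt: first, that $\Pic(Y) = \ker(c_Y)$ by \eqref{equ:Carlson}; and second, that $\Pic(Y) \subset L$, where $L$ sits inside $\bigoplus_i H^2(Y_i^\nu,\mathbb Z)$ via the exact sequence \eqref{equ:L}. Under the hypothesis $c_Y = 1$, the Carlson map is trivial, so $\ker(c_Y) = L$, and hence $\Pic(Y) = L$. Thus the entire content of the lemma is to unwind what $L$ actually is as a subgroup and to rewrite it in terms of line bundles on the normalisations $Y_i^\nu$ subject to a degree-matching condition along the double curves.

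First I would recall the definition of $L$ from \eqref{equ:L}: it is the kernel of the map $\bigoplus_i H^2(Y_i^\nu,\mathbb Z) \to \bigoplus_C H^2(C,\mathbb Z)$ given by differences of restrictions, where $C$ ranges over the components of $D^\nu$. Since each $Y_i^\nu$ is a smooth rational surface (by Definition \ref{def:dsemistable}(iii)), we have $\Pic(Y_i^\nu) \cong H^2(Y_i^\nu,\mathbb Z)$ via the first Chern class; rational surfaces have $H^1(\shO) = H^2(\shO) = 0$, so the exponential sequence gives this isomorphism and $\Pic(Y_i^\nu)$ is torsion-free and equal to $\NS(Y_i^\nu)$. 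For each double curve component $C$, which is a smooth rational curve $\cong \mathbb P^1$, we have $H^2(C,\mathbb Z) \cong \mathbb Z$, and the restriction map $H^2(Y_i^\nu,\mathbb Z) \to H^2(C,\mathbb Z)$ is precisely the degree map $\shL_i \mapsto \deg(\shL_i|_C)$. I would spell out this identification carefully, noting that each component $C$ of $D^\nu$ lies on exactly one $Y_i^\nu$ as the preimage of a double curve $D_{ij}$, and that the ``difference of restrictions'' compares the degree computed from the two sides $Y_i^\nu$ and $Y_j^\nu$ of that double curve.

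Putting these identifications together, the condition of lying in $\ker$ of the difference map becomes exactly $\deg \shL_i|_C = \deg \shL_j|_C$ for every double curve $C$ shared by $Y_i$ and $Y_j$, which is the description in the statement. I would therefore conclude $\Pic(Y) = L \cong \{(\shL_i)_i \in \bigoplus_i \Pic(Y_i^\nu) \mid \deg \shL_i|_C = \deg \shL_j|_C\}$, as claimed.

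The step requiring the most care is the bookkeeping around self-intersections and the ordering of $\{i,j\}$: the excerpt warns that when a component $Y_i$ is singular, a double curve $D_{ij}$ may be nodal and $D^\nu \to D$ need not be a bijection on components, so a single curve $C \subset D^\nu$ may arise as the preimage of a self-intersection (the case $i = j$). I expect this to be the main obstacle, since one must check that the map in \eqref{equ:L} indexed over components of $D^\nu$ correctly produces one degree-matching equation per such component, including the self-intersection case where both restrictions come from the same $Y_i^\nu$ but from the two branches of the double curve. The resolution is that \eqref{equ:L} is already set up to run $C$ over components of the normalised double locus $D^\nu$, so the matching condition is automatically the correct one; I would simply verify that the indexing in the statement (running $C$ over components of $D$, with the two sides $\shL_i, \shL_j$) agrees with the normalised indexing, which is routine once the conventions of the excerpt on $D_{ij}$ versus $D_{ji}$ are invoked.
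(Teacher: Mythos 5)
Your proof is correct and follows essentially the same route as the paper's own (implicit) argument: the paper deduces the lemma directly from relation (\ref{equ:Carlson}), i.e.\ $c_Y=1$ gives $\Pic(Y)=\ker(c_Y)=L$, and $L$ is by definition the degree-matching subgroup once one identifies $H^2(Y_i^\nu,\ZZ)$ with $\Pic(Y_i^\nu)$ for the rational surfaces $Y_i^\nu$ and the restriction maps $H^2(Y_i^\nu,\ZZ)\to H^2(C,\ZZ)\cong\ZZ$ with degree maps. The only slight imprecision is your assertion that every component of $D^\nu$ is a smooth $\PP^1$ --- for surfaces of class $\mathscr{T}$ a component of $\nu^{-1}(D)$ can be a nodal rational curve (e.g.\ the anticanonical curve on a copy of $\mathfrak{Y}_1$) --- but since $H^2(C,\ZZ)\cong\ZZ$ and restriction is still the degree map for such curves, your argument is unaffected.
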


The following Lemma is an application of  \cite[Lemma 5.5]{FS}.  

\begin{lemma}\label{carlsonunique}
Let $Y$ be a $d$-semistable K$3$ surface with $n$ components. There is a unique locally trivial $d$-semistable deformation $Y_c$ of $Y$ such that $\text{rank } \Pic(Y_c)=18+n$. 
The Carlson homomorphism $c_{Y_c}$ is trivial, i.e. $c_{Y_c}=1$ and hence $\Pic(Y_c)=L$. 
\end{lemma}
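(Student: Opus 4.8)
The plan is to translate the statement into the language of the Carlson homomorphism and then reduce to \cite[Lemma 5.5]{FS}. Recall from the discussion preceding (\ref{equ:Carlson}) that the locally trivial deformations of $Y$ are controlled by the Carlson map, and that for any such deformation $Y'$ one has $\Pic(Y')=\ker(c_{Y'})$ as a subgroup of the fixed lattice $L$ of rank $18+n$. The first thing I would record is a purely lattice-theoretic observation: since $c_{Y'}\colon L\to\CC^{*}$ induces an isomorphism $L/\ker(c_{Y'})\cong\im(c_{Y'})$, the kernel has full rank $18+n$ precisely when $\im(c_{Y'})$ is finite, whereas $\ker(c_{Y'})=L$ (the whole lattice) holds if and only if $c_{Y'}=1$. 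Thus the condition $\rank\Pic(Y_c)=18+n$ singles out the deformations of maximal Picard rank, and the maximal possible Picard lattice $L$ itself is attained exactly when the Carlson map is trivial.

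For existence I would exhibit the trivial homomorphism $c=1$ as the required member. Under the parameterization of \cite[\S 4]{FS} it corresponds to a locally trivial deformation $Y_c$, and since $\xi_i\in L=\ker(c)$ for every component $i$, the smoothability criterion of Friedman and Scattone recalled above shows that $Y_c$ is smoothable, hence $d$-semistable by \cite[Theorem 5.10]{Frie83}. By (\ref{equ:Carlson}) we then have $\Pic(Y_c)=\ker(c_{Y_c})=L$, so in particular $\rank\Pic(Y_c)=18+n$; this gives all asserted properties of $Y_c$ except uniqueness.

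The uniqueness is where the real content lies, and it is exactly here that I would invoke \cite[Lemma 5.5]{FS}: among the locally trivial $d$-semistable deformations of $Y$ there is a unique one whose Picard lattice is all of $L$, equivalently of maximal rank $18+n$, and it is the one with trivial Carlson homomorphism. Concretely the plan is to match our $L$, the classes $\xi_i$, and the Carlson map $c_Y$ with the corresponding objects of \cite{FS}, to verify that the smoothability (hence $d$-semistability) condition $c(\xi_i)=1$ coincides with theirs, and then to transport their uniqueness statement to obtain that $Y_c$ is the unique member with the stated Picard rank.

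The main obstacle I anticipate is precisely this uniqueness step. The naive rank count only forces $\im(c_{Y_c})$ to be finite, which a priori still permits nontrivial Carlson maps taking values in roots of unity and hence Picard lattices that are proper finite-index sublattices of $L$; excluding these and pinning the Picard lattice to the full lattice $L$ is what \cite[Lemma 5.5]{FS} supplies. A secondary, more bookkeeping-type difficulty is to make the dictionary between the mixed-Hodge-theoretic description of $c_Y$ as $r\circ s$ and the deformation-theoretic framework of \cite[\S 4]{FS} completely precise, so that the cited lemma applies without modification.
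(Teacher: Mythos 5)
Your lattice-theoretic dictionary (rank $18+n$ $\Leftrightarrow$ finite image of $c$; $\Pic=L$ $\Leftrightarrow$ $c=1$) is correct, and your existence step is fine in outline. The problem is the uniqueness step, which you yourself identify as the real content: you close it by attributing to \cite[Lemma 5.5]{FS} the statement ``among the locally trivial $d$-semistable deformations of $Y$ there is a unique one whose Picard lattice is all of $L$, equivalently of maximal rank $18+n$, and it is the one with trivial Carlson homomorphism.'' That is not what \cite[Lemma 5.5]{FS} says; it is essentially Lemma \ref{carlsonunique} itself, including exactly the implication (finite image $\Rightarrow$ trivial image) that your rank count cannot supply. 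Lemma 5.5 of \cite{FS} is a deformation-theoretic statement about the locus in a deformation space where prescribed line bundles continue to deform; this is how it is used elsewhere in this paper, e.g.\ in the proof of Proposition \ref{lem:unoquedeg}, where it gives that the locus in the smoothing component where $19$ chosen bundles deform is one-dimensional and smooth. So your citation is circular: the torsion problem you flag --- Carlson maps with values in roots of unity, whose kernels are proper finite-index sublattices of $L$ of full rank --- is precisely what remains unproven in your write-up.

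What is actually needed, and what the paper does, is to run the deformation theory: given a locally trivial $d$-semistable deformation $Y_c$ with $\rank\Pic(Y_c)=18+n$, choose $19$ linearly independent divisors $L_i$ on it, independent of the $\xi_i$, and \emph{iterate the proof} (not the statement) of \cite[Lemma 5.5]{FS}; each condition that a chosen class stays algebraic cuts down the locally trivial deformation space by a smooth divisor, and imposing all of them together with $d$-semistability pins the deformation down, forcing $c_{Y_c}=1$ and uniqueness at once. The paper refers to \cite[Section 10.4]{theta15} for the detailed version of this argument; some such input is indispensable, and without it your proof does not close. Two smaller points: your existence step tacitly assumes that the trivial homomorphism is realized by a locally trivial deformation of the \emph{given} $Y$ --- this realizability should be quoted from \cite[\S 3.9]{FS} (compare also Proposition \ref{optimalglue}, where the $c=1$ surface is constructed by hand) rather than extracted from the phrase ``parameterized by the Carlson map''; and the implication ``smoothable $\Rightarrow$ $d$-semistable'' is the elementary necessity of triviality of $N_D$, not \cite[Theorem 5.10]{Frie83}, which is the converse direction.
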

\begin{proof}A detailed proof is in \cite[Section  10.4]{theta15}.
If $Y$ is such that its Picard group has rank $18+n$, one can in particular choose $19$ linearly independent divisors $L_i$ different from the $\xi_i$. The result then follows by iterating the proof of  \cite[Lemma 5.5]{FS}.
\end{proof}

\begin{remark}
We will now consider deformations $\shY\to S$ of $d$-semistable K$3$ surfaces. The relevant relative notions of divisors and cones are recalled in Section \ref{sec:mmpmorifan}. Here we simply remark that under our assumptions on $S$, linear equivalence over the base coincides with the usual linear equivalence, in particular   $\Pic(\shY/S)=\Pic(\shY)$. 
\end{remark}

The following will play an important role for us. 
\begin{proposition}\label{lem:unoquedeg} 
Let $Y$ be a projective type III  $d$-semistable  $K3$ surface with  $c_Y=1$.  
Then there is  an up to isomorphism unique  Kulikov model $\shY \to S$ of $Y$ such that
\begin{equation}\label{equ:maximal} 
r_c\colon\Pic(\shY/S) \cong \Pic(Y)
\end{equation}
with $r_c$ being the restriction map.  Also, if $k$ is the index and $t$ is the number of triple points of $Y$ 
 then
 \[\Pic(\shY_\eta)\cong 2E_8(-1)\oplus U\oplus\langle \frac{-t}{k}\rangle.\]
\end{proposition}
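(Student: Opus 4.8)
The plan is to prove the two assertions of Proposition \ref{lem:unoquedeg} separately, first constructing the distinguished Kulikov model and then computing the Picard lattice of the generic fibre. For the existence and uniqueness of a model $\shY \to S$ with $r_c\colon \Pic(\shY/S) \cong \Pic(Y)$, the key input is Lemma \ref{carlsonunique}: since $c_Y = 1$ we have $\Pic(Y) = L$ of rank $18 + n$, which is the maximal possible rank. First I would invoke the general structure theory of the Mori fan, specifically the fact (due to Gross--Hacking--Keel--Siebert, see Proposition \ref{ThmGHKS}) that Kulikov models of $Y$ correspond to the maximal cones of $\Morifan(\shY/S)$, and that the restriction map $r_c\colon \Pic(\shY/S) \to \Pic(Y)$ is injective with image the rational structure recording which line bundles extend. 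The model realizing the maximal-rank Picard group is the one whose nef cone surjects onto all of $\Pic(Y)_\RR$; projectivity of $Y$ with $c_Y = 1$ guarantees the existence of such a model after possibly performing flops (elementary modifications), and uniqueness follows because any two models with $\Pic(\shY/S) \cong L$ differ by flops that would have to fix the entire Picard group and hence are isomorphisms in codimension one preserving the polarization, forcing an isomorphism.

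For the computation of $\Pic(\shY_\eta)$, I would use the fact that $\shY_\eta$ is a $K3$ surface and that $\Pic(\shY_\eta)$ is the image of the specialization/restriction map from $\Pic(\shY/S) = L$. The lattice $L$ carries the intersection form assembled from the $H^2(Y_i^\nu, \ZZ)$ modulo the gluing along the double curves, and restricting to $\shY_\eta$ amounts to passing to the monodromy-invariant part of $H^2$ of the nearby fibre. The standard degeneration theory for type III Kulikov models (the Clemens--Schmid exact sequence, together with the identification of the limiting mixed Hodge structure) identifies this invariant lattice as an even lattice of signature $(1,\rho-1)$. The expected shape $2E_8(-1) \oplus U \oplus \langle -t/k\rangle$ reflects that the $E_8(-1)$ and $U$ summands come from the lattice $\check M_{2d}$ polarizing the family, while the square $-t/k$ of the distinguished polarization class is governed by the self-intersection data: the integer $t$ counts the triple points and $k$ is the index, which measures the bands of hexagons and hence the divisibility of the monodromy.

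I would compute the discriminant $-t/k$ explicitly by locating the polarization class $\lambda \in L$ and evaluating $\lambda^2$ on the central fibre, which by flatness equals its value on $\shY_\eta$. The class $\lambda$ is the extension of the polarization, and its self-intersection is computed by summing contributions $\lambda_i^2$ over the components $Y_i^\nu$ and correcting along the double curves; the combinatorics of the triangulation $\Gamma$ of $S^2$ enters here, with each triple point contributing to $t$ and the refinement structure producing the factor $1/k$. The main obstacle will be the precise identification of $\Pic(\shY_\eta)$ as the monodromy-invariant lattice and the bookkeeping that pins down the discriminant as exactly $-t/k$ rather than some other multiple; this requires carefully matching the abstract Hodge-theoretic description with the concrete intersection theory on the components, and it is where the hypotheses on the index $k$ and the triple-point count $t$ must be used in full force. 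Once the rank is fixed at $20$ by the maximality of $\Pic(Y)$ and the signature is $(1,19)$ from the $K3$ condition, the isomorphism type of the even lattice of this signature is essentially determined by its discriminant form, so the computation of $-t/k$ completes the argument.
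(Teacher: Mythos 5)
Your proposal has two genuine gaps, one in each half.

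\textbf{The construction/uniqueness argument is circular.} You invoke the Mori fan and Proposition \ref{ThmGHKS} to produce the model, but in this paper the Mori fan is defined relative to an already existing model $\shY\to S$ of the DNV family, and the very notion of such a model (Definition \ref{def:DNVmodel}) rests on the maximality property (\ref{equ:maximal}) whose existence is exactly what Proposition \ref{lem:unoquedeg} must establish. Moreover, the maximal cones of $\Morifan(\shY/S)$ do not parametrize Kulikov models \emph{of the fixed surface} $Y$: they parametrize marked models $(\shY',f)$ whose central fibres change under the flops relating them, so "the model whose nef cone surjects onto $\Pic(Y)_\RR$" is not a meaningful selection principle here. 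What is actually needed, and what the paper does, is a deformation-theoretic construction: take Friedman's semiuniversal analytic deformation $\mathfrak{X}\to V$ of $Y$, choose $19$ line bundles $L_1,\dots,L_{19}$ generating $\Pic(Y)$ modulo the primitive sublattice spanned by the classes $\xi_i=\sum_j D_{ij}-D_{ji}$, and cut down to the locus $V'$ in the smoothing component where all $L_i$ deform; by Friedman--Scattone this locus is $1$-dimensional and smooth. Completing and applying Grothendieck's existence theorem (the ample bundle on $Y$ extends, giving projectivity) produces the algebraic model over $\Spec\CC[[t]]$, and regularity of the total space is checked via Matsumura. Uniqueness is then \emph{not} a statement about flops fixing the Picard group: it follows from formal semiuniversality, since any second model is pulled back along an endomorphism of $\CC[[t]]$, which by regularity of the second model must send $t\mapsto at$ with $a$ a unit, hence is an isomorphism.

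\textbf{The lattice computation contains numerical errors and inverted logic.} You fix the rank of $\Pic(\shY_\eta)$ at $20$ with signature $(1,19)$; both are wrong. One has $\Pic(\shY/S)\cong L$ of rank $18+n$, and the kernel of restriction to the generic fibre is the rank-$(n-1)$ group $\ZZ^\shY$ generated by the components of the central fibre modulo $\sum_i Y_i=0$ (Kawamata's exact sequence $0\to\ZZ^\shY\to\Pic(\shY/S)\to\Pic(\shY_\eta)\to 0$), so $\rank\Pic(\shY_\eta)=19$ and the signature is $(1,18)$ --- consistent with the target lattice $2E_8(-1)\oplus U\oplus\langle -t/k\rangle$ of rank $19$. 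With the wrong rank, your concluding appeal to uniqueness of the lattice via its discriminant form does not go through as stated. Separately, saying the $U\oplus 2E_8(-1)$ summands "come from $\check M_{2d}$ polarizing the family" is backwards: this proposition is for an arbitrary maximal $Y$ (arbitrary $t$ and $k$), and the identification with $\check M_{2d}$ occurs only later (Proposition \ref{modelexist}) as a consequence. Your Clemens--Schmid/monodromy-invariant strategy is in the right spirit --- it is essentially how the cited results of Friedman--Scattone and Laza are proved --- but the paper simply quotes those results (\cite[Prop.\ 4.3, Cor.\ 4.6]{Laz08}) together with the exact sequence above, whereas your sketch leaves the discriminant bookkeeping, which is the entire content of the claim $\langle -t/k\rangle$, unperformed.
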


\begin{proof}
This goes back to \cite{FS}.
If $c_Y=1$, then $\Pic(Y)\cong L$. The surface $Y$ also determines the 
invariants $t$,$k$. Let $n$ be the number of components of $Y$. Consider  the  divisors $\xi_i=\sum_j D_{ij}-D_{ji}$, $i=1,\dots,n$  defined  above. 
Note that  $\sum_i \xi_i=0$. The lattice $\langle \xi_1,\dots \xi_n | \sum_i \xi_i=0\rangle $ is a primitive sublattice of $L$ by \cite[(4.13)]{FS}.
Hence we can pick linearly independent divisors $L_1,\dots L_{19}$ that generate $\Pic(Y) \text{ mod } K$. If $\shY\to S$ is a deformation with $\Pic(\shY/S)\cong \Pic(Y)$ via restriction, then by definition $\shY\to S$ is a 
deformation of the tuple $(Y; L_1,\dots, L_{19})$. We shall show that there is a unique such 1-parameter deformation.

Let $\mathfrak{X}\to V$ be the semiuniversal  analytic deformation of $Y$ as defined in in the proof of  \cite[Theorem 5.10]{Frie83}. By the arguments in the proof of  \cite[Lemma 5.5]{FS}, 
the locus $V'$  in the smoothing component  of $V$ where the $L_i$ deform is $1$-dimensional and smooth. 
Let $\mathfrak{X}'\to V'$ be the restriction of the semiuniversal family. By \cite[Theorem 5.10]{Frie83}, this is a smoothing of $Y$. Let $W'$ be the analytic algebra defining the germ $V'$, and let $W$ be the completion of $W'$ with respect to the maximal ideal. 
Then $W\cong \CC[[t]]$. This defines a formal scheme $\hat{\shY}\to \Spf \CC[[t]]$, and by the condition that all $L_i$ deform, there is an $\shL\in \Pic(\hat{\shY})$ restricting to an ample line bundle on $Y$ and thus by Grothendieck's existence theorem a deformation $\shY\to S$ with $S=\Spec\CC[[t]]$ such that $\hat{\shY}$ is the completion of $\shY\to S$ along $Y$. By construction, $\Pic(\shY/S)\cong\Pic(Y)$ via restriction.

We show that $\shY\to S$ is a smoothing of $Y$. The total space of the deformation  $\mathfrak{X}'\to V'$  is smooth, as follows from \cite[Theorem 5.10]{Frie83}.
In particular, its local rings in closed points are regular, and thus by \cite[Theorem 23.7]{Mats} the local rings $\O_{\hat{\shY},y}$ for $y\in Y$ of the formal smoothing are regular. By the same theorem, this implies that the stalks of $\O_{{\shY},y}$ 
at closed points of the central fibre of $\shY\to S$ are regular local rings. This implies that $\shY$ is regular by \cite[Remark 6.25]{GW}.
 In particular, the generic fibre is a smooth K$3$ surface. Also, by adjunction, $\shY$ has trivial canonical bundle.  So $\shY\to S$ is indeed a semistable model. 
 
Also, for the 
degeneration $\shY \to S$, it follows from \cite[Lemma 4.2]{Kaw97}, using the fact that $S$ is a $DVR$, that we have an exact sequence  
\begin{equation}\label{equ:basicsequ}
0 \to \ZZ^\shY\to \Pic(\shY/S)\to \Pic(\shY_\eta)\to 0
\end{equation}
with $\ZZ^\shY$ the abelian group generated by the components $Y_i$ of the central fibre modulo the relation $\sum_i Y_i=0$.
The statement  about the Picard group of the generic fibre then follows from \cite[Prop. 4.3]{Laz08} and \cite[Corollary 4.6]{Laz08} together with Sequence \ref{equ:basicsequ}. 
 So $\shY\to S$  is a model as claimed.

Now, suppose $\shY'\to S$ is a second  model . By formal semiuniversality,  
$\shY'\to S$  is pulled back from $\shY\to S$ via a homomorphism $\CC[[t]] \to \CC[[t]]$. Because $\shY'\to S$  is regular, the uniformizing parameter $t$ maps to $at$ with $a$ a unit. Hence $\shY'\to S$ is canonically isomorphic to $\shY\to S$. This proves the result. 
\end{proof}

\begin{definition} 
We shall call a $d$-semistable K$3$ surface $Y$ with $c_{Y}=1$  {\em maximal} and  a degeneration $\shY \to S$ of a maximal K$3$ surface  with $r_c\colon\Pic(\shY/S) \cong \Pic(Y)$  
a {\em maximal degeneration}.
\end{definition}

The next proposition says that maximal degenerations behave well under flops.

\begin{proposition} \label{modelflop}
Let  $\shY\to S$ be  a maximal degeneration with central fibre $Y=\shY_c$ and 
let  $\shY^+\to S$ be a flop of $\shY\to S$. 
Then  $\shY^+\to S$ is again a maximal degeneration and the dual graph  of the central fibre  $Y^+=\shY^+_c$ is a triangulation of $S^2$ with the same number of triangles as the dual graph of $Y$.   
\end{proposition}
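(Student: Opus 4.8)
The plan is to use that a flop is a \emph{small} birational modification---its exceptional locus has codimension at least $2$---and is crepant, in order to transport all the relevant data from $\shY$ to $\shY^+$, and then to identify $\shY^+\to S$ with the unique maximal model of its central fibre furnished by Proposition \ref{lem:unoquedeg}. First I would record the consequences of $\phi\colon\shY\dashrightarrow\shY^+$ being a flop. The flopping curves lie over the closed point $0\in S$, since $f$ is small and the generic fibre is a smooth $K3$ with no curves to contract; hence $\phi$ restricts to an isomorphism $\shY_\eta\cong\shY^+_\eta$, so the generic fibre of $\shY^+$ is again a $K3$ surface. As a flop is crepant we obtain $\omega_{\shY^+}=\O_{\shY^+}$, and by adjunction $\omega_{Y^+}=\O_{Y^+}$. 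As a flop is small it contracts and extracts no divisor, so strict transform gives a bijection between the prime divisors of $\shY$ and $\shY^+$ together with an isomorphism $\phi_*\colon\Pic(\shY)\xrightarrow{\sim}\Pic(\shY^+)$; in particular the irreducible components of $\shY_c$ and $\shY^+_c$ correspond bijectively, so the number of components satisfies $n^+=n$.

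Next I would verify that $\shY^+$ is again a Kulikov model and that $Y^+$ is a type III $d$-semistable $K3$ surface. Locally analytically the flop is one of the elementary modifications recalled above, obtained by blowing up a smooth rational curve and contracting the other ruling of the resulting $\PP^1\times\PP^1$; this keeps the total space regular, so $\shY^+\to S$ is a genuine smoothing of $Y^+$ and the infinitesimal normal bundle $N_{D^+}$ is trivial, i.e.\ $Y^+$ is $d$-semistable. Since the modification only moves a $(-1)$-curve within or between the components, each $Y^+_i$ remains rational and the double curves still form anticanonical cycles on the normalisations, so $Y^+$ satisfies Definition \ref{def:dsemistable}; in particular its dual complex is a triangulation of $S^2$ with $n$ vertices.

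It then remains to prove maximality. Since $\shY$ is maximal, $\Pic(\shY/S)\cong\Pic(Y)=L$ has rank $18+n$ by Lemma \ref{carlsonunique}, hence $\rank\Pic(\shY^+/S)=18+n$ as well by $\phi_*$. Restriction to the central fibre over the complete base $S$ is injective, so the image of $r_c^+\colon\Pic(\shY^+/S)\to\Pic(Y^+)\subseteq L^+$ has rank $18+n$, where $L^+$ is the analogue of the lattice in $(\ref{equ:L})$ for $Y^+$; as $\rank L^+=18+n^+=18+n$, the group $\Pic(Y^+)$ has the maximal possible rank $18+n$. By Lemma \ref{carlsonunique} the unique maximal-rank deformation of this combinatorial type has trivial Carlson homomorphism, so $c_{Y^+}=1$ and $\Pic(Y^+)=L^+$, i.e.\ $Y^+$ is maximal. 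Now Proposition \ref{lem:unoquedeg} applies and produces a unique maximal model of $Y^+$; identifying $\shY^+$ with it via the semiuniversality of the deformation in the proof of that proposition shows that $r_c^+$ is an isomorphism, so $\shY^+\to S$ is a maximal degeneration. Finally, for any triangulation of $S^2$ Euler's formula $V-E+F=2$ together with $2E=3F$ gives that the number of triangles (triple points) equals $2V-4$; since $V=n^+=n$, the dual graph of $Y^+$ has the same number $2n-4$ of triangles as that of $Y$.

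The step I expect to be the main obstacle is controlling the local structure of the flop well enough to conclude that $\shY^+$ is regular and that $Y^+$ is again a type III $d$-semistable normal crossing surface: for a general extremal flopping contraction this requires identifying the contracted curves and their normal bundles and recognising the flop as (a composition of) elementary modifications of type I and II, whose explicit blow-up/blow-down description is what guarantees smoothness and the preservation of the type III structure. Everything downstream---the component count $n^+=n$, the vanishing $c_{Y^+}=1$ via Lemma \ref{carlsonunique}, and the triangle count via Euler's formula---is then essentially formal.
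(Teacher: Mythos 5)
Your overall architecture --- transport $\Pic$ across the small modification, deduce maximality of $Y^+$ from rank considerations and Lemma \ref{carlsonunique}, and get the triangle count from the fact that the number of components is preserved --- matches the paper's proof in its downstream parts. But the step you yourself flag as the main obstacle is a genuine gap, and it is precisely where all the content of the proposition lies. You assert that ``locally analytically the flop is one of the elementary modifications,'' i.e.\ that every flopping curve is a smooth rational curve with normal bundle $\O(-1)\oplus\O(-1)$ whose flop is the blow-up/blow-down through $\PP^1\times\PP^1$. Nothing in the hypotheses gives you this: for an arbitrary flopping contraction of a Kulikov model the exceptional locus can a priori be a connected \emph{tree} of rational curves, possibly lying inside a component and disjoint from the double locus, or meeting it in ways other than transversally in one point (this is exactly the content of Lemma \ref{exlocus}, quoted from \cite{theta15}), and even an irreducible flopping curve on a $3$-fold can have normal bundle $\O\oplus\O(-2)$ or $\O(1)\oplus\O(-3)$, in which case the flop is not a single blow-up of the curve followed by contracting the other ruling. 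Ruling out such behaviour (as the paper later does for DNV models in Proposition \ref{irredcomp}) itself requires maximality and K3-specific lifting arguments, so your appeal to the explicit $\PP^1\times\PP^1$ picture assumes exactly what has to be proved.

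The paper closes this gap by a different mechanism, which is worth comparing with your plan: it restricts the divisor defining the contraction to the central fibre, extends it by maximality to the maximal \emph{analytic} smoothing $\shX\to\DD$, and invokes the theorem of Kulikov (see \cite[Corollary 3.7]{Corti95}) that the flop $\shX^+\to\DD$ of a semistable K3 degeneration is again semistable; this is the external input that replaces your unproved local-structure claim. The remaining, and not short, work in the paper is then to identify the algebraic flop with this analytic one: completion along the central fibre and Grothendieck's existence theorem produce morphisms $f\colon\shY\to\shZ$ and $f^+\colon\shY^+\to\shZ$, one checks $f_*\O_{\shY}\cong\O_{\shZ}$ and that $\shZ$ is normal, and uniqueness of contractions then gives $\shY^+_c\cong(\shX^+)_c$, whence the central fibre is a type III $d$-semistable surface with the same dual complex data. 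Your proposal contains no substitute for either of these two steps, so as written it does not establish that $\shY^+$ is regular with $d$-semistable type III central fibre, and everything you correctly describe as ``essentially formal'' afterwards has nothing to stand on.
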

\begin{proof} 
We shall prove this result using the analytic theory.
For this we first note that any  flop factors into flops given by contractions of extremal rays, see \cite[\S 6.4]{KoMo}.
Thus we can assume the flop is given by a small contraction $\contr_R$ with $R$ an extremal ray of $\shY\to S$.
  Let $\pi\colon\shY\to \bar{\shY}$ be the flopping contraction over $S$, $F$ be a divisor that is anti-ample on the fibres of $\pi$. The morphism $\pi$ is  
  given by a divisor $G$ on $\shY$ with restriction $G_c$ to $Y$. This  defines a divisor  $G'$ on the  maximal analytic smoothing $\shX\to\mathbb{D}$ of 
  $Y$ and a contraction  $\shX\to \bar{\shX}$ of the extremal ray $R$.
   Hence $\shX\to \bar{\shX}$ is a small contraction. Restricting the  divisor $F$ to the central fibre 
   and extending by maximality to $\shX$ we obtain the induced divisor $F'$ on $\shX$.  
  This is anti-ample on the fibres of $\shX\to 
  \bar{\shX}$. Hence there is a flop $\shX^+\to \bar{\shX}$. By a result of Kulikov, $\shX^+\to \mathbb{D}$ is semistable, see e.g. \cite[Corollary 3.7]
  {Corti95}.  
 Completion along the central fibre gives morphisms of the corresponding formal schemes, and by Grothendieck's existence theorem, we get  proper
 morphisms $f\colon \shY\to \shZ$ and $f^+\colon \shY^+\to \shZ$. We claim that the morphism $f\colon \shY\to \shZ$ is given by the contraction of $R$. Indeed, we have $f_*\O_{\shY}\cong \O_\shZ$: the restriction to the central fibres $f_c\colon \shY_c\to \shZ_c$ is a proper surjective birational morphism with connected fibres.  It is is straightforward to check that  $(f_c)_*\O_Y\cong\O_Z$, using that every regular section of $\O_\shY$ is constant on the fibres.
 By the global version of \cite[Lemma 1.2]{wahl76},  
we have $(f_n)_*\O_{\shY_n}\cong \shO_{\shZ_n}$ for the truncations of order $n$ 
and thus $\hat {f}_*\widehat{\O_{\shY}}\cong\widehat{\O_{\shZ}}$, by \cite[Theorem 8.2.2]{FGAex}. By the same theorem, $\hat {f}_*\widehat{\O_{\shY}}\cong\widehat{f_*\O_\shY}$, so  $\widehat{\O_{\shZ}}\cong\widehat{f_*\O_\shY}$ and from  \cite[Theorem 8.4.2]{FGAex}, it follows that $f_*\O_\shY\cong \O_\shZ$. 
 As $\shZ$ is a Nagata scheme, its normalisation is finite over $\shZ$. Because of the universal property of normalisation and connectedness of the fibres of $f$, it follows from finiteness that $\shZ$ is normal. By uniqueness of contractions, $f=\pi$ and, in particular $\bar{\shY}=\shZ$.

 Hence $f\colon \shY^+\to \shZ$ is the flop of $\pi$. It has central fibre $Y^+\cong (\shX^+)_c$, which is a $d$-semistable K3 surface of type III.    
 As  $Y^+$  is the central fibre of  a degeneration, it follows that the $\xi_i$ classes from above are in fact Cartier, so the dual graph is indeed a triangulation of the sphere $S^2$ by Kulikov's theorem  \cite[Theorem II]{Kul77}, 
 with the same number of triangles as the dual graph of $Y$, because the number of components is the same. It has trivial Carlson extension since it follows from $\text{rank }\Pic(\shY^+/S)=18+n$ that its Picard group has rank $18+n$, with $n$ the number of components of $Y^+$,  so by  Lemma \ref{carlsonunique} it follows that  $c_{Y^+}=1$. This proves the proposition.
\end{proof}

We shall now consider the special case arising from the mirror families of $2d$-polarized $K3$ surfaces.  We will also
construct explicit models. Here we first state the more general  
\begin{proposition}\label{modelexist}
Let $d>0$ and $\check{M}_{2d}=U\oplus 2E_8(-1)\oplus\langle -2d\rangle$.
Then there exists a primitive maximal Kulikov model $\shY \to S$ such that 
$\Pic(\shY_{\eta}) \cong \check{M}_{2d}$.
Any two such Kulikov models are related by a sequence of  flops. \end{proposition}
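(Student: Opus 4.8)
The plan is to establish the two claims of Proposition \ref{modelexist} separately: first the existence of a primitive maximal Kulikov model with the prescribed generic Picard lattice, and second the fact that any two such models differ by flops. For existence, I would exhibit a concrete type III $d$-semistable $K3$ surface $Y$ with $c_Y=1$ whose invariants produce the correct generic Picard lattice, and then invoke the machinery already assembled. Recall from Proposition \ref{lem:unoquedeg} that for a projective type III $d$-semistable surface with $c_Y=1$, index $k$ and $t$ triple points, there is a unique maximal Kulikov model with $\Pic(\shY_\eta)\cong 2E_8(-1)\oplus U\oplus\langle -t/k\rangle$. To match $\check M_{2d}=U\oplus 2E_8(-1)\oplus\langle-2d\rangle$ we therefore need a \emph{primitive} surface (so $k=1$) with exactly $t=2d$ triple points. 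The construction of such a surface should come from the promised correspondence (Construction \ref{constructioncomponents}) between triangulations of $S^2$ and $d$-semistable models in $(-1)$-form with maximal Picard rank; I would select a triangulation of the sphere giving $2d$ triangles, hence $2d$ triple points, arranged so that the band-of-hexagons refinement structure is trivial, forcing $k=1$. Projectivity of this $Y$ is guaranteed by Propositions \ref{optimalglue} and \ref{max:proj}, which are stated to prove exactly this for the relevant surfaces, and then Proposition \ref{lem:unoquedeg} delivers the model together with the identification $\Pic(\shY_\eta)\cong\check M_{2d}$.

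For the second assertion I would argue that any two primitive maximal Kulikov models $\shY\to S$ and $\shY'\to S$ with $\Pic(\shY_\eta)\cong\Pic(\shY'_\eta)\cong\check M_{2d}$ have central fibres $Y$ and $Y'$ that are both type III $d$-semistable $K3$ surfaces of index $1$ with $2d$ triple points. By Theorem \ref{teo:-1form} (Miranda--Morrison) each may be brought into $(-1)$-form by elementary modifications of type I and II, and since both are primitive with the same number of triple points, their dual graphs are triangulations of $S^2$ with the same number of triangles. The key point is that elementary modifications of type I and II are, after completing along the central fibre and invoking Grothendieck's existence theorem, exactly flops in the sense of Definition \ref{def:typeIIIdeg} relating the corresponding algebraic models; this is precisely the content of the argument already carried out in the proof of Proposition \ref{modelflop}, where an analytic flop on the maximal smoothing $\shX\to\mathbb D$ was promoted to an algebraic flop $\shY\dashrightarrow\shY^+$ over $S$. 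Thus a sequence of elementary modifications connecting the analytic smoothings lifts to a sequence of flops connecting the models, and Proposition \ref{modelflop} guarantees that maximality is preserved at each stage.

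The main obstacle, and the step requiring genuine care, is showing that any two index-$1$ triangulations of $S^2$ with the same number of triangles are connected by the relevant flops --- equivalently, that the analytic modifications of Miranda--Morrison suffice to pass between any two $(-1)$-forms realizing the prescribed lattice. The theorem of Miranda--Morrison produces \emph{some} $(-1)$-form from a given starting surface, but does not immediately assert that all $(-1)$-forms with fixed invariants are mutually connected. I would address this by combining the uniqueness-up-to-deformation statements with the observation that all such surfaces smooth to the same lattice-polarized family: since $\Pic(\shY_\eta)$ is fixed as $\check M_{2d}$ and (by Proposition \ref{lem:unoquedeg}) the maximal model is determined by the central fibre, the generic fibres are abstractly isomorphic $K3$ surfaces, and the two models are birational models of a common smoothing. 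One must then verify that passing between distinct $(-1)$-forms of that smoothing is accomplished by elementary modifications, which again lift to flops by the mechanism of Proposition \ref{modelflop}. The remaining bookkeeping --- that each flop preserves primitivity (index $1$) and the triple-point count, so that one never leaves the class of models under consideration --- follows directly from Proposition \ref{modelflop}, which records that the number of triangles, and hence triple points, is preserved, together with the index-invariance statement following Figure \ref{Fig:bands}.
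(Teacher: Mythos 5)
Your overall architecture matches the paper's: existence via Proposition \ref{lem:unoquedeg} applied to a suitable primitive surface with $2d$ triple points, and connectedness by promoting analytic elementary modifications on the maximal smoothings to algebraic flops over $S$, exactly as in the proof of Proposition \ref{modelflop}, then concluding with uniqueness of maximal smoothings. However, there is a genuine gap at precisely the point you flag as ``the main obstacle,'' and your proposed fix does not close it. The paper's proof rests on the Friedman--Scattone connectedness theorem \cite[Theorem 0.6]{FS}: any two (analytic) type III degenerations with the same number of triple points and the same index are related by a sequence of elementary modifications of type I and II. This is a global statement with real content (it comes out of the Friedman--Scattone analysis of periods and monodromy for type III degenerations), and it is cited twice in the paper's proof --- once for existence of the central fibre and once to produce the chain of modifications $\shX\dashrightarrow\shX'$ between the two maximal analytic smoothings. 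Miranda--Morrison (Theorem \ref{teo:-1form}), which is what you invoke, only normalizes a \emph{single} degeneration into $(-1)$-form; it says nothing about connecting two different degenerations, as you correctly observe.

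Your substitute argument is circular. You claim that since $\Pic(\shY_\eta)\cong\Pic(\shY'_\eta)\cong\check M_{2d}$, ``the generic fibres are abstractly isomorphic $K3$ surfaces, and the two models are birational models of a common smoothing.'' Neither assertion is available at this stage: two $K3$ surfaces over $\CC((t))$ with isometric Picard lattices need not be isomorphic, and the fact that the generic fibre of a primitive maximal model is independent of the model is recorded in the paper only \emph{after} Proposition \ref{modelexist}, as a consequence of the connectedness by flops (flops do not change the generic fibre). So the statement you want to use as input is exactly the output of the proposition. Without \cite[Theorem 0.6]{FS} (or an equivalent period/monodromy argument) there is no way to get the two a priori unrelated smoothings $\shX\to\DD$ and $\shX'\to\DD$ into a common birational orbit. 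A secondary, more minor issue: your existence argument via Construction \ref{constructioncomponents} requires, for every $d>0$, a triangulation of $S^2$ with $2d$ triangles, all valencies at most $6$, and no $k$-bands of hexagons; you do not establish this combinatorial fact, whereas the paper again sidesteps it by citing \cite[Theorem 0.6]{FS} for existence of the surface directly.
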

\begin{proof}
The existence follows from Proposition \ref{lem:unoquedeg},
all we require is the existence of a primitive type III $d$-semistable K3 surface with $t=2d$ triple points and primitivity $k=1$, which exists by \cite[Theorem 0.6]{FS}.
We now show that two such models are related by flops. For this  let $\shY \to S$ and $\shY'\to S$ be two distinct deformations with the properties stated and central fibres $Y$ and  $Y'$ respectively.
 It follows from Sequence (\ref{equ:basicsequ}) and e.g.  \cite[Proposition 4.3]{Laz08} that both central fibres have 
 exactly  $2d$ triple points. Also, both degenerations are primitive. 
As above, we have the maximal analytic family  $\shX\to \mathbb{D}$ over a small disc with central fibre $Y$, $\Pic(\shX)\cong\Pic(Y)=L$ and smooth fibres  K$3$ surfaces with Picard rank $19$. 
Similarly we have $\shX'\to\mathbb{D}$ with central fibre $Y'$.

By \cite[Theorem 0.6]{FS}  
there is a sequence of  type I and type II modifications $\shX\dashrightarrow\shX'$. As both models are projective, this sequence factors into a sequence of projective flops given by contractions of extremal rays,  by \cite[Remark 6.37]{KoMo}.  Let $F$ be an effective divisor defining the first flop, say $ \shX \dashrightarrow \shX^+$ in this sequence.  Restriction to the central fibre and then lifting to $\shY\to S$ via maximality yields an effective divisor $F'$ inducing a contraction of an extremal ray.  As the exceptional locus on $Y$ does not deform to $\shY$  (because it does not deform on $\shX$, so it does not fullfill the numerical conditions for lifting to the family), we obtain a flop $\shY^+$ of $\shY$  with $\shY^+_c=\shX^+_c$. 
Proceeding in this way, one  obtains a sequence of flops 
 \[\shY\dashto\dots\dashto \shY^{''}\] such that the central fibre of $\shY^{''}$ is $Y'$. By uniqueness of maximal smoothings, see Proposition \ref{lem:unoquedeg} , $\shY'\cong\shY^{''}$ over $S$. 
\end{proof}


\begin{remark} Proposition \ref{modelflop} shows that the generic fibre $\shY_{\eta}$ of a maximal primitive degeneration $\shY \to S$ with  $\Pic(\shY_{\eta})=\check{M}_{2d}$ is 
independent of $\shY \to S$.  
\end{remark}

This allows us to make the following definition, where we follow the established terminology in the existing literature:
 \begin{definition}\label{def:DNV}  Fix $2d>0$ with $d$ square free. The (unique)
 $K3$ surface $\shY_{\eta}\to \Spec(\CC((t)))$ with $\Pic(\shY_{\eta})=\check{M}_{2d}$ which is the generic fibre of some
maximal primitive smoothing  $\shY \to S$ 
 is called the \emph{Dolgachev-Nikulin-Voisin family of degree $2d$}.
 \end{definition}

 \begin{remark}
 Here, we require $d$ to be square free to obtain the Dolgachev-Nikulin-Voisin family as in \cite{theta15}. For $d$ not square free, there are several distinct local models. 
 \end{remark}

\begin{definition}\label{def:DNVmodel}
We fix $2d>0$ with $d$ square free. Then any  primitive projective type III degeneration $\shY\to S$  that is maximal  and whose generic fibre has Picard group  $
\check{M}_{2d}$ is called a {\em model of the Dolgachev-Nikulin-Voisin family} of degree $2d$. 
\end{definition}

We will abbreviate Dolgachev-Nikulin-Voisin family by DNV family.
We note the following result. 

\begin{proposition}\label{-1optimal}Let $Y$ be a maximal projective $d$-semistable $K3$ surface.   Then there exists a sequence of elementary modifications of type I and II
\[ Y\dashrightarrow Y_1 \dashrightarrow \dots\dashrightarrow Y_i \dashrightarrow \dots \dashrightarrow Y_n\] such that all $Y_i$ are maximal $d$-semistable $K3$ surfaces and $Y_n$ is in $(-1)$-form.
\end{proposition}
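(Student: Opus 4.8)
The plan is to run the Miranda--Morrison reduction to $(-1)$-form (Theorem \ref{teo:-1form}) while lifting each elementary modification to a flop of a maximal model, and then to invoke Proposition \ref{modelflop} to conclude that maximality is preserved at every stage. The mechanism for lifting an analytic elementary modification to an algebraic flop of the maximal model is exactly the one already employed in the proof of Proposition \ref{modelexist}.

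First I would set up the base case. Since $Y$ is maximal and projective, Proposition \ref{lem:unoquedeg} provides an up to isomorphism unique maximal Kulikov model $\shY \to S$ with $r_c\colon \Pic(\shY/S) \cong \Pic(Y)$, and, as above, $Y$ carries a maximal analytic smoothing. Applying Theorem \ref{teo:-1form} to $Y$ yields a sequence of elementary modifications of type I and II
\[ Y = Y_0 \dashrightarrow Y_1 \dashrightarrow \cdots \dashrightarrow Y_n \]
with $Y_n$ in $(-1)$-form; the goal is to realize this very sequence on the level of central fibres of maximal models.

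The heart of the argument is an induction on $i$. Assume $Y_{i-1}$ is maximal and projective with unique maximal model $\shY_{i-1} \to S$ and maximal analytic smoothing $\shX_{i-1}$; the case $i=1$ is the base case above. The modification $Y_{i-1} \dashrightarrow Y_i$ is induced by an analytic elementary modification $\shX_{i-1} \dashrightarrow \shX_i$, which is a flop given by the contraction of an extremal ray. Following the proof of Proposition \ref{modelexist}, I would take an effective divisor $F$ defining this analytic flop, restrict it to $Y_{i-1}$, and re-extend it by maximality to a divisor $F'$ on $\shY_{i-1}$; since the flopped exceptional locus does not deform to $\shY_{i-1}$, the divisor $F'$ induces the contraction of an extremal ray of $\shY_{i-1}/S$, and the resulting algebraic flop $\shY_{i-1} \dashrightarrow \shY_i$ has central fibre $(\shY_i)_c = (\shX_i)_c = Y_i$. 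By Proposition \ref{modelflop}, $\shY_i \to S$ is again a maximal degeneration; in particular $Y_i = (\shY_i)_c$ is a maximal $d$-semistable K3 surface of type III, and it is projective as the central fibre of a Kulikov model, so by Proposition \ref{lem:unoquedeg} its unique maximal model is $\shY_i$. This closes the induction, and iterating through all $n$ steps yields the desired sequence of maximal $d$-semistable K3 surfaces with $Y_n$ in $(-1)$-form.

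The hard part will be the bookkeeping in the inductive step: confirming that the algebraic flop produced from $F'$ induces on the central fibre exactly the prescribed type I or II elementary modification $Y_{i-1} \dashrightarrow Y_i$, so that the central fibres faithfully track the Miranda--Morrison sequence rather than drifting to some other birational model. This is precisely where maximality is used essentially, via the divisor-restriction-and-re-extension argument borrowed from Proposition \ref{modelexist}, which forces the flop on the total space and the elementary modification on the central fibre to agree.
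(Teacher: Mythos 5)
Your proposal has a genuine gap in the inductive step. You want to lift each elementary modification $Y_{i-1}\dashrightarrow Y_i$ to a flop of the algebraic maximal model $\shY_{i-1}\to S$, and you conclude that $Y_i$ is then projective ``as the central fibre of a Kulikov model''. But nothing guarantees that the intermediate surfaces of the Miranda--Morrison sequence are projective, and in fact they need not be: type~I modifications can destroy projectivity (in degree $2$ this is exactly what Proposition \ref{nondegproj2}(i) quantifies, and it is why Definition \ref{def:dsemistable} deliberately does not require projectivity). If the sequence supplied by Theorem \ref{teo:-1form} passes through a non-projective $Y_i$ --- and you have no control over which sequence the theorem produces --- then the algebraic flop you want cannot exist, since a flop of a model that is projective over $S$ is again projective over $S$ and would force $Y_i$ to be projective. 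Concretely, the device you borrow from Proposition \ref{modelexist} is not available here: in that proof the ``effective divisor $F$ defining the flop'' comes from factoring a birational map between two \emph{projective} analytic models into projective flops via \cite[Remark 6.37]{KoMo}, which requires projectivity of both endpoints. In your step only $\shX_{i-1}$ is known to be projective; the elementary modification $\shX_{i-1}\dashrightarrow\shX_i$ is an analytic blow-up/blow-down construction that is not a priori induced by any divisor, so the restriction-and-re-extension mechanism never gets started. Your induction is thus attempting to prove a stronger statement (projectivity of all intermediates) that is false for a general Miranda--Morrison sequence, and the proposition itself carefully claims only maximality of the $Y_i$.

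The paper's proof sidesteps all of this by staying in the analytic category: apply Theorem \ref{teo:-1form} to the maximal analytic smoothing $\shX\to\DD$ of $Y$, observe that elementary modifications of the total space are isomorphisms in codimension $1$ and hence preserve the rank of the Picard group, so each central fibre has Picard rank $18+n$ and is therefore maximal by Lemma \ref{carlsonunique}, and then restrict the whole sequence to the central fibres. No algebraic models, no flopping contractions, and no projectivity of intermediates are needed. If you want to salvage your approach, you would have to either prove that an MM sequence can always be chosen through projective surfaces (not known, and not needed) or abandon the algebraic lifting and argue on the analytic smoothing as the paper does.
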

\begin{proof}
Let $\shX\to\mathbb{D}$ be the maximal analytic smoothing of $Y$. By the $(-1)$-Theorem, see Theorem \ref{teo:-1form},
there is a sequence of type I and type II modifications 
\begin{equation}\label{seq:analytic} 
\shX\dashrightarrow \shX_1 \dashrightarrow \dots\dashrightarrow \shX_i \dashrightarrow \dots \dashrightarrow \shX_n
\end{equation} 
such that the central fibre of $\shX_n$ is in $(-1)$-form.
The Picard groups of the  $\shX_i$ all have the same rank and hence the central fibres are maximal.  
Thus restricting the sequence (\ref{seq:analytic}) to the central fibre gives the result.
\end{proof}

\subsection{Construction of Models}
 There is a bijection between  triangulations  $\mathscr{G}$ of the sphere such that no vertex has valency greater than $6$ and locally trivial deformation classes  $[Y]_\mathscr{G}$ of  d-semistable K3 surfaces of type III in $(-1)$-form, see 
 \cite[\S 5.1]{Laz08}. By \cite[\S 3.9]{FS}, 
there is a unique element  $Y_\mathscr{G} \in  [Y]_\mathscr{G}$ with trivial Carlson extension. Here we describe the possible components of this surface.  
Each vertex $v$ of  $\mathscr{G}$ corresponds to a component with normalisation a (weak) del Pezzo surface of degree the valency of $v$. The edges of $\mathscr{G}$ then determine which components are glued.
We will describe the gluing later. We emphasize  that this construction is at least implicit in \cite{theta15}.

We first make the following definition.
 \begin{definition}
Let $(Y,D)$ be an anticanonical pair. Let $D=\sum D_i$ and $p$ be a smooth point of $D$ lying on the component $D_i$.  If $n=1$, the \emph{ $n$-fold blow-up  } of $Y$ in $p$ is the usual blow-up, if $n>1$,  the n-fold blow-up of $Y$ in $p$ is the blow-up of the $n-1$-fold blow-up $\pi\colon Y'\to Y$ in the point $\text{ex}(\pi)\cap D_i'$, where $D_i'$ is the strict transform of $D_i$ on $Y'$. More generally, if $(p_1,\dots ,p_k)$ is an ordered  set of points $p_j\in Y$,
which lie on the smooth locus of $D$ such that each $D_i$ contains at most one point $p_j$,
we define by the obvious generalisation the \emph{$(n_1,\dots, n_k)$-blow-up of $Y$ in $(p_1,\dots, p_k)$}.
\end{definition}

 \begin{construction}\label{constructioncomponents}
 We now describe the  possible anticanonical pairs $(Y,D)$ that appear as (normalisations) of components of  $Y_\mathscr{G}$.
  By \cite[Proposition 5.2]{Laz08}, we know the description in general terms. The  (normalisations of the) components are (weak) del Pezzo surfaces of degree $d=1,\dots,6$ and the double locus of the 
 central fibre gives rise to an anticanonical cycle on the normalisations of the components. The pairs $(Y,D)$ we are looking for are rigid pairs. To obtain rigidity we first demand that $D$ be a cycle of $(-1)$-curves 
 if $d>1$. In degree $d=5,6$ this leads to a unique pair $(Y,D)$ where $Y$ is a del Pezzo surface. For $d=1, \ldots ,4$ we obtain weak del Pezzo surfaces and we ensure rigidity by asking that $Y$ contains a
 maximal number of $(-2)$-curves.  This leads to unique pairs $(Y,D)$, see  \cite[Lemma 5.14]{Laz08}. Note that
the configurations of $(-2)$-curves defines a root lattices. 
Contracting the $(-2)$-curves one obtains singular del Pezzo surfaces with a unique maximal ADE-singularity.
We further specify  \emph{special} points on each component of the anticanonical cycle $D$.   
We will call a special point \emph{interior} if it is not a node of $D$. The special points will be used to define the gluing of  $Y_\mathscr{G}$.  We will now describe the pairs $(Y,D)$ and the special points explicitly.

\begin{itemize}
\item[d=1:] Let $Q=\PP^1\times\PP^1$ with toric boundary $\tilde{D}=\tilde{D}_1+\tilde{D}_2+\tilde{D}_3+\tilde{D}_4$, ordered cyclically. Let $p_i\in \tilde{D}_i$, $i=1\dots4$, be points in the smooth part of $\tilde{D}$ such that $p_i, p_{i+2}$ are in the same fibre of one of the two  rulings, see the right hand side of Figure \ref{model2}.
Let  $\tilde{Q}$ be  the $(1,5,1,3)$-blow-up of $Q$ in $(p_1,p_2,p_3,p_4)$.
 The strict transforms of  $\tilde{D}_1,\tilde{D}_3$ have self-intersection $-1$ on $\tilde{Q}$. Blowing 
 down these yields a surface such that the strict transform of $\tilde{D}_4$ has  self intersection $(-1)$.
Then, blowing this down gives a surface $\mathfrak{Y}_1$ with an anticanonical cycle ${D}$ of 
self intersection $1$ and an $E_8$ root system of effective $(-2)$-curves. This is a 
weak del Pezzo of degree 1, see Figure  \ref{d1bild}. 
There is a unique $(-1)$-curve $E$ meeting ${D}$. The special points are the node of ${D}$ and the point  $E\cap{D}$ (which is an interior special point).

\begin{figure}[]\centering
\begin{tikzpicture}[scale=1]
\draw[] (0,0)--(3,0)--(3,3)--(0,3)-- cycle; 

\draw[->] (4, 1.5)--(6,1.5); 

\draw[fill=black] (1.5,0) circle (0.03cm);
\draw[fill=black] (1.5,3) circle (0.03cm);
\draw[fill=black] (0,1.5) circle (0.03cm);
\draw[fill=black] (3,1.5) circle (0.03cm);

\draw[] (1.5,3) .. controls (1.3,2.5) .. (1.6,2) ; 
\draw[] (1.5,0) .. controls (1.3,0.5) .. (1.6,1) ; 
\draw[] (1.5,0.7) -- (1.5,2.3); 

\begin{scope}[rotate=90,shift={(0,-3)}]
\draw[] (1.5,3) .. controls (1.3,2.5) .. (1.6,2) ; 
\draw[] (1.5,0) .. controls (1.3,0.5) .. (1.6,1) ; 
\draw[] (1.5,0.7) -- (1.5,2.3); 
\end{scope}

\begin{scope}[shift={(2,0)}]
\draw[] (5,0)--(8,0)--(8,3)--(5,3)-- cycle; 
\draw[] (6.5,0) -- (6.5,3); 
\draw[] (5,1.5) -- (8,1.5);
\draw[fill=black] (6.5,0) circle (0.03cm);
\draw[fill=black] (6.5,3) circle (0.03cm);
\draw[fill=black] (5,1.5) circle (0.03cm);
\draw[fill=black] (8,1.5) circle (0.03cm);
\node[above] (p1) at (6.5,3) {$\small{p_1}$};
\node[right] (p2) at (8,1.5) {$\small{p_2}$};
\node[below] (p3) at (6.5,0) {$\small{p_3}$};
\node[left] (p4) at (5,1.5) {$\small{p_4}$};
\end{scope}

\end{tikzpicture}
\caption{The $(1,1,1,1)$- blow-up of $\PP^1\times\PP^1$ in $(p_1,p_2,p_3,p_4)$.}

\label{model2}
\end{figure}
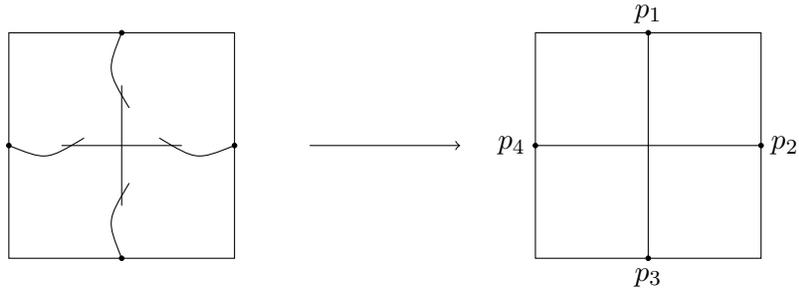

\item[d=2:]For degree $2$, we take  $\PP^2$ together with its toric boundary $(xyz=0)$. We can fix three 
collinear points, one on each boundary divisor, say $p,q,r$.  Let $Y'$ be the $(3,3,2)$-blow-up of $\PP^2$ in $(p,q,r)$. 
 This yields a weak del Pezzo surface of degree $1$, as we have blown up $8$ points that are not on $(-2)$ curves.
 Now, blow down the strict transform of the toric divisor that is a $(-1)$-curve.  The resulting surface $\mathfrak{Y}_2$ is  a weak del Pezzo surface of degree $2$ with anticanonical cycle $D=D_1+D_2$.  It carries an $E_6$ configuration of effective $(-2)$-curves by construction. 
 There are also 2 exceptional curves $E_1,E_2$ of the first kind each meeting a long end of the root system and a component of the anticanonical divisor, see Figure \ref{curveconfig}. 
  The special points of $D_i$ are the points $D_i\cap E_i$ and the two points of the intersection $D_1\cap D_{2}$. 
 
 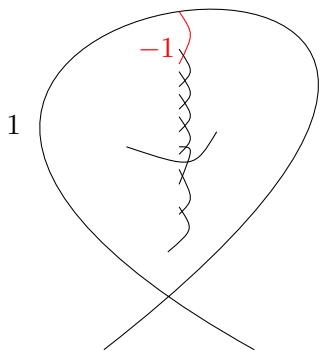
\begin{figure}\centering

\begin{tikzpicture}[scale=1]
\draw[] 
  (-1,0) 
    .. controls (8,7) and (-8,5) .. 
  (1,0);
  
  \draw[red] 
  (0,4.5) 
    .. controls (0.2,4.2) .. 
  (0,3.8) ;
   \draw[] 
  (0,4) 
    .. controls (0.2,3.7) .. 
  (0,3.5) ;
\draw[] 
  (0,3.7) 
    .. controls (0.2,3.4) .. 
  (0,3.2) ;
  \draw[] 
  (0,3.4) 
    .. controls (0.2,3.1) .. 
  (0,2.9) ;
  \draw[] 
  (0,3.1) 
    .. controls (0.2,2.8) .. 
  (0,2.6) ;
\draw[] 
  (0,2.7) 
    .. controls (0.2,2.7) .. 
  (0,2.2) ;

\draw[] 
  (-0.7,2.7) 
    .. controls (0.2,2.4) .. 
  (0.5,2.9) ;

 \draw[] 
  (0,2.4) 
    .. controls (0.2,2) .. 
  (0,1.8) ;
 \draw[] 
  (0,1.9) 
    .. controls (0.2,1.6) .. 
  (-0.15,1.3) ;
\node[red](2) at (-0.3, 4){$-1$};
\node[](3) at (-2.2, 3){$1$};
  \end{tikzpicture}
\caption{The surface $\mathfrak{Y}_1$.  }
\label{d1bild}
\end{figure}

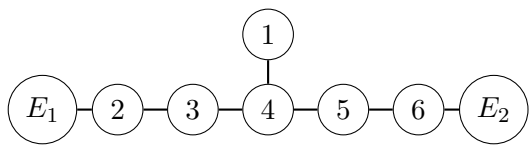
\begin{figure}\centering
\begin{tikzpicture}
\tikzstyle{every node}=[draw,shape=circle];
\node (1) at (0,0){$E_1$}; 
\node (2) at (1,0){$2$};

\node (3) at (2,0){$3$}; 
\node (4) at (3,0){$4$};
\node (5) at (3,1){$1$}; 
\node (6) at (4,0){$5$}; 

\node (7) at (5,0){$6$}; 
\node (8) at (6,0){$E_2$}; 
\draw[thick] (1)--(2)
(2)--(3)
           (3)--(4)
           (4)--(5)
           (4)--(6)
           (6)--(7)
           (7)--(8);
\end{tikzpicture}
\caption{The $E_6$ root system and the exceptional curves $E_1,E_2$. }
\label{curveconfig}
\end{figure}

 \item[d=3:] For degree $3$, we take again   $\PP^2$ together with its toric boundary $(xyz=0)$. As above, we fix three 
collinear points, one on each boundary divisor, say $p,q,r$.  Let $Y$ be the $(2,2,2)$-blow-up of $\PP^2$ in $(p,q,r)$, a weak del Pezzo surface of degree $3$.  It carries a $D_4$ root system of effective $(-2)$-curves, given by the strict transform of the line through the points $p,q,r$ together with the strict transforms of the first blow-ups in each point. 
The anticanonical cycle $D=D_1+D_2+D_3$ is given by the strict transform of the toric boundary. There are also three irreducible $(-1)$-curves  $ E_i$ in the exceptional locus of the blow-up, each meeting a unique component $D_i$. The special points are again the nodes of the anticanonical cycle and the points $E_i\cap D_i$, see Figure \ref{curveconfig2}.
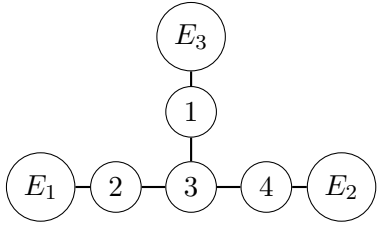
\begin{figure}\centering
\begin{tikzpicture}
\tikzstyle{every node}=[draw,shape=circle];
\node (2) at (1,0){$E_1$};

\node (3) at (2,0){$2$}; 
\node (4) at (3,0){$3$};
\node (5) at (3,1){$1$}; 
\node (6) at (4,0){$4$}; 

\node (7) at (5,0){$E_2$}; 
\node(9) at (3,2){$E_3$};
\draw[thick] 
(2)--(3)
           (3)--(4)
           (4)--(5)
           (4)--(6)
           (6)--(7)
           (5)--(9);
\end{tikzpicture}
\caption{The $D_4$ root system and the exceptional curves $E_1,E_2$, $E_3$. }
\label{curveconfig2}
\end{figure}

 \item[d=4:] Again, let $Q=\PP^1\times\PP^1$ with toric boundary $\tilde{D}=\tilde{D}_1+\tilde{D}_2+\tilde{D}_3+\tilde{D}_4$. Let $p_i$, $i=1\dots4$, be points on the intersection of the fibres of the two ruling with the toric boundary components. Blow up $Q$  once in each $p_i$, the resulting surface $\mathfrak{Y}_4$ is a weak del Pezzo of degree $4$, with an $A_2$  root system of effective $(-2)$-curves and an anticanonical cycle 
${D}={D}_1+{D}_2+{D}_3+{D}_4$ where the ${D}_i$ are the strict transforms of the $\tilde{D}_i$. There are $4$ $(-1)$-curves $E_i$, $i=1\dots 4$ on $\mathfrak{Y}_4$ that are not components of ${D}$, each meeting
exactly one of the $D_i$ transversally.  The special points of ${D}$ are the points ${D}_i\cap E_i$ and ${D}_i\cap {D}_{i+1}$, where the indices are considered cyclically, see Figure \ref{model2}.
\item[d=5:] Pick $4$ points $\{p,q,r,s\}$ in $\PP^2$, no three on a line. Then $\text{Bl}_{\{p,q,r,s\}}(\PP^2)$ is the del Pezzo surface of degree $5$. The anticanonical divisor $D=D_1+D_2+D_3+D_4+D_5$ is a cycle of 
$(-1)$-curves forming a pentagon, which we obtain as follows: we choose the strict transforms of the lines spanned by $(p,q)$, $(p,r)$ and $(q,s)$, together with the exceptional lines which arise form blowing up $p $ and $q$.
Note that permutations of the points $p,q,r,s$ gives rise to isomorphic pairs $(Y,D)$.
There are $5$ more $(- 1)$-curves $E_i$, $i=1\dots 5$. Each $E_i$ meets exactly one of the $D_i$ transversally. 
The special points of $D_i$ are the points $D_i\cap E_i$ and $D_i\cap D_{i+1}$.
\item[d=6:] Pick the three coordinate points $\{p,q,r\}$ in $\PP^2$ which are torus orbits. Then $\mathfrak{Y}_6=\text{Bl}_{\{p,q,r\}}(\PP^2)$ is the del Pezzo surface of degree $6$. Let $D=D_1+D_2+D_3+D_4+D_5+D_6$ be the anticanonical divisor 
which consists of the strict transforms of the coordinate lines and the exceptional lines.
This is a cycle of $(-1)$-curves forming a hexagon. 
The toric structure of $\mathfrak{Y}_6$ indentifies a copy of $\GG_m\subset D_i$ for each $i$.  The special points of $D_i$ are the points $-1\in \GG_m$, and $D_i\cap D_{i+1}$.
 \end{itemize} 

\begin{definition}\label{def:wdPd}
We will denote the (weak) del Pezzo surfaces of degree $d$ constructed here by $\mathfrak{Y}_d$. 
\end{definition}

Now, the triangulation $\mathscr{G}$ defines a locally trivial deformation class $[Y]_\mathscr{G}$ of $d$-semistable K3 surfaces and we can take the member $Y_\mathscr{G}$ of $[Y]_\mathscr{G} $ such that the (normalisation) of each component is  isomorphic  (as an anticanonical pair)  to a surface from the above list, by \cite[Lemma 5.14]{Laz08}, where we recall that the valency of a vertex is equal to the degree of the corresponding (weak) del Pezzo surface.
The gluing is such that the special points are identified pairwise where, in particular, interior special points are identified with interior special points.
\end{construction}

 We learned the next result from  \cite{theta15}:
 
 \begin{proposition}\label{optimalglue} 
 Let $\mathscr{G}$ be a triangulation with valency at most $6$ and no $k$-bands of hexagons. 
Then the surfaces $Y_\mathscr{G}$ 
are maximal, i.e. 
have trivial Carlson map $c_{Y_\mathscr{G}}=1$.
\end{proposition}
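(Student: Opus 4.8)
The plan is to compute the Carlson homomorphism $c_{Y_\mathscr{G}}\colon L\to\CC^*$ directly from the explicit gluing of Construction \ref{constructioncomponents} and to show that it is identically trivial. Recall from \eqref{equ:Carlson} that $\Pic(Y_\mathscr{G})=\ker(c_{Y_\mathscr{G}})$, and that for a tuple $(\shL_i)\in L$ — that is, a collection $\shL_i\in\Pic(Y_i^\nu)$ with $\deg\shL_i|_{D_{ij}}=\deg\shL_j|_{D_{ij}}$ on every double curve — the value $c_{Y_\mathscr{G}}((\shL_i))$ is exactly the obstruction to gluing the $\shL_i$ across the double curves into a single line bundle on $Y_\mathscr{G}$; compare \cite[\S3]{FS} and \cite{Carl}. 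Since each $D_{ij}\cong\PP^1$ and line bundles on $\PP^1$ are rigid, the only ambiguity in gluing $\shL_i|_{D_{ij}}$ to $\shL_j|_{D_{ij}}$ is a scalar in $\CC^*$, and the obstruction is the failure of these scalars to be chosen consistently around the triple points. I would take this as the working description of $c_{Y_\mathscr{G}}$ and reduce the statement to a combinatorial vanishing.

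First I would set up the \v Cech-type computation on the dual complex $\mathscr{G}$, whose vertices are the components, whose edges are the double curves, and whose $2$-cells are the triple points. Choosing a gluing scalar $\theta_{ij}\in\CC^*$ for each oriented edge $ij$ (with $\theta_{ji}=\theta_{ij}^{-1}$) is a $1$-cochain, and compatibility of the chosen isomorphisms at a triple point $p_{ijk}$ forces $\theta_{ij}\theta_{jk}\theta_{ki}=\tau_{ijk}$, where $\tau_{ijk}\in\CC^*$ is the local discrepancy determined by $(\shL_i)$ and by the gluing maps in a neighbourhood of $p_{ijk}$. The tuple $(\tau_{ijk})$ is a $2$-cochain, the $\shL_i$ glue precisely when it is a coboundary, and hence $c_{Y_\mathscr{G}}((\shL_i))$ is the image of its class in $H^2(\mathscr{G},\CC^*)=H^2(S^2,\CC^*)\cong\CC^*$, namely the oriented product $\prod_{T}\tau_T$ over the triangles $T$ of $\mathscr{G}$. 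In this way the global statement is reduced to a purely local analysis at each triple point.

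The key step is then the local computation, and here the matching of special points enters decisively. By Construction \ref{constructioncomponents} every double curve $D_{ij}\cong\PP^1$ carries three distinguished points: its two nodes (the triple points at its ends) and the interior special point $D_{ij}\cap E$. The gluing of $Y_\mathscr{G}$ is defined so as to match all three, so each gluing isomorphism $D_{ij}^{(i)}\xrightarrow{\sim}D_{ij}^{(j)}$ is the \emph{unique} automorphism of $\PP^1$ carrying the ordered triple of distinguished points to the corresponding ordered triple. Deleting the two nodes identifies $D_{ij}\setminus\{\text{nodes}\}\cong\Gm$ with the interior special point as a fixed reference, and I would use this $\Gm$-structure to trivialise each $\shL_i$ near the triple points compatibly with the special points; one then verifies that for such trivialisations every local discrepancy $\tau_{ijk}$ equals $1$. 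Concretely, matching the special points makes each gluing the monomial (toric) gluing on $\Gm$, so the three scalars around any triangle multiply to $1$; thus $\prod_T\tau_T=1$ and $c_{Y_\mathscr{G}}\equiv1$. The hypothesis that $\mathscr{G}$ has valency at most $6$ and no $k$-bands of hexagons is precisely what guarantees that the components are the rigid anticanonical pairs $\mathfrak{Y}_d$ of Construction \ref{constructioncomponents}, so that these distinguished points and the $\Gm$-structure are unambiguously defined.

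The main obstacle, I expect, is the local bookkeeping at the triple points: one must pin down trivialisations of the three bundles $\shL_i,\shL_j,\shL_k$ near $p_{ijk}$ and check that the triple product of gluing scalars really is trivial for the special-point gluing, keeping careful track of the orientations in the product over the triangulated $S^2$ and of the fact that the $Y_i^\nu$ are only weak del Pezzo surfaces rather than toric (so that only the double curves, and not the ambient surfaces, carry the relevant toric structure). Once $c_{Y_\mathscr{G}}=1$ is established, Lemma \ref{carlsonunique} ties the explicit surface back to the trivial-Carlson member singled out at the start of the subsection: it gives $\Pic(Y_\mathscr{G})=L$ and $\rank\Pic(Y_\mathscr{G})=18+n$, and it shows that $Y_\mathscr{G}$ is the unique such maximal member of its locally trivial deformation class, which I would record as a confirmation of the computation.
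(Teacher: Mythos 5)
Your reduction of the statement to a \v Cech computation on the dual complex is sound: since $\ker(c_{Y_\mathscr{G}})=\Pic(Y_\mathscr{G})$, it suffices to show that every collection $(\shL_i)\in L$ glues, and the obstruction to gluing is indeed a class in $H^2(S^2,\CC^*)\cong\CC^*$ represented by the triple-point discrepancies $\tau_{ijk}$. The gap is in the one sentence where all the content should be: ``one then verifies that for such trivialisations every local discrepancy $\tau_{ijk}$ equals $1$.'' This is not a verification that can be done locally at the triple points, and the mechanism you offer for it (matching of special points makes each gluing ``monomial'' on $\Gm$, so the scalars around a triangle cancel) is not correct. The discrepancy at a triple point compares two trivialisations of the \emph{same} bundle $\shL_i$ coming from the two branches of the boundary of $Y_i^\nu$ through that point; this comparison lives on the surface $Y_i^\nu$, not on any single double curve, and is not determined by the double curves and their marked points. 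What the choices you describe actually compute is the following: if $p_{ij}\in D_{ij}$ denotes the interior special point and $d_{ij}=\deg\shL_i|_{D_{ij}}$, then the oriented product of the discrepancies around the anticanonical cycle $D^{(i)}=\sum_j D_{ij}$ of one component is the class of $\shL_i|_{D^{(i)}}\otimes\O_{D^{(i)}}\bigl(-\sum_j d_{ij}p_{ij}\bigr)$ in $\Pic^0(D^{(i)})\cong\CC^*$, i.e.\ the holonomy of $\shL_i$ around the cycle, and the obstruction class is the oriented product of these holonomies over all components. For a general line bundle on a general anticanonical pair with marked points this holonomy is a general element of $\CC^*$, so no purely local cancellation can occur.

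A concrete way to see that your mechanism cannot suffice: it uses only that three points per double curve are matched, never what is special about the construction's special points. If the argument were valid, it would equally prove $c_Y=1$ for the surface obtained by matching the nodes together with \emph{any} choice of interior points $p'_{ij}$; but these surfaces sweep out the locally trivial deformation class of $Y_\mathscr{G}$, and by Lemma \ref{carlsonunique} exactly one member of that class has trivial Carlson map. The missing ingredient --- which is precisely where the paper's proof does its work --- is a component-wise representability statement: on each $\mathfrak{Y}_d$ every line bundle restricts to the boundary cycle to (the class of) a divisor supported at the interior special points. For $d\le 5$ this holds because the $(-2)$-curves together with the interior $(-1)$-curves form a $\QQ$-basis of $\Pic(\mathfrak{Y}_d)$ and meet $D$ only at the special points; for the hexagonal components it is the toric statement of \cite[Lemma 2.8]{GHK15}, and this is exactly why the special point is taken to be $-1\in\Gm$ rather than an arbitrary point. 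Once you insert this statement and use it to kill the holonomies $h_i$, your \v Cech argument closes up and becomes, in substance, the paper's proof; without it there is a genuine gap.
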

\begin{proof}A proof can be found in \cite[Construction 10.14]{GHKS}. Here we sketch a  proof in the case of simple normal crossing.
 Each of the surfaces $\mathfrak{Y}_i$, for $i=1\dots 5$, 
has a $\QQ$-basis $B_i$ of $\Pic(\mathfrak{Y}_i)$ given by the $(-2)$-curves and the interior $(-1)$-curves, i.e. those that are not components of the 
anticanonical divisor. Let $Y_\mathscr{G}=\cup_{j \in J} Y_j$. We shall choose an order on $J$. 
 The Picard 
group of $Y_\mathscr{G}$ is given by the kernel of the Carlson map. Let $Y_{\operatorname{simp}}$ be a semi-simplicial resolution of $Y_\mathscr{G}$, see \cite{Carl},\cite[\S 4.2.2]{AGIII}.  Then 
one defines $Y_p$ to be 
\[Y_p=\coprod Y_{j_0}\cap\dots\cap Y_{j_p}\qquad (j_0<\dots < j_p) \] for $p=0,1,2$. Face maps are given by maps $\delta_p\colon Y_p\to Y_{p-1}$ 
such that $\delta_p$ is the inclusion on the components of $Y_p$, see \cite[\S 4.2.2]{AGIII}. We start with a collection of line bundles $L_j, j\in J $ on the components $Y_j$ 
such that the degrees on the double curves $D_{ij}$ coincide. We want to show that these glue to a line bundle $L$ on  $Y_\mathscr{G}$.
For $i(j) \leq5$ we write $L_j$ in the basis $B_{i(j)}$.
Also, given a line bundle $L_j$ on a hexagonal component $\mathfrak{Y}_6$, we can find a linearly equivalent divisor on $Y_j$  whose restriction to a component $D_k$ of the anticanonical divisor of $\mathfrak Y_6$ 
is $(\deg L_{j|D_k}) p_k$ where 
$p_k$ denotes the interior special point on $D_k$. 
This follows from  (the proof of) \cite[Lemma 2.8]{GHK15}. 
The line bundles $L_j$ can therefore be represented by divisors whose restrictions to the double curves coincide.
It follows that the collection $L=\{L_j, j \in J\}$ is in the kernel of the Carlson map as follows from \cite[p. 277 ff]{Carl} applied to $Y_{\operatorname{simp}}$.
This implies the result.
\end{proof}

We also obtain that maximal $d$-semistable $K3$ surfaces  in  $(-1)$-form are always projective. 

\begin{proposition}\label{max:proj}Let $Y=\cup Y_i$ be a  $d$-semistable $K3$ surface of type III with $c_{Y}=1$. If $Y$ is in $(-1)$-form, then $Y$ is projective.
\end{proposition}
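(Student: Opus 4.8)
The plan is to exhibit an ample line bundle on $Y$ by gluing suitable ample bundles on the normalisations of the components, using the hypothesis $c_Y=1$. First I would recall that, since $Y$ is in $(-1)$-form, it belongs to one of the locally trivial deformation classes $[Y]_\mathscr{G}$ of Construction \ref{constructioncomponents}, so that the normalisation $Y_i^\nu$ of each component is one of the rigid (weak) del Pezzo surfaces $\mathfrak{Y}_{d}$. On $Y_i^\nu$ the double curves form the anticanonical cycle $D=\sum_j D_{ij}$, and the $(-2)$-curves form a negative definite root lattice $R_i$ (of type $E_8,E_6,D_4,A_2$ for $d=1,\dots,4$ and empty for $d=5,6$) which is disjoint from $D$. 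By adjunction applied to each component of the cycle one gets $\deg (-K_{Y_i^\nu})|_{D_{ij}}=1$ for every double curve, while $-K_{Y_i^\nu}$ is nef and big and strictly positive on every irreducible curve that is not a $(-2)$-curve.

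The heart of the argument is to replace the merely nef class $-K_{Y_i^\nu}$ by an ample class with the \emph{same} degree on every double curve, and with this degree chosen uniformly over all components. For this I would exploit the disjointness $R_i\perp D$: since the form on $R_i$ is negative definite, there is an integral class $A_i$ supported on the $(-2)$-curves with $A_i\cdot C>0$ for every $(-2)$-curve $C$ — such a class exists because the inverse Cartan matrix of an $ADE$ root system has positive entries — and, by disjointness, with $A_i\cdot D_{ij}=0$ for all $j$. Fixing one large integer $m$ and setting $\shL_i:=-mK_{Y_i^\nu}+A_i$, the Nakai--Moishezon criterion shows $\shL_i$ is ample: on a $(-2)$-curve $C$ one has $\shL_i\cdot C=0+A_i\cdot C>0$; on any other irreducible curve $\Gamma$ one has $-mK\cdot\Gamma\geq m$ while $A_i\cdot\Gamma$ is bounded below independently of $m$ (the Mori cone of a weak del Pezzo is finitely generated, so only finitely many extremal classes need be checked); and $\shL_i^2=m^2 d+A_i^2>0$ for $m\gg0$. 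When $d=5,6$ one takes $A_i=0$ and $\shL_i=-mK$ is already ample.

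The crucial point is that $\deg \shL_i|_{D_{ij}}=m$ for every double curve on every component, so the collection $(\shL_i)_i$ satisfies the degree-matching condition of Lemma \ref{degree}; using $c_Y=1$ it therefore defines a genuine line bundle $\shL\in\Pic(Y)$ with $\nu^*(\shL|_{Y_i})=\shL_i$. Finally I would invoke the standard fact that a line bundle on a proper scheme is ample as soon as its restriction to each irreducible component is ample, together with the invariance of ampleness under the finite normalisation maps $Y_i^\nu\to Y_i$: since each $\shL_i$ is ample, $\shL$ is ample on $Y$, and hence $Y$ is projective. The main obstacle is the middle step — producing, on each weak del Pezzo component, an ample class whose degree on the anticanonical cycle is prescribed and uniform, even though $-K$ itself is not ample there. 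The disjointness of the $(-2)$-curves from $D$ is exactly what makes the root-lattice correction $A_i$ restore ampleness without disturbing the boundary degrees.
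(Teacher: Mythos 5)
Your proof is correct, and it shares the paper's overall skeleton: produce on each normalisation $Y_i^\nu$ an ample divisor whose degree is the same on every component of the anticanonical cycle, glue the collection to a line bundle on $Y$ via Lemma \ref{degree} (this is exactly where $c_Y=1$ enters), and conclude ampleness of the glued bundle from ampleness on the irreducible components. Where you genuinely diverge from the paper is in the key step, the construction of the component-wise ample divisors. The paper builds them by hand: it takes divisors supported on the $(-2)$-curves $B(Y_i)$ \emph{and} the interior $(-1)$-curves $E(Y_i)$ of the rigid models of Construction \ref{constructioncomponents}, with explicitly chosen coefficients (e.g. $(7,11,13,16,13,11)$ with $e=10$ for $E_6$, $(11,11,13,11)$ with $e=10$ for $D_4$, $(3,3)$ with $e=2$ for $A_2$), checks positivity on the generators of the polyhedral cone of curves using Proposition \ref{pro:canflops}, and then rescales the bundles so that the boundary degrees agree across components. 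You instead take $\shL_i=-mK_{Y_i^\nu}+A_i$ with $A_i$ a correction supported on the $(-2)$-curves alone, obtained from positivity of the inverse ADE Cartan matrix; the adjunction identity $-K\cdot D_{ij}=1$, which is precisely where the $(-1)$-form hypothesis is used, then gives the uniform boundary degree $m$ for free, with no case analysis and no rescaling. Both arguments ultimately rest on the same two inputs, namely that the cone of curves of the relevant weak del Pezzo surfaces is rational polyhedral and spanned by $(-1)$- and $(-2)$-curves (\cite{TVAV}, \cite{ArLa}), and that a line bundle on a proper scheme is ample once its restriction to every irreducible component is ample (\cite[Proposition 1.2.16]{Lazar}); what your route buys is uniformity and transparency — it works verbatim for all degrees $d_i\le 6$, avoids the explicit coefficient lists, and isolates the role of the $(-1)$-form condition in a single intersection number. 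Two small points of hygiene: the clause that $A_i\cdot\Gamma$ is ``bounded below independently of $m$'' is not literally true over \emph{all} irreducible curves $\Gamma$, but your parenthetical remark is the actual argument — positivity need only be checked on the finitely many extremal classes, which are $(-1)$- and $(-2)$-curves, and there the bound does hold; and the opening identification of the components with the rigid surfaces $\mathfrak{Y}_d$ requires $c_Y=1$ together with Lemma \ref{carlsonunique}, not merely membership in a deformation class $[Y]_\mathscr{G}$ — although in fact your construction never uses rigidity, only that the effective $(-2)$-curves form ADE configurations, which holds on any weak del Pezzo surface.
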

\begin{proof} Let $Y$ be a surface as in the proposition. By \cite[Proposition 5.2]{Laz08}, the normalisations $Y^\nu_i$ of the components  $Y_i$ of $Y$ are weak del Pezzo surfaces of degree $d_i\leq 6$ and if $D=\sum D_i$ is the anticanonical cycle on  $Y^\nu_i$, the orthogonal complement of the sublattice generated by the $D_i$  in $\Pic(Y_i)$ is a root lattice  $E_8$, $E_6$, $D_4$ or $A_2$ for $d_i=1,2,3$ and $4$, respectively and empty for  $d_i=5,6$ (see our above discussion).  
These classes are effective by construction.  We show that $Y$ is projective by giving an ample bundle $A_i$  on each $Y^\nu_i$ such that $A_i.D_{ij}=\ A_i.D_{ik}$ for all $j,k$, i.e. such 
that the $A_i$ have the same degree on all components of the anticanonical cycle on  $Y^\nu_i$. By Lemma \ref{degree}, after taking suitable multiples, one can then glue these line bundles to obtain a bundle $A$ on $Y$. The bundle $A$ is ample as its restriction to each irreducible component is ample, by \cite[Proposition 1.2.16]{Lazar}.  
As the structure morphism is proper, Y  is projective. 
To simplify notation, we taciturnly assume $Y_i^\nu=Y_i$. 

For $d_i=1$ there is nothing to show. For $d_i=5,6$ the claim is obvious, we can take the anticanonical divisor.
We consider the cases $d_i=2,3$.  
For the root lattices $R_i$ described above we have a natural root basis given by the set $B(Y_i)$ of  $(-2)$-curves constructed in the blow-up procedure. 
Note also that for all such $Y_i$ by construction we have $(-1)$-curves  as in Figures \ref{curveconfig} and \ref{curveconfig2} connecting the root system to the boundary. We denote the set of these curves by $E(Y_i)$.
  
Suppose there is an integer $e>0$  and a divisor \[A_i=\sum_{B_j\in B(Y_i)} b_jB_j+\sum_{E_k\in E(Y_i)} eE_k\] in $\Pic(Y_i)$  such that $A_i.C>0$ for all $C\in B(Y_i)\cup E(Y_i)$.
  Then $A_i$ defines an ample bundle on $Y_i$: 
  being a (weak) del Pezzo surface, $Y_i$ is a Mori Dream space by \cite[Theorem 2.9]{TVAV}, so in particular,  the cone of curves of $Y_i$ is rational polyhedral. The Picard rank is greater than $3$ and thus, by \cite[Proposition 1]{ArLa}, $\Pic(Y_i)$ generated by curves $C$ with $C^2<0$, i.e. by $(-2)$ and $(-1)$-curves.  
  Let $C$ be such a curve. If $C\in B(Y_i)\cup E(Y_i)$, then $A_i.C>0$ by assumption. If $C$ is not in $B(Y_i)\cup E(Y_i)$,   $C$ is a component of $D$,
   as follows from Proposition \ref{pro:canflops} below (which is independent of this result). 
 Then $C.D=e$.  Hence $A_i$ is strictly positive on the (polyhedral)  cone of curves, and hence ample by Kleiman's criterion.  We define such $A_i$'s as follows:  for $E_6$,  take the numbers $(7,11,13,16,13,11)$ for the roots, where the number in the $i$-th position is the coefficient of the $i$-th root, with indexing as in the figures, and set $e=10$. For $D_4$ we can take $(11,11,13,11)$ and $e=10$. 
 
It remains to consider the case $d_i=4$. In this case $Y_4=\mathfrak{Y}_4$ with the $A_2$ root system and the same construction as before with numbers $(3,3)$ on the root system and $e=2$ defines an ample bundle $A_i$. 
\end{proof}

We  have the following proposition. 

\begin{proposition}
Let $\mathscr{G}$ be a triangulation of $S^2$  with $2d$ triangles with valency at most $6$ and without $k$-bands of hexagons for any $k>1$.  

The surface $Y_\mathscr{G}$ is a maximal primitive $d$-semistable K3 surface. 
The associated maximal smoothing $\shY_\mathscr{G}\to S$ is a  model in $(-1)$-form of the Dolgachev-Nikulin-Voisin family of degree $2d$, i.e.
 $\Pic(\shY_\mathscr{G}{_\eta})=\check{M}_{2d}$ and $\Pic(\shY_\mathscr{G}/S)\cong\Pic(Y_\mathscr{G})$ by restriction. 
The threefold $\shY_\mathscr{G}$ is Calabi-Yau with $H^1(\shY_\mathscr{G},\O_{\shY_\mathscr{G}})=0$ and projective over the base $S$. 
\end{proposition}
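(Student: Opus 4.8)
The proof is essentially a synthesis of the results established above, so the plan is to verify the assertions in turn, isolating the one cohomological computation that genuinely needs an argument.

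First I would record the properties of the central fibre. By Construction~\ref{constructioncomponents} the surface $Y_\mathscr{G}$ is a type III $d$-semistable $K3$ surface in $(-1)$-form whose dual complex is $\mathscr{G}$. Since $\mathscr{G}$ has valency at most $6$ and no $k$-bands of hexagons for $k>1$, Proposition~\ref{optimalglue} gives $c_{Y_\mathscr{G}}=1$, so $Y_\mathscr{G}$ is maximal. As $Y_\mathscr{G}$ is already in $(-1)$-form, in computing its index we may take $Y'=Y_\mathscr{G}$; the hypothesis that $\mathscr{G}$ has no special $k$-bands of hexagons for $k>1$ then forces the index to equal $1$, i.e.\ $Y_\mathscr{G}$ is primitive. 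Projectivity of $Y_\mathscr{G}$ is exactly Proposition~\ref{max:proj}.

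Next I would construct and identify the smoothing. Applying Proposition~\ref{lem:unoquedeg} to the projective, maximal surface $Y_\mathscr{G}$ produces the (unique) Kulikov model $\shY_\mathscr{G}\to S$ with $r_c\colon\Pic(\shY_\mathscr{G}/S)\cong\Pic(Y_\mathscr{G})$; being a Kulikov model it is projective over $S$ by Definition~\ref{def:typeIIIdeg}, which settles both projectivity over $S$ and the Picard isomorphism. For the generic Picard group I count invariants combinatorially: a triangulation of $S^2$ with $2d$ triangles has $3d$ edges and $d+2$ vertices by the Euler relation, so $Y_\mathscr{G}$ has $t=2d$ triple points, while its index is $k=1$. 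The formula in Proposition~\ref{lem:unoquedeg} then yields
\[
\Pic\big((\shY_\mathscr{G})_\eta\big)\cong 2E_8(-1)\oplus U\oplus\langle -2d\rangle=\check{M}_{2d}.
\]
Together with primitivity, projectivity and maximality this verifies every condition of Definition~\ref{def:DNVmodel}, and since the central fibre $Y_\mathscr{G}$ is in $(-1)$-form, $\shY_\mathscr{G}\to S$ is a model in $(-1)$-form.

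Finally I would treat the Calabi-Yau assertion. The equality $\omega_{\shY_\mathscr{G}}=\O_{\shY_\mathscr{G}}$ is part of the construction in Proposition~\ref{lem:unoquedeg} (adjunction on the regular total space). For $H^1(\shY_\mathscr{G},\O)=0$ I would first compute $H^1(Y_\mathscr{G},\O_{Y_\mathscr{G}})=0$: since $\shY_\mathscr{G}\to S$ is flat with $K3$ generic fibre we have $\chi(\O_{Y_\mathscr{G}})=\chi(\O_{(\shY_\mathscr{G})_\eta})=2$, while $h^0(\O_{Y_\mathscr{G}})=1$ ($Y_\mathscr{G}$ is connected and reduced, its dual complex $S^2$ being connected) and, by Serre duality together with $\omega_{Y_\mathscr{G}}=\O_{Y_\mathscr{G}}$ from Definition~\ref{def:dsemistable}, $h^2(\O_{Y_\mathscr{G}})=h^0(\O_{Y_\mathscr{G}})=1$; hence $h^1(\O_{Y_\mathscr{G}})=1-2+1=0$. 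As $h^1$ vanishes on both the special and the generic fibre, cohomology and base change shows $R^1\pi_*\O_{\shY_\mathscr{G}}=0$, where $\pi\colon\shY_\mathscr{G}\to S$ is the structure morphism; since $S$ is affine the Leray spectral sequence gives $H^1(\shY_\mathscr{G},\O)=H^0(S,R^1\pi_*\O_{\shY_\mathscr{G}})=0$.

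I expect no serious difficulty, as every step either cites a prior result or is an elementary count; the only point requiring care is the vanishing $H^1(Y_\mathscr{G},\O)=0$, where I deliberately route through the Euler characteristic identity rather than a Mayer-Vietoris spectral sequence, so as to sidestep the contributions of self-intersecting components and nodal double curves (such as the nodal anticanonical cycle on the degree-$1$ component) that would otherwise have to be analysed separately on the semi-simplicial resolution.
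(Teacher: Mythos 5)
Your proof is correct, and its skeleton coincides with the paper's: both deduce maximality from Proposition \ref{optimalglue}, primitivity from the absence of $k$-bands of hexagons for $k>1$, projectivity of the central fibre from Proposition \ref{max:proj}, then invoke Proposition \ref{lem:unoquedeg} to produce the (projective, Calabi--Yau) smoothing, and finish the vanishing of $H^1(\shY_\mathscr{G},\O_{\shY_\mathscr{G}})$ by semicontinuity/Grauert over the affine base. The one genuine divergence is the vanishing $H^1(Y_\mathscr{G},\O_{Y_\mathscr{G}})=0$ on the central fibre: the paper cites Friedman's intrinsic result \cite[Lemma 5.7]{Frie83}, whereas you derive it a posteriori from the existence of the smoothing, via constancy of $\chi(\O)$ in the flat proper family, $h^0=1$ (connected and reduced, over $\CC$), and $h^2=h^0=1$ by Serre duality for the Gorenstein surface with $\omega_{Y_\mathscr{G}}\cong\O_{Y_\mathscr{G}}$. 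This is logically sound, since Proposition \ref{lem:unoquedeg} nowhere uses the $H^1$ vanishing, so the smoothing may be constructed first; what your route buys is independence from Friedman's lemma, at the cost of being extrinsic (it computes $H^1$ only for surfaces already known to be smoothable, while Friedman's statement is intrinsic to the $d$-semistable surface). You also make explicit a step the paper leaves implicit: the identification $\Pic\bigl((\shY_\mathscr{G})_\eta\bigr)\cong\check{M}_{2d}$, obtained by counting $t=2d$ triple points via the Euler relation for the triangulation and substituting $t=2d$, $k=1$ into the lattice formula of Proposition \ref{lem:unoquedeg}. Both additions are welcome; neither changes the substance of the argument.
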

\begin{proof} $Y_\mathscr{G}$ is a  $d$-semistable K3 surface in $(-1)$-form by  construction. It is primitive because the dual graph does not have $k$-bands of hexagons. It is maximal, i.e. $c_{Y_\mathscr{G}}=1$, due to our
choice of the gluing. It is also projective by Proposition \ref{max:proj}. Hence there is a deformation with the claimed properties 
by Proposition \ref{lem:unoquedeg}. Letting $f\colon\shY_\mathscr{G}\to 
S$ denote the structure morphism,  we prove the vanishing of  
$H^1(\shY_{\mathscr{G}},\O_{\shY_\mathscr{G}})=0$ as follows: 
We have $H^1(Y_{\mathscr{G}},\O_{Y_\mathscr{G}})=0$  
by \cite[Lemma 5.7]
{Frie83} so by semi-continuity it follows from a result of Grauert 
\cite[Cor 12.9]{Har77}  that $R^1f_*\O_{\shY_\mathscr{G}}=0$. As we work over an affine base, $H^1(\shY_{\mathscr{G}},\O_{\shY_\mathscr{G}})=0$ follows 
from  \cite[Proposition 8.5]{Har77}.
\end{proof}

We give examples for low degree. 

 \begin{example}[\cite{thesis}]\label{DNV2}
  Specialising to genus $2$, there are precisely two different triangulations of $S^2$ with two triangles, corresponding to dual intersection 
 complexes of degenerations with central fibres having three components. By e.g. \cite{ES17}, these  are given by two triangles glued 
along the boundary and two triangles glued along one side to each other, with the 
remaining sides identified. We shall denote the first of these triangulations by $\mathscr{P}$  and the latter by $\mathscr{T}$,  see Figure \ref{ZETA}.
\begin{figure}\centering
\begin{tikzpicture}[scale=0.75]
\draw[]    (0,1) -- ++(0:1.5) node(C1)[below]{f}--++(0:1.5)  
-- ++(120:1.5) node(A1){}--++ (120:1.5) node(top1){} --++ (240:1.5)node(B1)[left]{e} --cycle; 
\draw[] (top1.center)--++(0:1.5) node(C2)[above]{e}--++(0:1.5) --++(-120:1.5) node[right]{f}--++(-120:1.5);
\draw node at (1.75,0) {$\mathscr{P}$};

\draw[]    (6,1) -- ++(0:1.5) node(C3)[below]{f}--++(0:1.5)  
-- ++(120:1.5) node(A3){}--++ (120:1.5) node(top2){} --++ (240:1.5)node(B3)[left]{f} --cycle; 
\draw[] (top2.center)--++(0:1.5) node(C4)[above]{e}--++(0:1.5) --++(-120:1.5) node[right]{e}--++(-120:1.5);
\draw node at (7.75,0) {$\mathscr{T}$};
\end{tikzpicture}\caption{The triangulations $\mathscr{P}$ and $\mathscr{T}$.}
\label{ZETA}
\end{figure}
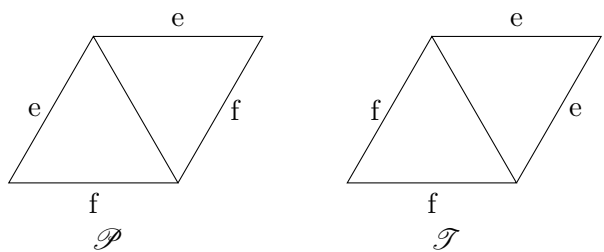
 Let $\YP$ be the surface obtained by gluing three copies of $\mathfrak{Y}_2$ by identifying  components of the boundary cycle such that interior special points  are identified. 
For the triangulation $\mathscr{T}$, the associated maximal surface is a copy of $\mathfrak{Y}_4$, with two opposite components, say ${D}_1$ and ${D}_3$, of the anticanonical curve ${D}$ identified, and two copies of $\mathfrak{Y}_1$ glued to the (images) of ${D}_2$ and  $D_4$, such that the images of the special points match. 
 \end{example}

 \begin{example}There are $4$ combinatorially distinct triangulations of $S^2$ with $4$ triangles, see \cite[Example 2.5]{ES17} and Figure \ref{triangulation4}. We collect the valencies of the vertices in a tuple. The tuples are $(3,3,3,3), (3,2,1,6), (5,5,1,1)$ and $(4,4,2,2)$. None of these triangulations have $k$-bands of hexagons for $k>1$, because the least number of triangles in such a triangulation is $8$.
To take an example let $\mathscr{G}_1$ denote the triangulation such that every vertex has valency $3$.  The resulting surface $Y_{\mathscr{G}_1}$ are three copies of $\mathfrak{Y}_3$ glued according to the triangulation. Here, this means that each component meets each of the remaining components in a curve $C\cong\PP^1$ in such a way that the special points are identified.

\begin{figure}\centering
\begin{tikzpicture}[scale=0.75]
\draw[]    (0.75,1) -- ++(0:1.5) node(C1)[below]{e}--++(0:1.5)  
-- ++(120:1.5) node(A1){}--++ (120:1.5) node(top1){} --++ (240:1.5)node(B1)[left]{g} --cycle; 
\draw[] (top1.center)--++(0:1.5) node(C2)[above]{}--++(0:1.5)node(top2){} --++(-120:1.5) node[right]{}--++(-120:1.5);
\draw[] (top2.center)--++(120:1.5)node[right]{f}--++(120:1.5)--++(240:1.5)node[left]{g}--++(240:1.5);
\draw[] (top2.center)--++(-60:1.5)node[right]{f}--++(-60:1.5)--++(180:1.5)node[below]{e}--++(180:1.5);
\node at (3.75,0){$\mathscr{G}_1$};

\draw[]    (8.75,1) -- ++(0:1.5) node[below]{e}--++(0:1.5)  
-- ++(120:1.5) node{}--++ (120:1.5) node(2top1){} --++ (240:1.5)node[left]{e} --cycle; 
\draw[] (2top1.center)--++(0:1.5) node[above]{}--++(0:1.5)node(2top2){} --++(-120:1.5) node[right]{}--++(-120:1.5);
\draw[] (2top2.center)--++(120:1.5)node[right]{f}--++(120:1.5)--++(240:1.5)node[left]{g}--++(240:1.5);
\draw[] (2top2.center)--++(-60:1.5)node[right]{f}--++(-60:1.5)--++(180:1.5)node[below]{g}--++(180:1.5);
\node at (11.75,0){$\mathscr{G}_2$};

\draw[]    (0,-4) -- ++(0:1.5) node[below]{e}--++(0:1.5)  
-- ++(120:1.5) node{}--++ (120:1.5) node(3top1){} --++ (240:1.5)node[left]{e} --cycle; 
\draw[] (3top1.center)--++(0:1.5) node[above]{f}--++(0:1.5)node(3top2){} --++(-120:1.5) node[right]{}--++(-120:1.5);
\draw[] (3top2.center)--++(0:1.5)node[above]{g}--++(0:1.5)--++(-120:1.5)node[right]{g}--++(-120:1.5);
\draw[] (3top2.center)--++(-60:1.5)node[right]{}--++(-60:1.5)--++(180:1.5)node[below]{f}--++(180:1.5);
\node at (3.75,-5){$\mathscr{G}_3$};
\draw[]    (8,-4) -- ++(0:1.5) node[below]{g}--++(0:1.5)  
-- ++(120:1.5) node{}--++ (120:1.5) node(3top1){} --++ (240:1.5)node[left]{e} --cycle; 
\draw[] (3top1.center)--++(0:1.5) node[above]{e}--++(0:1.5)node(3top2){} --++(-120:1.5) node[right]{}--++(-120:1.5);
\draw[] (3top2.center)--++(0:1.5)node[above]{g}--++(0:1.5)--++(-120:1.5)node[right]{f}--++(-120:1.5);
\draw[] (3top2.center)--++(-60:1.5)node[right]{}--++(-60:1.5)--++(180:1.5)node[below]{f}--++(180:1.5);
\node at (11.75,-5){$\mathscr{G}_4$};

\end{tikzpicture}\caption{The triangulations with $4$ triangles.}
\label{triangulation4}
\end{figure}
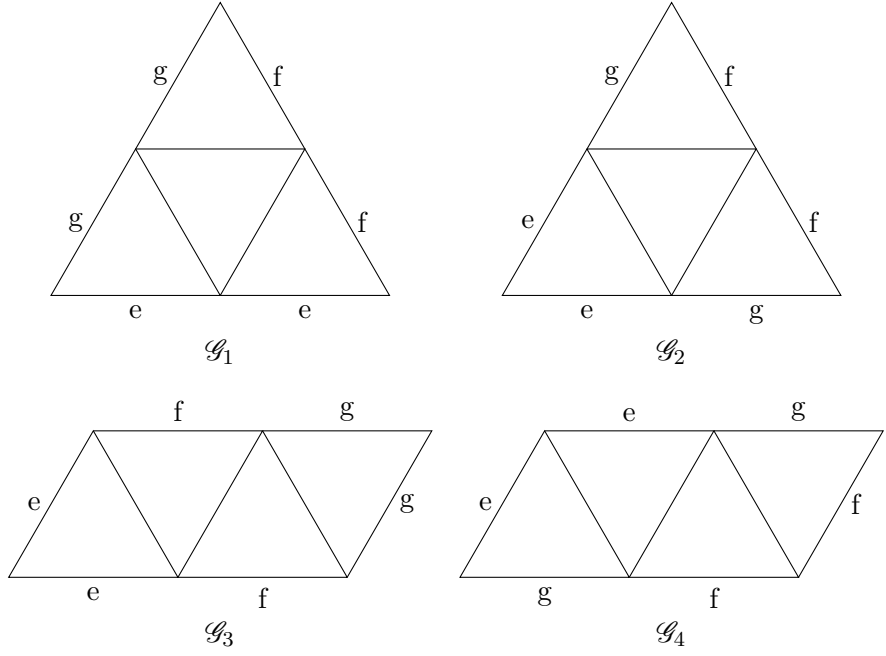
 \end{example}

\section{Relative Notions and the Mori fan}\label{sec:mmpmorifan}

\subsection{Relative Notions}
As we will make extensive use of various concepts of the minimal model program we will recall the relevant basic concepts here, following \cite{Kaw97}. Let $\pi\colon\shX\to U$ be a projective morphism of normal schemes. An $\RR$-Cartier divisor $L$ on $\shX$ is an $\RR$-linear combination of Cartier divisors.  A $\QQ$-Cartier divisor $L$ on $\shX$ is a $\QQ$-linear combination of Cartier divisors. Similarly, an $\RR$-Weil divisor is an $\RR$-linear combination of prime divisors. Two $\RR$-Cartier divisors $L,L'$ are linearly equivalent over $U$ if their difference is an $\RR$-linear combination of principal divisors and an $\RR$-Cartier divisor pulled back from $U$. Two $\RR$-Cartier diviors $L,L'$ are numerically equivalent over $U$, denoted by $L\equiv_U L'$, if $L.C= L'.C$ for all curves -integral $1$-dimensional closed subschemes -  $C$  in fibres of $\pi$.

We define
\[\Pic(\shX/U)_\RR=\{ \RR\text{-Cartier divisors on } X \}/\text{linear  equivalence over } U\]
and 

 \[\N^1(\shX/U)=\{ \RR\text{-Cartier divisors on } X \}/\text{numerical equivalence over } U.\]  The latter is a finite dimensional vector space 
\cite[Proposition IV.4.3]{kl66}.

In this context, we call an element of $\N^1(\shX/U)$ a $\ZZ$-divisor if we want to emphasise that it is  the class of a Cartier divisor.  
\begin{remark}
For a type III degeneration $\shY\to S$ of K$3$ surfaces, we have 
$\Pic(\shY/S)_\RR\cong\N^1(\shY/S)$:
indeed, by maximality, it is enough to show this on the central fibre $Y$, where it follows from the fact that any divisor on the central fibre is given by an element of the lattice $L$ defined in Sequence \ref{equ:L} and all (normalised) components are smooth rational surfaces.
\end{remark}

We say that a Cartier divisor $L$ is \emph{nef over $U$}, or \emph{$\pi$-nef}, if $L.C\geq 0$ for all curves $C$ in $\shX$ mapping to a point. A Cartier divisor $L$  is \emph{ample over $U$}, or \emph{$\pi$-ample}, if the restriction $L_u$ is ample on the fibre $\shX_u$ for all $u\in U$.
 A Cartier divisor $L$ is \emph{semiample over U }, or \emph{semiample},
  if there is a projective morphism $g\colon\shZ\to U$,  a $U$-morphism  $\shX\to\shZ$ and a $g$-ample divisor $A$ on $\shZ$ such that 
 $L^{\otimes n}=g^*A$ for some integer $n>0$. 
 A Cartier divisor $L$ is \emph{$\pi$-movable} (or \emph{$\pi$-moving}) if $\dim \operatorname{Supp}\operatorname{Coker}(\pi^*\pi_*\O_X(L)\to\O_X(L))\geq 2$.

\subsubsection{Cones}

The \emph{$\pi$-nef cone}  $\Nef(\shX/U)$ is the closure of the $\pi$-ample cone in $\N^1(\shX/U)$, the \emph{closed $\pi$-movable cone} $\bar{\shM}(\shX/U)$ is the closure of the  convex cone generated by $\pi$-movable divisors. A Cartier divisor $L$ is \emph{ $\pi$-effective }if $\pi_*\O(L)\neq 0$. Let $\shB^e(\shX/U)$ be the convex cone generated by all $\pi$-effective divisors, and let $\overline{\shB^e}(\shX/U)$ denote its closure, the cone of \emph{pseudo-effective} divisors. Set \[\Mov (\shX/U)=\bar{\shM}(\shX/U)\cap\shB^e(\shX/U).\] We will refer to $\Mov(\shX/U)$ as the \emph{moving cone}.

\subsection{The Mori fan}
The Mori fan was first introduced in \cite{HK}.  Here we shall recall the definition in the situation which we consider, namely for models of the  DNV family.

First however, we recall the pullback under dominant rational maps in general. For this we fix a dominant rational map $f\colon \shX\dashrightarrow \shZ$ of projective $\QQ$-factorial schemes over $U$. Let $L$ be an effective  Cartier divisor on $\shZ$. We define  $f^*(L)$ to be the unique Weil divisor that is equal to the pullback of $L$ by $f$  on the open set of codimension $2$ where $f$ is a morphism.  The divisor $f^*(L)$ is $\QQ$-Cartier 
by $\QQ$-factorality of $\shX$.  In this way we obtain a linear map $f^*: \N^1(\shZ/U)\to \N^1(\shX/U)$.

Now we specialize to the case where $\shY\to S$ is a model of the DNV family, of degree $2d$, see also \cite{theta15}.  
Let $f\colon \shY\dashrightarrow  \shY'$ be a \emph{small} modification over $S$, i.e. a birational map which is an isomorphism in codimension $1$ such that $\shY'$ is projective over $S$. 
In this situation, we will also write $(\shY',f)$ for $f\colon \shY\dashrightarrow  \shY'$ and call it a \emph{marked minimal model} of the DNV family (where we  have fixed $\shY \to S$ as a reference model).

Define the cone
$$
C(f):=f^*\Nef(\shY'/S)\subset \N^1(\shY/S).
$$
\begin{definition}\label{def:morifan}
Let $\shY\to S$ be  a model of the DNV family of degree $2d$. The set of all  cones $C(f)$  and their faces, with $f$ a small modification, is the \emph{Mori fan} of $
\shY\to S$, denoted by $\Morifan(\shY/S)$.
\end{definition}

\begin{remark}
If $(\shY',f)$ and $(\shY'',g)$ are two marked minimal models and $C(f)\cap C(g)$ has codimension $0$, then by \cite[Lemma 1.5]{Kaw97} there is an isomorphism $\beta\colon\shY''\to \shY'$ with $f=\beta \circ g$,
and hence $C(f)=C(g)$.
\end{remark}

Note that if $\shY'\to S$ is another model, $\Morifan(\shY'/S)$ is canonically identified with $\Morifan(\shY/S)$ via the isomorphism $\N^1(\shY/S)\cong\N^1(\shY'/S)$ induced by taking strict transforms. In particular,
the Mori fan only depends on the DNV family, not on a specific model.

We recall the following result of \cite{theta15}, which implies that $\Morifan(\shY/S)$ is indeed a fan, and, in particular, is closed under intersection and taking faces.  If $\Sigma$ is a fan in a vector space $V$, then we denote its support by $|\Sigma|$.
\begin{theorem}[\cite{theta15}, Thm 6.5]\label{ThmGHKS}
The following holds:
\begin{itemize}
\item[(i)] Let $\triangle\subset \Nef(\shY_\eta)$ be a rational polyhedral cone. Let $r_{\eta}$ be the restriction map $\Pic(\shY/S)\to \Pic(\shY_\eta)$. Then $r^{-1}_\eta(\triangle)$ and $r^{-1}_\eta(\triangle)\cap \Mov(\shY/S)$ are rational polyhedral cones and  
\[ \{r_\eta^{-1}(\triangle)\cap \gamma | \gamma \in \Morifan(\shY/S)\}\] is a finite set of rational polyhedral cones, with support $r^{-1}_\eta(\triangle)\cap \Morifan(\shY/S)$. 
\item[(ii)] The support of $\Morifan(\shY/S)$ is $\Mov(\shY/S)$. 
\end{itemize}
\end{theorem}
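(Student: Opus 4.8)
The plan is to treat $\shY\to S$ as a relative Calabi--Yau threefold ($\omega_\shY\cong\O_\shY$ by the Remark following Definition \ref{def:typeIIIdeg}) and to deduce both statements from the relative minimal model program together with the finiteness of minimal models in the sense of BCHM. The first thing I would record is that every small modification $f\colon\shY\dashrightarrow\shY'$ over $S$ restricts to a birational map of the generic fibres $\shY_\eta\dashrightarrow\shY'_\eta$; since a birational map of smooth minimal surfaces is an isomorphism, all marked minimal models share the \emph{same} generic fibre canonically, and the restriction maps $r_\eta$ for $\shY$ and for $\shY'$ are identified. Hence each chamber $C(f)=f^*\Nef(\shY'/S)$ lives in the same space $\N^1(\shY/S)\cong\Pic(\shY/S)_\RR$ and $r_\eta$ makes sense simultaneously on all of them. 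I would also note at the outset that $\ker r_\eta=\ZZ^\shY\otimes\RR$ is the span of the vertical divisors by Sequence \ref{equ:basicsequ}, so the fibres of $r_\eta$ are exactly the affine spaces along which the central fibre is altered by flops.

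For part (ii) the inclusion $|\Morifan(\shY/S)|\subseteq\Mov(\shY/S)$ is the easy direction: $\pi$-ample classes are movable and pseudo-effective, the movable cone is closed, and $f^*$ preserves movability because $f$ is an isomorphism in codimension one, so $C(f)\subseteq\Mov(\shY/S)$. For the reverse inclusion I would take $D$ in the interior of $\Mov(\shY/S)$, so that $D$ is big and movable, and run a $D$-MMP over $S$. Because $K_\shY\equiv_S 0$ this is literally a $D$-MMP, and because $D$ is movable no divisorial contraction can occur, so the program consists only of flips; these terminate in dimension three, ending at a small modification $f\colon\shY\dashrightarrow\shY'$ on which the strict transform of $D$ is $\pi$-nef. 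Thus $D\in f^*\Nef(\shY'/S)=C(f)$, which gives $\Mov(\shY/S)\subseteq|\Morifan(\shY/S)|$ on the interior and hence on the closure.

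The rational polyhedrality and the fan axioms in part (i) I would extract from finiteness of minimal models. The preimage $r_\eta^{-1}(\triangle)$ is rational polyhedral simply as the inverse image of a rational polyhedral cone under the lattice-rational linear surjection $r_\eta$. The crux is local finiteness: only finitely many chambers $C(f)$ should meet $r_\eta^{-1}(\triangle)$. For this I would pass to a compact rational polytope cross-section $P$ of the cone $r_\eta^{-1}(\triangle)\cap\Mov(\shY/S)$ and apply the BCHM finiteness of minimal models for the rational polytope $P$ of movable divisors: this yields finitely many marked minimal models and a finite rational polyhedral subdivision of $P$ into nef chambers $C(f)\cap P$. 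Coning off recovers the finitely many cones $r_\eta^{-1}(\triangle)\cap C(f)$ and exhibits $r_\eta^{-1}(\triangle)\cap\Mov(\shY/S)$ as their union, hence as a rational polyhedral cone. That these cones and their faces form a fan then follows from Kawamata's Lemma, already quoted in the Remark after Definition \ref{def:morifan}: two chambers overlapping in codimension zero coincide, and in general two nef chambers meet along a common face corresponding to a common contraction.

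The step I expect to be the main obstacle is precisely this local finiteness, that is, controlling the non-pointed directions of $r_\eta^{-1}(\triangle)\cap\Mov(\shY/S)$. The horizontal directions are pinned down by the choice of the \emph{rational polyhedral} cone $\triangle$ inside the generally round nef cone $\Nef(\shY_\eta)$, which keeps us away from the irrational boundary where chambers of a K3 can accumulate. The vertical directions, namely the lineality coming from $\ker r_\eta=\ZZ^\shY\otimes\RR$, are the flopping directions of the central fibre; here I would argue that, for a fixed generic polarisation, the relevant semistable models over $S$ are connected by type I and type II modifications (Theorem \ref{teo:-1form}) and, by Proposition \ref{modelflop}, all have central fibres of the same bounded combinatorial type, so that only finitely many arise. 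Reconciling the compact cross-section $P$ with this flop combinatorics, so that BCHM applies to a genuinely bounded region transverse to the lineality, is the delicate point; everything else is formal once the generic fibre has been identified across all models.
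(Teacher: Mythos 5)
Your argument for part (ii) -- run a $D$-MMP for a movable $D$, observe that movability rules out divisorial contractions, and terminate at a marked model where the transform of $D$ is nef -- is the standard one and is in spirit what the paper relies on. But there are two genuine gaps. The first is the base: you run the MMP and invoke BCHM-type finiteness of minimal models directly over $S=\Spec\CC[[t]]$, whereas those results are established for projective morphisms to quasi-projective bases over a field, not over a complete (formal) DVR. The paper's proof identifies exactly this as the crucial point: one first constructs models of the DNV family over the spectrum of the local ring of an honest algebraic curve, namely the compactified mirror modular curve at its cusp, where relative MMP is available (\cite[\S 3.6]{KoMo}); only then can one invoke \cite{Sho96} for flips and conclude by \cite[Theorems 3 and 4]{KAW2}. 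Your proposal never performs, or even mentions, this algebraization step, and without it neither the $D$-MMP in part (ii) nor the finiteness input in part (i) is justified.

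The second gap is your treatment of local finiteness in part (i). BCHM finiteness applies to compact polytopes of \emph{big} divisors, so even granting an algebraic base it cannot see the chambers along the non-big part of $r_\eta^{-1}(\triangle)\cap\Mov(\shY/S)$, i.e.\ where the restriction to $\shY_\eta$ lies on the square-zero boundary of $\Nef(\shY_\eta)$; handling these directions requires log abundance and is part of what the cited theorems of Kawamata provide. Worse, your fallback for the vertical (lineality) directions -- that the relevant models are connected by type I and II modifications and "all have central fibres of the same bounded combinatorial type, so that only finitely many arise" -- is false. Boundedness of the dual complex does not bound the number of models: for $d>1$ the Mori fan has infinitely many maximal cones (Remark \ref{finiteMF}) even though every central fibre has a dual complex with exactly $2d$ triangles, and even in degree $2$ there are infinitely many surfaces in $\Mod_2$ with the correct dual complex, of which only finitely many are projective -- a fact this paper can only establish through the curve-structure analysis of Sections \ref{sec:curvestructures}--\ref{sec:counting}, not from combinatorics of the dual complex. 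So the step you yourself flag as "the delicate point" is precisely where the argument fails, and the missing ingredient is the algebraization plus Kawamata's finiteness theorems, not combinatorial boundedness.
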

\begin{proof}We sketch the approach of \cite{GHKS}. 
The crucial point is that one can construct models of the DNV family not only over $S$, but over the spectrum of the local ring of a smooth algebraic curve, namely the 
local ring of a cusp of the compactified Dolgachev mirror space which, in this case, is a modular curve.  For these models one can run MMP, see \cite[\S 3.6]{KoMo}.
Then, invoking \cite{Sho96}, one concludes by  \cite[Theorems 3 and 4]{KAW2}.
\end{proof}

\begin{remark}\label{rem:factorizationflops}
The fact that one can run MMP for $\shY \to S$
implies that a small modification  $f \colon \shY\dashrightarrow \shY'$ factors into flops. 
Hence $\shY'\to S$ is also a model of the DNV family, which justifies the use of the name marked model.
\end{remark}

\begin{remark}\label{rem:abundance}
The proof we have just sketched also shows that log-abundance holds for models $\shY \to S$ of the DNV family. In particular nef line bundles are semi-ample.
\end{remark}

\begin{remark}\label{finiteMF}
The nef cone of $\shY_\eta$ is finitely polyhedral if and only if $d=1$. 
To see this we argue by means of the analytic category. Using the sequence (\ref{equ:basicsequ}) and its analytic analogue
we can identify the nef cone of  $\shY_\eta$ with the nef cone of a very general fibre of an analytic Kulikov model $\mathcal X \to \DD$. 
The claim then follows  from Nikulin's classification in \cite{niktable}, 
as the Picard lattice of $\shY_\eta$ is $U\oplus 2E_8(-1)\oplus \langle -2d \rangle$. See \cite[Theorem 1]{niktable2} for a more explicit statement of Nikulin's classification.
Hence the  Mori fan of a model  $\shY\to S$ of degree $2$ is a finite collection of rational polyhedral cones. Note that this is special to the case $d=1$.
\end{remark}

\subsubsection{Interior facets}

 It is well known that the codimension $1$ faces separating  maximal  (dimensional) cones of the Mori fan correspond to flopping contractions. We recall the details here. Let $\shY, \shY'$ be  models of the DNV family of degree $2d$.
 Let  $\phi\colon\shY\dashto \shY'$  be a flop. Then there exists a divisor $F$ on $\shY$ and morhpisms $\psi,\psi'$  with $\psi$ a contraction of an  $F$-negative extremal ray 
such that the diagram 
\[
\xymatrix{ \shY\ar@{-->}[rr]^\phi\ar[dr]_\psi&&\shY'\ar[dl]^{\psi'}\\
&{\shZ}&
}
\] 
commutes.   Write $\phi_*F=F'$ for the birational transform. By the contraction theorem, we have
\[\N^1(\shY/S)=\psi^*\N^1({\shZ}/S)\oplus \RR[F]
\] and 
\[\N^1(\shY'/S)={(\psi')}^*\N^1({\shZ}/S)\oplus \RR[F'].
\] 
Using the isomorphism $\N^1(\shY/S)\cong \N^1(\shY'/S)$ given by taking strict transforms  under $\phi$, 
we can identify $\psi^* \N^1(\shZ/S)={(\psi')}^*\N^1(\shZ/S)$. This cone spans a hyperplane in $\N^1(\shY/S)$ 
and $\RR[F]=\RR[F']$. Hence the cones   $\Nef(\shY / S)$, which we can think of as $C(\operatorname{id}_{\shY/S})$, 
and $C(\phi)$  in the Mori fan meet in codimension $1$,  compare \cite[Proposition 12.2.2]{MatBook}.

Conversely, we now assume that
 $\sigma, \sigma'$ are two  maximal cones of $\Morifan(\shY/S)$ with $\sigma\cap\sigma'=\tau$ a codimension $1$ face. Without loss of generality we can assume $\sigma=\Nef(\shY)$.
 By the definition of the Mori fan there is a small modification  $f\colon\shY\dashrightarrow \shY'$ of projective minimal models with $f^*(\Nef(\shY'/S))=\sigma'$.  Let $\psi\colon\shY \to\shZ$ be the contraction defined by the linear system of a suitable multiple of a very general effective divisor $L$ in $\tau$. Note that this is indeed a morphism as $L$ is semiample by abundance, see Remark \ref{rem:abundance}.
Then $\tau= \psi^*(\Nef(\shZ))$.
 The facet $\tau$ defines an extremal ray $R$  in $\N^1(\shY/S)$ that is cut out by $L$, i.e. $\psi=\contr_R$. 
 Also, any divisor in the interior of $\sigma'$ is negative on $R$. Let $F \in \Morifan(\shY/S)$ be such a divisor. Then 
 $\psi$ is an $F$-flopping contraction.  Thus the  diagram

\[
\xymatrix{ \shY\ar@{-->}[rr]^f\ar[dr]_\psi&&\shY'\ar[dl]^{\psi'}\\
&{\shZ}&
}
\] 
exhibits $f$ as an $F$-flop.

 \begin{definition}\label{def:intcones}
We say that a codimension $1$ cone $\tau$ of $\Morifan(\shY/S)$ is an \emph{interior facet} if there are two marked  models $(\shY',f) ,(\shY'',g)$  of the DNV family such that  $C(f)\cap C(g)=\tau$.
\end{definition}

The above discussion can now be summarized as follows:
\begin{proposition}\label{prop:intconesflops}
Every facet of $\Nef(\shY/S)\in \Morifan(\shY/S)$  that is an interior facet of $\Morifan(\shY/S)$ defines a  flop $\shY\dashrightarrow \shY'$ and conversely any such flop is defined by an interior facet.
\end{proposition}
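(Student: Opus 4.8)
The proposition is essentially the summary of the two constructions carried out in the paragraphs preceding Definition \ref{def:intcones}, so the plan is to organize these into the two required implications and to verify that the hypotheses of the results invoked there are met. I would split the argument into the direct statement (an interior facet produces a flop) and its converse (a flop produces an interior facet).

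For the direct direction, suppose $\tau$ is a facet of $\Nef(\shY/S)$ which is an interior facet. Since $\Morifan(\shY/S)$ is a fan with support the movable cone $\Mov(\shY/S)$ by Theorem \ref{ThmGHKS}(ii), the maximal cone meeting $\Nef(\shY/S)$ along $\tau$ from the other side of the hyperplane $\langle\tau\rangle$ is of the form $C(f)=f^*\Nef(\shY'/S)$ for a small modification $f\colon\shY\dashrightarrow\shY'$; thus I may take $\sigma=\Nef(\shY/S)$ and $\sigma'=C(f)$ with $\sigma\cap\sigma'=\tau$. I would then choose a general effective divisor $L$ in $\tau$; by log-abundance (Remark \ref{rem:abundance}) $L$ is semiample, so a large multiple defines a contraction $\psi\colon\shY\to\shZ$ with $\tau=\psi^*\Nef(\shZ)$ cutting out an extremal ray $R$, so that $\psi=\contr_R$. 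Taking $F$ in the interior of $\sigma'$, which is negative on $R$, exhibits $\psi$ as an $F$-flopping contraction, and $f\colon\shY\dashrightarrow\shY'$ is the corresponding flop.

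For the converse, I would start from a flop $\phi\colon\shY\dashrightarrow\shY'$, which by our standing convention is the flop of a contraction of an extremal ray. This gives a common flopping contraction $\psi\colon\shY\to\shZ$, $\psi'\colon\shY'\to\shZ$ and a divisor $F$ with $\psi=\contr_R$ for the $F$-negative ray $R$. Using the contraction theorem to write $\N^1(\shY/S)=\psi^*\N^1(\shZ/S)\oplus\RR[F]$ and the analogous splitting on $\shY'$, and identifying $\psi^*\N^1(\shZ/S)$ with $(\psi')^*\N^1(\shZ/S)$ under the strict-transform isomorphism $\N^1(\shY/S)\cong\N^1(\shY'/S)$, I obtain a common hyperplane along which $\Nef(\shY/S)=C(\id_{\shY})$ and $C(\phi)=\phi^*\Nef(\shY'/S)$ meet in codimension $1$. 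The resulting codimension-$1$ face $\tau=C(\id_{\shY})\cap C(\phi)$ is realized by two marked models, hence is an interior facet by Definition \ref{def:intcones}.

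The step I expect to be the main obstacle is verifying, in the direct direction, that the contraction $\psi$ attached to $L\in\tau$ is genuinely \emph{small}, i.e. that its exceptional locus has codimension at least $2$, so that $f$ is a flop rather than a divisorial contraction. This is exactly where the interiority of $\tau$ in $\Mov(\shY/S)$ enters: a general $L\in\tau$ still lies in the (closed) movable cone, so the map it defines contracts no divisor and $\psi$ is a flopping contraction. I would also invoke Remark \ref{rem:factorizationflops} to guarantee that the small modification $f$ produced by the fan structure indeed factors into flops, so that both implications refer to the same class of birational maps.
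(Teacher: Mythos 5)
Your two implications are, up to swapping their order, exactly the discussion the paper gives immediately before the proposition: the flop-to-facet direction via the contraction-theorem splittings $\N^1(\shY/S)=\psi^*\N^1(\shZ/S)\oplus\RR[F]$ and $\N^1(\shY'/S)=(\psi')^*\N^1(\shZ/S)\oplus\RR[F']$ together with the strict-transform identification, and the facet-to-flop direction via abundance, the contraction $\psi=\contr_R$ defined by a general $L\in\tau$, and a divisor $F$ interior to the opposite cone, which is negative on $R$. So the architecture coincides with the paper's.

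However, the one point where you go beyond the paper --- the smallness of $\psi$, which you yourself single out as the main obstacle and which the paper passes over in silence --- is argued incorrectly. You claim $\psi$ contracts no divisor because a general $L\in\tau$ ``still lies in the (closed) movable cone''. This is a non sequitur: if $\psi$ were a \emph{divisorial} contraction and $A$ ample on $\shZ$, then $L=\psi^*A$ would be semiample, a multiple of it base-point free, hence movable and effective, so $L\in\Mov(\shY/S)$ just the same. Every facet of $\Nef(\shY/S)$, whether it comes from a small or a divisorial contraction, lies in $\Mov(\shY/S)$, so membership of $L$ itself in the movable cone cannot distinguish the two cases. What makes interiority of $\tau$ do real work is the divisor on the \emph{far} side that you already introduced: since $\tau$ is an interior facet, the opposite maximal cone is $C(f)=f^*\Nef(\shY'/S)$ for a small modification $f\colon\shY\dashrightarrow\shY'$, so $F=f^*A'$ with $A'$ ample on $\shY'$, and because $f$ is an isomorphism in codimension one, $\Bs|mF|$ has codimension at least $2$ for $m$ sufficiently divisible. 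If $\Ex(\psi)$ contained a divisor $E$, then $E$ would be covered by curves contracted by $\psi$ (the fibres of $\psi|_E$ are positive dimensional), i.e.\ by curves with class in $R$; from $F.R<0$ every such curve, and hence all of $E$, would lie in $\Bs|mF|$ for every such $m$, a contradiction. Thus $\psi$ is small and, with this $F$, it is an $F$-flopping contraction exhibiting $f$ as the desired flop. (The same argument shows that facets of $\Nef(\shY/S)$ defined by divisorial contractions are never interior facets of $\Morifan(\shY/S)$, which is the dichotomy your remark was aiming at.) With this repair your proof is complete and agrees with the paper's.
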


\subsubsection{Action of $\Bir(\shY/S)$}

The birational $S$-automorphisms $\Bir(\shY/S)$ do not contract divisors, as $\shY\to S$ is a minimal model. Hence  we can define a representation 
\[ \sigma\colon\Bir(\shY/S)\to \GL(\N^1(\shY/S)) \] 
by $\sigma(\theta)(D)=\theta_*(D)$ for $\theta \in \Bir(\shY/S)$.  
Since $\theta$ is a small modification, this defines a permutation action on the set of  maximal cones of $\Morifan(\shY/S)$. The following result,
which will be crucial for us, follows directly as in \cite[Lemma 1.5]{Kaw97}. 

\begin{proposition} \label{prop:actionbir}
Let $f_i\colon \shY\dashrightarrow \shY_i$,  $i=1,2$ be small modifications of $\shY$ over $S$, with $\shY_1$ and  $\shY_2$  models of the DNV family (of a given degree). Suppose that the associated Mori  cones are in the same orbit under the action of $\Bir(\shY/S)^{\operatorname{op}}$, i.e. that there 
is a a birational automorphism $\theta\in \Bir(\shY/S)$ such that  
\[ C(f_1)=\theta^* C(f_2).\] 
Then  there is an isomorphism $\beta:\shY_1\to \shY_2$ such that $\beta \circ f_1=f_2$.
\end{proposition}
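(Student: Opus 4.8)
The plan is to absorb the automorphism $\theta$ into the marking, thereby reducing the assertion to the case of two marked models with \emph{identical} Mori cones, which is precisely the situation governed by \cite[Lemma 1.5]{Kaw97} and recorded in the remark following Definition \ref{def:morifan}. First I would set $g:=f_2\circ\theta\colon\shY\dashrightarrow\shY_2$. Since both $\theta$ and its inverse lie in $\Bir(\shY/S)$ and $\shY\to S$ is a minimal model, $\theta$ contracts no divisor and is an isomorphism in codimension $1$; composing with the small modification $f_2$ shows that $g$ is again a small modification, so $(\shY_2,g)$ is a marked minimal model. Using $g^*=\theta^*\circ f_2^*$ on $\N^1(\shY/S)$ together with the definition of the cones, one computes
\[
C(g)=(f_2\circ\theta)^*\Nef(\shY_2/S)=\theta^*\bigl(f_2^*\Nef(\shY_2/S)\bigr)=\theta^*C(f_2)=C(f_1),
\]
so the two marked models $(\shY_1,f_1)$ and $(\shY_2,g)$ determine the same (full-dimensional) cone of $\Morifan(\shY/S)$.

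Next I would run the argument of \cite[Lemma 1.5]{Kaw97} to deduce that two marked minimal models sharing a full-dimensional cone are isomorphic compatibly with their markings. Concretely, choose a rational class $A$ in the relative interior of the common cone $C(f_1)=C(g)$. Because $f_1$ and $g$ are isomorphisms in codimension $1$, the maps $f_1^\ast$ and $g^\ast$ are linear isomorphisms carrying $\Nef(\shY_1/S)$ and $\Nef(\shY_2/S)$ onto this cone; hence the strict transforms $f_{1\ast}A$ and $g_\ast A$ lie in the respective ample cones and are $S$-ample. Since the two modifications are isomorphisms away from codimension $2$, pushing forward sections identifies the relative section algebras $\bigoplus_{m\geq0}(\pi_1)_\ast\O_{\shY_1}(mf_{1\ast}A)$ and $\bigoplus_{m\geq0}(\pi_2)_\ast\O_{\shY_2}(mg_\ast A)$ with $\bigoplus_{m\geq0}\pi_\ast\O_\shY(mA)$, where $\pi,\pi_1,\pi_2$ denote the structure morphisms to $S$. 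Taking relative $\Proj$ and invoking ampleness identifies both $\shY_1$ and $\shY_2$ canonically with $\Proj_S\bigl(\bigoplus_m\pi_\ast\O_\shY(mA)\bigr)$, these identifications being compatible with the birational maps $f_1$ and $g$ from $\shY$. This produces an isomorphism $\beta\colon\shY_1\to\shY_2$ over $S$ with $\beta\circ f_1=g=f_2\circ\theta$; in the convention of the proposition, where markings are taken modulo the $\Bir(\shY/S)$-action, this is the asserted relation $\beta\circ f_1=f_2$.

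The reduction step (that $f_2\circ\theta$ is a marked model and that $C(g)=C(f_1)$) is routine bookkeeping once one is careful about the order of composition in $g^\ast=\theta^\ast\circ f_2^\ast$. The step I expect to be the main obstacle is the core of \cite[Lemma 1.5]{Kaw97}: verifying that the relative section algebras are genuinely preserved under the codimension-$1$ modifications and, more importantly, that the resulting $\Proj$ isomorphism respects the markings rather than being merely an abstract isomorphism of the $S$-schemes $\shY_1$ and $\shY_2$. That this argument transfers to our relative setting rests on two facts already available: linear equivalence over $S$ coincides with ordinary linear equivalence (so strict transforms of divisor classes behave as expected), and nef divisors on a model of the DNV family are semiample by abundance (Remark \ref{rem:abundance}), which guarantees the relevant section algebras are finitely generated and that their relative $\Proj$ recovers the models.
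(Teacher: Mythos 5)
Your proposal is correct and follows essentially the same route as the paper, which proves the proposition simply by invoking \cite[Lemma 1.5]{Kaw97} (the identical-cones case is already recorded in the remark after Definition \ref{def:morifan}); your contribution is just to make explicit the reduction $g=f_2\circ\theta$, $C(g)=C(f_1)$, and Kawamata's section-ring argument. Your observation that the conclusion literally reads $\beta\circ f_1=f_2\circ\theta$ (i.e.\ the markings agree only up to the birational automorphism $\theta$) is accurate and correctly resolves a slight looseness in the statement.
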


\section{$(-1)$-Curves and Curve Structures}\label{sec:curvestructures}
 
From this section on, $Y_c$ will denote a $d$-semistable $K3$ surfaces of type III  while $Y$ will denote a component of $Y_c$. 
For any degree $2d$, 
let $\DNV_{2d}$ be the set of $d$-semistable $K3$ surfaces of type III in $(-1)$-form  with $t=2d$ triple points. For $d=1$, these are the surfaces $\YP$ and $\YT$ described in Example \ref{DNV2}. Let $\Mod_{2d}$ denote the set of surfaces $Y_c$ such that there is  $Y_0\in \DNV_{2d}$ 
and a sequence  of elementary modifications of type I  \[ Y_0\dashrightarrow Y_1 \dashrightarrow \dots\dashrightarrow Y_i \dashrightarrow \dots \dashrightarrow Y_n=Y_c.\]
This also defines an anticanonical divisor on each component of  $Y_c$.
We denote by $\PMod_{2d}$ the subset of projective surfaces in $\Mod_{2d}$. Each of these surfaces determines a model of the Dolgachev family of degree $2d$. In general, one does not obtain the full set of models in this way 
as, usually, a central fibre of a model of the DNV family of degree $2d$ cannot be obtained by type I modifications alone. However, we will later see that for $d=1$ the set $\PMod_2$ parametrizes the isomorphism classes of 
 models of the Dolgachev-Nikulin-Voisin family of degree $2$.
 
  \begin{definition} Let $Y_c\in \Mod_2$. We say $Y_c$  is of  \emph{class} $\mathscr{T}$ (resp. $\mathscr{P}$) if the dual intersection graph of $Y_c$ is given by  $\mathscr{T}$ (resp. $\mathscr{P}$).
\end{definition}
We denote the set of $Y_c\in \Mod_2$ 
of class $\mathscr{T}$ (resp. $\mathscr{P}$) by  $\Mod_2(\mathscr{T})$ (resp. $\Mod_2(\mathscr{P})$), and similarly we define $\PMod_2(\mathscr{T})$ and $\PMod_2(\mathscr{P})$. 
 Note that this defines decompositions into  disjoint unions  $\Mod_2 = \Mod_2(\mathscr{T}) \sqcup \Mod_2(\mathscr{P})$ and $\PMod_2 = \PMod_2(\mathscr{T}) \sqcup \PMod_2(\mathscr{P})$ respectively.

We recall the following elementary facts about anticanonical pairs, i.e. pairs $(Y,D)$ with $Y$ a smooth rational surface and $D$ an effective anticanonical cycle. We will use these without further mention. 
\begin{proposition}
Let $(Y,D)$ be an anticanonical pair. Let $C$ be an irreducible curve that is not a component of $D$.
\begin{itemize}
\item[(i)]{If $C^2=-1$,  then $C.D$=1 and $C$ is smooth rational.}
\item[(ii)]{ If $C^2=0$,  then $C.D=2$ or $C.D=0$. In the first case, $C$ is smooth rational, in the second case $p_a(C)=1$. }
\item[(iii)]{If $C^2=-2$, then $C$ is smooth rational and $C.D=0$.}
\end{itemize}
\end{proposition}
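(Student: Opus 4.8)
The plan is to reduce all three parts to a single computation via the adjunction formula, exploiting the anticanonical condition. Since $(Y,D)$ is an anticanonical pair we have $K_Y\sim -D$, i.e. $\O_Y(K_Y+D)\cong\O_Y$. For any irreducible curve $C$ on the smooth surface $Y$, adjunction reads $2p_a(C)-2 = C^2 + C\cdot K_Y = C^2 - C\cdot D$, so that
\[
C\cdot D = C^2 + 2 - 2p_a(C).
\]
Everything will follow by feeding the three values $C^2\in\{-1,0,-2\}$ into this identity, once two elementary sign constraints are in place.

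First I would record those two constraints. Because $C$ is irreducible and, by hypothesis, is \emph{not} a component of the effective divisor $D$, the intersection number $C\cdot D$ is non-negative. Because $C$ is an irreducible curve on a smooth surface, its arithmetic genus satisfies $p_a(C)\geq 0$. It is worth stating the first of these explicitly, since it is the only place the hypothesis ``$C$ not a component of $D$'' together with effectivity of $D$ is used.

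With these in hand each case is immediate. For (i), substituting $C^2=-1$ gives $C\cdot D = 1-2p_a(C)$; non-negativity forces $p_a(C)=0$, whence $C\cdot D=1$ and $C$ is smooth rational, an irreducible curve of arithmetic genus $0$ being isomorphic to $\PP^1$. For (iii), substituting $C^2=-2$ gives $C\cdot D = -2p_a(C)$, which together with $p_a(C)\geq 0$ yields $p_a(C)=0$, so $C\cdot D=0$ and $C$ is smooth rational. For (ii), substituting $C^2=0$ gives $C\cdot D = 2(1-p_a(C))$; non-negativity bounds $p_a(C)\in\{0,1\}$, giving either $C\cdot D=2$ with $C$ smooth rational, or $C\cdot D=0$ with $p_a(C)=1$, exactly as claimed.

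The argument is essentially forced once the adjunction identity is written down, so I do not expect any serious obstacle; the proposition is really just bookkeeping on the single formula above. The only point deserving care is the non-negativity $C\cdot D\geq 0$, which I would justify at the outset rather than leave implicit. One may also remark that the rationality of $Y$ plays no role beyond setting up the anticanonical pair: the entire content is adjunction together with the two sign constraints $C\cdot D\geq 0$ and $p_a(C)\geq 0$.
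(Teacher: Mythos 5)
Your proof is correct, and it is the standard argument: adjunction $2p_a(C)-2 = C^2 - C\cdot D$ (using $K_Y \sim -D$), combined with $C\cdot D \geq 0$ (since $C$ is irreducible and not a component of the effective divisor $D$) and $p_a(C)\geq 0$, settles all three cases by arithmetic. Note that the paper itself offers no proof of this proposition -- it is stated as a recollection of elementary facts about anticanonical pairs to be used without further mention -- so there is nothing to compare against; your write-up supplies exactly the bookkeeping the authors left implicit, and your care in isolating the non-negativity $C\cdot D\geq 0$ as the only place the hypothesis on $C$ is used is well placed.
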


\subsection{$(-1)$-Curves}
 In order to control flops, we need to control exceptional curves of the first kind on the components of the central fibres. We show that the set of possible elementary modifications of a surface in $\Mod_{2}$ is quite small. 
 Recall the following terminology.  A \emph{cycle} is a graph whose vertices and edges can be ordered $C_1,\dots C_n$ and $e_1,\dots, e_n$ such that $e_i$ connects $C_i$ and $C_{i+1}$  (where the indices have to be read cyclically). A \emph{tree} is a connected graph not containing a subgraph that is a  cycle.  A vertex $v$  of a graph is called a \emph{fork} if there are at least three edges from $v$.  
 If $G$ is a tree
with a unique fork $v\in G$, the connected components of $G\backslash\{v\}$ are the \emph{branches} of $v$.

 \begin{construction} Let $\Gamma_1^{\mathbf{0}}$ be a tree with a unique fork $v$ such that there are $3$ branches $B_i$, $i=1,2,3$, consisting 
 of $1$, $2$ and $4$ vertices, i.e. the graph underlying the $E_8$ diagram. Let $\mathbf{n}=n_1 \geq 0$. Let $\Gamma_1^{\mathbf{n}}$ be the tree with a unique 
 fork  containing $\Gamma_1^{\mathbf{0}}$   such that $\Gamma_1^{\mathbf{n}}\backslash \{B_1\cup B_2\}$ has a unique connected component given by a 
 chain of length $n_1+4$.
 We label the end of this tree which is not is not a fork of  $\Gamma_1^{\mathbf{n}}$  with $-1$, all other  vertices are labelled with $-2$, see Figure \ref{graph1}.
 (We shall later interpret these labels as intersection numbers.) 
 
 Let $\Gamma_2^{\mathbf{0}}$ be the 
 tree with a unique fork $f$ such that there are $3$ branches $B_i$, $i=1,2,3$, consisting of $1$, $3$ and $3$ 
 vertices. Assume that $B_1$ is the branch that is a singleton. There are exactly two vertices $v_1,v_2$ - the ends of the branches -
 in $\Gamma_2^\mathbf{0}\backslash B_1$  that are connected to a unique vertex $\Gamma_2^\mathbf{0}\backslash\{v_1,v_2\}$. 
 Let $\mathbf{n}=(n_1,n_2)$. Let $\Gamma_2^{\mathbf{n}}$ be the tree  with a unique fork containing $\Gamma_2^{\mathbf{0}}$ such that $\Gamma_2^{\mathbf{n}}\backslash \Gamma_2^{\mathbf{0}}$ has two connected components $C_1,C_2$ such that $C_i$ contains a vertex connected to $v_i$ and $C_i$ has $n_i$ vertices. The ends of the branches $C_i$ are labelled with $-1$, all other vertices are labelled with $-2$, see Figure \ref{graph2}.
 
Finally,  let $\Gamma_4^{\mathbf{0}}$ be the unique tree with $2$  forks and 
 six vertices. 
 Label the vertices that are not forks from $1$ to $4$ such 
 that one fork is connected to the vertices labelled $1$ and $2$.  Let $
 \mathbf{n}=(n_1,n_2,n_3,n_4)$. Let $\Gamma^{\mathbf{n}}_4$ be the 
 tree with $2$ forks, containig $\Gamma^{\mathbf{0}}_4$ such that   $
 \Gamma^{\mathbf{n}}_4$  minus the forks  
 has $4$ connected components $B_i$ $i=1\dots 4$ such that if  $v$ is the unique  labelled vertex 
 contained in $B_i$, $B_i\backslash\{v\}$ contains $n_j$ vertices, with $j$ the label of $v$. Again, we label the ends of the branches with $-1$ and all other vertices with $-2$, see Figure \ref{graph4}.

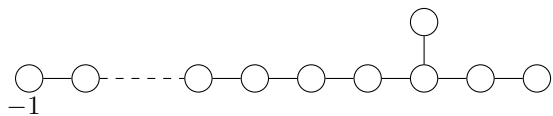
\begin{figure}\centering
\begin{tikzpicture}[scale=0.75]
\node[draw,shape=circle] (1) at (-3,0){}; 
\node(A) at (-3.1,-0.5){\small $-1$};
\node[draw,shape=circle] (2) at (-2,0){}; 
\node[draw,shape=circle] (3) at (0,0){}; 
\node[draw,shape=circle](9) at (1,0){};
\node[draw,shape=circle](10) at (2,0){};
\node[draw,shape=circle] (4) at (3,0){};
\node[draw,shape=circle] (5) at (4,1){};
 
\node[draw,shape=circle] (6) at (4,0){}; 
\node[draw,shape=circle] (8) at (5,0){};
\node[draw,shape=circle] (7) at (6,0){}; 
\draw (1)--(2)
           (3)--(9)
           (9)--(10)
           (10)--(4)
           (4)--(6)
           (6)--(8)
           (8)--(7)
           (5)--(6);
\draw[dashed] (2)--(3);

\end{tikzpicture}\caption{The intersection graph $\Gamma_1^{\bf n}$.  All vertices with label not displayed are labelled with $-2$.  }
\label{graph1}
\end{figure}

\begin{figure}\centering
\begin{tikzpicture}[scale=0.75]
\node[draw,shape=circle] (1) at (-2,0){}; 
\node(A) at (-2.1, -0.5){\small ${-1}$};
\node[draw,shape=circle] (2) at (-1,0){}; 
\node (2x) at (1,0){}; 

\node[draw,shape=circle] (3) at (2,0){}; 
\node[draw,shape=circle] (4) at (3,0){};
\node[draw,shape=circle] (5) at (3,1){};
 
\node[draw,shape=circle] (6) at (4,0){}; 
\node[draw,shape=circle] (6x) at (5,0){}; 

\node[draw,shape=circle] (7) at (7,0){}; 
\node[draw,shape=circle] (8) at (8,0){}; 
\node(B) at (7.9, -0.5){\small ${-1}$};
\draw (1)--(2)
(2x)--(3)
           (3)--(4)
           (4)--(5)
           (4)--(6)
           (6)--(6x)
           (7)--(8);
\draw[dashed] (2)--(2x)
(6x)--(7);

\end{tikzpicture}\caption{The intersection graph $\Gamma_2^{\bf n}$.   All vertices with label not displayed are labelled with $-2$.   }
\label{graph2}
\end{figure}

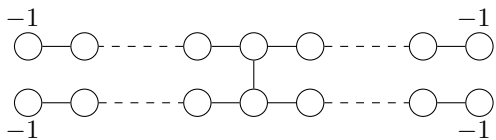
\begin{figure}\centering
\begin{tikzpicture}[scale=0.75]
\node[draw,shape=circle] (1) at (-1,0){}; 
\node (A) at (-1.1,-0.5){\small $-1$};
\node[draw,shape=circle] (2) at (0,0){}; 
\node[draw,shape=circle] (3) at (2,0){}; 
\node[draw,shape=circle] (4) at (3,0){};
\node[draw,shape=circle] (5) at (3,1){};
 
  \node[draw,shape=circle] (11) at (2,1){};
  \node[draw,shape=circle] (12) at (0,1){}; 
  \node[draw,shape=circle] (20) at (-1,1){}; 
  \node (D) at (-1.1,1.5){\small $-1$};
  \node[draw,shape=circle] (13) at (4,1){};
 
  \node[draw,shape=circle] (15) at (6,1){};
   \node[draw,shape=circle] (16) at (7,1){}; 
   \node (B) at (6.9,-0.5){\small $-1$};
  
\node[draw,shape=circle]  (6) at (4,0){}; 
\node[draw,shape=circle] (7) at (6,0){}; 
\node[draw,shape=circle]  (8) at (7,0){}; 
\node (C) at (6.9,1.5){\small $-1$};
\draw (1)--(2)
           (3)--(4)
           (4)--(5)
           (4)--(6)
           (7)--(8)
           (5)--(11)    
              (5)--(13) 
           (12)--(20)
           (15)--(16);
\draw[dashed] (2)--(3)
(11)--(12)
(6)--(7)
  
(13)--(15);       

\end{tikzpicture}\caption{The intersection graph $\Gamma_4^{\bf n}$.   All vertices with label not displayed are labelled with $-2$.   }
\label{graph4}
\end{figure}
\end{construction}
 \begin{lemma}\label{poscoefficient}
 Let $n\in\NN$, $n\geq 2$. Let  $C_i$, $i=1,\dots n$ be a collection of curves on a smooth surface $Y$. Suppose the dual graph is $A_n$, with labelling of the vertices $v_i$ as in Figure \ref{labelvert}.  Assume the labelling is such that $v_i$ corresponds to the curve $C_i$. Moreover, assume $C_i^2=-1$ if $i=1$ and $-2$ if $1<i<n$.  Let $\shH$ be a set of curves on $Y$ such for all $h\in \shH$, $h.c_i=0$ for $i=1,\dots, n-1$. Set $A=\sum_{i=1}^n a_i C_i + \sum_{h\in \shH} \gamma_h h$ for $a_i,\gamma_h\in \QQ$ and assume $A.C_i\geq 0$ for $i=1,\dots, n-1$ and $a_1\geq 0$. Then $a_{i+1}\geq a_{i}$.  If $A.C_i> 0$ for $i=1,\dots, n-1$ and $a_1\geq 0$, it follows $a_{i+1}> a_i$.
 \end{lemma}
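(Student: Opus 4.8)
The plan is to exploit the rigidity of an $A_n$ intersection pattern together with the fact that the curves in $\shH$ are orthogonal to the relevant $C_i$. First I would observe that, since $h\cdot C_i=0$ for every $h\in\shH$ and every $i=1,\dots,n-1$, the auxiliary curves contribute nothing to the intersection numbers $A\cdot C_i$ appearing in the hypotheses; thus for such $i$ we simply have $A\cdot C_i=\sum_{j=1}^n a_j\,(C_j\cdot C_i)$, and only the three entries $C_{i-1}\cdot C_i$, $C_i^2$ and $C_{i+1}\cdot C_i$ of the intersection matrix survive, because the chain is a path.

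Next I would read these numbers off from the $A_n$ structure. For $i=1$, using $C_1^2=-1$ and $C_1\cdot C_2=1$, one gets
\[ A\cdot C_1=-a_1+a_2. \]
For $2\le i\le n-1$, using $C_i^2=-2$ and $C_{i-1}\cdot C_i=C_{i+1}\cdot C_i=1$, one gets
\[ A\cdot C_i=a_{i-1}-2a_i+a_{i+1}. \]
Introducing the consecutive differences $\delta_i:=a_{i+1}-a_i$ for $1\le i\le n-1$, these identities become $A\cdot C_1=\delta_1$ and $A\cdot C_i=\delta_i-\delta_{i-1}$ for $2\le i\le n-1$.

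The conclusion is then immediate by induction on $i$. The hypotheses $A\cdot C_i\ge 0$ translate into $\delta_1\ge 0$ together with $\delta_i\ge\delta_{i-1}$ for $2\le i\le n-1$, whence $0\le\delta_1\le\delta_2\le\cdots\le\delta_{n-1}$; in particular every $\delta_i\ge 0$, which is exactly $a_{i+1}\ge a_i$. The strict statement follows verbatim with every inequality made strict, giving $0<\delta_1<\cdots<\delta_{n-1}$ and hence $a_{i+1}>a_i$.

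Since the argument is a direct computation with the intersection matrix, I do not expect a serious obstacle; the only points requiring care are bookkeeping. I would check that $n\ge 2$ guarantees $1\le n-1$, so that the index $i=1$ is genuinely covered and $h\cdot C_1=0$ holds; that the unspecified self-intersection $C_n^2$ never enters, because the index $i=n$ is excluded from the hypotheses; and that the stated hypothesis $a_1\ge 0$, although not needed for the monotonicity itself, combines with the conclusion to force $a_i\ge 0$ for all $i$, which is presumably the form in which the lemma will later be applied.
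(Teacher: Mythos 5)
Your proposal is correct and follows essentially the same argument as the paper: both reduce the hypotheses to $A\cdot C_1=-a_1+a_2\geq 0$ and $A\cdot C_i=a_{i-1}-2a_i+a_{i+1}\geq 0$ and conclude by induction, your reformulation via the differences $\delta_i=a_{i+1}-a_i$ being only a cosmetic repackaging of the paper's step $a_{i+1}\geq 2a_i-a_{i-1}\geq a_i$. Your side remark that $a_1\geq 0$ is not needed for the monotonicity itself (it only forces $a_i\geq 0$ downstream) is also accurate and matches how the lemma is used later.
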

\begin{proof}
We have $0\leq A.C_1=-a_1+a_2$, so $a_2\geq a_1$. Suppose $a_i\geq a_{i-1}$ for some $1<i<n-1$. Then   $0\leq A.C_i=a_{i+1}-2a_i+a_{i-1}$, so $a_{i+1}\geq 2a_i-a_{i-1}\geq a_i$.  The claim follows. Replacing weak by strict inequalites shows the second claim.
\end{proof}

\begin{figure}\centering
\begin{tikzpicture}
\tikzstyle{every node}=[draw, shape=circle];
\node (1) at (0,0){$1$}; 
\node (2) at (1,0){$2$};

\node (3) at (2,0){$3$};

\node (7) at (5,0){$n$}; 
\draw[thick] (1)--(2)
(2)--(3);
  \draw[dashed, thick]          (3)--(7);

\end{tikzpicture}
\caption{The numbering of the vertices of the $A_n$ graph in Lemma \ref{poscoefficient}. }
\label{labelvert}
\end{figure}
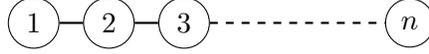

 Let  $\mathfrak{Y}_i$ be as in Construction \ref{constructioncomponents}, with anticanonical cycle $D$,  and let $p$ be a special point. We recall that $p$ is an  \emph{interior} special point if $p$ is a smooth point of $D$.

\begin{proposition}\label{pro:canflops}
 Consider $\mathfrak{Y}_i$ for $i=1,2 $ or $4$, with anticanonical divisor $D$. Let $(p_1,\dots p_i)$ denote the tuple of  interior special points. Let $Y$ denote the ${\mathbf n}=(n_1,\dots n_i)$ blow-up of $\mathfrak{Y}_i$ in $(p_1,\dots p_i)$. 
 
Then there are exactly $i$ exceptional curves  $C_k\subset Y$, $k=1\dots i$ of the first kind such that $C_k$ is not a component of $D$. More precisely, the intersection graph of the negative curves that are not components of $D$ is  given by the 
graph $\Gamma_i^{\mathbf n}$.
\end{proposition}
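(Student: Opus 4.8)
The plan is to exploit that $(Y,D_Y)$ is again an anticanonical pair, where $D_Y$ is the strict transform of $D$, and then to separate the (easy) construction of the curves appearing in $\Gamma_i^{\mathbf n}$ from the (harder) completeness statement that there are no others.

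First I would record the basic reductions. Since each $p_k$ is a smooth point of the anticanonical cycle $D$, blowing it up replaces $-K$ by its strict transform, so inductively $-K_Y=D_Y$ is effective and $(Y,D_Y)$ is an anticanonical pair. Hence for every irreducible curve $C\not\subset D_Y$ one has $-K_Y\cdot C=D_Y\cdot C\ge 0$, and adjunction together with the elementary facts about anticanonical pairs recalled above force $C^2\ge -2$; the negative ones are precisely the $(-1)$-curves (with $C\cdot D_Y=1$) and the $(-2)$-curves (with $C\cdot D_Y=0$, hence disjoint from $D_Y$). This reduces the statement to enumerating $(-1)$- and $(-2)$-curves off $D_Y$.

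Next I would carry out the explicit blow-up computation. The interior special point $p_k=E_k\cap D_k$ is a smooth point of $D$ lying on no root curve, since the $(-2)$-curves of the root systems of Construction \ref{constructioncomponents} are disjoint from $D$. A single blow-up at $p_k$ turns $E_k$ into a $(-2)$-curve meeting the new exceptional curve, which is a $(-1)$-curve meeting $D_k$; each further blow-up at the intersection of the last exceptional curve with the strict transform of $D_k$ promotes that exceptional curve to a $(-2)$-curve and produces a new terminal $(-1)$-curve. Tracking self-intersections step by step shows that the $n_k$-fold blow-up attaches to $E_k$ a chain of $n_k$ curves, all $(-2)$ except the terminal $(-1)$-curve, which is exactly the branch added in passing from $\Gamma_i^{\mathbf 0}$ to $\Gamma_i^{\mathbf n}$. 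Together with the untouched root curves this realises all vertices and edges of $\Gamma_i^{\mathbf n}$ with the prescribed labels, and in particular exhibits exactly $i$ terminal $(-1)$-curves not contained in $D$.

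The substantial part is completeness. For the base case $\mathbf n=\mathbf 0$ I would work in the Picard lattice of the weak del Pezzo $\mathfrak Y_i$ realised as the explicit blow-up of $\PP^2$ or $\PP^1\times\PP^1$ of Construction \ref{constructioncomponents}: the $(-2)$-curves form exactly the stated root system, while the $(-1)$-curves off $D$ are the irreducible $(-1)$-classes, i.e. those meeting every effective root non-negatively. Decomposing the Weyl orbit of $(-1)$-classes under the Weyl group of the effective roots, the classes orthogonal to all roots correspond to the boundary components and contribute no $(-1)$-curve off $D$, while the remaining dominant representatives are exactly the $E_k$; this gives $\Gamma_i^{\mathbf 0}$. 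For general $\mathbf n$ I would argue by induction along the blow-up tower: a negative curve $C\not\subset D$ on the blown-up surface is either the strict transform of such a curve below, the new exceptional curve, or has image a curve of non-negative self-intersection passing through the blown-up point. Ruling out this last \emph{spurious} possibility is the technical heart: a numerical analysis via $p_a(C)\ge\binom{m}{2}$ restricts such an image to a curve of self-intersection $m^2-1$ or $m^2-2$ through $p_k$ with multiplicity $m$, and to exclude it I would write the class of $C$ in the $\QQ$-basis furnished by the negative curves and apply Lemma \ref{poscoefficient} to the chains: non-negativity of $C$ against the $(-2)$-curves of a chain forces its coefficients to increase monotonically towards the terminal $(-1)$-curve, which is incompatible with $C$ being an irreducible $(-1)$- or $(-2)$-curve distinct from those already listed. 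I expect this spurious-curve exclusion, together with the orbit bookkeeping in the base case, to be where essentially all the work lies.
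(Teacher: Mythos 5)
Your first two steps (the reduction via adjunction to $(-1)$- and $(-2)$-curves off $D$, and the explicit computation showing that the blow-up tower produces the chains of $\Gamma_i^{\mathbf n}$) are correct and consistent with the paper. The problem is the completeness step, which you yourself flag as carrying all the weight: as written it contains a genuine gap and a reversed inequality. Lemma \ref{poscoefficient} is stated with $C_1$ the $(-1)$-curve and concludes $a_{i+1}\geq a_i$, so the coefficients increase \emph{away} from the terminal $(-1)$-curve towards the fork, not ``towards the terminal $(-1)$-curve'' as you claim; moreover it only applies under the hypothesis $a_1\geq 0$, i.e.\ you must first show that the coefficient of each terminal $(-1)$-curve in the expansion of your hypothetical extra curve $C$ is non-negative, and you never say where this seed comes from. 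In the paper it comes from intersecting with the components $D_j$ of the anticanonical cycle: these are not in the $\QQ$-basis, but $C.D_j\geq 0$ since $C\not\subset D$, and each $D_j$ meets exactly one basis element (the terminal $(-1)$-curve of its chain), which forces that coefficient to be non-negative; Lemma \ref{poscoefficient} then propagates non-negativity inward along each chain. Finally, ``incompatible with $C$ being an irreducible $(-1)$- or $(-2)$-curve distinct from those already listed'' is an assertion, not an argument; the actual contradiction is numerical: writing $mC=\sum_{B\in C(Y)}\beta_B B$ with all $\beta_B\geq 0$, and using $C.B\geq 0$ for every listed curve $B$, one gets $0>mC^2=\sum_{B}\beta_B(C.B)\geq 0$.

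Once this lattice argument is carried out correctly, you should also notice that your surrounding scaffolding becomes unnecessary: the argument applies verbatim to \emph{any} irreducible curve $C$ with $C^2<0$ not in the list, directly on $Y$ and uniformly in $\mathbf n$. So there is no need for a separate Weyl-orbit analysis of the base case $\mathbf n=\mathbf 0$, no induction along the blow-up tower, and no classification of ``spurious'' curves via $p_a\geq\binom{m}{2}$. This is exactly how the paper proceeds: after reducing to $i=4$, it quotes Looijenga for the statement that $C(Y)$ is a $\QQ$-basis of $\Pic(Y)$ with intersection graph $\Gamma_i^{\mathbf n}$, and then runs the expansion-and-positivity contradiction once.
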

\begin{proof}
 Let $\psi\colon Y\to \mathfrak{Y}_i$ be the $(n_1,\dots ,n_i)$ blow-up of $\mathfrak{Y}_i$ in the  interior special points $(p_1,\dots, p_i)$. 
 
Let $C(Y)$ be the set consisting of strict transforms of the  $(-1)$ curves $E_i$ from the construction of the $\mathfrak{Y}_i$ in Construction \ref{constructioncomponents}
and  the  $(-2)$-curves  of $\mathfrak{Y}_i$ together with the irreducible components of the exceptional locus of $\psi$, see also Figures \ref{curveconfig} and \ref{curveconfig2}. 
By the results of \cite[Section $2$ ]{Loo}, the set $C(Y)$ defines a $\QQ$-basis of $\Pic(Y)$, with intersection graph $\Gamma_i^\mathbf{n}$ and  $\mathbf{n}=(n_1,\dots n_i)$.
We can assume $i=4$ as it is straightforward to check, using the construction  of the $\mathfrak{Y}_i$, 
that the result for $i=4$ implies the result for  $i=1, 2$.
 
 Let $C_k$, $k=1,\dots,4$ denote the  integral $(-1) $-curves in $\Ex(\psi)$, corresponding to the  vertices labelled with $(-1)$ in the diagrams.  Let $D_1,\dots, D_4$ denote the components of the anticanonical divisor $D$ given as 
 birational transforms of the anticanonical cycle from the construction of $\mathfrak{Y}_4$.  
 Suppose there is an integral curve $C$ with $C^2<0$ not in $\shB:=\{D_1,\dots,D_4\} \cup C(Y)$. We have $C.B\geq 0$ for all curves $B$ in  $\shB$. 
 Write 
\begin{equation}\label{eg:d3}
mC=\sum_{ B\in C(Y)}\beta_B B.
\end{equation}
with  $m, \beta_B \in\ZZ$ and $m> 0$. 
From Lemma \ref{poscoefficient}, it follows that all coefficients  $\beta_B$ are non-negative. 

We find that 
\begin{equation}\label{eg:d4}
0 > mC^2=\sum_{ B\in C(Y)} \beta_B (C.B) \geq 0,
\end{equation}
 where the last inquality follows since $\beta_B \geq 0$ and $C.B\geq 0$.
Hence a curve such as $C$ cannot exist.
\end{proof}

\begin{remark}
The same proof works for $i=3$, the case $i=5$ also holds. We restrict to $i=1,2,4$ as these are the relevant surfaces in the $d=1$ case.
\end{remark}
We have the  the following result.

\begin{corollary}\label{main1} Let $Y_c=Y_1\cup Y_2\cup Y_3$ be in $\Mod_2$. Then the cones of curves $\NE(Y_i^{\nu})$ on the normalisations  of the components are finitely generated. If the Picard rank of $Y_i$
is at least $3$, a generating set is given by the curves $C$ with $C^2<0$. For smaller Picard rank, either $Y_i$ is a Hirzebruch surface or $\PP^2$.
\end{corollary}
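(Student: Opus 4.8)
The plan is to reduce the statement to the enumeration of negative curves already obtained in Proposition \ref{pro:canflops}, and then to promote that enumeration to a description of the entire cone of curves. First I would dispose of the low rank cases directly from the classification of smooth rational surfaces: a component $Y_i^\nu$ with $\rho(Y_i^\nu)=1$ is $\PP^2$, and one with $\rho(Y_i^\nu)=2$ is a Hirzebruch surface (these arise precisely when a component has absorbed enough flops to reach anticanonical degree $9$ or $8$); in either case $\NE(Y_i^\nu)$ is visibly generated by at most two curves. So I may assume $\rho(Y_i^\nu)\geq 3$.

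In that range I would prove the sharper assertion $\NE(Y_i^\nu)=\cone\big(C(Y_i^\nu)\big)$, where $C(Y_i^\nu)$ is the set of interior $(-2)$- and $(-1)$-curves furnished by Proposition \ref{pro:canflops}. By \cite[Section 2]{Loo} this set is a $\QQ$-basis of $\Pic(Y_i^\nu)$, it consists entirely of negative curves, and its intersection graph is the explicit chain-with-forks graph $\Gamma_i^{\mathbf{n}}$; so proving this identity yields simultaneously finite generation and the claim that the negative curves generate. Since every effective class is a non-negative sum of irreducible curves, it is enough to check that $[C]\in\cone\big(C(Y_i^\nu)\big)$ for each irreducible curve $C$. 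If $C\in C(Y_i^\nu)$, or if $C$ is a component $D_j$ of the anticanonical cycle (then $C^2<0$), this is immediate; otherwise I would expand $mC=\sum_{B\in C(Y_i^\nu)}\beta_B B$ with $m>0$, exactly as in the proof of Proposition \ref{pro:canflops}, and aim to show every $\beta_B\geq 0$.

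The mechanism is Lemma \ref{poscoefficient}, applied to each maximal $A_n$-chain of $\Gamma_i^{\mathbf{n}}$ running inward from its $(-1)$-end: irreducibility of $C$ gives $mC\cdot B=m(C\cdot B)\geq 0$, which supplies the inequalities the lemma needs along the interior of each chain. The single input the lemma still requires is non-negativity of the coefficient at the $(-1)$-end of each chain, and this is exactly the place where the proof of Proposition \ref{pro:canflops} could invoke $C^2<0$ but I cannot. I would instead obtain it from the boundary: if $F$ is the terminal $(-1)$-curve of the chain lying over the interior special point on the cycle component $D_j$, then the blow-up description in Construction \ref{constructioncomponents} shows that, among the curves of $C(Y_i^\nu)$, only $F$ meets $D_j$, and transversally, so $m(C\cdot D_j)=\beta_F$; as $C$ is irreducible and $C\neq D_j$ we get $\beta_F\geq 0$. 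Feeding this into Lemma \ref{poscoefficient} forces $\beta_B\geq 0$ throughout, whence $[C]\in\cone\big(C(Y_i^\nu)\big)$ and $\NE(Y_i^\nu)=\cone\big(C(Y_i^\nu)\big)$, a finitely generated cone spanned by negative curves.

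The routine part is the intersection bookkeeping on $Y_i^\nu$ (that each cycle component $D_j$ meets $C(Y_i^\nu)$ only in its terminal $(-1)$-curve, with multiplicity one), read off from the chains of $\Gamma_i^{\mathbf{n}}$. The step I expect to be the genuine obstacle is the non-negativity of the chain-end coefficients: this is the only point that truly uses the anticanonical geometry, and it is precisely what fails in low Picard rank, where a ruling produces an extremal ray of self-intersection $0$ that no non-negative combination of negative curves can reach. That is both why the argument must route through the boundary cycle and why the hypothesis $\rho(Y_i^\nu)\geq 3$ (equivalently, that $C(Y_i^\nu)$ be a full basis of negative curves) cannot be dropped.
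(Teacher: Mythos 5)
Your strategy (reduce to irreducible curves, expand in the $\QQ$-basis $C(Y_i^\nu)$, and propagate positivity of coefficients with Lemma \ref{poscoefficient}, seeded by intersections with the boundary) is a reasonable attempt at a self-contained argument, and the seeding identity $m(C\cdot D_j)=\beta_F$ is a nice observation. But the ``sharper assertion'' $\NE(Y_i^\nu)=\cone\bigl(C(Y_i^\nu)\bigr)$ to which you reduce everything is false, and the step where you wave the boundary through (``if $C$ is a component $D_j$ of the anticanonical cycle \dots this is immediate'') is exactly where it breaks. An irreducible curve $Z$ with $Z^2<0$ spans an extremal ray of $\NE$: if $[Z]=\sum_i a_i[C_i]$ with $a_i\geq 0$ and each $C_i$ an irreducible curve different from $Z$, then $Z^2=\sum_i a_i(C_i\cdot Z)\geq 0$, a contradiction. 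The boundary components are irreducible, are by definition excluded from $C(Y_i^\nu)$, and already on the components of $\YP$ (that is, on $\mathfrak{Y}_2$) they satisfy $D_1^2=D_2^2=-1$; hence $[D_j]\notin\cone\bigl(C(Y_i^\nu)\bigr)$ and your identity fails for the very first surfaces in $\Mod_2$. Concretely, writing $C_j$ for the $(-2)$-curve labelled $j$ in Figure \ref{curveconfig}, one computes on $\mathfrak{Y}_2$ that $D_1=-E_1+C_3+2C_4+C_1+2C_5+2C_6+2E_2$, with a genuinely negative coefficient. This is why the Corollary's generating set consists of \emph{all} curves of negative self-intersection, boundary components included. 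Note also that your parenthetical ``then $C^2<0$'' is not available in the generality of the Corollary: for $Y_c\in\Mod_2$ the boundary curves can have $D_{ij}^2\geq 0$ (for type $d_1$ components the nodal boundary has square $1-n_1$, and self-intersections as large as $D_{13}^2=4$ occur in Theorem \ref{modelsP}); such a $D_j$ is then an irreducible curve that is neither a generator nor covered by your ``otherwise'' branch, so your case analysis misses it entirely.

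Even after enlarging the generating set, two further steps do not go through as written. First, ``feeding this into Lemma \ref{poscoefficient} forces $\beta_B\geq 0$ throughout'' overstates what that lemma gives: it controls only the coefficients along the $A_n$-chains emanating from the $(-1)$-ends. The branch vertices hanging off the forks (the vertex labelled $1$ in Figures \ref{curveconfig} and \ref{curveconfig2}, and three such vertices in $\Gamma_1^{\mathbf{n}}$) lie on no such chain, and at such a vertex $v$ the only available inequality $C\cdot C_v\geq 0$ reads $\beta_{\mathrm{fork}}-2\beta_v\geq 0$, an \emph{upper} bound on $\beta_v$. The desired non-negativity is in fact true, but for a different reason: on $\mathfrak{Y}_2$, for instance, $\beta_{C_1}=m\,C\cdot(E_1+E_2+C_2+C_3+C_4+C_5+C_6)\geq 0$ because the dual basis vector of $C_1$ happens to be an effective sum of the remaining basis curves -- an argument you have not supplied and which must be checked configuration by configuration. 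Second, there is a scope problem: the Corollary concerns arbitrary components of surfaces in $\Mod_2$, which are blow-\emph{downs} of the surfaces treated in Proposition \ref{pro:canflops}; their curve structures can be degenerate, with several basis curves meeting one boundary component or a single curve meeting it with multiplicity two (Figures \ref{ex:alone} and \ref{degcurve}), so both your incidence bookkeeping (``only $F$ meets $D_j$, transversally'') and the $(-1),(-2),\dots,(-2)$ chain shape required by Lemma \ref{poscoefficient} break down. The paper's own proof sidesteps all of this: it extracts from Proposition \ref{pro:canflops} only the \emph{finiteness} of the set of negative curves (which survives blow-downs, since strict transforms of negative curves are negative) and then quotes \cite[Proposition 1]{ArLa} for the general fact that, on a surface of Picard rank at least $3$ with finitely many negative curves, the cone of curves is generated by them.
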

\begin{proof} The only thing that remains to show is the statement on the generators of cones of curves. By 
Proposition \ref{pro:canflops}, there are finitely many curves $C$ on $Y_i$ with $C^2<0$, so, 
if the Picard rank is at least $3$, the corollary follows  from \cite{ArLa}.
\end{proof}

 \subsection{Curve Structures} Let $Y$ be a component of a surface $Y_c\in \Mod_2$. 
 By definition, 
there is a sequence of type I flops $ Y_\mathscr{G}\dashrightarrow Y_c$ whose inverse connects 
$Y$  to a component of a model $Y_\mathscr{G}\in \DNV_2$ in $(-1)$-form. 
Using this sequence, we will recursively define a set of curves $C(Y)$
on the normalisation $Y^\nu$.
Note that there is an induced sequence of blow-ups 
and blow-downs  $\psi\colon Y^\nu \dashrightarrow \mathfrak{Y}_i$ 
for some $i\in\{1,2,4\}$. 

Let $D$ denote the anticanonical divisor of $\mathfrak{Y}_i$ as discussed in Construction \ref{constructioncomponents}.  
Set 
\[ C(\mathfrak{Y}_i):=\{ C\subset \mathfrak{Y}_i \mid C \text{ is an integral curve with } C^2<0, C\not\subset D \}. \]  To start the induction, we factor 
$\psi$  as $Y^\nu\dashrightarrow W\dashrightarrow \mathfrak{Y}_i$ with $Y^\nu\dashrightarrow W^\nu$ corresponding to 
an elementary modification of type I. 
Suppose we have defined $C(W)$.  If $Y^\nu=\text{Bl}_p(W^\nu)$ for some $p$, we take $C(Y)$ to be the strict transforms of curves in $C(W)$ together with the exceptional curve of $Y^\nu\to W^\nu$. If  $Y^\nu\dashrightarrow W^\nu$ is the inverse of $\pi\colon W^\nu=\text{Bl}_p(Y^{\nu})\to Y^\nu$, 
we set \[C(Y):=\{ C\subset Y^\nu \mid C=\pi(C')  \text{ for } C'\in C(W)\}.\]  
Because the normalisation morphism $Y^\nu\to Y$ identifies  components of the singular locus of $Y_c$, which are part 
of the anticanonical cycle, 
and thus never curves in $C(Y)$, we can also  interpret $C(Y)$ as a collection of curves on $Y$.
We then proceed by induction.
The sequence $Y_\mathscr{G}\dashrightarrow Y_c$ is not uniquely defined, but the set $C(Y)$ and its
intersection graph is independent of the choice such a sequence, as the following lemma shows.
\begin{lemma}
Let $Y$ be a component of $Y_c\in \Mod_2$. The  collection $C(Y)$ and the intersection graph are independent of the sequence $ Y_\mathscr{G}\dashrightarrow Y_c$. 
\end{lemma}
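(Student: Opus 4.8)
The plan is to show that $C(Y)$ admits a description intrinsic to the component $Y$ of $Y_c$, with no reference to the chosen sequence. Concretely, write $D$ for the anticanonical cycle on $Y^\nu$, that is, the preimage under normalisation of the double locus of $Y_c$, and set
\[
N(Y) := \{\, C \subset Y^\nu \mid C \text{ integral},\ C^2 < 0,\ C \not\subset D \,\}.
\]
Both $Y^\nu$ and $D \subset Y^\nu$ depend only on the surface $Y_c$ and the chosen component $Y$, not on how $Y_c$ was produced; hence $N(Y)$ is intrinsic, and so is the intersection graph it carries. The entire lemma therefore reduces to the single claim $C(Y) = N(Y)$.

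To prove this claim I would argue by induction on the length of the induced map $\psi\colon Y^\nu \dashrightarrow \mathfrak{Y}_i$. The base case is immediate: by definition $C(\mathfrak{Y}_i)$ is exactly the set of integral curves with negative self-intersection not contained in $D$, i.e. $C(\mathfrak{Y}_i) = N(\mathfrak{Y}_i)$. For the inductive step I would maintain, in parallel, the structural assertion that every component $Y^\nu$ occurring along the sequence is an $\mathbf{n}$-blow-up of some $\mathfrak{Y}_i$ (with $i \in \{1,2,4\}$) at its interior special points. This is preserved by a type I modification: such a modification contracts an \emph{end} $(-1)$-curve lying in $C(\,\cdot\,)$ on one component, lowering one entry of $\mathbf{n}$, and blows up the corresponding smooth point of the double locus on another component, raising one entry of $\mathbf{n}$, because the flopped curve meets the double locus transversally in a single interior point. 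Granting this, Proposition \ref{pro:canflops} computes $N(Y)$ exactly: it is the set of curves of the graph $\Gamma_i^{\mathbf{n}}$, namely the strict transforms of the $E_k$, the $(-2)$-curves, and the exceptional components. These are precisely the curves the recursion tracks: a blow-up adjoins the new exceptional $(-1)$-curve (the new end vertex of $\Gamma_i^{\mathbf{n}}$) and keeps the strict transforms of $C(W) = N(W)$, while a blow-down deletes the contracted $(-1)$-curve and keeps the images. Hence $C(Y) = N(Y)$ at each step.

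The point where care is needed is exactly the reason a purely local argument fails. Under a blow-up a curve with non-negative self-intersection may acquire a negative strict transform, and under a blow-down the image of a negative curve may cease to be negative; so one cannot verify that ``$N$ transforms by strict transforms together with the exceptional curve'' by a naive local computation. The indispensable input that excludes such hidden curves is the global enumeration in Proposition \ref{pro:canflops}, which identifies all negative curves off $D$ with the vertices of $\Gamma_i^{\mathbf{n}}$. Consequently the real work of the proof is the bookkeeping that the ``$\mathbf{n}$-blow-up at interior special points'' structure is genuinely preserved along every type I modification, namely that the contracted curve is always an end $(-1)$-curve of the current graph and that the blown-up point is always a smooth point of the double locus, so that Proposition \ref{pro:canflops} is applicable at every stage and forces $C(Y) = N(Y)$.
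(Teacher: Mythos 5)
Your reduction of the lemma to the single equality $C(Y)=N(Y)$ is exactly where the argument breaks down: that equality is false, and so is the structural invariant your induction relies on. The collection $C(Y)$ is not a set of negative curves. In the blow-down step of the recursion, the image $\pi(C')$ of a curve $C'\in C(W)$ meeting the contracted $(-1)$-curve gains self-intersection, so a $(-1)$-curve can map to a $0$-curve, a $0$-curve to a $(+1)$-curve, and so on. This is not a marginal phenomenon: the paper's degenerate curve structures (Figure \ref{degcurve}, Proposition \ref{degample2}) have vertices of self-intersection $0$, $1$ and $m\geq -1$. In the extreme case $\Gamma_Y$ is a singleton, so by Proposition \ref{qbasis} we have $\rank\Pic(Y^\nu)=1$, i.e.\ $Y^\nu\cong\PP^2$ (components of smaller Picard rank, namely Hirzebruch surfaces and $\PP^2$, do occur -- see Corollary \ref{main1}); there $C(Y)$ consists of a single line with self-intersection $+1$, while $N(Y)=\varnothing$. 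So $N(Y)$ cannot serve as an intrinsic model for $C(Y)$, and no appeal to Proposition \ref{pro:canflops} can repair this, because that proposition only describes $\mathbf{n}$-blow-ups of the $\mathfrak{Y}_i$.

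The root cause is that your parallel structural assertion -- that every surface occurring along the sequence is an $\mathbf{n}$-blow-up of some $\mathfrak{Y}_i$ at interior special points -- can fail at the very first step. A type I modification acts on the component that \emph{loses} the curve by a blow-down, and the contracted curve need not be exceptional over $\mathfrak{Y}_i$: contracting $E_1\subset\mathfrak{Y}_2$ produces a component of Picard rank $7<8$ admitting no birational morphism to $\mathfrak{Y}_2$ at all, and after such a step former $(-2)$-curves become the new flippable $(-1)$-curves, so the "lower one entry of $\mathbf{n}$" bookkeeping has nothing to act on. In general $Y^\nu$ and $\mathfrak{Y}_i$ are related only by a roof $Y^\nu\xleftarrow{\psi}\tilde Y\xrightarrow{\pi}\mathfrak{Y}_i$ of blow-ups, not by a morphism in either direction, and this is how the paper actually argues: $\pi$ blows up the interior special points of $\mathfrak{Y}_i$ on the components whose strict transforms satisfy $D_j^2<-1$, while $\psi$ is the $(n_{j_1},\dots,n_{j_k})$-blow-up of $Y^\nu$ at the interior special points on the components with $D_j^2>-1$; both maps are determined, up to trivial blow-up/blow-down pairs, by the self-intersection numbers $D_j^2$ computed on $Y^\nu$, which are intrinsic, and independence of the chosen sequence follows. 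A correct version of your strategy would have to phrase the intrinsic characterisation on the canonical roof $\tilde Y$ (e.g.\ $C(Y)$ as the $\psi$-images of the negative curves of $\tilde Y$ off the boundary), not on $Y^\nu$ itself.
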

\begin{proof} First, the surface $\mathfrak{Y}_i$ is uniquely determined by the number of components of the double locus $D$ of $Y$. Let $D=D_1+\dots + D_i $ and  $p_1, \dots, p_i$ be
 the interior special points.  
 Let $\bar{D}=\bar{D}_1+\dots+\bar{D}_i$ be the anticanonical cycle on $\mathfrak{Y}_i$. 
 
 We can write the induced sequence $Y^\nu\dashrightarrow\mathfrak{Y}_i$ as
\[
\xymatrix{
&\tilde{Y}\ar@{->}[dl]_\psi\ar@{->}[dr]^\pi&\\
 Y^\nu&&\mathfrak{Y}_i
}
 \] with $\pi$ the blow-up in the interior special points on components $\bar{D}_i$ of $\bar{D}$ such that their 
 strict transform $D_i$ under $Y\dashrightarrow \mathfrak{Y}_i$ has $D_i^2<-1$. Up to trivial 
 modifications, i.e. blowing up a special point and then blowing down the resulting exceptional curve, this 
 morphism is uniquely defined by the self-intersection numbers of  $Y^\nu$. Let $J=\{j_1,\dots, j_k \}$ be the set of indices 
 of components $D_j$ of $D$ with $D_j^2>-1$, where we calculate intersection numbers on the normalisation.
Then  $\psi\colon \tilde{Y}\to Y^\nu$ is the $(n_{j_1},\dots,n_{j{_k}})$-blow-up of $Y^\nu$ in $ (p_{j_1},\dots,p_{j{_k}})$. 
This uniquely defines the curves that are contracted. 
\end{proof}

\begin{definition}\label{def:curvestructure} 
Let $Y$ be as above.
We define $\Gamma_Y$ to be
the dual graph of  $C(Y)$ 
and label its vertices with the self-intersection numbers. We call $\Gamma_Y$
the \emph{curve structure} of $Y$.
We say that a curve structure has \emph{type} $d_i$ if $Y$ maps to $\mathfrak{Y}_i$ under a sequence of type I flops. 
 This is well defined as the number of  components of $D$ is fixed under type I flops.
Note that the chosen anticanonical divisor of $\mathfrak{Y}_i$  has $i$ components.
\end{definition}
\begin{remark}
We will  usually consider $C(Y)$ as a set of curves on $Y^\nu$. In particular, intersection numbers will always be calculated on $Y^\nu$.
\end{remark}
\begin{remark}
Note that for surfaces $Y$ which are $(n,m)$-blow-ups of some $\mathfrak {Y}_i$   this coincides with the set $C(Y)$ which we have used in the proof of Proposition \ref{pro:canflops}.
\end{remark}

We will often interpret  $\Gamma_Y$ as the underlying set of vertices. Also, for any vertex $v\in\Gamma_Y$, we will denote the underlying curve  in $C(Y)$ by $C_v $, but abusing notation, we will often just  write $v$. Note that if $v,w$ are two distinct vertices of $\Gamma_Y$, the number of edges between $v$ and $w$ is either $1$ or $0$ and this is the same as the intersection number $C_w.C_v$.
Two vertices  $v,w$ with  $v.w=1$ for the underlying curves are called \emph{adjacent}.

Given $Y$ with normalisation $Y^\nu$ and  with curve structure  $\Gamma_{Y}$, let $D=\sum D_i$ be the anticanonical divisor of $Y^\nu$. For each $D_i$, append a vertex $v_{D_i}$ to $\Gamma_{Y}$ and for each $v\in \Gamma_Y$ such that $C_v$ is not a component of $D$, add an edge joining $v_{D_i}$ and $v$ if $D_i.C_v  \neq 0$. This defines the \emph{augmented} curve structure $\Gamma_Y^a$. See Figure \ref{fig:aug} for an example.

Note that the existence of an edge between a vertex $v\in \Gamma_Y$  and a vertex corresponding to some $D_i$  does not necessarily imply that the intersection number of the corresponding curves is exactly $1$, it simply means that the intersection is non-empty.  

If $\Gamma_{Y}$ and $\Gamma_{Y'}$ are two curve structures, we then  have an obvious notion of an  isomorphism $\Gamma_Y\to \Gamma_{Y'}$:  a bijection of graphs  
$\Gamma_Y\to \Gamma_{Y'}$ preserving the intersection numbers that extends to a bijection of graphs $\Gamma_Y^a\to \Gamma_{Y'}^a$.

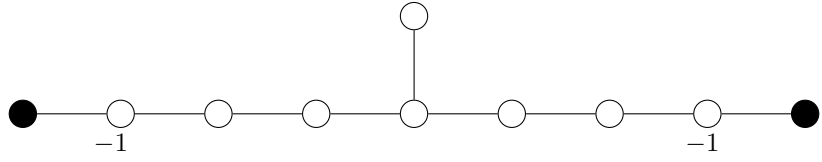
\begin{figure}[]\centering
\begin{tikzpicture}[scale=1.3]

\node[fill=black,draw,shape=circle] (0) at (0,0){};
\node[draw,shape=circle] (1) at (1,0){};
\node (A) at(0.9,-0.3){{\small $-1$}};
\node[draw,shape=circle] (2) at (2,0){};
\node[draw,shape=circle] (3) at (3,0){};
\node[draw,shape=circle] (4) at (4,0){};
 \node[draw,shape=circle] (9) at (4,1){};

  \node[draw,shape=circle] (5) at (5,0){};
   \node[draw,shape=circle] (6) at (6,0){};
    \node[draw,shape=circle] (7) at (7,0){};
\node (B) at (6.95,-0.3) {\small $-1$};

  \node[draw,shape=circle,fill=black](8) at (8,0){};

\draw (0)--(1)--(2)--(3)--(4)--(5)--(6)--(7)--(8)
(4)--(9);

\end{tikzpicture}
\caption{The augmented curve structure of a component $Y$ of $\YP$. The black vertices correspond to the components of the anticanonical divisor.}

\label{fig:aug}
\end{figure}

\begin{definition}Let   $v_e\in \Gamma_Y$  be a vertex  with  $v_e^2=-1$. Let $D_0$ be the unique component of the anticanonical cycle  met by $v_e$.    
The vertex $v_e$  is called  \emph{exceptional} if \[\{ v\in \Gamma_Y \mid v.D_0\neq 0\}=\{v_e\} \text{ and }|\{ v\in \Gamma_Y \mid  v.v_e= 1\}|=1.\]
\end{definition}

Let $Y$ be a component of a surface $Y_c \in \Mod_2$
with curve structure $\Gamma_Y$.  
We will also denote the  preimage of the double locus of $Y_c$ on  $Y$ under the normalisation map $Y^\nu\to Y$ by $D$.
Let $v_e\in \Gamma_Y$ be an exceptional vertex. Starting with this exceptional vertex we now define a subgraph of $\Gamma_Y$ inductively.  Since $v_e$ is exceptional, there is a unique vertex $v_1$ with $v_1.v_e=1$. 
Set $L_1(v_e):=(v_e,v_1).$
Now suppose that we have already defined the ordered tuple of vertices  $L_n(v_e)=(v_e,v_1,\dots, v_n)$.  
If there is a unique vertex $v\in\Gamma_Y\backslash\{v_e,\dots v_n\}$ adjacent to $v_n$ and $v_n.D=0$, set $v_{n+1}:=v$ and $L_{n+1}(v_e)=L_n(v_e)\cup \{v_{n+1}\}$. Else set  $L(v_e):=L_n(v_e)$.  
There is a unique connected subgraph of $\Gamma_{Y}$ whose vertices are given by the vertices in $L(v_e)$.
 By abuse of notation we consider $L(v_e)$ as  this subgraph.

\begin{definition}Let $v_e$ be an exceptional vertex. The graph $L(v_e)$ is the \emph{leg} defined by $v_e$. The unique vertex $v$  of $L(v_e)$ not equal to $v_e$ meeting precisely one vertex of $L(v_e)$ is the \emph{end} of the leg. In this situation we also say that $L(v_e)$ ends in $v$.
\end{definition}

\begin{definition} A curve structure $\Gamma_Y$ is called \emph{degenerate} if  there is no exceptional vertex, or if for some  exceptional vertex $v_e$ the leg $L(v_e)$ ends in a vertex $v$ with $v.D_i=1$ for some  component $D_i\subset D$ such that $D_i$ is in the preimage of a smooth component of the restriction of the double locus of $Y_c$ to  $Y$.

\end{definition}

The   degenerate curve structures  of type $d_2$ 
are displayed in Figure \ref{degcurve}. Curve structures that are not degenerate will be called \emph{non-degenerate}.

\begin{example}\label{exnondeg}
A non-degenerate curve  structure $\Gamma_Y$ has at least one exceptional vertex. If $\Gamma_Y$ is of type $d_2$, then $Y$ is  obtained from $\mathfrak{Y}_2$ by a sequence of blow-ups and blow-downs. It follows from the definitions that $\Gamma_Y$ has two exceptional vertices. More precisely, $\Gamma_Y$ has two legs that both end on the same vertex $v$. Moreover, there is a unique vertex $v'$ that is on none of the legs and also meets $v$.  In particular,  we have 
$D_1^2\leq 1$ and $D_2^2\leq 1$ for the two components $D_1$ and $D_2$ of the anti-canonical divisor.
\end{example}

\begin{figure}\centering

\begin{tikzpicture}

%
 \node[draw,shape=circle][fill=black] (D1) at (0,0){};      
  \node[draw,shape=circle] (1) at (1,1){};  
  \node(n1) at (1.5,1){{\small $-2$}};  
\node[draw,shape=circle] (2) at (1,0){}; 
\node(n1) at (1,-0.4){{\small $0$}}; 
 \node[draw,shape=circle][fill=black] (D2) at (2,0){} ;     

\draw[thick]  (1)--(2)
   (D1)--(2)--(D2);

       \begin{scope}[shift= {(5,0)}]      
      
          \node[draw,shape=circle][fill=black] (D1) at (0,0){};      
  \node[draw,shape=circle] (1) at (1,1){};  
  \node(n1) at (1.5,1){{\small $m$}};  
\node[draw,shape=circle] (2) at (1,0){}; 
\node(n1) at (1,-0.5){{\small $0$}}; 
 \node[draw,shape=circle][fill=black] (D2) at (2,0){} ;     

\draw[thick]  (1)--(2)
(1)--(D1)
   (D1)--(2)--(D2);

         \end{scope}

 \begin{scope}[shift= {(10,0)}]      
      
          \node[draw,shape=circle][fill=black] (D1) at (0,0){};      
\node[draw,shape=circle] (2) at (1,0){}; 
\node(n1) at (1,-0.5){{\small $1$}}; 
 \node[draw,shape=circle][fill=black] (D2) at (2,0){} ;     

\draw[thick]  
   (D1)--(2)--(D2);

         \end{scope}

\end{tikzpicture}
\caption{The possible (augmented) degenerate curve structures of type $d_2$ without exceptional vertex. The numbers give the self intersection of the underling curve, with $m\geq -1$. }
\label{degcurve}
\end{figure}
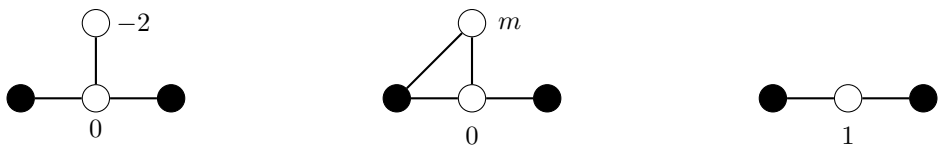

\begin{proposition}\label{qbasis}Let $\Gamma_Y$ be a curve structure of type $d_i$ with $i\in \{1,2,4\}$. Then 
\[ \{C_v\mid v \in \Gamma_Y\}\] is a $\QQ$-basis of $\Pic(Y^\nu)$.
\end{proposition}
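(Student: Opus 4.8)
The plan is to reduce the general statement to the pure blow-up case already settled in Proposition \ref{pro:canflops}, by resolving the birational map $\psi\colon Y^\nu \dashrightarrow \mathfrak{Y}_i$. As in the proof of the Lemma preceding Definition \ref{def:curvestructure}, $\psi$ fits into a diagram
\[
Y^\nu \xleftarrow{\ \rho\ } \tilde Y \xrightarrow{\ \tilde\pi\ } \mathfrak{Y}_i,
\]
where $\tilde Y$ is smooth, $\tilde\pi$ is an $\mathbf n$-blow-up of $\mathfrak{Y}_i$ in interior special points and $\rho$ is an iterated blow-up of $Y^\nu$ in interior special points; every centre is a smooth point of the relevant anticanonical cycle. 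Because $\tilde Y\to \mathfrak{Y}_i$ is of the form treated in Proposition \ref{pro:canflops}, the remark following Definition \ref{def:curvestructure} identifies the curve structure $C(\tilde Y)$ with the set of all integral curves $C\not\subset D_{\tilde Y}$ satisfying $C^2<0$, and Proposition \ref{pro:canflops} shows that this $C(\tilde Y)$ is a $\QQ$-basis of $\Pic(\tilde Y)$.

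Next I would unwind the recursive definition of the curve structure along $\rho$. Factoring $\rho$ into point blow-downs and applying the blow-down clause of the construction repeatedly gives
\[
C(Y)=\rho_*\bigl(C(\tilde Y)\setminus \mathcal E\bigr),
\]
where $\mathcal E$ is the set of $\rho$-exceptional prime divisors, the curves contracted by $\rho$ being discarded. The decisive observation is that $\mathcal E\subseteq C(\tilde Y)$: each $\rho$-exceptional curve is an integral curve of negative self-intersection and, the blow-up centres being smooth points of the cycle, is not a component of $D_{\tilde Y}$, hence lies in $C(\tilde Y)$ by the description above. At each intermediate contraction the curve to be blown down is, by the same reasoning, among the curves currently recorded, so the iteration is legitimate. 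In particular $|C(Y)|=|C(\tilde Y)|-|\mathcal E|=\rank\Pic(\tilde Y)-|\mathcal E|=\rank\Pic(Y^\nu)$, so the cardinality is already correct.

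It would then remain to check that $\rho_*$ carries the surviving part of the basis $C(\tilde Y)$ onto a basis of $\Pic(Y^\nu)$. Fixing a $\QQ$-basis $\beta_1,\dots,\beta_r$ of $\Pic(Y^\nu)$, the classes $\{\rho^*\beta_k\}\cup\mathcal E$ form a $\QQ$-basis of $\Pic(\tilde Y)_\QQ=\rho^*\Pic(Y^\nu)_\QQ\oplus\langle\mathcal E\rangle_\QQ$, the two summands being complementary because the intersection form on the exceptional curves is nondegenerate. Writing each $C'\in C(\tilde Y)\setminus\mathcal E$ as $C'=\rho^*\alpha_{C'}+(\text{exceptional part})$ gives $\rho_*C'=\alpha_{C'}$, and expressing the basis $C(\tilde Y)=(C(\tilde Y)\setminus\mathcal E)\sqcup\mathcal E$ in the basis $\{\rho^*\beta_k\}\cup\mathcal E$ produces a block lower-triangular transition matrix whose diagonal blocks are the matrix of the $\alpha_{C'}$ in the $\beta_k$ and the identity. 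Its determinant equals, up to sign, that of the $\alpha_{C'}$-matrix; since $C(\tilde Y)$ is a basis this is nonzero, so $\rho_*(C(\tilde Y)\setminus\mathcal E)=\{\alpha_{C'}\}=C(Y)$ is a $\QQ$-basis of $\Pic(Y^\nu)$, as claimed.

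I expect the main obstacle to be the bookkeeping of the second step: one must guarantee that every curve contracted by $\rho$ already lies in $C(\tilde Y)$, so that $C(Y)$ has the right cardinality and the push-forward is surjective. This is precisely where the geometry of type I modifications enters — the blow-up centres are smooth points of the double cycle, so the exceptional curves are negative curves off $D_{\tilde Y}$ — together with the identification, coming from Proposition \ref{pro:canflops}, of $C(\tilde Y)$ with the full set of such negative curves. Once this is secured, the remainder is the routine linear algebra of a birational morphism between smooth surfaces.
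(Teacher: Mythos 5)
Your proof is correct, and it reaches the statement by a route that is organized genuinely differently from the paper's. The paper argues by induction along the given sequence of type I modifications: the base case is $\mathfrak{Y}_i$ itself (via Looijenga's results), a blow-up step is trivial since one merely adjoins the exceptional curve, and a blow-down $g\colon Y_0^\nu\to Y^\nu$ contracting $E$ is handled with the exact sequence $0\to\Pic(Y^\nu)\to\Pic(Y_0^\nu)\to\ZZ\to 0$, writing curves of $C(Y_0)$ disjoint from $E$ as pullbacks, curves meeting $E$ as pullbacks after adding $E$, and deducing independence of the pushforwards from independence upstairs. You instead pass once and for all to the common resolution $Y^\nu\xleftarrow{\rho}\tilde Y\to\mathfrak{Y}_i$ (legitimate by the unlabelled well-definedness lemma preceding Definition \ref{def:curvestructure}, which you invoke), identify $C(\tilde Y)$ via Proposition \ref{pro:canflops} with the set of \emph{all} integral negative curves off the boundary, and then do the descent in one stroke using the orthogonal decomposition $\Pic(\tilde Y)_\QQ=\rho^*\Pic(Y^\nu)_\QQ\oplus\langle\mathcal{E}\rangle_\QQ$ and a block-triangular determinant. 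The blow-down linear algebra is at bottom the same in both arguments (yours is the all-at-once version of the paper's single-step exact-sequence computation), but your use of the full characterization of $C(\tilde Y)$ buys something concrete: it makes explicit, once and for all, that every curve contracted in the descent already lies in the current curve structure, a point the paper's induction uses only implicitly when it writes $C(Y_0)=\{H_i,\,C_j+E,\,E\}$. The price is a heavier reliance on Proposition \ref{pro:canflops} in its strong form (exhaustion of negative curves, not just the basis property) and on the independence lemma, whereas the paper's induction works directly with whatever alternating sequence of modifications is given.
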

\begin{proof}
The statement is true for $\mathfrak{Y}_i$: as mentioned earlier, by the results in   \cite[Section $2$]{Loo}, one obtains a $\QQ$-basis of $
\Pic(\mathfrak{Y}_i)$. In the general case,  the curve structures in question are obtained via blow-ups and blow-downs from the $\mathfrak{Y}_i$, 
$i=i\in \{1,2,4\}$. Arguing inductively, we need to show that given a curve structure $C(Y_0)$, the structure $C(Y)$ obtained on $Y$ under one of these 
operations is also a $\QQ$-basis.
Obviously,  a blow-up $Y^{\nu}\to Y^{\nu}_0$ induces a curve structure  on $Y^{\nu}$ 
that is also a $\QQ$-basis, as it adds the exceptional curve. So let  $g\colon Y^\nu_0\to  Y^\nu$ be the  blow-down in an exceptional curve corresponding to a type I flop, i.e. the contraction of an extremal ray spanned by a curve $E$. We have the exact sequence
\[0\to  \Pic(Y^\nu)\to \Pic(Y^\nu_0)\to \ZZ\to 0\] with first map pullback via $g$ and second map evaluation on $E$. Let $A=\{H_i\}$ denote the collection of elements of $C(Y_0)$ disjoint from $E$. They induce a collection $\{ L_i\}$ with $H_i=g^*L_i$. If $C_j$ is in $C(Y_0)$ 
but not in $A$ and not equal to $E$, then $C_j.E=1$. Hence $C_j+E=g^*(F_j)$ for some $F_j$. 
The collection $\{ L_i,F_j\}Ê$ is linearly independent as $C(Y_0)=\{H_i,C_i+E,E\}$ is a spanning set of $\Pic(Y^\nu_0)_\QQ$ with $\rank \Pic(Y^\nu_0)$ elements, and thus linearly independent. 
Also, note that $g_*H_i=g_*g^*L_i= L_i$ and $g_*(C_j)=g_*(C_j+E)=g_*g^*(F_j)= F_j$.
\end{proof}

We now specialise the discussion to curve structures of type $d_2$. These are the ones appearing on components $Y$ of  surfaces $Y_c$ in $\Mod_2(\mathscr{P})$.  
Note that in this case all components of $Y_c$ are smooth. 
Our goal in the remainder of this section is to express properties of the ample cone of such  $Y$ in terms of curve structures. In the next section, we will see how these properties give criteria for the  projectivity of $Y_c$.  Note that  all such  $Y$ are smooth rational surfaces and the anticanonical divisor $D$ is the restriction of the double locus of $Y_c$ to  $Y$.

\begin{definition}Let $\Gamma_Y$ be a curve structure of type $d_2$.  We say $\Gamma_Y$ is \emph{regular} if  $|\Gamma_Y|>1$ and no leg $L(e)$ of an exceptional vertex $e$ ends in a vertex $v$ with $v^2=0$. \end{definition}

A curve structure that is not regular is called {\em non-regular}.
In particular, a non-degenerate curve structure is regular and a non-regular curve structure is degenerate. Let $\Gamma_Y$ be a non-regular curve structure. Then for each component $D_i$ of the anticanonical divisor $D$ there is a unique vertex
 $v_{D_i}$ of $\Gamma_Y$
 with  $D_i.v_{D_i}>0$.

Let $\Gamma_Y$ be a curve structure. Consider a map  \[f \colon\Gamma_Y\to \ZZ. \]
 By abuse of notation we mean here that  the map is defined on the underlying set of $\Gamma_Y$, i.e. its vertices.
 
This defines a divisor  
\[\Gamma_f=\sum_{v\in\Gamma_Y} f(v)C_v.\]  

Rephrasing Lemma \ref{poscoefficient}, we obtain the following lemma.

\begin{lemma}\label{leg} Let $\Gamma_f.v>0$ for all $v\in\Gamma_Y$. Suppose there is an exceptional vertex $v_0\in\Gamma_Y$  and suppose $f(v_0)>0$. Assume the leg $L(v_0)$ defined by $v_0$ is given by the graph $(v_0,v_1,v_2,\dots, v_n)$. Then $f(v_{i+1})>f(v_i)$ for $i=0,\dots, n-1$.
\end{lemma}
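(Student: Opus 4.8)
The plan is to recognise the leg as a linear $A$-chain and then quote Lemma \ref{poscoefficient} almost verbatim, with $\Gamma_f$ in the role of the divisor $A$ and the numbers $f(v_i)$ in the role of the coefficients $a_i$. Concretely, I would relabel the vertices of the leg by $C_i:=C_{v_{i-1}}$ for $i=1,\dots,n+1$, so that $C_1=C_{v_0}$ is the exceptional curve and $C_{n+1}=C_{v_n}$ is the curve in which the leg ends. Since the leg $(v_0,\dots,v_n)$ is a chain, its dual graph is $A_{n+1}$, ordered exactly as in Figure \ref{labelvert}.

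Next I would check the self-intersection hypotheses. As $v_0$ is an exceptional vertex, $C_1^2=v_0^2=-1$. For an inner vertex $v_i$ with $1\le i\le n-1$, the inductive construction of the leg only proceeds past $v_i$ when $v_i.D=0$; since $C_{v_i}$ is a smooth rational curve on the anticanonical pair $(Y^\nu,D)$ that is not a component of $D$, adjunction yields $C_{v_i}^2=C_{v_i}.D-2=-2$. Hence $C_i^2=-2$ for $1<i<n+1$, while the self-intersection of the terminal curve $C_{n+1}$ is left free, exactly as Lemma \ref{poscoefficient} permits.

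The step requiring the most care, and the one where the definitions do the real work, is to show that the curves of $C(Y)$ not belonging to the leg form a legitimate set $\shH$ for Lemma \ref{poscoefficient}, i.e. that $h.C_i=0$ for every such $h$ and every $i=1,\dots,n$. This is guaranteed by the uniqueness clauses built into the notions of exceptional vertex and of leg: by definition $v_0$ is adjacent to exactly one vertex of $\Gamma_Y$, namely $v_1$; and for $1\le i\le n-1$ the continuation step selects $v_{i+1}$ as the \emph{unique} vertex outside $\{v_0,\dots,v_i\}$ adjacent to $v_i$, so $v_i$ meets only $v_{i-1}$ and $v_{i+1}$ in all of $\Gamma_Y$. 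Consequently none of $v_0,\dots,v_{n-1}$ meets any curve off the leg, which is precisely the condition $h.C_i=0$ for $i=1,\dots,n$; the same observation confirms that the leg really is a linear $A_{n+1}$-graph, rather than carrying extra chords.

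Finally I would assemble everything. Writing $\Gamma_f=\sum_{i=1}^{n+1}f(v_{i-1})\,C_i+\sum_{h\in\shH}f(h)\,h$, the hypothesis $\Gamma_f.v>0$ for all $v\in\Gamma_Y$ specialises to $\Gamma_f.C_i>0$ for $i=1,\dots,n$, and $f(v_0)>0$ is assumed; thus the strict form of Lemma \ref{poscoefficient} applies with $a_i=f(v_{i-1})$ and gives $a_{i+1}>a_i$ throughout. Translating back through the relabelling turns this into $f(v_{i+1})>f(v_i)$ for $i=0,\dots,n-1$, which is the assertion. Apart from the bookkeeping of the previous paragraph the argument is immediate, so the only genuine content is verifying that the leg sits inside $\Gamma_Y$ as an isolated $A$-chain along its first $n$ vertices.
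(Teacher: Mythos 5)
Your proof is correct and takes essentially the same route as the paper: the paper introduces Lemma \ref{leg} with the single remark that it is a rephrasing of Lemma \ref{poscoefficient}, which is exactly the reduction you perform, with $\Gamma_f$ as the divisor $A$ and the values $f(v_i)$ as the coefficients $a_i$. The hypothesis-checking you supply — $C_{v_0}^2=-1$ from the definition of exceptional vertex, $C_{v_i}^2=-2$ for the inner vertices via $v_i.D=0$ and adjunction, and the fact that the first $n$ leg curves meet no other curve of $\Gamma_Y$ (no chords, no off-leg neighbours) via the uniqueness clauses in the definitions of exceptional vertex and leg — is precisely the verification the paper leaves implicit.
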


\begin{lemma}\label{lem:critample}
Let $\Gamma_Y$ be of type $d_2$. Let $\Gamma_f$ be a divisor defined by a map $f \colon\Gamma_Y\to \ZZ$. Suppose $\Gamma_f.v>0$ for all $v\in\Gamma_Y$ and assume that  $f(v_e)>0$ for all  exceptional vertices $v_e$  of $\Gamma_Y$ or, if there is no exceptional vertex, $f(c)>0$ for the unique vertex meeting both components of $D$.  
Then $\Gamma_f$ is an ample divisor. 
\end{lemma}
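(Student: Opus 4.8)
The plan is to apply Kleiman's criterion, reducing ampleness of $\Gamma_f$ to positivity against the finitely many generators of the cone of curves $\NE(Y^\nu)$. First I would invoke Corollary \ref{main1}: since $Y^\nu$ is a smooth rational surface and $\Gamma_Y$ is of type $d_2$, the cone $\NE(Y^\nu)$ is rational polyhedral, and when the Picard rank is at least $3$ it is generated by the curves of negative self-intersection. By the construction of the curve structure together with Proposition \ref{pro:canflops}, these negative curves are the members $C_v$, $v\in\Gamma_Y$, of $C(Y)$ together with (those of) the two components $D_1,D_2$ of the anticanonical cycle $D$. Positivity of $\Gamma_f$ on each $C_v$ is exactly the hypothesis $\Gamma_f.v>0$, so it suffices to show $\Gamma_f.D_1>0$ and $\Gamma_f.D_2>0$; positivity then extends to the whole polyhedral cone $\NE(Y^\nu)\setminus\{0\}$ and $\Gamma_f$ is ample, exactly as in the proof of Proposition \ref{max:proj}.

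To estimate $\Gamma_f.D_i$ I would write $\Gamma_f.D_i=\sum_v f(v)\,(C_v.D_i)$, the sum running over the vertices adjacent to $D_i$ in the augmented structure $\Gamma_Y^a$, and argue that in each type $d_2$ configuration $D_i$ is met by a single curve of $C(Y)$, transversally in one point. Suppose first that an exceptional vertex $v_e$ exists. If $v_e$ meets $D_i=D_0$, then by the very definition of an exceptional vertex $v_e$ is the unique vertex with $v.D_0\neq 0$, and since $C_{v_e}$ is a $(-1)$-curve not contained in $D$ it meets $D$ transversally in one point; hence $\Gamma_f.D_i=f(v_e)>0$ by hypothesis. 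If instead $D_i$ is met not by an exceptional vertex but by the end $v$ of a leg $L(v_e)$ (the degenerate situation in which $L(v_e)$ ends in a vertex $v$ with $v.D_i=1$), I would apply Lemma \ref{leg}: since $\Gamma_f.w>0$ for all $w$ and $f(v_e)>0$, the function $f$ is strictly increasing along the chain $L(v_e)=(v_e,v_1,\dots,v_n=v)$, so $f(v)>f(v_e)>0$ and again $\Gamma_f.D_i=f(v)\,(C_v.D_i)=f(v)>0$. This handles both components whenever an exceptional vertex is present.

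It remains to treat the case with no exceptional vertex, where the hypothesis provides $f(c)>0$ for the unique vertex $c$ meeting both components of $D$. Here the curve structure is one of the finitely many explicit degenerate type $d_2$ structures of Figure \ref{degcurve}, so $Y^\nu$ has Picard rank at most two (a Hirzebruch surface or $\PP^2$) and $\NE(Y^\nu)$ is generated by the few visible negative curves. I would read $\Gamma_f.D_i$ off each diagram directly: in every case $c$ is the only vertex meeting $D_i$ with $C_c.D_i=1$, whence $\Gamma_f.D_i=f(c)>0$, while positivity on the remaining generators is covered by $\Gamma_f.v>0$. Kleiman's criterion on these low Picard rank surfaces then finishes the argument.

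The main obstacle I anticipate is not a single deep estimate but the bookkeeping behind the claim that, in every type $d_2$ configuration, each anticanonical component $D_i$ is met by exactly one curve of $C(Y)$ and with multiplicity one, which is what collapses $\Gamma_f.D_i$ to a single value $f(\cdot)$. Establishing this uniformly requires the classification of type $d_2$ curve structures into the regular, degenerate-with-exceptional-vertex, and degenerate-without-exceptional-vertex cases (Example \ref{exnondeg} and Figure \ref{degcurve}), together with the identification of the negative curves as $C(Y)\cup\{D_1,D_2\}$. The one genuinely structural input beyond this case-check is the monotonicity of $f$ along legs from Lemma \ref{leg}, which is precisely what converts the hypothesis $f(v_e)>0$ at an exceptional vertex into positivity of $f$ at the possibly distant vertex that actually meets $D_i$.
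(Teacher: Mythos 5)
Your strategy (Kleiman via Corollary \ref{main1}, positivity on $C(Y)$ by hypothesis, then positivity on the anticanonical components) is workable, and in the \emph{non-degenerate} case it is essentially correct: there each $D_i$ is met only by its exceptional vertex, by the very definition of an exceptional vertex, so $\Gamma_f.D_i=f(v_{e_i})>0$. But there is a genuine gap in the degenerate cases, located exactly at the bookkeeping claim you yourself flag as the main obstacle: it is \emph{false} that in every type $d_2$ configuration each component $D_i$ is met by a single curve of $C(Y)$ transversally in one point. In a regular degenerate structure the unique vertex $w$ off the leg $L(v_0)$ meets the component $D_2$ not met by $v_0$; this is precisely the situation of Proposition \ref{degample}, whose ample divisor has degree $e+(D_2.w)(n+1)+\tfrac12 n(n+1)$ on $D_2$ --- the term $(D_2.w)(n+1)$ is the contribution of $w$, which your formula $\Gamma_f.D_2=f(v_n)$ omits. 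Likewise, in Figure \ref{degcurve} the middle structure has two vertices meeting $D_1$, and in the singleton case Proposition \ref{degample2} gives $D_2.v=2$, not $1$ (harmless there, since $\Gamma_f.D_2=2f(c)>0$, but it shows the claim is not uniform).

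Once the omitted term $f(w)\,(w.D_2)$ is reinstated, your argument needs $f(w)>0$, and positivity of $f$ at vertices \emph{off} the legs is never established in your proposal: Lemma \ref{leg} only gives monotonicity along legs, while the hypothesis $\Gamma_f.w>0$ by itself yields an \emph{upper} bound (it reads $f(v_n)+w^2f(w)>0$ with $w^2<0$), not a lower one. The missing positivity is true but requires an extra step: strict monotonicity along the leg gives $f(v_n)>f(v_{n-1})$, and then $\Gamma_f.v_n=f(v_{n-1})+f(w)-f(v_n)>0$ forces $f(w)>f(v_n)-f(v_{n-1})>0$ (and similarly $f(y)>0$ at the short branch in the non-degenerate case, using $\Gamma_f.c>0$ at the fork). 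This is exactly the content of the first step of the paper's proof, which shows that \emph{all} values $f(v)$ are strictly positive before doing anything else. The paper then avoids your second fragile point --- the identification of the generators of $\NE(Y^\nu)$ with $C(Y)\cup\{D_1,D_2\}$, which as stated relies on Proposition \ref{pro:canflops} that only covers blow-ups of $\mathfrak{Y}_i$, not blow-downs --- by a cleaner device: for \emph{any} integral curve $C\notin C(Y)$, writing an auxiliary ample divisor in the $\QQ$-basis $C(Y)$ (Proposition \ref{qbasis}) produces some $v$ with $C_v.C>0$, whence $\Gamma_f.C=\sum_v f(v)(C_v.C)>0$ once all $f(v)>0$. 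That argument treats components of $D$, any stray negative curves, and the low Picard rank cases uniformly, and is the route you should adopt to close the gap.
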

\begin{proof}
If $\Gamma_Y$ does not have an exceptional vertex, it is degenerate.  It follows from the description  of possible degenerate curve structures without exceptional vertices, compare  Figure \ref{degcurve}, that all $f(v)$ are strictly positive. 
The same statement  holds if $Y$ has an exceptional vertex. This follows from Lemma \ref{leg}. 
We next claim that for any curve $C$ not in $C(Y)$ there exists some $C_v, v\in \Gamma_Y$ such that $C.C_v>0$. To see this, let  
$A$ be an ample divisor on $Y$. Then $A.C>0$. As $C(Y)$ is a basis of $\Pic(Y)_\QQ$, we can, replacing $A$ by a  multiple if necessary,  write $A=\sum_{C_v\in C(Y)} \beta_v C_v$ with $\beta_v\in \ZZ$. Then  $C.C_v>0$ for some $v\in C(Y)$, as else $A.C =0$.
 It follows that $\Gamma_f.C>0$ for all integral curves, so $\Gamma_f$ is positive on $\NE(Y)$, as the latter is finitely polyhedral by Corollary \ref{main1}. 
 By Kleiman's criterion, $\Gamma_f$ is ample.
\end{proof}

\begin{proposition}\label{nondegample}Let $Y$ be a component of a surface $Y_c\in \Mod_2(\mathscr{P})$.  Suppose $\Gamma_Y$ is non-degenerate.
 Let $f_0, g_0$  be the exceptional vertices and let $k_1,k_2$ be non-negative integers. 
Then,  there is a positive  integer $\triangle$  such that for any $e>\triangle$, there is a map  $f\colon \Gamma_Y\to \ZZ$ such that $\Gamma_f$ is an ample divisor with $f(f_0)=e+k_1$ and $f(g_0)=e+k_2$.
\end{proposition}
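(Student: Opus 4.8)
The plan is to reduce to the ampleness criterion of Lemma~\ref{lem:critample}: it suffices to produce, for every sufficiently large $e$, an integer-valued map $f\colon\Gamma_Y\to\ZZ$ with $f(f_0)=e+k_1$, $f(g_0)=e+k_2$ (so in particular $f(f_0),f(g_0)>0$) and $\Gamma_f.C_v>0$ for every $v\in\Gamma_Y$; then $\Gamma_f$ is automatically ample. By Example~\ref{exnondeg} and Proposition~\ref{pro:canflops} a non-degenerate curve structure of type $d_2$ is very rigid: writing $v$ for the common fork at which the two legs $L(f_0)$ and $L(g_0)$ end and $v'$ for the unique pendant vertex meeting $v$, every vertex lies on one of the two legs or equals $v'$, and all carry self-intersection $-2$ except the two leg-ends $f_0,g_0$, which carry $-1$. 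Writing the $f_0$-leg as $f_0=a_0,a_1,\dots,a_p=v$ and the $g_0$-leg as $g_0=b_0,\dots,b_q=v$, and using that $C_w.C_{w'}=1$ for adjacent distinct vertices and $0$ otherwise, the inequalities $\Gamma_f.C_v>0$ translate into: strict monotonicity $f(a_1)>f(a_0)$, $f(b_1)>f(b_0)$ at the $(-1)$-ends; strict convexity $f(a_{i+1})-f(a_i)>f(a_i)-f(a_{i-1})$ of the increments along each leg; the pendant inequality $f(v)>2f(v')$; and the fork inequality $f(a_{p-1})+f(b_{q-1})+f(v')>2f(v)$.

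I would then build $f$ by prescribing strictly increasing positive integer increments $\delta_i=f(a_i)-f(a_{i-1})$ and $\delta'_j=f(b_j)-f(b_{j-1})$ along the two legs, starting from the prescribed values $f(a_0)=e+k_1$ and $f(b_0)=e+k_2$. The strict convexity conditions are exactly $\delta_{i+1}>\delta_i$ and $\delta'_{j+1}>\delta'_j$, and the $(-1)$-end conditions are $\delta_1,\delta'_1>0$. The only global compatibility is that both legs reach the same value at the shared fork, i.e. $e+k_1+\sum_i\delta_i=e+k_2+\sum_j\delta'_j=:f(v)$; since $k_1,k_2$ are fixed this is the single equation $\sum_i\delta_i-\sum_j\delta'_j=k_2-k_1$, which is readily solved with nearly balanced increments (e.g. $\delta_i\approx i$), so that all increments, and in particular the terminal ones $\delta_p$ and $\delta'_q$, stay bounded by a constant depending only on $\Gamma_Y$, $k_1$ and $k_2$. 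Finally I set $f(v')$ to be any integer in the window $(\,\delta_p+\delta'_q,\ f(v)/2\,)$; this makes both the pendant and the fork inequalities hold, since $f(a_{p-1})=f(v)-\delta_p$ and $f(b_{q-1})=f(v)-\delta'_q$ turn the fork inequality into $f(v')>\delta_p+\delta'_q$.

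The single place where the hypothesis $e\gg 0$ enters is that this last window be nonempty and contain an integer: its lower endpoint $\delta_p+\delta'_q$ is bounded independently of $e$, while its upper endpoint $f(v)/2=(e+k_1+\sum_i\delta_i)/2$ grows linearly in $e$, so for $e$ larger than an explicit threshold $\triangle$ (depending on $\Gamma_Y$, $k_1$, $k_2$) a valid integer $f(v')$ exists; then $f$ satisfies all the inequalities and Lemma~\ref{lem:critample} yields that $\Gamma_f$ is ample with $f(f_0)=e+k_1$ and $f(g_0)=e+k_2$. The main obstacle is precisely the interaction at the fork: strict monotonicity forces $f(a_{p-1}),f(b_{q-1})<f(v)$, so the fork inequality can only be rescued by a large pendant value $f(v')$, which competes directly with the pendant inequality $f(v')<f(v)/2$; keeping the terminal increments small (balanced increments) and taking $e$ large is exactly what reconciles the two constraints. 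Equivalently, one may argue abstractly: the set of $f$ with $\Gamma_f$ ample is an open cone in $\Pic(Y^\nu)_\RR\cong\RR^{\Gamma_Y}$ (using the $\QQ$-basis of Proposition~\ref{qbasis}), the construction above with $k_1=k_2=0$ exhibits a class with equal $f_0$- and $g_0$-coordinates, so the projection of this cone to the $(f(f_0),f(g_0))$-plane contains the diagonal direction, whence $(e+k_1,e+k_2)$ lies in it for $e\gg 0$; a covering-radius estimate then provides the required integer point.
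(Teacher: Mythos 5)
Your proposal is correct and follows essentially the same route as the paper: both reduce to Lemma \ref{lem:critample} on the explicit two-legs-plus-fork-plus-pendant shape of a non-degenerate type $d_2$ curve structure, and then exhibit integer values satisfying the resulting linear inequalities, with the threshold $\triangle$ arising exactly from the competition between the fork inequality and the pendant inequality. The only difference is numerical rather than conceptual: the paper uses increments $\delta_i=i$ on both legs (so the leg tops may be unbalanced) and compensates with a large pendant value $f(y)=\lceil e/2\rceil-1$, whereas you balance the two legs exactly at the fork and keep the pendant value bounded — both choices verify the same system of inequalities.
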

\begin{proof}  Let $c$ be the fork of $\Gamma_Y$ and let
 $(f_0,\dots,f_n,c)$ and $(g_0,\dots,g_m,c)$ denote the legs associated to the exceptional vertices. Let $y$ denote  the unique vertex not appearing on any of the legs. 
 We can assume
 \[k_1+\frac{1}{2}n(n+1)\geq k_2+\frac{1}{2}m(m+1).\]
 Set $\triangle':=k_1+\frac{1}{2}n(n+1)-k_2-\frac{1}{2}m(m+1)+2\max\{n,m\}+2$. 
  Let $e$ be any integer such that $e>\triangle:=2\triangle' +2$.

 Define a map  $f\colon \Gamma_Y\to \ZZ$ by 
 
\begin{align*}
&f(f_i)=e+k_1+\frac{1}{2}i(i+1)\\
&f(g_i)=e+k_2+\frac{1}{2}i(i+1)\\
&f(c)=\max\{f(f_n),f(g_m)\}+\max\{n,m\}+1\\ 
&f(y)= \left \lceil{  \frac{1}{2}e}\right \rceil -1.
\end{align*} Note that by assumption $f(f_n)\geq f(g_m)$. Also, we have $f(f_1)-f(f_0)=1$ 
and \[f(f_{i-1})-2f(f_i)+f(f_{i+1})=1\] for $1\leq 1\leq n-1$. Also, $f(f_{i})-f(f_{i-1})=i$, so
\begin{align*}
-2f(f_n)+f(c)+f(f_{n-1})=\max\{n,m\}+1-n>0.
\end{align*} Analoguously, we find 
$f(g_1)-f(g_0)=1$, $f(g_{i-1})-2f(g_i)+f(g_{i+1})=1$ for $1\leq 1\leq m-1$ and
\begin{align*}
-2f(g_m)+f(c)+f(g_{m-1})\geq\max\{n,m\}+1-m>0.
\end{align*} 
We also have 
\begin{align*}
-2f(c)+f(f_n)+f(g_m)+f(y)&=-2\max\{f_n,g_m\}-2\max\{n,m\}-2+f(f_n)+f(g_m)+f(y)\\
&=-f(f_n)+f(g_m)+f(y)-2\max\{n,m\}-2\\
&=f(y)-\triangle'>0.
\end{align*}
Finally, $ \max\{f(f_n),g(g_m)\}\geq e$, so  $-2f(y)+f(c)=-e+2+\max\{f_n,g_m\}+\max\{n,m\}+1>0.$ 
By Lemma \ref{lem:critample}, $\Gamma_f$ is an ample divisor 
with properties as desired.
\end{proof}

\begin{proposition}\label{nefample}Let $Y$ be a component of a surface $Y_c\in \Mod_2(\mathscr{P})$.  Suppose $\Gamma_Y$ is non-degenerate.  Write $D=D_1+D_2$ for the anticanonical cycle. Assume $D_1^2\leq 0$.  Let $k_1, k_2$ and $\gamma$ be integers with 
$k_1,k_2 \geq 0$
and $\gamma>1$. Then there are positive integers $\triangle$ and $\alpha$ such that for any  $e>\triangle$ divisible by  $\alpha$, 
there is an ample divisor $A$ with $A.D_1=e+k_1$ and $A.D_2=\gamma e+k_2$.
\end{proposition}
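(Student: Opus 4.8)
The plan is to reduce the statement to the construction of an integer valued function $f\colon\Gamma_Y\to\ZZ$ whose associated divisor $A:=\Gamma_f=\sum_{v\in\Gamma_Y}f(v)C_v$ is ample and realizes the prescribed degrees. Since $\Gamma_Y$ is non-degenerate of type $d_2$, by Example \ref{exnondeg} it consists of two legs meeting in a common fork $c$, together with a single further vertex $y$ adjacent to $c$; I write the legs as $(f_0,f_1,\dots,f_n,c)$ and $(g_0,g_1,\dots,g_m,c)$, where $f_0,g_0$ are the two exceptional vertices. These are the unique vertices of $\Gamma_Y$ meeting the two components of $D$, and each is a $(-1)$-curve meeting $D$ transversally in one point; after relabelling I may assume $C_{f_0}.D_1=1$, $C_{g_0}.D_2=1$, and that no other vertex meets $D_1$ or $D_2$. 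Consequently $A.D_1=f(f_0)$ and $A.D_2=f(g_0)$, so it suffices to produce an ample $\Gamma_f$ with $f(f_0)=e+k_1$ and $f(g_0)=\gamma e+k_2$.

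By Lemma \ref{lem:critample}, ampleness of $\Gamma_f$ follows once $\Gamma_f.v>0$ for every $v\in\Gamma_Y$ and $f$ is positive on $f_0,g_0$. Writing out these inequalities, the conditions along each leg say (as in Lemma \ref{leg}) that $f$ is strictly increasing with strictly increasing increments from the exceptional vertex up to $c$; the condition at the fork $c$ (a $(-2)$-curve meeting exactly $f_n,g_m,y$) reads $f(y)>2f(c)-f(f_n)-f(g_m)=\delta_n+\delta_m'$, where $\delta_n:=f(c)-f(f_n)$ and $\delta_m':=f(c)-f(g_m)$ are the last increments of the two legs; and the condition at the pendant $y$ (a $(-2)$-curve meeting only $c$) reads $f(y)<\tfrac12 f(c)$. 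Thus the whole problem reduces to choosing leg values and a value $f(y)$ that simultaneously satisfy these, which is possible precisely when $\delta_n+\delta_m'<\tfrac12 f(c)$.

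The key point, and the reason for the hypothesis $D_1^2\le 0$, is the following. Since $f(g_0)=\gamma e+k_2$ and $f$ increases along the $g$-leg, one necessarily has $f(c)>\gamma e$; on the other hand $f(f_0)=e+k_1$, so along the $f$-leg the value must climb from $\approx e$ up to $\approx\gamma e$. Because the increments along a leg are forced to be strictly increasing, the last increment $\delta_n$ is the largest, hence at least the average climb $\tfrac{1}{n+1}\bigl(f(c)-f(f_0)\bigr)\approx\tfrac{(\gamma-1)e}{n+1}$. Taking the $g$-leg increments small (so that $\delta_m'$ stays bounded), the solvability condition $\delta_n+\delta_m'<\tfrac12 f(c)$ becomes, to leading order, $\tfrac{(\gamma-1)e}{n+1}<\tfrac12\gamma e$, i.e. $n+1>2-\tfrac{2}{\gamma}$. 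Now $D_1^2\le 0$ is exactly the condition that the $f$-leg reaches the fork in at least two steps, i.e. $n\ge 1$; then $n+1\ge 2>2-\tfrac{2}{\gamma}$ and there is room. For $n=0$ (which corresponds to $D_1^2=1$) the inequality would force $\gamma<2$, which is why the hypothesis is indispensable. This tension between the fork inequality (pushing $f(y)$ up) and the pendant inequality (pushing $f(y)$ down) is the only genuine obstacle, and it is resolved exactly by $D_1^2\le0$.

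Concretely I would set, for the $g$-leg, $f(g_i)=\gamma e+k_2+\tfrac12 i(i+1)$ and $f(c)=f(g_m)+m+1$; for the $f$-leg I would spread the remaining climb $\Delta:=f(c)-(e+k_1)$ as evenly as convexity allows, choosing increments $\delta_i=q+i$ for $i=0,\dots,n$ with $q=\tfrac{\Delta-\binom{n+1}{2}}{n+1}$, so that the largest increment is $\delta_n=q+n\approx\tfrac{(\gamma-1)e}{n+1}$; and finally pick any integer $f(y)$ with $\delta_n+\delta_m'<f(y)<\tfrac12 f(c)$. For all these values to be integers it suffices that $e$ be divisible by a suitable $\alpha$ (accounting for the denominators $n+1$ and $2$), and all the strict inequalities hold once $e$ exceeds an explicit bound $\triangle$; this is what pins down $\alpha$ and $\triangle$. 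Verifying ampleness is then the routine check, via Lemma \ref{lem:critample}, that $\Gamma_f.v>0$ at every vertex, which is immediate from the increment conditions, and $A=\Gamma_f$ is the desired divisor.
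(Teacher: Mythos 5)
Your proof is correct in substance, but it takes a genuinely different route from the paper. The paper's argument is a short reduction: it uses the hypothesis $D_1^2\le 0$ to produce a \emph{nef} divisor $M$ with $M.D_1=0$ and $M.D_2=\alpha>0$ (namely $M=-D_1^2L+(L.D_1)D_1$ for $L$ ample when $D_1^2<0$, and $M=D_1$ itself when $D_1^2=0$), and then simply adds $(\gamma-1)rM$ to the ``balanced'' ample divisor supplied by Proposition \ref{nondegample}, whose degrees on $D_1,D_2$ are $e+k_1$ and $e+k_2$; ampleness survives adding a nef class, and the divisibility constant is just $\alpha=M.D_2$ (so that $e=\alpha r$ makes the degree on $D_2$ come out to exactly $\gamma e+k_2$). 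You instead rebuild the divisor from scratch on the curve structure: you translate ampleness via Lemma \ref{lem:critample} into the increment inequalities of Lemma \ref{leg} along the two legs plus the fork and pendant conditions, and you isolate the solvability condition $\delta_n+\delta_m'<\tfrac12 f(c)$, which at leading order in $e$ reads $\tfrac{\gamma-1}{n+1}<\tfrac{\gamma}{2}$ and therefore holds precisely when the $f$-leg has $n\ge 1$, i.e. $D_1^2\le 0$. Your structural inputs are all sound: the identification $A.D_1=f(f_0)$, $A.D_2=f(g_0)$ holds because exceptional vertices are by definition the only vertices meeting their boundary component, and the correspondence $n=1-D_1^2$ is exactly what ties the hypothesis to the leg length. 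What your approach buys is an explanation of \emph{why} the hypothesis is sharp, matching the failure in Example \ref{notample}; what the paper's approach buys is brevity, since the leg combinatorics is quarantined inside Proposition \ref{nondegample} and replaced by the standard ``ample plus nef'' trick.

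One small repair is needed in your integrality bookkeeping. Divisibility of $e$ by some $\alpha$ does not by itself make $q=\bigl(\Delta-\binom{n+1}{2}\bigr)/(n+1)$ an integer, because $\Delta=(\gamma-1)e+\bigl(k_2-k_1+\tfrac12(m+1)(m+2)\bigr)$ carries a constant term whose residue modulo $n+1$ is not controlled by any divisibility condition on $e$. The standard fix is to take $q$ to be the floor and absorb the remainder (at most $n$) into the last increment $\delta_n$; this keeps the increments strictly increasing integers summing to $\Delta$, changes $\delta_n$ only by a bounded amount, and so leaves the solvability inequality intact for $e$ large. With this adjustment your construction actually needs no divisibility condition at all (one may take $\alpha=1$), so it proves a marginally stronger statement than the one in the proposition.
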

\begin{proof} 
Assume first that $D_1^2<0$. Take an ample divisor $L$ on $Y$ and set $M=-D_1^2 L + (L.D_1)D_1$. Let $C$ be an integral curve on $Y$. If $C$  is distinct form $D_1$ then $M.C= (-D_1^2)L.C +(L.D_1) D_1.C> 0$. 
Also, $M.D_1=-D_1^2 L.D_1 + (L.D_1)D_1^2=0$.  
If  $D_1^2=0$, then set  $M=D_1$. 
In any case there is a  nef divisor  $M$
with  $M.D_2=\alpha>0$. 
and $M.D_1=0$.
Now, by  Proposition \ref{nondegample}, there is an integer $\triangle$ and  such that for any integer  $e>\triangle$ there is an ample divisor $\Gamma_f$ such that $\Gamma_f.D_1=e+k_1$ and $\Gamma_f.D_2=e+k_2$. 
 Fix  $e=\alpha r$ for some integer 
$r>0$ such that  $e>\triangle$.   Then $A:=\Gamma_f+(\gamma-1)rM$ is ample. Also, $A.D_1=e+k_1$ and $A.D_2=k_2+\gamma e$, as desired.
\end{proof}

\begin{example}\label{notample} Proposition \ref{nefample} fails if $D_1^2 > 0$, as  the legs of the curve structure are then too short: 
Let $D'=D_1'+D_2'$ be the anticanonical divisor of $\mathfrak{Y}_2$, $p_i\in D'_i$, $i=1,2$ be the interior special points of $\mathfrak{Y}_2$. Assume that $C$ is the $(-1)$-curve meeting $D'_1$ and $F$ the $(-2)$-curve with $C.F=1$.  For $n\in \NN$  let $Y(n)$ 
be the surface obtained from $\mathfrak{Y}_2$ obtained by blowing up $n$ times in $p_2$ and contracting $C+F$.  
Note that $\Gamma_{Y(n)}$ is non-degenerate of type $d_2$. Let $k$ and $e$ be positive integers.  Suppose  $A$ is an ample divisor with $A.D_1=e$  and $A.D_2=\gamma e+k$ with some integer $\gamma>1$,
and define $f\colon \Gamma_{Y(n)}\to Z$ to be the
map with $A=\Gamma_f$.  Let $v_{e_i}$ be the exceptional vertex meeting $D_i$. We have $L(v_{e_1} )=(e_1,c)$ with $c$  the fork of $\Gamma_{Y(n)}$. Also, write $L(v_{e_2})=(v_{e_2},\dots,w,c)$ and let $y$ be the unique vertex not on any leg. We have $f(c)=\gamma e+\alpha+\beta$ and $f(w)=\gamma e +\beta$ with $\alpha,\beta >0$ by Lemma \ref{leg}. 
By ampleness
$A.c>0$, so $f(e_1)+f(w)+f(y)>2f(c)$ 
and using that $A.D_1=f(e_1)=e$  we obtain
 \[ f(y)>(\gamma-1)e+2\alpha+\beta.\] 
 Intersecting $A$
with $y$ gives $-2f(y)+f(c)>0$. So $0>2f(y)-f(c)>2(\gamma-1)e+4\alpha+2\beta-f(c)$, implying $0>(\gamma-2)e+3\alpha+\beta>0$, a contradiction. 
Hence a divisor such as $A$ cannot exist.
\end{example}

We note the following variant of Proposition \ref{nondegample}.

\begin{proposition}\label{32ample}Let $Y$ be a component of a surface $Y_c$ in 
$\Mod_2(\mathscr{P})$, with $\Gamma_Y$ non-degenerate. Let $D=D_1+D_2$ be the anticanonical divisor. Assume $D^2_1=1$. Let $k \geq 0$ be an integer. Then there is an integer $\triangle>0$ such that for any even integer $e>\triangle$ an ample divisor $A$ on $Y$ exists with $A.D_1=e$ and $A.D_2=\frac{3}{2}e+k$.
\end{proposition}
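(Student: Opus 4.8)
The plan is to follow the strategy of Proposition \ref{nondegample}, producing an explicit map $f\colon\Gamma_Y\to\ZZ$ whose associated divisor $\Gamma_f$ is ample by Lemma \ref{lem:critample}, but with the numerics tuned to the target degrees $(e,\tfrac32 e+k)$. First I would pin down the shape of $\Gamma_Y$. By Example \ref{exnondeg} a non-degenerate curve structure of type $d_2$ consists of two legs ending on a common fork $c$, the two exceptional vertices $f_0,g_0$ (meeting $D_1,D_2$ respectively), and a single off-leg vertex $y$ adjacent to $c$. The hypothesis $D_1^2=1$ is the extremal case of $D_1^2\le 1$, and—exactly as in the computation of Example \ref{notample}—it forces the $D_1$-leg to be as short as possible, namely $L(f_0)=(f_0,c)$ with $f_0$ adjacent to $c$ and no interior $(-2)$-curve in between; the $D_2$-leg is $(g_0,g_1,\dots,g_m,c)$. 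In this configuration $f_0,g_0$ are $(-1)$-curves and the remaining vertices $g_1,\dots,g_m,c,y$ are $(-2)$-curves, and the only member of $C(Y)$ meeting $D_1$ (resp.\ $D_2$) is $f_0$ (resp.\ $g_0$), with $C_{f_0}.D_1=C_{g_0}.D_2=1$. Hence for any $f$ one has $\Gamma_f.D_1=f(f_0)$ and $\Gamma_f.D_2=f(g_0)$, so it suffices to produce an ample $\Gamma_f$ with $f(f_0)=e$ and $f(g_0)=\tfrac32 e+k$.

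Next I would write down $f$ explicitly: set $f(f_0)=e$; on the long leg set $f(g_j)=\tfrac32 e+k+\tfrac12 j(j+1)$ for $0\le j\le m$ (so the second differences along the leg are $1$); set $f(c)=f(g_m)+m+1$ and $f(y)=\tfrac12 e+k+\tfrac12 m(m+1)+2m+3$. Because $e$ is even, $\tfrac32 e$ and $\tfrac12 e$ are integers, so all these values lie in $\ZZ$; this is precisely where the parity hypothesis enters. It then remains to verify $\Gamma_f.v>0$ for every $v\in\Gamma_Y$ together with $f(f_0),f(g_0)>0$, after which Lemma \ref{lem:critample} gives ampleness.

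The verifications are routine intersection computations on $Y^\nu$ using the edge and self-intersection data above. Along the long leg, $\Gamma_f.g_0=f(g_1)-f(g_0)=1>0$ and $\Gamma_f.g_j=f(g_{j-1})-2f(g_j)+f(g_{j+1})=1>0$ for $1\le j<m$ by the constant second difference, while the choice $f(c)=f(g_m)+m+1$ gives $\Gamma_f.g_m=f(g_{m-1})-2f(g_m)+f(c)=1>0$. The short-leg condition $\Gamma_f.f_0=f(c)-e>0$ holds since $f(c)\approx\tfrac32 e$. At the fork $\Gamma_f.c=f(f_0)+f(g_m)+f(y)-2f(c)$, and the choice of $f(y)$ makes this equal to $1$; finally $\Gamma_f.y=-2f(y)+f(c)$ is positive precisely when $\tfrac12 e>k+\tfrac12 m(m+1)+3m+5$. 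Setting $\triangle:=2k+m^2+7m+10$ and taking any even $e>\triangle$ makes all of these inequalities strict simultaneously, and then $f(f_0)=e>0$, $f(g_0)=\tfrac32 e+k>0$, so $\Gamma_f$ is ample with the prescribed degrees.

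The main obstacle—and the entire point of the proposition—is that this last chain of inequalities is satisfiable at all. For a target $A.D_2=\gamma e+k$ with $\gamma\ge 2$ the same bookkeeping forces $(\gamma-2)e+(\text{positive lower-order terms})<0$, which is impossible; this is exactly the contradiction recorded in Example \ref{notample}. With $\gamma=\tfrac32$ the leading coefficient $\gamma-2=-\tfrac12$ is negative, so for $e$ large the slack at the vertex $y$ becomes positive and the construction closes up. Thus the only genuinely delicate points are (i) justifying that $D_1^2=1$ pins the $D_1$-leg to length two with fork, off-leg, and interior vertices all $(-2)$-curves—which I would extract from Example \ref{exnondeg} together with the self-intersection bookkeeping of Example \ref{notample}—and (ii) the $y$-inequality, which simultaneously dictates the parity requirement on $e$ and fixes the threshold $\triangle$.
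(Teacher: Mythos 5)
Your proof is correct and is essentially identical to the paper's own argument: the same explicit function $f$ on $\Gamma_Y$ (with the quadratic values $\tfrac{3}{2}e+k+\tfrac12 i(i+1)$ along the long leg, $f(c)=f(f_n)+n+1$, and the same choice of $f(y)$), the same threshold $\triangle=n^2+7n+2k+10$, and the same conclusion via Lemma \ref{lem:critample}. Your additional remarks — that $D_1^2=1$ pins the short leg to $(v_{e_1},c)$ and that the parity of $e$ and the $y$-inequality are where $\gamma=\tfrac32$ succeeds while $\gamma\geq 2$ fails (Example \ref{notample}) — are accurate and consistent with the paper's setup.
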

\begin{proof} 
Let $c$ be the fork of $\Gamma_Y$ and let $v_{e_i}$ denote the exceptional vertex with $D_i.v_{e_i}=1$. Let 
 $L(v_{e_2})=(f_0,\dots,f_n,c)$.  Denote by $y$  the unique vertex not appearing on any of the legs and
set $\triangle=n(n+1)+6n+2k+10$. 
  
  Let $e$ be any even integer such that $e>\triangle $. Define a map  $f\colon \Gamma_Y\to \ZZ$ by 
\begin{align*}
&f(v_{e_1})=e\\
&f(f_i)=\frac{3}{2}e+k+\frac{1}{2}i(i+1)\\
&f(c)= f(f_n)+n+1  \\  
&f(y)= \frac{1}{2}e+\frac{1}{2}n(n+1)+2n+3+k.
\end{align*}  
Then $-f(v_{e_1})+f(c)=f(f_n)+n+1-e>0.$ As above,  we have $f(f_1)-f(f_0)=1$ 
and \[f(f_{i-1})-2f(f_i)+f(f_{i+1})=1\] for $1\leq i \leq n-1$. Moreover, $f(f_{i})-f(f_{i-1})=i$  for $1\leq i \leq n-1$, so
\begin{align*}
-2f(f_n)+f(c)+f(f_{n-1})=1.
\end{align*}  
We also have 
\begin{align*}
-2f(c)+f(f_n)+f(v_{e_1})+f(y)&=-2f(f_n)-2n-2+f(f_n)+f(v_{e_1})+f(y)\\
&=-f(f_n)+f(v_{e_1})+f(y)-2n-2\\
&=-\frac{3}{2}e-k-\frac{1}{2}n(n+1)-2n-2+e+f(y)\\
&=1.
\end{align*}
Finally 
\begin{align*}
-2f(y)+f(c)&= -e-n(n+1)-4n-6-2k+\frac{3}{2}e+k+\frac{1}{2}n(n+1)+n+1\\
&=\frac{1}{2}e-\frac{1}{2}n(n+1)-3n-5-k\\
&=\frac{1}{2}(e-\triangle )>0.
\end{align*}
Hence $\Gamma_f$ is an ample divisor with properties as desired.
\end{proof}

We  will also need to construct ample  divisors on surfaces with degenerate curve structures. For better readability, we will treat the different cases separately. 

\begin{proposition}\label{degample}Let $Y$ be a component of a surface $Y_c\in \Mod_2(\mathscr{P})$.  
Suppose $\Gamma_Y$ is of type $d_2$, regular and degenerate.  Suppose there is an exceptional vertex $v_0$ and that $e$ is an integer, $e>2$.  
Write $D=D_1+D_2$ with  $D_1$ denoting the component of $D$  with $D_1.v_0=1$.
Then there is a map   $f\colon \Gamma_Y\to \ZZ$ such that $\Gamma_f$ is an ample divisor 
 with  degree $e+m$ on $D_2$ and degree $e$ on $D_1$ for some  integer  $m>0$ independent of $e$.
\end{proposition}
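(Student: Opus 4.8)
The plan is to follow the template of Proposition \ref{nondegample}: I would produce an explicit map $f\colon\Gamma_Y\to\ZZ$, verify that $\Gamma_f.v>0$ on every vertex together with strict positivity at the exceptional vertex, and then invoke the ampleness criterion of Lemma \ref{lem:critample}. The first task is to read off the shape of $\Gamma_Y$. Since $v_0$ is the (unique) exceptional vertex, it is the only curve meeting $D_1$ and satisfies $v_0^2=-1$, $v_0.D_1=1$; hence $\Gamma_f.D_1=f(v_0)$ for \emph{any} $f$. The leg $L(v_0)=(v_0,v_1,\dots,v_k)$ is a chain whose interior vertices satisfy $v_i.D=0$ and therefore have $v_i^2=-2$, and which terminates (by the degeneracy hypothesis) at a vertex $v_k$ meeting $D_2$; the remaining vertices of $\Gamma_Y$ are branches attached at $v_k$. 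Consequently the only curves meeting $D_2$ lie among $v_k$ and these branch vertices, so $\Gamma_f.D_2$ is the sum of the $f$-values on them.

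Next I would set $f(v_0)=e$ and propagate along the leg by the quadratic rule $f(v_i)=e+\frac{1}{2}i(i+1)$ for $0\le i\le k$. By the second-difference computation of Lemma \ref{poscoefficient} this forces $\Gamma_f.v_i=1>0$ for the interior $(-2)$-vertices, and by Lemma \ref{leg} it makes $f$ strictly increasing, so $f(v_k)>e$. On the branch vertices at $v_k$ I would assign fixed positive integers, independent of $e$, chosen large enough that $\Gamma_f.v_k>0$ and small enough that $\Gamma_f$ stays positive on each branch. The hypothesis $e>2$ is exactly what supplies the slack for the latter: in the minimal configuration, where the leg runs from the exceptional vertex to the former fork of the $E_6$-pattern (so $k=3$) carrying a single $(-2)$-branch $y$, assigning the value $4$ to $y$ gives $\Gamma_f.v=1$ along the leg, $\Gamma_f.y=-8+f(v_3)=e-2>0$, and degree $e+6$ on $D_2$; in the shorter configurations, where two $(-1)$-curves meet $D_2$ at the terminal vertex, the analogous choice works already for $e>0$.

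With positivity checked on all vertices and $f(v_0)=e>0$, Lemma \ref{lem:critample} yields ampleness of $\Gamma_f$, and the degrees come out as $\Gamma_f.D_1=e$ and $\Gamma_f.D_2=e+m$, where $m>0$ is a fixed integer independent of $e$ (its positivity is precisely the strict increase along the leg from Lemma \ref{leg}). The main obstacle I anticipate is organizational rather than conceptual: one must enumerate the finitely many shapes of a regular degenerate type-$d_2$ curve structure carrying an exceptional vertex — they differ in the leg length $k$ and in whether one or two curves meet $D_2$ at the terminal vertex — and confirm in each that the explicit $f$ keeps $\Gamma_f$ strictly positive for every $e>2$. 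Pinning down this short classification, and checking that the single binding inequality (the positivity on the $(-2)$-branch in the minimal case) is exactly the uniform bound $e>2$, is where the real care is needed.
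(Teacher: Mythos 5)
Your proposal is correct and is essentially the paper's own proof: the paper also sets $f(v_i)=e+\tfrac{1}{2}i(i+1)$ along the leg, assigns the fixed value $n+1$ to the unique vertex $w\in\Gamma_Y\setminus L(v_0)$ (which equals your choice of $4$ in the minimal case $n=3$), verifies $\Gamma_f.v_i=1$ and $\Gamma_f.w>0$, and concludes by Lemma \ref{lem:critample}, with $m=(D_2.w)(n+1)+\tfrac{1}{2}n(n+1)$. Your identification of the binding inequality (positivity on the $(-2)$-vertex attached to the end of the leg, which is exactly where $e>2$ is needed) matches the paper; the only cosmetic difference is that the paper records up front, via regularity, that exactly one vertex lies off the leg, rather than leaving this as a short enumeration.
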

\begin{proof}
  Write  $L(v_0)=(v_0,\dots, v_n)$. Let $w$ be the unique element in $\Gamma_Y\backslash L(v_0)$ which exists by regularity.  Let $e\geq 2$. Define $f\colon \Gamma_Y\to \ZZ $ by setting 
 $f(v_i)=e+\frac{1}{2}i(i+1)$ and $f(w)=n+1$. Then $\Gamma_f.v_i=1$ for all $i$ and also $\Gamma_f.w>0$. It follows from  Lemma \ref{lem:critample} that $\Gamma_f$ is an ample divisor with degree $e$ on  $D_1$ and degree  $e+(D_2.w)(n+1)+\frac{1}{2}n(n+1)$ on $D_2$. Since $D_2.w> 0$ we obtain the claim by setting $m=(D_2.w)(n+1)+\frac{1}{2}n(n+1)$.
\end{proof}

Let $\Gamma_Y$ be of type $d_2$, non-regular and with no exceptional vertex. Then $\Gamma_Y$ is a singleton given by a vertex $v$. Conversely, if $Y$ is a component of a surface $Y_c\in \Mod_2(\mathscr{P})$ with $\Gamma_Y$ a singleton, then $\Gamma_Y$ is of type $d_2$, non-regular and with no exceptional vertex.  For later use we note  the following obvious statement.

\begin{proposition}\label{degample2} Let $Y$ be a component of a surface $Y_c\in \Mod_2(\mathscr{P})$.
Suppose $\Gamma_Y$ is a singleton given by a vertex $v$.  Then, after relabelling $D$, we have $D_1.v=1$ and $D_2.v=2$.
 Then, for any $e>0$, $v\mapsto e $ defines an ample divisor with degree $e$ on  $D_1$ and $2e$ on $D_2$ and any ample divisor is of this form.

\end{proposition}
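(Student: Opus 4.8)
The plan is to reduce the whole statement to elementary intersection theory on $\PP^2$, after first identifying the surface $Y^\nu$. First I would observe that since $\Gamma_Y$ is a singleton $\{v\}$, Proposition \ref{qbasis} tells us that $\{C_v\}$ is a $\QQ$-basis of $\Pic(Y^\nu)$, so $Y^\nu$ has Picard rank $1$. As $Y$ is a component of a surface $Y_c\in\Mod_2(\mathscr{P})\subset\Mod_2$, Corollary \ref{main1} applies: a component of Picard rank less than $3$ is either a Hirzebruch surface or $\PP^2$, and since Hirzebruch surfaces have Picard rank $2$, I conclude $Y^\nu\cong\PP^2$. Writing $H$ for the hyperplane class, this gives $\Pic(Y^\nu)=\ZZ H$ with $H^2=1$ and $-K_{Y^\nu}=3H$.

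Next I would pin down the classes of $C_v$, $D_1$ and $D_2$. The anticanonical cycle satisfies $D=D_1+D_2=-K_{Y^\nu}=3H$, and each $D_i$ is an effective curve, so $D_i=d_iH$ with $d_i\geq 1$ and $d_1+d_2=3$; hence $\{d_1,d_2\}=\{1,2\}$, and I relabel so that $D_1=H$ and $D_2=2H$. For $C_v$, it is an integral curve not contained in $D$, so $C_v=aH$ with $a\geq 1$; by the classification of degenerate type $d_2$ curve structures without an exceptional vertex, namely the singleton case recorded in Figure \ref{degcurve}, we have $C_v^2=1$, which forces $a=1$, i.e. $C_v=H$. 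Then $D_1.C_v=H.H=1$ and $D_2.C_v=(2H).H=2$, which is the first assertion.

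The ampleness statements are now immediate. For any integer $e>0$ the divisor $\Gamma_f=f(v)C_v=eH$ is a positive multiple of the ample class $H$, hence ample, with degrees $\Gamma_f.D_1=eH.H=e$ and $\Gamma_f.D_2=eH.(2H)=2e$. Conversely, any ample (integral) divisor on $Y^\nu\cong\PP^2$ equals $eH$ for a unique integer $e>0$, so $A=eC_v=\Gamma_f$ for the map $f$ with $f(v)=e$, and its degrees on $D_1,D_2$ are $e,2e$; this gives the final claim that every ample divisor is of the stated form.

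I expect the only genuine content to be the two identifications $Y^\nu\cong\PP^2$ and $C_v=H$: once the ambient surface and the basis element are determined, the remaining assertions are routine computations in $\Pic(\PP^2)$. The one mild subtlety is ruling out $C_v=aH$ with $a\geq 2$, which is precisely where the recorded self-intersection $C_v^2=1$ from the curve-structure classification is needed — without it one could not guarantee that the value $1$ actually occurs as the degree of $\Gamma_f$ on one of the boundary components, and indeed the matching $D_1=H$ (rather than $D_2=H$) is forced by the same normalisation.
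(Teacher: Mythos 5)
Your proof is correct; in fact the paper gives no proof at all for this proposition (it is introduced as an ``obvious statement''), and your argument --- $\operatorname{rank}\Pic(Y^\nu)=1$ by Proposition \ref{qbasis}, hence $Y^\nu\cong\PP^2$ by Corollary \ref{main1}, the boundary components are a line and a conic since $D_1+D_2=-K_{Y^\nu}$, and $C_v$ is a line, so that ampleness and the degree computations are immediate --- is precisely the justification the paper leaves implicit. The only dependence worth flagging is that excluding $C_v$ from being a conic (the other smooth rational curve class on $\PP^2$) rests on the self-intersection $1$ recorded in Figure \ref{degcurve}, which the paper asserts as part of its classification of degenerate curve structures rather than proves; citing it is legitimate as established prior content, and it can also be verified directly by tracing the blow-downs from $\mathfrak{Y}_2$.
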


\begin{proposition}\label{degample3}
Let $Y$ be a component of a surface $Y_c\in \Mod_2(\mathscr{P})$.  Suppose $\Gamma_Y$ is
not regular. 
Suppose there is an exceptional vertex $v_0$ and let $e>2$ be an integer.  Write $D=D_1+D_2$ with  $D_1$ denoting the component of $D$  with $D_1.v_0=1$.
Then there is a map   $f\colon \Gamma_Y\to \ZZ$ such that $\Gamma_f$ is an ample divisor 
 with  degree $2e+m$ on $D_2$ and degree $e$ on $D_1$ for some  integer  $m>0$ independent of $e$.
\end{proposition}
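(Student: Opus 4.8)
The plan is to reproduce the argument of Propositions \ref{degample} and \ref{degample2}, the new ingredient being that non-regularity forces the leg attached to $v_0$ to terminate in a \emph{$0$-curve} meeting $D_2$ with multiplicity $2$; this doubling is exactly what turns the degree $e$ on $D_1$ into a degree $\approx 2e$ on $D_2$. So the first task is to pin down the shape of $\Gamma_Y$. Since $\Gamma_Y$ is non-regular and carries the exceptional vertex $v_0$, we have $|\Gamma_Y|>1$, so non-regularity means that the leg $L(v_0)=(v_0,v_1,\dots,v_n)$ ends in a vertex $v_n$ with $v_n^2=0$; as $v_0^2=-1$ this forces $n\ge 1$. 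The interior leg vertices $v_1,\dots,v_{n-1}$ satisfy $v_i.D=0$ and $v_i^2=-2$, and $v_0$ is the unique vertex meeting $D_1$, with $D_1.v_0=1$ (this is what distinguishes the $D_1$-side exceptional vertex from the collapsed $D_2$-side, cf.\ Example \ref{exnondeg}). Because $v_n$ is a smooth rational curve not contained in $D$ with $v_n^2=0$, adjunction gives $v_n.D=2$; since $v_n.D_1=0$ (otherwise $D_1$ would receive a second positive contribution and its degree could not equal $e$), we conclude $v_n.D_2=2$.

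Next I would define $f\colon\Gamma_Y\to\ZZ$ along the leg exactly as in Proposition \ref{degample}, setting
\[
f(v_i)=e+\tfrac12 i(i+1),\qquad i=0,\dots,n,
\]
and assigning to every vertex of $\Gamma_Y$ not lying on $L(v_0)$ a fixed positive value independent of $e$ (e.g.\ a constant such as $n+1$, as in Proposition \ref{degample}). With this choice the recursion $\Gamma_f.v_0=-f(v_0)+f(v_1)=1$ and $\Gamma_f.v_i=f(v_{i-1})-2f(v_i)+f(v_{i+1})=1$ for $1\le i\le n-1$ holds (this is the computation underlying Lemma \ref{leg}), so $\Gamma_f.v_i=1>0$ for $i<n$. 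Moreover $\Gamma_f.v_n=f(v_{n-1})+(\text{contributions of off-leg neighbours})>0$ since $v_n^2=0$ and $f(v_{n-1})=e+\tfrac12(n-1)n>0$. For any off-leg vertex $w$ one checks $\Gamma_f.w>0$ just as in Proposition \ref{degample}, using that every neighbour of $w$ on the leg carries a coefficient $\ge e>2$.

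Finally I would invoke Lemma \ref{lem:critample}: the unique exceptional vertex $v_0$ has $f(v_0)=e>0$, and $\Gamma_f.v>0$ for every $v\in\Gamma_Y$, so $\Gamma_f$ is ample. Its degrees are $\Gamma_f.D_1=f(v_0)\,(v_0.D_1)=e$ and
\[
\Gamma_f.D_2=f(v_n)\,(v_n.D_2)+\sum_{w\notin L(v_0)} f(w)\,(w.D_2)=2e+n(n+1)+m_0,
\]
where $m_0\ge 0$ is the (fixed, $e$-independent) contribution of the off-leg vertices to $D_2$. Setting $m=n(n+1)+m_0>0$, which is manifestly independent of $e$, gives an ample divisor of degree $e$ on $D_1$ and $2e+m$ on $D_2$, as required.

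The main obstacle is the structural first step: rigorously justifying the combinatorial shape of a non-regular type $d_2$ curve structure possessing an exceptional vertex, namely that $v_0$ is the \emph{unique} vertex meeting $D_1$ (so that the $D_1$-degree is exactly $e$) and that the leg terminates in a $0$-curve meeting $D_2$ with multiplicity exactly $2$ (so that the $D_2$-degree carries the factor $2$). This is the degenerate analogue, in the presence of an exceptional vertex, of the classification recorded in Figure \ref{degcurve}, and it relies on the adjunction facts for anticanonical pairs recalled at the start of this section together with Proposition \ref{qbasis}. Once this shape is fixed, the remaining steps are the same bookkeeping as in Proposition \ref{degample}.
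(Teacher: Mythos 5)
Your core computation is the same as the paper's (the coefficients $f(v_i)=e+\tfrac12 i(i+1)$ along the leg, ampleness via Lemma \ref{lem:critample}, and the factor $2$ coming from $v_n^2=0$, $v_n.D_2=2$), but there is a genuine gap in the structural step, and your workaround for it does not work. The paper's proof rests on the stronger fact that non-regularity forces $L(v_0)=\Gamma_Y$: in a non-regular structure with an exceptional vertex the \emph{entire} curve structure is the leg, so there are no off-leg vertices at all. You never establish (or even assert) this; instead you design the proof to tolerate off-leg vertices by giving them a fixed coefficient such as $n+1$. That tolerance is illusory. An off-leg vertex $w$ cannot meet $D$ (by the paper's observation that in a non-regular structure each $D_i$ meets a unique vertex of $\Gamma_Y$, and those vertices are $v_0$ and $v_n$), so $w^2=-2$ and $w$ can only attach to $v_n$; then with your choice $f(w)=n+1$ one gets $\Gamma_f.w=f(v_n)-2f(w)=e+\tfrac12 n(n+1)-2(n+1)$, which is $\leq 0$ for instance when $n=1$ and $e=3$, violating the requirement that the construction work for \emph{every} $e>2$. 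Worse, for a hypothetical chain of $(-2)$-vertices hanging off $v_n$, positivity of $\Gamma_f$ forces the coefficients to grow quadratically toward $v_n$, so no assignment of $e$-independent constants can work uniformly in $e$. In other words, the proposition (with $m$ independent of $e$ and validity for all $e>2$) is true precisely \emph{because} $\Gamma_Y=L(v_0)$; a proof that does not rule out off-leg vertices is not merely incomplete, it is attempting to prove something that would be false in the cases it purports to cover.

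Two smaller points. First, your argument that $v_n.D_1=0$ ("otherwise $D_1$ would receive a second positive contribution and its degree could not equal $e$") is circular; the correct reason is definitional: $v_0$ exceptional means $\{v\in\Gamma_Y\mid v.D_1\neq 0\}=\{v_0\}$, so no other vertex, in particular not $v_n$, can meet $D_1$, and then adjunction gives $v_n.D_2=2$. Second, you should justify why non-regularity is witnessed by the leg of $v_0$ itself rather than by the leg of some other exceptional vertex: if an exceptional vertex $v_0'$ meeting $D_2$ had a leg ending in a square-zero vertex $w$, then by exceptionality of $v_0'$ and adjunction one would get $w.D_1=2$, contradicting the exceptionality of $v_0$. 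With these repairs, and with the fact $L(v_0)=\Gamma_Y$ in hand, your computation collapses exactly to the paper's one-line proof.
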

\begin{proof}
As $\Gamma_Y$ is not regular and hence degenerate, $L(v_0)=\Gamma_Y$. Write $L(v_0)=(v_0,v_1,\dots v_n)$  and setting $f(v_i)=e+\frac{1}{2}i(i+1)$ defines an ample divisor (note that $v^2_n=0$) with degree $2(e+\frac{1}{2}n(n+1))$ on $D_2$ and $e$ on $D_1$.
\end{proof}

\section{Projective  Models}\label{sec:projmodel}
In this section we analyse the elements in the set $ \PMod_2$. 

\subsection{Models of class  $\mathscr{T}$}\label{subsec41}
Recall from Example  \ref{DNV2} that $\YT$ has a component 
with normalisation $\mathfrak{Y}_4$. Denote  this component  by $(\YT){_{\omega}}$.
Let $Y_c\in \Mod_2(\mathscr{T})$. There is a sequence of type I flops to $\YT$.  Let $Y_\omega$ be the image of    $(\YT){_{\omega}}$ under the induced birational map. It is independent of the chosen sequence.  We call $Y_\omega$ the \emph{special} component of $Y_c$. 

\begin{proposition}\label{projTcurves}Let $Y_c\in \Mod_2(\mathscr{T})$, $Y_\omega$ the special component. Let $D_\omega$ be the smooth component of the boundary curve of $Y_\omega$. Let $D_1$ and  $D_2$ be the disjoint curves in the preimage of $D_\omega$  under the normalisation $\pi\colon Y^\nu_\omega\to Y_\omega$. Then $Y_c$ is projective if and only if $D_1^2=D_2^2=-1$.
\end{proposition}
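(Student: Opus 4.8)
The plan is to reduce the projectivity of $Y_c$ to the existence of a single ample bundle on the special component $Y_\omega$, and then to read off the numerical constraint $D_1^2=D_2^2=-1$ from the curve structure of $Y_\omega$.

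First I would set up the reduction. Write $Y_c=Y_\omega\cup W_a\cup W_b$, where $W_a,W_b$ are the two valency-one components (normalisations obtained from $\mathfrak{Y}_1$) glued to $Y_\omega$ along their single nodal boundary curves, as in Example \ref{DNV2}; the normalisation $Y_\omega^\nu$ carries an anticanonical cycle $D_1+B+D_2+B'$ in which $D_1,D_2$ are the two branches of the smooth self-locus $D_\omega$ (hence opposite in the cycle, so $D_1.D_2=0$) and $B,B'$ meet $W_a,W_b$. Since $c_{Y_c}=1$, Lemma \ref{degree} identifies $\Pic(Y_c)$ with tuples of bundles on the normalisations that agree in degree on the double curves, and a bundle is ample on $Y_c$ iff its restriction to each component is ample; on the singular component $Y_\omega$ this last condition may be tested on $Y_\omega^\nu$ via the finite normalisation map. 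On each of $W_a,W_b$ there is only one boundary curve, so the matching condition there is vacuous and, after passing to a common multiple, is met by scaling an ample bundle. Thus \emph{$Y_c$ is projective iff there is an ample $A$ on $Y_\omega^\nu$ with $\deg A|_{D_1}=\deg A|_{D_2}$.} I would also record that the type I flops relating $Y_c$ to $\YT$ preserve $D_1^2+D_2^2=-2$ (each self-flop across $D_\omega$ raises one branch by $1$ and lowers the other by $1$, while flops across $B,B'$ leave $D_1,D_2$ untouched), so that $D_1^2=D_2^2$ is equivalent to $D_1^2=D_2^2=-1$.

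For sufficiency, assume $D_1^2=D_2^2=-1$. Then the two legs of the type $d_4$ curve structure $\Gamma_{Y_\omega}$ ending on $D_1$ and on $D_2$ are balanced, and I would build a map $f\colon\Gamma_{Y_\omega}\to\ZZ$ exactly as in the ample-divisor constructions of Proposition \ref{nondegample} and Lemma \ref{lem:critample}, assigning values symmetrically on the two legs so that $\Gamma_f$ is ample with $\Gamma_f.D_1=\Gamma_f.D_2$; ampleness follows by Kleiman's criterion using Proposition \ref{qbasis} and the finite polyhedrality of $\NE(Y_\omega^\nu)$ from Corollary \ref{main1}. Scaling and choosing matching ample bundles on $W_a,W_b$ (whose boundary degrees are unconstrained), I would glue via Lemma \ref{degree} to produce an ample bundle on $Y_c$.

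For necessity, suppose the branches are unbalanced; by the reduction WLOG $D_1^2\geq 0$ and $D_2^2\leq -2$. I would show no ample $A$ satisfies $\deg A|_{D_1}=\deg A|_{D_2}$. Writing $A=\Gamma_f$, Lemma \ref{leg} forces $f$ to increase strictly along each leg from its exceptional vertex toward the forks, while $\deg A|_{D_i}$ is controlled by the value of $f$ at the exceptional vertex meeting $D_i$; since $D_2^2\leq -2$ its leg is strictly longer than the (shortest possible) leg at $D_1$, and carrying the intersection inequalities through the two forks of $\Gamma_4^{\mathbf n}$ and the off-leg vertex — the bookkeeping of Example \ref{notample} — yields $\deg A|_{D_2}>\deg A|_{D_1}$, a contradiction. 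Equivalently, in the unbalanced case $D_1-D_2$ (or its negative) is nef, so $A.(D_1-D_2)$ keeps a fixed nonzero sign on the ample cone. I expect this necessity direction to be the main obstacle: one must pin down the precise fork-and-leg combinatorics of $\Gamma_4^{\mathbf n}$ (from which fork each branch hangs, the exact relation between $D_i^2$ and its leg length, and whether the structure becomes degenerate when $D_1^2\geq 0$) before the inequality chain through both forks can be run; by contrast the reduction and the sufficiency construction are routine given Lemma \ref{degree}, Lemma \ref{lem:critample} and Corollary \ref{main1}.
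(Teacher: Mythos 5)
Your reduction to the existence of an ample divisor on $Y_\omega^\nu$ with equal degrees on $D_1$ and $D_2$ is correct and is precisely the paper's first step, and your necessity direction reaches for the right tool (Lemma \ref{leg}); but the combinatorial picture you hang it on is not the one that occurs, and the fact you would actually need is exactly the one you flag as open. With your labels ($D_1^2\geq 0$, $D_2^2\leq -2$) there is in general \emph{no} exceptional vertex, hence no leg, meeting $D_1$ at all: for instance when $D_1^2=0$ the unique vertex of $\Gamma_{Y_\omega}$ meeting $D_1$ has two neighbours, so the defining condition of an exceptional vertex fails. What is true, and is the entire content of the paper's necessity proof, is that the curve structure is \emph{degenerate}: the unique exceptional vertex $e$ meets $D_2$, and its leg $L(e)$ ends in a vertex $w$ with $w.D_1=1$ (the leg crosses over the contracted side of the graph). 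Lemma \ref{leg} then finishes in one line: for any ample $A=\Gamma_f$ pulled back from $Y_\omega$ one gets $\deg A|_{D_1}\geq f(w)>f(e)=\deg A|_{D_2}$, contradicting the equality forced by descent. Note this inequality is the \emph{opposite} of the one you assert (either strict inequality gives the contradiction, but it shows your mental picture is off), and no bookkeeping ``through the two forks'' in the style of Example \ref{notample} enters. Your parenthetical alternative is simply false: $D_1-D_2$ is not nef in the unbalanced case (it pairs to $-1$ with $e$), nor is $D_2-D_1$ (it pairs to $D_2^2<0$ with $D_2$).

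The sufficiency half also has a gap: you invoke Lemma \ref{lem:critample} and Proposition \ref{nondegample}, which are stated and proved only for curve structures of type $d_2$, and you presume a symmetric ``two balanced legs'' shape that a general type $d_4$ structure with $D_1^2=D_2^2=-1$ need not have --- the two vertices meeting $D_1,D_2$ hang off the same fork, the branches towards the nodal double curves $B,B'$ have arbitrary and unrelated lengths, and forks may have been contracted away entirely by flops across $B$ and $B'$. The paper shows this direction needs no curve-structure combinatorics at all: take any ample $L'$ on $Y_\omega^\nu$ with, say, $L'.D_1\geq L'.D_2$, and set $M=L'+(L'.D_1)D_1$, which is nef and vanishes exactly on $D_1$ because $D_1^2=-1$; then $(L'.D_2)L'+(L'.D_1-L'.D_2)M$ is ample by the Nakai--Moishezon criterion, and since $D_1.D_2=0$ its degree on both $D_1$ and $D_2$ equals $(L'.D_1)(L'.D_2)$, so it descends to an ample bundle on $Y_\omega$ and glues as in Lemma \ref{degree}. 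In short, your skeleton agrees with the paper's, but both halves of the numerical step are genuinely incomplete: necessity is missing its key structural input (degeneracy of the curve structure in the unbalanced case), and sufficiency rests on lemmas that do not apply as stated, where a short uniform argument is available.
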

\begin{proof}Let $Y_c=Y_1\cup Y_2\cup Y_3$  and suppose $Y_\omega=Y_2$.
We begin by showing that  $Y_c\in  \PMod_2(\mathscr{T})$
if and only if $Y_\omega$ is projective.
Projectivity of $Y_c$ clearly implies projectivity of $Y_w$, so we only need to show the opposite implication.
So suppose $Y_\omega$ is projective.  Because $Y_c$ 
is in $\Mod_2$ and has dual intersection complex $\mathscr{T}$,  the remaining two components of $Y_c$ are obtained by blow-ups and blow-downs of $\mathfrak{Y}_1$ in the interior special point, and thus are projective. Hence there are  ample line bundle  $A_i$ on $Y_i$ for $i=1,2,3$. The components $Y_1$ and $Y_2$  are glued by identifying a curve $D_{12}\subset Y_1$ with  a curve $D_{21}\subset Y_2$. Similarly,  $Y_3$ and $Y_2$  are glued by identifying a curve $D_{32}\subset Y_3$ with  a curve $D_{23}\subset Y_2$.  Replacing the  $A_i$ by suitable multiples, we can assume that $ A_1.D_{12}=A_2.D_{21}$ and $A_2.D_{23}=A_3.D_{32}$.
Because $Y_c$ has trivial Carlson extension,  one obtains an ample bundle on $Y_c$ by Lemma \ref{degree} and \cite[Proposition 1.2.16]{Lazar},
implying that $Y_c$ is projective. 

We now show that projectivity of $Y_w$ is equivalent to the numerical conditon stated in the propositon.
 Suppose first that $D_1^2=D_2^2=-1$.  One can, similar as in the proof of Proposition \ref{max:proj}, 
find an ample divisor on $Y_\omega$. Indeed, take an ample divisor $L'=L'_\omega$ on the normalisation $Y^\nu_\omega$. Suppose without loss of generality $L'.D_1\geq L'.D_2$.  Set $L''=(L'.D_2)L'$.  Set $M=L'+(L'.D_1)D_1$. Then $M.C\geq 0$ for all curves $C$ on $Y^\nu_\omega$ with equality if and only if $C=D_1$.  Write $n=L'.D_1-L'.D_2$.  Note that $D_1.D_2=0$. Then $L_\omega=L''+nM$ is ample by the
Nakai-Moisjezon criterion
and 

\begin{align*}
L_\omega.D_2&=L''.D_2+nM.D_2\\
&=(L'.D_2)(L'.D_2)+(L'.D_1)(L'.D_2)-(L'.D_2)(L'.D_2)\\
&=L''.D_1+nM.D_1\\
&=L_\omega.D_1.
\end{align*}
Hence $L_\omega$ induces an ample divisor on $Y_w$. 

We now show that the condition on the intersection numbers is also necessary. So suppose that $D_1^2<-1$. Note that  $D_2^2= -2-D_1^2$. 
Suppose $L'$ is an ample line bundel on $Y_\omega$.  The normalisation $\nu \colon Y_\omega^\nu \to Y_\omega$ is finite, so $L=\nu^*L'$ is ample.

By Proposition \ref{qbasis},  replacing $L$ by a suitable multiple we can write
 \[L=\sum_{v\in C(Y)}a_vC_v\] with $a_v\in \ZZ$. 
  
 As $L$ is ample, the coefficients of vertices meeting the anticanonical divisor are strictly positive. This implies $a_v>0 $ for  all $v$ by Lemma \ref{leg}, as either $v$ is contained in the  leg defined by  some exceptional vertex or meets a component of the boundary. 
 By the condition on the self-intersection of $D_1$ the curve structure $\Gamma_Y$ is degenerate: there is an exceptional vertex $e$ with $e_1.D_1=1$ and $L(e)$ ends in a vertex $w$  with $w.D_2=1$. 
By Lemma \ref{leg},
 \[  L.D_2>a_w>a_e= L.D_1,\] 
 so $Y_\omega$ cannot be projective.
\end{proof}
In particular, the proposition says that  $Y_c\in \Mod_2$ is in  $  \PMod_2(\mathscr{T})$ if and only if there is a sequence of type I flops \[Y_c\dashrightarrow \dots\dashrightarrow Y_i \dashrightarrow \dots\dashrightarrow \YT,\]  such that any  flopping curve meets the nodal components of the double locus on the  special component of $Y_i$. 

\subsection{Models of class $\mathscr{P}$}

\begin{proposition}\label{nondegproj2}
Let $Y_c\in \Mod_2(\mathscr{P})$ and write $Y_c=Y_1\cup Y_2\cup Y_3$. 

\begin{itemize}
\item[(i)] If $\Gamma_{Y_i}$ is degenerate for all $i=1,2,3$, then $Y_c$ is not projective.
\item[(ii)]Suppose $\Gamma_{Y_i}$ is regular for all $i$. If there  is a component $Y_i$ such that $\Gamma_{Y_i}$ is non-degenerate, then $Y_c$ is projective.
\end{itemize}
\end{proposition}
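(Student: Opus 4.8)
The plan is to reduce projectivity of $Y_c = Y_1\cup Y_2\cup Y_3$ to a matching problem on its three double curves and then solve (resp. obstruct) it with the ample-divisor propositions of this section. Since every $Y_c\in\Mod_2(\mathscr{P})$ is maximal (maximality is preserved under type I flops by Proposition \ref{modelflop}), Lemma \ref{degree} applies: a collection of bundles $A_i\in\Pic(Y_i^\nu)$ glues to a bundle on $Y_c$ exactly when the degrees agree on the double curves, and by \cite[Proposition 1.2.16]{Lazar} the result is ample once each $A_i$ is. Write $D_{12},D_{13},D_{23}$ for the three double curves; each $Y_i$ carries the anticanonical cycle $D_{ij}+D_{ik}$, all components being smooth. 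The task becomes: assign integers $x,y,z$ (the degrees on $D_{12},D_{13},D_{23}$) and realise each prescribed pair by an ample bundle on the relevant $Y_i^\nu$. I will use two structural inputs. First, by Lemma \ref{leg} any ample divisor on a \emph{degenerate} component is strictly larger on its ``large side'' (the boundary curve at the end of the leg) than on the other, whereas on a \emph{non-degenerate} component Proposition \ref{nondegample} lets one prescribe two nearly equal large degrees. Second, for each double curve the two self-intersections $(D_{ij}^2)_{Y_i}$ and $(D_{ij}^2)_{Y_j}$ sum to $-2$: this holds in $(-1)$-form, where both equal $-1$, and a type I flop across $D_{ij}$ raises one by $1$ and lowers the other by $1$. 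Combined with Example \ref{exnondeg}, this identifies the large side of a degenerate component as its boundary curve of self-intersection $\geq 2$.

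For (ii), assume $Y_1$ is non-degenerate and all $Y_i$ are regular; ``regular'' excludes singletons and every non-regular structure, so any degenerate $Y_i$ here is regular-degenerate and, by Proposition \ref{degample}, realises a pair $(e,e+m_i)$ with $m_i>0$ fixed and $e>2$ arbitrary. The idea is to pick $x,y,z$ all large with pairwise differences bounded by the finitely many constants $m_i$. If all three components are non-degenerate, take $x=y=z=N\gg 0$, realised by $(N,N)$ on each via Proposition \ref{nondegample}. Otherwise one solves the linear system imposed by the one or two regular-degenerate components: each fixes the difference of its two degrees, with large side dictated by the self-intersection $\geq 2$, and since two degenerate components can share $D_{23}$ as a large side on at most one of them (by the invariant), the system always admits a solution with $z=N$ and $x,y=N+O(1)$. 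Then $Y_1$ (and any non-degenerate $Y_3$) sees two large degrees differing by a bounded amount, hence of ratio tending to $1$, which Proposition \ref{nondegample} realises for $N$ large; the degenerate components are handled by Proposition \ref{degample}. Gluing yields an ample bundle, so $Y_c$ is projective.

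For (i), suppose all three components are degenerate and, for contradiction, that an ample $A$ exists with boundary degrees $x,y,z$. Each component forces a strict inequality between its two degrees, oriented toward its large side, i.e. toward its incident double curve of self-intersection $\geq 2$. View this as a tournament on the nodes $\{D_{12},D_{13},D_{23}\}$ whose three edges are $Y_1,Y_2,Y_3$, each oriented toward the double curve on which it has self-intersection $\geq 2$. A sink node would be the self-intersection-$\geq 2$ side on \emph{both} of its components, contradicting $(D_{ij}^2)_{Y_i}+(D_{ij}^2)_{Y_j}=-2$; hence the tournament has no sink and is a directed $3$-cycle. But a $3$-cycle of strict inequalities forces $x<y<z<x$, a contradiction, so $Y_c$ is not projective.

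The main obstacle is the bookkeeping behind the two structural inputs of the first paragraph. Concretely: one must check, for every degenerate curve structure of type $d_2$ (the exceptional-vertex-free cases of Figure \ref{degcurve} and the singleton of Proposition \ref{degample2} included), that the large side of an arbitrary ample divisor coincides with the boundary curve of self-intersection $\geq 2$, and one must verify that the self-intersection sum $-2$ is genuinely preserved by each elementary modification of type I affecting a given double curve. Once these are established, part (i) is the short tournament argument above and part (ii) is the elementary choice of $x,y,z$; the only further care needed in (ii) is to confirm that Proposition \ref{nondegample} tolerates the bounded discrepancies $m_i$ as soon as $N$ is sufficiently large.
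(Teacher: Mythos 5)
Your overall strategy is the paper's own: reduce projectivity to a degree-matching problem on the three double curves via Lemma \ref{degree} and \cite[Proposition 1.2.16]{Lazar}, prove (ii) by chaining the prescribed-degree Propositions \ref{nondegample} and \ref{degample} around the cycle so that the non-degenerate component absorbs the bounded constants, and prove (i) by a cyclic chain of degree inequalities coming from Lemma \ref{leg}. Part (ii) of your proposal is correct and essentially identical to the paper's proof. Your tournament reformulation of (i) is the paper's chain
\[
A_2.D_{23}\geq A_2.D_{21}=A_1.D_{12}>A_1.D_{13}=A_3.D_{31}\geq A_3.D_{32}=A_2.D_{23},
\]
and it has a real merit: the invariant $(D_{ij}^2)_{Y_i}+(D_{ij}^2)_{Y_j}=-2$ together with ``large side has self-intersection $\geq 2$'' explains why the three inequalities can be consistently oriented around the cycle, a point the paper asserts without justification (``by the same reasoning'').

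There is, however, a genuine flaw in (i) as you state it: it is false that every degenerate component forces a \emph{strict} inequality. Consider the first structure of Figure \ref{degcurve}: a curve $u$ with $u^2=0$ meeting each boundary component $D_1,D_2$ once, and a $(-2)$-curve $w$ attached only to $u$. By adjunction $w$ is disjoint from the boundary, so every ample divisor $A=au+bw$ satisfies $A.D_1=a=A.D_2$ -- an equality, not a strict inequality. (This structure is also regular, and here $D_1^2=D_2^2=2$, so your orientation rule ``point toward the side of self-intersection $\geq 2$'' is ill-defined for it; relatedly, Proposition \ref{degample} does not cover it in part (ii) either, since that proposition presupposes an exceptional vertex -- though this is harmless there, as the structure realises the pairs $(e,e)$ directly.) If all three components imposed only weak inequalities, the cycle $x\leq y\leq z\leq x$ would yield no contradiction, so your argument needs at least one strictly oriented edge, and nothing in your proposal supplies it. The missing step, which is exactly where the paper gets its single strict inequality, is that at least one component must carry an exceptional vertex: type I modifications preserve the total number of vertices of the three curve structures (namely $24$, eight for each copy of $\mathfrak{Y}_2$ in $\YP$), whereas exceptional-vertex-free structures have at most two vertices. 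That component then gives $A_i.D_{ij}>A_i.D_{ik}$ by Lemma \ref{leg}, and the remaining components need only contribute weak inequalities, consistently oriented by your self-intersection invariant. With this insertion your proof of (i) closes; without it, the premise ``a $3$-cycle of strict inequalities'' is false as stated.
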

\begin{proof}
Suppose first 
 that the curve structures $\Gamma_{Y_i}$ are all degenerate. We show that this cannot happen if $Y_c$ is projective.
 So let $A$ 
be an ample divisor on  $Y_c$. Denote the restriction to $Y_i$ by $A_i$.  
 The set of underlying curves of $\Gamma_{Y_i}$  is a basis of $\Pic(Y_i)_\QQ$ and thus after replacing $A$ by a suitable multiple, we write $A_i=\Gamma_{f_i}$ for a suitable function $f_i\colon \Gamma_{Y_i}\to \ZZ$.
 Note that
at least one of the $Y_i$, say $Y_1$, has an exceptional vertex $v_e$. Let the leg of $v_e$ be given by $L(v_e)=(v_e,\dots, v_n)$. 
Suppose the component of the anticanonical divisor met by $v_e$ is $D_{13}$ while $v_n$ meets $D_{12}$.
Necessarily $f(v_e)>0$, and thus we have $f(v_n)>f(v_e)$ by Lemma \ref{leg}. In particular, $A_1.D_{12}>A_1.D_{13}$.
Now consider $Y_2$.  If $\Gamma_{Y_2}$ has an exceptional vertex, we find  $A_2.D_{23}>A_2.D_{21}$ by the same reasoning.
If $\Gamma_{Y_2}$ has no exceptional vertex and 
 $|\Gamma_{Y_2}|>1$, then $\Gamma_{Y_2}$ has two elements, say $v_1$ and $v_2$ with indices chosen such that $v_1^2=0$, $v_1.v_2=1$, $D_{21}.v_2=0$ and $D_{21}.v_1=1$.  Write $A_2=a_1 v_1+a_2v_2$. Then $0<A_2.v_1=a_2$ and  $0<A_2.D_{21}=a_1$. So $A_2.D_{23}\geq a_1+a_2>a_1=A_2.D_{21}$. 
 If $|\Gamma_{Y_2}|=1$, we have  $A_2.D_{23}=2A_2.D_{21}$ by virtue of Proposition \ref{degample2}. 

The same reasoning applied to $Y_3$ yields the chain of inequalites 
 \[A_2.D_{23}\geq A_2.D_{21}=A_1.D_{12}>A_1.D_{13}=A_3.D_{31}\geq A_3.D_{32}=A_2.D_{23},\]
where the equalities come from the gluing condition. Hence $Y_c$ is not projective.

Conversely, suppose  the curve structure $\Gamma_{Y_1}$ is non-degenerate. For large enough $e$, there is an ample divisor $A_2$  on $Y_2$ with  $\deg A_2{_{|D_{21}}}=e$ and  $\deg A_2{_{|D_{23}}}=e+k_1$, with $k_1\geq 0$ and independent of $e$, by Proposition \ref{nondegample} if $\Gamma_{Y_2}$ is non-degenerate or Proposition \ref{degample} in the degenerate case. Similarly, maybe increasing $e$, one finds an ample divisor $A_3$ on $Y_3$ with $ A_3.D_{32}=e+k_1$ 
and  $ A_3.D_{31}=e+k_2$, $k_2\geq 0$. 
Then, maybe again increasing $e$, there is an ample divisor $A_1$  on $Y_1$ with $A_1.D_{13}=e+k_2$ and  $ A_1.D_{12}=e$, as $\Gamma_{Y_1}$ is non-degenerate and hence Proposition \ref{nondegample} applies.
Hence $Y$ carries an ample bundle and thus is projective.
\end{proof}
\begin{remark}
The non-degeneracy condition of Propositon \ref{nondegproj2} implies that $Y_c\in  \PMod_2(\mathscr{P})$
if and only if there is an index $i$ such that  $D^2_{ij}\leq 1$  for all $j\neq i$, with self intersections considered on $Y_i$.
\end{remark}

Recall that if $\Gamma_{Y_i}$ is a curve structure of type $d_2$ that is not regular and $D_{ij}$ is a component of the anticanonical cycle, then there is a 
unique vertex $v_{D_{ij}}\in \Gamma_{Y_i}$ such that $v_{D_{ij}}.D_{ij}\neq 0$.
Note that if  $Y=Y_1\cup Y_2\cup Y_3$ is of class $\mathscr{P}$ such that   $\Gamma_{Y_1}$ is non-degenerate and  $\Gamma_{Y_i}$ is not regular for $i=2,3$, then there exists an  $i$ such that $v_{D_{i1}}.D_{i1}=2$.

\begin{proposition}\label{nondegproj3}
Let $Y_c\in \Mod_2(\mathscr{P})$ and write  $Y_c=Y_1\cup Y_2\cup Y_3$. Suppose $\Gamma_{Y_1}$ is non-degenerate and $\Gamma_{Y_i}$ is not regular for $i=2,3$. 
\begin{itemize}
\item[(i)] If $v_{D_{i1}}.D_{i1}=2$ for $i=2,3$, then $Y_c$ is projective.
\item[(ii)] If $v_{D_{i1}}.D_{i1}=1$ for exactly one $i\in\{2,3\}$, then $Y_c$ is projective if and only if  $D^2_{1i}\leq 0$.
\end{itemize}
\end{proposition}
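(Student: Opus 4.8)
The plan is to translate projectivity of $Y_c$ into the existence of ample divisors $A_1,A_2,A_3$ on the three components whose degrees agree along the glued double curves, i.e. $A_1.D_{12}=A_2.D_{21}$, $A_1.D_{13}=A_3.D_{31}$ and $A_2.D_{23}=A_3.D_{32}$. Since $Y_c$ is maximal, such a compatible triple glues (after passing to a common multiple) to a line bundle on $Y_c$ that is ample by Lemma \ref{degree} and \cite[Proposition 1.2.16]{Lazar}, and conversely the restrictions of an ample bundle on $Y_c$ form such a triple. The whole argument rests on one observation about a non-regular curve structure $\Gamma_{Y_i}$ of type $d_2$: its two boundary components are met with multiplicities $1$ and $2$ in some order (read off from the singleton case of Proposition \ref{degample2} and, in the exceptional-vertex case, from the fact that the exceptional vertex meets one component with multiplicity $1$ while the $v^2=0$ end of its leg meets the other with multiplicity $2$, cf. Proposition \ref{degample3}). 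Combining this with Proposition \ref{qbasis} (write $A_i=\Gamma_f$) and Lemma \ref{leg} (so that $f$ increases strictly along the leg) shows that for \emph{any} ample $A_i$ the multiplicity-$2$ component carries at least twice the degree of the multiplicity-$1$ component, with equality exactly in the singleton case.

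For (i), the hypothesis $v_{D_{i1}}.D_{i1}=2$ makes $D_{i1}$ the multiplicity-$2$ component on both $Y_2$ and $Y_3$. I would construct $A_2$ with $A_2.D_{23}=a$, $A_2.D_{21}=2a+m_2$ and $A_3$ with $A_3.D_{32}=a$, $A_3.D_{31}=2a+m_3$ using Proposition \ref{degample3} (or Proposition \ref{degample2} in the singleton case), where $m_2,m_3\ge 0$ are independent of $a$; this already forces $A_2.D_{23}=A_3.D_{32}$. The remaining targets $A_1.D_{12}=2a+m_2$ and $A_1.D_{13}=2a+m_3$ on the non-degenerate component $Y_1$ are both close to $2a$, so they are realised by an ample divisor via Proposition \ref{nondegample} (base $2a$, nonnegative shifts $m_2,m_3$) once $a$ is large, yielding the compatible triple and hence projectivity.

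For the \emph{if} part of (ii), say $v_{D_{21}}.D_{21}=1$ and $v_{D_{31}}.D_{31}=2$, so $D_{23}$ is the multiplicity-$2$ component on $Y_2$ and $D_{31}$ on $Y_3$. The gluing chain then forces $A_1.D_{13}$ to be about four times $A_1.D_{12}$: I would take $A_2.D_{21}=a$, $A_2.D_{23}=2a+m_2$, then $A_3.D_{32}=2a+m_2$, $A_3.D_{31}=4a+2m_2+m_3$, and finally need an ample $A_1$ with $A_1.D_{12}=a$ and $A_1.D_{13}=4a+2m_2+m_3$. Because $D_{12}^2\le 0$, Proposition \ref{nefample} with $\gamma=4$ and shift $k_2=2m_2+m_3$ supplies exactly such a divisor for $a$ large and suitably divisible, which produces the compatible triple.

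For the \emph{only if} part, suppose $Y_c$ is projective with ample $A$ and set $A_i=A|_{Y_i}$. The multiplicity observation with Lemma \ref{leg} gives $A_2.D_{23}\ge 2A_2.D_{21}$ and $A_3.D_{31}\ge 2A_3.D_{32}$, and chaining these through the gluing identities yields $A_1.D_{13}=A_3.D_{31}\ge 2A_3.D_{32}=2A_2.D_{23}\ge 4A_2.D_{21}=4A_1.D_{12}$, in particular $A_1.D_{13}\ge 2A_1.D_{12}$. Since $Y_1$ is non-degenerate, Example \ref{exnondeg} gives $D_{12}^2\le 1$, so it remains to rule out $D_{12}^2=1$; in that case I would reproduce the computation of Example \ref{notample} for the curve structure of $Y_1$ to conclude that no ample divisor satisfies $A_1.D_{13}\ge 2A_1.D_{12}$, contradicting the inequality just derived, and hence $D_{12}^2\le 0$. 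The main obstacle is exactly this last step: one must check that $D_{12}^2=1$ pins down the $D_{12}$-side of $\Gamma_{Y_1}$ to the minimal leg (the exceptional vertex adjacent to the fork), so that the obstruction argument of Example \ref{notample}, carried out there only for the family $Y(n)$, applies to every non-degenerate $Y_1$; once the leg lengths and the off-leg vertex are identified, the relations $A_1.c>0$ and $A_1.y>0$ force the contradiction as in that example.
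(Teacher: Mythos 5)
Your proposal is correct and follows essentially the same route as the paper's proof: part (i) and the ``if'' half of (ii) are the same gluing constructions via Propositions \ref{degample2}, \ref{degample3}, \ref{nondegample} and \ref{nefample}, and the ``only if'' half is the same degree-chain argument (gluing conditions plus the multiplicity/Lemma \ref{leg} inequalities) terminating in the obstruction of Example \ref{notample}. The one point you flag as an obstacle --- that Example \ref{notample} is stated only for the family $Y(n)$ --- is handled in the paper simply by citing the example, and your sketched resolution is the right one: $D_{12}^2=1$ on a non-degenerate type $d_2$ structure forces the $D_{12}$-leg to consist of a single exceptional vertex adjacent to the fork, while the fork and the off-leg $(-2)$-vertex are untouched by type I flops, so the computation of Example \ref{notample} goes through for every such $Y_1$.
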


\begin{proof}
We begin by  proving the first claim. By Propositions \ref{degample2} and \ref{degample3}, given any sufficiently large $e$, there are ample divisors  $A_2$ and $A_3$ on $Y_2$ and $Y_3$ with $A_2.D_{23}=A_3.D_{32}=e$ and  $A_i.D_{i1}=2e+n_i$, $i=2,3$,  with $n_i\geq 0$ and $n_i$ independent of $e$. 
As $Y_1$ is non-degenerate, after maybe increasing $e$, by Proposition \ref{nondegample},  there is an  ample divisor $A_1$ on $Y_1$ with $A.D_{1i}=2e+n_i$, 
hence the $A_i$'s glue to an ample divisor on $Y_c$.

In the second case, we can assume  $v_{D_{21}}.D_{21}=1 $. If  $D^2_{12}>0$,  then in fact  $D^2_{12}=1 $, as $Y_1$ is non-degenerate.   We want to show that there is no ample bundle on $Y_c$. To get a 
contradiction, 
suppose $A$ is ample on $Y_c$. Write $A_i$ for the restriction to $Y_i$. 
Write  $A_2.D_{21}=e>0$.  Note that $|\Gamma_{Y_2}|>1$. So  by Proposition \ref{degample3} $A_2.D_{23}=2e+n_1$ for some number $n_1>0$.  

As $v_{D_{31}}.D_{31}=2$, by the same results and the gluing condition, we have $A_3.D_{32}
=2e+n_1$ and $A_3.D_{31}=4e+n_2$ for some $n_2>0$. It follows that  $A_1.D_{12}=e$ and $A_1.D_{13}=4e+n_2$ by the gluing condition. But 
then $A_1$ cannot be ample by Example \ref{notample}.

 If  $D^2_{12}\leq0$, we can, by  Lemma \ref{leg} and 
Proposition \ref{degample2},  find an ample divisor on $A_2$  on $Y_2$ with  $A_2.D_{21}=e$ and  $A_2.D_{23}=2e+n_1$
with $n_1$ independent of $e$ for any $e$ sufficiently large.  Similarly, we find an ample $A_3$  on $Y_3$ with  $A_3.D_{32}
=2e+n_1$ and $A_3.D_{31}=4e+n_2$, $n_2$ independent of $e$. By Proposition \ref{nefample}, there is a ample $A_1$ on $Y_1$ with  $A_1.D_{12}=e$ and $A_1.D_{13}=4e+n_2$. By construction, the $A_i$ glue to an ample bundle $A$ on $Y_c$, so $Y_c$ is projective.
\end{proof}

The following proposition is proven by the same reasoning as Proposition \ref{nondegproj3}.

\begin{proposition}\label{nondegproj4}
Let $Y_c\in \Mod_2(\mathscr{P})$ and write  $Y_c=Y_1\cup Y_2\cup Y_3$. Suppose $\Gamma_{Y_1}$ is non-degenerate, $\Gamma_{Y_2}$ is not regular and $\Gamma_{Y_3}$ is degenerate and regular.
\begin{itemize}
\item[(i)] If $v_{D_{21}}.D_{21}=1$ then $Y_c$ is projective if and only if  $D^2_{12}\leq 0$.
\item[(ii)] If $v_{D_{21}}.D_{21}=2$ then $Y_c$ is projective if and only if  $D^2_{13}\leq 0$. 
\end{itemize}
\end{proposition}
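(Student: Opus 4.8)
The plan is to follow the same template as Proposition~\ref{nondegproj3}: reduce projectivity of $Y_c = Y_1 \cup Y_2 \cup Y_3$ to the existence of ample divisors $A_i$ on the three components whose degrees agree on the glued double curves, and then track these degrees around the cycle of components. As in the proof of Proposition~\ref{max:proj}, Lemma~\ref{degree} together with \cite[Proposition 1.2.16]{Lazar} shows that a compatible triple $(A_1, A_2, A_3)$ glues (after passing to multiples) to an ample bundle on $Y_c$, and conversely the restrictions of an ample bundle provide such a triple. So everything reduces to bookkeeping of the intersection numbers $A_i.D_{ij}$ subject to the gluing relations $A_1.D_{12} = A_2.D_{21}$, $A_2.D_{23} = A_3.D_{32}$, $A_3.D_{31} = A_1.D_{13}$.

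The key input is the different degree ratios the three curve structures realise. First I would record that, since $\Gamma_{Y_2}$ is not regular, Propositions~\ref{degample2} and \ref{degample3} produce, for every sufficiently large $e$, an ample $A_2$ whose degrees on the two boundary components are $e$ and $2e + n_2$ with $n_2 \geq 0$ independent of $e$; moreover the component carrying the smaller degree $e$ is exactly the one met by the exceptional vertex (resp.\ the multiplicity-$1$ ray in the singleton case), so that $v_{D_{21}}.D_{21} = 1$ forces $A_2.D_{21} = e$ whereas $v_{D_{21}}.D_{21} = 2$ forces $A_2.D_{21} = 2e + n_2$. Secondly, since $\Gamma_{Y_3}$ is degenerate and regular, Proposition~\ref{degample} (or, when $\Gamma_{Y_3}$ has no exceptional vertex, a direct application of Lemma~\ref{lem:critample} to the structures of Figure~\ref{degcurve}) yields an ample $A_3$ whose two boundary degrees are $e'$ and $e' + n_3$ with $n_3 \geq 0$ independent of $e'$, i.e.\ of equal magnitude. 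Thus $Y_2$ forces a $2{:}1$ imbalance while $Y_3$ merely transports a degree of fixed magnitude from $D_{32}$ to $D_{31}$.

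For the ``if'' direction I would then propagate degrees around the triangle. In case (i), where $v_{D_{21}}.D_{21}=1$, this gives $A_1.D_{12} = e$ and $A_1.D_{13} = 2e + (n_2 + n_3)$; in case (ii), where $v_{D_{21}}.D_{21}=2$, it gives $A_1.D_{13} = e$ and $A_1.D_{12} = 2e + (n_2 + n_3)$. In either case $Y_1$ must carry an ample divisor with degree $\approx e$ on one boundary component $D$ and $\approx 2e$ on the other. Since $\Gamma_{Y_1}$ is non-degenerate, Example~\ref{exnondeg} gives $D_{12}^2, D_{13}^2 \leq 1$, so the hypothesis $D^2 \leq 0$ on the relevant ``small'' component ($D_{12}$ in (i), $D_{13}$ in (ii)) lets me invoke Proposition~\ref{nefample} with $\gamma = 2$ to produce the required $A_1$, choosing $e$ large in the prescribed arithmetic progression and absorbing the bounded constants into the free parameters $k_1, k_2$. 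This completes a compatible triple and exhibits $Y_c$ as projective.

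For the converse I would argue by contradiction exactly as in Proposition~\ref{nondegproj3}(ii): if $D^2 > 0$ on the small component then in fact $D^2 = 1$, and an ample bundle on $Y_c$ would restrict to an ample divisor on $Y_1$ with the forced degrees $e$ and $\gamma e + k$, $\gamma = 2 > 1$, on its two boundary components, which is impossible by Example~\ref{notample}; hence $Y_c$ is not projective. I expect the main obstacle to be the uniform verification of the degree-magnitude claims across all sub-cases --- $\Gamma_{Y_2}$ a singleton versus having an exceptional vertex, and $\Gamma_{Y_3}$ with or without an exceptional vertex and of either orientation --- together with the matching of the $e$-independent constants so that the three divisors actually glue. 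Once the ratios are pinned down, the behaviour of the non-degenerate component $Y_1$ is governed entirely by the dichotomy between Proposition~\ref{nefample} and Example~\ref{notample}.
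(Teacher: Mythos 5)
Your proposal is correct and is essentially the paper's own proof: the paper disposes of this proposition with the single remark that it ``is proven by the same reasoning as Proposition~\ref{nondegproj3}'', and your argument is exactly that reasoning spelled out --- gluing via Lemma~\ref{degree}, the $2{:}1$ degree imbalance from Propositions~\ref{degample2}/\ref{degample3} on the non-regular component, the bounded degree shift from Proposition~\ref{degample} on the degenerate regular component, and the dichotomy Proposition~\ref{nefample} (for $D^2\leq 0$) versus Example~\ref{notample} (for $D^2=1$) on the non-degenerate component, now with $\gamma=2$ instead of $\gamma=4$. The one point you flag as an obstacle --- the orientation of $\Gamma_{Y_3}$ --- does resolve itself: in the relevant necessity cases the hypotheses force the exceptional vertex of $Y_3$ onto the side glued to $Y_2$ (since the opposite orientation would make the self-intersection of the double curve on the $Y_3$ side at least $2$, contradicting the intersection numbers imposed by the other two components), so the degrees are indeed forced as you claim.
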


\begin{proposition}\label{nondegproj5}
Let $Y_c\in \Mod_2(\mathscr{P})$ and write  $Y_c=Y_1\cup Y_2\cup Y_3$. Suppose $\Gamma_{Y_1}$ is not regular and  $\Gamma_{Y_i}$ is non-degenerate for $i=2,3$. Also, assume $v_{D_{12}}.D_{12}=2$. Then $Y_c$ is projective.
\end{proposition}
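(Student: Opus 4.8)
The plan is to exhibit, for a suitably large parameter, compatible ample bundles on the three components and glue them. Recall from the proof of Proposition~\ref{nondegproj2} that once we produce ample line bundles $A_i$ on $Y_i$ whose degrees agree on the glued curves, that is with $A_1.D_{12}=A_2.D_{21}$, $A_2.D_{23}=A_3.D_{32}$ and $A_3.D_{31}=A_1.D_{13}$, then after passing to suitable multiples Lemma~\ref{degree} glues them to a line bundle on $Y_c$ which is ample by \cite[Proposition 1.2.16]{Lazar}; hence $Y_c$ is projective. So the whole task is to realise one consistent triple of degrees, the common value $a$ on $D_{12}=D_{21}$, the value $b$ on $D_{13}=D_{31}$ and the value $c$ on $D_{23}=D_{32}$.

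First I would read off the degrees forced on $Y_1$. Since $\Gamma_{Y_1}$ is not regular with $v_{D_{12}}.D_{12}=2$, the component $D_{12}$ is the one met with multiplicity two by the unique boundary vertex, so $D_{13}$ plays the role of ``$D_1$'' in Propositions~\ref{degample2} and \ref{degample3}. Thus for every large $e$ there is an ample $A_1$ with $A_1.D_{13}=e$ and $A_1.D_{12}=2e+n_1$ for a fixed $n_1\ge 0$ (with $n_1=0$ in the singleton case of Proposition~\ref{degample2}). This fixes the ratio $a:b=2:1$ up to a bounded constant, and it is this $2:1$ ratio that the two non-degenerate components must absorb.

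The key geometric input is then a self-intersection bound. I would first record that for each double curve the two self-intersections, computed on the respective normalisations, satisfy $(D_{ij}^2)_{Y_i}+(D_{ji}^2)_{Y_j}=-2$: on $\YP$ both summands equal $-1$ because the central fibre is in $(-1)$-form, while a type I flop only blows the flopped curve down on one side of the double curve and up on the other, leaving this sum invariant; since every member of $\Mod_2(\mathscr{P})$ is reached from $\YP$ by type I flops, the relation holds throughout. Applied to the edge $D_{13}=D_{31}$ it gives $(D_{31}^2)_{Y_3}=-2-(D_{13}^2)_{Y_1}$, so it suffices to show $(D_{13}^2)_{Y_1}\ge -2$. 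This is where I expect the main difficulty to lie: it requires inspecting the non-regular curve structures of type $d_2$ compatible with $v_{D_{12}}.D_{12}=2$. By the classification of the degenerate structures without exceptional vertex (Figure~\ref{degcurve}), the boundary vertex there meets each component only once; hence under our hypothesis $\Gamma_{Y_1}$ is either the singleton, in which case $Y_1^\nu\cong\PP^2$ and $D_{13}$ is a line with $D_{13}^2=1$, or it has its unique exceptional vertex on $D_{13}$ with the self-intersection-$0$ leg-end lying on $D_{12}$, which keeps $D_{13}$ unblown-up so that $(D_{13}^2)_{Y_1}\ge -1$. In either case $(D_{31}^2)_{Y_3}\le 0$.

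With $(D_{31}^2)_{Y_3}\le 0$ in hand the construction is a short cascade. Fix $e$ large and divisible by the modulus $\alpha$ of Proposition~\ref{nefample} applied to $Y_3$. On the non-degenerate $Y_3$, Proposition~\ref{nefample} with $D_1=D_{31}$ (so $D_1^2\le 0$), $\gamma=2$ and $k_1=k_2=0$ yields an ample $A_3$ with $A_3.D_{31}=e$ and $A_3.D_{32}=2e$. On $Y_1$ take the $A_1$ above, with $A_1.D_{13}=e$ and $A_1.D_{12}=2e+n_1$. Finally, on the non-degenerate $Y_2$ both required degrees, namely $A_2.D_{21}=2e+n_1$ and $A_2.D_{23}=2e$, lie within a bounded offset of the common base $2e$, so Proposition~\ref{nondegample} (with base $2e$ and offsets $n_1$ and $0$) produces the ample $A_2$. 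By construction $A_1.D_{13}=A_3.D_{31}$, $A_1.D_{12}=A_2.D_{21}$ and $A_2.D_{23}=A_3.D_{32}$, so the three bundles glue to an ample bundle on $Y_c$ and $Y_c$ is projective. The crux of the whole argument is the estimate $(D_{13}^2)_{Y_1}\ge -2$; everything else is the gluing machinery already used for Propositions~\ref{nondegproj2}--\ref{nondegproj4}.
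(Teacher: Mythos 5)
Your gluing cascade and your treatment of the case $D_{31}^2\le 0$ are sound and agree with the paper's first case, but the classification claim on which the whole argument rests is false. You assert that non-regularity of $\Gamma_{Y_1}$ together with $v_{D_{12}}.D_{12}=2$ forces $(D_{13}^2)_{Y_1}\ge -1$, because the exceptional vertex sitting on $D_{13}$ supposedly "keeps $D_{13}$ unblown-up". It does not: blowing up the interior special point of $D_{13}$ (equivalently, flopping a curve from $Y_3$ into $Y_1$) prepends a new exceptional vertex to the chain $(v_0,\dots,v_n)$, keeps the structure non-regular (the leg still ends in the $0$-curve $v_n$ meeting $D_{12}$ twice), and decreases $D_{13}^2$ by $1$ each time. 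The non-regular structures compatible with your hypothesis are the singleton ($D_{13}^2=1$) and chains of arbitrary length $\ell\ge 2$ with $D_{12}^2=4$ and $D_{13}^2=2-\ell$; the only upper bound on $\ell$ comes from your own sum rule combined with non-degeneracy of $Y_3$, namely $D_{31}^2=-2-D_{13}^2\le 1$, i.e. $D_{13}^2\ge -3$. In particular $D_{31}^2=1$ does occur: the chain of length $5$ on $Y_1$ (so $D_{13}^2=-3$, $D_{12}^2=4$), glued to the surface $Y_3\cong Y(2)$ of Example \ref{notample} (non-degenerate, $D_{31}^2=1$, $D_{32}^2=-3$) and to a non-degenerate $Y_2$ with $D_{21}^2=-6$, $D_{23}^2=1$, is a legitimate element of $\Mod_2(\mathscr{P})$ satisfying all hypotheses. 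This is exactly the configuration the paper isolates as the unique hard case.

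For that surface your construction is not merely unjustified but provably impossible: you need an ample $A_3$ on $Y_3$ with $A_3.D_{31}=e$ and $A_3.D_{32}=2e$, and Example \ref{notample} (applied with $\gamma=2$, $k=0$ to $Y_3\cong Y(2)$) shows no such divisor exists — this is precisely why Proposition \ref{nefample} carries the hypothesis $D_1^2\le 0$. The paper's proof therefore splits off this case and replaces the ratio-$2$ divisors by the ratio-$\frac{3}{2}$ divisors of Proposition \ref{32ample} on both $Y_2$ and $Y_3$, and compensates on $Y_1$ by adding a multiple of the nef class $C_{v_n}$ (the $0$-curve ending the leg of $\Gamma_{Y_1}$) to push the ratio there from $2$ up to $\frac{9}{4}=\frac{3}{2}\cdot\frac{3}{2}$, which makes the three degrees match around the cycle. (The mixed case $D_{31}^2=1$, $D_{23}^2\le 0$ also escapes your rigid ratio-$2$ cascade, though it can be patched; the case $D_{31}^2=D_{23}^2=1$ cannot.) As written, your argument proves the proposition only for the surfaces with $D_{31}^2\le 0$.
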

\begin{proof}Note that by non-degeneracy, $D^2_{31}\leq 1$ and $D^2_{23}\leq 1$. 
 If $D^2_{31}\leq 0$ or $D^2_{23}\leq 0$ then $Y_c$ is projective: this follows as in the proof above. 
 If $D^2_{31}=1$ and  $D^2_{23}= 1$, the intersection numbers and the conditions on degeneracy and regularity fix the components, so there is only one such surface.  It follows from  Propositon \ref{32ample} that 
 for sufficiently large $e$ 
 there is an ample divisor $A_3$ on $Y_3$ with  $A_3.D_{31}=e$ and  $A_3.D_{32}=\frac{3}{2}e$. 
 
 We  can take $e$ to be divisible by $4$.  Since $D^2_{31}=1$, $\Gamma_{Y_1}$ has an exceptional vertex $v_0$. Let $L(v_0)=(v_0,v_1,\dots,v_n)$ be the associated leg. Then $v_n^2=0$ as $\Gamma_{Y_3}$ is not regular. So $C_{v_n}$ is a nef divisor on $Y_1$. By Proposition \ref{degample3}, there is an ample divisor $A'_1$ on $Y_1$ with $A'_1.D_{13}=e$ and $A'_1.D_{12}=2e+k$, with $k$ independent of $e$. Set $A_1=A'_1+\frac{1}{4}eC_{v_n}$. This is an ample divisor with  $A_1.D_{13}=e$ and $A_1.D_{12}=\frac{9}{4}e+k$. By Proposition \ref{32ample}, there is an ample divisor $A_2$ on $Y_2$ with $A_2.D_{23}=\frac{3}{2}e$ and $A_2.D_{21}= \frac{9}{4}e+k$. Hence the $A_i$ glue to an ample divisor on $Y$.
\end{proof}

\section{Flops, Birational Automorphisms and Orbits}\label{sec:flopauto}

\subsection{Flops}

In this section we analyse possible flops of models of the DNV family of degree $2$. We first recapitulate Lemma 1.29 from \cite{theta15}, whose proof is an elementary but lengthy computation.

\begin{lemma}[\cite{theta15}]\label{exlocus} Let $\shY\to S$ be a  model of the Dolgachev-Nikulin-Voisin family of degree $2d$, and let $\pi\colon \shY\to \shZ$ 
be a birational contraction (over $S$) whose exceptional locus $\Ex(\pi)$ intersects $\shY_c$ in a one dimensional scheme. Let $\nu \colon \shY_c^\nu\to \shY_c$ 
be the normalisation. Then each connected component $C$ of $\Ex(\pi)\cap \shY_c$ is one of the following:
 \begin{itemize}
 \item[(i)] A non-singular irreducible component of the double curve of $\shY_c$. Necessarily $\nu^{-1}(C)$ consists of two copies of $\PP^1$, each with self intersection $-1$ in $\shY^\nu_c$.
 
 \item[(ii)] A tree of rational curves contained in an irreducible component of $\shY_c$. This tree is of the form $\nu(C')$, where $C'$ is a connected tree of rational curves contained in one connected component of $\shY^\nu_c$, intersecting its boundary transversally in one point.
 
 \item[(iii)] A tree of rational curves in an irreducible component of $\shY_c$, disjoint from the double curve.
 
 \item[(iv)] $\nu^{-1}(C)$ consists of two connected components, each intersecting the boundary of the connected component of $\shY^\nu_c$ containing it transversally in one point. 
 These two intersection points are identified under $\nu$.
 \end{itemize}
\end{lemma}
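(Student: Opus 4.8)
The plan is to reduce the statement to a local study of the contracted curves near the double locus $D$ of $\shY_c$, and then to read off the four possible shapes from the classification of negative curves in Proposition~\ref{pro:canflops}. First I would check that $\Ex(\pi)\cap\shY_c$ is a union of rational curves. Since $\pi$ is birational over $S$ and the generic fibre $\shY_\eta$ is a smooth $K3$ surface, every curve contracted by $\pi$ lies in a single fibre, hence in $\shY_c$. As $\omega_\shY\cong\shO_\shY$, the contraction is crepant, so $K_\shY\cdot\Gamma=0$ for every contracted curve $\Gamma$; for $\Gamma\cong\PP^1$ adjunction then gives $\deg\det N_{\Gamma/\shY}=-2$, and a standard bend-and-break argument for crepant contractions of a Calabi--Yau threefold forces every irreducible component of $\Ex(\pi)\cap\shY_c$ to be rational. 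I would then pass to the normalisation $\nu\colon\shY^\nu_c\to\shY_c$ and fix a connected component $C$ of $\Ex(\pi)\cap\shY_c$: its preimage $\nu^{-1}(C)$ is a disjoint union of connected trees of $\PP^1$'s, each contained in one component $Y^\nu_i$, and these trees are reglued by $\nu$ along points lying over $D$.

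The heart of the argument, and where I expect the main difficulty (this is the ``elementary but lengthy computation''), is the behaviour of a contracted tree at the double locus. Working in a single component $Y^\nu_i$, a contracted curve $\Gamma$ not lying in $D$ is one of the negative curves enumerated by the curve structure $\Gamma_{Y_i}$, hence appears in the graphs of Proposition~\ref{pro:canflops}. The normal-bundle sequence
\[
0\to N_{\Gamma/Y^\nu_i}\to N_{\Gamma/\shY}\to N_{Y^\nu_i/\shY}|_\Gamma\to 0,
\]
together with $N_{Y^\nu_i/\shY}\cong\shO_{Y^\nu_i}(-D_i)$ (which follows from $\shO_\shY(\shY_c)|_{Y_i}\cong\shO_{Y_i}$ and $\omega_{Y^\nu_i}\cong\shO(-D_i)$ by Definition~\ref{def:dsemistable}), translates $\deg\det N_{\Gamma/\shY}=-2$ into $\Gamma\cdot D_i\in\{0,1\}$ for each irreducible $\Gamma$. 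Using Lemma~\ref{poscoefficient} and the monotonicity along legs of Lemma~\ref{leg}, one then shows that a connected contracted tree $T\subset Y^\nu_i$ meets the anticanonical cycle $D_i$ in \emph{at most one} point, always transversally and at a smooth point of $D_i$. In parallel one must exclude contracted curves through the triple points of $\shY_c$: near such a point $\shY$ is analytically $\{xyz=t\}$, which is rigid under the small contraction, so $\Ex(\pi)$ avoids the triple points and every regluing of $C$ takes place at a smooth point of $D$.

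Finally I would assemble the global picture from these local facts. A smooth point $p\in D_{ij}$ carries exactly two local branches, coming from $Y_i$ and $Y_j$ (allowing $Y_i=Y_j$, separated by $\nu$); hence at most two trees can be glued over a single point of $D$, and since each tree meets $D$ in at most one point a connected component $C$ can involve at most two trees. The bookkeeping then yields precisely the four cases. If $C$ is the image of a single double curve, the $(-1,-1)$ condition forces $\nu^{-1}(C)$ to be two copies of $\PP^1$ of self-intersection $-1$, giving (i). A single tree meeting $D$ transversally in one point gives (ii); a single tree disjoint from $D$ gives (iii); and two trees, each meeting $D$ in the one point that is their common image under $\nu$, give (iv). Consistency of the flopped model with Proposition~\ref{modelflop} (the result is again a maximal degeneration) confirms that no further cases occur. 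The only genuinely laborious step is the normal-bundle and leg computation establishing the ``at most one boundary point, transversal, at a smooth point'' dichotomy, as it must be verified against each of the admissible configurations in Proposition~\ref{pro:canflops}.
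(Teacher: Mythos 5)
The paper offers no proof to compare against: Lemma \ref{exlocus} is recapitulated verbatim from \cite{theta15} (Lemma 1.29 there), with the explicit remark that its proof is ``an elementary but lengthy computation''. So your proposal has to stand on its own, and as written it has genuine gaps. The most serious one is that the ``heart'' of your argument invokes Proposition \ref{pro:canflops}, Lemma \ref{poscoefficient} and Lemma \ref{leg}. These concern curve structures of components of surfaces in $\Mod_2$, i.e.\ blow-ups of $\mathfrak{Y}_1,\mathfrak{Y}_2,\mathfrak{Y}_4$ at interior special points; they are specific to degree $2$, whereas Lemma \ref{exlocus} is stated, and later used, for models of arbitrary degree $2d$, whose components need not carry any such structure. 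A proof of the lemma cannot use this machinery. Your rationality step also fails: bend-and-break produces rational curves only in the $K$-negative regime, and here $K_\shY\cdot\Gamma=0$. Both problems are repaired by one elementary observation you never state: a connected component of $\Ex(\pi)\cap\shY_c$ is a positive-dimensional fibre $\pi^{-1}(z)$, $z\in\shZ_c$ (Zariski's main theorem), so the curves it induces on each $Y_i^\nu$ are contracted by the induced birational morphism of surfaces and hence form a configuration with \emph{negative definite} intersection matrix (Mumford's criterion). Then adjunction on $Y_i^\nu$, using that $D_i$ is an effective anticanonical cycle and $\Gamma\not\subset D_i$, gives $2p_a(\Gamma)-2=\Gamma^2-D_i\cdot\Gamma\le -1$, hence $p_a(\Gamma)=0$ and $(\Gamma^2,\Gamma\cdot D_i)\in\{(-2,0),(-1,1)\}$. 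This one computation yields rationality, transversality and smoothness of the boundary point simultaneously; by contrast, your normal-bundle computation presupposes $\Gamma\cong\PP^1$ and only gives $\Gamma\cdot D_i\in\{0,1\}$ after you know $\Gamma^2<0$, which you never justify.

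The two statements that actually carry the classification are also missing or circular. First, ``a connected contracted curve in $Y_i^\nu$ meets $D_i$ in at most one point'' does not follow from leg monotonicity; it follows from negative definiteness: by the dichotomy above, any component meeting $D_i$ is a $(-1)$-curve meeting it once, and if two such components were joined by a chain of $(-2)$-curves, the sum of the curves along that chain would have self-intersection $0$, which negative definiteness forbids; the same argument excludes cycles in the dual graph, which is where ``tree'' comes from. Second, your case (i) is circular: you write ``the $(-1,-1)$ condition forces\dots'', but $(-1,-1)$ is exactly the assertion to be proved; it follows from negative definiteness on \emph{both} adjacent components together with the triple point formula $(D_{ij}^2)_{Y_i}+(D_{ij}^2)_{Y_j}=-2$ (in a type III degeneration every smooth double curve carries exactly two triple points, since the dual complex triangulates $S^2$). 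Finally, your opening reduction ``every curve contracted by $\pi$ lies in a single fibre, hence in $\shY_c$'' is false: $\pi$ may contract $(-2)$-curves of the generic fibre $\shY_\eta$, producing horizontal exceptional divisors whose trace on $\shY_c$ is still one-dimensional; this is permitted by the hypothesis and is in fact one source of case (iii). The correct starting point is the fibre description above, after which the bookkeeping at the end of your proposal (at most two trees glued at a smooth point of the double locus) does go through.
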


\begin{remark}
In $(\rm{iv})$, the two connected components can lie in one or two components of $\shY_c$. 
\end{remark}

The lemma implies the following description of flopping contractions.

\begin{proposition}\label{irredcomp} Let $\shY\to S$ be a  model of the Dolgachev-Nikulin-Voisin family of degree $2d$, and let $\pi\colon \shY\to \shZ$ be a birational contraction (over $S$) with  $\codim \Ex(\pi)= 2$, 
i.e. a flopping contraction. Let $C$ be a  connected component
 of $\Ex(\pi)$.  
Let $\nu \colon \shY_c^\nu\to \shY_c$ be the normalisation. 
Then $\nu^{-1}(C)$ is an integral $(-1)$-curve if $C$ is not contained in the singular locus of $\shY_c$ and a disjoint union of two integral $(-1)$-curves
 if $C$ is contained in the singular locus.
\end{proposition}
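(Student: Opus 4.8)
The plan is to feed the output of Lemma~\ref{exlocus} into the extra hypothesis $\codim\Ex(\pi)=2$ together with the Calabi--Yau condition $K_\shY=\O_\shY$. First I would check that $\Ex(\pi)\subseteq\shY_c$: as $\pi$ is small, its restriction to the generic fibre is a small birational contraction of the smooth surface $\shY_\eta$, and a proper birational morphism out of a smooth surface can only contract divisors, so $\shY_\eta\to\shZ_\eta$ is an isomorphism and $\Ex(\pi)$ avoids the generic fibre. Hence $\Ex(\pi)\cap\shY_c=\Ex(\pi)$ is one-dimensional and Lemma~\ref{exlocus} applies, so each connected component $C$ is of one of the types (i)--(iv). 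If $C$ is contained in the singular locus of $\shY_c$, then only type~(i) is possible, and the lemma already gives that $\nu^{-1}(C)$ is a disjoint union of two $(-1)$-curves; this disposes of the case $C\subseteq\Sing(\shY_c)$.

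It remains to treat $C\not\subseteq\Sing(\shY_c)$, i.e.\ types (ii), (iii), (iv), and to show each reduces to a single integral $(-1)$-curve. The engine is a normal bundle computation for an irreducible $\Gamma\cong\PP^1$ in $\nu^{-1}(C)$, lying in a component $Y_i^\nu$ with anticanonical cycle $D$. From the normal bundle sequence $0\to N_{\Gamma/Y_i^\nu}\to N_{\Gamma/\shY}\to N_{Y_i^\nu/\shY}|_\Gamma\to 0$, together with $N_{Y_i^\nu/\shY}=K_{Y_i^\nu}=\O(-D)$ (which holds because $\O_\shY(\shY_c)\cong\O_\shY$ and $K_\shY=\O_\shY$) and $K_\shY\cdot\Gamma=0$, one gets $\deg N_{\Gamma/\shY}=-2$. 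Since $\Gamma$ is contracted by a small contraction it must be rigid, $H^0(N_{\Gamma/\shY})=0$; and since $\Gamma^2\ge 0$ would make $\Gamma$ move already inside $Y_i^\nu$, rigidity forces $\Gamma^2\le -1$, equivalently $\Gamma\cdot D\le 1$ by adjunction on $Y_i^\nu$. The remaining bad possibility is $\Gamma\cdot D=0$, i.e.\ a $(-2)$-curve disjoint from the double locus; here $\Gamma$ is numerically trivial against every component $Y_j$, its class is horizontal, and such a curve deforms along $\shY\to S$, so contracting it would sweep out a divisor and make $\pi$ divisorial, contradicting $\codim\Ex(\pi)=2$. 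I expect this exclusion of interior $(-2)$-curves to be the main obstacle, because the rigidity estimate alone does not separate $\Gamma^2=-1$ from $\Gamma^2=-2$; one genuinely has to argue that the $(-2)$-classes move in the family. Granting it, every contracted curve is a $(-1)$-curve meeting $D$ transversally in exactly one point.

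Finally I would read off the three cases. Type~(iii) is impossible, since its curves are disjoint from $D$ whereas a $(-1)$-curve meets $D$. In type~(ii) the tree meets $D$ in a single point, but each of its $(-1)$-curves meets $D$ once, so the tree consists of exactly one curve; as it meets $D$ at a smooth point, $\nu$ is a local isomorphism there and $\nu^{-1}(C)$ is one integral $(-1)$-curve, as claimed. Type~(iv) I would rule out by extremality: there $\nu^{-1}(C)=C_1\sqcup C_2$ with $C_1\subset Y_i^\nu$ and $C_2\subset Y_j^\nu$ two $(-1)$-curves whose boundary points are glued over a point of $D_{ij}$, and since $\pi$ contracts a single extremal ray, $C_1$ and $C_2$ are numerically proportional with positive ratio; but intersecting with the component $Y_j$ gives $Y_j\cdot C_1=1$ and $Y_j\cdot C_2=K_{Y_j}\cdot C_2=-1$, forcing the ratio to be $-1$, which is absurd (the self-glued case $i=j$ being handled in the same way). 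Hence for $C\not\subseteq\Sing(\shY_c)$ we are always in the refined type~(ii), which completes the proof.
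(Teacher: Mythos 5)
Your reduction to Lemma \ref{exlocus}, the dispatch of case (i), and the adjunction computation $\Gamma^2-\Gamma\cdot D=-2$ are all fine, and your overall route (local normal-bundle estimates plus rigidity) genuinely differs from the paper's. But the step you yourself flag as the main obstacle — excluding interior $(-2)$-curves — is a real gap, and your proposed fix does not work as stated. You assert that a $(-2)$-curve $\Gamma$ disjoint from $D$ ``deforms along $\shY\to S$'', but nothing in your setup proves this: the normal bundle sits in an extension $0\to\O_{\PP^1}(-2)\to N_{\Gamma/\shY}\to\O_{\PP^1}\to 0$, so first-order deformations, even when they exist, may be obstructed, and for a rational curve in a Calabi--Yau threefold the expected dimension of the deformation space is $0$; deformation theory alone cannot rule this case out. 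What actually makes the exclusion work — and this is the engine of the paper's proof — is the \emph{maximality} of the model: by Proposition \ref{lem:unoquedeg} the restriction $\Pic(\shY/S)\to\Pic(\shY_c)$ is an isomorphism, so the class of $\Gamma$ (extended by zero to the other components, which is legitimate by Lemma \ref{degree} since $\Gamma\cap D=\varnothing$) lifts to a divisor $L$ on $\shY$. Taking a nef divisor $F$ defining $\pi$, one has $F_c\cdot L_c=0$ and $L_c^2=-2$; by flatness these numbers persist on the generic fibre, and Riemann--Roch on the K3 surface $\shY_\eta$ forces $L_\eta$ or $-L_\eta$ to be effective, so $F_\eta$ is orthogonal to a nonzero effective class — contradicting that $\pi$ is an isomorphism over $\eta$ (equivalently, the closure of such a curve would be a divisorial component of $\Ex(\pi)$). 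You never invoke maximality anywhere, and without it the exclusion of $(-2)$-curves is simply not established; the same lifting argument is also how the paper kills reducible trees in cases (ii) and (iii), since any reducible contracted tree contains a $(-2)$-component.

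A second, smaller issue: your treatment of case (iv) relies on $\pi$ being the contraction of a single extremal ray, so that $C_1$ and $C_2$ are positively proportional. The proposition as stated does not assume extremality — indeed, when it is applied later (proof of Proposition \ref{refinedversion}), extremality is imposed as a separate additional hypothesis precisely to upgrade ``every connected component is a $(-1)$-curve'' to irreducibility of $\Ex(\pi)$. The paper handles case (iv) uniformly with the same maximality device: the two $(-1)$-curves glue to a class on $\shY_c$ of square $-2$ with $F_c$-degree zero, which lifts to $\shY$ and yields the same contradiction on the generic fibre. If you replace your deformation heuristic and your extremality argument by this single lifting argument, your proof closes up and essentially becomes the paper's.
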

\begin{proof}First, note that $\Ex(\pi) \subset \shY_c$ 
and hence the connected components of $\Ex(\pi) $ are as in Lemma \ref{exlocus}.
In particular, one of the cases applies to $C$.  In case $\rm(i)$ 
 of Lemma \ref{exlocus} there is nothing to show. 
Suppose we are in  case  $\rm(ii)$
of the Lemma.
 Let $Y_i$ be the component 
 of the central fibre containing $C$. Then there is an induced contraction $Y_i^\nu\to Z_i^\nu $ with $Z_i^\nu$ the normalisation of a component of the central fibre of $\shZ$.   The intersection matrix of the curves contracted by $Y_i^\nu\to Z_i^\nu $  is negative definite, see e.g. \cite[Lemma 3.40]{KoMo}.
  In particular, if $C$ is reducible there is an irreducible  component $C_0\subset C$ with $C_0^2=-2$.
 As $C_0$ does not meet the boundary, we can extend  it trivially to a divisor $L_c$ on $\shY_c$ and then find,  by maximality of the DNV family, a divisor $L$ on $\shY$ restricting to $L_c$. 
 Let $F$ be a nef divisor inducing the flopping contraction $\pi$, with restriction $F_c$ to $\shY_c$. As $C$ is contracted, $F_c.L_c=0$. Also, $L_c^2=-2$.  Let $L_\eta$, $F_\eta$ denote the restrictions to the generic fibre.  We note that the intersection numbers on $\shY_c$ can be calculated on the normalisation $\shY^\nu_c$, see eg \cite[Corollary 1.11]{bad13}.
 Because $\shY\to S$ is flat, we have $L_\eta ^2=L_c^2$ and $F_\eta.L_\eta=F_c.L_c$, by constancy of the Euler characteristic, see \cite[Proposition VI.2.9]{kollrational}. By a standard argument for K3 surfaces, either $L_\eta$  or $-L_\eta$ is effective, see \cite[2.1.4]{Huy}. In any case, there is an integral curve on $Y_\eta$ contracted by $F_\eta$. But  $\pi$ is a flopping contraction, so this is a contradiction.  Hence  $C$ is irreducible. As $C$  meets the double locus and and $C^2<0$, it is necessarily a $(-1)$-curve. 
In the situation of $\rm(iii)$  of the lemma, there is again a $(-2)$-curve on some component. In case $\rm(iv)$ one can lift the bundle given by the two irreducible $(-1)$-curves and conclude as above, as the self intersection is again $-2$.
\end{proof}

\begin{remark}\label{dic:change}
In our case $d=1$ the following holds: If $\phi\colon \shY\dashrightarrow\shY^+$ is a flop defined by a flopping contraction $\pi\colon \shY\to \shZ$, then the dual intersection complexes of $\shY$ and $\shY^+$ are the same if and only if  no component
of $\Ex(\phi)$ is contained in the singular locus of $\shY_c$. This follows from the fact that any component of the singular locus meets any other such component in a triple point. 
\end{remark}

\begin{remark}\label{rmkflops}
We include some remarks on terminology and on factorisation of maps into flops. We also fix some notation which we will use throughout the paper.
Let $\phi\colon\shY\dashrightarrow \shY'$ be a birational map between two models of the DNV family that is not an isomorphism. Then it is an isomorphism in codimension $1$. Let $F'$ be an effective ample divisor on $\shY'$ and let 
$F$ be its birational transform on $\shY$. 
Then there is a sequence of $F$- flops factoring $\phi$, i.e there is 
a sequence 
\[ \shY\dashrightarrow\shY_1\dashrightarrow \dots \dashrightarrow\shY_i\dashrightarrow\dots\dashrightarrow \shY_n\cong\shY'.\]  
such that each $\shY_i\dashrightarrow \shY_{i+1}$ is the flop defined by an $\tilde{F}$-flopping contraction where  $\tilde{F}$ is the birational transform of $F$ under $\phi$, 
by Remark \ref{rem:factorizationflops}. 
We shall write $F_k$ for this birational transform of $F$ under $
\shY\dashrightarrow \shY_k$ and  $\phi_k\colon \shY_k\to {\shZ_k}$ 
for the flopping contraction defining $\psi_k\colon\shY_k\dashrightarrow 
\shY_{k+1}$. We also write $C_k$ for the birational transform of $C$ under $\prod_{i=0}^k \phi_i$ if $C$ is not contracted under this map, and similar for components $Y_i$. We further employ a similar notation for the double 
curves $D_{ij}\subset Y_i$, so $(D_{ij})_k$ is the birational transform of $Y_i\cap Y_j$ considered as a curve on $(Y_i)_k$. If we consider several maps at once, we will decorate the notation accordingly, i.e. here we would write $C_k^\phi$, $(D_{jj})_k^\phi$ etc.
Also, note that if  $C$ is a curve in $\shY_{k}$ with $F_k.C>0$ and $C$ is 
disjoint from $\Ex{\phi_k}$, then $F_{k+1}.(\psi_k)_*C>0$.  
Finally, given an $F$-flop
\[
\xymatrix{
\shY \ar@{-->}[rr]\ar[dr]_\pi& & \shY^+\ar[dl]^{\pi^{+}}\\
&\shZ& \\
}
\] 
a curve in $\Ex(\pi)$ will be called a \emph{flopping curve } and a curve in $\Ex(\pi^{+})$ will be called a \emph{flopped curve }. 
\end{remark}


We want to classify the flopping contractions of a Kulikov model $\shY\to S$ of the  Dolgachev-Nikulin-Voisin family of degree $2$ by their exceptional loci. 
The next proposition shows that an elementary modification connecting central fibers of projective models lifts to a flop of the models.

\begin{proposition}\label{typeflop}Let $\shY $ and $\shY'$ be models of the DNV family of degree $2$, with central fibres $\shY_c$  and $\shY'_c$ respectively. Suppose that there exists a type I or type II elementary modification $Y_c\dashrightarrow Y'_c$ contracting a curve $C$. Then there is a flopping contraction $\pi$ contracting precisely $C$, inducing a flop $\shY\dashrightarrow \shY'$.
\end{proposition}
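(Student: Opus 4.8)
The plan is to realise the given analytic elementary modification as the flop attached to a genuinely algebraic flopping contraction of $\shY$, and then to identify the flopped threefold with $\shY'$ via uniqueness of maximal smoothings. First I would record the setup: since $\shY\to S$ is a model of the DNV family, its central fibre $Y_c=\shY_c$ is a projective maximal (i.e. $c_{Y_c}=1$) type III $d$-semistable K3 surface, so by Proposition \ref{lem:unoquedeg} I may pass to the maximal analytic smoothing $\shX\to\DD$ of $Y_c$, which satisfies $\Pic(\shX)\cong\Pic(Y_c)$ and carries the given modification as an analytic modification $\shX\dashrightarrow\shX'$ (this is exactly the setting in which type I and II modifications are defined). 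By the very construction of these modifications one blows up $C$, obtaining an exceptional $\PP^1\times\PP^1$ with normal bundle $\O(-1,-1)$, and then recontracts the other ruling; hence $\shX\dashrightarrow\shX'$ is an analytic flop whose exceptional locus meets the central fibre in exactly $C$ and which is an isomorphism over the punctured disc. As $Y_c$ and $Y'_c$ are both projective, this factors as a projective flop given by the contraction of a single extremal ray, with associated small contraction $g\colon\shX\to\bar\shX$ contracting precisely $C$.

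Next I would descend the contraction to $\shY$. Choose $A$ relatively ample on $\bar\shX$ and set $N=g^*A$: then $N$ is nef on $\shX$, satisfies $N\cdot C=0$ and $N\cdot\Gamma>0$ for every other curve $\Gamma$ in a fibre, and since $g$ is an isomorphism over the punctured disc, $N$ restricts to an ample class on the general fibre. Restricting $N$ to $Y_c$ and lifting it through the maximality isomorphism $\Pic(\shY/S)\cong\Pic(Y_c)$ of Proposition \ref{lem:unoquedeg} produces a class $\tilde N\in\Pic(\shY/S)$. Using the identification $\Pic(\shY/S)_\RR\cong\N^1(\shY/S)$ together with the identification of $\Nef(\shY_\eta)$ with the nef cone of a very general analytic fibre (Remark \ref{finiteMF}), the class $\tilde N$ is nef over $S$, trivial on $C$, and ample on $\shY_\eta$. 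By log-abundance (Remark \ref{rem:abundance}) $\tilde N$ is semiample and defines a contraction $\pi\colon\shY\to\shZ$ over $S$ with $\shZ$ normal.

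It then remains to identify $\pi$ as the desired flopping contraction and the flop with $\shY'$. Because $\tilde N_\eta$ is ample, $\pi$ is an isomorphism on the generic fibre, so $\Ex(\pi)\subseteq Y_c$ and $\codim\Ex(\pi)=2$; the contracted locus consists of the curves on which $\tilde N$ is zero, which inside $Y_c$ is precisely $C$. Thus $\pi$ is a flopping contraction with $\Ex(\pi)=C$, consistent with the classification in Proposition \ref{irredcomp}. Letting $\shY^+\to\shZ$ be the flop of $\pi$, Proposition \ref{modelflop} shows $\shY^+$ is again a maximal degeneration, and by construction its central fibre is that of $\shX'$, namely $Y'_c$. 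By the uniqueness of the maximal smoothing of $Y'_c$ (Proposition \ref{lem:unoquedeg}), $\shY^+\cong\shY'$ over $S$, so $\pi$ induces the flop $\shY\dashrightarrow\shY'$ contracting precisely $C$.

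The hard part will be the descent step: one must guarantee that the lifted class $\tilde N$ defines a \emph{small} algebraic contraction with exceptional locus exactly $C$, rather than a divisorial contraction or one that also contracts curves of the general fibre. This hinges on the rigidity of $C$ — the fact that the relevant $(-1)$-configuration does not deform to the general fibre, which is exactly what makes a type I or II modification a flop rather than a divisorial contraction — combined with the maximality identification that pins $\shY$ down by its central fibre. The verification that $\pi$ exists as a morphism to a normal base over $S$ can, if one prefers not to invoke abundance, instead be run through completion along $Y_c$, Grothendieck existence and Nagata normality exactly as in the proof of Proposition \ref{modelflop}.
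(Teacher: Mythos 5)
Your argument is correct and its skeleton coincides with the paper's: both pass to the maximal analytic smoothing, invoke \cite[Theorem 6.38]{KoMo} to realise the elementary modification as an extremal analytic flop, descend to $\shY$ through the maximality isomorphism $\Pic(\shY/S)\cong\Pic(\shY_c)$ of Proposition \ref{lem:unoquedeg}, and identify the resulting flop with $\shY'$ via Proposition \ref{modelflop} and uniqueness of maximal smoothings. Where you genuinely diverge is the step that forces the contraction to be small with exceptional locus exactly $C$. The paper lifts (the restriction of) the flop-defining divisor $F$ itself, contracts the extremal ray generated by $C$, and is then faced with the dichotomy divisorial-versus-small; it excludes the divisorial case by a case analysis (type I versus type II, class $\mathscr{P}$ versus class $\mathscr{T}$) showing that an exceptional divisor would restrict to a class on $\shY_c$ violating the degree-matching conditions of Lemma \ref{degree}, and this case analysis is the bulk of the printed proof. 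You instead lift the nef class $N=g^*A$, the pullback of a relative ample from the target $\bar{\shX}$ of the analytic flopping contraction: this class has degree zero on $C$, strictly positive degree on every other curve of the central fibre, and is ample on nearby fibres, so once relative semiampleness (Remark \ref{rem:abundance}) --- or, as you note, the completion/Grothendieck-existence route from the proof of Proposition \ref{modelflop} --- produces the associated morphism $\pi$, its contracted curves are precisely the $\tilde N$-degree-zero curves, i.e.\ only $C$; since a divisor cannot be covered by a single curve, smallness and $\Ex(\pi)=C$ come for free and the dichotomy never arises. This is exactly the mechanism the paper itself deploys later in Corollary \ref{typeIIflopexist} (``a nef divisor \ldots that has degree zero precisely on $C$''), so your route is fully consistent with the paper's toolkit; its cost is that you need the analytic extremal contraction to be a projective morphism so that $A$ exists on $\bar{\shX}$ (the same \cite{KoMo} input the paper uses), together with abundance if you take that shortcut. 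Two cosmetic points: you assert $\codim\Ex(\pi)=2$ before you have identified the contracted curves, whereas the logical order is the reverse; and the closing appeal to ``rigidity of $C$'' is not actually needed in your argument, since non-deformation of $C$ to the general fibre is already encoded in the ampleness of $\tilde N_\eta$.
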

\begin{proof}
Let $\shX$ and $\shX'$ be the maximal analytic smoothings of $\shY_c$ and $\shY'_c$.  Both are projective because the central fibres are and every line bundle  of $\shY_c$ extends to $\shX$ by maximality, and similarly for $\shY'_c$. 
It follows from \cite[Theorem 6.38]{KoMo}  that there is a flop  $\psi'\colon \shX \dashrightarrow \shX'$ 
given by some divisor $F$, with restriction $F_c$
 on $\shY_c$. 
  The curve $C$  generates an extremal ray of $\shX$. 
Then $C$  also generates an extremal ray of $\shY\to S$ and $F_c$ lifts to a divisor $\shF\in \Pic(\shY/S)$. 
Hence, as in the proof of Proposition \ref{modelflop}, $\shF$ defines a contraction $\pi\colon\shY\to 
\shZ$, auch that the restriction of the exceptional locus  to $\shY_c$ is $C$. It is either divisorial or small. Suppose it is divisorial. Then the exceptional divisor $E$ restricts to a multiple of $C$ with multiplicity $a>0$.
We shall show that the curve  $C$ then leads to a contradiction of the conditions of Lemma \ref{degree}. 
To see this, write 
$\shY_c=Y_1\cup Y_2\cup Y_3$. Assume first $
\shY_c\in \PMod(\mathscr{P})$. Suppose  $C$ is 
the exceptional locus of an elementary 
modification of  type I. Assume the notation is 
such that $C\subset Y_1$ and $C.D_{12}=1$. Let 
Write $r_i$ for the inclusion $Y_i\to \shY$. Then 
\[ r_1^*E.D_{12}=aC.D_{12}=a > 0\text{ and } r_2^*E.D_{21}=0.\]

 If $C$ is the exceptional locus of an elementary modification of type II, assume the normalisation of $C$ is $D_{12}\cup D_{21}$. Then 

\[ r_1^*E.D_{13}=aC.D_{13}=2a > 0 \text{ and } r_3^*E.D_{31}=0.\] 

Next assume that $\shY_c\in \PMod(\mathscr{T})$, and assume further that $Y_1$ is the special component.  The proof is the same if 
$C$ is the exceptional locus of an elementary modification of type I. If $C$ is the exceptional 
locus of a type II modification, necessarily $C$ is the smooth component of the double locus on the special 
component $Y_1$. To see this, suppose this is not the case. Then we can  write $D_{1j}\cup D_{j1}$ for the preimage of $C$ under the normalisation $\nu\colon \shY_c^\nu\to \shY$,  with $j\in \{2,3\}$. By  Proposition \ref{irredcomp},
 $D_{1j}^2=D_{j1}^2=-1$. But  $D^2_{12}+ D^2_{21}=0$ and $D^2_{13}+D^2_{31}=0$ by Definition
\ref{def:-1form} and the definition of $\PMod(\mathscr{T})$. Hence $C$ is indeed the smooth component of the double locus on $Y_1$. Write $C_1\cup C_2$ for the preimage of $C$ under $\nu$.   We have \[ r_1^*E.D_{12}=aC_1.D_{12}+aC_2.D_{12}=2a > 0 \text{ and } r_2^*E.D_{21}=0.\] 
So $r_c^*E$ does not fullfill the degree conditions of Lemma \ref{degree}. It follows that the extremal ray generated by $C$ defines a small contraction.
\end{proof}
 
 For the statement of the next corollary, recall that a surface $Y_c\in \Mod_2(\mathscr{T}) $ has one component $Y_\omega$ that is not smooth, the special component. 
 
\begin{corollary} \label{typeIflopexist}
Let $\shY$ be a model of the DNV family of degree $2$
with $\shY_c\in  \PMod_2(\mathscr{T})$.  
Let $\phi \colon \shY_c\dashrightarrow Y^+_c$ 
be an elementary modification of type I, contracting a curve $C$, such that $C$ does not meet the singular locus of the special component $(\shY_c)_\omega$. Then there is a flopping contraction  $\shY\to \shZ$,  such that for the  induced flop $\shY^+$, we have $\shY^+_c= Y^+_c$.
\end{corollary}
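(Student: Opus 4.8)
The plan is to deduce the statement from Proposition \ref{typeflop}. That proposition lifts a type I (or type II) elementary modification between the central fibres of two \emph{models} to a flop of the models; since $\shY_c\in\PMod_2(\mathscr{T})$ is a model by hypothesis, the only thing that has to be checked is that $Y^+_c$ is again the central fibre of a model of the DNV family, i.e.\ that $Y^+_c\in\PMod_2(\mathscr{T})$. First I would note that $Y^+_c$ is automatically of class $\mathscr{T}$: the flopping curve $C$ of a type I modification meets the double locus transversally in a single smooth point and is therefore not contained in the singular locus of $\shY_c$, so by Remark \ref{dic:change} the dual intersection complex is unchanged. Furthermore $Y^+_c\in\Mod_2$ is maximal and primitive of degree $2$, since a type I modification preserves the Picard rank $18+n$ (as in the proof of Proposition \ref{-1optimal}), the index, and the number of triple points. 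Hence $Y^+_c$ has a well-defined special component $Y^+_\omega$, namely the image of $Y_\omega$ under $\phi$.

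The essential point, and the main obstacle, is to verify that $Y^+_c$ is projective. I would do this with the numerical criterion of Proposition \ref{projTcurves}: denoting by $\nu\colon (Y^+_\omega)^\nu\to Y^+_\omega$ the normalisation of the special component and by $D_1,D_2$ the two disjoint copies of $\PP^1$ in $\nu^{-1}(D_\omega)$ lying over the smooth (self-intersection) double curve $D_\omega$, it suffices to show $D_1^2=D_2^2=-1$ on $(Y^+_\omega)^\nu$. Recall that $(Y_\omega)^\nu$ is obtained from $\mathfrak{Y}_4$ by type I flops, its anticanonical cycle has four components, two of which ($=D_1,D_2$) are identified to produce $D_\omega$, while the other two are glued to the $\mathfrak{Y}_1$-components of $\shY_c$. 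A type I modification contracting a $(-1)$-curve meeting a double curve transversally in one point raises the self-intersection of that double curve by $1$ on the component carrying the curve and lowers it by $1$ on the adjacent component, leaving all other double curves unchanged. Thus I would argue by cases on the location of $C$: if $C\subset Y_\omega$ then, since by hypothesis $C$ avoids the singular locus $D_\omega$, the point $\nu^{-1}(C\cap D)$ lies on one of the two anticanonical components glued to a $\mathfrak{Y}_1$-component and not on $D_1$ or $D_2$; if instead $C$ lies on a $\mathfrak{Y}_1$-component, it meets the unique double curve joining it to $Y_\omega$, which again corresponds to a glued anticanonical component distinct from $D_1,D_2$. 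In either case the modification alters only the self-intersection of a glued anticanonical component, so $D_1^2$ and $D_2^2$ are unchanged. Since $\shY_c$ is projective, Proposition \ref{projTcurves} gives $D_1^2=D_2^2=-1$ on $(Y_\omega)^\nu$, and these values persist on $(Y^+_\omega)^\nu$; by Proposition \ref{projTcurves} again, $Y^+_c$ is projective. This is exactly where the hypothesis is used: were $C$ to meet $D_\omega$, the modification would be a self-flop across the self-intersection locus, raising one of $D_1^2,D_2^2$ to $0$ and lowering the other to $-2$, in violation of the criterion.

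With projectivity in hand, $Y^+_c\in\PMod_2(\mathscr{T})$ is a projective, maximal, primitive type III $d$-semistable K3 surface of degree $2$, and hence by Proposition \ref{lem:unoquedeg} it is the central fibre of a (unique) model $\shY'\to S$ of the DNV family. I would then apply Proposition \ref{typeflop} to $\shY$ and $\shY'$: the given type I modification $\shY_c\dashrightarrow Y^+_c=\shY'_c$ contracts $C$, so the proposition yields a flopping contraction $\pi\colon\shY\to\shZ$ contracting precisely $C$ and inducing a flop $\shY\dashrightarrow\shY'$. Setting $\shY^+:=\shY'$ gives $\shY^+_c=Y^+_c$, which is the desired conclusion.
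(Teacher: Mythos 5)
Your proof is correct and takes essentially the same route as the paper: the paper's own proof likewise reduces everything to checking that the two curves $D_1,D_3$ lying over $\Sing Y^+_\omega$ still have self-intersection $-1$ (immediate from the hypothesis that $C$ avoids the singular locus of the special component), invokes Proposition \ref{projTcurves} to conclude $Y^+_c\in\PMod_2(\mathscr{T})$, and then relies on Proposition \ref{typeflop} (together with the uniqueness of the maximal smoothing, Proposition \ref{lem:unoquedeg}) to produce the flopping contraction. Your explicit case analysis of where $C$ can sit and your verification of maximality and primitivity merely spell out steps the paper leaves tacit.
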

\begin{proof}  Let $Y_\omega$ be the surface obtained by applying $\phi$ to $(\shY_c)_\omega$. Let $D_1\cup D_3$ be the preimage of $\Sing Y_\omega$ under the normalisation $ Y^\nu_\omega \to Y_\omega$. As $C$ does not meet the singular locus of the special component,  $D_1^2=D_3^2=-1$. Hence $Y^+_c\in \PMod_2(\mathscr{T})$  by Proposition \ref{projTcurves}.
\end{proof}

\begin{corollary} \label{typeIIflopexist}
Let $\shY$ be a model of the DNV family of degree $2$, with $\shY_c\in  \PMod_2$.  
Let $\phi \colon \shY_c\dashrightarrow Y^+_c$ 
be an elementary modification of type II, contracting a curve $C$. Then there is a flopping contraction  $\shY\to \shZ$,  such that for the  flop $\shY^+$ we have $\shY^+_c= Y^+_c$. 
\end{corollary}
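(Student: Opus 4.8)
The plan is to follow the structure of the proof of Proposition \ref{typeflop}, but to observe that for a type II modification the construction of the flopping contraction and the verification that it is \emph{small} use only data on the central fibre $\shY_c$; in particular they do not require the target $Y^+_c$ to be known projective in advance, and projectivity of $Y^+_c$ will emerge afterwards as a byproduct. Let $C$ be the double curve contracted by $\phi$. By the definition of a type II modification, $C$ is a smooth rational component of the double locus with $C^2=-1$ on both adjacent normalised components, so $\nu^{-1}(C)$ is a disjoint union of two $(-1)$-curves and $C$ is of the type described in case (i) of Lemma \ref{exlocus}. The analytic elementary modification realises a local analytic flop of the maximal smoothing $\shX$ of $\shY_c$ along $C$; the curve $C$ therefore spans a contractible extremal ray, and the supporting nef divisor restricts to a divisor $F_c$ on $\shY_c$ which, by maximality (Proposition \ref{lem:unoquedeg}), lifts to $\shF\in\Pic(\shY/S)$. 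This $\shF$ defines a contraction $\pi\colon\shY\to\shZ$ whose exceptional locus meets $\shY_c$ exactly in $C$.

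First I would show that $\pi$ is small. Suppose instead it were divisorial, with exceptional divisor $E$ restricting to $aC$ for some $a>0$. Writing $r_i\colon Y_i\to\shY$ for the inclusions of the two components glued along $C$, the type II configuration forces, exactly as in the type II case of the proof of Proposition \ref{typeflop}, the asymmetry $r_i^*E.D=2a>0$ on one side while $r_j^*E.D=0$ on the other. One treats here both the class $\mathscr{P}$ and class $\mathscr{T}$ configurations, and in the latter case the subcase where $C$ is the smooth component of the double locus on the special component (the only possibility, since by Definition \ref{def:-1form} and the definition of $\PMod_2(\mathscr{T})$ one has $D^2_{12}+D^2_{21}=0$ and $D^2_{13}+D^2_{31}=0$). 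In every case $r_c^*E$ violates the degree-matching condition of Lemma \ref{degree}, a contradiction, so $\pi$ is a flopping contraction.

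Finally I would invoke the existence of the flop of the small contraction $\pi$: the flop $\shY^+\to\shZ$ exists and, by Proposition \ref{modelflop}, is again a model of the DNV family; in particular $\shY^+$ is projective over $S$ and hence $\shY^+_c$ is projective. By construction the induced modification on central fibres is precisely the analytic type II modification $\phi$, so $\shY^+_c=Y^+_c$, which also shows a posteriori that $Y^+_c\in\PMod_2$. The main obstacle is the smallness verification: one must confirm that for every position of the contracted double curve $C$ in both combinatorial classes the degree computation genuinely yields $r_i^*E.D\neq r_j^*E.D$. In contrast to the type I statement of Corollary \ref{typeIflopexist}, no auxiliary hypothesis on $C$ is needed here, precisely because a double curve always meets the two adjacent components in the asymmetric fashion dictated by the $(-1)$-form together with the gluing, so the contraction can never be divisorial; this is the feature that makes the type II statement unconditional.
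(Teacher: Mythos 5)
Your proposal has a genuine gap at its first and most important step. You assert that because the \emph{analytic} elementary modification of type II exists, the curve $C$ ``spans a contractible extremal ray, and the supporting nef divisor restricts to a divisor $F_c$ on $\shY_c$.'' This inference is false in general: analytic elementary modifications (of either type) always exist, being local blow-up/blow-down constructions on the maximal analytic smoothing, yet they need not admit any supporting nef divisor, because the resulting central fibre may fail to be projective --- this is precisely why Corollary \ref{typeIflopexist} carries the extra hypothesis that the flopped curve avoid the singular locus of the special component, and why $\Mod_2 \setminus \PMod_2$ is nonempty. If the existence of the analytic modification produced a supporting nef divisor, every type~I modification would lift to a projective flop as well, which is not the case. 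So the nef divisor whose lift $\shF$ you feed into the rest of your argument is exactly the object that must be constructed (or replaced by a projectivity argument), and you have assumed it. Note also that Proposition \ref{typeflop}, whose structure you say you are following, cannot be quoted ``without knowing $Y^+_c$ projective in advance'': its proof invokes \cite[Theorem 6.38]{KoMo} on the two maximal analytic smoothings, and that theorem requires \emph{both} to be projective minimal models.

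The paper closes this gap by splitting into the two combinatorial cases, with genuinely different arguments. If $\shY_c$ is of class $\mathscr{P}$, the target $Y^+_c$ is of class $\mathscr{T}$ and its special component has the $(-1,-1)$ configuration on the preimages of the smooth double curve, so $Y^+_c$ is projective by the criterion of Proposition \ref{projTcurves}; only \emph{then} does Proposition \ref{typeflop} apply. If $\shY_c$ is of class $\mathscr{T}$, the paper constructs the supporting divisor by hand: taking an ample $A$ on the normalisation of the special component with $A.C_1=A.C_2$ (where $C_1, C_2$ are the two $(-1)$-curves over $C$), the divisor $A+(A.C_i)C_i$ has degree zero exactly on the $C_i$ and positive degree elsewhere; gluing, lifting by maximality, and using abundance (Remark \ref{rem:abundance}) yields a nef $\shL$ on $\shY$ inducing the flopping contraction with exceptional locus $C$. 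Your smallness verification via Lemma \ref{degree} and your final appeal to Proposition \ref{modelflop} are sound, and your closing intuition --- that the type~II statement is unconditional --- is correct; but the reason is not the asymmetry in the divisorial-contraction computation (that computation is needed for type~I as well), it is that for type~II the requisite nef divisor, or equivalently the projectivity of the target, can always be produced, and producing it is the substance of the proof you have omitted.
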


\begin{proof}
If $\shY$ is of class $\mathscr{P}$ and we apply a type II modification, then  $Y^+_c$ has dual intersection graph $\mathscr{T}$, as follows from the description of elementary modifications in Section 1.1. 
By the definition of elementary modifications of type II, $Y^+_c$ fullfills the condition of Proposition \ref{projTcurves} and hence $Y^+_c$ is projective. The result then follows  from Proposition \ref{typeflop}.

If $\shY$ is of class  $\mathscr{T}$, then $Y^+_c$ has dual intersection graph $\mathscr{P}$. By the definiton of elementary modifications of type II, $C$ is the smooth component of the restriction of the double locus of $\shY_c$ to $(\shY_c)_\omega$. Let $A$ be an ample line bundle on $(\shY_c)^\nu_\omega$ and let $C_1$, $C_2$ be the components of the preimage of $C$ under $(\shY_c)^\nu_\omega\to (\shY_c)_\omega$.  Here, we choose $A$ such that $A.C_1=A.C_2$ which can be done as $(\shY_c)_\omega$ is projective.  Note that $C_i^2=-1$. So $L'=A+(A.C_i)C_i$ has degree $0$ on the $C_i$ and strictly positive degree on all other curves. By the usual arguments,  there is a divisor $L$ on $\shY_c$ that restricts to a positive multiple of  $L'$ on $(\shY_c)_\omega$ and to an ample divisor on the remaining components. It follows that there is a nef divisor $\shL$ on $\shY$ that has degree zero precisely on $C$. 
The birational morphism induced by $\shL$ is a flopping contraction with flopping curve $C$. Hence there is a model $\shY^+$ with properties as required.
\end{proof}

\begin{remark}
We emphasize that in Corollaries \ref{typeIflopexist} and \ref{typeIIflopexist}, the exceptional locus is precisely the curve $C$.
\end{remark}

If $C$ is a curve
 on an component $Y$ of the central fibre of a model $\shY\to S$, we say that $C$ -- considered as a curve on  $\shY$ -- is a $(-1)$-curve if the components of  the preimage $C^\nu$ under the normalisation $Y^\nu\to Y$ are $(-1)$-curves.

\begin{definition}
Let  $f\colon\shY\to\shZ$  be a flopping contraction with flopping curve $C$ an irreducible $(-1)$-curve. 
The flop $\shY\dashrightarrow \shY^+$  defined by  $f$ is  a \emph{ type I flop} if $C$ is not contained in the double locus $D$  of the central fibre $\shY_c$ and a \emph{ type II flop} if $C\subset D$.
\end{definition}

Note that type I and type II flops are simply global versions of type I and type II elementary modifications.

In the degree $2$ case, we  can give a refined version of Lemma \ref{exlocus} in case of flopping contractions on models with central fibre in  $\PMod_2$ .

\begin{proposition}\label{refinedversion}
Let $\shY\to S$ be a model of the DNV family of degree $2$ such that $\shY_c\in  \PMod_2$.
Let $\pi\colon\shY\to\shZ$ be a flopping contraction, given be the contraction of an extremal ray $R$. Then the exceptional locus $\Ex(\pi)$ is an irreducible $(-1)$-curve.  

\end{proposition}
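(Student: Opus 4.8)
The starting point is Proposition \ref{irredcomp}, which already shows that every connected component of $\Ex(\pi)$ is an irreducible $(-1)$-curve: either an integral $(-1)$-curve not contained in the double locus (the type I situation), or a smooth double curve $D_{ij}$ whose normalisation $\nu^{-1}(D_{ij})$ is a disjoint pair of integral $(-1)$-curves (the type II situation). In each case the component is irreducible, so the real content of the proposition is that $\Ex(\pi)$ is \emph{connected}. The plan is to argue by contradiction, assuming that $\Ex(\pi)$ has two distinct, hence disjoint, connected components $C$ and $C'$.

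Since $\pi=\contr_R$ contracts a single extremal ray, the classes of all curves contracted by $\pi$ lie on $R$; in particular $[C]=\lambda[C']$ in $\N_1(\shY/S)$ for some $\lambda>0$. The first step is to intersect the contracted curves with the three components $Y_1,Y_2,Y_3$ of $\shY_c$. Using $\omega_{\shY}=\O_{\shY}$ and adjunction one gets $N_{Y_i/\shY}=\omega_{Y_i}$, so that a type I curve $\tilde C\subset Y_i$ meeting the double curve $D_{ij}$ has intersection numbers $\tilde C\cdot Y_i=-1$, $\tilde C\cdot Y_j=1$ and $\tilde C\cdot Y_k=0$, while the triple-point formula gives a type II double curve $D_{ij}$ the numbers $D_{ij}\cdot Y_i=D_{ij}\cdot Y_j=-1$ and $D_{ij}\cdot Y_k=2$. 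Comparing these vectors shows that a type II component cannot be positively proportional to a component of a different shape; since moreover two distinct double curves always meet in a triple point and so cannot be disjoint, the presence of a type II component forces $\Ex(\pi)$ to be that single double curve, contradicting the existence of $C'$. Hence both $C$ and $C'$ are type I $(-1)$-curves, and proportionality of their component-vectors forces them to lie on one and the same surface $Y_i$ and to meet one and the same double curve $D_{ij}$.

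It then remains to rule out two disjoint $(-1)$-curves $\tilde C,\tilde C'\subset Y_i$ meeting $D_{ij}$ whose classes lie on $R$, and here I will produce a divisor separating them. By maximality $r_c\colon\Pic(\shY/S)\xrightarrow{\sim}\Pic(\shY_c)$ and $\Pic(\shY/S)_\RR\cong\N^1(\shY/S)$, and since $c_{\shY_c}=1$, Lemma \ref{degree} identifies a class in $\Pic(\shY_c)_\RR$ with a tuple of classes on the normalised components agreeing in degree along the double curves. I take the tuple whose $Y_i$-entry is the class $[\tilde C]$ and extend it to the remaining components by choosing classes of matching boundary degrees, which is possible by Lemma \ref{degree}. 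The resulting $\shL\in\Pic(\shY/S)_\RR$ satisfies $\shL\cdot\tilde C=\tilde C^2=-1$ while $\shL\cdot\tilde C'=\tilde C\cdot\tilde C'=0$, because $\tilde C$ and $\tilde C'$ are disjoint. This contradicts $[\tilde C]=\lambda[\tilde C']$, so $\Ex(\pi)$ has a single connected component, which by the first paragraph is an irreducible $(-1)$-curve.

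The technical heart of the argument is the numerical bookkeeping of the middle paragraph: computing the intersections of the contracted curves with the boundary components (via the normal bundle identity and the triple-point formula) in \emph{both} combinatorial types $\mathscr{P}$ and $\mathscr{T}$, where in type $\mathscr{T}$ the self-glued special component needs separate care, and then checking that the extension of $[\tilde C]$ to a global class really is unobstructed by the gluing conditions of Lemma \ref{degree}. The incompatibility of the type I and type II component-vectors, together with the fact that distinct double curves always share a triple point, is what is special to degree $2$ and is precisely what forces an extremal ray to contract exactly one curve.
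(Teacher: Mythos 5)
Your class-$\mathscr{P}$ argument is, at its core, the paper's own proof in different clothing: intersecting contracted curves with the components $Y_i$ (via $N_{Y_i/\shY}=\omega_{Y_i}$) is the same as intersecting with the classes $\xi_i=\sum_j D_{ij}-D_{ji}$ that the paper uses, and both conclude by non-proportionality on the extremal ray. Your extra step — separating two disjoint $(-1)$-curves on the same component meeting the same double curve by extending $[\tilde C]$ to a global class — treats a case the paper dismisses with a bare assertion, which is a genuine improvement in spirit; but your justification ``which is possible by Lemma \ref{degree}'' is not automatic. The degree map $\Pic(Y_j)_\QQ\to\QQ^2$ fails to be surjective exactly when the two boundary curves of $Y_j$ are numerically proportional, and this does occur in actual models (e.g.\ a component isomorphic to $\PP^2$ with a line and a conic as boundary, as in Proposition \ref{degample2}). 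The extension still always exists, but only because $Y_j$ and $Y_k$ cannot \emph{both} be of this kind: proportionality forces both boundary self-intersections positive, which is incompatible with $D_{jk}^2+D_{kj}^2=-2$ on the double curve they share. This check is missing from your write-up.

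The genuine gap is class $\mathscr{T}$, which you defer to ``separate care'' but which your strategy cannot handle even in principle. Let $Y_\omega$ be the special component, let $D_1',D_3'\subset Y_\omega^\nu$ be the two disjoint preimages of the smooth double curve $D_\omega$, and let $E_1,E_3$ be the interior $(-1)$-curves meeting $D_1'$ resp.\ $D_3'$. On $\YT$ itself one has $Y_\omega^\nu=\mathfrak{Y}_4$ and $E_1-E_3=D_3'-D_1'$ in $\Pic(\mathfrak{Y}_4)$, because $\tilde D_1$ and $\tilde D_3$ are fibres of the same ruling of $\PP^1\times\PP^1$; this relation persists in every model of $\PMod_2(\mathscr{T})$, since by Proposition \ref{projTcurves} no modification touches $D_1',D_3',E_1,E_3$. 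Now the matching condition of Lemma \ref{degree} applied to the self-gluing says that \emph{every} $\shL\in\Pic(\shY/S)_\QQ$ has $\deg\shL|_{D_1'}=\deg\shL|_{D_3'}$, whence $\shL.E_1=\shL.E_3$. So the disjoint pair $E_1,E_3$ is numerically \emph{equal} in $\N_1(\shY/S)$: no separating divisor exists, and indeed your component vectors vanish identically there ($E_i.Y_j=0$ for all $j$), so proportionality bookkeeping yields nothing. What rules out $\Ex(\pi)\supset E_1\cup E_3$ is not numerical at all: the ray $\RR_{\geq 0}[E_1]$ admits no flopping contraction, because the flop would again be a model of the DNV family (Proposition \ref{modelflop}), hence would have projective central fibre, while contracting a curve meeting $D_\omega$ produces a central fibre violating the projectivity criterion of Proposition \ref{projTcurves}. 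This projectivity input, used in the paper together with Corollary \ref{typeIflopexist}, is exactly the ingredient your proof is missing, and without it the type-$\mathscr{T}$ case of the proposition does not follow.
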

\begin{proof}
The idea of the proof is the following: assume that  $\pi\colon\shY\to\shZ$ has reducible exceptional locus $\Ex(\pi)$. Then we shall show that one can find a contraction that contracts a proper subset of $\Ex(\pi)$. Hence the classes of the 
components of $\Ex(\pi)$ are not contained in an extremal ray $R$, giving a contradiction.

By Proposition \ref{irredcomp} all connected components 
of the exceptional locus of a flopping contraction are $(-1)$-curves. If a component  $C$ of $\Ex(\pi)$ is a $(-1)$-curve  in the double locus, then there is a flopping contraction with exceptional locus $C$, by Corollary \ref{typeIIflopexist}.
 
 So we can assume that all connected components  $C_i$ of  $\Ex(\pi)$  are given  by  interior $(-1)$-curves. As each component of the double locus $\Sing \shY_c$ meets at most one $C_i$, there are at most $3$ such components, $C_1,C_2,C_3$. Also, if $\shY_c\in  \PMod_2(\mathscr{T})$, by Proposition \ref{projTcurves},  none of the $C_i$ meets the smooth component of the restriction of  $\Sing \shY_c$ to $(\shY_c)_\omega$. Hence, by Corollary  \ref{typeIflopexist}, we only need to show the Proposition for  $\shY_c\in  \PMod_2(\mathscr{P})$.
 
   We show that in this case, $\Ex(\pi)$ is irreducible. Suppose there are two components $C_1, C_2$ in $\Ex(\pi)$. Write $\shY_c=Y_1\cup Y_2\cup Y_3$ and assume  $C_1\subset Y_1$.  Let $\xi_i=\sum_{j=1}^3 D_{ij}-D_{ji}$. This defines a divisor in $\Pic(\shY/S)$, by Lemma \ref{degree} and maximality of the model $\shY\to S$.
 Then $\xi_1.C_1=1$ and $\xi_1.C_2=-1$ if $C_2$ is not contained in $Y_1$. If $C_2$ is contained in $Y_1$, then $C_2.D_{1k}=1$ 
 for some $k\in\{2,3\}$. Then $\xi_k.C_1=0$ and $\xi_k.C_2=-1$. 
 Hence $C_1$  is not numerically equivalent to a positive multiple of $C_2$. This is a contradiction to $\pi=\contr_R$ and we can conclude that $\Ex(\pi)=C_1$.
 \end{proof}
 
 For later reference, we record the following immediate corollary.
 \begin{corollary}\label{classflop}
Let $\shY\to S$ be a model of the DNV family of degree $2$.
Let $\pi\colon\shY\to\shZ$ be a flopping contraction. Then the flop $\shY\dashrightarrow \shY^+$  defined by  $\pi$ is a  type I flop  or a  type II flop.
\end{corollary}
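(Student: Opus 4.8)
The plan is to reduce the statement immediately to Proposition \ref{refinedversion}. The only point requiring care is that Proposition \ref{refinedversion} is phrased for models whose central fibre lies in $\PMod_2$, whereas here $\shY\to S$ is an arbitrary model of the DNV family of degree $2$. So first I would record that this extra hypothesis is automatic in degree $2$: by Definition \ref{def:DNVmodel} the central fibre $\shY_c$ is a projective type III $d$-semistable $K3$ surface, and by the classification of degree-$2$ models (every such model is of type $\mathscr{P}$ or $\mathscr{T}$, established separately in this section) its central fibre lies in $\Mod_2$. Being projective, it therefore lies in $\PMod_2$, and Proposition \ref{refinedversion} applies verbatim.

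Granting this, the argument is immediate. Applying Proposition \ref{refinedversion} to the flopping contraction $\pi\colon\shY\to\shZ$ shows that the exceptional locus $\Ex(\pi)$ is a single irreducible $(-1)$-curve $C$; note this already rules out the reducible possibilities $\mathrm{(ii)}$–$\mathrm{(iv)}$ of Lemma \ref{exlocus} that are permitted for a general contraction but excluded for a flopping contraction in degree $2$ by Proposition \ref{irredcomp}. This is exactly the shape of flopping curve demanded in the definition of type I and type II flops, so the flop $\shY\dashrightarrow\shY^+$ defined by $\pi$ falls under that definition.

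Finally I would split into the two exhaustive and mutually exclusive cases afforded by the definition: either $C$ is disjoint from the double locus $D=\Sing\shY_c$, in which case the flop is a type I flop, or $C\subset D$, in which case it is a type II flop. Since $C$ is irreducible these are the only possibilities, so the flop is of type I or type II, as claimed.

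The one genuine obstacle is the reduction in the first paragraph, namely verifying that an arbitrary degree-$2$ model has central fibre in $\PMod_2$; everything after that is a direct invocation of Proposition \ref{refinedversion} together with the definition of type I and type II flops. In particular I would make sure this reduction does not secretly rely on the present corollary, so as to avoid circularity in the surrounding development.
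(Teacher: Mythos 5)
Your proposal is correct and follows the paper's own route: the paper records Corollary \ref{classflop} simply as an immediate consequence of Proposition \ref{refinedversion}, which is exactly your argument. Where you go beyond the paper is in the hypothesis management, and your instinct there is sound: Proposition \ref{refinedversion} is stated only for $\shY_c\in\PMod_2$, the corollary is stated for an arbitrary degree-$2$ model, and the paper silently elides this. Your fix --- invoking the classification that every degree-$2$ model has $\shY_c\in\PMod_2$, i.e.\ Corollary \ref{cor:typeItypeII} --- is legitimate, and the circularity check you call for does pass: Corollary \ref{cor:typeItypeII} is deduced from Corollary \ref{cor:seqtypeI}, whose proof rests on Proposition \ref{irredcomp}, Lemma \ref{onetypeIIflop} and the curve-structure lemmas of Section \ref{sec:flopauto}, none of which cite Corollary \ref{classflop}. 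It does turn the corollary into a forward reference within the section, but this is consistent with how the paper actually uses it: every later invocation of Corollary \ref{classflop} (in Lemma \ref{lem:typeIflopsT} and in Section \ref{sec:secondaryfan}) occurs in a context where Corollary \ref{cor:typeItypeII} is already available. One small correction to your final case split: the dichotomy in the definition of type I versus type II flops is $C\not\subset D$ versus $C\subset D$, not ``$C$ disjoint from $D$'' versus $C\subset D$; a type I flopping curve is an interior $(-1)$-curve and always meets the double locus in a point, so ``disjoint'' as written would exclude precisely the type I case, even though your conclusion (the two cases of the definition are exhaustive for an irreducible curve) is of course right.
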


 \begin{remark} We can make Remark  \ref{dic:change} more precise:  if $\phi\colon\shY \dashrightarrow \shY^+$ is a flop, then the dual intersection complexes of $\shY$ and $\shY'$ are the same if $\phi$ is of type I and they are distinct if $\phi$ is of type II.
 \end{remark}


\begin{lemma}\label{onetypeIIflop} 
Let $\shY$ be a model of the DNV family of degree $2$ with $\shY_c\in  \PMod_2(\mathscr{P})$.
Assume that we have a birational map $\phi\colon \shY\dashrightarrow \shY'$ to another model $\shY'$.  Let $F'$ be an ample divisor on $\shY'$ with birational transform $F$ on  $\shY$. 
Consider a factorization of $\phi$ into $F$-flops:
\[\shY\dashrightarrow \shY_1\dashrightarrow\dots \shY_i\overset{\phi_i}{\dashrightarrow}\shY_{i+1}\dashrightarrow\dots\shY_n\xrightarrow{\sim} \shY'.
\]
Then at most one $\phi_i$ is of type II. In particular, if both $\shY$ and $\shY'$ are of class  $\mathscr{P}$, all $\phi_i$ are type I flops.
\end{lemma}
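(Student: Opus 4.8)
The plan is to leverage the fact that in degree $2$ the central fibre of any model is of class $\mathscr{P}$ or $\mathscr{T}$ and nothing else, together with the $F$-monotonicity built into the factorization. The first step is to record the combinatorial bookkeeping of the classes along the sequence. By Corollary \ref{classflop} each $\phi_i$ is a type I or a type II flop, and by the sharpening of Remark \ref{dic:change} stated just after Corollary \ref{classflop} a type I flop preserves the dual intersection complex while a type II flop changes it; since the only two complexes available are $\mathscr{P}$ and $\mathscr{T}$, every type II flop toggles the class $\mathscr{P}\leftrightarrow\mathscr{T}$. Each intermediate $\shY_k$ is again a model of the DNV family by Remark \ref{rem:factorizationflops}, so its central fibre lies in $\PMod_2$, and the notions of \emph{special component} and of the smooth double curve $D_\omega$ on it are available whenever $\shY_k$ is of class $\mathscr{T}$.

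The key geometric input is the identification of a single distinguished curve governing the type II flops. I would show that the flopping curve of a type II flop $\mathscr{T}\to\mathscr{P}$ is precisely the smooth component $D_\omega$ of the double locus lying on the special component: this is exactly what is extracted in the proof of Corollary \ref{typeIIflopexist}. Reversing a flop interchanges its flopping and flopped curves, and the inverse of a $\mathscr{P}\to\mathscr{T}$ flop is a $\mathscr{T}\to\mathscr{P}$ flop; hence the flopped curve of a type II flop $\mathscr{P}\to\mathscr{T}$ is the curve $D_\omega$ on the special component of the resulting $\mathscr{T}$-model.

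With this, the proof is a monotonicity contradiction. Let $\phi_i$ be the first type II flop, if one exists. Since $\shY_0=\shY$ is of class $\mathscr{P}$ and $\phi_0,\dots,\phi_{i-1}$ are type I, the model $\shY_i$ is of class $\mathscr{P}$ and $\phi_i$ goes $\mathscr{P}\to\mathscr{T}$. By the identification above its flopped curve is $(D_\omega)_{i+1}$, and since the flopped curve of an $F$-flop is $F$-positive we obtain $F_{i+1}\cdot (D_\omega)_{i+1}>0$. Suppose now, for contradiction, that some later flop is type II, and let $\phi_j$ with $j>i$ be the first such; then $\phi_{i+1},\dots,\phi_{j-1}$ are all type I, so $\shY_j$ is still of class $\mathscr{T}$ and $\phi_j$ goes $\mathscr{T}\to\mathscr{P}$, with flopping curve $(D_\omega)_j$. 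Being the flopping curve of an $F$-flop, it satisfies $F_j\cdot (D_\omega)_j<0$. To contradict this I would propagate $F$-positivity of $D_\omega$ across the intervening type I flops: the essential point is that while we remain in class $\mathscr{T}$, every type I flopping curve is disjoint from $D_\omega$, which is exactly the structural statement used in the proof of Proposition \ref{refinedversion} (via Proposition \ref{projTcurves}), that the interior flopping $(-1)$-curves do not meet the smooth component of $\Sing\shY_c$ on the special component. This disjointness makes the final implication of Remark \ref{rmkflops} applicable at each intermediate step, yielding $F_{k}\cdot (D_\omega)_k>0\Rightarrow F_{k+1}\cdot (D_\omega)_{k+1}>0$ and hence $F_j\cdot (D_\omega)_j>0$, the desired contradiction. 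Thus at most one $\phi_i$ is of type II.

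I expect the propagation of positivity to be the main obstacle, since the preservation statement in Remark \ref{rmkflops} is only valid for curves disjoint from the exceptional locus; the whole argument therefore rests on establishing, carefully and for every intermediate class-$\mathscr{T}$ model, that the transform of $D_\omega$ avoids the type I flopping curves. Once this is settled, the last assertion is immediate: if $\shY$ and $\shY'$ are both of class $\mathscr{P}$, then the class starts and ends at $\mathscr{P}$ and toggles with each type II flop, so the number of type II flops is even; being at most one, it is zero, and every $\phi_i$ is of type I.
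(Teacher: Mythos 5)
Your proposal is correct and takes essentially the same route as the paper's own proof: both arguments identify the smooth double curve $D_s$ on the special component as the flopped curve of the (first) type II flop, hence $F$-positive, and then show this positivity persists because the exceptional loci of all subsequent flops must avoid $D_s$ — a consequence of the projectivity of the intermediate models via Proposition \ref{projTcurves} together with the propagation statement of Remark \ref{rmkflops} — so that $D_s$ can never become a flopping curve, which rules out any further type II flop. The only differences are cosmetic (the paper assumes WLOG that the type II flop comes first and re-runs the curve-structure argument of Proposition \ref{projTcurves} on the new special component, while you invoke the disjointness statement as recorded in the proof of Proposition \ref{refinedversion}).
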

\begin{proof} Write $\shY_c=Y_1\cup Y_2\cup Y_3$. Without loss of generality we can assume the first flop in the factorisation to be of type II with exceptional locus $Y_1\cap Y_2$. Let $Y=Y_3$.  
 Let $Y'$  be the component of $\shY'_c$ which is the transform of  $Y$. 
 Then $Y'$ is a surface 
 such that the preimage  of the restriction of $\Sing \shY'_c$ 
 to $Y'$ under the normalisation morphism has $4$ components, $2$ of which, say $D_1,D_2$,  are $(-1)$-curves that are identified under the normalisation map, giving a smooth component $D_s$. The images of the curves in $\Gamma_Y$ together with the interior $(-1)$-curves meeting $ D_1$ and $D_2$ define a system of curves that is a basis of $\Pic((Y')^\nu)$, and as in Proposition \ref{projTcurves}, one shows that for all  $\phi_i$, $i\geq 2$,  $\Ex(\phi_i)$ 
cannot meet $D_s$, where we use the notation as in Remark \ref{rmkflops}. Hence $F_k.(D_s)_k>0$ for all $k\geq 2$. 
Thus any flopping curve in the sequence is an interior $(-1)$-curve, so 
all other flops are of type I. 
\end{proof}

\subsection{Birational Automorphisms}

We first fix some language. See Remark \ref{rmkflops} for notation.
\
\begin{definition}\label{flopofC} Let $\shY$ be  model of the DNV family of degree $2$  with $\shY_c\in  \PMod_2(\mathscr{P})$. 
Let $F\in\Mov(\shY/S)$. Let $C$ be a curve with $F.C<0$  that generates an extremal ray and let   $\phi_1\colon \shY\dashrightarrow \shY_1$ be the flop given by contracting $C$.  
Further, let  $\phi'=\prod_{i=2}^n \phi_i\colon\shY_1\dashrightarrow\shY'$ be a sequence of $(\phi_1)_*F$-flops and assume all flops $\phi_i$ in the sequence $\phi'$ are of type I. Set $\phi=\phi'\circ\phi_1$.
\begin{itemize}
\item[(i)]Let $C^+$ be the flopped curve of $\phi_1$. Any curve $C'$ that is the birational transform of $C^+$ under a subsequence of flops from $\phi'$ is called a \emph{$\phi$-flop of $C$}.
\item[(ii)] Let $C'$ be a $\phi$-flop of $C$ that is itself  a flopping curve. Any curve birational to the flopped curve $(C')^+$ under a map $\prod_{i=k}^m \phi_i $, $k,m\leq n$, 
is also called a $\phi$-flop  of $C$.
\end{itemize}
\end{definition}

\begin{remark}If $\phi$ is the sequence   \[\shY\dashrightarrow \shY_1\dashrightarrow\dots \shY_i\overset{\phi_i}{\dashrightarrow}\shY_{i+1}\dashrightarrow\dots\shY_n\xrightarrow{\sim} \shY'\]
and  $E$ is a curve on some $\shY_k$ in the factorisation, we have a sequence  $\phi'$ of flops defined by the tail 
 \[\shY_k\dashrightarrow \shY_{k+1}\dashrightarrow\dots \shY_i\overset{\phi_i}{\dashrightarrow}\shY_{i+1}\dashrightarrow\dots\shY_n\xrightarrow{\sim} \shY'.\] In this situation, if $E'$ is a  $\phi'$-flop of $E$ we will 
 also call $E'$ a $\phi$-flop of $E$.
\end{remark}

\begin{definition}\label{def:aloneannex}
Let $\shY\to S$ be a model of the DNV family in degree $2$ with $\shY_c\in  \PMod_2(\mathscr{P})$. Let $Y\subset \shY_c$  be a component 
of the central fibre and let $D_1$ be a component of the anticanoncial cycle $D=D_1+D_2$  of $Y$.   Let $C$ be an interior $(-1)$-curve on  $Y$ meeting $D_1$ 
\begin{itemize}
\item[(i)]  $C$  is called \emph{alone} if  there is no $H$ in $\Gamma_Y$ with $H.D_1=H.D_2=1$  and also \[\{E\in \Gamma_{Y}\mid E.D_1=1 \text{ and }E^2=-1\}=\{C\}.\]  
\item[(ii)]  $C$ is  called an \emph{annex} if  $C$ is not alone  and meets a unique $v\in\Gamma_Y$ or is the unique interior $(-1)$-curve of $Y$ meeting $D_1$.
\item[(iii)] If $C$ is not alone, the curve $C'\in \Gamma_{Y}\backslash{C}$ meeting $D_1$ is the \emph{companion} of $C$.  
\end{itemize}
\end{definition}

Some illustrations of this definition can be found in Figure \ref{ex:alone}.

\begin{figure}\centering

\begin{tikzpicture}

      \begin{scope}[shift= {(0,0)}]      
      
          \node[draw,shape=circle][fill=black] (D1) at (0,0){};      
  \node[draw,shape=circle] (1) at (1,1){};  
    \node(n1) at (0,-0.5){{\small $D_1$}};  
  \node(n1) at (1.5,1){{\small $-1$}};  
\node[draw,shape=circle] (2) at (1,0){}; 
\node[draw,shape=circle] (3) at (2,0){}; 

\node(n1) at (1,-0.5){{\small $-1$}}; 
\node(n2) at (2,-0.5){{\small $-1$}};

 \node[draw,shape=circle][fill=black] (D2) at (3,0){} ;     

\draw[thick]  (1)--(2)--(3)
(1)--(D1)
   (D1)--(2)
   (3)--(D2);

     \end{scope}

       \begin{scope}[shift= {(4.5,0)}]      
      
          \node[draw,shape=circle][fill=black] (D1) at (0,0){};      
            \node(n1) at (0,-0.5){{\small $D_1$}};  
  \node[draw,shape=circle] (1) at (1,1){};  
  \node(n1) at (1.5,1){{\small $-1$}};  
\node[draw,shape=circle] (2) at (1,0){}; 
\node[draw,shape=circle] (3) at (2,0){}; 
\node[draw,shape=circle] (4) at (3,0){}; 
\node(n1) at (1,-0.5){{\small $-1$}}; 
\node(n2) at (2,-0.5){{\small $-2$}}; 
\node(n3) at (3,-0.5){{\small $-1$}}; 

 \node[draw,shape=circle][fill=black] (D2) at (4,0){} ;     

\draw[thick]  (1)--(2)--(3)--(4)
(1)--(D1)
   (D1)--(2)
   (4)--(D2);

         \end{scope}

        \begin{scope}[shift= {(10,0)}]      
      
          \node[draw,shape=circle][fill=black] (D1) at (0,0){};     
            \node(n1) at (0,-0.5){{\small $D_1$}};   
  \node[draw,shape=circle] (1) at (1,1){};  
  \node(n1) at (1.5,1){{\small $-1$}};  
\node[draw,shape=circle] (2) at (1,0){}; 

\node(n1) at (1,-0.5){{\small $0$}}; 

 \node[draw,shape=circle][fill=black] (D2) at (2,0){} ;     

\draw[thick]  (1)--(2)
(1)--(D1)
   (D1)--(2)
   (2)--(D2);

         \end{scope}

\end{tikzpicture}
\caption{Some examples of (augmented) curve structures where none of the $(-1)$-curves meeting $D_1$ is alone. In all examples, the $( -1)$ curve $v$  meeting $D_1$ and exactly one curve in $\Gamma_Y\backslash\{v\}$ 
is an annex. Also, both curves meeting $D_1$ are companions (of each other). }
\label{ex:alone}
\end{figure}
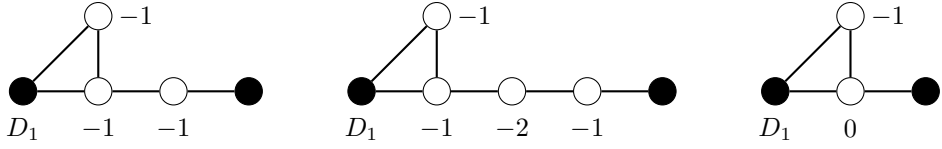

\begin{definition}\label{def:completeetc}
Let $\phi$ be a sequence of $F$-flops \[\shY\dashrightarrow \shY_1\dashrightarrow\dots \shY_i\overset{\phi_i}{\dashrightarrow}\shY_{i+1}\dashrightarrow\dots\shY_n\xrightarrow{\sim} \shY'
\]  and $F$ be a divisor on $\shY$. Assume all flops $\phi_i$ are of type I. Let $I=\{0, \ldots , n\}$
be the index set, where we assign the index $0$ to $\shY$. Let $E$ be a curve on some $\shY_k$ that is the flopped curve of $\phi_k$.
\begin{itemize}
 \item[(i)]An \emph{$E$-sequence} is a
collection of indices $K^\phi_E\subset I$ such that for all $k\in K^\phi_E$, the flopping curve of $\phi_k\colon\shY_k\dashrightarrow\shY_{k+1}$ is either $E$ or a $\phi$-flop of  $E$. 

\item[(ii)] Set 
$m_1(K_E^\phi)=\min_{K_E^\phi} k $ 
and  $m_2(K_E^\phi)=\max_{K_E^\phi} k$. An $E$-sequence $K^\phi_E$ is \emph{complete}  if for any $i$ such that the flopping curve of $\phi_i$ is $E$ or a $\phi$-flop of $E$, $i\notin K_E^\phi$   implies $i\notin [m_1(K_E^\phi),m_2(K_E^\phi)]$.

\item[(iii) ]For any $k\in K_E^\phi$, let $E(k)$ denote the flopped curve of $\phi_k$.
A complete $E$-sequence  $K_E^\phi$ is \emph{directed} if either $(D_{ij})_k.E(k)=1$ implies $ i<j$ for all $k\in K_E^\phi$ or $(D_{ij})_k.E(k)=1$ implies $ i>j$ for all $k\in K_E^\phi$.

\item[(iv)]A directed $E$-sequence $K_E^\phi$ is called \emph{initial} if for any $i$ such that the flopping curve of $\phi_i$ is $E$ 
or a $\phi$-flop of $E$, $i\notin K_E^\phi$   implies $i>m_2(K_E^\phi)$.

\item[(v)]An initial $E$-sequence $K_E^\phi$ is called \emph{exhaustive} if for any $i$ such that the flopping curve of $\phi_i$ is $E$ or a $\phi$-flop of $E$, $i\in K_E^\phi$.
\end{itemize}
\end{definition}

\begin{definition}Let $\shY$ be  a model of the DNV family of degree $2$ with $\shY_c\in  \PMod_2(\mathscr{P})$. Let $\shY_c=Y_1\cup Y_2\cup Y_3$ be  the central fibre. 
Let $F\in\Mov(\shY)$.
 Let $C$ be an interior $(-1)$-curve on $Y_1$ with $F.C<0$  that generates an extremal ray and let   $\phi_1\colon \shY\dashrightarrow \shY_1$ be the flop given by contracting $C$. Assume the numbering is such that  $D =D_{12}$ 
 is the component of the anticanonical divisor met by $C$.   Assume $D^2\neq -5$.
  Let $C^+$ be the flopped curve of $\phi_1$. 
  $C$ is \emph{good} for $D$  or \emph{$D$-good} 
  if for any sequence of $F$-flops $\phi=\phi'\circ\phi_1$ as in Definition \ref{flopofC}, the following holds:
 \begin{itemize}
 \item [(i)]  if $C'$ is the flopped curve of $\shY_k\dashrightarrow \shY_{k+1}$ and if
 there is an  initial $C$-sequence $K_C^\phi$ with $k\in K_C^\phi$, then $C'$ is alone.
  \item[(ii)]  Let $E$ be the curve in $\Gamma_{(Y_2)_1}$  
with $E^2<0$ and $E.C^+=1$.  Suppose there is a birational transform $E_r$ of $E$ that is the flopping curve of some $\shY_r\dashrightarrow\shY_{r+1}$. 
If  $E'$ is the flopped curve of $\shY_k\dashrightarrow \shY_{k+1}$ and there is an  initial $E_r$-sequence $K_E^\phi$, with $k\in K_{E_r}^\phi$, then $E'$ is alone.
 \end{itemize}
\end{definition}

The following example is a model for our future considerations.

\begin{example}\label{trailingcurveexample} We give an example of $D$-good curves and an exhaustive sequence.
Let $\shY$ be a model with central fibre $\shY_c\in  \PMod_2(\mathscr{P})$. 
Write $\shY_c=Y_1\cup Y_2\cup Y_3$ 
and suppose that $\Gamma_{Y_1}$ is degenerate, regular and has an exceptional vertex $v_0$. In particular, $v_0^2=-1$. Assume moreover that for the unique vertex $v_w\in \Gamma_{Y_1}\backslash\{L(v_0)\}$, $v_w^2=-1$.  
Let $v_n$ be the vertex on which $L(v_0)$ ends. We have $v_n^2=-1$. 
Let $D_{12}$ be the component of the anticanonical divisor met by $v_w$, see Figure \ref{examplecurves}.
\begin{figure}[]\centering
\begin{tikzpicture}[scale=1]
          \node[draw,shape=circle][fill=black] (D1) at (0,0){};      
            \node(n1) at (0,-0.5){{\small $D_{12}$}};  
  \node[draw,shape=circle] (1) at (1,1){};  
  \node(n1) at (1.5,1){{\small $v_w$}};  
\node[draw,shape=circle] (2) at (1,0){}; 
\node[draw,shape=circle] (3) at (2,0){}; 
\node[draw,shape=circle] (4) at (3,0){}; 
\node(n1) at (1,-0.5){{\small $v_n$}}; 
\node(n2) at (2,-0.5){{\small $-2$}}; 
\node(n3) at (3,-0.5){{\small $v_0$}}; 

 \node[draw,shape=circle][fill=black] (D2) at (4,0){} ;     
\node(n1) at (4,-0.5){{\small $D_{13}$}};  

\draw[thick]  (1)--(2)

(1)--(D1)
   (D1)--(2)
   (4)--(D2);

\draw[dashed](2)--(3)--(4);

\end{tikzpicture}
\caption{The curve structure in Example \ref{trailingcurveexample}}

\label{examplecurves}
\end{figure}
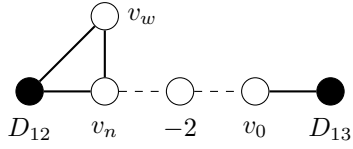

 We show that $C_{v_w}$ is $D_{12}$-good. Note that $D_{12}^2= 3$. Let $\phi$ be a sequence of  $F$-flops, such that $\shY\dashrightarrow \shY_1$ has flopping curve $C:=C_{v_w}$ and assume all flops $\phi_i$ in $\phi$ to be of type I.  Let $E$ be the unique curve of $\Gamma_{(Y_2)_1}$ met by $C^+$. We have  $(C_{v_n})_1^2=0$. 
 Assume  no birational transform  $E_r$ of $E$ is a flopping curve of 
 $\phi_r$. Then no birational transform of  $(C_{v_w})$ meets a 
 birational transform of $D_{23}$. 
 We first assume that there exists some $p\geq 2$ such that $\Ex(\phi_p)$ meets $(D_{12})_p$ or $(D_{21})_p$. Choose the minimal such $p$.
It then follows by minimality that   $\Ex(\phi_p).
 (D_{12})_p=1$.  But then $\Ex(\phi_p)=(C_{v_n})_p$, which is 
 impossible as $(C_{v_n})_1^2\geq0$.  Therefore $C_{v_w}$ is  $D_{12}$-good.

Now, assume  $E_t$ is the flopping curve of  $\phi_t$, where $t$ is chosen minimal. Necessarily, $\Ex(\phi_t).(D_{23})_t=1$.  Let $E^+$ be the flopped curve of $\phi_t$.  It is straightforward to calculate $(D_{32})_{t+1}^2=-12$: $(Y_2)_t$ is obtained from $\YP$ by blowing up one interior special point $4$ times and blowing down $10$ times in the other interior special point. Flopping $E_t$ corresponds to a further blow down, giving  $(D_{32})_{t+1}^2=-12$.
  So $E^+$ is the only curve in $\Gamma_{(Y_3)_{t+1}}$ meeting $(D_{32})_{t+1}$. In particular, $E^+$ is alone. 
 No birational transform of $E^+$ can be a flopping curve: such a curve $E^+_s$ would be $F_s$-negative, implying that there is a minimal index $k$ such that $E_k^+$ meets $(D_{23})_k$. But this implies that 
 all curve structures of the components of  the central fibre of $\shY_k$ are degenerate, a contradiction to  Proposition \ref{nondegproj2}.
 Hence $C$ is $D_{12}$-good and there is an exhaustive $C$-sequence.  
 
 Similar  arguments show that $C_{v_n}$ is $D_{12}$-good. 
 \end{example}

In order to make the statemens of the following lemmas lighter, we introduce the following setup.

\begin{setting}\label{setting}
Let $\shY$ and $\shY'$  be  models of the DNV family of degree $2$, with $\shY_c,\shY_c'\in  \PMod_2(\mathscr{P})$. Write $\shY_c=Y_1\cup Y_2 \cup Y_3$.
Let $\phi\colon\shY\dashrightarrow \shY'$ be a birational map  and let $C$ be an interior  $(-1)$-curve on the component $Y_1$ meeting 
$Y_1\cap Y_2$. Let $F'$ be  ample on $\shY'$ and denote its pullback under $\phi$ by $F$. Let \[\shY\dashrightarrow \shY_1\dashrightarrow\dots \shY_i\overset{\phi_i}{\dashrightarrow}\shY_{i+1}\dashrightarrow\dots\shY_n\xrightarrow{\sim} \shY'
\] be a factorisation into $F$-flops. 
\end{setting}
 
We assume this setting in Lemmas \ref{meetboundary}, \ref{mainrr},  \ref{neverback}, \ref{genmainrr},\ref{noreflection}, \ref{dgoodnorr}, \ref{floppingback}
and in  Corollary \ref{isalone}.

\begin{lemma}\label{meetboundary} 
Suppose that the curve  $C$ is not an annex and $F.C\geq 0$.  If there is a $p$ such that $\Ex(\phi_p)=C_p$  and $C_p.(D_{12})_p=1$, then there exists $q<p$ with $C_q.(D_{13})_q=1$.
\end{lemma}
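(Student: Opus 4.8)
The plan is to follow the single curve $C$ through the factorisation
\[\shY\dashrightarrow \shY_1\dashrightarrow\dots\dashrightarrow\shY_n\xrightarrow{\sim}\shY'\]
and to control how its intersections with the two boundary components $D_{12},D_{13}$ of $Y_1$ evolve. First note that, since both $\shY_c$ and $\shY_c'$ are of class $\mathscr{P}$, Lemma \ref{onetypeIIflop} guarantees that every $\phi_i$ is a type~I flop. The starting observation is a monotonicity of $F\cdot C$: because each $\phi_i$ is the flop of an $F_i$-negative extremal ray, the blow-up/blow-down description of a flop gives $F_{k+1}.C_{k+1}=F_k.C_k+(C_k.\Ex(\phi_k))(F_k.\Ex(\phi_k))$ up to a positive factor, so $F_k.C_k$ is non-increasing and strictly decreases precisely at those $k$ where the flopping curve meets the transform of $C$. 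As $F_0.C_0=F.C\ge 0$ while $C_p$ is a flopping curve, whence $F_p.C_p<0$, there is a first index $j<p$ at which $\Ex(\phi_j)$ meets $C_j$.

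Next I would argue that $C$ is unchanged up to index $j$: a flop whose exceptional curve is disjoint from the transform of $C$ leaves the class, self-intersection and boundary incidences of $C$ untouched. Hence $C_j=C$ still lies on $Y_1$, is a $(-1)$-curve, and (as a $(-1)$-curve meets the anticanonical cycle $D_{12}+D_{13}$ in exactly one point) satisfies $C_j.(D_{12})_j=1$, $C_j.(D_{13})_j=0$, with $F_j.C_j\ge 0$. By Proposition \ref{refinedversion} the curve $\ell:=\Ex(\phi_j)$ is an irreducible $(-1)$-curve; since it meets $C_j$ but $\phi_j$ cannot be the flop of $C_j$ (because $F_j.C_j\ge 0$), $\ell$ is an interior $(-1)$-curve on $Y_1$ adjacent to $C$, meeting exactly one of $D_{12},D_{13}$.

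The key computation is the blow-down formula on the component $Y_1$: flopping $\ell$ contracts it on $Y_1$, so the transform $C_{j+1}$ satisfies $C_{j+1}.(D_{1m})_{j+1}=C.D_{1m}+(C.\ell)(\ell.D_{1m})$ for $m\in\{2,3\}$ and $C_{j+1}^2=C^2+(C.\ell)^2$. Thus if $\ell.D_{13}=1$ then $C_{j+1}.(D_{13})_{j+1}=C.\ell=1$ (the two distinct $(-1)$-curves meet transversally in one point on the smooth surface $Y_1^\nu$), so $q=j+1$ does the job; moreover $q<p$ since $C_{j+1}^2=0$ shows $C_{j+1}$ is not a flopping curve, hence $j+1\neq p$.

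It remains to rule out $\ell.D_{12}=1$ (equivalently $\ell.D_{13}=0$), and this is where the hypothesis that $C$ is not an annex enters; I expect this to be the main obstacle. If $C$ is alone, then by Definition \ref{def:aloneannex} it is the only $(-1)$-curve of the curve structure meeting $D_{12}$ and no curve meets both components of $D$; a neighbour $\ell$ with $\ell.D_{12}=1$ would be a second such curve, a contradiction, forcing $\ell.D_{13}=1$. If $C$ is not alone but not an annex, $C$ has a companion $C'$ meeting $D_{12}$, $C$ meets at least two vertices, and $C$ is not the unique interior $(-1)$-curve meeting $D_{12}$; I would use this to pin down the local shape of the leg containing $C$ and, together with $F_j.C_j\ge 0$ and extremality of the flopped ray, conclude that the first neighbour of $C$ to be flopped lies on the $D_{13}$-side. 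The delicate point is excluding an initial flop along the $D_{12}$-side: such a flop increases $C^2$ and $C.D_{12}$ while keeping $C$ disjoint from $D_{13}$, and one must prevent $C$ from later returning to a $(-1)$-curve meeting $D_{12}$ without ever touching $D_{13}$; controlling this is exactly what the directedness and completeness of the associated $C$-sequences, and the reflection bookkeeping of the surrounding lemmas, are designed to handle.
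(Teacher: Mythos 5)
Your opening is sound and agrees with the paper's first step: the flop formula $F_{k+1}.C_{k+1}=F_k.C_k+(C_k.\Ex(\phi_k))(F_k.\Ex(\phi_k))$ makes $F_k.C_k$ non-increasing along the factorisation, so $F.C\geq 0$ together with $F_p.C_p<0$ forces a first index $j<p$ at which $C_j$ interacts with $\phi_j$; and your treatment of the sub-case where $\Ex(\phi_j)$ is an interior $(-1)$-curve of $(Y_1)_j$ meeting $(D_{13})_j$ (giving $q=j+1<p$) is correct. The genuine gap is the sentence asserting that $\ell=\Ex(\phi_j)$ ``is an interior $(-1)$-curve on $Y_1$ adjacent to $C$''. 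Interaction at step $j$ does not force $\ell$ to lie on $Y_1$: since all $\phi_i$ are type I flops moving curves between components, $\ell$ may be a $(-1)$-curve on $(Y_2)_j$ meeting $(D_{21})_j$ precisely at the point identified with $C_j\cap (D_{12})_j$. In that case $\phi_j$ blows up this point on $(Y_1)_j$, so $C_{j+1}$ becomes a $(-2)$-curve disjoint from the boundary which meets the flopped curve $H^+$, with $H^+.(D_{12})_{j+1}=1$. This is exactly the ``second case'' of the paper's proof, it is the typical situation (curves are flopped back and forth through the special points of the double curves), and it consumes most of the paper's argument: one shows that $H^+$ must itself be contracted later (otherwise $C_p$ can never return to being a $(-1)$-curve meeting $D_{12}$), applies the same dichotomy to $H^+$, uses that $H^+$ is not an annex, and terminates by finiteness of the flop sequence with a curve whose transform either remains $F$-positive or has self-intersection $0$ while still meeting the transform of $D_{12}$ --- a contradiction. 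Your proposal contains no mechanism for this case, and it cannot be absorbed into your Case-A analysis.

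The second gap is the sub-case $\ell\subset (Y_1)_j$, $\ell.(D_{12})_j=1$, with $C$ not alone and not an annex, where you explicitly defer to ``directedness and completeness of the associated $C$-sequences, and the reflection bookkeeping'' without giving an argument. The paper closes this case concretely and rather differently from what you sketch: in the curve structures at hand, a second $(-1)$-curve meeting $D_{12}$ and meeting $C$ is necessarily the annex $C'$ of $C$ (this is where ``$C$ is not an annex'' is used); flopping $C'$ gives $C^2_{j+1}=0$, and since $C'$ is $D_{12}$-good with an exhaustive $C'$-sequence (the argument of Example \ref{trailingcurveexample} together with Lemma \ref{neverback}), the transform of $C$ keeps self-intersection $\geq 0$ at every later index, so it can never be a flopping curve --- contradicting $\Ex(\phi_p)=C_p$. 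Note also that the paper runs the whole proof by contradiction, assuming $C_q.(D_{13})_q=0$ for all $q\leq p$; this standing hypothesis is what pins the interacting curve, respectively the blow-up point, to the $D_{12}$-side at every stage of the iteration, and your direct framing would need an equivalent bookkeeping device to carry out both of the missing cases.
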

\begin{proof}
Suppose there is a $p$ such that $\Ex{\phi_p}=C_p$ but $C_q.(D_{13})_q=0$ for all $q\leq p$. We have $F_p.C_p<0$, hence there must be some minimal $k<p$ such that $C_k$ meets the flopping curve $H$ or the flopped curve $H^+$ of $\phi_k$. As $C_q.(D_{13})_q=0$ for all $q<p$, in the first case, $H.(D_{12})_k=1$ and in the second case, $H^+.(D_{12})_{k+1}=1$. 
The  first case can only occur if 
 $C$ is not alone  and $H=C'_k$, $C' $ the annex meeting $C$. So in this case,  $C^2_{k+1}=0$. The curve $H$ is $D_{12}$-good by the argument in Example \ref{trailingcurveexample}. There, it is also shown that there is an exhaustive $H$-sequence, so it follows that  $C^2_{s}\geq0$ for $s\geq {k+1}$. Thus, $C_s$ is not contracted for any $s\geq {k+1}$, a contradiction to $\Ex{\phi_p}=C_p$.
 Consider the second case.
 We have   $F.H^+>0$ and by our assumptions, there is an $h$ such that $\Ex(\phi_h)=H^+_h$ and then arguing as before, there is $j<h$ such that $H^+_k$ meets the flopping curve $G$ or the flopped curve $(G)^+$ of $\phi_j$. 
  As $H^+$ is not an annex, as above, we conclude then that either $(H^+_i)^2=0$ for $i>k$ or  $G^+.H^+_k=1$ and $H^+_k$ is disjoint from the boundary.
 As there are only finitely many flops in the sequence, this procedure stops with  a curve $H'$ such that either there is an $l<p$ such that $F_i.H'_i>0$ and $H'_i.(D_{12})_i=1$ for $i>l$ or $(H'_i)^2=0$ and $H'_i.(D_{12})_i=1$ for $i>l$, a contradiction. 
\end{proof}

\begin{definition}\label{reflection}

Assume Setting \ref{setting}.
\begin{itemize}
\item[(i)]
Assume $\shY\dashrightarrow \shY_1$ has flopping curve $C$. Let $C^+\subset (Y_2)_1$ be the flopped curve and assume $C^+$ is not alone. Let $E$ be the  
$(-1)$-curve  meeting $(D_{21})_1=(Y_2)_1\cap (Y_1)_1$ with $E.C^+=1$.
Set $D_R:=D_{21}$.  Assume there is an index $q$ such that  $E_q$  meets $(D_R)_q$ and is the flopping curve of $\shY_q\dashrightarrow\shY_{q+1}$. This $q$ is unique as $E_q$
is $(D_R)_q$-good. In this case we say that $C$ is \emph{replaced} by $E$ and say $E$ is the \emph{replacement} of $C$.
\item[(ii)] Suppose $F.C\geq 0$.  
Assume $V:=\{i\in \NN\mid  C_i.(D_{13})_i=1\}\neq\varnothing$  
and let $p$ be its minimal element.   Let $E$ be the flopping curve of $\phi_{p-1}$.  Also, suppose that if $s$ is the minimal integer $s>p$ such that $C_s$ is the flopped curve of $\phi_s$, then $s\notin V$.  If there is such an $s$, we say that $C$ is \emph{reflected}. If $C$ is reflected, there is a minimal $q$,
 $s>q>p$ such that $q\notin V$. We set $D_R:=D_{23}$.  
 The flopped curve  $C^R$ of $\phi_q$ is the \emph{reflection} of $C$.  
\end{itemize}
 The curve $D_R$ is the \emph{$R$-locus} of the replacement or the reflection.   The index $q$ such that the flopped curve of $\phi_q$ is the replacement or the reflection of $C$ is called the \emph{index} of the 
 replacement resp. the reflection. 
  \end{definition}

\begin{remark}
If a $\phi$-flop  $C'$ of $C$ is replaced or reflected we will also call the replacement $E$ of $C'$ a replacement  or a reflection of $C$.
\end{remark}

\begin{lemma}\label{mainrr} 
 Assume that $C$ is alone and $F.C\geq 0$.  
Suppose $C$ is reflected, with reflection $C^R$. 
Suppose $p$ is the minimal element of $\{i\in \NN\mid  C_i.(D_{13})_i=1\}$, which exists by definition.  Let $E^+$ be the flopped curve of $\phi_{p-1}$.
Let $q$ be the index of the reflection. 
 If $E^+$ is not an annex, then  $E^+$ is not alone and $\Ex{\phi_q}$ is  a
 birational transform of the annex  $E^a$ meeting  $E^+$.
 \end{lemma}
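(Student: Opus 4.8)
The plan is to first pin down the local configuration produced by the flop $\phi_{p-1}$, then rule out that $E^+$ is alone by a contradiction argument, and finally identify the curve contracted at step $q$ with a transform of the annex of $E^+$.

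First I would analyse $\phi_{p-1}$. Since $p$ is minimal with $C_p.(D_{13})_p=1$ and, by Proposition \ref{refinedversion}, the exceptional locus of a flopping contraction of a model with central fibre in $\PMod_2$ is an irreducible $(-1)$-curve, the flopping curve $E=\Ex(\phi_{p-1})$ is an interior $(-1)$-curve. The intersection number of (the transform of) $C$ with $D_{13}$, computed on the component $Y_1$, can only jump from $0$ to $1$ through a flop that contracts, on the $Y_1$-side, a curve meeting both $C$ and $D_{13}$: blowing up a point can never create an intersection between two disjoint curves. Hence $E$ lies on $(Y_1)_{p-1}$ with $E.C_{p-1}=E.(D_{13})_{p-1}=1$ and $E.(D_{12})_{p-1}=0$, so that $E$ is flopped across $D_{13}$ and the flopped curve $E^+$ lands on $(Y_3)_p$ and meets $(D_{31})_p$. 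Bookkeeping of self-intersections then gives $C_p^2=C_{p-1}^2+(C_{p-1}.E)^2=0$, so $C_p$ becomes a $0$-curve meeting both components of the anticanonical cycle of $(Y_1)_p$; and since $\phi_{p-1}$ is an $F$-flop, the flopped curve satisfies $F_p.E^+>0$.

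To prove that $E^+$ is not alone I would argue by contradiction. Because $C$ is reflected, there is an index $q$ with $p<q<s$ at which $C$ ceases to meet $D_{13}$. By the same local analysis, $C.(D_{13})$ can only drop from $1$ back to $0$ through the contraction of an interior $(-1)$-curve on $Y_3$ feeding the point $C\cap D_{13}$, whose flop moreover has $R$-locus $D_{23}$ in the sense of Definition \ref{reflection}. If $E^+$ were alone it would be the unique interior $(-1)$-curve meeting $(D_{31})_p$, with no curve connecting $D_{31}$ to $D_{32}$; but $E^+$ stays $F$-positive (which I would justify by showing $E^+$ is $D_{31}$-good, exactly as in Example \ref{trailingcurveexample}), and in a factorisation into $F$-flops only contractions of $F$-negative rays occur, so no transform of $E^+$ is ever a flopping curve and the arrival of $C$ at $D_{13}$ could never be undone. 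This contradicts $q\notin V$, whence $E^+$ is not alone.

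Finally I would identify $\Ex(\phi_q)$. As $E^+$ is not alone and, by hypothesis, not an annex, the rigidity of the negative curves on each component (Propositions \ref{pro:canflops} and \ref{qbasis}) constrains the curve structure on $(Y_3)_p$ so tightly that the non-aloneness must be realised by an annex $E^a$ adjacent to $E^+$, i.e.\ $E^a.E^+=1$, whose eventual transform reaches $D_{32}$ and so has $R$-locus $D_{23}$. Producing the reflection at step $q$ requires contracting an $F$-negative $(-1)$-curve connected to $E^+$ (this is the only flop keeping the intermediate models projective, by Proposition \ref{nondegproj2}); since $E^+$ itself is never flopped, the flopping curve $\Ex(\phi_q)$ is forced to be a birational transform of $E^a$. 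To make the word \emph{forced} rigorous I would track the flops on the index range $[p,q]$ via the directed and exhaustive $E^+$-sequence formalism of Definition \ref{def:completeetc}, as in Example \ref{trailingcurveexample}, invoking Lemma \ref{meetboundary} to control when transforms of $C$ reach the boundary and the finiteness of negative curves to exclude any competing $(-1)$-curve meeting the relevant double curve. This last exclusion is the main obstacle: everything else (the self-intersection bookkeeping and the $F$-sign tracking) is routine once the local configuration around $E^+$ is fixed, but showing that no rival $(-1)$-curve can arise between steps $p$ and $q$ is precisely what the $E$-sequence machinery is designed to handle, and it is where the care lies.
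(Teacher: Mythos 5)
Your top-level strategy (a contradiction argument to rule out ``$E^+$ alone'', followed by the identification of $\Ex(\phi_q)$) matches the paper's, and your local analysis of $\phi_{p-1}$ is consistent with it, but the core step has a genuine gap. You dispose of the case ``$E^+$ alone'' by asserting that $E^+$ is then $D_{31}$-good and therefore ``stays $F$-positive'', so that no transform of $E^+$ is ever a flopping curve. This misapplies the notion: $D$-goodness is defined for flopping curves, i.e.\ curves on which $F$ is negative and which generate the contracted extremal ray, and it does not by itself assert that an $F$-positive flopped curve can never later become $F$-negative --- that is exactly the reflection/replacement phenomenon this section is designed to control. In Example \ref{trailingcurveexample} the analogous conclusion is obtained only because the specific numerics there (e.g.\ $(D_{32})^2_{t+1}=-12$ after the flop) make every competing configuration degenerate at once; in the general situation of Lemma \ref{mainrr} no such rigidity is available, and ruling out that a transform of $E^+$ is ever flopped is precisely the hard content of the proof. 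The same unproved claim (``$E^+$ itself is never flopped'') is then reused in your final step to force $\Ex(\phi_q)$ to be a transform of $E^a$, so the gap propagates into both conclusions of the lemma.

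What the paper does instead, and what your sketch is missing, is a global cascade argument that treats the two bad cases (``$E^+$ alone'' and ``$E^+$ not alone but $C^R$ not a $\phi$-flop of $E^a$'') uniformly: since $C$ is reflected, in either bad case some transform of $E^+$ must be the flopping curve of some $\phi_{b'}$ while meeting $(D_{31})_{b'}$; Lemma \ref{meetboundary} then forces a transform of $E^+$ to have met $(D_{32})_{p'}$ at an earlier step $p'$, which produces a flopped curve $G^+$ on $(Y_2)_{p'}$; a case analysis on whether $G^+$ is alone (with one more application of Lemma \ref{meetboundary}) shows that the curve structures of both $Y_2$ and $Y_3$ become degenerate; projectivity (Proposition \ref{nondegproj2}) then forces $\Gamma_{(Y_1)_j}$ to be non-degenerate, and a final application of Lemma \ref{meetboundary}, to the exceptional vertex of $\Gamma_{(Y_1)_j}$ other than $C$, yields $(G^+)_i.(D_{21})_i=1$ for all later $i$, contradicting the assumption that $C$ is reflected. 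Note in particular that the final contradiction is not with projectivity alone (as in the Example) but with the reflectedness of $C$; your proposal never engages with this interplay, and without it neither ``$E^+$ is not alone'' nor the identification of $\Ex(\phi_q)$ as a birational transform of $E^a$ is established.
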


\begin{proof}
 By definition of reflection,  there is a $b$ with $C_b.(D_{12})_b=1$ and $C_b$ the flopping curve of $\phi_b$.   
 There is a $p$, $p<b$ with $C_p.(D_{13})_p=1$. Assume $p$ minimal with this property.
 As in the statement, let $E^+\subset  (Y_3)_p$ be the flopped curve  of $\phi_{p-1}$.  By assumption,   $E^+$ is not an annex.
 Note that if $C_p$ is not alone,  it is necessarily not an annex. Also, the annex $C^a_p$ is  $D_{13}$-good and hence is not a flopped curve for $i\geq p$, as $C$ is reflected.
  
If $E^+$ is not alone, let $E^{a}$ be the annex and suppose 
$C^R$ is not a $\phi$-flop of $E^a$.  In any event ($ E^+$ alone or not)
as $C$ is reflected, there is a $b'$, $b'<b$ with $E^+_{b'}$ the flopping curve of $\phi_{b'}$ and $E^+_{b'}.(D_{31})_{b'}=1.$
By Lemma \ref{meetboundary}, there is $p'$, $p'<b'$, such that $E^+_{p'}.(D_{32})_{p'}=1$, which we again assume minimal. 
 
 Let $G^+\subset (Y_2)_{p'}$ be the flopped curve of $\phi_{p'-1}$. Suppose $G^+$ is alone. Again, it follows that there is  $b''$ with $p'<b''<b'$  such that  for the flopping curve $G^+_{b''}$ 
 of $\phi_{b''}$ one has $G^+_{b''}.(D_{23})_{b''}=1$. 
 By Lemma \ref{meetboundary}, there is $p''$, $p'<p''<b''$ such that $G^+_{p''}.(D_{21})_{p''}=1$, which we again assume minimal. So $\Gamma_{(Y_2)_{p''}}$ and $\Gamma_{(Y_3)_{p''}}$ are degenerate.  If $G^+$ is not alone, it follows that  $\Gamma_{(Y_2)_{p'}}$ and $\Gamma_{(Y_3)_{p'}}$ are degenerate.

In both cases, by projectivity, it follows from  Proposition \ref{nondegproj2} that  $\Gamma_{(Y_1)_{j}}$ for $j\in\{ p',p''\}$ is non-degenerate.  
By the assumption that $C$ is reflected, we have $C_s.(D_{12})_s=1$ for the  minimal $s>p$ such that $C_s$ is the flopped curve of $\phi_s$. Hence it follows from Lemma \ref{meetboundary} (applied to the exceptional vertex of $\Gamma_{(Y_1)_{j}}$ distinct from $C$)  that $(G^+)_i.(D_{21})_i=1$ for all $i\geq j$.  But then $C$ is not reflected, a contradiction. 
Hence we conclude that  $E^+$ is not alone
and  $C^R$ is a $\phi$-flop of $E^{a}$. Now, it follows by the argument in  Example \ref{trailingcurveexample} that $\Ex(\phi_q)$ is indeed a birational transform of $E^{a}$.
\end{proof}

\begin{lemma} \label{neverback}
  Suppose $C$ is the flopping curve of $\phi_0$.
If $C$ is  $D$-good, then there is an exhaustive $C$-sequence $K_C^\phi$.

\end{lemma}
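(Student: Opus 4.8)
The plan is to take for $K_C^\phi$ the \emph{entire} collection of flopping indices belonging to the trajectory of $C$, namely
\[ K_C^\phi := \{\, i \in I \mid \text{the flopping curve of } \phi_i \text{ is } C \text{ or a } \phi\text{-flop of } C \,\}, \]
which is finite since the chosen factorisation of $\phi$ consists of finitely many flops. By construction this is a $C$-sequence and it contains \emph{every} index whose flopping curve lies in the trajectory of $C$ (in the sense of Definition \ref{flopofC}). Consequently exhaustiveness holds tautologically, and the completeness condition and the index condition in the definition of ``initial'' are satisfied vacuously, because there is no trajectory index outside $K_C^\phi$. Thus the whole statement reduces to verifying that $K_C^\phi$ is \emph{directed}.

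First I would record the base orientation. Since $\phi_0$ contracts the interior $(-1)$-curve $C\subset Y_1$ meeting $D_{12}$, its flopped curve $C^+=E(0)$ lies on $Y_2$ and meets $D_{21}$, so $(D_{21})_0.E(0)=1$ with $2>1$. I fix the orientation ``$i>j$'' and aim to show that every flopped curve $E(k)$, $k\in K_C^\phi$, meets its double curve with this same orientation. To organise the argument I enumerate $K_C^\phi=\{0=j_0<j_1<\dots<j_N\}$ and prove, by induction on $r$, that each initial segment $K^{(r)}=\{j_0,\dots,j_r\}$ is an \emph{initial} $C$-sequence. For fixed $r$ the completeness and index conditions hold automatically, as $K^{(r)}$ consists of the first $r+1$ trajectory indices in increasing order; hence the induction is genuinely only about directedness. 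Granting that $K^{(r)}$ is initial, the hypothesis that $C$ is $D$-good applies (via both clauses (i) and (ii) of its definition) and shows that each of $E(j_0),\dots,E(j_r)$ is \emph{alone}. In particular the trajectory curve is, at every stage, the unique interior $(-1)$-curve meeting the relevant double curve and is never an annex; this rigidity is exactly what will force the next crossing to respect the orientation.

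For the induction step I would examine the next flop $\phi_{j_{r+1}}$, whose flopping curve is a birational transform of one of the alone curves $E(j_s)$ sitting on a component $Y_a$ and meeting a double curve $D_{ab}$; its flopped curve $E(j_{r+1})$ then lies on $Y_b$ and meets $D_{ba}$. The task is to exclude $b<a$, i.e.\ to rule out a reversal of the trajectory. Here I invoke Lemma \ref{meetboundary}: as the trajectory curve is alone, hence not an annex, and is $F$-nonnegative as a freshly flopped curve, a crossing of one boundary double curve forces an earlier crossing of the other, which fixes the admissible order of the boundary crossings and excludes the decreasing-index configuration. Any residual reversal would force the curve structures on all three components of the central fibre to be simultaneously degenerate, contradicting projectivity of $\shY_{j_{r+1},c}\in\PMod_2$ through Proposition \ref{nondegproj2}. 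This gives $b>a$, so $E(j_{r+1})$ meets $D_{ba}$ with the orientation $b>a$, making $K^{(r+1)}$ directed and initial and completing the induction. Since the factorisation is finite, the induction terminates at $r=N$, whence $K_C^\phi=K^{(N)}$ is directed and therefore an exhaustive $C$-sequence.

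The main obstacle is precisely the no-reversal (``never back'') claim inside the induction step: one must control how the transform of the alone trajectory curve can meet the two boundary double curves across the intervening type I flops, and then exclude a crossing in the decreasing-index direction. This is where the aloneness furnished by $D$-goodness, the ordering of boundary crossings supplied by Lemma \ref{meetboundary}, and the projectivity obstruction of Proposition \ref{nondegproj2} have to be combined carefully; the rest of the argument is the bookkeeping reduction that makes directedness the sole nontrivial requirement.
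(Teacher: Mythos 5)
Your reduction is correct: taking $K_C^\phi$ to be the full set of trajectory indices, completeness, the initial condition and exhaustiveness are indeed vacuous or tautological, so the lemma is equivalent to directedness of that set; and your induction scaffolding (initial segments of the trajectory are automatically complete and initial, and clause (i) of $D$-goodness then makes every flopped curve $E(j_s)$ alone) is sound. This parallels the paper's shorter route, which takes a \emph{maximal initial} $C$-sequence and derives a contradiction from non-exhaustiveness. The gap is in your induction step, which is exactly the hard point. Lemma \ref{meetboundary} does not ``exclude the decreasing-index configuration'': it only gives a \emph{necessary} condition for a reversal, namely that the (alone, $F$-nonnegative) trajectory curve must have met the other boundary component at an earlier time; nothing you cite rules that scenario out. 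Your fallback claim --- that a residual reversal forces the curve structures on all three components to be simultaneously degenerate, contradicting Proposition \ref{nondegproj2} --- is unjustified, and it is in fact not the mechanism that works: in the paper's analysis of a reversal (a \emph{reflection} in the sense of Definition \ref{reflection}), one only obtains that \emph{two} of the three curve structures become degenerate; projectivity via Proposition \ref{nondegproj2} then forces the third to be non-degenerate, and a further application of Lemma \ref{meetboundary} on that non-degenerate component is needed to produce a curve which meets the boundary for all later times, which is what finally contradicts the reversal.

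All of this is packaged in the paper as Lemma \ref{mainrr} (and its generalisation, Lemma \ref{genmainrr}), which is precisely the statement your step needs: if the trajectory curve is alone and reflected, then the ``carrier'' curve $E^+$ (the flopped curve of $\phi_{p-1}$ whose contraction brought the trajectory curve to meet the other boundary) cannot be alone. Clause (ii) of the definition of $D$-goodness says that $E^+$ \emph{is} alone, and this clash is the contradiction the paper's proof runs; note that your proof cites clause (ii) only to re-derive aloneness of the trajectory curves, which is not what it is for. As written, your induction step assumes what Lemma \ref{mainrr} establishes, so the argument has a genuine hole exactly at the ``never back'' claim it is supposed to prove. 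To repair it: (1) observe that a failure of directedness at $j_{r+1}$ exhibits the relevant trajectory curve as reflected, checking the conditions of Definition \ref{reflection} with the crossing of the other boundary supplied by Lemma \ref{meetboundary}; (2) use clause (ii) of $D$-goodness to conclude the carrier curve $E^+$ is alone, hence not an annex and without companion; (3) apply Lemma \ref{mainrr} to conclude $E^+$ is not alone --- a contradiction.
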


\begin{proof} 
Let   $K_C^\phi$ be a maximal initial $C$-sequence. To get a 
contradiction, suppose $K_C^\phi$ is not exhaustive. We can 
assume that $K_C^\phi=\{0\}$.
Let $C^+$ be the flopped curve of $\phi_0$. Being a $\phi$-flop of $C$,  $C^+$ is alone and $F_1.C^+>0$.
As $K_C^\phi$ is not exhaustive, by definition, $C^+ $ is reflected. 

 So by assumption, there is some $b$ with $C^+_b.(D_{12})_b=1$ and $C_b$ the flopping curve of $\phi_b$.   
 There is a $p$, $p<b$ with $C_p.(D_{13})_p=1$. Assume $p$ minimal with this property.
 Let $E^+\subset  (Y_3)_p$ be the flopped curve  of $\phi_{p-1}$.  As $C$ is $D_{12}$-good, $E^+$ is alone. In particular, $E^+$ is not an annex.  Being alone, $E^+$ does not have a companion. From Lemma \ref{mainrr} we conclude that $C^+$ is not reflected and hence $K_C^\phi$ is exhaustive. 
\end{proof}

\begin{corollary}\label{isalone}
Suppose $F.C\geq 0$ and 
assume that there is an integer $p$ such that $\Ex{\phi_p}=C_p$ and $C_p.(D_{12})_p=1$.
Then $C$ is alone.  
\end{corollary}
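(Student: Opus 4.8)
The plan is to argue by contradiction: assume that $C$ is \emph{not} alone and derive a contradiction with the existence of an index $p$ with $\Ex(\phi_p)=C_p$ and $C_p.(D_{12})_p=1$. First note that, since $F.C\geq 0$ while every flopping curve is $F$-negative, we must have $p\geq 1$; so $C$ is genuinely carried through at least one flop before being contracted, and $C_p$, being a flopping curve, is an interior $(-1)$-curve by Proposition \ref{refinedversion}, whence $C_p.(D_{13})_p=0$. I would then split the argument according to whether $C$ is an annex in the sense of Definition \ref{def:aloneannex}.

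The case where $C$ is an annex I would treat by the mechanism already used in the proof of Lemma \ref{meetboundary}. If $C$ is an annex it has a companion, and the only way a transform of $C$ can change its self-intersection is through a flop of that companion; but flopping the companion across $D_{12}$ raises the self-intersection of the transform of $C$ from $-1$ to $0$, so no later transform $C_s$ is a $(-1)$-curve, while if the companion is never flopped then $C$ stays $F$-nonnegative. Either way $C$ never becomes a flopping curve, contradicting $\Ex(\phi_p)=C_p$. (Here one must also dispose of the subcase in which $C$ is the unique interior $(-1)$-curve meeting $D_{12}$ but is accompanied by a curve $H$ with $H.D_{12}=H.D_{13}=1$; such an $H$ is a $0$-curve and is never flopped, so the same conclusion holds.)

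It remains to treat $C$ not alone and not an annex. Then Lemma \ref{meetboundary} applies and produces $q<p$ with $C_q.(D_{13})_q=1$, so the set $V=\{i\mid C_i.(D_{13})_i=1\}$ is nonempty, while $C_0.(D_{13})_0=0$ and $C_p.(D_{13})_p=0$: the transform of $C$ leaves $D_{12}$, meets $D_{13}$, and is later flopped back across $D_{12}$. This is exactly the configuration of a reflection in the sense of Definition \ref{reflection}(ii). Since $C$ is not alone but not an annex, its companion does not meet $C$, so flopping the companion across $D_{12}$ leaves $C$ a $(-1)$-curve and makes its transform \emph{alone}; thus after this reduction I can feed the situation into the tracking argument of Lemmas \ref{mainrr} and \ref{neverback}. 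Following the chain of flopped curves generated by the reflection, one is forced through a stage at which all three curve structures $\Gamma_{(Y_i)_j}$ are simultaneously degenerate, which contradicts the projectivity of the central fibre by Proposition \ref{nondegproj2}.

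I expect the main obstacle to be precisely this last reduction and the attendant bookkeeping. One must track the birational transforms of $C$, of its companion, and of the auxiliary flopped curves through the whole factorisation, and invoke the correct flavour (initial, complete, directed, exhaustive) of an $E$-sequence from Definition \ref{def:completeetc} at each step, so as to pin down that the reflection genuinely forces the simultaneous degeneracy contradicting Proposition \ref{nondegproj2}. The companion/annex computation of Example \ref{trailingcurveexample} is the local tool that makes each step go through, and reconciling it with the global monotonicity guaranteed by directedness is where the care will be required.
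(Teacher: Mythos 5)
Your overall architecture (argue by contradiction, split on whether $C$ is an annex, and use Lemma \ref{meetboundary}, Example \ref{trailingcurveexample}, Lemma \ref{neverback} and the projectivity input of Proposition \ref{nondegproj2}) is the right family of tools, but both of your cases have genuine gaps. In the annex case your dichotomy -- either the companion is flopped (and then $C^2$ becomes $0$ and stays nonnegative) or the companion is never flopped (and then $F.C_i\geq 0$ forever) -- is not exhaustive. The minimal index $k$ at which the transform of $C$ first interacts with the sequence can also be one where $C_k$ meets the \emph{flopped} curve $H^+$ of $\phi_k$: a curve flopped into $(Y_1)_k$ through $(D_{12})_k$, arriving at the point $C_k\cap (D_{12})_k$. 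That flop is a blow-up at this point; it separates $C_{k+1}$ from $(D_{12})_{k+1}$ and drops the self-intersection of the transform of $C$ to $-2$, all without the companion ever being touched, so both horns of your dichotomy fail. The paper's proof spends its entire second subcase on exactly this configuration: since $F_{k+1}.H^+>0$, the curve $H^+$ must itself later be contracted while meeting $(D_{12})$; Lemma \ref{meetboundary} applied to $H^+$ yields a minimal $l$ with $H^+_l.(D_{13})_l=1$; the flopping curve of $\phi_{l-1}$ is then the companion $C'_{l-1}$, which is $D_{13}$-good, and the exhaustive $C'_{l-1}$-sequence from Lemma \ref{neverback} keeps $C_i$ disjoint from $(D_{12})_i$ for $i>l$, contradicting $\Ex(\phi_p)=C_p$ with $C_p.(D_{12})_p=1$. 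Nothing in your proposal covers this.

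In the non-annex case your ``reduction'' is not legitimate: the factorisation of $\phi$ into $F$-flops is given (each step contracts an $F$-negative extremal ray), so you cannot insert a flop of the companion to make $C$ alone; Lemmas \ref{mainrr} and \ref{neverback} are statements about the given sequence, not about a modified one. (Also, a companion of a non-annex curve may well meet $C$, contrary to your assertion.) The paper instead argues directly on the given sequence: taking the vertex $E\in \Gamma_{Y_1}$ with $E.C=1$ and $E.D_{12}=0$, one checks by extending Example \ref{trailingcurveexample} that both $E$ and $C$ are $D_{13}$-good; Lemma \ref{meetboundary} produces $q$ with $C_q.(D_{13})_q=1$, hence some $r<q$ with $E_r=\Ex(\phi_r)$ and $E_r.(D_{13})_r=1$; the exhaustive $E$-sequence of Lemma \ref{neverback}, combined with $D_{13}$-goodness of $C$, forces $K^\phi_E=\{r\}$ and hence $C_i.(D_{13})_i=1$ for all $i>r$. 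This is the contradiction, because an interior $(-1)$-curve cannot meet both $(D_{12})_p$ and $(D_{13})_p$. Your proposed endpoint -- simultaneous degeneracy of all three curve structures contradicting Proposition \ref{nondegproj2} -- is not the contradiction reached here (that mechanism lives inside the proofs of Lemma \ref{mainrr} and related statements), and you give no actual route to it within the fixed flop sequence.
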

\begin{proof}
Suppose $C$ is not alone. As $F.C\geq 0$, there must be some minimal $k$, $k<p$, such that $C_k$ meets the flopping curve $H$ or the flopped curve $H^+$ of $\phi_k$ .  Suppose first that $C$ is an annex.
 Let $C'$ be the companion of $C$.
 Suppose we are in the first case, i.e. $C_k$ meets the flopping curve $H$. Then $H=C'_k$ as $C_k.D_{12}=1$.
  By the argument in Example \ref{trailingcurveexample}, $C'$ is $D_{12}$-good, and hence from Lemma \ref{neverback}, there is an  exhaustive $C'$-sequence, and hence $C_i^2\geq 0$ for all $i>k$  contradicting our assumptions, as in particular $C_p^2<0$.

 Hence we are in the second case, i.e.  $C_k$ meets the flopped curve $H^+$ of $\phi_k$. Then $F_{k+1}.H^+>0$, $H^+$ is not an annex and  there is an $h$ such that $\Ex(\phi_h)=H^+_h$, with $H^+_h.(D_{12})_h=1$.    
 By Lemma \ref{meetboundary}, we have a minimal  $l$, $l<h$ with $H^+_l.(D_{13})_l=1$. 
 Note that the flopping curve of $\phi_{l-1}$ is  $C'_{l-1}$ which is necessarily $D_{13}$-good and there is an  exhaustive $C'_{l-1}$-sequence $\{l-1,\dots, v\}$, by Lemma \ref{neverback}, implying that $C_{i}$ is disjoint from  $(D_{12})_i$ for $i>l$, a contradiction.
Now suppose $C$ is not alone and not the annex. Let $E$ be the curve in $\Gamma_{Y_1}$ with $E.C=1$ and $E.D_{12}=0$. By a straightforward extension of the argument in the proof of Example \ref{trailingcurveexample}, one checks that $E$  and $C$ are $D_{13}$-good. 
By Lemma \ref{meetboundary}, $C_q.(D_{13})_q=1$.  Hence for some $r<q$, $E_r=\Ex(\phi_r)$ and $E_r.(D_{13})_r=1$.   By Lemma \ref{neverback}, there is an exhaustive  $E$-sequence $K^\phi_E$. As $C$ is $D_{13}$-good, 
our assumptions imply  $K^\phi_E=\{r\}$. Then $C_i.(D_{13})_i=1$ 
for all $i>r$, a contradiction.
\end{proof}

\begin{lemma}\label{genmainrr} 
 Suppose $F.C\geq 0$ and 
assume $C$ is reflected, with reflection $C^R$. Let $E^+$ be the flopped curve of $\phi_{p-1}$, with $p$ the minimal element of $\{i\in \NN:\mid  C_i.(D_{13})_i=1\}$. Let $q$ be the index of the reflection. 
 \begin{itemize}
 \item[(i)] If $E^+$ is not an annex, then  $E^+$ is not alone and $\Ex{\phi_q}$ is  a birational transform of the annex  $E^a$ meeting  $E^+$.
 \item[(ii)] If $E^+$ is an  annex, then  $\Ex{\phi_q}$ is  a birational transform of the companion of $E^+$.
 \end{itemize}
\end{lemma}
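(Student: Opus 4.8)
The plan is to deduce both parts from the single contradiction argument that already drives the proof of Lemma~\ref{mainrr}, after observing that the hypothesis ``$C$ is alone'' is never actually used there. In that proof $C$ enters only through its reflection data --- the index $b$ with $C_b.(D_{12})_b=1$, the minimal $p$ with $C_p.(D_{13})_p=1$, and the flopped curve $E^+$ of $\phi_{p-1}$ on $(Y_3)_p$ --- and the one point where the local shape of $C$ could intervene, namely the transform $C_p$, is handled uniformly: if $C_p$ is not alone then its annex $C^a_p$ is $D_{13}$-good and so, once $C$ is reflected, is not a flopped curve for any $i\ge p$. Thus the verbatim argument of Lemma~\ref{mainrr} proves part (i) with no aloneness assumption on $C$. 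In both parts the common conclusion is that $\Ex(\phi_q)$ is a birational transform of the companion of $E^+$; the two cases differ only in which member of the pair of $(-1)$-curves meeting $(D_{31})_p$ is the annex (in (i) the companion $E^a$, in (ii) $E^+$ itself).

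For part (i) I would recall the mechanism. Assuming for contradiction that $E^+$ is alone, or that $E^+$ is not alone but $C^R$ is not a $\phi$-flop of its companion $E^a$, one finds, using that $C$ is reflected, an index $b'<b$ at which $E^+_{b'}$ is the flopping curve meeting $(D_{31})_{b'}$; Lemma~\ref{meetboundary} then yields a strictly earlier minimal index $p'$ at which the transform meets the opposite component $(D_{32})_{p'}$, and the next flopped curve $G^+$ on $(Y_2)_{p'}$ repeats this across $Y_2$. Iterating forces two of the three curve structures to be degenerate at a common model $\shY_j$, $j\in\{p',p''\}$; projectivity and Proposition~\ref{nondegproj2}(i) then force $\Gamma_{(Y_1)_j}$ to be non-degenerate, and Lemma~\ref{meetboundary} applied to the exceptional vertex of $\Gamma_{(Y_1)_j}$ other than $C$ shows the relevant transform meets $(D_{21})_i$ for all $i\ge j$, contradicting that $C$ is reflected. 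Hence $E^+$ is not alone and $C^R$ is a $\phi$-flop of $E^a$, whereupon the exhaustive-sequence argument of Example~\ref{trailingcurveexample} identifies $\Ex(\phi_q)$ as a birational transform of $E^a$.

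For part (ii), $E^+$ is an annex and hence, by definition, not alone; let $E^{+c}$ denote its companion, the other $(-1)$-curve meeting $(D_{31})_p$. Since $E^+$ meets a unique vertex it carries no leg, so $E^{+c}$, which unlike $E^+$ is not an annex, is the only leg-carrying curve at $(D_{31})$ and plays the role that $E^+$ had in part (i). I would assume for contradiction that $C^R$ is not a $\phi$-flop of $E^{+c}$ and run exactly the chain of the previous paragraph with $E^{+c}$ as the curve that must flop back; being a non-annex with $F\cdot E^{+c}\ge 0$, it is eligible for Lemma~\ref{meetboundary}, and the iteration again produces two degenerate components at some $\shY_j$, contradicting Proposition~\ref{nondegproj2}(i). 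As in Example~\ref{trailingcurveexample}, $E^{+c}$ is $(D_{31})$-good, so Lemma~\ref{neverback} supplies an exhaustive $E^{+c}$-sequence, and this pins down $\Ex(\phi_q)$ as a birational transform of $E^{+c}$, the companion of $E^+$, as asserted.

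The main obstacle will be the bookkeeping in part (ii): one must check that no competing flopping curve is contracted strictly between the indices $p$ and $q$ that could pre-empt the flop of $E^{+c}$ or divert the chain onto a different curve. This is exactly where the minimality of $q$ built into the reflection (Definition~\ref{reflection}), the $(D_{31})$-goodness of $E^{+c}$, and the exhaustiveness from Lemma~\ref{neverback} have to be combined; the annex hypothesis on $E^+$ is precisely what guarantees that $E^{+c}$ is the unique leg-carrying candidate, so that the chain terminates at $E^{+c}$ and not at some other transform.
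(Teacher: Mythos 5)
Your treatment of part (ii) has a genuine gap, and it sits exactly where the paper's own (very short) argument does its work. Assume, as you do, that $C^R$ is not a $\phi$-flop of the companion $E^{+c}$. Since the reflection flop $\phi_q$ must contract a curve through the relevant point of $(D_{31})$ --- that is, a birational transform of $E^+$ or of $E^{+c}$, which is common ground between your write-up and the paper --- your hypothesis forces $\Ex(\phi_q)$ to be a transform of the \emph{annex} $E^+$ itself. So the curve ``that must flop back'' is $E^+$, not $E^{+c}$: under your hypothesis nothing forces $E^{+c}$ to be a flopping curve at any index, so the chain you propose to run on $E^{+c}$ via Lemma \ref{meetboundary} has no starting point; and the chain cannot be transplanted to $E^+$, because Lemma \ref{meetboundary} explicitly excludes annexes. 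What rules out $E^+$ is not a meetboundary chain at all but Corollary \ref{isalone}, applied from $\shY_p$ onward with the components relabelled: $E^+$ is a flopped curve, so $F_p.E^+>0$, and an annex is by definition not alone, so Corollary \ref{isalone} forbids any birational transform of $E^+$ meeting the boundary from ever being a flopping curve. Hence $\Ex(\phi_q)$ must be a transform of the companion. That single application of Corollary \ref{isalone} is precisely the paper's proof of (ii), and your proposal never invokes it.

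The same omission weakens part (i). Your claim that the hypothesis ``$C$ is alone'' is never used in the proof of Lemma \ref{mainrr} is not safe: the step ``if $C_p$ is not alone, it is necessarily not an annex'' relies on $C$ being alone at the start, since a curve that begins as an annex and whose neighbourhood is untouched by the intervening flops is still an annex at index $p$, and for an annex the subsequent argument (which quarantines the annex $C^a_p$ as $D_{13}$-good) is set up the wrong way around. The correct, and much shorter, observation is the paper's first line: because $C$ is reflected, some transform $C_s$ with $C_s.(D_{12})_s=1$ is a flopping curve, so Corollary \ref{isalone} together with $F.C\geq 0$ forces $C$ to be alone; then part (i) is literally Lemma \ref{mainrr} with its hypothesis intact. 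In short, both halves of the lemma are designed to fall out of Corollary \ref{isalone}, and the attempt to replace that corollary by meetboundary-chains is where the proposal breaks.
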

\begin{proof}It follows from Corollary \ref{isalone} that $C$ is alone. Hence
item $\rm{(i)}$ is nothing but Lemma \ref{mainrr}. If $E^+$ is an annex, it follows from Corollay \ref{isalone}
 that $\Ex{\phi_q}$ cannot be  a birational transform of $E^+$.
 Hence it is necessarily a  birational transform of the companion of $E^+$.
\end{proof}

\begin{remark} 
In particular, it follows that for the index  $q$ of a replacement or reflection
exactly one of $\Gamma_{(Y_R)_q}$ or $\Gamma_{(Y_R)_{q+1}}$ is a  regular curve structure, where $Y_R$ is the component containing $D_R$.
\end{remark}

\begin{lemma}\label{noreflection}
Assume that $C$ is not alone, and let $W$ be the  companion of $C$. 
\begin{itemize}
\item[(i)]  Assume $\shY\dashrightarrow \shY_1$ has flopping curve $C$.
Suppose $C$ and none of its $\phi$-flops is  replaced or reflected. Then there is an exhaustive $C$-sequence $K_C^\phi$.
\item[(ii)] Suppose $C$ is not an annex. Suppose there is no index $i$ sucht that  
$\Ex(\phi_i).(D_{12})_i=1$ and  $\Ex(\phi_i)$ a  $\phi$-flop of $C$  or $W$ or of a replacement of $C$. 
Then
 $D_{12}^2\geq(D_{12})_i^2$ for all $i>0$.
\end{itemize}
\end{lemma}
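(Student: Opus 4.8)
The plan is to prove both statements by bookkeeping along the given factorisation into type I flops, exploiting that the $F$-degree of a curve and the self-intersection of a double curve change in a controlled way under each $\phi_i$. Throughout I keep the notation of Setting \ref{setting} and of Remark \ref{rmkflops}.

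For (i), I would argue by contradiction, following the scheme of Lemma \ref{neverback} but feeding in the no-reflection/no-replacement hypothesis in place of $D$-goodness. Since $\shY\dashrightarrow\shY_1$ flops $C$, the singleton $\{0\}$ is an initial $C$-sequence, so a maximal initial $C$-sequence $K_C^\phi$ exists. Suppose it is not exhaustive. Then there is a minimal index $j>m_2(K_C^\phi)$ at which the flopping curve $\Ex(\phi_j)$ is a $\phi$-flop $G$ of $C$. The flopped curve $C^\ast$ of $\phi_{m_2}$ is itself a $\phi$-flop of $C$ and is $F$-positive; by the degree-propagation observation in Remark \ref{rmkflops}, $F$-positivity of $C^\ast$ and of its birational transforms can only be destroyed by meeting the exceptional locus of an intervening flop. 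Tracking the first such interaction, and applying Corollary \ref{isalone} to locate the meeting with the double locus and Lemma \ref{genmainrr} to identify the relevant annex or companion, I would show that the re-entry of a $\phi$-flop of $C$ as a flopping curve after the block $K_C^\phi$ is precisely a reflection or a replacement of $C$ or of one of its $\phi$-flops. By the Remark following Definition \ref{reflection} this is a reflection or replacement of $C$, contradicting the hypothesis. Hence $K_C^\phi$ is exhaustive.

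For (ii), I would set $n_i:=(D_{12})_i^2$ and track how $n_i$ evolves. A type I flop $\phi_i$ whose flopping curve is an interior $(-1)$-curve on $(Y_1)_i$ meeting $(D_{12})_i$ transversally contracts that curve on the $Y_1$-side, so by the blow-down formula $n_{i+1}=n_i+1$; a flop whose flopped curve instead lands on $(Y_1)_{i+1}$ along $(D_{12})_{i+1}$ gives $n_{i+1}=n_i-1$; every other flop leaves $n_i$ unchanged. Thus $n_i\le n_0=D_{12}^2$ for all $i$ will follow once I show that every \emph{increase} of $n$ is caused by a flopping curve that is a $\phi$-flop of $C$, of $W$, or of a replacement of $C$, for then the hypothesis of (ii) forbids all increases and only decreases or neutral steps remain. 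To establish this identification I would use the explicit shape of type $d_2$ curve structures (Proposition \ref{pro:canflops} and Example \ref{exnondeg}) together with the hypothesis that $C$ is not an annex: on $(Y_1)$ the only interior $(-1)$-curves meeting $D_{12}$ are $C$ and $W$, while any further interior $(-1)$-curve meeting $(D_{12})_i$ can only have arrived as a curve flopped in across $D_{12}$, which by Definition \ref{reflection} and Lemma \ref{genmainrr} is a $\phi$-flop of a replacement of $C$. Hence every $+1$-step is of the forbidden form, so no $+1$-step occurs and $n_i\le n_0$.

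The main obstacle is the key step common to both parts, namely showing that the only way a curve near $D_{12}$ can be flopped \emph{out} across $D_{12}$ (a re-entry in (i), an increase of $n$ in (ii)) is through the combinatorics of reflection/replacement of $C$, $W$, or a replacement. This requires carefully tracing the $F$-degrees and the meetings with the double locus through the intervening type I flops, and is exactly where Corollary \ref{isalone}, Lemma \ref{meetboundary} and Lemma \ref{genmainrr} must be combined; by contrast, the bookkeeping with the self-intersections $(D_{12})_i^2$ in (ii) and the handling of the $E$-sequence indexing in (i) are routine once this identification is in place.
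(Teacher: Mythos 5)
Your part (i) is essentially the paper's own argument: take a maximal initial $C$-sequence, and show that any re-entry of a $\phi$-flop of $C$ as a flopping curve would constitute a reflection or replacement (via Corollary \ref{isalone}, Lemma \ref{meetboundary}, Lemma \ref{genmainrr}), contradicting the hypothesis. That part is fine.

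Part (ii), however, has a genuine gap. Your bookkeeping of $n_i=(D_{12})_i^2$ is correct (increases come exactly from flops out of $(Y_1)_i$ across $(D_{12})_i$, decreases from flops into $(Y_1)_i$ across $(D_{21})_i$), but the classification on which you rest the proof is false: you claim that any interior $(-1)$-curve meeting $(D_{12})_i$ other than the transforms of $C$ and $W$ ``can only have arrived as a curve flopped in across $D_{12}$, which by Definition \ref{reflection} and Lemma \ref{genmainrr} is a $\phi$-flop of a replacement of $C$.'' A curve $H$ flopped into $Y_1$ across $(D_{21})_q$ is simply the flopped curve of some flop on the $Y_2$-side; nothing makes it a replacement of $C$ or a $\phi$-flop of one. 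A replacement is defined by a specific incidence with the flopped curve $C^+$ of a flop of $C$ (one needs $E.C^+=1$, etc.), and in part (ii) the curve $C$ need never be flopped at all, so there may be no replacements of $C$ whatsoever — yet curves can still enter $Y_1$ across $D_{21}$ and later exit across $(D_{12})_p$, producing an increase that the hypothesis of (ii) does not forbid. Your argument silently assumes this cannot happen; in fact it is the whole difficulty.

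The paper excludes precisely this case by a substantive argument rather than a classification: take $p$ minimal with $\Ex(\phi_p).(D_{12})_p=1$ and $q<p$ maximal with $\Ex(\phi_q).(D_{21})_q=1$, and let $H$ be the flopped curve of $\phi_q$. One first shows $\Ex(\phi_p)=H_p$ (the alternative, that $\Ex(\phi_p)$ is the annex meeting $H$, is ruled out by $D_{12}$-goodness as in Example \ref{trailingcurveexample}). Then item (i) forces $H$ to be reflected, so at some index $r<p$ the transform $H_r$ meets both $(D_{12})_r$ and $(D_{13})_r$; this makes $\Gamma_{(Y_1)_r}$ and $\Gamma_{(Y_3)_r}$ degenerate, whence projectivity (Proposition \ref{nondegproj2}) forces $\Gamma_{(Y_2)_r}$ to be non-degenerate. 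That non-degeneracy makes $\Ex(\phi_q)$ $D_{21}$-good, and Lemma \ref{neverback} then yields an exhaustive $\Ex(\phi_q)$-sequence, contradicting the flop of $H_p$ at step $p$. This chain — reflectedness of the incoming curve, degeneracy of two curve structures, projectivity forcing the third non-degenerate, goodness and exhaustion — is the actual content of part (ii) and is entirely absent from your proposal; your closing paragraph acknowledges the step as the ``main obstacle'' but proposes to resolve it with an identification that does not hold.
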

\begin{proof}The first part is immediate: take a maximal initial  $C$-sequence $K_C^\phi=\{1,\dots, p\}$. We can assume $K_C^\phi=\{1\}.$ If the flopped curve  $C^+$ of $\phi_p$ is an annex, we are done by Corollary \ref{isalone}, as in particular an annex is not alone. Else, we conclude from Lemma \ref{meetboundary} and  the assumption that there is no reflection.

For the second part, to obtain a contradiction, suppose the statement on intersection numbers is not true.
 Let $\shF(D_{12})$ be the collection of indices in $\{1,\dots , n\}$
such that $i\in \shF(D_{12})$ implies  $\Ex(\phi_i).(D_{12})_i=1$ or  
$\Ex(\phi_i).(D_{21})_i=1$.  The assumption that the statement on intersection numbers is not true implies that there is a  $p\in\shF(D_{12})$  with $\Ex(\phi_p).(D_{12})_p=1$.  Assume that $p$ is minimal with this property. Then there 
is $q\in\shF(D_{12})
$  with $q<p$ such that $\Ex(\phi_q).(D_{21})_q=1$, 
as else 
$\Ex(\phi_p)=C_p$  (or $W_p$), contrary to our assumptions.  
Suppose $q$ is chosen maximal. Let $H\subset (Y_1)_{q+1}$ be the flopped curve of $
\phi_q$. In particular, $F_{q+1}.H>0$. Also, $H$ is not an annex. 

We 
have $\Ex(\phi_p)=H_p$, as the remaining possibility is that $H$ is 
not alone and $\Ex(\phi_p)=H^ {a}_p$ with $H^{a}$ the annex 
meeting $H$. In that case, by the argument in Example \ref{trailingcurveexample}, $H^{a}_p$ is 
$D_{12}$-good, so $(H_i)^2\geq 0$ for all $i>p$ and $D_{12}
^2\geq(D_{12})^2_i$ for all $i>p$. By definition of $p$ we get $D_{12}
^2\geq(D_{12})^2_i$ for all $i>0$. 

Hence, by item $\rm(i)$, $H$ is reflected. Thus, there is $r$ with $r<p$ such that $H_r.(D_{13})_r=1$. Also, by constrution, $H_r.(D_{12})_r=1$. Hence  $\Gamma_{(Y_3)_r}$ and $
\Gamma_{(Y_1)_r}$ are degenerate. So, by projectivity, it follows from Proposition \ref{nondegproj2} that  $\Gamma_{(Y_2)_r}$ is non-degenerate. 
This implies that $\Ex(\phi_q)$ is $D_{21}$-good, so there is an exhaustive $\Ex(\phi_q)$-sequence by Lemma \ref{neverback},
contradicting $\Ex(\phi_p)=H_p$.
\end{proof}

\begin{lemma}\label{dgoodnorr}
Assume $\shY\dashrightarrow \shY_1$ has flopping curve $C$. 

\begin{itemize}
\item[(i)] If $C$ is $D_{12}$-good, then $C$ and none of its $\phi$-flops is replaced or reflected.
\item[(ii)] If $C$ is an annex, then $\Gamma_{(Y_1)_i}$ is not regular for any $i>0$.
\item[(iii)] If $C$ is not alone and not an annex,  then  $\Gamma_{(Y_1)_i}$ is  regular for any $i>0$.
\end{itemize}
\end{lemma}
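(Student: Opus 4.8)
The plan is to lean on two standing simplifications coming from Setting \ref{setting}. Since both $\shY_c$ and $\shY'_c$ lie in $\PMod_2(\mathscr{P})$, Lemma \ref{onetypeIIflop} forces every $\phi_i$ in the factorisation to be a type I flop; hence the three components $Y_1,Y_2,Y_3$ persist throughout, and each flop merely moves an interior $(-1)$-curve between two adjacent components. On the normalisation, such a move is a blow-down (when a curve leaves $Y_1$) or a blow-up (when one arrives), so it changes the self-intersection of every vertex adjacent to the flopping curve, and of the boundary component $(D_{12})_i$ it meets, by exactly $\pm 1$. I would phrase all three parts as combinatorial bookkeeping on the curve structures $\Gamma_{(Y_1)_i}$, which by Proposition \ref{qbasis} faithfully record $\Pic((Y_1)_i^\nu)$.

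For part (i) I would first invoke Lemma \ref{neverback}: as $C$ is the flopping curve of the first flop and is $D_{12}$-good, there is an exhaustive $C$-sequence $K_C^\phi$. The decisive point is that $D_{12}$-goodness forces every flopped curve occurring in an initial $C$-sequence, and in the initial $E_r$-sequences of Definition \ref{def:completeetc}, to be \emph{alone}, whereas both a replacement and a reflection require a \emph{not}-alone flopped curve. Indeed, a replacement of $C$ (or of a $\phi$-flop of $C$) demands by Definition \ref{reflection}(i) that the relevant flopped curve be not alone, and the flopped curve $C^+$ of the first flop is alone by goodness since $0\in K_C^\phi$; a reflection demands, by Lemmas \ref{mainrr} and \ref{genmainrr}, that the flopped curve $E^+$ of $\phi_{p-1}$ (with $p$ minimal such that the transform of $C$ meets $(D_{13})_p$) be not alone, whereas $D_{12}$-goodness forces $E^+$ to be alone exactly as in the proof of Lemma \ref{neverback}. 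Since $F.C<0$ the curve $C$ cannot itself be reflected, so these two observations together rule out every replacement and reflection of $C$ and of its $\phi$-flops.

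For parts (ii) and (iii) I would analyse the single transition from $\Gamma_{Y_1}$ to $\Gamma_{(Y_1)_1}$ produced by contracting $C$, and then propagate. In the annex case (ii), $C$ meets $D_{12}$ and exactly one vertex $v$, while its companion still meets $D_{12}$; the vertex $v$ sits at the end of the leg issuing from the exceptional vertex attached to the \emph{other} boundary $D_{13}$, precisely as in Example \ref{trailingcurveexample}. Contracting $C$ raises $v^2$ from $-1$ to $0$, so this leg now ends in a $0$-vertex and $\Gamma_{(Y_1)_1}$ is non-regular. In case (iii), $C$ meets at least two vertices and there is a companion $(-1)$-curve also meeting $D_{12}$; contracting $C$ cannot produce a dangling $0$-vertex at the end of a leg, and the companion survives as an exceptional vertex whose leg does not end in $0$, so $\Gamma_{(Y_1)_1}$ stays regular (the non-degenerate shape of Example \ref{exnondeg} being the guiding picture). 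Both statements are finite checks on the type $d_2$ curve structures.

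The propagation to all $i>0$ is where the real work lies, and it is the step I expect to be the main obstacle. A later flop acting near $(D_{12})_i$ — one promoting the critical $0$-vertex (which would restore regularity in (ii)) or degrading a leg-end to $0$ (which would destroy it in (iii)) — must have exceptional locus meeting $(D_{12})_i$ and being a transform of $C$, of its companion, or of a replacement thereof. I would exclude such flops by the reflection--replacement calculus: Corollary \ref{isalone} and Lemma \ref{genmainrr} pin down which curves can return to $(D_{12})$, Lemma \ref{meetboundary} controls how a curve reaches the boundary, and Lemma \ref{noreflection}(ii) supplies the monotonicity $(D_{12})^2\ge (D_{12})_i^2$ in the non-annex case of (iii) once the offending flops are absent; a dual monotonicity (that $(D_{12})^2$ cannot be brought back down) handles (ii). Throughout, projectivity via Proposition \ref{nondegproj2} forbids the configurations (all three components simultaneously degenerate) that the alternative scenarios would force. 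Assembling these so that the critical $0$-vertex can never be un-created in (ii) and never created in (iii) is the delicate bookkeeping that the earlier lemmas were designed to make routine.
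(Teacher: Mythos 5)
Your part (i) is correct and is essentially the paper's own argument: $D_{12}$-goodness forces every flopped curve occurring along an initial $C$-sequence to be alone, which rules out replacement, and the exhaustive $C$-sequence supplied by Lemma \ref{neverback} rules out reflection.

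For parts (ii) and (iii) there is a genuine gap, and you point at it yourself. The one-step computation is right: contracting an annex leaves a leg ending in a $0$-vertex (or a singleton), hence a non-regular structure, while contracting a not-alone, not-annex curve leaves a regular one. But the propagation to all $i>0$ --- which is the entire content of these two statements --- is deferred to ``delicate bookkeeping'' that you never carry out, and the tools you propose do not obviously close it: Lemma \ref{noreflection}(ii) is conditional on precisely the absence of the ``offending flops'' you are trying to exclude, and monotonicity of $(D_{12})_i^2$ is in any case not the same thing as (non-)regularity of $\Gamma_{(Y_1)_i}$, which is a statement about legs ending in $0$-vertices. What you are missing is the single observation on which the paper's proof rests: in \emph{both} cases (ii) and (iii) the curve $C$ is itself $D_{12}$-good --- for an annex this is the argument of Example \ref{trailingcurveexample}, and for a not-alone, not-annex curve it is the same extension of that argument used in the proof of Corollary \ref{isalone}. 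Once this is stated, part (i) together with Lemma \ref{neverback} applies directly to $C$: there is an exhaustive (hence initial and directed) $C$-sequence along which every flopped curve is alone, so no transform of $C$, and no companion or replacement of such a transform (none exist, by alone-ness), is ever flopped back across $(D_{12})$, nor is $C$ or any of its $\phi$-flops replaced or reflected. Consequently the configuration at $(D_{12})$ created by the very first flop can never be undone, and the regularity status computed in your one-step analysis persists for all $i>0$. In short, the ``main obstacle'' you identify is exactly what $D_{12}$-goodness of $C$ was introduced to remove; without stating and invoking that fact, your proof of (ii) and (iii) is incomplete.
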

\begin{proof}
Item $\rm{(i)}$ is immediate: by definition of $D$-goodness, it follows that $C$ or its $\phi$-flops are not replaced. They are not reflected as there is an exhaustive $C$-sequence.
For the remaining items, note  that in both cases $C$ is $D_{12}$-good, implying the claim.
\end{proof}

We are now able to prove a statement which is crucial for us (and which is specific to the degree $2$ case).

\begin{corollary}\label{cor:seqtypeI}
Let $\shY\to S$ be a model of the DNV family of 
degree $2$, of class $\mathscr{G}$ for    $
\mathscr{G} \in\{\mathscr{P}, \mathscr{T}\}$. 
Then there is a sequence of type I flops 
\[  \shY_{\mathscr{G}} \dashrightarrow \dots \dashrightarrow \shY.\]
\end{corollary}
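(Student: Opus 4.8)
The plan is to treat the two classes separately, in each case producing the type I flops by showing that the flops connecting $\shY_\mathscr{G}$ and $\shY$ can be arranged so as never to change the dual intersection complex. Both $\shY_\mathscr{G}$ and $\shY$ are models of the DNV family of degree $2$, so by Proposition \ref{modelexist} they are related by a sequence of flops; fixing an ample divisor $F'$ on $\shY$ and letting $F$ be its birational transform, Remark \ref{rmkflops} shows that the birational map $\phi\colon\shY_\mathscr{G}\dashrightarrow\shY$ factors into a sequence of $F$-flops. By Corollary \ref{classflop} each of these is of type I or type II, and by the remark following it a flop is of type II precisely when it alters the dual intersection complex. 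Hence the entire problem is to exclude type II flops from the factorisation.

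For $\mathscr{G}=\mathscr{P}$ this is immediate. The central fibre of $\shY_{\mathscr{P}}$ is $\YP\in\PMod_2(\mathscr{P})$, and both $\shY_{\mathscr{P}}$ and $\shY$ are of class $\mathscr{P}$ by hypothesis, so Lemma \ref{onetypeIIflop} applies to the $F$-flop factorisation above and shows that every flop in it is of type I. This already gives the required chain $\shY_{\mathscr{P}}\dashrightarrow\dots\dashrightarrow\shY$.

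For $\mathscr{G}=\mathscr{T}$ I would instead construct the sequence on central fibres and then lift it. Since $\shY_c\in\PMod_2(\mathscr{T})$, the characterisation of projective models of class $\mathscr{T}$ recorded after Proposition \ref{projTcurves} furnishes a sequence of type I elementary modifications $\shY_c\dashrightarrow\dots\dashrightarrow\YT$ whose flopping curves meet the nodal components of the double locus on the special component, that is, avoid its singular locus $D_\omega$. Reversing this sequence yields $\YT\dashrightarrow\dots\dashrightarrow\shY_c$; because each flopping curve avoids $D_\omega$, the special component retains $D_1^2=D_3^2=-1$ at every stage, so Proposition \ref{projTcurves} keeps every intermediate surface in $\PMod_2(\mathscr{T})$, and each is therefore the central fibre of a unique maximal model by Proposition \ref{lem:unoquedeg}. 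Corollary \ref{typeIflopexist} then lifts each elementary modification (its flopping curve being disjoint from $D_\omega$) to a genuine type I flop of the associated models, while Proposition \ref{modelflop} keeps us among maximal models of class $\mathscr{T}$. Composing these flops and identifying the final model with $\shY$ via the uniqueness in Proposition \ref{lem:unoquedeg} produces $\shY_{\mathscr{T}}\dashrightarrow\dots\dashrightarrow\shY$.

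The main obstacle lies entirely in the class $\mathscr{T}$ case. There is no ``at most one type II flop'' statement available when the source is of class $\mathscr{T}$, and a naive reduction --- passing to class $\mathscr{P}$ by one type II flop and returning by another --- would introduce two type II flops with no evident way to cancel them. The delicate point to verify is thus that the reversed central-fibre sequence genuinely proceeds through projective surfaces and that every step satisfies the hypothesis of Corollary \ref{typeIflopexist}, namely that its flopping curve is disjoint from the singular locus $D_\omega$ of the special component; this is exactly what the numerical criterion $D_1^2=D_3^2=-1$ of Proposition \ref{projTcurves} and the characterisation following it are there to guarantee at each stage.
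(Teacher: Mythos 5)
Your argument for class $\mathscr{P}$ is correct and is exactly the paper's: factor $\YYP\dashrightarrow\shY$ into $F$-flops and apply the "in particular" clause of Lemma \ref{onetypeIIflop}. The class $\mathscr{T}$ argument, however, is circular. Its first step invokes $\shY_c\in\PMod_2(\mathscr{T})$ together with the characterisation recorded after Proposition \ref{projTcurves}. But that characterisation is a statement about surfaces $Y_c\in\Mod_2$, i.e.\ surfaces \emph{already known} to be connected to $\YT$ by a sequence of type I elementary modifications — membership in $\Mod_2$ is part of its hypothesis, not its conclusion. For an arbitrary model $\shY$ of class $\mathscr{T}$ (which at this point of the paper means only that the dual intersection complex of $\shY_c$ is $\mathscr{T}$), the assertion $\shY_c\in\PMod_2(\mathscr{T})$ is precisely the fibre-level content of what Corollary \ref{cor:seqtypeI} is supposed to establish: the paper deduces Corollary \ref{cor:typeItypeII} ("every model has $\shY_c\in\PMod_2$") \emph{from} Corollary \ref{cor:seqtypeI}, and Section \ref{sec:curvestructures} explicitly cautions that in general a central fibre of a model "cannot be obtained by type I modifications alone", with the identification of models and $\PMod_2$ deferred to later. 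So you have no sequence of type I modifications $\shY_c\dashrightarrow\dots\dashrightarrow\YT$ to reverse and lift; assuming one exists assumes the conclusion.

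The difficulty you flag — that passing through class $\mathscr{P}$ seems to cost two uncancelable type II flops — is real, and the paper resolves it without ever passing back and forth. Factor an arbitrary birational map $\YYP\dashrightarrow\shY$ into $F$-flops; since the source has central fibre $\YP\in\PMod_2(\mathscr{P})$, Lemma \ref{onetypeIIflop} applies and (as $\shY$ has dual complex $\mathscr{T}$) there is \emph{exactly one} type II flop $\phi_p$ in the factorisation, contracting the transform of some double curve $D_{ij}\subset\YP$. Every interior $(-1)$-curve of $\YP$ meeting $D_{ij}$ is $D_{ij}$-good by definition, so Lemma \ref{neverback} shows that the flops $\phi_i$ with $i<p$ never touch $D_{ij}$ or $D_{ji}$; hence $F.D_{ij}=F.D_{ji}<0$ already on $\YYP$, and the factorisation can be rearranged so that the type II flop comes first, i.e.\ $p=0$. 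One is then reduced to a single concrete model $\shY_1$, obtained from $\YYP$ by one type II flop, and this is connected to $\YYT$ by an explicit short sequence of type I flops: on each of the two degree-$3$ weak del Pezzo components one flops the chain formed by a $(-1)$-curve and a $(-2)$-curve meeting the interior special point. Concatenating gives the required sequence $\YYT\dashrightarrow\dots\dashrightarrow\shY$ of type I flops. This rearrangement-plus-explicit-bridge is the idea missing from your proposal.
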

\begin{proof}
We start with a birational map $\phi\colon \YYP\dashrightarrow \shY$ be a birational map.   Let 
\[\YYP\dashrightarrow \shY_1\dashrightarrow \shY_2\dashrightarrow\dots \shY_i\overset{\phi_i}{\dashrightarrow}\shY_{i+1}\dashrightarrow\dots\shY_n\xrightarrow{\sim} \shY \] 
be a factorisation of $\phi$ into flops. By Proposition \ref{irredcomp}, each flop $\phi_i$ is  either a type II flop or has exceptional locus given by disjoint interior $(-1)$-curves. Hence, 
if the set $\{ i\in\NN \mid \phi_i \text{ is of type II } \}$ is empty, we are done. 
Otherwise it follows from Lemma \ref{onetypeIIflop} that this set consists of a single element $\{p\}$.
Let $D_{ij}=Y_i\cap Y_j$ 
be a  double curve with $(D_{ij})_p$ contained in  the exceptional locus of $\phi_p$. Let $F$ be the bundle defining the factorisation. If $C$  is an interior  $(-1)$-curve on $Y_i$ meeting $D_{ij}$, then $C$ is $D_{ij}$-good by definition.
It thus follows from Lemma \ref{neverback} that $D_{ij}$ is disjoint from the flopped curve of $\phi_i$ for $i<p$. The same reasoning applies to $D_{ji}$.  Hence $F.D_{ij}=F.D_{ji}<0$.
So we can assume $p=0$.  
 Then, all that remains to show is that the model $\shY_1$  obtained from $\YYP$ by a flop defined by  a flopping contraction with exceptional locus a curve in the singular locus of $\YP$  can be obtained from $\YYT$ via  type I flops. This is immediate: $\shY_1$ has two components that are weak  del Pezzo surfaces of degree $3$. For each such component, there is a tree of curves of length $2$, i.e. a $(-1)$-curve and a $(-2)$-curve, meeting the interior special point. Flopping these trees
 gives the desired result.
\end{proof}
We immediately obtain that the set of all models of the Dolgachev-Nikulin-Voisin family of degree $2$ is indeed bijective to $ \PMod_2$:

\begin{corollary}\label{cor:typeItypeII}
Let $\shY\to S$ be a model of the $DNV$ family of degree $2$. Then $\shY_c\in  \PMod_2$.
\end{corollary}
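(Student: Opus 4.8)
The plan is to deduce Corollary \ref{cor:typeItypeII} directly from Corollary \ref{cor:seqtypeI} together with the structural results already established. First I would recall what must be shown: given an arbitrary model $\shY\to S$ of the DNV family of degree $2$, its central fibre $\shY_c$ lies in $\PMod_2$, i.e. $\shY_c$ is a projective surface in $\Mod_2$. Projectivity is automatic, since by Definition \ref{def:DNVmodel} a model is in particular a type III \emph{projective} degeneration, so the only content is membership in $\Mod_2$, namely that $\shY_c$ can be reached from some surface in $\DNV_2$ by a sequence of type I elementary modifications on the central fibre.

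The key observation is that $\shY_c$ has dual intersection graph equal to a triangulation of $S^2$ with two triangles, and by Example \ref{DNV2} there are exactly two such triangulations, $\mathscr{P}$ and $\mathscr{T}$; hence $\shY$ is of class $\mathscr{G}$ for some $\mathscr{G}\in\{\mathscr{P},\mathscr{T}\}$. I would invoke Corollary \ref{cor:seqtypeI}, which furnishes a sequence of type I flops $\shY_{\mathscr{G}}\dashrightarrow\cdots\dashrightarrow\shY$. Each such flop of the total spaces induces, on the central fibres, an elementary modification of type I, by the compatibility between flops and elementary modifications recorded in the definition of type I flops (they are ``global versions'' of type I elementary modifications) and in Proposition \ref{typeflop} and its corollaries. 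Running this induced sequence on the central fibre gives precisely a chain
\[
(\shY_{\mathscr{G}})_c = Y_0 \dashrightarrow Y_1 \dashrightarrow \cdots \dashrightarrow Y_n = \shY_c
\]
of type I elementary modifications starting from $(\shY_{\mathscr{G}})_c$.

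It then remains to check that the starting surface $(\shY_{\mathscr{G}})_c$ itself lies in $\DNV_2$, so that the chain witnesses $\shY_c\in\Mod_2$. This is immediate: $\shY_{\mathscr{G}}=\shY_{\P}$ or $\shY_{\T}$ is the maximal smoothing of the surface $Y_{\P}$ or $Y_{\T}$ constructed in Example \ref{DNV2}, and these are precisely the two $d$-semistable $K3$ surfaces of type III in $(-1)$-form with $t=2$ triple points described there, i.e. the elements of $\DNV_2$ for $d=1$. Since $\shY_c$ is projective, it lies in $\PMod_2$, and the disjoint decomposition $\PMod_2=\PMod_2(\mathscr{P})\sqcup\PMod_2(\mathscr{T})$ records which class it belongs to.

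I expect the only genuine subtlety — and hence the main point requiring care rather than a real obstacle — to be the passage from flops of the \emph{threefolds} $\shY_i$ to elementary modifications of the \emph{central fibres} $(\shY_i)_c$: one must confirm that a type I flop restricts on central fibres to exactly a type I elementary modification in the sense of Section \ref{review}. This is exactly the content already packaged into the notion of type I/II flop and Proposition \ref{typeflop}, so the argument is a short assembly of prior results rather than a new computation; the bulk of the work has been discharged in Corollary \ref{cor:seqtypeI}.
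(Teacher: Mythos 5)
Your proof is correct and takes essentially the same route as the paper: there the corollary is stated as an immediate consequence of Corollary \ref{cor:seqtypeI}, with exactly the ingredients you assemble — projectivity of $\shY_c$ coming from Definition \ref{def:typeIIIdeg}, the dual graph being $\mathscr{P}$ or $\mathscr{T}$ by Example \ref{DNV2}, and the type I flops of the total spaces restricting to type I elementary modifications of the central fibres (the ``global versions'' remark), so that the chain starting at $(\shY_{\mathscr{G}})_c\in\DNV_2$ witnesses $\shY_c\in\Mod_2$. Your write-up merely makes explicit what the paper leaves as ``we immediately obtain,'' so there is nothing to add.
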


In particular,  all models of the DNV family can be linked using only type I or type II flops. In particular all flops between
 \emph{any} two models of the DNV family have irreducible exceptional loci.

\begin{corollary}\label{factor} 
Let $\shY\to S$ and $\shY'\to S$ be models of the DNV family of degree $2$. Any birational $S$-map $\shY\dashrightarrow \shY'$  factors into a sequence of type $I$ and type $II$ flops. 
\end{corollary}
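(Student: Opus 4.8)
The plan is to reduce the statement to two facts already in place: that a birational map between two models factors into flops given by extremal-ray contractions, and that in degree $2$ each such flop is necessarily of type I or type II. Since both of these are established in the preceding sections, the corollary should follow by a direct assembly.

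First I would note that because both $\shY\to S$ and $\shY'\to S$ are minimal models with trivial relative canonical class (they are Calabi-Yau, cf. the remark following Definition \ref{def:typeIIIdeg}), any birational $S$-map $\phi\colon\shY\dashrightarrow\shY'$ that is not an isomorphism is an isomorphism in codimension $1$, hence a small modification, as recorded in Remark \ref{rmkflops}. Invoking the fact that one can run the MMP for $\shY\to S$ (the discussion sketched after Theorem \ref{ThmGHKS}, see Remark \ref{rem:factorizationflops}), the map $\phi$ factors into a finite sequence of flops
\[
\shY = \shY_0 \dashrightarrow \shY_1 \dashrightarrow \cdots \dashrightarrow \shY_n = \shY',
\]
where each step is the flop defined by the contraction $\pi_i\colon\shY_i\to\shZ_i$ of an extremal ray. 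By Remark \ref{rem:factorizationflops} each intermediate $\shY_i\to S$ is again a model of the DNV family.

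Next I would bring in the structural results of Section \ref{sec:flopauto}. By Corollary \ref{cor:typeItypeII} every central fibre $(\shY_i)_c$ lies in $\PMod_2$, so Proposition \ref{refinedversion} applies and shows that each exceptional locus $\Ex(\pi_i)$ is an irreducible $(-1)$-curve. Consequently Corollary \ref{classflop} is applicable to each $\pi_i$ and tells us that each flop $\shY_i\dashrightarrow\shY_{i+1}$ is either a type I flop (when the flopping curve is disjoint from the double locus) or a type II flop (when it lies in the double locus). Concatenating, $\phi$ is expressed as a sequence of type I and type II flops, which is the assertion.

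The only non-formal point worth emphasising — and really the only thing that could obstruct the argument — is that the factorization must stay inside the class of DNV models at every step, so that Corollary \ref{classflop} is legitimately applicable to each intermediate flopping contraction; this is exactly what Corollary \ref{cor:typeItypeII} guarantees. Everything else is the formal combination of the flop-factorization furnished by the MMP with the type-I/II dichotomy of Corollary \ref{classflop}, so I do not expect any substantial additional obstacle.
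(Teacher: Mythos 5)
Your proof is correct and follows essentially the same route as the paper, which leaves this corollary without a separate proof precisely because it is the assembly you describe: factor the map into flops of extremal contractions via Remark \ref{rem:factorizationflops}, note that every intermediate model has central fibre in $\PMod_2$ by Corollary \ref{cor:typeItypeII}, and then apply Proposition \ref{refinedversion} and Corollary \ref{classflop} to each flopping contraction. You also correctly isolate the one non-formal point, namely that Corollary \ref{cor:typeItypeII} is what legitimises applying the type I/II dichotomy at every intermediate step.
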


\begin{lemma}\label{floppingback}
Let $K^\phi_C$ be a  maximal initial $C$-sequence that is not exhaustive.  Let $p\notin K_C^\phi$ be the minimal index such that $C'=\Ex(\phi_p)$ is a $\phi$-flop of $C$. Let $\phi'$ be the composition
\[\shY_p\dashrightarrow \shY_1\dashrightarrow\dots \shY_i\overset{\phi_i}{\dashrightarrow}\shY_{i+1}\dashrightarrow\dots\shY_n %
\] and let $K_{C'}^{\phi'}$ be a maximal initial $C'$-sequence.  If $\Ex(\phi_i).(D_{21})_i=1$ for some $i\in K_{C'}^{\phi'}$, then $|K^\phi_C|=1$.
\end{lemma}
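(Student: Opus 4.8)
The plan is to argue by contradiction: assume $|K_C^\phi|\geq 2$ together with the hypothesis that $\Ex(\phi_i).(D_{21})_i=1$ for some $i\in K_{C'}^{\phi'}$, and derive a contradiction. First I would pin down the least element of $K_C^\phi$. Since a $\phi$-flop of $C$ can only occur after $C$ itself has been flopped, and since initiality (Definition \ref{def:completeetc}) forbids any flop of $C$ or of a $\phi$-flop of $C$ before $m_1(K_C^\phi)$, the smallest index of $K_C^\phi$ must flop $C$ itself. As $C$ lies on $Y_1$ and meets $Y_1\cap Y_2$, this first flop carries $C$ across $D_{12}$ to $Y_2$, so its flopped curve $C^+$ lies on $Y_2$ and satisfies $(D_{21}).C^+=1$; in the language of Definition \ref{def:completeetc}(iii) this fixes the direction of $K_C^\phi$ to be the one with $i>j$. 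I would then record the elementary but crucial observation that the whole $C$-lineage is confined to the two components $Y_1$ and $Y_2$: an interior $(-1)$-curve meets the anticanonical cycle in a single point, and this incidence is preserved by the modifications, so every $\phi$-flop of $C$ sits either on $Y_1$ meeting $D_{12}$ or on $Y_2$ meeting $D_{21}$, and each lineage flop exchanges these two possibilities.

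With the direction fixed to $i>j$, every flop in $K_C^\phi$ must take a transform of the lineage from $Y_1$ to $Y_2$. If $|K_C^\phi|\geq 2$, then between the two smallest elements of $K_C^\phi$ the lineage, which after the first flop sits on $Y_2$, must return to $Y_1$; that return is a flop of a $\phi$-flop of $C$ across $D_{21}$, hence of direction $i<j$, and lies in the interval $[m_1(K_C^\phi),m_2(K_C^\phi)]$. By completeness of $K_C^\phi$ this return flop belongs to $K_C^\phi$, contradicting directedness. This is the backbone of the argument; the remaining work is to see that the hypothesis forces exactly this returning configuration rather than an escape via the third component. Concretely, the witness $p\notin K_C^\phi$ shows $K_C^\phi$ is not exhaustive, so a transform of $C$ is flopped again at $p$, while the assumption that some flop of $K_{C'}^{\phi'}$ meets $(D_{21})$ says that in the tail $\phi'$ the lineage is driven back across the $Y_1$--$Y_2$ interface. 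I would combine this with Lemma \ref{meetboundary} (any later flop meeting $D_{12}$ is preceded by one meeting $D_{13}$) and with the $D_{12}$-goodness machinery of Lemma \ref{neverback} to produce an exhaustive sub-sequence of lineage flops that would have to absorb the flop at $p$, which is incompatible with $p\notin K_C^\phi$ unless the initial sequence had already terminated, i.e. $|K_C^\phi|=1$. At the point where a hypothetical second element of $K_C^\phi$ would drive both $Y_1$ and $Y_2$ to degenerate curve structures, I would invoke projectivity through Proposition \ref{nondegproj2} to force the third component to be non-degenerate, which is precisely what feeds Lemma \ref{neverback}.

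The main obstacle will be the bookkeeping: keeping straight, across the many intermediate flops that are not themselves lineage flops, which component each transform of $C$ currently lies on and which double curve it meets, so that the direction label $i<j$ versus $i>j$ is assigned correctly at every index. In particular one must control the interaction between the completeness condition (which forces all intermediate lineage flops into $K_C^\phi$) and the reflection/replacement phenomena of Definition \ref{reflection}, using Corollary \ref{isalone} and Lemma \ref{genmainrr} to guarantee that the curves produced along the way are \emph{alone} and therefore cannot spawn hidden lineage flops outside the controlled interval. Once this bookkeeping is in place the contradiction with directedness becomes immediate and yields $|K_C^\phi|=1$.
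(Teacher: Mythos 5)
Your proof has a genuine gap at its foundation: the ``elementary but crucial observation'' that the $C$-lineage is confined to $Y_1$ and $Y_2$ because the incidence of an interior $(-1)$-curve with the boundary ``is preserved by the modifications'' is false, and it is false in exactly the way that drives this part of the paper. A type I flop whose flopping curve $E$ meets both $C_k$ and, say, $(D_{13})_k$ blows down $E$ on $(Y_1)_k$, after which the transform $C_{k+1}$ passes through the image point of $E$ and hence meets $(D_{13})_{k+1}$; so intermediate flops that are not lineage flops can change which boundary component a transform of $C$ meets. This is precisely the reflection phenomenon of Definition \ref{reflection}, and it allows the lineage to escape to $Y_3$: after $C$ is flopped to $Y_2$ (flopped curve meets $D_{21}$, direction $2>1$), its transform can come to meet $(D_{23})$ and then be flopped to $Y_3$ (flopped curve meets $D_{32}$, direction $3>2$). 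Both flops have the same direction $i>j$, so a complete, directed, initial sequence with two elements is perfectly possible and your directedness contradiction never materializes. Indeed, if your backbone argument were correct it would prove $|K_C^\phi|=1$ for \emph{every} maximal initial sequence, unconditionally, making the lemma's hypothesis on $K_{C'}^{\phi'}$ superfluous --- which should have been a warning sign. You do concede later that ``an escape via the third component'' must be ruled out, but that concession contradicts the confinement claim your backbone relies on, and the tools you invoke to handle it do not apply: Lemma \ref{neverback} requires $C$ to be $D_{12}$-good, and in the setting of this lemma $C$ cannot be $D_{12}$-good, since $D$-goodness together with Lemma \ref{neverback} would make the maximal initial $C$-sequence exhaustive, contrary to hypothesis.

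The paper's actual proof runs through the reflection machinery you tried to sidestep. Non-exhaustiveness of the maximal initial sequence means $C$ is reflected; letting $q$ be the index and $D_R$ the $R$-locus of the reflection, one splits according to whether the flopping curve of $\phi_q$ is an annex, using Lemma \ref{genmainrr}. If it is not an annex, the reflection forces a unique earlier flop across $D_{23}$ and none across $D_{32}$, which constrains the curve structures of $Y_2$ and $Y_3$ (either $\Gamma_{Y_2}$ regular with two vertices, or $\Gamma_{Y_2}$ a singleton and $\Gamma_{Y_3}$ not regular) so tightly that only one lineage flop can have occurred, giving $|K_C^\phi|=1$ directly --- the hypothesis is not even needed in this case. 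If it is an annex, one shows instead that $K_{C'}^{\phi'}$ is a singleton and that transforms of the relevant flopped curve stay alone once they meet the boundary; only then does the hypothesis $\Ex(\phi_i).(D_{21})_i=1$ for some $i\in K_{C'}^{\phi'}$ enter, transferring the singleton property back to $K_C^\phi$. Any correct repair of your argument would have to engage with this case analysis rather than with directedness across a single interface.
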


\begin{proof}By assumption, $C$ is reflected. Let $q$ be the index, $D_R$ the $R$-locus of the reflection.  Suppose the flopping curve of $\phi_q$ is not an annex. Then by the description in Lemma \ref{genmainrr} there is a unique $r$, $r<q$, with   $\Ex(\phi_r).D_{23}=1$ and no $i$, $i<q$  with with   $Ex(\phi_i).D_{32}=1$.  Hence the curve structure $\Gamma_{Y_2}$ is either regular with $|\Gamma_{Y_2}|=2$ or not regular with $|\Gamma_{Y_2}|=1$ and $\Gamma_{Y_3}$ is not regular. Thus $|K^\phi_C|=1$.

By Lemma \ref{genmainrr}, the remaining possibility is that  the flopping curve of $\phi_q$ is an annex. Suppose $D_R\subset Y_R$.
Then $\Gamma_{(Y_R)_{q+1}}$ is not regular. Let $E\subset Y'$ be the reflection of $C$.  
Let $D'$ 
be the component of the double locus on $Y'$ met by $E$. Then $E_i.D'_i=1$ for all $i>q$. Let $c$ be the maximal element of $K^\phi_C$ and let   $C'$ be the flopped curve of $\phi_c$.  By definition  $C'_{q+1}.E=1$.  Also, if $C'_t$  meets the boundary for some $t\geq c$, it is alone. It follows that   $K_{C'}^{\phi'}$ is a singleton set, and   as $\Ex(\phi_i).(D_{21})_i=1$ for some $i\in K_{C'}^{\phi'}$, the same holds for $K^\phi_C$.
\end{proof}

After these preliminaries, we can proof the main result of this section.

\begin{proposition}\label{negativity}
Let $\shY\to S$ and  $\shY'\to S$ be  models of the DNV family of degree $2$, with $\shY_c,\shY_c'\in  \PMod_2(\mathscr{P})$. Let $\psi\colon \shY\dashrightarrow \shY'$ and $\phi\colon\shY\dashrightarrow \shY'$ be two birational maps.
Write $\shY_c=\cup Y_i$ and $\shY'_c=\cup Y'_i$. 
Suppose 
\begin{equation}\label{cond} \psi(Y_i)\subset Y'_j \Leftrightarrow \phi(Y_i)\subset Y'_j.\end{equation}
Let $A'$ be an ample divisor on $\shY'$.  Let $A_\phi$ and $A_\psi$ be the birational transforms of $A'$  under $\phi$ and $\psi$. 
Let $C$ be a  $(-1)$-curve on $\shY$ that generates an extremal ray. Then 
\[ A_\phi.C<0 \Leftrightarrow A_\psi.C<0.\]
\end{proposition}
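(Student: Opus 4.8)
The plan is to reduce to the case of an interior $(-1)$-curve and then to compare the two factorisations of $\phi$ and $\psi$ into type I flops, using the replacement/reflection machinery of Lemmas \ref{meetboundary}--\ref{floppingback} to show that whether $C$ must be flopped is dictated by the common component permutation. First I would dispose of the case where $C$ lies in the double locus. By Proposition \ref{refinedversion} the exceptional locus of $\contr_R$ for $R=\RR_{\geq 0}[C]$ is an irreducible $(-1)$-curve, so $C$ is either interior or contained in $\Sing\shY_c$; in the latter case flopping $R$ is a type II flop, which by Remark \ref{dic:change} changes the dual complex from $\mathscr{P}$ to $\mathscr{T}$. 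Since $\shY_c,\shY'_c$ are both of class $\mathscr{P}$, Lemma \ref{onetypeIIflop} shows any $A_\phi$- or $A_\psi$-guided factorisation uses only type I flops; were $A_\phi.C<0$ (resp. $A_\psi.C<0$) we could begin the MMP by flopping the type II ray $R$, a contradiction. Hence in this case $A_\phi.C\ge 0$ and $A_\psi.C\ge 0$, so both statements $A_\phi.C<0$ and $A_\psi.C<0$ are false and the equivalence holds. I may therefore assume $C$ is an interior $(-1)$-curve, and after relabelling that $C\subset Y_1$ meets $D_{12}$. Since swapping $\phi$ and $\psi$ preserves hypothesis (\ref{cond}), it suffices to prove $A_\phi.C<0\Rightarrow A_\psi.C<0$.

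Assume $A_\phi.C<0$ and, for contradiction, $A_\psi.C\ge 0$. Guiding the factorisation of $\phi$ by $F=A_\phi$, I may take the first flop $\shY\dashrightarrow\shY_1$ to be the flop of $C$, which places us in Setting \ref{setting} with flopped curve $C^+\subset (Y_2)_1$. On the other side, guide the factorisation of $\psi$ by $A_\psi$; because $A_\psi.C\ge 0$, the transform of $C$ is never an $A_\psi$-negative ray and in particular is not the initial flopping curve. The key step, which I would extract from Corollary \ref{isalone} and Lemmas \ref{neverback}, \ref{noreflection} together with the replacement/reflection analysis of Lemmas \ref{mainrr} and \ref{genmainrr}, is that an $A_\psi$-nonnegative interior $(-1)$-curve meeting the boundary cannot be dragged into a flop: one produces an exhaustive sequence controlling it, so $C$ \emph{survives} under $\psi$ as an honest $(-1)$-curve $\bar C=\psi_*C$ on the component $Y'_{\sigma(1)}$, meeting $D'_{\sigma(1)\sigma(2)}$, where $\sigma$ is the common permutation of components supplied by (\ref{cond}).

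To derive the contradiction I would play the two pictures off against the component-fixing automorphism $\theta=\psi^{-1}\circ\phi\in\Bir(\shY/S)$, which fixes each $Y_i$ by (\ref{cond}) and satisfies $A_\phi=\theta^*A_\psi$. Since $\psi$ carries $C$ isomorphically to $\bar C$ while $\phi$ flops $C$, the curve $C$ is flopped by $\theta$; reading its $A_\phi$-guided type I factorisation, $\theta$ begins by flopping $C$ yet must return to $\shY$ with the same component structure, hence with $(D_{12})_m^2=D_{12}^2$ at the end. The plan is to run the boundary-self-intersection bookkeeping: the initial flop of $C$ raises $D_{12}^2$, and I would invoke Lemma \ref{noreflection}(ii) to conclude that no later transform of $C$, of its companion, or of a replacement meets $(D_{12})$ again, forcing $D_{12}^2>(D_{12})_i^2$ for all subsequent $i$ and contradicting the return to $\shY$. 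Equivalently, pulling $\bar C$ back by $\phi^{-1}$ would have to return the unique interior $(-1)$-curve on $Y_1$ meeting $D_{12}$ in the \emph{alone} case, namely $C$ itself, contradicting that $\phi$ flops $C$.

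The genuinely hard part is the survival step of the second paragraph: controlling the transform of $C$ (and, when $C$ is not alone, of its companion and of possible reflections or replacements, in the sense of Definitions \ref{def:aloneannex} and \ref{reflection}) through an arbitrary type I factorisation. This requires splitting into the cases $C$ alone, $C$ an annex, and $C$ neither, and in each case combining Lemmas \ref{meetboundary}, \ref{mainrr}, \ref{genmainrr}, \ref{noreflection}, \ref{isalone} and \ref{floppingback} to exclude the curve re-approaching $D_{12}$ with the opposite orientation; the directedness bookkeeping of Definition \ref{def:completeetc} is exactly what makes the ``survives versus flopped'' dichotomy well defined and symmetric in $\phi$ and $\psi$. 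Assembling these into the clean numerical contradiction above is where the care lies, and it is specific to degree $2$, where the central fibre has only three components and the boundary self-intersections are tightly constrained by Definition \ref{def:-1form} and the projectivity criterion of Proposition \ref{nondegproj2}.
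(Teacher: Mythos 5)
Your opening reduction (the case $C\subset\operatorname{Sing}\shY_c$, via Proposition \ref{refinedversion} and Lemma \ref{onetypeIIflop}) is fine and matches what the paper does implicitly. The gap lies in your two load-bearing steps. First, the ``survival'' claim: you assert that $A_\psi.C\geq 0$ forces $C$ never to be flopped along a $\psi$-factorisation, because ``one produces an exhaustive sequence controlling it.'' But exhaustive sequences (Definition \ref{def:completeetc}, Lemma \ref{neverback}) are attached to curves that \emph{are} flopped and are $D$-good; they say nothing about a curve with $F.C\geq 0$ staying unflopped. On the contrary, the reflection mechanism of Definition \ref{reflection} is precisely the phenomenon of an $F$-nonnegative curve being flopped later in the sequence, and it cannot be excluded from $A_\psi.C\geq0$ alone: ruling it out is exactly where condition (\ref{cond}) must enter, via the regularity-of-curve-structure mismatch (Lemmas \ref{dgoodnorr}, \ref{genmainrr}, \ref{floppingback} and the configuration of Figure \ref{curvessetup}). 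This is the actual content of the paper's proof, and its architecture runs in the opposite direction to yours: it first shows that some $\psi$-flop of $C$ (or of its annex, companion or replacement) meeting $(D_{12})$ \emph{must} occur, by deriving regularity contradictions against (\ref{cond}) in the annex, not-alone and alone cases separately, and only then shows, via Corollary \ref{isalone}, Lemma \ref{meetboundary} and the $R$-locus analysis, that this flop can only be the very first one, i.e.\ $A_\psi.C<0$.

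Second, your endgame via $\theta=\psi^{-1}\circ\phi$ does not go through as stated. There is no ``$A_\phi$-guided factorisation of $\theta$'' in the sense of Remark \ref{rmkflops}: indeed $A_\phi=\theta^*A_\psi$, but $A_\psi$ is in general not ample on the target copy of $\shY$ (it lies in the interior of $C(\psi)$, not of the nef cone of $\shY$), so the factorisation machinery and the hypotheses of Setting \ref{setting} are unavailable for $\theta$; the natural factorisation of $\theta$ is a concatenation of a flop sequence with an inverse flop sequence, and Lemma \ref{noreflection} is not proved for such concatenations. Moreover, Lemma \ref{noreflection}(ii) assumes $C$ is not alone and not an annex, whereas the alone case is exactly the main case of the proposition (in the paper's proof it is the case requiring the reflection and $R$-locus analysis), so your self-intersection bookkeeping misses it entirely. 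The same objection applies to your closing sentence: there is no reason that $\phi^{-1}_*\bar C$ should be an interior $(-1)$-curve on $Y_1$ meeting $D_{12}$ --- controlling that transform would require running the whole machinery on $\phi^{-1}$, which is the problem you started with.
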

\begin{proof} Assume $A_\phi.C < 0$. Let  \[\shY\dashrightarrow \shY^\phi_1\dashrightarrow\dots \shY^\phi_i\overset{\phi_i}{\dashrightarrow}\shY_{i+1}^\phi\dashrightarrow\dots\shY^\phi_n\xrightarrow{\sim} \shY'
\] and  \[\shY\dashrightarrow \shY^\psi_1\dashrightarrow\dots \shY^\psi_i\overset{\psi_i}{\dashrightarrow}\shY^\psi_{i+1}\dashrightarrow\dots\shY^\psi_m\xrightarrow{\sim} \shY'
\]  be factorisations of $\phi$ and $\psi$, such that $C$ is the flopping curve of $\shY\dashrightarrow \shY^\phi_1$. 
  We show $A_\psi.C < 0$. 
By Proposition \ref{refinedversion}, $C$ is a $(-1)$-curve on a component, say $Y_1$, meeting $D_{12}$. Let $C^+$ be the flopped curve.  By Lemma \ref{onetypeIIflop}, 
any flopping contraction in the factorisation of $\phi$  contracts an interior $(-1)$-curve, so $C$ is not contained in $(\Sing \shY_c)\cap Y_1$. 

Suppose $C_k$ is not flopped by any $\shY_k^\psi\dashrightarrow \shY^\psi_{k+1}$ for any index $k$, with $(D_{12})_k.C_k=1$ and that the same is true for any $\psi$-flop of $C$ or any replacement. Suppose first that $C$ is an annex. Then $C$ is $D_{12}$-good and $\Gamma_{(Y_1)^\phi_1}$ is not regular. 
By Lemma \ref{dgoodnorr}, $\Gamma_{(Y_1)_n^\phi}$ is not regular. By hypothesis, $\Gamma_{(Y)^\psi_m}$ is regular.  But from Condition (\ref{cond}), we obtain $(Y_1)^\psi_m\xrightarrow{\sim}(Y_1)^\phi_n$, a contradiction.
Hence, $C$ is not an annex. Thus we can apply Lemma \ref{noreflection} and the equality $(\phi_*D_{12})^2=(\psi_*D_{12})^2$, which follows from Condition (\ref{cond}), to conclude that  either 
\begin{itemize}
\item[(i)]$C$ is not alone and a birational transform $C^{a}_k$  of the annex $C^{a}$ is the flopping curve of some $\shY_k^\psi\dashrightarrow \shY^\psi_{k+1}$, or
\item[(ii) ] $C$ is not alone and there is a flop $C_f$ of  $C$  that is  replaced or reflected under $\phi$ or
\item[(iii)] $C$ is alone and there is a flop $C_f$ of  $C$  that is  replaced or reflected under $\phi$.
\end{itemize}

 In cases $\rm{(i)}$ and $\rm{(ii)}$, $C$  is 
  $D_{12}$-good.  
So by Lemma \ref{neverback}, the case $\rm{(ii)}
$ is impossible. In case $\rm{(i)}$, note that 
$C^{a}$ is $D_{21}$-good, and by the arguments in 
the proof of Example \ref{trailingcurveexample}, any birational transform of $C^{a}$ meeting $(D_{2j})_k$ will be $D_{2j}$-good for $j=1,3$. By the same argument as above, we obtain a  contradiction to Conditon(\ref{cond}) using 
 Lemma \ref{dgoodnorr}. 
Hence we are in case $\rm{(iii)}$, namely  $C$ is alone and there  is a flop $C_f$ of  $C$  that is  replaced or reflected under $\phi$.

Let $D_R$
be the  $R$-locus and let $ Y_R$ be the component containing it. Let $q$ be the index of the replacement or reflection.  Let $C^R$ be the replacement (or reflection) of $C_f$.  The curve $C^R$ is $(D_R)_q$-good. This implies that  exactly one of $\Gamma_{(Y_R)_n^\phi}$ and $\Gamma_{(Y_R)_n^\psi}$ is regular, again a contradiction to (\ref{cond}). Indeed, by Lemma \ref{floppingback}, the $R$-locus is given by $D_{32}$. The implied curve structures are depicted in  Figure \ref{curvessetup}.  It then follows that in order  to change the regularity of $Y_R$ under a flop $\psi_i$, there has to be some $k$ such that $\psi_{k}$ has flopped curve $C_k$ or a $\psi$-flop of $C_k$, which we assumed is not the case.  So this case also leads to a contradiction. 
\begin{figure}[]\centering
\begin{tikzpicture}[scale=1.3]
\node[fill=black,draw,shape=circle] (11) at (0.5,0){};
\node[fill=black,draw,shape=circle] (1) at (0,0){};
\node[draw,shape=circle] (2) at (1.5,0){};
\node[draw,shape=circle] (3) at (-1,0){};
  \node[draw,shape=circle] (4) at (-1,1){};
  \node[draw,shape=circle,fill=black](5) at (-2,0){};
  \node[draw,shape=circle,fill=black](51) at (-2.5,0){};
   \node[draw,shape=circle] (6) at (-3.5,0){};

\draw (11)--(2)
(3)--(4)
(1)--(3)
(1)--(4)
(3)--(5)
(51)--(6);
  \draw[double, double distance between line centers=0.2em](5)--(51);
  \draw[double, double distance between line centers=0.2em](1)--(11);

  \node(A) at (1.5,-0.3){{\small $C$}};
  \node(B) at (-2.5,-0.3){{\small${D_R}$}};
\end{tikzpicture}
\caption{A subgraph of the augmented curve structures of $Y_1$, $Y_2$ and $Y_3$. The black vertices correspond to curves in the double locus of $\shY_c$. Here, the double lines between the black vertices indicates that the underlying curves are identified under the normalisation map. }

\label{curvessetup}
\end{figure}
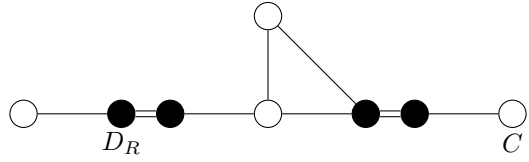

We conclude that there is an integer $p$ such that 
$\Ex(\phi_p)$ is a $\psi$-flop of $C$ 
with $(D_{12})_p.\Ex(\phi_p)=1$.
We show that $p=0$. To get a contradiction, suppose $A_\psi.C\geq 0$. In particular, $C$ is not an annex by Corollary \ref{isalone}.
Suppose $p$ is chosen minimal.  Lemma \ref{meetboundary} implies that $C$ is replaced or reflected. 
It also follows that $C$ is $D_{12}$-good. Suppose there is no $i$ with $\Ex(\psi_i)=C_i$, $i<p$. Then the $R$-locus is $D_{31}$.
Let $q$ be the index of the reflection/replacement. As before, we find that   $\Gamma_{(Y_3)_{q}^\psi}$ and $\Gamma_{(Y_3)_{q+1}^\psi}$ are not both regular or not regular. By 
Lemma \ref{dgoodnorr}, $\Gamma_{(Y_3)_m^\psi}$ has the same regularity as $\Gamma_{(Y_3)_{q+1}^\psi}$. Note that there is a curve $E\in \Gamma_{Y_1}$  meeting $C$ and not meeting $D_{12}$ such that $E_l=\Ex(\psi_l)$ where $l+1$ is the minimal index such that $C_{l+1}.(D_{13})=1$. 
By $D_{12}$-goodness of $C$, there is no $i$ such that $E_i=\Ex(\phi_i)$ and $E_i.(D_{13})_i=1$.
It follows that $\Gamma_{(Y_3)^\phi_n}$ has opposite regularity to  $\Gamma_{(Y_3)^\psi_m}$. As above, this gives a contradiction. 
If there is $l<p$  with $\Ex(\psi_l)=C_i$, $i<p$, by Lemma \ref{floppingback}, 
the $R$-locus is $D_{23}$ and the same reasoning as before applies.
\end{proof}

\begin{corollary}\label{orderBir}Let $\shY\to S$ be a model of the DNV family of degree $2$, with $\shY_c=\cup_i Y_i$. Suppose  $\shY_c\in \PMod_2(\mathscr{P})$. Let  $\phi$ and $\psi$ be birational automorphisms of $\shY\to S$. Suppose \[ \psi(Y_i)\subset Y_j \Leftrightarrow \phi(Y_i)\subset Y_j.\]
Then there is a regular  automorphism $\gamma$ of $\shY$ such that \[  \psi=\gamma\circ \phi.\]
 
\end{corollary}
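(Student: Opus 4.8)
The plan is to reduce the assertion to an equality of two maximal cones of the Mori fan and then quote the uniqueness of marked minimal models. Setting $\gamma=\psi\circ\phi^{-1}$, it suffices to prove that $C(\phi)=C(\psi)$ in $\N^1(\shY/S)$: for then the two marked models $(\shY,\phi)$ and $(\shY,\psi)$ have cones meeting in codimension $0$, so the remark following Definition \ref{def:morifan} produces an isomorphism $\beta\colon\shY\to\shY$ with $\phi=\beta\circ\psi$, and I may take $\gamma=\beta^{-1}$, which is then the desired regular automorphism with $\psi=\gamma\circ\phi$.

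The whole content is thus the identity $C(\phi)=C(\psi)$, which I would prove in the following general form: for any two birational maps $\phi,\psi\colon\shY\dashrightarrow\shY'$ between class $\mathscr{P}$ models satisfying hypothesis (\ref{cond}), one has $C(\phi)=C(\psi)$. The proof goes by induction on the minimal length $d(\phi)$ of a factorisation of $\phi$ into flops. Since $\shY$ and $\shY'$ are both of class $\mathscr{P}$, Lemma \ref{onetypeIIflop} allows me to take this factorisation to consist entirely of type I flops, so that every intermediate model is again of class $\mathscr{P}$ and Proposition \ref{negativity} remains applicable at every stage. Fixing an ample divisor $A'$ on $\shY'$, I write $A_\phi=\phi^{*}A'$ and $A_\psi=\psi^{*}A'$, which lie in the interiors of $C(\phi)$ and $C(\psi)$ respectively.

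For the inductive step ($d(\phi)\ge 1$) I would let $f\colon\shY\dashrightarrow\shY_1$ be the first type I flop in a minimal factorisation of $\phi$; its exceptional locus is an extremal $(-1)$-curve $C$ with $A_\phi\cdot C<0$ by Proposition \ref{refinedversion}. Proposition \ref{negativity} then forces $A_\psi\cdot C<0$, so $f$ is also a legitimate first flop for $\psi$. Writing $\phi=\phi'\circ f$ and $\psi=\psi'\circ f$ with $\phi',\psi'\colon\shY_1\dashrightarrow\shY'$, the pair $(\phi',\psi')$ again satisfies (\ref{cond}) because $f$ is common to both, $\shY_1$ is of class $\mathscr{P}$, and $d(\phi')=d(\phi)-1$; by the inductive hypothesis $C(\phi')=C(\psi')$, and applying the linear isomorphism $f^{*}$ to $C(\phi)=f^{*}C(\phi')$ and $C(\psi)=f^{*}C(\psi')$ yields $C(\phi)=C(\psi)$. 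In the base case $d(\phi)=0$ the map $\phi$ is biregular, so $A_\phi$ is ample and lies in the interior of $\sigma_0:=\Nef(\shY/S)$; if $C(\psi)\ne\sigma_0$, then the segment $[A_\phi,A_\psi]\subset\Mov(\shY/S)$ would leave $\sigma_0$ through an interior facet, which by Propositions \ref{prop:intconesflops} and \ref{refinedversion} is cut out by an extremal $(-1)$-curve $C$ with $A_\phi\cdot C>0$ and (since a line meets $\{C=0\}$ once) $A_\psi\cdot C<0$, contradicting Proposition \ref{negativity}.

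I expect the main obstacle to be precisely this bootstrapping. Proposition \ref{negativity} only compares the transforms of $A'$ against curves on the fixed reference model $\shY$, so by itself it detects merely the first flop on the way to each of the two ample models, not the entire chamber containing it. The role of the induction is to peel off these matching first flops one at a time, and the structural input that makes this legitimate is Lemma \ref{onetypeIIflop}, which furnishes a purely type I factorisation and hence keeps every intermediate model in class $\mathscr{P}$, the only setting in which Proposition \ref{negativity} is stated. A secondary point requiring care is the verification that the induced component permutations of $\phi'$ and $\psi'$ still coincide, but this is immediate since they differ from those of $\phi$ and $\psi$ only by the common factor $f$.
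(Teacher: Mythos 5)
Your proof is correct and is essentially the paper's own argument: both peel off the flops of a type~I factorisation (available by Lemma \ref{onetypeIIflop}) one at a time, use Proposition \ref{negativity} to see that the flopping curve for $A_\phi$ is also one for $A_\psi$, and conclude via the uniqueness of marked models (\cite[Lemma 1.5]{Kaw97}, i.e.\ the remark following Definition \ref{def:morifan}). The paper packages this as a simultaneous run of the $A_\phi$- and $A_\psi$-MMP ending in the isomorphisms $\alpha,\beta$, whereas you package it as an induction on factorisation length with the cone equality $C(\phi)=C(\psi)$ and a separating-facet argument in the base case, but the key inputs and their roles are identical.
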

\begin{proof} We use the same notation as in the previous propositon. Let $A_\phi$ be negative on a curve $C_1$ generating an extremal ray. Then there is an $A_\phi$ flop $\phi_1\colon\shY\dashrightarrow \shY_1$ defined by $\contr_{C_1}$.  
This is also the  $A_\psi$-flop defined by $\contr_{C_1}$, see e.g. \cite[6.10]{KoMo}. 
Continuing this way, one obtains a sequence of flops factoring $\psi$ and $\phi$ up to
isomorphism, i.e. a birational map $\psi_n\colon\shY\dashrightarrow\shY_n$ and isomorphisms $\alpha\colon \shY_n\to \shY$ and $\beta\colon \shY_n\to \shY$ such that 
$ \psi=\alpha\circ \psi_n $ and $\phi=\beta\circ \psi_n$ and thus setting
$\gamma=\alpha\circ \beta^{-1}$
we have $ \psi=\gamma\circ \phi$. 
\end{proof}

\begin{corollary}\label{autocoro} Let $\shY\to S$ be a model of the DNV family of degree $2$, with $\shY_c=\cup_i Y_i$. Suppose  $\shY_c\in \PMod(\mathscr{P})$. Let  $\phi\in\Bir(\shY/S)$.  Suppose $\phi(Y_i)\subset Y_i$ for all $i$. Then $\phi\in\Aut(\shY/S)$. In particular, $\Bir(\YYP/S)=\Aut(\YYP/S)$.
\end{corollary}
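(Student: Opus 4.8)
The plan is to obtain the first assertion as an immediate application of Corollary \ref{orderBir}, and then to derive the equality $\Bir(\YYP/S)=\Aut(\YYP/S)$ from it by absorbing the permutation of components into a regular symmetry.

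First I would invoke Corollary \ref{orderBir} with the two birational automorphisms $\phi$ and $\psi=\id_{\shY}$. Since $\id_{\shY}(Y_i)=Y_i$ and, by hypothesis, $\phi(Y_i)\subset Y_i$ for every $i$, both maps induce the trivial permutation of components, so the equivalence $\psi(Y_i)\subset Y_j\Leftrightarrow\phi(Y_i)\subset Y_j$ required by Corollary \ref{orderBir} holds. The corollary then supplies a regular automorphism $\gamma\in\Aut(\shY/S)$ with $\id_{\shY}=\gamma\circ\phi$, and hence $\phi=\gamma^{-1}\in\Aut(\shY/S)$. This settles the first statement.

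For the final assertion I would first observe that any $\phi\in\Bir(\YYP/S)$ is an isomorphism in codimension $1$ (Remark \ref{rmkflops}) and therefore restricts to a birational self-map of the central fibre $\YP$ taking each component dominantly onto a component; this yields a permutation $\sigma\in S_3$ with $\phi(Y_i)\subset Y_{\sigma(i)}$. Next I would use that $\YP$ is built symmetrically from three copies of $\mathfrak{Y}_2$ glued along the $S_3$-symmetric triangulation $\mathscr{P}$ (Example \ref{DNV2}), so that $S_3$ acts on $\YP$ by permuting its three components. Each of these automorphisms preserves the condition $c_{\YP}=1$, so by the uniqueness of the maximal model (Proposition \ref{lem:unoquedeg}) it lifts to a regular $S$-automorphism of $\YYP$; in particular every $\sigma\in S_3$ is realised by some $\tau\in\Aut(\YYP/S)$.

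Finally, given $\phi$ inducing $\sigma$, I would pick such a $\tau$ and consider $\tau^{-1}\circ\phi\in\Bir(\YYP/S)$, which now induces the trivial permutation, i.e.\ $(\tau^{-1}\circ\phi)(Y_i)\subset Y_i$ for all $i$. The first part of the corollary then gives $\tau^{-1}\circ\phi\in\Aut(\YYP/S)$, whence $\phi=\tau\circ(\tau^{-1}\circ\phi)\in\Aut(\YYP/S)$; together with the trivial reverse inclusion this proves $\Bir(\YYP/S)=\Aut(\YYP/S)$. I expect the one genuinely substantive point to be the lifting of the component-permuting symmetries of the central fibre $\YP$ to regular automorphisms of the model $\YYP$, which relies on the uniqueness of the maximal smoothing; everything else is bookkeeping of permutations and a direct citation of Corollary \ref{orderBir}.
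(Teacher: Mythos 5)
Your proof of the first assertion is correct and is exactly the paper's argument: apply Corollary \ref{orderBir} with $\psi=\id_{\shY}$, so that $\id_{\shY}=\gamma\circ\phi$ for a regular $\gamma$ and hence $\phi=\gamma^{-1}\in\Aut(\shY/S)$.

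The second assertion is where there is a genuine gap, namely in the step where you lift the component-permuting automorphisms of the surface $\YP$ to regular $S$-automorphisms of $\YYP$ ``by uniqueness of the maximal model''. Proposition \ref{lem:unoquedeg} says that any two maximal Kulikov models of $\YP$ are isomorphic; it does \emph{not} say that a prescribed automorphism $g$ of the central fibre extends to an automorphism of the total space over $S$. To extract that, you would apply uniqueness to the two marked families $(\YYP,\iota)$ and $(\YYP,\iota\circ g)$ and would need the resulting isomorphism both to intertwine the markings and to be an isomorphism over $S$. The proof of Proposition \ref{lem:unoquedeg} (via formal semiuniversality) only produces an isomorphism up to a reparametrization $t\mapsto at$ of the base, and $g$ could a priori act nontrivially on the smoothing parameter; in that case the lift exists only as an automorphism covering a nontrivial automorphism of $S$, which is not an element of $\Aut(\YYP/S)$ and cannot be composed with $\phi\in\Bir(\YYP/S)$ to reduce to the first part. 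The statement that $S_3$ does lift to $\Aut(\YYP/S)$ is true, but in the paper it is Corollary \ref{s6}, whose proof \emph{uses} Corollary \ref{autocoro}; so patching your argument by invoking that result (or Proposition \ref{prop:syminvolution}, which also sits downstream of it) would be circular. A smaller issue: even the existence of automorphisms of the surface $\YP$ realizing all of $S_3$ is not contained in Example \ref{DNV2}; it requires the symmetry analysis of $\mathfrak{Y}_2$ carried out in Lemma \ref{swap}. The paper's own proof of the second assertion avoids all of this and is much more direct: any interior $(-1)$-curve $C$ on a component of $\YP$ is $D_{ij}$-good for the double curve $D_{ij}$ it meets, so if a factorization of $\phi\in\Bir(\YYP/S)$ into flops contained any flop at all, Lemma \ref{noreflection} would force the self-intersection of the relevant double curve to stay $\le -2$ ever after, contradicting that the final central fibre is again $\YP$ with all double curves of self-intersection $-1$; hence no flops occur and $\phi$ is regular. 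That ``no flops are possible'' argument is the fix you should aim for, rather than constructing symmetries of the model.
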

\begin{proof}The first part  follows  by setting $\psi$ to be  identity. 
 The second part follows as any interior $(-1)$-curves  $C$ on a component of $\YP$ is $D$-good for the component $D_{ij}
 $ of the double locus met  by $C$. Indeed, let  $\prod_{i=1}^n\phi_i$ be a factorisation of $\phi$ into flops and suppose there is  an index $k$ such that $C_k=\Ex(\phi_k)$ for the birational transform of an interior $(-1)$ curve $C$ of a component of $\YP$.  Obviously, we can assume  $k=1$.  By $D$-goodness, it follows from Lemma \ref{noreflection} that $(D^2_{ji})_t\leq -2$ for all  $t\geq 1$. This contradicts $(D_{ij})_n^2=-1$.
\end{proof}

\begin{corollary}\label{s6}
The automorphism group $\Aut(\YYP/S)$ of $\YYP/S$ contains a subgroup $S_3(\YYP)$  that is isomorphic to the symmetric group $S_3$ and  acts faithfully by  permutations on the set of components of $\YP$.  
\end{corollary}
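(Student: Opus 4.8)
The plan is to first realise the symmetric group $S_3$ as a genuine subgroup of $\Aut(\YP)$ acting by permutations on the three components, and then to lift this action to the model $\YYP\to S$ using the uniqueness of the maximal smoothing. Recall from Example \ref{DNV2} that $\YP=Y_1\cup Y_2\cup Y_3$, where each $Y_i$ is a copy of $\mathfrak{Y}_2$ and the gluing identifies boundary components so that interior special points are matched. The dual graph of $\YP$ is the complete graph $K_3$: each pair $Y_i,Y_j$ meets in a single double curve $D_{ij}$, and the two boundary components $D_1,D_2$ of a fixed copy $Y_i\cong\mathfrak{Y}_2$ are glued to the two remaining components. Combinatorially, the triangulation $\mathscr{P}$ carries an evident action of $S_3$ permuting its three vertices, and realising this action on the surface is what we must do.

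The key geometric input is an involution $\tau\in\Aut(\mathfrak{Y}_2)$ exchanging the two components $D_1,D_2$ of the anticanonical cycle. Such a $\tau$ should exist because the configuration of negative curves on $\mathfrak{Y}_2$ described in Construction \ref{constructioncomponents} (the $E_6$ root system together with the two exceptional curves $E_1,E_2$, see Figure \ref{curveconfig}) admits the diagram involution exchanging $E_1\leftrightarrow E_2$, $D_1\leftrightarrow D_2$ and the corresponding pairs of $(-2)$-curves; this isometry of $\Pic(\mathfrak{Y}_2)$ preserves $K_{\mathfrak{Y}_2}$ together with the effective and nef cones, and by the rigidity of the anticanonical pair $(\mathfrak{Y}_2,D)$ it is induced by an automorphism. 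Using $\tau$, together with the tautological identifications of the three copies of $\mathfrak{Y}_2$, I would construct for each $\sigma\in S_3$ an isomorphism $\Phi_\sigma$ sending $Y_i$ to $Y_{\sigma(i)}$: a $3$-cycle is realised by cyclically relabelling the copies, while a transposition $(ij)$ is realised by interchanging $Y_i,Y_j$ and applying $\tau$ on the fixed component so as to interchange its two double curves $D_{ki},D_{kj}$. Because the gluing of $\YP$ is pinned down by the matching of special points, and $\tau$ permutes the special points of $\mathfrak{Y}_2$ accordingly, the maps $\Phi_\sigma$ are compatible with the gluing and descend to automorphisms of $\YP$, yielding an injective homomorphism $S_3\to\Aut(\YP)$ that induces the standard permutation action on components. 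Verifying this compatibility, and in particular pinning down the existence of $\tau$ from the explicit geometry of $\mathfrak{Y}_2$, is the step I expect to be the main obstacle.

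Finally, I would lift the action to the model. Given $\sigma\in S_3$ with automorphism $\Phi_\sigma\in\Aut(\YP)$, transporting $\YYP\to S$ along $\Phi_\sigma$ produces a second Kulikov model of $\YP$ whose Picard group restricts isomorphically to $\Pic(\YP)$; by the uniqueness statement in Proposition \ref{lem:unoquedeg} this model is isomorphic to $\YYP$ over $S$, which furnishes an automorphism $\tilde\Phi_\sigma\in\Aut(\YYP/S)$ restricting to $\Phi_\sigma$ on the central fibre. Any two such lifts differ by an automorphism of $\YYP/S$ that is the identity on $\YP$, and such an automorphism is the identity by the formal rigidity argument in the proof of Proposition \ref{lem:unoquedeg} (the induced map $\CC[[t]]\to\CC[[t]]$ must fix the uniformiser up to a unit and hence be trivial); the lift is therefore unique, so $\sigma\mapsto\tilde\Phi_\sigma$ is a group homomorphism. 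Setting $S_3(\YYP)$ to be its image, the composite $S_3\to\Aut(\YYP/S)\to\Sym\{Y_1,Y_2,Y_3\}$ is the standard action, so $S_3(\YYP)\cong S_3$ acts faithfully by permutations on the components of $\YP$, as required. This is moreover consistent with Corollary \ref{autocoro}, which already guarantees $\Bir(\YYP/S)=\Aut(\YYP/S)$.
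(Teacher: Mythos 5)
Your route is genuinely different from the paper's. The paper never constructs an automorphism of the central fibre $\YP$ at all: it produces birational automorphisms of $\YYP$ by conjugating a single type I flop $\phi\colon\YYP\dashrightarrow\shY$ with the isomorphisms $\gamma_{(k,i)}$ supplied by uniqueness of the DNV family, and then invokes Corollary \ref{autocoro} ($\Bir(\YYP/S)=\Aut(\YYP/S)$, which rests on the $D$-goodness and flop machinery of Section \ref{sec:flopauto}) to conclude that these birational maps are regular; reading off the induced permutations of components then realizes all six permutations. You instead build $S_3$ inside $\Aut(\YP)$ by hand and lift it to the family. The two halves of your plan are sound in outline: the boundary-swapping involution of $\mathfrak{Y}_2$ does exist --- the paper constructs it explicitly in Section \ref{automorph} via the $(1,3,1,3)$-blow-up of $\PP^1\times\PP^1$ (Lemma \ref{swap}(ii)) --- and the gluing of $\YP$ is indeed pinned down by matching three special points on each double curve, so your descent check reduces to finitely many point conditions. (Your Torelli-style justification of the involution needs care, though: an isometry exchanging $D_1$ and $D_2$ reverses the orientation of the anticanonical cycle and hence inverts the period point, so one must use that the period of this rigid pair is trivial; the paper's explicit construction sidesteps this.) If completed, your argument would even give something cleaner than the paper's: an actual homomorphism $S_3\to\Aut(\YYP/S)$, i.e.\ an explicit splitting, where the paper only exhibits automorphisms realizing each permutation.

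The genuine gap is the step on which that homomorphism property rests, namely uniqueness of the lift. An element of $\Aut(\YYP/S)$ induces the identity on $\CC[[t]]$ by definition, so the ``uniformiser maps to $at$'' argument from the proof of Proposition \ref{lem:unoquedeg} says nothing here: that argument classifies base reparametrizations along which one deformation is pulled back from another, not automorphisms of the total space over $S$ restricting to the identity on the central fibre. What you actually need is that the polarized central fibre has no infinitesimal automorphisms, so that the relative automorphism group scheme is unramified over $S$ and a section through the identity of the closed fibre is unique. Concretely: a global vector field on $\YP$ restricts on each component to a log vector field on $(\mathfrak{Y}_2,D)$; such a field is tangent to every $(-1)$- and $(-2)$-curve (negative self-intersection forces the normal component to vanish), hence vanishes at the nodes of $D$ and at the crossings of rigid curves, and one checks $H^0(T_{\mathfrak{Y}_2}(-\log D))=0$; this is what forces an automorphism of $\YYP/S$ restricting to $\id_{\YP}$ to be trivial. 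Without some such rigidity input, $\sigma\mapsto\tilde\Phi_\sigma$ need not be a homomorphism (for a transposition, $\tilde\Phi_\sigma^2$ is only known to lie in the subgroup of automorphisms fixing $\YP$ pointwise), and all you obtain is a subgroup of $\Aut(\YYP/S)$ surjecting onto $S_3$ --- which does not by itself contain a copy of $S_3$, since such extensions need not split. So either supply the vanishing of infinitesimal automorphisms, or bypass the issue as the paper does via Corollary \ref{autocoro}.
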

\begin{proof}
Let $C$ be an interior $(-1)$-curve on a component ${\YP}_{,1}$ of $\YP$, meeting ${\YP}_{,1}\cap{\YP}_{,2}$. It defines an elementary 
modification of type I, $\YP\dasharrow Y$. All components of $Y$ have non-degenerate curve structure, so there is a smoothing $\shY$ of $Y$ 
and  a type I flop $\phi\colon \YYP\dashrightarrow\shY$  given by a flopping contraction contracting precisely $C$, by Proposition \ref{typeflop}. 
Note that all components $Y_i$ of $Y$ have pairwise distinct curve structure. Assume that ${\YP}_{,1}$ is mapped to $Y_1$. In particular, they 
are not isomorphic. Let $C'$ be an interior $(-1)$  different from $C$, on say ${\YP}_{,k}$, meeting ${\YP}_{,k}\cap{\YP}_{,i}$ with $i\in\{1,2,3\}\backslash\{k\}$. Note that as $C'$  is different from $C$, $(k,i)\neq (1,2)$. 
 By the same argument as before, we obtain a type I flop $\phi_{(k,i)}\colon 
\YYP\dashrightarrow\shY'$  given by a flopping contraction contracting precisely $C'$. Let $Y'_k$ be the the component which is  the image 
of ${\YP}_{,k}$. Now, $Y'=\shY'_c$ and $Y$ are isomorphic and hence by uniqueness of the DNV family there is an isomorphism $
\gamma_{(k,i)}\colon\shY\to \shY'$. Because the curve structures are distinct, necessarily $\gamma(Y_1)=Y'_k$.  It follows that there is a map $\psi_{(k,i)}$ and a commutative diagram of 
birational maps
\[\xymatrix{\YYP\ar@{-->}[r]^\phi\ar@{-->}[d]^{\psi_{(k,i)}} & \shY\ar[d]^{\gamma_{(k,i)}} \\
\YYP\ar@{-->}[r]_{\phi_{(k,i)}}&\shY'.
}\] By Corollary \ref{autocoro}, $\psi_{(k,i)}$ is a morphism.
By looking at curve structures, we find that $\psi_{(k,i)}$ maps ${\YP}_{,1}$ to ${\YP}_{,k}$ and ${\YP}_{,2}$ to ${\YP}_{,i}$.  One finds that the possible combinations $(k,i)$ are $(1,3),(2,1),(2,3),(3,1)$, and $(3,2)$. Hence the  permutations on the set of components of $\YP$ that are induced by automorphisms  $\psi_{(k,i)}$  are $(1,3,2), (2,1,3),(2,3,1), (3,1,2)$ and $(3,2,1)$. Hence there is indeed a subgroup
as claimed.
\end{proof}

 \begin{definition}\label{def:symmetric}
A model $Y\in  \PMod_2$ is \emph{symmetric} if there are distinct components $Y_1,Y_3\subset Y$ and an automorphism $\psi\in\Aut(Y)$ such that
\[\psi_{|Y_1}\colon Y_1\xrightarrow{\sim} Y_3.\]
If there $Y=\shY_c$ for a model $\shY$ of the DNV family, then $\shY$ is \emph{symmetric}.
\end{definition}

Note that this implies that $D^2_{13}=-1$, as $\psi(D_{13})=D_{31}$. For models with symmetric central fibre, we have the following  statement.

\begin{proposition}\label{symmbirat} Let $\shY\to S$ be a model of the DNV family of degree $2$. Assume $\shY_c\in  \PMod_2(\mathscr{P})$ is symmetric, but  $\shY\neq\YYP$. Suppose $\phi, \psi$ are two birational automorphisms
of $\shY$. Write 
$\shY_c=Y_1\cup Y_2\cup Y_3$ with notation such that $D_{13}^2=-1$. If
$\phi (D_{13})^2=\psi(D_{13})^2$, then there is an automorphism  $\gamma\in \Aut(\shY/S)$ such that \[ \psi=\gamma\circ \phi.\]
\end{proposition}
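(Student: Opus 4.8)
The plan is to reduce Proposition~\ref{symmbirat} to the earlier Corollary~\ref{orderBir} by exploiting the symmetry automorphism. Recall Corollary~\ref{orderBir} says that if two birational automorphisms $\phi,\psi$ of $\shY$ act the \emph{same way} on the components---i.e.\ $\psi(Y_i)\subset Y_j \Leftrightarrow \phi(Y_i)\subset Y_j$---then they differ by a regular automorphism. The obstruction to applying it directly is that the hypothesis here is weaker: we are only told that $\phi$ and $\psi$ carry the distinguished double curve $D_{13}$ to a curve of the \emph{same self-intersection}, $\phi(D_{13})^2=\psi(D_{13})^2$, not that they induce the same permutation of the components. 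So the first step is to analyse how a birational automorphism can move the components, and to show that the numerical condition $\phi(D_{13})^2=\psi(D_{13})^2$ pins down the induced permutation up to the symmetry $S_3(\YYP)$-type ambiguity.

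First I would record the consequence of symmetry: since $\shY_c$ is symmetric there is $\psi_0\in\Aut(\shY/S)$ with $\psi_0|_{Y_1}\colon Y_1\xrightarrow{\sim}Y_3$, and as noted after Definition~\ref{def:symmetric} this forces $D_{13}^2=-1$ (indeed $\psi_0(D_{13})=D_{31}$). The idea is then to compose $\psi$ with $\psi_0$ if necessary so as to match the component-permutation of $\phi$. Concretely, a birational automorphism of $\shY$ induces a permutation of $\{Y_1,Y_2,Y_3\}$ together with strict-transform data on the double curves, and by Corollary~\ref{factor} it factors into type I and type II flops; the self-intersection $\phi(D_{13})^2$ is a birational invariant of the image double curve. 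The key claim to establish is that the value $\phi(D_{13})^2$ determines which double curve of $\shY_c$ the curve $D_{13}$ is sent to, and hence (together with the fact that $\phi$ permutes the set of double curves compatibly with the component permutation) determines $\phi$'s action on components \emph{up to} the ambiguity that the symmetry $\psi_0$ can absorb. Since $D_{13}$ is the unique double curve with self-intersection $-1$ among the three relevant pairs (this is where the hypothesis $\shY\neq\YYP$ and the structure of $\PMod_2(\mathscr{P})$ enter, ruling out extra coincidences), the equality $\phi(D_{13})^2=\psi(D_{13})^2$ forces $\phi$ and $\psi$ to send $D_{13}$ to double curves of equal self-intersection, and after possibly replacing $\psi$ by $\psi_0\circ\psi$ we may assume they send $D_{13}$ to the \emph{same} double curve.

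Once $\phi$ and $\psi$ agree on where $D_{13}$ goes, I would argue they induce the same permutation of components: the unordered pair of components $\{Y_1,Y_3\}$ glued along $D_{13}$ is sent to a fixed pair, and the remaining component $Y_2$ (which is distinguished, since $\shY\neq\YYP$ means the three components are not all interchangeable) must go to the same target under both maps; the orientation within the pair is then fixed by the marking $D_{13}^2=-1$ versus $D_{31}^2=-1$ and the compatibility with the double-curve images. This yields exactly the hypothesis $\psi'(Y_i)\subset Y_j \Leftrightarrow \phi(Y_i)\subset Y_j$ of Corollary~\ref{orderBir} for $\psi'=\psi_0^{\epsilon}\circ\psi$ (with $\epsilon\in\{0,1\}$). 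Applying that corollary gives a regular automorphism $\gamma'$ with $\psi'=\gamma'\circ\phi$, and then $\psi=\psi_0^{-\epsilon}\circ\gamma'\circ\phi=\gamma\circ\phi$ with $\gamma:=\psi_0^{-\epsilon}\circ\gamma'\in\Aut(\shY/S)$, as required.

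The main obstacle I expect is the bookkeeping in the middle step: rigorously showing that $\phi(D_{13})^2=\psi(D_{13})^2$ forces the induced component-permutations to agree \emph{up to} the single symmetry $\psi_0$, rather than merely constraining them. This requires knowing that among the three double curves $D_{12},D_{13},D_{23}$ of a symmetric fibre in $\PMod_2(\mathscr{P})$ the self-intersections, viewed as a numerical invariant preserved by strict transform under birational automorphisms, are distinct enough to separate the non-symmetric permutations---so that the only genuine ambiguity is the one absorbed by the symmetry. The hypothesis $\shY\neq\YYP$ is doing real work here: for $\YYP$ all components are isomorphic and Corollary~\ref{s6} produces the full $S_3$, so there would be additional permutations to contend with, whereas for a non-maximally-symmetric $\shY$ the component $Y_2$ is rigidly distinguished and the argument closes.
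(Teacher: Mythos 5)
Your reduction has a genuine circularity at its first step: you promote the symmetry of the central fibre to an automorphism $\psi_0\in\Aut(\shY/S)$ of the total space. Definition \ref{def:symmetric} only provides an automorphism of $\shY_c$; the statement that a component-swapping automorphism of $\shY_c$ is induced by a biregular $S$-automorphism of $\shY$ is exactly Proposition \ref{prop:syminvolution}, which in the paper comes \emph{after} Proposition \ref{symmbirat} and is deduced from Proposition \ref{orbitmain}, whose symmetric case is in turn proved \emph{using} Proposition \ref{symmbirat}. Nor is the extension harmless to prove independently: semiuniversality together with uniqueness of the maximal model (Proposition \ref{lem:unoquedeg}) a priori only extends a central-fibre automorphism to an automorphism of $\shY$ covering a possibly non-trivial automorphism of the base $S$, and correcting this to an $S$-automorphism introduces an uncontrolled automorphism of $\shY_c$ that can cancel the component swap. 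This difficulty is precisely why the paper establishes Proposition \ref{prop:syminvolution} through the Mori-fan orbit machinery rather than by deformation theory.

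Here is where the gap bites. Your bookkeeping is otherwise fine: since $\shY\neq\YYP$ the three values $-1$, $D_{12}^2$, $D_{21}^2$ are pairwise distinct; every birational automorphism factors into type I flops (Lemma \ref{onetypeIIflop}), hence sends double curves to double curves; and $\phi(D_{13})^2=\psi(D_{13})^2$ forces the induced permutations to agree up to post-composition with the transposition exchanging $Y_1$ and $Y_3$. If the permutations agree, Corollary \ref{orderBir} indeed finishes the proof. But in the other case $\tau:=\psi\circ\phi^{-1}$ is a birational automorphism inducing that transposition, so $\tau(D_{13})=D_{31}$ and $\tau(D_{13})^2=-1$, and what you then need is: \emph{any birational automorphism $\tau$ with $\tau(D_{13})^2=-1$ is regular}. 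This is not Corollary \ref{autocoro}, which covers only the identity permutation; it is exactly the opening claim of the paper's proof, established there via $D_{13}$-goodness of the interior $(-1)$-curves meeting $D_{13}$ or $D_{31}$ (no flop can occur in a factorisation of such a $\tau$), and it is essentially equivalent to the existence of the $\psi_0$ you assumed. Since your proposal never supplies this regularity statement, the argument does not close; the paper instead avoids the issue by matching the factorisations of $\phi$ and $\psi$ flop by flop, treating the cases $\Gamma_{Y_1}$ regular and non-regular separately.
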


\begin{proof}
 As $\shY\neq\YYP$ it follows from  $D_{13}^2=-1$ and  symmetry  that  $Y_1\xrightarrow{\sim} Y_3$. 
It is straightforward to see, using $D_{13}$-goodness of interior $(-1)$ curves meeting $D_{13}$ or $D_{31}$,
that $\phi(D_{13})^2=-1$ implies  that $\phi$ is regular, as then no curves can be flopped. Hence we can assume $\phi(D_{13})^2\neq-1$.  Also,  note that any interior $(-1)$-curve $C$ meeting $D_{13}$ is $D_{13}$-good. More generally, it is easy to see that if \[\shY\dashrightarrow \shY_1\dashrightarrow\dots \shY_i\overset{\phi_i}{\dashrightarrow}\shY_{i+1}\dashrightarrow\dots\shY_n\xrightarrow{\sim}\shY\] 
is a factorisation of a birational automorphism of $\shY$  and $C$ is the flopping curve of $\shY_k\dashrightarrow\shY_{k+1}$, with $C.(D_{13})_k=1$, then $C$ is $(D_{13})_k$-good.

We first show the proposition assuming  $\Gamma_{Y_1}$ is regular.  Note that then no flopping curve $C\subset (Y_i)_k$  in the factorisation can change the regularity  of $\Gamma_{(Y_i)_k}$, as such a change would be irreversible by Lemma \ref{dgoodnorr}.  This 
implies the proposition. Indeed,  there is neither replacement nor reflection as there are no regularity changing flops, so if $C$ is the flopping curve of some $\phi_r$, with  $C.(D_{ij})_r=1$, we conclude from Lemma \ref{noreflection} and the condition on intersection numbers that for some $p$, $C_p$ is the flopping curve
of some $\psi_p$, with $C_p.(D_{ij})_p=1$.
  As in the proof of Proposition \ref{negativity}, it follows that $p=0$. Arguing as in Corollary \ref{orderBir}, the claim follows. 

Now, suppose $\Gamma_{Y_1}$ is not regular.  Then $\shY_c$ is uniquely defined up to isomomorphism: we have $D^2_{12}=D^2_{32}=4$, $D^2_{13}=D^2_{31}=-1$, 
$D^2_{21}=D^2_{23}=-6$ and also $\Gamma_{Y_3}$ is not regular. Also, both $\Gamma_{Y_1}$ and $\Gamma_{Y_3}$ have three vertices while $\Gamma_{Y_2}$ has $18$ vertices.   
If $\phi$ is a birational automorphism such that  $\phi(D_{13})^2\neq -1$, then either 
$ \phi(D_{13})^2=4,  \phi(D_{12})^2=-1 $ and $\phi(D_{23})^2=4 $ or $\phi(D_{13})^2=-6,  \phi(D_{12})^2=-6 $ and $  \phi(D_{23})^2=-1 $.
We first show that if \[\shY\dashrightarrow \shY_1\dashrightarrow\dots \shY_i\overset{\phi_i}{\dashrightarrow}\shY_{i+1}\dashrightarrow\dots\shY_n\xrightarrow{\sim}\shY\] 
is a factorisation of $\phi$, and $C$ the flopping curve of some $\phi_k$, then there is an exhaustive $C$-sequence $K_C^\phi$. By symmetry, it is enough to show this for the case $\phi(D_{13})^2=4$.
So let $C$ be the flopping curve of some $\phi_k$. Then $( D_{ij})_k.C=1$ for some $D_{ij}$. If $D_{ij}=D_{13}$ or $D_{ij}=D_{31}$, then, as remarked above,  $C$ is  $( D_{ij})_k$-good. If $D_{ij}=D_{21}$, then it may be that the flopped curve  $C^+$ of $\phi_k$ is not alone. 
In that case however, both curves $C^+$ and $C'$  meeting $(D_{12})_{k+1}$ are $(D_{12})_{k+1}$-good by the same argument as in  Example \ref{trailingcurveexample},
 so from the condition on the intersection numbers, there is no $\phi_s$ with flopping curve a birational transform of $C^+$ or $C'$.   The same argument  applies
to $D_{23}$, impliying that $D_{ij}=D_{21}$ and $D_{ij}=D_{23}$ cannot happen. One concludes that there is an exhaustive $C$-sequence $K_C^\phi $.

To prove the proposition,  assume there  are  factorisations of $\phi$ and $\psi$ that  agree up to term $l$.  As in Proposition \ref{negativity} we  let $A_\phi, A_\psi$ denote the bundles defining the sequences of flops.
If $C$ is the flopping curve of  $\phi_l$, with  $C.(D_{ij})_l=1$, because of the existence of an exhaustive $C$ sequence $K^\phi_C$, the condition on intersection numbers implies that there is a flop $\psi_s$ with flopping curve $C^\psi_s$: if $C$ is alone, this is immediate. If $C$ is not alone, then flopping the companion of $C$ implies that $\Gamma_{(Y_i)^\psi_m}$ and $\Gamma_{(Y_i)^\phi_m}$ have distinct regularity, a contradiction. Hence indeed there is a $\psi_s$ with the claimed properties.  
We show $s=l$.

Suppose first that $C$ is alone.  Assume  $(A_\psi)_l.C_l\geq  0$.
We can deduce from  Corollary \ref{isalone} that $C$ is alone and from  Lemma \ref{meetboundary} we get a minimal $q$, $s>q>l$ such that $C_s^\phi.(D_{ik})_q^\phi=1$, $k\neq \{i, j\}$. Write $E=\Ex(\phi_k)$ for the flopping curve and $E^+$ for the flopped curve. If $E$ is not replaced with 
$R$-locus $D_{ki}$ we have $C_i.(D_{ik})_s^\phi=1$ for all $i\geq 
q$, contradicting $C=\Ex(\psi_s)$.    Hence $E^+$ is not alone and, letting  $E^c$ be the  companion of $E^+$, there is a $t$, $s>t>q$ such 
that $(E^c)_t=\Ex(\psi_t)$ and $E^c_t.(D_{ki})_t=1$. 

 If $E^+$ is an annex,  it follows that the curve structure $\Gamma_{(Y_i)_l}$ consists of only two vertices and there is no $(-1)$-curve meeting $(D_{ki})_m^\phi$ for any $m\geq s $. This implies that $\Gamma_{(Y_i)_m}$ is a singleton for $m\geq l+1$, a contradiction.
 If $E^+$ is not an annex, then $\Gamma_{(Y_k)^\phi_m}$ is regular for all $m\geq l$ while  $\Gamma_{(Y_k)^\psi_m}$ is not regular for $m > t$, a contradiction. Hence $(A_\psi)_s.C<0$ and it follows inductively that $\phi$  and $\psi$ agree up to automorphism.
\end{proof}

\subsection{Orbits}

Let $\shY\to S$ a be model of the DNV family of degree $2$. We recall the action of $\Bir(\shY/S)$ on the Mori fan $\Morifan(\shY/S)$ in somewhat more detail. Let $(\shY',f)$
 be a marked minimal model of the DNV family, i.e. a model $\shY'$ 
 of the DNV family  together with a birational map
 $f \colon\shY\dashrightarrow \shY'$. 
This determines a maximal cone $C(f)$ of $\Morifan(\shY/S)$, defined as the pullback under $f$ of the Nef cone of $\shY'$.

The group $\Bir(\shY/S)$ acts on the cones of $\Morifan(\shY/S)$.  Suppose $g\colon\shY\dashrightarrow \shZ$ is another marked minimal model with $\shZ$ isomorphic to $\shY'$ via $h\colon \shZ\to \shY'$.  We can replace $g$ by $h\circ g$ and assume $\shZ=\shY'$, as the corresponding cones are identical. Then $\gamma=f^{-1}\circ  g$ is a birational $S$-automorphism of $\shY$ mapping $C(f)$ to $C(g)$. Hence the orbit of $C(f)$ under the action of $\Bir(\shY/S)$ is parameterised by the set of  marked minimal models $(\shZ,g)$ with $\shZ\cong \shY'$. 
 If $f={\operatorname{id}}_{\shY/S}$ is the identity on $\shY$, this trivially defines a model 
 $(\shY,{\operatorname{id}}_{\shY/S})$ 
 and the corresponding cone is simply $\Nef (\shY/S)$.

\begin{definition}\label{def:asscone}
Let $\shY\to S$ be a model of the Dolgachev-Nikulin-Voisin family of degree $2d$. If $\sigma$ is  a maximal 
cone in the orbit of $\Nef (\shY/S)$ under $\Bir(\shY/S)$ we say that $\sigma$ is a cone \emph{associated} to $\shY\to S$. 
\end{definition}

\begin{proposition}\label{orbitmain} Let $\shY\to S$ be a model of the DNV family of degree $2$.  Let $\sigma\in\Morifan(\shY/S)$ 
be an  associated maximal cone and
$\Bir(\shY/S).\sigma$ be the orbit of $\sigma$ under $\Bir(\shY/S)$.
Then 
\[|\Bir(\shY/S).\sigma|=
\begin{cases}
1 & \text { if }\shY=\YYP, \\
3 & \text{ if } \shY_c \text{ is symmetric}, \shY\neq\YYP, \\
6 &\text{ else.}
\end{cases}
\]
\end{proposition}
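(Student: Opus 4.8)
The plan is to convert the orbit count into the computation of a group index and then to read that index off from how birational automorphisms permute the components of $\shY_c$. Since orbit lengths are constant on $\Bir(\shY/S)$-orbits, I may take $\sigma=\Nef(\shY/S)$. The stabilizer of $\Nef(\shY/S)$ under the action of Proposition \ref{prop:actionbir} is exactly $\Aut(\shY/S)$: an automorphism fixes the nef cone, and conversely if $\theta_*\Nef(\shY/S)=\Nef(\shY/S)$ then $C(\theta)=\theta^*\Nef(\shY/S)=\Nef(\shY/S)=C(\id)$, so $\theta$ is an isomorphism by the Remark following Definition \ref{def:morifan}. By orbit--stabilizer, $|\Bir(\shY/S).\sigma|=[\Bir(\shY/S):\Aut(\shY/S)]$. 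By Corollary \ref{cor:typeItypeII} I may assume $\shY_c\in\PMod_2$, and I treat the class $\mathscr{P}$ case first.

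Next I would introduce the homomorphism $\rho\colon\Bir(\shY/S)\to\Sym(\{Y_1,Y_2,Y_3\})\cong S_3$ recording the permutation induced on the components (well defined because the induced birational self-map of $\shY_c$ preserves the dual graph $\mathscr{P}\cong K_3$). The crucial input is Corollary \ref{orderBir}: two birational automorphisms inducing the same permutation differ by an element of $\Aut(\shY/S)$. Setting $\Aut_0:=\Aut(\shY/S)\cap\ker\rho$, this says precisely that each nonempty fibre of $\rho$ is a single left coset of $\Aut_0$, so $[\Bir(\shY/S):\Aut_0]=|\rho(\Bir(\shY/S))|$ while $[\Aut(\shY/S):\Aut_0]=|\rho(\Aut(\shY/S))|$. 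Hence $|\Bir(\shY/S).\sigma|=|\rho(\Bir(\shY/S))|/|\rho(\Aut(\shY/S))|$. I would then show $\rho$ is surjective: conjugating the subgroup $S_3(\YYP)\subset\Aut(\YYP/S)$ of Corollary \ref{s6} by any birational map $\YYP\dashrightarrow\shY$ (which exists by Corollary \ref{cor:typeItypeII}) produces birational automorphisms of $\shY$ realizing every permutation of its components. Thus $|\rho(\Bir(\shY/S))|=6$ and the orbit length is $6/|\rho(\Aut(\shY/S))|$.

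It remains to compute $|\rho(\Aut(\shY/S))|$, and this is where the case distinction enters. A nontrivial element of $\rho(\Aut(\shY/S))$ is induced by an automorphism restricting to an isomorphism between two distinct components, i.e. exactly the condition that $\shY$ be symmetric (Definition \ref{def:symmetric}); so for non-symmetric $\shY$ we get $\rho(\Aut(\shY/S))=\{e\}$ and orbit length $6$. If $\rho(\Aut(\shY/S))=S_3$, then $\Aut(\shY/S)$ realizes both a transposition and a $3$-cycle; combining invariance of self-intersection numbers under automorphisms with the triple-point relation $D_{ij}^2+D_{ji}^2=-2$ (each double curve of a class $\mathscr{P}$ surface meets both triple points) forces all six double curves to have self-intersection $-1$, hence $\shY=\YYP$ by uniqueness of the maximal smoothing (Proposition \ref{lem:unoquedeg}). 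For $\YYP$ itself $\Bir=\Aut$ by Corollary \ref{autocoro}, so $\rho(\Aut(\shY/S))=S_3$ and the orbit length is $1$. Finally, for symmetric $\shY\neq\YYP$ the image $\rho(\Aut(\shY/S))$ contains a transposition but is not all of $S_3$; the only such subgroups of $S_3$ have order $2$, so the orbit length is $6/2=3$. (The same count follows from Proposition \ref{symmbirat}, where the coset of $\phi$ is detected by $\phi(D_{13})^2$, which takes exactly three values.)

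For a class $\mathscr{T}$ model I would run the same orbit--stabilizer reduction and then exploit that $\shY$ is joined to a class $\mathscr{P}$ model by a single type II flop (Lemma \ref{onetypeIIflop}, Corollary \ref{typeIIflopexist}): this flop identifies the three double curves of the $\mathscr{T}$-configuration with the three double curves of the resulting $\mathscr{P}$-model, so the intrinsic $S_3$-action on the set of double curves transports across it, the symmetric condition becoming the realized swap of the two $\mathfrak{Y}_1$-components; since no class $\mathscr{T}$ model equals $\YYP$, this yields orbit length $3$ in the symmetric case (e.g. $\YYT$) and $6$ otherwise. The main obstacle throughout is the symmetric case: ruling out $\rho(\Aut(\shY/S))=S_3$ for $\shY\neq\YYP$, i.e. the rigidity argument showing that full symmetry forces every double curve to be a $(-1)$-curve, and, in the $\mathscr{T}$ case, transporting the $S_3$-bookkeeping across the type II flop, since there the permutation visible on components alone is only $\ZZ/2$. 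I expect these rigidity and transport steps, rather than the orbit--stabilizer formalism, to carry the real weight.
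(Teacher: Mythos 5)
Your framework---orbit--stabilizer plus the permutation homomorphism $\rho\colon\Bir(\shY/S)\to S_3$, with Corollaries \ref{orderBir}, \ref{autocoro} and \ref{s6} identifying the fibres of $\rho$ with cosets of $\Aut(\shY/S)$ and giving surjectivity---is a clean repackaging of what the paper actually does (the paper counts birational automorphisms up to composition with automorphisms, i.e.\ exactly your cosets), and your non-symmetric and $\YYP$ cases are correct. The first genuine gap is your primary argument in the symmetric case: you assert that symmetry of $\shY_c$ yields a transposition in $\rho(\Aut(\shY/S))$. But Definition \ref{def:symmetric} concerns an automorphism of the \emph{central fibre}, whereas a transposition in $\rho(\Aut(\shY/S))$ requires an $S$-automorphism of the \emph{total space}; that implication is precisely Proposition \ref{prop:syminvolution}, which the paper proves only \emph{after}, and \emph{using}, Proposition \ref{orbitmain}, so invoking it here is circular, and it does not follow from the definition (an automorphism of $\shY_c$ is not known a priori to extend over $S$). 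Fortunately your parenthetical is not just "the same count'' but the needed repair, and it is the paper's own argument: Proposition \ref{symmbirat} makes $\phi\mapsto\phi(D_{13})^2$ injective on cosets, the value is constant on cosets, it can only take the three values $-1$, $D_{12}^2$, $D_{21}^2$ (distinct because $D_{12}^2+D_{21}^2=-2$ and $\shY\neq\YYP$), and your conjugated maps $\alpha_\sigma=\phi^{-1}\circ g_\sigma\circ\phi$ realize all three; hence exactly $3$ cosets. Promote this to the main argument and drop the transposition claim; your rigidity argument ruling out $\rho(\Aut(\shY/S))=S_3$ then becomes superfluous (and note its last step needs Proposition \ref{isomorphismprop}, i.e.\ curve structures, to get $\shY_c\cong\YP$ from all double curves being $(-1)$-curves---uniqueness of the maximal smoothing, Proposition \ref{lem:unoquedeg}, only enters afterwards).

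The second gap is the class $\mathscr{T}$ case, which you yourself flag as carrying the weight. Conjugation by the type II flop $\phi\colon\shY\dashrightarrow\shY^+$ identifies $\Bir(\shY/S)$ with $\Bir(\shY^+/S)$, but the two orbit lengths are indices of stabilizers of \emph{different} cones, and "the intrinsic $S_3$-action on double curves transports across the flop'' is not an argument that these stabilizers correspond. What makes the transport work---and what the paper supplies---is the pair of facts that a class $\mathscr{T}$ model admits a \emph{unique} type II flop, and that distinct type II flops of a class $\mathscr{P}$ model $\shY^+\not\cong\YYP$ yield pairwise non-isomorphic models. These force every element of $\Bir$ stabilizing $\Nef(\shY/S)$ to stabilize the unique adjacent cone $C(\phi)$ and conversely, so the stabilizers coincide and the orbit lengths agree, with symmetry transported by the direct check the paper performs; the case where the type II flop lands on $\YYP$ must then be treated separately (its unique cone has three type II facets permuted transitively, giving length $3$). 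As written, your $\mathscr{T}$ paragraph is a plausible plan rather than a proof; with these two ingredients added it coincides with the paper's argument.
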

\begin{proof} We first assume  $\shY_c\in  \PMod_2(\mathscr{P})$. If $\shY=\YYP$, the result follows from Corollary \ref{autocoro} together with Corollary \ref{s6}.
In general,
by Corollary
\ref{orderBir}, there are at most $5$ birational automorphism that are not regular  (up to composition with an automorphism). 
This follows since the fibre has $3$ components and hence there are only $5$ permutations of the components which are not the identity.
Let $\shY$ be a model different from $\YYP$ and let $\phi\colon\shY\dashrightarrow \YYP$ be a birational map. 
For each $\sigma\in S_3$, let $g_\sigma\in \Aut(\YYP)$ be an automorphism permuting the components of the central fibre as in Corollary \ref{s6}.
One obtains $6$ birational maps $\alpha_\sigma=\phi^{-1}\circ g_\sigma\circ\phi$.
We show that if $\shY_c$ is not symmetric, $5$ of the maps $\alpha_\sigma$ are  not regular and  distinct in the sense that for any pair $\alpha_\sigma,\alpha_{\sigma'}$  with $\sigma\neq \sigma'$, there does not exist an automorphism  $\beta$ of $\shY$ with $\alpha_\sigma=\beta\circ\alpha_{\sigma'}$.  Suppose  there is an $\alpha:=\alpha_\sigma$ that is regular and does not map each component of $\shY_c$ into itself. Regularity implies $\alpha$ is an isomorphism.  We obtain that $\shY_c$ is symmetric. Indeed, $\alpha$ induces an automorphism of $\shY_c$ and by assumption, there is a component $(\shY_c)_1$ mapped to a component $(\shY_c)_2$, so $\shY_c$ is symmetric. Hence none of the $\alpha_\sigma$ is a regular map.
 Also, all the $\alpha_\sigma$ are distinct: as $\shY_c$ is not symmetric, all automorphisms fix the components of $\shY_c$, so by construction, no two  $\alpha_\sigma$ can be related by an automorphism.

Now assume $\shY_c=Y_1\cup Y_2\cup Y_3$ is symmetric.  By symmetry,  there are two components, say $Y_1$ and $Y_3$, such that $Y_1\xrightarrow{\sim} Y_3$ and $D^2_{13}=D_{31}^2=-1$. 
By Proposition \ref{symmbirat}, up to automorphism, there are at most $2$ birational  automorphisms of $\shY$. Arguing as in the preceeding case, there are precisely $2$ birational automorphishms up to automorphism, 
say $\phi$ and  $\psi$,   where we define $\phi$ as the map such that $\phi(D_{13})^2=D_{21}^2$ and $\psi$ by requiring $\psi(D_{13})^2=D_{12}^2$. These do not agree up to automorphism as the existence of a $\gamma\in \Aut(\shY)$ with $\gamma\circ\psi=\phi$ implies $D^2_{12}=D^2_{21}=-1$ and thus $\shY=\YYP$.

Now suppose  $\shY_c\in  \PMod_2(\mathscr{T})$. 
Suppose first that the model
is obtained by a single type II flop from $\YYP$. Then $\shY$ 
is symmetric and the orbit of the associated cone has length $3$.  
Second, if the type II flop  of $\shY$ yields a model $\shY^+$ not isomorphic to $\YYP$, still   $\shY^+_c\in \PMod(\mathscr{P})$. Note that 
models obtained from $\shY^+$ by applying different  single type II flops  to $\shY^+$ are non-isomorphic.  
Hence,  if $\sigma$  is an associated cone of $\shY$, there is a unique cone $\sigma^+$ associated to $\shY^+$ meeting $\sigma$ in codimension $1$ and vice versa. It follows that 
if $\sigma$ and $\sigma^+$ are associated cones of $\shY$ and $\shY^+$, the orbits have the same length, i.e. we have 
\[|\Bir(\shY/S).\sigma|=|\Bir(\shY/S).\sigma^+|.\] 
Also, $\shY$ is symmetric if and only if $\shY^+$ is symmetric, as is easily  checked by a direct calculation. 
\end{proof}

\begin{remark}\label{stabil}
Suppose $\gamma\in \Bir(\shY/S)$ fixes the cone $C(f)$ (as a cone), i.e. $\gamma(C(f))=C(f)$. Then $(\shY',f\circ \gamma)$ is a marked minimal model that is isomorphic to $(\shY',f)$. 
In particular, there is $\beta$ in $\Aut(\shY'/S)$ with $f\circ\gamma=\beta\circ f$, see e.g. \cite[Lemma 1.5]{Kaw97}.
Conversely, setting $\gamma=f^{-1}\circ\beta\circ f$ for any $\beta\in \Aut(\shY')$ defines an element of the stabilizer $\Bir(\shY/S)_{C(f)}$ of $C(f)$. 

Hence
\[ \gamma \mapsto f\circ\gamma\circ f^{-1} \] defines an isomomorphism 
\[ \Aut(\shY'/S) \to \Bir(\shY/S)_{C(f)}.\]
\end{remark}

We next construct automorphisms of symmetric models of the DNV family that permute the smooth components of the central fibre. 

\begin{proposition}\label{prop:syminvolution}
Let $\shY$ be a symmetric model of the DNV family of degree $2$. Suppose $\shY_c=Y_1\cup Y_2\cup Y_3$  and let $\phi\in \Aut(\shY_c)$ be an automorphism with $\phi(Y_2)=Y_3$. 
 Then there is an automorphism $\psi\in \Aut(\shY/S)$ of the total space  such that  
$\psi(Y_2)=Y_3$ and $\psi(Y_3)=Y_2$. 
\end{proposition}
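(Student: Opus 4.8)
The plan is to reduce the statement to lifting an automorphism of the central fibre $\shY_c$ to the total space $\shY$, the lifting being supplied by the uniqueness of the maximal model (Proposition~\ref{lem:unoquedeg}). The two ingredients are therefore: first, producing a regular automorphism $\phi_0\in\Aut(\shY_c)$ that genuinely interchanges $Y_2$ and $Y_3$ while fixing $Y_1$; and second, lifting $\phi_0$ to an $S$-automorphism of $\shY$.

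For the first ingredient I would dispose immediately of the case $\shY=\YYP$, where Corollary~\ref{s6} realises the full symmetric group $S_3$ by honest automorphisms, so that the transposition interchanging $Y_2$ and $Y_3$ is an automorphism of $\YP$ and already lies in $\Aut(\YYP/S)$. Assume now $\shY\neq\YYP$ and, to fix ideas, that $\shY_c$ is of class $\mathscr{P}$, so that all components and all double curves are smooth (the class $\mathscr{T}$ case reduces to this by a type II flop, cf.\ the proof of Proposition~\ref{orbitmain}). I claim the given $\phi$ must satisfy $\phi(Y_3)=Y_2$, and hence $\phi(Y_1)=Y_1$, so that I may take $\phi_0=\phi$. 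Indeed, the only alternative compatible with $\phi(Y_2)=Y_3$ is that $\phi$ is the $3$-cycle $(Y_1\,Y_2\,Y_3)$. But by Definition~\ref{def:symmetric} a symmetric model carries a further automorphism $\rho$ swapping two components; together $\phi$ and $\rho$ would generate a subgroup of $\Aut(\shY_c)$ surjecting onto $S_3$, forcing all three components to be pairwise isomorphic and all double curves interchangeable. Using the $d$-semistability relation $D_{ij}^2+D_{ji}^2=-2$ (each double curve of a class-$\mathscr{P}$ fibre passes through exactly two triple points), interchangeability of $D_{ij}$ and $D_{ji}$ would force $D_{ij}^2=-1$ for every pair, i.e.\ $\shY_c$ would be the fully symmetric $(-1)$-form $\YP$ (Definition~\ref{def:-1form}, Example~\ref{DNV2}), contradicting $\shY\neq\YYP$. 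Hence $\phi$ is the desired transposition.

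For the second ingredient, which is the heart of the matter, I would appeal to the construction of the maximal model in the proof of Proposition~\ref{lem:unoquedeg}: $\shY\to S$ is the algebraisation of the formal completion of the restriction $\mathfrak{X}'\to V'$ of the semiuniversal analytic deformation $\mathfrak{X}\to V$ of $\shY_c$ to the one-dimensional smooth locus $V'$ on which the chosen generators of $\Pic(\shY_c)$ deform. By semiuniversality, $\phi_0$ extends to an automorphism of $\mathfrak{X}\to V$ covering an automorphism $g$ of the germ $(V,v_0)$; since the maximal smoothing locus is intrinsically characterised as the unique deformation keeping the Picard rank equal to $18+n$ (Lemma~\ref{carlsonunique}), $g$ preserves $V'$. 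Completing along $\shY_c$ and algebraising by Grothendieck's existence theorem then yields an automorphism $\psi$ of $\shY$ restricting to $\phi_0$ on $\shY_c$, with the induced reparametrisation of $\Spec\CC[[t]]$ absorbed exactly as in the uniqueness argument of Proposition~\ref{lem:unoquedeg} (where $t\mapsto at$ with $a$ a unit gives a canonical $S$-isomorphism), so that $\psi\in\Aut(\shY/S)$. By construction $\psi(Y_2)=\phi_0(Y_2)=Y_3$ and $\psi(Y_3)=\phi_0(Y_3)=Y_2$, as required.

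I expect the only delicate point to be the last part: verifying that the lifted automorphism can be taken to lie over $S$, i.e.\ that the base reparametrisation induced by $g$ can be trivialised. This should follow the same pattern used to establish uniqueness of the maximal model, where regularity of the total space constrains the base map to $t\mapsto at$ with $a$ a unit. The construction of $\phi_0$ is, by contrast, an elementary case distinction resting on the rigidity of $(-1)$-forms and the characterisation of $\YP$ as the unique fully symmetric central fibre.
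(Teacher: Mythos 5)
Your first step—reducing to the case where the given $\phi$ is the transposition interchanging $Y_2$ and $Y_3$—is essentially sound and matches the paper's intended reading of Definition~\ref{def:symmetric}: a $3$-cycle together with a transposition would generate a full $S_3$ of component permutations, the triple-point relation $D_{ij}^2+D_{ji}^2=-2$ would then force every double curve to be a $(-1)$-curve, and maximality would identify $\shY_c$ with $\YP$, contradicting $\shY\neq\YYP$. The genuine gap is in your second step, which is the heart of the matter. Versality only produces a lift of $\phi$ covering an automorphism $g$ of the base germ $(V',0)$, hence, after completion and algebraization, an automorphism $\Psi$ of the scheme $\shY$ covering some $\sigma\in\Aut(S)$; the proposition demands an element of $\Aut(\shY/S)$, i.e.\ $\sigma=\operatorname{id}_S$. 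To first order $g$ is multiplication by the scalar $\lambda\in\CC^*$ by which $\phi$ acts on the smoothing direction $H^0(\shY_c,T^1_{\shY_c})\cong\CC$, and nothing forces $\lambda=1$: in the local model $xy=0$ the swap $(x,y)\mapsto(y,x)$ preserves the smoothing $xy=t$, but $(x,y)\mapsto(\lambda x,y)$ preserves it only after rescaling $t\mapsto\lambda t$. Your proposed remedy—absorbing $\sigma$ ``as in the uniqueness argument of Proposition~\ref{lem:unoquedeg}''—does not exist: that argument shows precisely that two maximal models are related by pullback along $t\mapsto at$, i.e.\ uniqueness \emph{up to reparametrization of $S$}; it provides no procedure for converting an isomorphism covering $\sigma\neq\operatorname{id}$ into one covering the identity. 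Worse, your strategy targets a stronger statement than the proposition, namely that the \emph{given} $\phi$ extends over $S$. This can genuinely fail: if $\psi\in\Aut(\shY/S)$ is a swap as in the conclusion and $\alpha\in\Aut(\shY_c)$ fixes every component but acts by $\lambda\neq1$ on the smoothing direction, then $\phi:=\psi|_{\shY_c}\circ\alpha$ is an admissible input to the proposition, yet no lift of it can lie over $\operatorname{id}_S$. A correct proof must therefore be free to replace $\phi$ by a different swap, which your argument never does.

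This is exactly what the paper's proof is engineered to avoid: it never lifts $\phi$ and never leaves the category of $S$-maps. It fixes an $S$-birational map $\rho\colon\shY\dashrightarrow\YYP$, conjugates the automorphism $g_0\in\Aut(\YYP/S)$ of Corollary~\ref{s6} which fixes $Y_1'$ and swaps $Y_2',Y_3'$, obtaining $\psi=\rho^{-1}\circ g_0\circ\rho\in\Bir(\shY/S)$ (so $S$-linearity is built in), and then upgrades $\psi$ to a regular map by a Mori-fan argument: by Proposition~\ref{orbitmain} the orbit of $\Nef(\shY/S)$ consists of three distinct cones $C(\gamma_1),C(\gamma_2),C(\gamma_3)$; every automorphism of $\shY$ fixes $Y_1$, while $\gamma_2,\gamma_3$ move it, so $C(\psi)$ can only be $C(\gamma_1)=\Nef(\shY/S)$; Remark~\ref{stabil} then yields $\psi\in\Aut(\shY/S)$. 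To rescue your deformation-theoretic route you would need an additional ingredient that kills the base action—for instance, exhibiting a component swap acting trivially on $H^0(T^1_{\shY_c})$ together with a linearization or finite-order argument, or invoking that nearby fibres of the DNV family are pairwise non-isomorphic as lattice-polarized $K3$ surfaces so that any automorphism of the total space must lie over $\operatorname{id}_S$—and none of this is present in your proposal.
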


\begin{proof}If $\shY\xrightarrow{\sim}\YYP$ the proposition follows from  Corollary \ref{s6}. 
Hence suppose that $\shY$ is not isomorphic to $\YYP$. 
We will first construct birational isomorphisms $\gamma_i$, $i=1,2,3$ of $\shY$ such that the orbit of $\Nef(\shY/S)$ in $\Morifan(\shY/S)$ is $\{C(\gamma_1), C(\gamma_2), C(\gamma_3) \}$. 
For this we fix a birational map $\phi\colon\shY\dashrightarrow \YYP$.  
Write $\YP=Y'_1\cup Y'_2 \cup Y'_3$   with indices chosen such that $\phi(Y_i)\subset Y'_i$.  Let $g_1$ be the identity on $\YYP$ and choose elements  $g_i \in \Aut(\YYP/S), i=2,3$ with $g_i(Y'_1)=Y'_i$. These exist 
by Corollary \ref{s6}. 
We then define the maps $\gamma_i$ via the commutative diagrams

\[\xymatrix{\shY\ar@{-->}[r]^\phi\ar@{-->}[d]^{\gamma_i} & \YYP\ar[d]^{g_i} \\
\shY\ar@{-->}[r]_\phi & \YYP
}\]
as $\gamma_i=\phi^{-1} \circ g_i \circ \phi$.
In particular, $\gamma_1$ is the identity on $\shY$. 
Now, note that if $\shY_c\in \PMod(\mathscr{T})$ then any automorphism of $\shY$  maps the special component to itself as it permutes the smooth components. With our assumptions, the special component is $Y_1$. 
Similarly, if $\shY_c\in \PMod(\mathscr{P})$, 
any 
automorphism of $\shY$ maps $Y_1$ to itself, as $D^2_{23}=-1$ and $D^2_{12}=D^2_{21}\neq -1$ by symmetry.
So in any event, since  $\gamma_i$ maps  $Y_1$ to $Y_i$ the maps $g_2$ and $g_3$ cannot be automorphisms of $\shY$.

We claim that the cones $C(\gamma_i)$ are pairwise distinct. Indeed, if $C(\gamma_i)=C(\gamma_j)$, there exists, by the above Remark \ref{stabil} an automorphism
 $h\in \Aut(\shY/S)$ with  $\gamma_i=h\circ \gamma_j$.  
 We get \[h\circ \gamma_j(Y_1)\subset Y_j \text{ and } \gamma_i(Y_1)\subset Y_i\] and hence $i=j$. 
 By Proposition \ref{orbitmain},  the orbit of $\Nef(\shY/S)$ in $\Morifan(\shY/S)$ consists of three cones and is thus the set $\{C(\gamma_1), C(\gamma_2), C(\gamma_3) \}$. We now construct the desired automorphism $\psi$. Again by  Corollary \ref{s6}  there is  an automorphism $g_0 \in \Aut(\YYP/S)$  
such that 
\[ g_0(Y'_1)=Y'_1,\quad g_0(Y'_2)=Y'_3 \text{ and } g_0(Y'_3)=Y'_2.\] 
Consider the map $\psi=\phi^{-1}\circ g_0 \circ \phi $.  Because $g_0$ fixes $Y'_1$ it follows that also $\psi$ fixes  $Y_1$. 
So we cannot have $C(\psi)=C(\gamma_i)$ for $i=2,3$ by the same reasoning as before. So necessarily $C(\psi)=C(\gamma_1)$.  It follows now from  Remark \ref{stabil} that $\psi\in \Aut(\shY/S)$ and by construction $\psi$
permutes the components of $\shY$ as claimed.
\end{proof}

\section{Counting models}\label{sec:counting}

In this section we will count the elements of  $\PMod_2$.  We will show that this can be done by counting triples of curve structures.

\subsection{Automorphisms }\label{automorph}
We will need certain automorphisms of components of surfaces in $\PMod_2$. We first calculate the automorphism group of the central fibre $\YP$ of the model $\YYP$. 
 
 Write $Y=\mathfrak{Y}_2$, where $\mathfrak{Y}_2$ is the weak del Pezzo surface of degree $2$ defined in Construction \ref{constructioncomponents}.
 Observe that any automorphism $\gamma$ of $Y$ fixes the set of interior special points $\{p_1,p_2\}$ as it fixes the set of $(-1)$-curves. 
Hence $\gamma$ lifts to an 
automorphism of the $(2,2)$-blow-up
$Y_{(2,2)}$ of $Y$ in $(p_1,p_2)$. Also, any automorphism of $Y_{(2,2)}$ fixes the set of exceptional curves of the blow-up $Y_{(2,2)}\to Y$, as these contain all  $(-1)$-curves, and thus descends to $Y$. 
Applying \cite[I.5.4]{Loo} and \cite[Remark 5.2]{GHK15} to $Y_{(2,2)}$, it follows that $\Aut(Y)$ is a subgroup of the dihedral group $\ZZ_2\times\ZZ_2$.
Indeed, we will show that $\Aut(Y)=\ZZ_2\times\ZZ_2$.

First, we give an alternative construction of $Y$. Let $Q=\PP^1\times \PP^1$. Let $\bar{D}=\bar{D}_1+\bar{D}_2+\bar{D}_3+\bar{D}_4$ be its toric boundary, ordered cyclically. Let $F_1$ be a fibre of the ruling with fibre $\bar{D}_1$ 
that meets $\bar{D}$ in smooth points of $\bar{D}$. Let $p_2$ be the point in $F_1\cap \bar{D}_2 $ and  $p_4$ be the point in $F_1\cap \bar{D}_4$. Similarly,  let $F_2$ be a fibre of the ruling with fibre $\bar{D}_2$ that meets $\bar{D}$ in smooth points of $\bar{D}$ with $p_1$ the point in $F_2\cap \bar{D}_1$ and  $p_3$ the point in $F_2\cap \bar{D}_3$, see Figure \ref{blowupauto}.
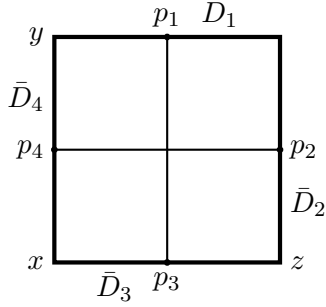
\begin{figure}[]\centering
\begin{tikzpicture}[scale=1]

\begin{scope}[shift={(2,0)}]
\draw[ultra thick] (5,0)--(8,0)--(8,3)--(5,3)-- cycle; 
\draw[thick] (6.5,0) -- (6.5,3); 
\draw[thick] (5,1.5) -- (8,1.5);
\draw[fill=black,thick] (6.5,0) circle (0.03cm);
\draw[fill=black,thick] (6.5,3) circle (0.03cm);
\draw[fill=black,thick] (5,1.5) circle (0.03cm);
\draw[fill=black,thick] (8,1.5) circle (0.03cm);
\node[above] (p1) at (6.5,3) {$\small{p_1}$};
\node[above] (p31) at (7.2,3) {$\small{\bar{D}_1}$};
\node[right] (p2) at (8,1.5) {$\small{p_2}$};
\node[right] (p21) at (8,0.8) {$\small{\bar{D}_2}$};

\node[below] (p3) at (6.5,0) {$\small{p_3}$};
\node[below] (p31) at (5.8,0) {$\small{\bar{D}_3}$};

\node[left] (p4) at (5,1.5) {$\small{p_4}$};
\node[left] (p21) at (5,2.2) {$\small{\bar{D}_4}$};

\node[left] (y) at (5 , 3) {$\small{y}$};
\node[left] (x) at (5 , 0) {$\small{x}$};
\node[right] (z) at (8 , 0) {$\small{z}$};
\end{scope}

\end{tikzpicture}
\caption{ $\PP^1\times\PP^1$ with the points $q_1, q_2, p_1, p_2$.}

\label{blowupauto}
\end{figure}

Let $\tilde{\pi}\colon\tilde{Y}\to Q$ be the $(1,3,1,3)$-blow-up of $Q$ in $(p_1,p_2,p_3,p_4)$.  The birational transforms $\tilde{D}_1$ and $\tilde{D}_3$ of $\bar{D}_1$ and $\bar{D}_3$ respectively 
under $\tilde{\pi}^{-1}$ are $(-1)$-curves. Let $\tilde{Y}\to Y$ be the induced contraction. 
The surface ${Y}$  is a weak del Pezzo surface of degree $2$ with an $E_6$ root system of effective curves and thus isomorphic to $\mathfrak{Y}_2$,  see e.g. the global Torelli theorem in \cite{GHK15}. 

 Using this construction, it is straightforward to produce automorphisms on $Y$. First, being a product of two copies of $\PP^1$, every pair $(\psi_1,\psi_2)$ of automorphisms of $\PP^1$ induces an automorphism of $\PP^1\times\PP^1$. 
Let  $y=\bar{D}_1\cap \bar{D}_4$ and $x=\bar{D}_3\cap \bar{D}_4$. Let $\bar{\phi}=(\phi_1,\id_{\PP^1})$ be the automorphism  of $Q$ that is given on $\bar{D}_4$
 by the automorphism $\phi_1$ of $\PP^1$ defined by
\begin{align*}x&\mapsto y\\
y&\mapsto x\\
p_4&\mapsto p_4,
\end{align*} and by the identity on the second ruling. 
By the universal property of blow-ups, $\bar{\phi}$ lifts to an automorphism $\tilde{\phi}$ of $\tilde{Y}$. By construction, $\tilde{\phi}$ maps  $\tilde{D}_1$ to  $\tilde{D}_3$ and vice versa. Thus $\tilde{\phi}$ descends to an automorphism $\phi$ of $Y$.
Note that $\phi$ acts as involution on each of  the  components of the anticanonical divisor $D=D_2+D_4$
 of $Y$.

We can do the same construction for an automorphism $\bar{\psi}=(\id_{\PP^1},\psi_2)$ of $Q$ with $\psi_2$ the automorphism of $\bar{D}_3$ with \begin{align*}x&\mapsto z\\
z&\mapsto x\\
p_3&\mapsto p_3,
\end{align*} 
where $z$ is the point in $\bar{D}_2\cap\bar{D}_3$. This yields an automorphism $\psi$ of $Y$ interchanging the components $D_2$ and $D_4$. 
As we have seen that $\Aut(Y)$ is a subgroup of $\ZZ_2\times \ZZ_2$, it follows that in fact $\phi$ and $\psi$  generate  $\Aut(Y)=\ZZ_2\times \ZZ_2$.

As a consequence, we have the following straightforward lemma.

\begin{lemma}\label{swap} Let $Y$ be a contraction of an $(n,m)$- blow-up of $\mathfrak{Y}_2$ in the interior special points. Let $D$ be the strict transform of the anticanoncial divisor of $\mathfrak{Y}_2$.  
\begin{itemize}
\item[(i)] Let $D_0$ be a component  of $D$. 
Then there is an involution $\phi^Y\colon Y\to Y$ which restricts to an involution on  $D_0$  fixing the interior special point.
\item[(ii)]Suppose $D=D_2+D_4$ and $D_2^2=D_4^2$. Then there is an involution $\psi^Y\colon Y\to Y$ interchanging $D_2$ and $D_4$ while fixing the points in $D_2\cap D_4$.
\end{itemize}
\end{lemma}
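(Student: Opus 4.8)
The goal is to produce the two involutions $\phi^Y$ and $\psi^Y$ on $Y$, where $Y$ is a contraction of an $(n,m)$-blow-up of $\mathfrak{Y}_2$ in the interior special points. The plan is to bootstrap both statements from the explicit automorphisms $\phi,\psi$ of $\mathfrak{Y}_2$ constructed immediately above the lemma, by tracking them through the blow-up/blow-down procedure defining $Y$. The key observation is that the automorphisms of $\mathfrak{Y}_2$ constructed via the $\PP^1\times\PP^1$ model restrict to prescribed involutions on the anticanonical components $D_2,D_4$ and fix the interior special points, so the only remaining work is to check that they lift compatibly through the modifications.

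First I would set up notation: write $\pi\colon \tilde{Y}\to \mathfrak{Y}_2$ for the $(n,m)$-blow-up in the interior special points $p_1\in D_2$ and $p_2\in D_4$, and $\tilde{Y}\to Y$ for the contraction defining $Y$. For part (i), fix the component $D_0$, say $D_0=D_2$. The involution $\phi$ of $\mathfrak{Y}_2$ constructed above restricts to an involution on each component of $D$ and fixes the interior special point on each component (this is exactly the content of the construction: $\phi$ is built from an automorphism of $\PP^1\times\PP^1$ fixing $p_2,p_4$ and hence, after passing to $\mathfrak{Y}_2$, fixes the interior special points $p_1,p_2$ on $D_2,D_4$). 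Since $\phi$ fixes the interior special point $p_1\in D_2$ and restricts to an involution there, the $n$-fold iterated blow-up at $p_1$ (and likewise the $m$-fold blow-up at $p_2$) is $\phi$-equivariant by the universal property of blow-ups: the blown-up point and the successive infinitely near points all lie in the fixed locus of the restriction of $\phi$ to the strict transform of $D_2$. Thus $\phi$ lifts to $\tilde\phi$ on $\tilde Y$, and because $\tilde\phi$ preserves the set of $(-1)$-curves being contracted, it descends to an involution $\phi^Y$ on $Y$. By construction $\phi^Y$ restricts to an involution on $D_0$ fixing the interior special point, which is what is claimed.

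For part (ii), under the hypothesis $D_2^2=D_4^2$ on $Y$, the symmetry forces the number of blow-ups at $p_1$ and at $p_2$ to agree, i.e. $n=m$ (the self-intersections of the strict transforms drop by one with each blow-up, so equality of the final self-intersections forces equality of the number of blow-ups). Now I would use the automorphism $\psi$ of $\mathfrak{Y}_2$ constructed above, which interchanges $D_2$ and $D_4$. Because $n=m$, the blow-up data at $p_1$ and $p_2$ are interchanged by $\psi$, so $\psi$ again lifts through the $(n,n)$-blow-up to an automorphism $\tilde\psi$ of $\tilde Y$ (here one uses that $\psi$ fixes the interior special points, carrying the tower over $p_1$ to the tower over $p_2$), and $\tilde\psi$ descends to an involution $\psi^Y$ of $Y$ interchanging $D_2$ and $D_4$. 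One checks that $\psi$ fixes the two nodes $D_2\cap D_4$, which is visible in the $\PP^1\times\PP^1$ picture since $\psi$ is induced from an automorphism fixing the relevant torus-fixed points; this property is inherited by $\psi^Y$.

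The main obstacle will be the equivariant lifting of the iterated (non-reduced, i.e.\ infinitely near) blow-ups: one must verify carefully that each successive center of the $n$-fold blow-up lies in the fixed locus (for part (i)) or is correctly matched up with the corresponding center over the other point (for part (ii)). This reduces to checking that the relevant automorphism of $\mathfrak{Y}_2$ restricts to an involution on the strict transform of $D_i$ at each stage and fixes (respectively swaps) the point $\mathrm{ex}(\pi)\cap D_i'$ appearing in the inductive definition of the $n$-fold blow-up. Since the automorphisms in question act as honest involutions on the smooth rational curves $D_i$ with a fixed point at the interior special point, this is a routine but necessary induction on the number of blow-ups, and I would only sketch it rather than write it out in full.
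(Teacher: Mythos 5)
Your part (i) is correct and is essentially the paper's own argument: $\phi$ fixes both interior special points of $\mathfrak{Y}_2$, hence lifts through any $(n,m)$-blow-up; the lift maps every interior negative curve to itself, so it descends through the contraction and restricts to an involution of $D_0$ fixing the interior special point.

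Part (ii) has a genuine gap. You claim that $D_2^2=D_4^2$ on $Y$ forces $n=m$, justified by ``the self-intersections drop by one with each blow-up.'' This ignores that $Y$ is a \emph{contraction} of the $(n,m)$-blow-up $\tilde Y$: the contraction raises the self-intersections of the strict transforms back up. If $\tilde Y\to Y$ contracts $a$ curves on the $D_2$-side and $b$ on the $D_4$-side, then on $Y$ one has $D_2^2=-1-n+a$ and $D_4^2=-1-m+b$, so the hypothesis only yields $n-a=m-b$. Concretely, present $Y=\mathfrak{Y}_2$ itself as the contraction of its $(1,0)$-blow-up (blow up the interior special point on $D_2$ once, then blow the exceptional curve back down): here $D_2^2=D_4^2=-1$ but $n=1\neq 0=m$. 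This is not a cosmetic issue, because when $n\neq m$ the automorphism $\psi$ does \emph{not} lift to $\tilde Y$ at all: $\psi$ interchanges the two centers, so a lift exists only when the two towers of infinitely near points have equal length. Your argument therefore fails for every asymmetric presentation, while the lemma allows arbitrary ones.

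The repair needs an extra step that the paper leaves implicit. At every stage of the contraction the only interior $(-1)$-curves are the two ends of the chains hanging off $D_2$ and $D_4$ (Proposition \ref{pro:canflops} and the curve structure formalism of Section \ref{sec:curvestructures}), so the contraction necessarily removes initial segments of these two chains, and $Y$ depends, as an anticanonical pair, only on the differences $n-a$ and $m-b$. One may therefore replace the given presentation by a symmetric one, with $n=m$ and $a=b$ (a $(k,k)$-blow-up if $k=n-a\geq 0$, a symmetric contraction of $\mathfrak{Y}_2$ if $k<0$), after which your lifting argument goes through; note that the equality $a=b$ is also exactly what makes the contracted locus $\tilde\psi$-invariant, a point needed for the descent which you assert but never check. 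Two smaller slips: $\psi$ \emph{swaps} the interior special points rather than fixing them (your parenthetical says the opposite of what your tower-matching argument uses), and $\psi$ fixes the two nodes $D_2\cap D_4$ because it preserves each of the contracted curves $\tilde D_1,\tilde D_3$ of the $\PP^1\times\PP^1$ model, not because it fixes torus-fixed points of $\PP^1\times\PP^1$ (it does not).
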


\begin{proof}The morphism $\phi$ lifts to any  $(n,m)$- blow-up of $\mathfrak{Y}_2$ in the interior special points and maps any interior $(-1)$ curve to itself, so it descends to an automorphism of $Y$. Similarly, $\psi$ induces an automorphism $\psi^Y$ if $D$ has two components $D_2,D_4$ with $D_2^2=D_4^2$.
\end{proof}

\begin{lemma}\label{isocurves}Let $Y,Y'$ be components of surfaces  in $\PMod_2(\mathscr{P})$. Let $D=D_1+D_2$ and $D'=D'_1+D'_2$ be the double loci. Let  $\alpha\colon Y\xrightarrow{\sim}Y'$ be an isomorphism, mapping $D$ to $D'$. 
Then $\Gamma_Y\xrightarrow{\sim}\Gamma_{Y'}$.
\end{lemma}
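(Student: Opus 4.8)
The plan is to show that an isomorphism $\alpha\colon Y \xrightarrow{\sim} Y'$ of components carrying $D$ to $D'$ induces a bijection of the distinguished curve sets $C(Y)$ and $C(Y')$, and that this bijection preserves the combinatorial data recorded by the curve structures. The key observation is that the sets $C(Y)$ and $C(Y')$ are defined intrinsically once we know $D$ and $D'$: by construction they consist of (certain) negative curves not contained in the respective anticanonical cycles. More precisely, for components of surfaces in $\PMod_2(\mathscr{P})$ the curve structure has type $d_2$, and by Proposition \ref{pro:canflops} together with the inductive definition of $C(Y)$, the set $C(Y)$ is precisely the set of integral curves $C \subset Y^\nu$ with $C^2 < 0$ that are not components of $D$. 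Since $\alpha$ is an isomorphism it preserves self-intersection numbers and irreducibility, and since $\alpha(D) = D'$ it sends curves not contained in $D$ to curves not contained in $D'$; hence $\alpha$ restricts to a bijection $C(Y) \to C(Y')$.

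First I would verify the intrinsic characterization $C(Y) = \{\,C \subset Y^\nu : C \text{ integral}, C^2 < 0, C \not\subset D\,\}$ for the relevant components. This is exactly the content established in the proof of Proposition \ref{pro:canflops} for the surfaces obtained from $\mathfrak{Y}_2$ by $(n,m)$-blow-ups, and it is recorded in the Remark following Definition \ref{def:curvestructure} that for such surfaces the inductively defined $C(Y)$ coincides with this negative-curve set. Since every component $Y$ of a surface in $\PMod_2(\mathscr{P})$ is of type $d_2$ and hence arises in this way (its double locus $D$ has exactly two components and $Y^\nu$ is a blow-up/blow-down of $\mathfrak{Y}_2$), the characterization applies to both $Y$ and $Y'$.

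Next I would transport the graph structure. An isomorphism of graphs $\Gamma_Y \to \Gamma_{Y'}$ is, by definition, a bijection of vertex sets preserving intersection numbers that extends to a bijection of the augmented graphs $\Gamma_Y^a \to \Gamma_{Y'}^a$. The vertex bijection is given by $C \mapsto \alpha(C)$. Because $\alpha$ is an isomorphism of surfaces, $\alpha(C).\alpha(C'') = C.C''$ for all curves, so intersection numbers among the $C(Y)$ (and their self-intersection labels) are preserved. For the augmented structure, recall that $\Gamma_Y^a$ is obtained by adjoining one vertex $v_{D_i}$ for each component $D_i$ of $D$ and joining $v_{D_i}$ to $v \in \Gamma_Y$ whenever $D_i.C_v \neq 0$; since $\alpha(D_1) = D'_{1}$ and $\alpha(D_2) = D'_2$ (after possibly relabelling, as $\alpha$ maps $D$ to $D'$ componentwise, both being reduced with two irreducible components), the incidences $D_i.C_v \neq 0$ are preserved under $\alpha$. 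Thus the vertex bijection extends to $\Gamma_Y^a \to \Gamma_{Y'}^a$, giving the required isomorphism of curve structures.

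The main subtlety, and the step I would treat most carefully, is the matching of the anticanonical components: $\alpha$ sends $D = D_1 + D_2$ to $D' = D'_1 + D'_2$ as divisors, but one must ensure the induced correspondence respects the labelling used in the augmented structure (i.e.\ that $\alpha(D_i)$ is one of $D'_1, D'_2$, not a curve split between them). This is immediate since $\alpha$ maps irreducible curves to irreducible curves and the $D_i, D'_j$ are the irreducible components of the double loci; hence $\{\alpha(D_1), \alpha(D_2)\} = \{D'_1, D'_2\}$. After this identification the extension to the augmented graphs is forced, and the remaining verifications are the routine preservation of intersection numbers already noted. Thus $\Gamma_Y \xrightarrow{\sim} \Gamma_{Y'}$, completing the argument.
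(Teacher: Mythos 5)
There is a genuine gap, and it sits exactly at the step you flag as your ``key observation.'' You claim that for a component $Y$ of a surface in $\PMod_2(\mathscr{P})$ the set $C(Y)$ is \emph{precisely} the set of integral curves $C\subset Y^\nu$ with $C^2<0$ not contained in $D$. That characterization is only established (in Proposition \ref{pro:canflops} and the remark after Definition \ref{def:curvestructure}) for surfaces obtained from $\mathfrak{Y}_2$ by $(n,m)$-\emph{blow-ups} in the interior special points. A general component of a surface in $\PMod_2(\mathscr{P})$ is obtained by a sequence of blow-ups \emph{and blow-downs}, and after a blow-down the inductively defined set $C(Y)$ typically contains curves of self-intersection $\geq 0$: see the degenerate curve structures in Figure \ref{degcurve}, whose vertices carry labels $0$, $1$, and $m\geq -1$, or Example \ref{trailingcurveexample}, where a curve of the structure has self-intersection $0$, or Proposition \ref{degample2}, where $\Gamma_Y$ is a singleton whose underlying curve is not negative. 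For such a component the set of negative interior curves is strictly smaller than $C(Y)$ (it can even be empty while $\Gamma_Y$ is not), so the identity you assert fails, and with it the conclusion that $\alpha$ restricts to a bijection $C(Y)\to C(Y')$. The rest of your argument (preservation of intersection numbers, matching of the irreducible components $D_i$ and hence of the augmented graphs) is fine \emph{granted} such a bijection, but the bijection itself is exactly what is missing.

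Two ways to close the gap. The paper's own route avoids transporting curves altogether: for type $d_2$ the isomorphism class of $\Gamma_Y$ is determined by discrete invariants of the pair $(Y,D)$ --- whether the structure is regular resp.\ degenerate, and the self-intersection numbers $D_1^2, D_2^2$ --- and all of these are preserved by $\alpha$ since $\alpha(D_i)^2=D_i^2$; this is a two-line argument. Alternatively, your transport strategy can be repaired by invoking the lemma preceding Definition \ref{def:curvestructure}: its proof shows that $C(Y)$ is canonically attached to the pair $(Y,D)$ via a roof $Y^\nu \leftarrow \tilde Y \rightarrow \mathfrak{Y}_2$, where $\tilde Y\to Y^\nu$ is the blow-up in the interior special points of exactly those $D_j$ with $D_j^2>-1$, a recipe determined by the self-intersection data alone. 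An isomorphism of pairs $(Y,D)\to(Y',D')$ then carries this canonical resolution, hence the set $C(Y)$, to the corresponding one for $Y'$, and your argument about intersection numbers and augmented graphs goes through from there. Either repair is acceptable, but as written the proof does not stand.
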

\begin{proof} Let $p_1$ and $p_2$ be the interior special points of $Y$ and $q_1, q_2$ be those of $Y'$, assuming $q_i\in \alpha(D_i)$.
The assumptions imply that  $\Gamma_Y$ is regular (degenerate) if and only if  $\Gamma_{Y'}$  is regular (degenerate). 
Then the result follows as  $D_i^2=\alpha(D_i)^2$.
\end{proof}

\begin{proposition}\label{isomcomp}Let $Y$, $Y'$ be components
 of surfaces in $\PMod_2$.  
 Suppose the curve structures $\Gamma_Y, \Gamma_Y'$ are isomorphic and of the same type (see Definition \ref{def:curvestructure}).
 Let $D$, $D'$ be the double curves of $Y$ and $Y'$ respectively. Then there is an isomorphism $Y\xrightarrow{\sim} Y'$ which identifies $D$ and $D'$.
\end{proposition}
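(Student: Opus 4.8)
The plan is to reduce everything to a statement about the fixed surfaces $\mathfrak{Y}_i$ of Construction \ref{constructioncomponents} and their automorphisms. First I would pass to the normalisations $\nu\colon Y^\nu\to Y$ and $\nu'\colon (Y')^\nu\to Y'$ together with the anticanonical cycles $D^\nu=\sum_j D_j$ and $(D')^\nu=\sum_j D'_j$. Since the two curve structures have the same type $d_i$, both $Y^\nu$ and $(Y')^\nu$ arise from the same $\mathfrak{Y}_i$. As in the proof that $C(Y)$ is well defined, each of them fits into a canonical roof
\[
\xymatrix{
& \tilde Y \ar[dl]_{\psi} \ar[dr]^{\pi} & \\
Y^\nu & & \mathfrak{Y}_i
}
\]
where $\tilde Y$ is an iterated blow-up of $\mathfrak{Y}_i$ at the interior special points (and their forced infinitely near points along the boundary), $\pi$ and $\psi$ contracting exactly the boundary legs prescribed by the self-intersection labels in $\Gamma_Y$. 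The essential feature to exploit is that these blow-ups carry no moduli: the interior special points $p_j$ are the canonical points $D_j\cap E_j$, and each further centre is forced onto the strict transform of $D_j$, so $\tilde Y$, together with the loci contracted by $\psi$ and $\pi$, is completely determined by the multiplicity vector $\mathbf n=(n_1,\dots,n_i)$ and the labelling of the boundary.

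Next I would read off from the isomorphism $\Gamma_Y\xrightarrow{\sim}\Gamma_{Y'}$ of augmented curve structures the data comparing $\tilde Y$ with $\tilde Y'$. The isomorphism matches the curves $C_v$ with their images preserving all intersection numbers and, extending to $\Gamma_Y^a\to\Gamma_{Y'}^a$, induces a bijection $\sigma$ of the boundary vertices. Exactly as in Lemma \ref{isocurves}, the augmented structure determines the self-intersections $D_j^2$, so $\sigma$ pairs boundary components of equal self-intersection; the lengths of the legs attached to the exceptional vertices then force the multiplicity vectors to satisfy $\mathbf n'=\sigma\cdot\mathbf n$. Hence $\tilde Y'$ is the $\sigma\cdot\mathbf n$-blow-up of $\mathfrak{Y}_i$ at the permuted special points $(p_{\sigma(1)},\dots,p_{\sigma(i)})$.

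The heart of the argument is to realise $\sigma$ by an honest automorphism $g\in\Aut(\mathfrak{Y}_i)$ carrying $p_j$ to $p_{\sigma(j)}$ and $D_j$ to $D_{\sigma(j)}$. For $i=1$ the permutation is trivial, while for the involution interchanging the two boundary components of $\mathfrak{Y}_2$ this is precisely the content of the computation $\Aut(\mathfrak{Y}_2)=\ZZ_2\times\ZZ_2$ together with Lemma \ref{swap}; the special component ($i=4$) is treated by the analogous toric automorphisms of $\mathfrak{Y}_4$. Given such $g$, the equality $\mathbf n'=\sigma\cdot\mathbf n$ and the universal property of blowing up lift $g$ to an isomorphism $\tilde g\colon\tilde Y\to\tilde Y'$ sending the exceptional configuration of $\pi$ to that of $\pi'$ and $D^\nu$ to $(D')^\nu$ compatibly with $\sigma$. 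Since $\sigma$ matches the curves contracted by $\psi$ with those contracted by $\psi'$, the map $\tilde g$ descends to $Y^\nu\xrightarrow{\sim}(Y')^\nu$ with $D_j\mapsto D'_{\sigma(j)}$. In the class $\mathscr{P}$ case the components are smooth, $Y=Y^\nu$, and this already gives the claim; in the non-normal special case one checks that $\tilde g$ respects the identification of the two glued boundary components (recorded in the extension to the augmented structures), so that $\tilde g$ descends through the normalisations to the desired $Y\xrightarrow{\sim}Y'$ identifying $D$ with $D'$.

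The main obstacle is this third step: promoting the abstract bijection $\sigma$ to a geometric automorphism of $\mathfrak{Y}_i$ and checking that it lifts compatibly with the forced blow-up centres. This is where the rigidity of the construction (absence of moduli in the iterated boundary blow-ups) and the explicit description of $\Aut(\mathfrak{Y}_i)$ are indispensable; without them the graph isomorphism would only yield an isometry of Picard lattices, and passing from such an isometry to an isomorphism of surfaces would require the full Torelli theorem for anticanonical pairs together with control of the period point. A secondary, purely bookkeeping point is the verification that $\mathbf n'=\sigma\cdot\mathbf n$ and that $\sigma$ preserves the boundary self-intersections, which is routine given the description of the graphs $\Gamma_i^{\mathbf n}$.
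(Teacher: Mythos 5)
Your proposal is correct and follows essentially the same route as the paper's proof: both pass to the canonical roof over $\mathfrak{Y}_i$, exploit the rigidity (absence of moduli) of the forced blow-up centres at the interior special points, realise the required matching of boundary components by an (auto)morphism of $\mathfrak{Y}_i$ --- exactly the content of the $\Aut(\mathfrak{Y}_2)$ computation and Lemma \ref{swap} --- and then descend because the lifted isomorphism identifies the contracted extremal configurations. The only structural difference is that the paper first reduces types $d_1$ and $d_4$ to the $d_2$ case (viewing them as blow-ups/contractions at nodes of the double curve), whereas you treat all types, including the non-normal special component, directly; this is a cosmetic rather than substantive divergence.
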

\begin{proof} It is enough to  proof the proposition in the type $d_2$
 case, as the proof also implies the $d_1$ and the  $d_4$ cases, these being blow-ups or contractions in nodes of the double curves. 
Let $D=D_1+D_2$ and $D'=D'_1+D'_2$. Write $D_i^2=n_i$ and $(D'_i)^2=n'_i$. As $\Gamma_Y\cong \Gamma_{Y'}$, we can choose indices such that  $n_i=n'_i$. Let $p_i$ and $p'_i$ be the interior special points. If $n_i \geq 0$, 
blow up $Y$ in $p_i$ until $n_i=-1$, and the same for $Y'$. Write $\tilde{Y}$ and $\tilde{Y'}$ for the blow-ups.  These are $(n,m)$-blow-ups
of $\mathfrak{Y}_2$ and thus  we obtain a diagram 
\[
\xymatrix{
&\tilde{Y}\ar[ld]_{\pi}\ar[d]\ar[r]^{\tilde{\alpha}} &\tilde{Y'}\ar[d]\ar[rd]^{\pi'}&\\
Y&\mathfrak{Y}_2\ar[r]^{\sim}&\mathfrak{Y}_2&Y'
}
\]
with $\tilde{\alpha}$ an isomorphism lifting the isomorphism $\mathfrak{Y}_2\xrightarrow{\sim}\mathfrak{Y}_2$, where we can assume that $\tilde{D}_i$ is mapped to $\tilde{D}'_i$. 
\
Let $\Gamma$ and $\Gamma'$ be the intersection graph of integral curves $C$  of $Y$ and $Y'$ with $C^2<0$,  and $C$ not a component of the double curve. 

Then $\tilde{\alpha}$ identifies $\Gamma$ and $\Gamma'$.  Write $\pi''= \pi'\circ\tilde{\alpha}$. Then the extremal cones of $\pi$ and $\pi''$ agree and hence
there is an isomorphism $\alpha\colon Y\to  Y'$ mapping $D_i$ to $D'_i$ and inducing an isomorphism $\Gamma_{Y}\to \Gamma_{Y'}$, see e.g. \cite[Proposition 1.14]{Deb13}.
\end{proof}

For the following, recall our convention that for components $Y_i, Y_j $ of a semistable K3 surface,  the self-intersection number of $Y_i\cap Y_j$ is calculated on $Y_i$.

\begin{lemma}\label{gluealpha} Let $\shY_c,\shY_c'$ be in $\PMod_2(\mathscr{P})$.   
Write $ \shY_c=Y_1 \cup  Y_2 \cup Y_3$ and $\shY'_c=Y'_1 \cup  Y'_2 \cup Y'_3$. Assume $D^2_{21}\neq D_{13}^2$. Suppose that there exist a permutation 
 $\sigma\in \shS_3$ and  isomorphisms of curve structures $\alpha^\Gamma_i\colon \Gamma_{Y_i}\xrightarrow{\sim} \Gamma_{Y'_{\sigma(i)}}$ 
 such that $(Y'_{\sigma(1)}\cap Y'_{\sigma(2)})^2=D^2_{12}$.
Then there is an isomorphism $\alpha\colon \shY_c \xrightarrow{\sim} \shY_c'$.

\end{lemma}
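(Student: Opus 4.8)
The plan is to construct the isomorphism $\alpha$ component by component and then check that the components agree on the double curves so that they glue. First I would use Proposition \ref{isomcomp}: since each $\alpha_i^\Gamma\colon\Gamma_{Y_i}\xrightarrow{\sim}\Gamma_{Y'_{\sigma(i)}}$ is an isomorphism of curve structures of the same type (the type being determined by the number of components of the double locus, which is preserved because all surfaces are of class $\mathscr{P}$ and hence all $Y_i$ have $d_2$-type curve structures with $D=D_{ij}+D_{ik}$), there is an isomorphism $\alpha_i\colon Y_i\xrightarrow{\sim}Y'_{\sigma(i)}$ identifying the respective double loci. The issue is that each $\alpha_i$ is only determined up to an automorphism of $Y'_{\sigma(i)}$, and I must arrange the freedom in these automorphisms so that the three maps agree on the glued curves.

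The key structural input is the hypothesis $D_{21}^2\neq D_{13}^2$ together with the normalization $(Y'_{\sigma(1)}\cap Y'_{\sigma(2)})^2=D_{12}^2$. Recall that on a component $Y_i$ the double curve has two branches $D_{ij}$ and $D_{ik}$, and by Lemma \ref{isocurves} an isomorphism of components respecting curve structures matches up the branches according to their self-intersection numbers on $Y_i$. The plan is to read off from the self-intersection data which branch of $Y'_{\sigma(i)}$ the map $\alpha_i$ sends each branch $D_{ij}$ to, and to check these choices are forced (or can be forced) to be mutually compatible around the triangle. Concretely, $\alpha_1$ sends the pair $(D_{12},D_{13})$ to a pair of branches on $Y'_{\sigma(1)}$; the condition $(Y'_{\sigma(1)}\cap Y'_{\sigma(2)})^2=D_{12}^2$ fixes $\alpha_1(D_{12})=Y'_{\sigma(1)}\cap Y'_{\sigma(2)}$, and the inequality $D_{21}^2\neq D_{13}^2$ will be what rules out an ``unwanted'' matching that would otherwise be permitted by abstract curve-structure isomorphism, thereby pinning down $\sigma$ and the branch assignments uniquely. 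I would then verify that the three branch-assignments are consistent, i.e. that whenever $Y_i$ and $Y_j$ are glued along $D_{ij}\sim D_{ji}$, the maps $\alpha_i$ and $\alpha_j$ send $D_{ij}$ and $D_{ji}$ to the same curve $Y'_{\sigma(i)}\cap Y'_{\sigma(j)}$ (in the two surfaces being glued).

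Having matched the branches correctly, the remaining task is to adjust $\alpha_i$ by automorphisms so that the induced maps on the glued $\mathbb P^1$'s literally coincide under the gluing identification. Here I would invoke Lemma \ref{swap}: the component $Y'_{\sigma(i)}$ (a contraction of an $(n,m)$-blow-up of $\mathfrak Y_2$) admits an involution $\phi^{Y}$ restricting to an involution on any chosen boundary component $D_0$ fixing the interior special point, and if the two boundary branches have equal self-intersection, an involution $\psi^Y$ swapping them. Since an isomorphism $\mathbb P^1\to\mathbb P^1$ is determined by the images of three points, and both $\alpha_i$ and $\alpha_j$ send the interior special point and the two nodes of $D_{ij}$ to the corresponding distinguished points on $Y'_{\sigma(i)}\cap Y'_{\sigma(j)}$, the two restrictions agree up to the $\mathbb Z_2\times\mathbb Z_2$ of automorphisms supplied by Lemma \ref{swap}. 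I would use these involutions to correct the $\alpha_i$ one branch at a time, checking the corrections do not disturb previously matched branches (this is where the explicit action of $\phi^Y,\psi^Y$ on the anticanonical components, and the fact that they fix interior special points, is essential). Once all three restrictions agree on the glued curves, the $\alpha_i$ glue to a morphism $\alpha\colon\shY_c\xrightarrow{\sim}\shY_c'$, which is an isomorphism since it is so on each component.

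I expect the main obstacle to be the bookkeeping in this last gluing step: one must simultaneously control the matching of \emph{both} branches on each of the three components, and show that the automorphism corrections available from Lemma \ref{swap} suffice to make all three pairwise identifications consistent without overdetermining the system. The hypothesis $D_{21}^2\neq D_{13}^2$ is what breaks the potential symmetry and prevents a monodromy obstruction around the triangle of components; verifying carefully that this inequality indeed forces a consistent (non-contradictory) set of branch assignments, and that no further parity obstruction arises when the three $\mathbb P^1$-identifications are imposed at once, is the delicate part of the argument.
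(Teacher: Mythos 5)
Your proposal takes essentially the same route as the paper's proof: Proposition \ref{isomcomp} supplies the component isomorphisms $\alpha_i\colon Y_i\to Y'_{\sigma(i)}$, the hypothesis $(Y'_{\sigma(1)}\cap Y'_{\sigma(2)})^2=D_{12}^2$ pins down the branch matching, and the involutions of Lemma \ref{swap} are used to correct the $\alpha_i$ so that they send the triple points $t_1,t_2$ and the interior special points $p_{ij}$ to their counterparts, whereupon the maps glue because isomorphisms of $\PP^1$ agreeing at three distinct points coincide. The paper's proof is exactly this argument, only written more tersely (the three-point gluing step is left implicit in ``by construction, the $\alpha_i$ glue''), so your plan, including the points you flag as delicate, is on target.
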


\begin{proof}By Proposition \ref{isomcomp}, for each $\alpha^\Gamma_i$ there is an induced  isomorphism $\alpha_i\colon Y_i \to Y'_{\sigma(i)}$ .
We write $t_1$, $t_2$ for the triple points of $Y$ .  
The interior special point contained in a component $D_{ij}\subset Y_i$ is denoted by $p_{ij}$. We use $t'_1$, $t'_2$  and $p'_{ij}$ for the triple points and interior 
special points on $Y'$, where we assume $\alpha_1(t_i)=t'_i$.
We 
have $\alpha_1(p_{12})=p'_{12}$ and  as  $(Y'_{\sigma(1)}\cap 
Y'_{\sigma(2)})^2=D^2_{12}$, using Lemma \ref{swap}, we can find an isomorphism $\alpha_2\colon 
Y_2\to Y'_{\sigma(2)}$  with  $\alpha_2(p_{21})=p'_{21}$,
 $\alpha_2(p_{23})=p'_{23} $,  $
\alpha_2(t_{1})=t'_{1} $ and $\alpha_2(t_{2})=t'_{2} $, by the (proof of) Proposition \ref{isomcomp}. Similarly, there is  an isomorphism $\alpha_3\colon 
Y_3\to Y'_{\sigma(3)}$  with  $\alpha_3(p_{31})=p'_{31} $,  $\alpha_3(p_{32})=p'_{32} $, $
\alpha_3(t_{1})=t'_{1} $, and $\alpha_3(t_{2})=t'_{2} $. By construction, the $\alpha_i$ glue to an isomorphism  $\alpha\colon Y\xrightarrow{\sim} Y'$.
\end{proof}

The following proposition says that in order to count models, it will be enough to count curve structures. We will first formulate this for models of class $\mathscr{P}$ and then extend it to models of class $\mathscr{T}$.

\begin{proposition}\label{isomorphismprop}
Let $\shY$, $\shY'$ be two models of the DNV family of degree $2$
with $\shY_c, \shY'_c$ in $\PMod_2(\mathscr{P})$ and components $Y_i$ and $Y'_i$ respectively. Then $\shY$ and $\shY'$  are isomorphic if and only if   there are isomorphisms of curve structures $\Gamma_{Y_i}\to \Gamma_{Y'_{\sigma(i)}}$
for some permutation $\sigma \in \shS_3$. 
\end{proposition}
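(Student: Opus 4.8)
The statement is an ``if and only if'', and the forward direction is essentially trivial: if $\shY\cong\shY'$ as models over $S$, the isomorphism restricts to an isomorphism of central fibres $\shY_c\xrightarrow{\sim}\shY'_c$ which carries components to components, inducing a permutation $\sigma\in\shS_3$ together with isomorphisms $Y_i\xrightarrow{\sim}Y'_{\sigma(i)}$ identifying double loci; by Lemma \ref{isocurves} these yield isomorphisms of curve structures $\Gamma_{Y_i}\to\Gamma_{Y'_{\sigma(i)}}$. The real content is the converse, and the plan is to promote the given curve-structure isomorphisms to an isomorphism of the central fibres, and then to lift this to an isomorphism of the total spaces using the uniqueness of maximal smoothings.

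\textbf{Key steps for the converse.} First I would use Proposition \ref{isomcomp} to turn each isomorphism of curve structures $\Gamma_{Y_i}\to\Gamma_{Y'_{\sigma(i)}}$ into an isomorphism of components $Y_i\xrightarrow{\sim}Y'_{\sigma(i)}$ identifying the respective double curves. The obstacle is that these component isomorphisms need not glue: along each double curve $D_{ij}$ the two chosen isomorphisms may disagree, and moreover there is a numerical compatibility to check, namely that the self-intersections match up consistently across the gluing so that the three isomorphisms can be made to agree on triple points and interior special points. This is exactly the content of Lemma \ref{gluealpha}, which produces a global isomorphism $\alpha\colon\shY_c\xrightarrow{\sim}\shY'_c$ \emph{under the hypothesis} $D^2_{21}\neq D^2_{13}$ (this inequality is what pins down the permutation and lets one use the freedom in $\Aut(Y_i)=\ZZ_2\times\ZZ_2$ from Lemma \ref{swap} to adjust the component isomorphisms so they glue). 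I would therefore split into cases according to whether such a distinguishing inequality among the $D^2_{ij}$ holds; when it does, Lemma \ref{gluealpha} applies directly. The degenerate case where the self-intersection pattern is symmetric (so $\shY_c$ is symmetric in the sense of Definition \ref{def:symmetric}) must be handled separately, using Lemma \ref{swap}(ii) to build the swapping involution and the automorphisms of $\YP$ from Corollary \ref{s6} to absorb the remaining ambiguity; Proposition \ref{prop:syminvolution} guarantees the needed symmetry automorphisms exist.

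\textbf{Lifting to total spaces.} Once an isomorphism $\alpha\colon\shY_c\xrightarrow{\sim}\shY'_c$ of central fibres is in hand, I would invoke the uniqueness part of Proposition \ref{lem:unoquedeg}: both $\shY\to S$ and $\shY'\to S$ are maximal degenerations of their central fibres (being models of the DNV family, by Definition \ref{def:DNVmodel}), and a maximal $d$-semistable $K3$ surface with $c_Y=1$ admits an up-to-isomorphism unique maximal Kulikov model. Since $\alpha$ identifies the two central fibres, and since maximality forces $\Pic(\shY/S)\cong\Pic(Y_c)$ on both sides via restriction, the isomorphism $\alpha$ transports the ample bundle and the deformation data, and the formal/semiuniversality argument in the proof of Proposition \ref{lem:unoquedeg} shows $\shY\to S$ and $\shY'\to S$ are isomorphic over $S$ compatibly with $\alpha$.

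\textbf{Main obstacle.} The hardest part is the gluing step and its case analysis: matching the component isomorphisms along the double curves and at the triple points, and in particular disentangling the symmetric case where the self-intersection data does not uniquely determine the permutation. This is precisely why the paper isolates Lemmas \ref{swap} and \ref{gluealpha} and the symmetry results of the previous section; the bulk of the argument here consists of reducing the general situation to the hypotheses of Lemma \ref{gluealpha} and treating the symmetric exceptional case by hand.
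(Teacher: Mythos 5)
Your skeleton coincides with the paper's (reduce to an isomorphism of the central fibres, build component isomorphisms with Proposition \ref{isomcomp}, glue, and conclude by uniqueness of the DNV family via Proposition \ref{lem:unoquedeg}; forward direction via Lemma \ref{isocurves}), but your case analysis for the converse has a genuine gap. Lemma \ref{gluealpha} does not ``apply directly'' whenever some self-intersection numbers differ: besides the inequality $D^2_{21}\neq D^2_{13}$ it assumes the compatibility hypothesis that the given curve-structure isomorphisms induce the correct matching of double curves, namely $(Y'_{\sigma(1)}\cap Y'_{\sigma(2)})^2=D^2_{12}$, and nothing in the hypotheses of the proposition guarantees this: the isomorphism $\Gamma_{Y_1}\to\Gamma_{Y'_{\sigma(1)}}$ may carry $D_{12}$ to the double curve shared with $Y'_{\sigma(3)}$ instead. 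The bulk of the paper's proof is devoted to exactly this mismatch case: one shows that $(Y'_{\sigma(1)}\cap Y'_{\sigma(2)})^2=D^2_{13}$ forces the cyclic pattern $D^2_{21}=D^2_{13}=D^2_{32}=a$ and $D^2_{12}=D^2_{31}=D^2_{23}=-a-2$, that non-degeneracy bounds $a$ so that all curve structures are regular and determined by these intersection numbers, hence all components of both central fibres are pairwise isomorphic, and the gluing can then be done directly.

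Your proposed fallback case does not catch this. The models occurring in the mismatch case are the ``cyclic'' ones, i.e.\ the triples $(n,n,n)$ in the notation of Section \ref{sec:countP}; for these $D^2_{21}=D^2_{13}$ holds for \emph{every} labelling of the components, so Lemma \ref{gluealpha} is never applicable to them, and for $n\neq 0$ they are \emph{not} symmetric in the sense of Definition \ref{def:symmetric} (by Proposition \ref{numbsym} the symmetric class-$\mathscr{P}$ models are the $(0,n,-n)$ together with one non-regular model). Concretely, take $\shY=\shY'$ the model $(1,1,1)$ and $\sigma$ the transposition $(2\,3)$: curve-structure isomorphisms $\Gamma_{Y_i}\to\Gamma_{Y'_{\sigma(i)}}$ exist, since all three structures are isomorphic, but they have the wrong matching; this configuration falls through both of your cases, since Lemma \ref{gluealpha} cannot be invoked and the machinery you cite for the symmetric case (Lemma \ref{swap}(ii), Corollary \ref{s6}, Proposition \ref{prop:syminvolution}) does not apply. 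What is missing is precisely the paper's argument that a mismatch forces the cyclic pattern, together with the direct gluing in that pattern. Your symmetric models, by contrast, pose no special difficulty: each of them other than $\YP$ has a component with $D^2_{12}\neq D^2_{13}$ and is handled by Lemma \ref{gluealpha} once the matching is correct, while the only model with no distinguishing inequality at all is $\YP$ itself, because the relations $D^2_{ij}+D^2_{ji}=-2$ then force all $D^2_{ij}=-1$.
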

\begin{proof}
It is enough to show that the central fibres are isomorphic, as then the models are isomorphic by uniqueness of the DNV family.

 Either both $\shY$ and $\shY'$ are  isomorphic to $\YYP$, or there is a component $Y_1\subset \shY_c$ such that 
  for the components of the double curve $D=D_{12}+D_{13}$ of $Y_1$, $D^2_{12}\neq D^2_{13}$. There is a component $Y'_{\sigma(1)}$ with $\Gamma_{Y_1}\xrightarrow{\sim} \Gamma_{Y'_{\sigma(1)}}$ and by Proposition \ref{isomcomp} an isomophism $\alpha_1\colon Y_1\xrightarrow{\sim} Y'_{\sigma(1)}$. 

  Let $Y_2$ be the component with $Y_1\cap 
 Y_2=D_{12}$.  
 We also have a component $Y'_{\sigma(2)}$ with $\Gamma_{Y_2}\xrightarrow{\sim} \Gamma_{Y'_{\sigma(2)}}$ and an isomophism $\alpha_2\colon Y_2\xrightarrow{\sim} Y'_{\sigma(2)}$.
 
 Consider first the case $(Y'_{\sigma(1)}\cap Y'_{\sigma(2)})^2=D_{13}^2$. 
 Then, as $D_{12}^2\neq D_{13}^2$ and we have isomorphisms of curve structures $\Gamma_{Y_i}\to \Gamma_{Y'_{\sigma(i)}}$, we have 
   $D^2_{31}=D^2_{23}$ and also $D^2_{23}=D_{13}^2$. 
 Also, there is an isomorphism $\alpha_3\colon Y_3\xrightarrow{\sim} Y'_{\sigma(3)} $ and $(Y'_{\sigma(1)}\cap Y'_{\sigma(3)})^2=D_{12}^2$.
  By the same 
 token, we obtain $D_{32}^2=D_{21}^2$ and $D_{23}^2=D_{12}^2$. We deduce that $\shY_c$ is given by three components $Y_1,Y_2$ and $Y_3$ 
 with $D^2_{21}=D^2_{13}=D^2_{32}=a$ and   $D^2_{12}=D^2_{31}=D^2_{23}=-a-2$ for some integer $a$.  As $\Gamma_{Y_1}$ is non-degenerate, $a\leq 1$ and $-a-2\leq 1$.  We deduce that all $\Gamma_{Y_i}$ are isomorphic:  all self intersection numbers of the $D_{ij}$ are less than $1$, hence the curve structures are regular, hence they are completly determined by the self intersection numbers $D_{ij}^2$. 
 It follows from Proposition \ref{isomcomp} that 
 $Y_1\xrightarrow{\sim}Y_2\xrightarrow{\sim}Y_3$ and the same is true for $\shY'$. 
 
 It is then immediate to choose an isomorphism $Y_i \to Y'_i$ that induces 
 an isomorphism $\shY_c\xrightarrow{\sim} \shY_c'$. Hence $\shY\cong\shY'$ by uniqueness of the DNV family.
 
 In the remaining case,  $(Y'_{\sigma(1)}\cap Y'_{\sigma(2)})^2=D_{12}^2$. Then
 the result follows from  Lemma \ref{gluealpha}. The other direction follows from Lemma \ref{isocurves}. 
  \end{proof}
We now consider models of class  $\mathscr{T}$.


\begin{proposition}\label{cstructureT}
Let $\shY$, $\shY' \in \PMod_2(\mathscr{T})$ be two models of the DNV family of degree $2$ with components $Y_i$ and $Y'_i$ respectively.
Then $\shY$ and $\shY'$  are isomorphic if and only if  there are  isomorphisms of curve structures $\Gamma_{Y_i}\to \Gamma_{Y'_{\sigma(i)}} $
for some permutation $\sigma\in \shS_3$.
\end{proposition}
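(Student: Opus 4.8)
The plan is to mirror the proof of Proposition \ref{isomorphismprop}, the one genuinely new feature being the self-gluing of the special component. As there, by uniqueness of the DNV family (Proposition \ref{lem:unoquedeg}) it suffices to exhibit an isomorphism of central fibres $\shY_c \xrightarrow{\sim} \shY'_c$; conversely, any isomorphism $\shY \xrightarrow{\sim} \shY'$ restricts to an isomorphism of central fibres carrying components to components and double curves to double curves, and hence induces isomorphisms of curve structures exactly as in Lemma \ref{isocurves}. So I only need to treat the ``if'' direction.

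First I would pin down the permutation $\sigma$. Recall from Example \ref{DNV2} that a surface in $\PMod_2(\mathscr{T})$ has a distinguished \emph{special} component $Y_\omega$ with normalisation $\mathfrak{Y}_4$ and curve structure of type $d_4$, while the two remaining components have normalisation a blow-up/contraction of $\mathfrak{Y}_1$ and curve structure of type $d_1$. Since an isomorphism of curve structures extends to a bijection of augmented graphs, it preserves the number of boundary vertices, hence the type; therefore $\sigma$ necessarily sends the special index of $\shY_c$ to the special index of $\shY'_c$. After relabelling I may assume this index is $1$ and that $\sigma$ permutes $\{2,3\}$. Applying Proposition \ref{isomcomp} to the given curve structure isomorphisms (in type $d_4$ for $i=1$ and type $d_1$ for $i=2,3$) produces isomorphisms of surfaces $\alpha_i \colon Y_i \xrightarrow{\sim} Y'_{\sigma(i)}$, each carrying the respective double curves to double curves.

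Next I would glue the $\alpha_i$. By Construction \ref{constructioncomponents} the surface $\shY_c$ (and likewise $\shY'_c$) is determined by its components together with the identification of double curves, where all \emph{special points} --- the interior special points and the triple points --- are matched pairwise; in particular every gluing isomorphism of a $\PP^1$ (or nodal $\PP^1$) double curve is the unique one matching the three special points lying on it. Hence it is enough to adjust the $\alpha_i$ by automorphisms of the components so that they carry special points of $\shY_c$ to special points of $\shY'_c$. For the two type $d_1$ components this is immediate up to the involution fixing the interior special point, the analogue of Lemma \ref{swap}(i) for $\mathfrak{Y}_1$. For the special component I would use the automorphisms of the pair $(\mathfrak{Y}_4, D)$ realising the dihedral symmetry of the anticanonical $4$-cycle, together with the involutions fixing the interior special point of each boundary component: these let me arrange that the lift $\tilde{\alpha}_1 \colon \mathfrak{Y}_4 \to \mathfrak{Y}'_4$ sends the self-glued pair of boundary curves to the self-glued pair and the two boundary curves glued to $Y_2, Y_3$ to those glued to $Y'_{\sigma(2)}, Y'_{\sigma(3)}$ (swapping them when $\sigma$ swaps $2$ and $3$), each preserving the attached special points. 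Once all special points are matched, the $\alpha_i$ descend to an isomorphism $\alpha \colon \shY_c \xrightarrow{\sim} \shY'_c$, exactly as in Lemma \ref{gluealpha}.

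The main obstacle is the self-gluing of the special component, which has no counterpart in the class $\mathscr{P}$ situation: $\alpha_1$ must not merely send the self-intersection curve $D_\omega$ to $D'_\omega$ but intertwine the two self-gluing isomorphisms $\theta \colon B_1 \xrightarrow{\sim} B_3$ and $\theta' \colon B'_1 \xrightarrow{\sim} B'_3$ identifying the two preimage curves under normalisation. I would handle this by noting that, for $\shY_c, \shY'_c \in \PMod_2(\mathscr{T})$, these preimage curves are $(-1)$-curves by Proposition \ref{projTcurves}, and that $\theta, \theta'$ are again the \emph{unique} isomorphisms matching the three special points on the respective curves (the interior special point $B_1 \cap E_1 \mapsto B_3 \cap E_3$ and the two cycle-nodes that become triple points); thus once $\tilde{\alpha}_1$ preserves special points it commutes with the self-gluings on the nose. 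The remaining bookkeeping --- verifying that the required automorphisms of $(\mathfrak{Y}_4, D)$ and $(\mathfrak{Y}_1, D)$ exist and act transitively enough on the relevant special-point configurations --- is routine and parallels the construction carried out in Subsection \ref{automorph}.
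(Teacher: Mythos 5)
Your proposal is correct in outline, but it takes a genuinely different route from the paper. The paper's proof of the ``if'' direction is a short reduction: both $\shY$ and $\shY'$ are obtained by a type II flop from models $\hat{\shY}$, $\hat{\shY}'$ of class $\mathscr{P}$; since any sequence of type I flops $\YYT\dashrightarrow\shY$ induces one $\YYP\dashrightarrow\hat{\shY}$, the curve structures of the components of $\hat{\shY}_c$ and $\hat{\shY}'_c$ are determined by those of $\shY_c$ and $\shY'_c$, so Proposition \ref{isomorphismprop} gives $\hat{\shY}\cong\hat{\shY}'$; then $\shY$ and $\shY'$ are flops of one and the same flopping contraction $\hat{\shY}\to\shZ$, hence isomorphic by uniqueness of flops. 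This completely sidesteps the gluing analysis that occupies most of your argument. Your direct gluing is viable --- in particular your treatment of the self-gluing is right: once $\tilde{\alpha}_1$ matches boundary components and special points, the composites $\tilde{\alpha}_1|_{B_3}\circ\theta$ and $\theta'\circ\tilde{\alpha}_1|_{B_1}$ agree on three points of a $\PP^1$ and hence coincide --- but two steps you call ``routine'' are covered nowhere in the paper and need genuine work. First, Section \ref{automorph} constructs automorphisms only for $\mathfrak{Y}_2$ and its blow-ups; the involution of $(\mathfrak{Y}_1,D)$ exchanging the two branches of $D$ at its node while fixing $E\cap D$ does exist, but the paper's presentation of $\mathfrak{Y}_1$ (a $(1,5,1,3)$-blow-up of $\PP^1\times\PP^1$ followed by contractions) is asymmetric, so one must produce it by another device, e.g.\ as the descent of the elliptic involution of the rational elliptic surface obtained by blowing up the base point $E\cap D$ of the anticanonical pencil, where inversion acts on the $I_1$-fibre by swapping the branches at the node. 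Second, the dihedral symmetries of $(\mathfrak{Y}_4,D)$ lift to the special component only when the blow-up towers at the boundary curves being permuted have equal length; that your hypotheses guarantee this requires the (easy but unstated) conservation argument that $\Gamma_{Y_2}\cong\Gamma_{Y'_{\sigma(2)}}$ forces the chain of $\Gamma_{Y_1}$ attached to the boundary curve glued to $Y_2$ to match the corresponding chain of $\Gamma_{Y'_1}$, so that a compatible boundary bijection can be arranged at all. With these points filled in your proof goes through; what it buys is an explicit isomorphism of central fibres, whereas the paper's reduction buys brevity by recycling Proposition \ref{isomorphismprop} and the uniqueness of flops.
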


\begin{proof} We first assume that there are isomorphisms of curve structures $\Gamma_{Y_i}\to \Gamma_{Y'_{\sigma(i)}} $
for some permutation $\sigma\in \shS_3$. We first note that there are models  $\hat{\shY}$, $\hat{\shY}'$ of class $\mathscr{P}$ and type II flops   $\hat{\shY} \dashrightarrow \shY$ and 
$\hat{\shY}'\dashrightarrow \shY'$. Write $\hat{\shY}_c=\hat{Y}_1\cup \hat{Y}_2\cup \hat{Y}_3$ 
and ${\hat{\shY}'}_c=\hat{Y}'_1\cup \hat{Y}'_2\cup \hat{Y}'_3$ 
for the components of the central fibres. The curve structures of the  surfaces $\hat{Y}_i$ and $\hat{Y}'_i$ are determined  by the curve structures of  $Y_i$ and $Y'_i$. This follows as any sequence of type I flops 
$\YYT\dashrightarrow \shY$ induces a sequence $\YYP \dashrightarrow \hat{\shY}$ and similar for $\shY'$ and $\hat{\shY}'$.
Hence there exist
a permutation $\tau\in \shS_3$ and  isomorphisms of curve structures
 $\Gamma_{\hat{Y}_i}\to \Gamma_{\hat{Y}'_{\tau(i)}}$.
Then,  Proposition \ref{isomorphismprop} implies that $\hat{\shY}\cong \hat{\shY'}$.  So $\shY$ and $\shY'$ are both flops of the extremal contraction $\hat{\shY}\to \shZ$ defining the type II flop, and hence isomorphic.

We now prove the remaining direction. So assume  $\shY\xrightarrow{\sim}\shY'$ via some isomorphism $h$.  Any isomorphism preservers the number of $(-1)$-curves meeting a component of the double locus $D_{ij}$ and also the self-intersection numbers of the $D_{ij}$. But this datum uniquely determines  curve structures of type $d_4$ and $d_2$. Hence  we can find a permutation  $\sigma\in\shS_3$ and  isomorphisms of 
curve structures $\Gamma_{Y_i}\to \Gamma_{Y'_{\sigma(i)}}$. 
\end{proof}

\subsection{Counting models of class $\mathscr{T}$}
We now count the number of models of class $\mathscr{T}$. Note that by  Remark \ref{finiteMF}, this is a finite problem. 
 Our approach is as follows: 
 we can count the number of models of class $\mathscr{T}$ by starting with $\YT$ 
 and applying all possible elementary modifications of type I such that the flopped curve does not meet the singular locus of the special component.
 By Proposition \ref{projTcurves}, any surface obtained in this way is the central fibre of a model of the DNV family of degree $2$. By Corollary \ref{cor:typeItypeII}, all models can be obtained in this way, up to isomorphism. By Proposition \ref{cstructureT}, the isomorphism classes are uniquely determined by curve structures. Hence, to count models we will count the distinct curve structures that one can obtain from $\YYT$ by type I elementary modifications.
 Note that by  Remark \ref{finiteMF}, this is a finite problem. 
This counting of flops is then essentially reduced to countig possible combinations of  self-intersection numbers of the nodal components of the double locus. 

So let $Y\in \PMod_2(\mathscr{T})$. 
Let $(n_1,n_2)$ be the self intersection numbers of the preimages of the  nodal components of the double curve  - now written as  $D_1,D_2$ - of  the special component $Y_\omega$ of $Y$ on its normalisation.  Let $Y_i$ be the smooth component glued to $D_i$.
We will break up the analysis into several cases, namely  $\rm({i})$ $n_1 \geq -1, n_2 \leq -1$, $\rm({ii})$ $ n_1,n_2 \geq 0$ and 
$\rm({iii})$ $n_1,n_2 \leq -2$.  By the symmetry of $\YT$, these are all cases.

Consider first the case $n_1\geq -1$ and $n_2\leq -1$. The possible numerical combinations are as follows.
\begin{itemize}
\item
{ $n_1\in \{-1,0,1\}$ and $n_2\in \{-1,\dots,-9\}$. 
Note that there are two models with  with $n_2=-8$ for any $n_1$. This is because after flopping $7$ curves from a smooth component  $Y_i$ of $\YT$, there are $2$ interior $(-1)$ that can be flopped, cf. Figure \ref{fig:augYT}.
This gives $30$ models. 

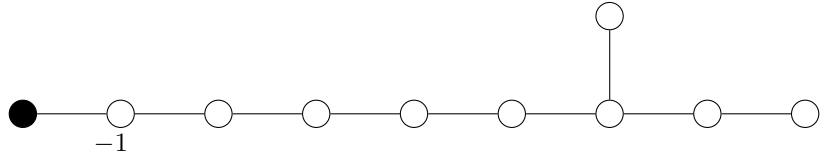
\begin{figure}[]\centering
\begin{tikzpicture}[scale=1.3]

\node[fill=black,draw,shape=circle] (0) at (0,0){};
\node[draw,shape=circle] (1) at (1,0){};
\node (A) at(0.9,-0.3){{\small $-1$}};
\node[draw,shape=circle] (2) at (2,0){};
\node[draw,shape=circle] (3) at (3,0){};
\node[draw,shape=circle] (4) at (4,0){};
 \node[draw,shape=circle] (9) at (6,1){};

  \node[draw,shape=circle] (5) at (5,0){};
   \node[draw,shape=circle] (6) at (6,0){};
    \node[draw,shape=circle] (7) at (7,0){};

  \node[draw,shape=circle](8) at (8,0){};

\draw (0)--(1)--(2)--(3)--(4)--(5)--(6) --(7)--(8)
(6)--(9);

%
\end{tikzpicture}
\caption{The augmented curve structure of a smooth component $Y$ of $\YT$. The black vertices correspond to  the anticanonical divisor. All  unlabelled curves of $\Gamma_Y$ have self intersection $-2$.}

\label{fig:augYT}
\end{figure}
}
 \item{There are two different ways to obtain  $n_1=2$, as after contracting $2$ curves to obtain $n_1=1$, one can either contract the transform of a fork or a transform of a $(-1)$-curve on $Y_\omega$ meeting $D_2$. In the first  case one can  flop $r_1\in\{0,\dots,9 \}$ curves from $Y_2$, with $2$ choices for flopping $7$ curves as above. In the second case one has to flop one curve from the smooth component $Y_1$
 in order to obtain $n_1=2$. So one can flop $r_2\in\{0,\dots,8 \}$ curves, with $2$ possibilities for $r_2=6$.
This gives $21=11+10$ models.}
 \item{ $n_1\in\{3,\dots 9\}$ . To obtain $n_1$, one needs to  flop $n_1-1$ curves from $Y_2$ to $Y_\omega$ and then flop $n_1+1$ curves to $Y_1$.  One can also flop more curves from $Y_2$ to $Y_\omega$:   for given $n_1$, one can flop $r\in\{0,\dots,9-n_1\}$ additional curves. If $n_1<9$, one value of $n_2$ can be obtained in two different ways, as above. This yields $34$ models.  }
 \end{itemize}
 
 Hence we find $30+21+34=85$ models.  Now, if both $n_1$ and $n_2$ are $\geq 0$, there is only the possibility $(n_1,n_2)=(0,0)$.

 If both $n_1$ and $n_2$ are $\leq -2$, all models are obtained by flopping curves from the smooth components to the special component. These operations are independent on each $D_i$, so the models are given by intersection numbers \[(n_1,n_2)\in \{-2,\dots,-9\}^2,\] with two possibilities to obtain a $-8$ for each entry. This gives $81$ models. More precisely, these models are paremetrised by the set  \[(n_1,n_2)\in \{-2,\dots,-8,-8' ,-9\}^2,\]
 Among these $81$ there are the  $9$ models $(x,x)$ with $x\in \{ -2,\dots, -7,-8,-8',-9\}$. Taking these out, 
the remaining set is given by $(n_1,n_2)$ with $n_1\neq n_2$, giving $72$ models. 
 Taking out the ordering of the components, we obtain $36$ non-isomorphic models in this set. 
 Adding up, we have $131=85+1+9+36$  projective models of class $\mathscr{T}$. 
  
\begin{theorem}\label{modelsT}  There are $131$ surfaces in $\PMod_2(\mathscr{T})$.
\end{theorem}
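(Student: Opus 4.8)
The plan is to enumerate the isomorphism classes of surfaces in $\PMod_2(\mathscr{T})$ by exploiting the combinatorial reduction already assembled in the excerpt. By Corollary \ref{cor:typeItypeII} every model of the DNV family of degree $2$ has central fibre in $\PMod_2$, and by Corollary \ref{cor:seqtypeI} every surface of class $\mathscr{T}$ is obtained from $\YT$ by a sequence of type I flops (equivalently, type I elementary modifications on the central fibre). By Proposition \ref{projTcurves} a surface so obtained lies in $\PMod_2(\mathscr{T})$ precisely when the flopping curves never meet the singular locus of the special component, i.e. when the two nodal components $D_1,D_2$ of the double curve on the normalisation $Y_\omega^\nu$ of the special component both retain self-intersection $-1$ or, when we do flop into the special component, we track the resulting self-intersections carefully. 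Finally, by Proposition \ref{cstructureT} two such surfaces are isomorphic iff their triples of curve structures agree up to a permutation in $\shS_3$. So the entire count reduces to enumerating the distinct curve-structure data reachable from $\YT$, and this in turn is governed by the pair $(n_1,n_2)$ of self-intersection numbers of $D_1,D_2$ on $Y_\omega^\nu$ together with the data of how many interior $(-1)$-curves have been flopped off each smooth component.

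First I would fix notation: write $Y_\omega$ for the special component (normalisation $\mathfrak{Y}_4$ initially), let $(n_1,n_2)$ be the self-intersections of the preimages of the two nodal double curves on $Y_\omega^\nu$, and let $Y_1,Y_2$ be the smooth components glued along $D_1,D_2$. I would record the augmented curve structure of a smooth component of $\YT$ (Figure \ref{fig:augYT}), noting that it is a chain realising an $A_8$-type configuration with a single fork, so that successive type I flops from $Y_i$ into $Y_\omega$ raise $n_i$ one at a time, while flopping the fork versus an ordinary $(-1)$-curve produces the one genuine branching in the process. The key numerical input, used repeatedly, is that after flopping $7$ curves off a smooth component there are \emph{two} distinct interior $(-1)$-curves available to flop, so $n_i=-8$ is realised by two non-isomorphic configurations; I will denote these $-8$ and $-8'$. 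By the symmetry of $\YT$ the analysis splits into the three exhaustive cases $(\mathrm{i})$ $n_1\ge -1,\ n_2\le -1$, $(\mathrm{ii})$ $n_1,n_2\ge 0$, and $(\mathrm{iii})$ $n_1,n_2\le -2$, and within each case I would count the admissible $(n_1,n_2)$ together with the multiplicities coming from the fork/$(-8')$ phenomena, exactly as laid out in the paragraphs preceding the theorem: case $(\mathrm{i})$ yields $30+21+34=85$, case $(\mathrm{ii})$ yields the single model $(0,0)$, and case $(\mathrm{iii})$ yields $81$ ordered configurations over $\{-2,\dots,-7,-8,-8',-9\}^2$.

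The delicate point, and the main obstacle, is the bookkeeping in case $(\mathrm{iii})$: the $81$ ordered pairs must be reduced modulo the $\shS_3$-action of Proposition \ref{cstructureT}, which here manifests as the involution swapping $Y_1$ and $Y_2$ (equivalently swapping $n_1\leftrightarrow n_2$ via Lemma \ref{swap} and Lemma \ref{gluealpha}). Among the $81$ pairs there are exactly $9$ diagonal ones $(x,x)$ for $x\in\{-2,\dots,-7,-8,-8',-9\}$, which are fixed by the swap; the remaining $72$ off-diagonal pairs come in swap-orbits of size $2$, giving $36$ isomorphism classes. I must justify carefully that in case $(\mathrm{iii})$ the two smooth components are never forced to be isomorphic in a way that would collapse further, and conversely that distinct unordered $(n_1,n_2)$ give non-isomorphic surfaces — this is where I lean on Proposition \ref{cstructureT}, since the self-intersection data plus the count of $(-1)$-curves meeting each $D_{ij}$ determines the type-$d_4$ and type-$d_2$ curve structures uniquely, so no accidental coincidences arise beyond the $-8$ versus $-8'$ distinction already accounted for. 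Adding the subtotals then gives $131=85+1+9+36$, which is the asserted count. I would close by remarking that finiteness of the whole enumeration is guaranteed a priori by Remark \ref{finiteMF}, so the case analysis is genuinely exhaustive.
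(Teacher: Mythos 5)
Your proposal is correct and follows essentially the same route as the paper: reduce to type I modifications from $\YT$ via Corollary \ref{cor:typeItypeII} and Corollary \ref{cor:seqtypeI}, use Proposition \ref{projTcurves} for projectivity and Proposition \ref{cstructureT} to identify isomorphism classes with curve-structure data, and then run the identical three-case count $85+1+9+36=131$, including the $-8$ versus $-8'$ subtlety and the final $\shS_3$-reduction in the case $n_1,n_2\le -2$. One small slip in your setup sentence: the curves that must retain self-intersection $-1$ are the two preimages of the \emph{smooth} (self-glued) component $D_\omega$ of the double locus on $Y_\omega^\nu$, not the nodal components $D_1,D_2$ — those are precisely the curves whose self-intersections $(n_1,n_2)$ vary and index the count — but your actual enumeration uses the correct convention, so this does not affect the argument.
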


Note that the number of surfaces in the theorem is the number of orbits of maximal cones in the Mori fan under the action of the birational goup.

\subsection{Counting models of class $\mathscr{P}$}\label{sec:countP}

We first count models such that at least one component of the central fibre has non-regular curve structure. 

\begin{theorem}\label{modelsP}
There are $104$  surfaces $Y_c=Y_1\cup Y_2 \cup Y_3 $ 
in  $\PMod_2(\mathscr{P})$ sucht that there is a component $Y_i$ with non-regular curve structure. Explicitly, 
these are given as follows:
\begin{itemize}
\item[(i)] Surfaces $Y_c$ such that $\Gamma_{Y_1}$ is not regular, $\Gamma_{Y_2}$ is non-degenerate and $\Gamma_{Y_3}$ is degenerate  and regular. There are $71$ such models.
\item[(ii)] Surfaces $Y_c$ such that $\Gamma_{Y_1}$ is not regular, $\Gamma_{Y_2}$ is non-degenerate and $\Gamma_{Y_3}$ is not regular. There are $8$ such models.
\item[(iii)] Surfaces $Y_c$ such that $\Gamma_{Y_1}$ is not regular, $\Gamma_{Y_2}$ is non-degenerate and $\Gamma_{Y_3}$ is non-degenerate. There are $25$ such models.
\end{itemize}
\end{theorem}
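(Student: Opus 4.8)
The plan is to convert the enumeration into a purely combinatorial count of triples of curve structures. By Proposition \ref{isomorphismprop}, two models with central fibre in $\PMod_2(\mathscr{P})$ are isomorphic precisely when their components carry isomorphic curve structures up to a permutation in $\shS_3$; hence an isomorphism class is the same datum as an unordered triple $\{\Gamma_{Y_1},\Gamma_{Y_2},\Gamma_{Y_3}\}$ that admits an actual projective gluing. Before counting I would dispose of exhaustiveness: if some $\Gamma_{Y_i}$ is non-regular then it is degenerate, so by Proposition \ref{nondegproj2}(i) (all components degenerate $\Rightarrow$ not projective) at least one component is non-degenerate; running through the regularity profiles of the three components and discarding those in which all three are degenerate leaves exactly the multisets $\{\text{NR},\text{ND},\text{ND}\}$, $\{\text{NR},\text{ND},\text{DR}\}$ and $\{\text{NR},\text{NR},\text{ND}\}$ (writing NR, DR, ND for non-regular, degenerate-regular and non-degenerate). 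These match cases (iii), (i) and (ii) respectively.

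The next step is to record the two numerical constraints pinning down the admissible triples. For a component $Y$ of type $d_2$ with anticanonical cycle $D=D_1+D_2$ one has $D_1.D_2=2$, the two nodes being the two triple points of $\shY_c$, so $D^2=D_1^2+D_2^2+4$; since $D=-K_{Y^\nu}$ and $K_{Y^\nu}^2=10-\rho(Y^\nu)$, Proposition \ref{qbasis} yields
\[ |\Gamma_Y|=\rho(Y^\nu)=6-D_1^2-D_2^2. \]
In particular $|\Gamma_Y|\ge 1$ forces $D_1^2+D_2^2\le 5$, and the available leg lengths inside the $E_6$-configuration of $\mathfrak{Y}_2$ (Proposition \ref{pro:canflops}) bound the individual $D_i^2$. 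Secondly, $d$-semistability imposes the triple-point relation $D_{ij}^2+D_{ji}^2=-2$ on every double curve; this is the gluing compatibility that the three pairs of boundary self-intersections must satisfy.

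I would then classify type-$d_2$ curve structures by these invariants together with regularity type, using Example \ref{exnondeg}, Figure \ref{degcurve} and Proposition \ref{degample2}. A structure is non-degenerate exactly when both legs of its exceptional vertices terminate well, which in particular forces $D_1^2\le 1$ and $D_2^2\le 1$ (the converse requiring a check of the leg ends); it is non-regular exactly when it is a singleton (the $\mathbb{P}^2$ case $(D_1^2,D_2^2)=(1,4)$, the unique vertex meeting $D_1$ once and $D_2$ twice) or some leg ends in a $0$-curve; the remaining degenerate structures are regular. With this dictionary the three cases are each governed by one projectivity criterion. In every case the non-degenerate component's boundary self-intersections are forced by the gluing relation from its neighbours, so the free data are the non-regular component (and, in case (i), the degenerate-regular one) together with the splitting $D_{23}^2+D_{32}^2=-2$ of the remaining edge. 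For case (iii) Proposition \ref{nondegproj5} makes projectivity automatic once the multiplicity-$2$ boundary of $Y_1$ faces a neighbour, so one counts admissible $Y_1$ against the edge-split subject to $D_{2i}^2\le 1$. For cases (i) and (ii) I would invoke Propositions \ref{nondegproj4} and \ref{nondegproj3}, whose dichotomy $v_{D}.D=1$ versus $v_{D}.D=2$ and sign condition $D^2\le 0$ cut out exactly the projective configurations; finally isomorphic triples are identified via Proposition \ref{isomorphismprop}, i.e. one counts unordered triples. Summing the three finite counts should give $71+8+25=104$.

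The main obstacle will be the classification step itself, not the projectivity (which is entirely packaged in Propositions \ref{nondegproj3}--\ref{nondegproj5}). Concretely, the difficulty is to produce the exact finite list of type-$d_2$ curve structures attached to each admissible self-intersection pair, allowing for the phenomenon -- already visible in the class $\mathscr{T}$ count -- that one pair $(D_1^2,D_2^2)$ can support several non-isomorphic structures coming from distinct flop histories (legs of different shapes). The delicate points are the boundary values $D_i^2\in\{0,1\}$ where the regularity type changes, and the symmetric triples (two isomorphic components), where the $\shS_3$-identification of Proposition \ref{isomorphismprop} must be applied with care to avoid over- or under-counting. This makes the argument a careful, finite but error-prone enumeration rather than a single clean estimate.
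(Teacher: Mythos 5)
Your framework is sound and is in fact the same one the paper uses: exhaustiveness of the three profiles via ``non-regular $\Rightarrow$ degenerate'' plus Proposition \ref{nondegproj2}(i), reduction to triples of curve structures via Proposition \ref{isomorphismprop}, the triple-point relation $D_{ij}^2+D_{ji}^2=-2$, and projectivity packaged in Propositions \ref{nondegproj3}--\ref{nondegproj5}; your identity $|\Gamma_Y|=6-D_1^2-D_2^2$ is a clean closed form of the ``total number of flops'' constraint that the paper invokes implicitly when it bounds ranges such as $D_{32}^2\in[2,10]$. But the proposal stops exactly where the theorem begins: the numbers $71$, $8$ and $25$ are never derived. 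You defer the enumeration (``summing the three finite counts should give $71+8+25=104$'') and yourself label it the main, error-prone obstacle. Since the statement to be proved \emph{is} these counts, what you have is a correct plan with a genuine gap, not a proof.

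Concretely, two executions are missing. First, in each case the projectivity criteria must be turned into explicit finite ranges of realizable boundary self-intersections: e.g.\ in case (i) with $v_{D_{12}}.D_{12}=1$ one gets $D_{13}^2=4$, $D_{21}^2\in[-3,0]$ (the cutoff $D_{21}^2<1$ coming from Proposition \ref{nondegproj4}) and $D_{32}^2\in[2,10]$, contributing $36$ models, while the subcase $v_{D_{12}}.D_{12}=2$ splits further and contributes $15+20$; nothing of this kind is carried out for any of the three cases. Second, one must prove the resulting models are pairwise non-isomorphic, and your own worry here is justified: one pair of boundary numbers can indeed carry several structures --- for instance $(D_1^2,D_2^2)=(4,-1)$ supports both a non-regular structure and a degenerate regular one, depending on which of the two $(-1)$-curves created at the fork is flopped last --- so ``counting tuples'' equals ``counting models'' only after fixing the regularity type and the invariant $v_D.D\in\{1,2\}$ for the non-regular component, together with the fact (used in the proof of Proposition \ref{isomorphismprop} and in Lemma \ref{shift}/Theorem \ref{modelsPreg}) that regular structures \emph{are} determined by their boundary self-intersections. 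Likewise, in case (iii) the $25$ ordered pairs in $[-3,1]^2$ are pairwise non-isomorphic because $\Gamma_{Y_3}$ always has a leg of length $9$ while $\Gamma_{Y_2}$ never does, so the $\shS_3$-identification you propose to apply would, if applied naively to unordered pairs, undercount to $15$. Flagging these pitfalls is not the same as resolving them; without the actual case-by-case analysis the theorem's numbers remain unproved.
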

\begin{proof} 
First, note that the three cases items in the theorem exhaust all possible configurations of curve structures with at least one of the structure not regular: by assumption, one of the curve structures is not regular, so a second one has to be non-degenerate and the third one is as in the statement of the theorem.

Now, recall that there is a unique vertex $v_{D_{12}}$ meeting $D_{12}$, by regularity.
If $Y_c$ is a model as in $\rm{(i)}$, then $v_{D_{12}}.D_{12}=1$
or $v_{D_{12}}.D_{12}=2$. In the first case,  
 the conditions on the curve structures 
impy that
$D^2_{13}=4$,  $D^2_{21}\in [-3,1]$ and  
$D^2_{32}\in [2, 10]$. Projectivity of $Y_c$ further implies $D^2_{21}<1$, by Proposition \ref{nondegproj4}.  
Conversely, each  triple $(D^2_{13},D^2_{21}, D^2_{32})$ with  
$D^2_{13}=4$, $D^2_{21}\in [-3,0]$ and $D^2_{32}\in [2, 10]$ 
determines a  projective model, and these are pairwise non-isomorphic 
as the  intersection numbers of the double curves are distinct. Hence 
there are 36 models with this 
specification of curve structures  and $v_{D{12}}.D_{12}=1$.

If $v_{D{12}}.D_{12}=2$, we have $D^2_{12}=4$. Assume first that $D^2_{31}\geq 2$.  By non-degeneracy of $Y_2$, we have $D^2_{23}\in [-4,1]$ and counting possible flops, $D^2_{31}\in [ 2, 6+D^2_{23}]$. Projectivity requires  $D^2_{23}<1$ 
by  Proposition \ref{nondegproj4}, so one obtains $15$ distinct models.
Now suppose  $D^2_{31}< 2$. Then $D^2_{13}\in [-3,1]$ and $D_{32}^2\in [ 2, 6+D_{13}^2]$, giving $20$ models.

If $Y_c$ is a model as in $\rm{(ii)}$, we can assume $v_{D_{12}}.D_{12}=2$. Then either   $v_{D_{31}}.D_{31}=2$ or $v_{D_{31}}.D_{31}=1$.
In the first case,  $D^2_{12}=D^2_{32}=4$. One finds that up to isomorphism, we have $D^2_{13}\in [-3,0]$, so there are $4$  models in this case.
In the second case, $D^2_{31}=4$, $D^2_{12}=4$ and $D^2_{23}\in [-3,0]$ by non-degeneracy and projectivity.  Hence there are again $4$ such models.

If $Y_c$ is a model as in $\rm{(iii)}$, we can  assume $v_{D_{23}}.D_{23}=2$. Then $D_{13}=4$, $D_{12}\in[-3,1]$ and $D_{23}\in [-3,1]$. These models are pairwise non-isomorphic as the curve structure of $\Gamma_{Y_3}$ always has an exceptional vertex $v$ with $|L(v)|=9$. This never occurs for $\Gamma_{Y_{2}}$. One finds $25$ models.
\end{proof}

We now count models $Y\in  \PMod_2(\mathscr{P})$ such that $\Gamma_{Y_i}$ is regular for all components $Y_i\subset Y$.
We use the following shorthand notation.  Fix a numbering of the components of $\YP$.
There is a
sequence of type I modifications  $\YP\to Y$. Write $Y=\cup_i Y_i$, assuming that the $i$-th component of $\YP$ maps to $Y_i$. Let $D_i=Y_i\cap Y_{i+1}$, with  $i=1,2,3$, indices considered modulo $4$,  considered as curve on $Y_i$. We have $D_i^2=-1+n_i$ for some $n_i\in \ZZ$. 

Note that if $\Gamma_{Y_i}$ is regular for $i=1,2,3$,  the triple $(n_1,n_2,n_3)$ determines $Y$ uniquely, 
by Proposition \ref{isomorphismprop}. The meaning of $(n_1,n_2,n_3)$ is that $n_i$ curves  are flopped from  $Y_i$ to $Y_{i+1}$ (or from $Y_{i+1}$ to $Y_i$, if $n_i<0$).

In order to simplify the arguments below, we now define certain operations on triples of integers. 

\begin{definition}Let  $(x,y,z)$ be a triple. We shall call the triples $(z,x,y)$ and $(y,z,x)$ the \emph{shifts} of $(x,y,z)$. The triple  $(-y,-x,-z)$ will be called  the \emph{involution} of $(x,y,z)$. 
We write $(x,y,z)\sim (u,v,w)$ if the triples $(x,y,z)$ and $(u,v,w)$ are related by a sequence of shifts and involutions and call $(x,y,z)$ and $(u,v,w)$ \emph{equivalent}.
\end{definition}

\begin{remark}
We let $s$ be the shift operator $(x,y,z)\mapsto (z,y,x)$ and $\i$ be the involution operator $(x,y,z)\mapsto (-y,-x,-z)$. 
 Then $\i\circ s\circ \i(x,y,z)=s^2(x,y,z)$ and $\i\circ s^2\circ \i(x,y,z)=s(x,y,z)$. Also, $s\circ \i\circ s=\i$ so any sequence of shifts and involutions reduces to $\i^a\circ s^b$ or $s^b\circ \i^a$ with $a\in\{0,1\}$ and $b\in\{0,1,2\}$. In particular, the group generated by $\i$ and $s$ is isomorphic to $S_3$.
\end{remark}

\begin{lemma}\label{shift} Let $Y$, $Y'$ be two models in $\PMod_2(\mathscr{P})$  such that all curve structures $\Gamma_{Y_i} $ and $\Gamma_{Y'_i}$ are regular, with triples $(n_1,n_2,n_3)$ and $(n'_1,n_2',n_3')$ (relative to a chosen numbering of
the components of  $\YP$). Then $Y\cong Y'$ if and only if $(n_1,n_2,n_3)\sim(n'_1,n_2',n_3')$.
\end{lemma}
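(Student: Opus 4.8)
The plan is to establish both directions of the equivalence by relating the combinatorial triple operations to geometric symmetries of the central fibres, ultimately reducing everything to Proposition \ref{isomorphismprop}, which says that two models of class $\mathscr{P}$ are isomorphic if and only if their triples of curve structures agree up to a permutation $\sigma\in\shS_3$. The key observation is that for models with all curve structures regular, the triple $(n_1,n_2,n_3)$ completely encodes the isomorphism class, since a regular curve structure of type $d_2$ is determined entirely by the self-intersection numbers $D_{ij}^2$ of the two boundary components (as noted in the proof of Proposition \ref{isomorphismprop}), and these are recoverable from the $n_i$ via $D_i^2 = -1+n_i$ together with the relation $D_{i}^2 + D_{i+1,\text{opp}}^2 = -2$ for the two sides of each double curve in $(-1)$-form.

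First I would show that shifts and involutions of the triple produce isomorphic models. A \emph{shift} corresponds to a cyclic relabelling of the three components of $\YP$ (equivalently, precomposing the sequence $\YP\dashrightarrow Y$ with a cyclic automorphism of $\YP$), and such cyclic automorphisms exist by Corollary \ref{s6}, which gives the $S_3$-action on the components of $\YP$. The \emph{involution} $(x,y,z)\mapsto(-y,-x,-z)$ corresponds to the transposition of two components; geometrically, reversing the orientation of the cycle of components swaps the roles of $D_{i,i+1}$ and $D_{i+1,i}$, which accounts for the sign change and the interchange of the first two entries, while the third entry flips sign because the orientation reversal also reverses the direction of flopping across $D_3$. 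I would verify that the two generators $s$ and $\i$ indeed realise the full symmetric group action on components (matching the remark that $\langle s,\i\rangle\cong S_3$), so that $(n_1,n_2,n_3)\sim(n_1',n_2',n_3')$ implies the two models differ by a relabelling of components of $\YP$, hence are isomorphic by Proposition \ref{isomorphismprop}.

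For the converse, suppose $Y\cong Y'$. Then by Proposition \ref{isomorphismprop} there is a permutation $\sigma\in\shS_3$ and isomorphisms of curve structures $\Gamma_{Y_i}\xrightarrow{\sim}\Gamma_{Y'_{\sigma(i)}}$. I would argue that any such $\sigma$, together with the constraint that the gluing self-intersection numbers must match up (so that $(Y'_{\sigma(i)}\cap Y'_{\sigma(i+1)})^2$ agrees with $D_i^2$ or with $D_{i+1,i}^2$ depending on orientation), forces the induced transformation on the triples $(n_1,n_2,n_3)$ to be precisely one of the six maps in $\langle s,\i\rangle$. Concretely, a cyclic $\sigma$ preserving orientation yields a shift, while a $\sigma$ reversing the cyclic order yields an involution composed with a shift; the sign bookkeeping on the $n_i$ follows from the $(-1)$-form relations on the double curves.

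The main obstacle I anticipate is the orientation and sign bookkeeping in the converse direction: one must carefully track how a given permutation $\sigma$ of components interacts with the convention that $D_i^2$ is computed on $Y_i$ (so that the two sides $D_{i,i+1}$ and $D_{i+1,i}$ of a double curve carry self-intersections summing to $-2$), and confirm that no permutation produces a transformation of the triple outside the group $\langle s,\i\rangle\cong S_3$. This is essentially a finite check over the six elements of $\shS_3$, and the regularity hypothesis is what guarantees each component is rigidly determined by its boundary self-intersections, so that no additional isomorphisms can arise. I would organise this by computing the action of $s$ and $\i$ explicitly on the boundary data and matching it against the six relabellings, confirming the bijection between $\langle s,\i\rangle$ and $\shS_3$ at the level of triples.
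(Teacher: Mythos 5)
Your proposal is correct and takes essentially the same approach as the paper: both directions are reduced to Proposition \ref{isomorphismprop}, with shifts/involutions matched to relabellings of the three components and the sign bookkeeping coming from the relation $D_{ij}^2+D_{ji}^2=-2$ on the two sides of each double curve. The only cosmetic differences are that for the easy direction the paper simply observes that corresponding components are obtained from $\mathfrak{Y}_2$ by the same type I modifications (no appeal to Corollary \ref{s6} is needed), and for the converse it fixes an isomorphism $Y_1\cong Y'_1$ and tracks self-intersections around the cycle, which amounts to the same finite orientation check you describe.
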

\begin{proof} Write $Y=\cup Y_i $ and $Y'=\cup Y'_i$ for the central fibres. If $Y$ and $Y'$ are isomorphic, after possible renumbering, we can assume   $Y_1\cong Y'_1$. 
Let $D'=D'_1+D'_2$ be the anticanonical cycle on $Y'_1$ with numbering chosen such that $D_1\cong D'_1$, with $D_1=Y_1\cap Y_2$ and write $(D'_1)^2=-1+x$ and $(D'_2)^2=-1+y$. Then either the tupel  $(-y,x)$ or the tuple $(-x,y)$ is contained in a shift of $(n_1',n_2',n_3')$. Let $Y'_2$ be the component of $Y'$ 
meeting $Y_1'$ in $D'_2$ and let $D''$ denote the component of the anticanonical cycle  on $Y'_2$ that is not glued to $Y'_1$. Write $(D'')^2=-1+z$. It follows that after applying shifts, either  $(n_1',n_2',n_3')\sim (-z,-y,x)$ or $(n_1',n_2',n_3')\sim (-x,y,z) $. Because $Y_1\cong Y'_1$, we have  $x=-n_3, y=n_1$ and $z=n_2$ thus  $(n_1',n_2',n_3')\sim (-n_2,-n_1,-n_3)$ 
or $(n_1',n_2',n_3')\sim (n_3,n_1,n_2) $. So indeed $(n_1,n_2,n_3)\sim(n'_1,n_2',n_3')$.\\
If conversely the triples are equivalent, the corresponding models are isomorphic: let $(x,y,z)$  be a triple defining a model $Y=Y_1\cup 
Y_2\cup Y_3$. By our convention, this is the model where $x$ curves are flopped from $Y_1$ to $Y_2$, $y$ curves are flopped from $Y_2$ to 
$Y_3$ and $z$ curves are flopped from $Y_3$ to $Y_1$. The shift $(z,x,y)$ also defines a model, say $Y'=Y_1'\cup Y_2'\cup Y'_3$. Then 
$Y_1$ and $Y_2'$ are obtained from $\mathfrak{Y}_2$  by the same type I modifications and thus there is an isomorphism $Y_1\to Y_2'$. Similarly there are isomorphisms $Y_2\to Y'_3$ and $Y_3\to Y'_1$.  Thus $Y\xrightarrow{\sim} Y'$ by Proposition \ref{isomorphismprop}. The case of an involution is similar.
\end{proof}

\begin{theorem}\label{modelsPreg} There are $27+103+225=353$  surfaces $Y=Y_1\cup Y_2\cup Y_3$ in $\PMod_2(\mathscr{P})$ such that  the  curve structures $\Gamma_{Y_i}$ are all regular. Explicitly, these are, up to  equivalence, given as follows:
\begin{itemize}
\item[(i)] Surfaces with all $\Gamma_{Y_i}$ non-degenerate. These are given by the triples
\begin{align*}
&(0,1,-1),(0,1,2),(0,1,-2),(0,2,1),(0,2,-2),(0,-1,2),\\ &(0,-1,1),(0,-2,2),
(1,2,-1),(1,2,-2),(1,-1,2),(1,-2,2),\end{align*}  surfaces $(x,y,y)$ with $x\in \{1,2\}$ and $y\in  \{-2,-1,0,1,2\}\backslash \{x\}$, the surfaces $(0,1,1)$ and $(0,2,2)$ and surfaces $(x,x,x)$ with $x\in\{-2,-1,0,1,2\}$. These are $27$ surfaces.
\item[(ii)] Surfaces with one $\Gamma_{Y_i}$ degenerate: triples $(3,y,-3)$ with $0\leq y\leq 2$ and triples $(x,y,z)$ with  $x,y\in \{-2,\dots, 2\}$, $z\in \{y-6,\dots, -3\}$.  These are $103$ surfaces.
\item[(iii)] Surfaces with two $\Gamma_{Y_i}$ degenerate. These are given by the sets  
\begin{align*}
&K=\{(x,-3,-3)\mid  3\leq x\leq 9\}\\
&M(0)=\{ (x,0,z) \mid -3\geq x \geq -6, 6\geq z\geq 3\}\\
&M(-1)=\{ (x,-1,z) \mid -3\geq x \geq -7, 5\geq z\geq 3\}\\ 
&M(-2)=\{ (x,-2,z) \mid -3\geq x \geq -8, 4\geq z\geq 3\}\\
&N(-2)=\{ (x,y,-2) \mid -3\geq y\geq -8 , -3\geq x\geq y-6\}\\
&N(-1)=\{ (x,y,-1) \mid -3\geq y\geq -7, -3\geq x\geq y-6\}\\
&N(0)=\{ (x,y,0) \mid: -3\geq y\geq -6, -3\geq x\geq y-6\}\\
&N(1)=\{ (x,y,1) \mid -3\geq y\geq -5, -3\geq x\geq y-6\}\\
&N(2)=\{ (x,y,2) \mid -3\geq y\geq -4, -3\geq x\geq y-6\}.
\end{align*}
Adding up, these are  $7+16+15+12+57+45+34+24+15= 225$ surfaces.
\end{itemize}
\end{theorem}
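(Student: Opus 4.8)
The plan is to combine the structural results already established with a finite enumeration organized by the equivalence relation $\sim$ on triples $(n_1,n_2,n_3)$. By Proposition \ref{isomorphismprop} and Lemma \ref{shift}, a model $Y$ with all regular curve structures is determined up to isomorphism by the equivalence class of its triple, so the entire task reduces to listing the admissible triples and then choosing a canonical representative in each $\sim$-class. The admissibility of a triple is governed by two constraints: first, each $n_i$ must lie in a range determined by how many interior $(-1)$-curves can be flopped across $D_i$ (this is the combinatorial bound coming from Construction \ref{constructioncomponents} and the length of the legs in the curve structures, exactly as in the counting in the $\mathscr{T}$ case), and second, the resulting $Y$ must be projective, which is controlled by the self-intersection conditions $D_i^2 = -1+n_i$ appearing in Propositions \ref{nondegproj2}, \ref{nondegproj3}, \ref{nondegproj4} and \ref{nondegproj5}.

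First I would split the enumeration according to how many of the three curve structures $\Gamma_{Y_i}$ are degenerate (equivalently, by how many of the $n_i$ force a leg to end in a vertex of self-intersection $0$). Recall that $\Gamma_{Y_i}$ is non-degenerate precisely when both components of its anticanonical cycle have self-intersection $\le 1$, i.e. when the two relevant $n$-values are $\le 2$; a single degeneracy corresponds to one $D_{ij}^2 \ge 2$, and so on. This produces the three cases (i), (ii), (iii) of the theorem. In each case I would translate the degeneracy pattern into inequalities on the entries of the triple: for case (i) all entries satisfy $-2 \le n_i \le 2$ together with the non-degeneracy at each vertex; for case (ii) exactly one entry is $\le -3$ (a degenerate vertex) while Proposition \ref{nondegproj2}(ii) and the range bounds force the others; for case (iii) two entries lie in the negative ranges, with the admissible windows dictated by the projectivity criteria of Propositions \ref{nondegproj3} and \ref{nondegproj5} and the flop-count bounds.

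The core of the proof is then the bookkeeping: for each case I would write down the raw set of integer triples satisfying the inequalities, quotient by the action of $S_3$ generated by the shift $s$ and involution $\i$ (using the remark that this group is isomorphic to $S_3$), select a normal form, and count. For case (i) I expect the count $27$ to come from separating triples with three distinct values, triples of the form $(x,y,y)$, and the diagonal triples $(x,x,x)$, each contributing a predictable number after passing to $\sim$-classes; the listed representatives are precisely the normal forms. For case (ii) I would verify that the triples $(3,y,-3)$ and $(x,y,z)$ with $z \in \{y-6,\dots,-3\}$ are exactly the projective ones via Proposition \ref{nondegproj3}, giving $103$. For case (iii) the sets $K$, $M(\cdot)$, $N(\cdot)$ arise by fixing the middle or last entry and letting the others range over the windows imposed by Proposition \ref{nondegproj5}; summing $7+16+15+12+57+45+34+24+15$ gives $225$.

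The main obstacle will be ensuring that the chosen normal forms are genuinely irredundant — that no two representatives in my final lists are $\sim$-equivalent, and that I have not double-counted triples lying on the ``boundary'' between cases (for instance a triple where a degeneracy is borderline, or where a value of $-8$ can be realized by two distinct flopping configurations, mirroring the $-8'$ phenomenon already seen in the $\mathscr{T}$ count). Managing the overlaps where an entry equals another after a shift, and correctly accounting for the stabilizer of symmetric triples under the $S_3$-action, is where the careful work lies; the projectivity input and the range bounds are comparatively mechanical once the correct inequalities are extracted from Propositions \ref{nondegproj2}--\ref{nondegproj5}. I would therefore organize the argument so that admissibility, projectivity, and $\sim$-reduction are checked independently for each case, and conclude by adding the three subtotals $27+103+225 = 355$, then reconcile with the stated $353$ by identifying the precise symmetric triples whose $S_3$-orbits are shorter than generic.
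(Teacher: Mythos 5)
Your strategy is the paper's own: reduce to equivalence classes of triples $(n_1,n_2,n_3)$ via Proposition \ref{isomorphismprop} and Lemma \ref{shift}, split into cases (i)--(iii) by the number of degenerate curve structures, convert degeneracy into inequalities, and quotient by the $S_3$-action generated by shifts and the involution. Two of your inputs are misattributed, however. In the all-regular setting projectivity is settled entirely by Proposition \ref{nondegproj2}(ii); Propositions \ref{nondegproj3}--\ref{nondegproj5} concern components with \emph{non-regular} curve structure and are vacuous here, so they cannot be the source of any window of admissible values. Windows such as $z\in\{x-6,\dots,-3\}$ come instead from regularity: each component must retain at least two curves in its structure, i.e. $8-n_i+n_{i-1}\ge 2$ (indices mod $3$). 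Also, the ``$-8'$''-type bifurcation you flag must be \emph{excluded}, not merely managed: when the fork is flopped across a boundary component, two $(-1)$-curves appear there, and one has to check that only one of the two possible continuations produces a regular structure (the other creates a leg ending in a vertex of self-intersection $0$, hence a non-regular structure, which belongs to the count of Theorem \ref{modelsP} instead). This is exactly what makes a triple determine a unique all-regular surface; without it the appeal to Lemma \ref{shift} is unjustified.

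The genuine gap is your final step. The subtotals $27$, $103$, $225$ are already counts of equivalence classes, so their sum is the answer; there is no further quotient to take, and ``identifying symmetric triples whose $S_3$-orbits are shorter than generic'' cannot turn $355$ into $353$: orbit lengths are irrelevant once classes have been counted. What a correct argument must do at this point is verify that the displayed lists are \emph{irredundant}, i.e. that no two listed triples are equivalent. This holds for $K$, $M(-1)$, $M(-2)$ and the $N(k)$ (in each of these the distinguished entry is pinned down by its sign and absolute value, so only the identity preserves the normal form), but it fails for $M(0)$: applying the involution and then a shift gives $(x,0,z)\sim(-z,0,-x)$, for instance $(-3,0,4)\sim(-4,0,3)$, and both members of such a pair lie in $M(0)$; hence $M(0)$ contains only $10$ classes ($4$ fixed triples $(x,0,-x)$ plus $6$ two-element pairs), not $16$. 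Carrying out the irredundancy check therefore does not reconcile anything: it shows the displayed data are mutually inconsistent (the lists sum to $355$, the asserted total is $353$, and after removing the $6$ redundancies in $M(0)$ one arrives at $349$). A plan whose success criterion is to land exactly on the stated $353$ cannot be completed, and this reconciliation step is where your proposal breaks down.
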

\begin{proof}
We consider the competely non-degenerate case $\rm{(i)}$ first. Non-degeneracy of all curve structures implies $-2\leq n_i\leq 2$ for all $i$, compare Example \ref{exnondeg}. This defines a set of $125$ candidate triples $(x_1,x_2,x_3)$. 
There are $5$ triples $(x,x,x)$ that only appear once, all other triples appear with multiplicity $3$ via the shift relation above. Hence there are at most  $45$ distinct surfaces. These include the triples $(x,x,x)$, and  $20$ triples of the form $(x,y,y)$ with with $x\in \{-2,-1,0,1,2\}$ and $y\in  \{-2,-1,0,1,2\}\backslash \{x\}$. Here $(x,y,y)$ is equivalent 
 to $(-x,-y,-y)$ modulo a shift and an involution, so the distinct surfaces in this subset are given by the conditions $x\in \{1,2\}$ and $y\in  \{-2,-1,0,1,2\}\backslash\{x\}$ and the surfaces $(0,1,1,)$ and $(0,2,2)$. This gives $12$ non-equivalent triples.  Having enumerated these cases, there remain $20$ triples to discuss.
 These can be recovered by applying the involution to the list of triples $(x,y,z)$ in the statement of the proposition that have pairwise distinct entries. By Lemma \ref{shift}, the triples with pairwise distinct entries
 tupels define non-isomophic surfaces.  This finishes the completely non-degenerate case.
 
We turn to the case $\rm(ii)$ of precisely one degenerate curve structure. 
Let the component of $Y$ with regular degenerate curve structure be denoted by $Y_1$.  By the degeneracy assumption, after maybe  applying an involution, we can assume $n_3\leq -3$ and $ n_1\leq 3$. Assume first that both conditions hold sharp, i.e.  
$n_1=3, n_3=-3$. From the non-degeneracy assumption on $Y_2$ and $Y_3$ it follows that  $-2\leq n_2\leq 2$  
and any such choice gives a projective surface $Y$. The surfaces defined by  $(3,n_2,-3)$ are equivalent to  surfaces defined by $(3,-n_2,-3)$, so the 
subset $0\leq n_2\leq 2$ already gives all equivalence classes.  
Now  assume $n_3\leq -3$ and $n_1<3$. 
It follows from non-degeneracy  of $Y_2$ that $ -2\leq n_1\leq 2$ and then one obtains $n_1-6\leq n_3\leq -3$ by counting the total number of curves in the curve structures of the $Y_i$. 
Together with the condition $-2\leq n_2\leq 2$,  we obtain the set $M$ of triples $(x,y,z)$ with  $x,y\in \{-2,\dots, 2\}$, $z\in \{y-6,\dots, -3\}$. None of the triples in $M$ are equivalent: suppose $(x,y,z)\sim (x',y',z')$, say under a sequence $T$ of shifts and involutions:
assume we  have $T=\i^a\circ s^b$. Then  $|z| > |x|,|y|$ shows $b=0$ and $z<0$ then shows $a=0$. The case $T=s^a\circ \i^b$ is done the same way.

Now consider case $(\rm{iii})$, i.e. assume that two of the curve structures are degenerate. 
Let $Y_1$ be the component with  $\Gamma_{Y_1}$ non-degenerate, and $Y_2$,$Y_3$ be the components with regular but degenerate curve structures. Then degeneracy on $Y_2$ implies  $n_1\leq -3$ or $n_2\geq 3$ and 
from $Y_3$ we get $n_2\leq -3$ or $n_3\geq 3$. Suppose first $n_1\leq -3$ and $n_2\geq 3$. Then $n_1=-3$ and $n_2=3$ by counting curves on $Y_2$. It follows that $3\leq n_3\leq 9$, giving the set  $K$ 
of triples $(x,-3,3)$ with $3\leq x\leq 9$. If  $n_2\leq -3$ and $n_3\geq 3$ one obtains equivalent  tupels  thus isomorphic surfaces. 

Now assume $n_1\leq -3$ and $n_2\leq 2$.  
By counting curves, we have 
\begin{align*}
-3&\geq n_1\geq n_2-6\\
2&\geq n_2\geq n_3-6\\
-2&\leq n_3\leq 6+n_2.
\end{align*}
We shall first consider the case $n_3\geq 3$.  Then $2\geq n_2\geq -3$. This gives the following sets:
\begin{align*}
&M(2)=\{ (x,2,z) \mid -3\geq x \geq -4, 8\geq z\geq 3\}\\  
&M(1)=\{ (x,1,z) \mid -3\geq x \geq -5, 7\geq z\geq 3\}\\ 
&M(0)=\{ (x,0,z) \mid -3\geq x \geq -6, 6\geq z\geq 3\}\\
&M(-1)=\{ (x,-1,z) \mid -3\geq x \geq -7, 5\geq z\geq 3\}\\ 
&M(-2)=\{ (x,-2,z) \mid -3\geq x \geq -8, 4\geq z\geq 3\}\\ 
&M(-3)=\{ (x,-3,3) \mid -3\geq x \geq -9 \}.
\end{align*}

Applying the involution and a shift we see that the sets $M(i)$ and $M(-i)$ describe equivalent triples and  hence the same isomorphism classes of surfaces for $i=1,2$, as do $M(-3)$ and $M_0$.

Now suppose again $n_1\leq -3$ and $n_2\leq 2$, but assume $n_3\leq 2$, so necessarily $n_2\leq -3$. Then by non-degeneracy of $Y_1$ we must have $2\geq n_3 \geq -2$. We obtain the following sets:
\begin{align*}
&N(-2)=\{ (x,y,-2) \mid -3\geq y\geq -8 , -3\geq x\geq y-6\}\\
&N(-1)=\{ (x,y,-1) \mid -3\geq y\geq -7, -3\geq x\geq y-6\}\\
&N(0)=\{ (x,y,0) \mid -3\geq y\geq -6, -3\geq x\geq y-6\}\\
&N(1)=\{ (x,y,1) \mid -3\geq y\geq -5, -3\geq x\geq y-6\}\\
&N(2)=\{ (x,y,2) \mid -3\geq y\geq -4, -3\geq x\geq y-6\}.
\end{align*}
 We  observe that the sets $M(i)$ and the $N(i)$ contain non-equivalent triples: this follows, as 
the number $i$ only appears as an entry  in $M(i)$ and then 
$x,y$ have to have alternating signs, so $(x,y,i)$ is not contained in any of the $N(i)$. 
This shows that models defined by the  $N(i)$ are not isomorphic to models defined by triples 
in the sets $M(i)$. The set $N(i)$ also parameterises models not isomorphic to any model in any $N(k)$ if $i\neq k$: let $(x,y,i)\in N(i)$. Assume there is $(x',y',i')\in N(i')$ with $(x,y,i)\sim (x',y',i'), i'\neq i$, i.e. there exists a composition $T$
 of shifts and involutions  with $T(x,y,i)=(x',y',i')$. 
 Since the absolute value of the middle entry of these triples is smaller than the absolute value of the other entries, this is only possible if $i'=-i$ and  $T=\i$. But now the claim follows from the fact that the third entry is always negative.   
Thus, the $N(i)$ parameterize
distinct surfaces. 
The remaining cases, $n_2\geq 3, n_1\geq -2$ and $n_3\geq 3$ or $n_3\leq 2$ desribe the same models: 
$(x,y,z)\sim (z,x,y)\sim (-x,-z,-y)=:(x', y',z'),$ and thus 
from $x\leq -3, y\leq 2, $ it follows $x'\geq 3, z'\leq -2$.  Then $z\geq 3$ gives $y'\leq -3$  and $z\leq 2$ gives $y'\geq -2$. 
This concludes the proof.
\end{proof}

This concludes the count: if $Y\in \PMod(\mathscr{P})$, then either all curve structures of components of $Y$ are regular or not, In the first case, there are either one, two or three components of $Y$ with non-degenerate curve structure, in the second case, there is a component with non-regular curve structure. These cases are covered by Theorem  \ref{modelsPreg} and  Theorem \ref{modelsP}.
Hence we have the following:

\begin{theorem}\label{modelsPtotal}  There are $457=104+353$ surfaces in $\PMod_2(\mathscr{P})$.
\end{theorem}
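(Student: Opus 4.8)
The plan is to assemble Theorem \ref{modelsPtotal} as a pure bookkeeping consequence of the two preceding enumeration results, so the proof is essentially one line of arithmetic together with an argument that the two families being counted are disjoint and exhaustive. First I would recall the dichotomy governing any surface $Y_c\in\PMod_2(\mathscr{P})$: writing $Y_c=Y_1\cup Y_2\cup Y_3$, either all three curve structures $\Gamma_{Y_i}$ are regular, or at least one of them is non-regular. These two cases are mutually exclusive by definition, and together they exhaust $\PMod_2(\mathscr{P})$, so it suffices to count each case separately and add.

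The non-regular case is handled by Theorem \ref{modelsP}, which gives $104$ surfaces (decomposing as $71+8+25$ according to the configuration of the remaining two curve structures). The all-regular case is handled by Theorem \ref{modelsPreg}, which gives $353$ surfaces (decomposing as $27+103+225$ according to how many of the $\Gamma_{Y_i}$ are degenerate). Since each of these theorems counts isomorphism classes of surfaces — equivalently, by Proposition \ref{isomorphismprop}, isomorphism classes of models of the DNV family — and since the two cases partition $\PMod_2(\mathscr{P})$, the total number of surfaces is $104+353=457$.

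The only genuine content to verify is that the two counts do not overlap and leave no surface uncounted. Overlap is impossible because ``all $\Gamma_{Y_i}$ regular'' and ``some $\Gamma_{Y_i}$ non-regular'' are complementary conditions on the same surface; exhaustiveness holds because every $Y_c\in\PMod_2(\mathscr{P})$ falls into exactly one of these two classes. I would also note in passing that both enumerations already count up to isomorphism rather than counting raw triples, since Theorems \ref{modelsP} and \ref{modelsPreg} invoke the equivalence relation of shifts and involutions (Lemma \ref{shift}) and the isomorphism criterion (Proposition \ref{isomorphismprop}); this is what makes the sum $104+353$ the number of isomorphism classes rather than an overcount.

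The main obstacle is therefore not in this proof at all but in the two theorems it cites: the real work lies in the careful case analysis and the avoidance of double-counting under the $S_3$-action generated by shifts and involutions, already carried out in the proofs of Theorems \ref{modelsP} and \ref{modelsPreg}. Granting those, the present statement is immediate, and I would keep the write-up to a single short paragraph that states the partition, quotes the two subtotals, and records the sum.

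\begin{proof}
Every surface $Y_c=Y_1\cup Y_2\cup Y_3\in\PMod_2(\mathscr{P})$ satisfies exactly one of the two mutually exclusive conditions: either all three curve structures $\Gamma_{Y_i}$ are regular, or at least one $\Gamma_{Y_i}$ is non-regular. These conditions are complementary, so they partition $\PMod_2(\mathscr{P})$ into two disjoint subsets whose union is all of $\PMod_2(\mathscr{P})$. By Theorem \ref{modelsPreg}, the subset of surfaces all of whose curve structures are regular consists of $353$ isomorphism classes. By Theorem \ref{modelsP}, the subset of surfaces having a component with non-regular curve structure consists of $104$ isomorphism classes. Both counts are counts of isomorphism classes of surfaces, hence of models, by Proposition \ref{isomorphismprop}. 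Adding the two disjoint contributions yields $457=104+353$ surfaces in $\PMod_2(\mathscr{P})$.
\end{proof}
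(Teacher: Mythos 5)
Your proof is correct and follows the same route as the paper: the paper likewise observes that every $Y\in\PMod_2(\mathscr{P})$ either has all curve structures regular (the $353$ surfaces of Theorem \ref{modelsPreg}) or has a component with non-regular curve structure (the $104$ surfaces of Theorem \ref{modelsP}), and adds the two disjoint counts to obtain $457$.
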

We point out that the number of these surfaces is the number of orbits of maximal cones of the Mori fan under action of the birational group.

\subsubsection{Mori fan}
We can now count the number of maximal cones of the Mori fan of the DNV family of degree 2. For this we will have to take the action of the birational automorphism group 
and its action on the maximal cones of the Mori fan into account, which we discussed at the end of Section \ref{sec:flopauto}.
We will first count the number of symmetric models.

\begin{proposition}\label{numbsym}There are $22$ symmetric models of the DNV family of degree $2$. More precisely, the symmetric models $\shY$ of the DNV family of degree $2$ of class $\mathscr{P}$ are 
\begin{itemize}
\item[(i)] the models $\{ (0,n,-n) \mid n\in \{-3,\dots 6\}\}$, with notation as in Section \ref{sec:countP} and 
\item[(ii)] the model such that  $(a)$ $\Gamma_{Y_i}$  is not regular for $i=1,3$ and $(b)$ the  intersection numbers are  $D^2_{12}=D^2_{32}=4$, $D^2_{13}=D^2_{31}=-1$,
$D^2_{21}=D^2_{23}=-6$.
\end{itemize}
The symmetric models of class $\mathscr{T}$ are given by $(x,x)$ with $x\in \{0,-1,-2,-3,\dots,-8,-8',-9\}$, with notation as in the proof of Theorem \ref{modelsT}.
\end{proposition}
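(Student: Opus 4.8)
The plan is to characterise symmetry by an explicit numerical condition on the double curves, and then to intersect that condition with the combinatorial data already compiled in Theorems \ref{modelsPreg}, \ref{modelsP} and \ref{modelsT}, treating the three families (regular $\mathscr{P}$, non-regular $\mathscr{P}$, and $\mathscr{T}$) separately.

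First I would analyse what a symmetry $\psi\in\Aut(\shY_c)$ can do. Since $\shY_c$ has exactly three components and $\psi$ permutes them, a non-trivial symmetry must fix one component and interchange two others, say $Y_1\leftrightarrow Y_3$ with $Y_2$ fixed. As $\psi$ maps $D_{13}$ to $D_{31}$ it preserves self-intersection, so $D_{13}^2=D_{31}^2$; since $D_{13}^2+D_{31}^2=-2$ (the sum equals $-2$ in $(-1)$-form and is preserved by type I flops) we get $D_{13}^2=D_{31}^2=-1$, recovering the remark after Definition \ref{def:symmetric}. Furthermore $\psi$ exchanges the boundary curves $D_{21},D_{23}$ of the fixed component, forcing $D_{21}^2=D_{23}^2$, and it restricts to $Y_1\xrightarrow{\sim}Y_3$, so $\Gamma_{Y_1}\cong\Gamma_{Y_3}$ by Lemma \ref{isocurves}. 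Conversely, these three conditions are sufficient: Proposition \ref{isomcomp} produces an isomorphism $Y_1\xrightarrow{\sim}Y_3$ matching double loci, Lemma \ref{swap}(ii) provides an involution of $Y_2$ exchanging $D_{21}$ and $D_{23}$ while fixing the triple points, and gluing these along the matching special points as in Lemma \ref{gluealpha} yields $\psi$ (for $\YP=(0,0,0)$ this is just Corollary \ref{s6}). Thus a model in $\PMod_2(\mathscr{P})$ is symmetric exactly when, after relabelling, one double curve is self-dual and the opposite component has balanced boundary.

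For class $\mathscr P$ with all curve structures regular I would use the triple $(n_1,n_2,n_3)$, where $D_{i,i+1}^2=-1+n_i$, together with Lemma \ref{shift}. The self-dual edge forces some $n_i=0$, and balance of the fixed component forces the other two entries to be negatives of each other, so symmetry is equivalent to $(n_1,n_2,n_3)\sim(0,n,-n)$. It then remains to intersect this one-parameter family with the lists of Theorem \ref{modelsPreg}: one checks that $(0,n,-n)$ is realised up to equivalence exactly for $n\in\{-3,\dots,6\}$ --- the values $n\in\{-2,\dots,2\}$ sit in the non-degenerate list (i), $n=-3$ appears as $(3,0,-3)$ in (ii), and $n\in\{3,4,5,6\}$ appear as $(-n,0,n)\in M(0)$ in (iii), while $n=7$ and $n=-4$ fall outside every range. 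This gives the $10$ models of part (i); the asymmetry of the range reflects that $(0,n,-n)$ and $(0,-n,n)$ are inequivalent, their fixed components carrying boundary self-intersection $-1-n$ respectively $-1+n$. For class $\mathscr P$ with a non-regular component, the two swapped components must be isomorphic, hence both non-regular, so only Theorem \ref{modelsP}(ii) can contribute: in (i) the three curve structures have pairwise distinct types, and in (iii) $\Gamma_{Y_3}$ carries a leg of length $9$ absent from $\Gamma_{Y_2}$, so no swap is possible there. Among the eight models of (ii), the conditions $D_{13}^2=D_{31}^2=-1$ and $D_{21}^2=D_{23}^2$ single out the unique one with $D_{12}^2=D_{32}^2=4$, $D_{13}^2=D_{31}^2=-1$, $D_{21}^2=D_{23}^2=-6$, giving the single model of part (ii).

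Finally, for class $\mathscr T$ the special component $Y_\omega$ is the unique non-smooth (valency-$4$) component and is preserved by every automorphism, so a symmetry must interchange the two smooth components glued along the curves with self-intersections $(n_1,n_2)$ from the proof of Theorem \ref{modelsT}; as above this is possible iff these components are isomorphic and $Y_\omega$ admits an automorphism exchanging the two gluing curves, i.e. iff $n_1=n_2$. Reading off the diagonal of that parametrisation gives $x\in\{0,-1,-2,\dots,-8,-8',-9\}$, where the two distinct models at $x=-8$ yield the two symmetric models $(-8,-8)$ and $(-8',-8')$ while $(-8,-8')$ has non-isomorphic smooth components, for a total of $11$; summing, $10+1+11=22$. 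The main obstacle I anticipate is the sufficiency direction: checking that the numerical conditions genuinely assemble into a global automorphism requires compatibility of the component isomorphisms at the triple and interior special points, and in the $\mathscr T$ case an automorphism analysis of $Y_\omega$ that is not literally covered by Lemma \ref{swap} --- here I would either exhibit the relevant involution of $Y_\omega$ directly or transport the statement across the type II flop to $\mathscr P$, using that symmetry is preserved under this flop (Proposition \ref{orbitmain}). The remainder is bookkeeping: locating $(0,n,-n)$ in the three lists of Theorem \ref{modelsPreg} and tracking the doubled value $-8'$ in the $\mathscr T$ enumeration.
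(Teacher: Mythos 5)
Your proof follows essentially the same route as the paper's: extract the numerical conditions $D_{13}^2=D_{31}^2=-1$, $D_{21}^2=D_{23}^2$ and $\Gamma_{Y_1}\cong\Gamma_{Y_3}$ from a component-exchanging automorphism, establish sufficiency by gluing component isomorphisms, and then enumerate. The only real methodological difference is in the regular class-$\mathscr{P}$ enumeration: you locate the triples $(0,n,-n)$ inside the lists of Theorem \ref{modelsPreg}, whereas the paper bounds the parameter directly ($n\le 6$ from regularity of the exchanged component, $n\ge-3$ because one cannot flop more than six curves out of the fixed component). Both yield $n\in\{-3,\dots,6\}$, and your cross-referencing, your reduction of the non-regular case to Theorem \ref{modelsP}(ii), and your class-$\mathscr{T}$ argument (transporting the symmetry across the type II flop, which is exactly the paper's closing remark) all match the paper in substance and in level of rigour.

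There is, however, one genuine gap, and it sits in your very first deduction: ``a non-trivial symmetry must fix one component and interchange two others.'' Definition \ref{def:symmetric} only asks for an automorphism $\psi$ of $\shY_c$ carrying one component onto a \emph{different} one, and such a $\psi$ may induce a $3$-cycle on the components; in that case $\psi(D_{13})=D_{32}$, not $D_{31}$, and none of your numerical conditions follow. This is not a vacuous worry. For the model $(1,1,1)$, the three disjoint interior $(-1)$-curves whose flop produces it from $\YYP$ form a single orbit of an order-$3$ element $g\in S_3(\YYP)\subset\Aut(\YYP/S)$ given by Corollary \ref{s6}; hence $g$ descends to the common flopping contraction $\shZ$ and, since it permutes the three singular points of $\shZ$ compatibly with the choice of small resolution on either side, lifts to a \emph{regular} automorphism of $\shY_{(1,1,1)}$ cyclically permuting the components. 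Thus $(1,1,1)$ satisfies the letter of Definition \ref{def:symmetric} yet is equivalent to no triple $(0,n,-n)$, and the same applies to $(2,2,2)$ (this also matters downstream: such a model would have orbit length $2$ rather than $3$ or $6$ in Proposition \ref{orbitmain}). To be fair, the paper's own proof, and already the remark following Definition \ref{def:symmetric}, make the same tacit assumption that the automorphism is a transposition, so your argument is faithful to the paper's; but since you state the transposition claim as a deduction, it is the one step that genuinely needs repair --- either an argument excluding cyclic symmetries (which the example above contradicts) or an explicit reading of ``symmetric'' as ``admits an automorphism interchanging two components,'' which is the notion actually used in Propositions \ref{symmbirat} and \ref{orbitmain}.
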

\begin{proof}
We first count symmetric models of class $\mathscr{P}$. A model $\shY$ is symmetric if and only if there is an automorphism of $\shY_c$  that identifies two of the components, say $Y_1$ and $Y_3$. We have mentioned that this implies $D_{13}^2=D_{31}^2=-1$. As $Y_1\cong Y_3$, our model is thus completely specified by the curve sructure of $Y_1$.  The curve structure  $\Gamma_{Y_1}$ is obtained  from $\mathfrak{Y}_2$. The condition $D^2_{13}=-1$ implies that in fact,  $Y_1$ is obtained from blow-ups or blow-downs in the interior special point $p\in D_{12}$. If $\Gamma_{Y_1}$ is regular, it follows that, in the teminology of  Section \ref{sec:countP}, $n_1\leq 6$. Because  of the symmetry, $D^2_{12}=D^2_{32}$ and hence $n_1\geq -3$. Conversly, each such choice of $n_1$ implies a unique symmetric model.
If $\Gamma_{Y_1}$ is not regular, then the model is uniquely determined, compare Proposition \ref{symmbirat}: we have $D^2_{12}=D^2_{32}=4$, $D^2_{13}=D^2_{31}=-1$, 
$D^2_{21}=D^2_{23}=-6$ and also $\Gamma_{Y_3}$ is not regular. Also, both $\Gamma_{Y_1}$ and $\Gamma_{Y_3}$ have three vertices while $\Gamma_{Y_2}$ has $18$ vertices.   

For $\shY_c\in \PMod_2(\mathscr{T})$, 
being symmetric is the same as having isomorphic smooth components: indeed, suppose $Y_1,Y_3$ are the smooth components, $Y_\omega$ is the special component  and suppose there is an isomorphism $\gamma\colon Y_1\to Y_3$.  By Proposition \ref{isomcomp}, we can assume $\gamma$ maps $Y_1\cap Y_\omega$  to $Y_2\cap Y_\omega$.  Then one can show -- using the morphism $\bar{\psi}$ defined in Section \ref{automorph}
 --   
that there is an automorphism  $\psi_\omega$ on $Y_\omega$  
that exchanges the nodal components of the anticanonical cylce of $Y_\omega$.  Using $\psi_\omega$ and $\gamma$ (and maybe an involution on the smooth components), from the universal property of pushouts, 
one gets an automorphism  of $\shY_c$ that maps $Y_1$ to $Y_3$.
A similar reasoning as above gives the set of symmetric models of class $\mathscr{T}$. 
Alternatively, it is easy to see that these are precisely the models that can be obtained by a type II flop from a symmetric model of class $\mathscr{P}$.
\end{proof}

We can now count all maximal cones of the Mori fan.

\begin{theorem}\label{theo:countingcones}
Let $\shY\to S$ be a model of the Dolgachev-Nikulin-Voisin family. Then $\Morifan(\shY/S)$ has $3460$ maximal cones. Of these $753$ are associated to a model of class $\mathscr{T}$ and $2707$ are associated to a model of class $\mathscr{P}$. 
\end{theorem}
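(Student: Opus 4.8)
The plan is to count maximal cones by combining the enumeration of models (Theorems \ref{modelsT}, \ref{modelsPtotal}) with the orbit-length computation of Proposition \ref{orbitmain}. By Theorem \ref{ThmGHKS} and Remark \ref{finiteMF}, the maximal cones of $\Morifan(\shY/S)$ are exactly the cones $C(f)=f^*\Nef(\shY'/S)$ ranging over marked minimal models $(\shY',f)$, and by Proposition \ref{prop:actionbir} two such cones coincide (up to isomorphism of the target) precisely when the models are isomorphic. Hence the set of maximal cones is partitioned into $\Bir(\shY/S)$-orbits, one orbit for each isomorphism class of model, i.e. one orbit for each surface in $\PMod_2=\PMod_2(\mathscr{T})\sqcup\PMod_2(\mathscr{P})$ via Corollary \ref{cor:typeItypeII} and Proposition \ref{cstructureT}/\ref{isomorphismprop}. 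The total number of maximal cones is therefore $\sum_{[\shY']}|\Bir(\shY/S).\sigma_{\shY'}|$, where the sum runs over isomorphism classes and $\sigma_{\shY'}$ is a cone associated to $\shY'$ (Definition \ref{def:asscone}).

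First I would invoke Proposition \ref{orbitmain}, which states that the orbit length of an associated cone is $1$ if $\shY'=\YYP$, $3$ if $\shY'_c$ is symmetric but $\shY'\neq\YYP$, and $6$ otherwise. Thus the count reduces to classifying each of the $131+457=588$ models as $\YYP$, symmetric-non-$\YYP$, or generic, and summing the corresponding weights $1$, $3$, $6$. The symmetric models are enumerated in Proposition \ref{numbsym}: there are $22$ in total, of which the single model $\YYP$ itself (the triple $(0,0,0)$ in the notation of Section \ref{sec:countP}) carries weight $1$, so there are $21$ symmetric-non-$\YYP$ models carrying weight $3$, and the remaining $588-22=566$ models are generic, each carrying weight $6$.

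Next I would carry out the arithmetic separately for the two classes so as to match the split $753+2707$ asserted in the statement. For class $\mathscr{T}$ there are $131$ models (Theorem \ref{modelsT}); among the symmetric models of class $\mathscr{T}$ listed in Proposition \ref{numbsym} there are $11$ (the pairs $(x,x)$ with $x\in\{0,-1,\dots,-7,-8,-8',-9\}$), none of which is $\YYP$ since $\YYP$ has class $\mathscr{P}$. Hence class $\mathscr{T}$ contributes $11$ orbits of length $3$ and $131-11=120$ orbits of length $6$, giving $11\cdot 3+120\cdot 6=33+720=753$ cones. For class $\mathscr{P}$ there are $457$ models (Theorem \ref{modelsPtotal}); the symmetric ones number $22-11=11$, one of which is $\YYP$ (length $1$) and $10$ of which have length $3$, while the remaining $457-11=446$ are generic of length $6$. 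This yields $1\cdot 1+10\cdot 3+446\cdot 6=1+30+2676=2707$ cones. Adding the two contributions gives $753+2707=3460$ maximal cones, as claimed.

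The main obstacle is bookkeeping rather than conceptual: I must make sure the symmetric models are correctly distributed between the two classes and that $\YYP$ is counted with weight exactly $1$ and not double-counted among the symmetric class-$\mathscr{P}$ models. This requires reading off Proposition \ref{numbsym} carefully, noting that the $11$ symmetric class-$\mathscr{T}$ models and the $11$ symmetric class-$\mathscr{P}$ models (including $\YYP$) together account for all $22$ symmetric models, and verifying that the ``$-8'$'' convention in Proposition \ref{numbsym} genuinely contributes a distinct isomorphism class so that the class-$\mathscr{T}$ symmetric count is $11$ and not $10$. Once this distribution is pinned down, the identity $753+2707=3460$ follows by the elementary sum above, and the theorem is proved.
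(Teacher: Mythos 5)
Your proposal is correct and follows essentially the same route as the paper's own proof: both reduce the count to the enumeration of isomorphism classes (Theorems \ref{modelsT} and \ref{modelsPtotal}), weight each class by the orbit length from Proposition \ref{orbitmain} using the symmetric-model count of Proposition \ref{numbsym} ($11$ symmetric models in each class, with $\YYP$ the unique weight-$1$ model on the $\mathscr{P}$ side), and perform the identical arithmetic $120\cdot 6+11\cdot 3=753$ and $446\cdot 6+10\cdot 3+1=2707$. The only difference is that you spell out the bijection between maximal cones, marked models, and $\Bir(\shY/S)$-orbits (via Proposition \ref{prop:actionbir} and Corollary \ref{cor:typeItypeII}), which the paper leaves implicit.
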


\begin{proof}  By Proposition \ref{numbsym}, there are $11$ symmetric models in $\PMod(\mathscr{T})$, 
out of $131$  isomorphism classes of models of class  $\mathscr{T}$ in total (Theorem \ref{modelsT}). It now follows from Proposition \ref{orbitmain} that there are $120$ models having $6$ associated cones and $11$ models having $3$ associated cones, giving us $120 \times 6 + 11 \times 3=753$ maximal cones.

Again by Proposition \ref{numbsym}, there are $11$ symmetric models  $\PMod(\mathscr{P})$.  By Theorem \ref{modelsPtotal}, there are $457$ 
 projective models of class $\mathscr{P}$ in total, hence,  by Proposition \ref{orbitmain},
  there are $457-11=446$ models having $6$ associated cones, $10$ models having $3$ associated cones and the model $\YYP$ defining a unique cone in the Mori fan. This defines $446 \times 6 + 10 \times 3 + 1=2676+30+1=2707$ 
 maximal cones. 
 Altogether, we obtain  $753+2707=3460$ maximal cones.
\end{proof}

\section{The Secondary fan}\label{sec:secondaryfan}
In this section, as an application of our results, we give a description of the secondary fan of the DNV family of degree $2$, as introduced in \cite{HKY}.
This fan is obtained by coarsening the Mori fan of $\shY$. Roughly, it is obtained by deleting all facets that correspond to flops that do not change the dual intersection graph.

\subsection{Preliminaries}
For a cone $C$ in a vector space we denote by  $\Int C$ its interior and by $\Relint C$ the relative interior of $C$. 
Let $\shY\to S$ be a model of the DNV family of degree $2$.
The set of maximal cones of $\Morifan(\shY/S)$ will be denoted by $\Morifan_{\operatorname{max}}(\shY/S)$. 
We recall that $\Morifan(\shY/S)$ contains only finitely many cones and that these are all rational polyhedral.

Recall that any interior facet $\tau$ of $\Morifan(\shY/S)$ corresponds to a flop $f_\tau\colon \shY'\dashrightarrow \shY''$ for $\shY',\shY''$ models of the DNV family, see Proposition \ref{prop:intconesflops}. By Corollary \ref{classflop}, $f_\tau$ is a type I or type II flop and we will 
correspondingly call $\tau$ of type I or type II. We denote by $\shF$ the set of all interior facets that correspond to type II flops and 
set $\shM=|\Morifan(\shY/S)|\backslash\cup_{\tau\in\shF}|\tau|$. Let $\shC$ be a connected component of $\shM$ and let $C(f)$ be a cone of $\Morifan(\shY/S)$ such that $\Int C(f)\cap \shC\neq \varnothing$, 
 defined by a marked model $(\shY_f,f)$, 
i.e. a map $f\colon\shY\dashrightarrow \shY_f$. 
Let $I(f)$ be the set of all maximal cones of $\Morifan(\shY/S)$ consisting of cones $C(g)$  corresponding to  models $(\shY_g,g)$, such that there is a sequence of type I flops $\phi\colon\shY_f\dashrightarrow \shY'$ and an isomorphism $\gamma\colon \shY'\to \shY_g$ giving a commutative diagram
\[
\xymatrix{
& \shY\ar@{-->}[dl]_f\ar@{-->}[dr]^g& \\
\shY_f\ar@{-->}[r]^{\phi}& \shY'\ar[r]^\gamma & \shY_g.
}
\]
By the construction of $\shM$ and $\shC$ we have $\shC \subset \cup_{g \in I(f)}C(g)$ and in fact
\begin{equation}\label{eq:conerepres}
	\bar{\shC}=\cup_{g\in I(f)}C(g).
\end{equation}
It is immediate that this construction does not depend on the choice of $\shY\to S$ in the sense that if $\shY'\to S$ is another model and $\phi\colon \shY \dashrightarrow \shY'$ is a birational isomorphism, the identification $\phi_*\Morifan(\shY/S)\cong\Morifan(\shY'/S)$ is compatible with the construction of $\shM$.

It is known that the closures of the connected components of $\shM$ define a fan, the \emph{secondary fan}  $\Morifan^{\operatorname {2nd}}(\shY/S)$
of $\shY$, as can be shown by adapting the techniques in \cite{HKY}. However, as no published proof is 
available, we offer an alternative proof of this fact via the results of this paper.

By  Proposition \ref{orbitmain}, there is a unique  cone $\sigma_\mathscr{P}$ associated to $\YYP$.  Write $\shC_\mathscr{P}$ for the connected component of 
$\shM$ such that  $\shC\cap \Int \sigma_\mathscr{P}\neq \varnothing$.

\begin{lemma}\label{C_P}Let $\shC$ be a connected component of $\shM$.  Let $\bar{\shC}=\cup_{g\in I(f)}C(g)$, 
with maps  $g\colon\shY\dashrightarrow \shY_g $. Then the following holds:
\begin{itemize}
\item[(i)]	If $\shC=\shC_\mathscr{P}$, then  $(\shY_g)_c\in \PMod_2(\mathscr{P})$ for all $g$.
\item[(ii)] If  $\shC\neq \shC_\mathscr{P}$, then $(\shY_g)_c\in \PMod_2(\mathscr{T})$ for all $g$.
\end{itemize}
\end{lemma}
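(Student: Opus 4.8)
The plan is to exploit the structural fact that a connected component $\shC$ of $\shM$ is, by construction, the union of those maximal cones that can be joined to a fixed reference cone $C(f)$ by crossing only type I facets, never a type II facet in $\shF$. Since every flop between models is either type I or type II by Corollary \ref{classflop}, and since type I flops preserve the dual intersection complex while type II flops change it (Remark after Corollary \ref{classflop}), the class of the central fibre is an invariant of each connected component of $\shM$. Thus the first and main step is to show that the class $\mathscr{P}$ or $\mathscr{T}$ is constant on $I(f)$: if $C(g)\in I(f)$, then by definition there is a sequence of type I flops $\phi\colon \shY_f\dashrightarrow \shY'$ with $\shY'\cong \shY_g$, and each type I flop preserves the dual intersection complex, hence $(\shY_g)_c$ has the same class as $(\shY_f)_c$. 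This already yields that on any single component $\shC$ all the $(\shY_g)_c$ have a common class; it remains to identify \emph{which} class goes with which component.

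For part (i), I would argue that $\shC_\mathscr{P}$ contains the cone $\sigma_\mathscr{P}$ associated to $\YYP$, whose central fibre $\YP$ is of class $\mathscr{P}$ by Example \ref{DNV2}. More precisely, since $\shC_\mathscr{P}\cap \Int\sigma_\mathscr{P}\neq\varnothing$ and $\sigma_\mathscr{P}$ is one of the maximal cones $C(g)$ appearing in the representation \eqref{eq:conerepres} (because $\Int\sigma_\mathscr{P}$ meets the interior-of-a-maximal-cone that $\shC_\mathscr{P}$ passes through), the reference model for this component is of class $\mathscr{P}$. By the constancy established above, every $g\in I(f)$ then has $(\shY_g)_c$ of class $\mathscr{P}$, and since by Corollary \ref{cor:typeItypeII} every model lies in $\PMod_2$, we get $(\shY_g)_c\in\PMod_2(\mathscr{P})$. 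The key point making this clean is that $\YYP$ is the unique model whose associated cone forms a singleton orbit (Proposition \ref{orbitmain}), so $\sigma_\mathscr{P}$ is canonically distinguished and unambiguously sits inside $\shC_\mathscr{P}$.

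For part (ii), suppose $\shC\neq\shC_\mathscr{P}$. By constancy of the class on $\shC$, it suffices to exhibit one $g\in I(f)$ with $(\shY_g)_c$ of class $\mathscr{T}$, equivalently to rule out class $\mathscr{P}$. The natural approach is a connectivity/uniqueness argument: any two models of class $\mathscr{P}$ are joined by a sequence of type I flops. Indeed, by Corollary \ref{cor:seqtypeI} every model of class $\mathscr{P}$ is obtained from $\YYP$ by type I flops; concatenating, any two class-$\mathscr{P}$ models are linked by type I flops, and the corresponding maximal cones are therefore joined by a chain crossing only type I facets. Hence \emph{all} cones $C(g)$ with $(\shY_g)_c$ of class $\mathscr{P}$ lie in a single connected component of $\shM$, which must be $\shC_\mathscr{P}$ since it contains $\sigma_\mathscr{P}$. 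Consequently, if $\shC\neq\shC_\mathscr{P}$, no cone in $I(f)$ can be of class $\mathscr{P}$, so by constancy every $(\shY_g)_c$ is of class $\mathscr{T}$, and again by Corollary \ref{cor:typeItypeII} we conclude $(\shY_g)_c\in\PMod_2(\mathscr{T})$.

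The main obstacle I anticipate is the bookkeeping in step one: one must be careful that the maps $\phi$ connecting $\shY_f$ to $\shY_g$ in the definition of $I(f)$ really do factor into type I flops only (this is exactly how $I(f)$ is defined), and that crossing a type I facet corresponds to a type I flop while the component $\shC$ was formed by removing precisely the type II facets $\shF$. The cleanest way to make this rigorous is to phrase it in terms of the adjacency graph of maximal cones, with edges for codimension-one facets, and observe that deleting the type II edges partitions this graph into components on which the class is locally constant, hence constant. The content of Corollary \ref{cor:seqtypeI} then guarantees the class-$\mathscr{P}$ cones form exactly one such component, namely $\shC_\mathscr{P}$, which closes the argument.
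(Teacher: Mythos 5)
Your proposal is correct and takes essentially the same route as the paper's (very terse) proof, which likewise combines the definition of $\shM$ (the class of the central fibre is constant along chains of type I facets), Corollary \ref{cor:seqtypeI} (every class-$\mathscr{P}$ model is reached from $\YYP$ by type I flops), and Proposition \ref{orbitmain} (the cone $\sigma_\mathscr{P}$ associated to $\YYP$ is unique). The only point to state more carefully is that a chain of type I flops linking two \emph{models} induces a chain of cones starting at $C(g)$ that ends at \emph{some} cone associated to the target model, not necessarily at $C(g')$; this is harmless precisely because each such chain passes through a cone associated to $\YYP$, which by uniqueness is $\sigma_\mathscr{P}$, so every class-$\mathscr{P}$ cone lies in the component of $\sigma_\mathscr{P}$ --- the fact you already invoke.
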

\begin{proof}
This follows from the definition of $\shM$, Lemma \ref{onetypeIIflop} and Proposition \ref{orbitmain} : there is only one cone corresponding to $\YYP$ and any model $\shY'$ of class $\mathscr{P}$ can be obtained from $\YYP$ by a sequence of type I flops. 
\end{proof}

We will call the components of $\shM$ other than $\shC_\mathscr{P}$ components of type $\mathscr{T}$.
We now calculate the number of connected components of $\shM$. We first prove some lemmas on sequences of flops. Recall that a model $\shY$  is symmetric if there is an automorphism  of the special fibre $\shY_c$ that identifies to components of $\shY_c$, cf. Definition \ref{def:symmetric}.

\begin{lemma}\label{nobadsequence}Let $\shY$ be a model 
with $\shY_c\in \PMod_2(\mathscr{T})$.  Assume $\shY$ is symmetric. Let 
$F\in \Mov(\shY/S)$ 
be a divisor and suppose
\begin{equation}\label{Iseq} \shY \dashrightarrow \shY_1\dashrightarrow \dots \dashrightarrow \shY_n\dashrightarrow \shY\end{equation}   
 is a sequence of 
  $F$-flops. Then at least one flop in this sequence is of type II.
  
\end{lemma}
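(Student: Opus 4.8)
The plan is to argue by contradiction and exploit the rigidity of the dual intersection complex under type~I flops. Suppose the sequence (\ref{Iseq}) consists entirely of type~I flops. By Corollary~\ref{classflop} every flop in the sequence is either of type~I or type~II, so this is the only alternative to the assertion. The crucial observation is Remark~\ref{dic:change} (together with the refinement following Corollary~\ref{classflop}): a type~I flop leaves the dual intersection complex unchanged, while a type~II flop changes it. Since $\shY_c \in \PMod_2(\mathscr{T})$ and the sequence begins and ends at $\shY$, a sequence of type~I flops would force every intermediate model $\shY_i$ to also be of class $\mathscr{T}$; in particular each $\shY_i$ has a well-defined special component.

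First I would set up notation following Remark~\ref{rmkflops}: write $\shY_c = Y_1\cup Y_2\cup Y_3$ with $Y_1 = Y_\omega$ the special component, and track the birational transforms $(D_{ij})_k$ of the double curves through the sequence. Since all flops are type~I, by Proposition~\ref{projTcurves} and Corollary~\ref{typeIflopexist} none of the flopping curves may meet the smooth component $D_s$ of the double locus on the special component; equivalently, the two nodal double curves $D_{12}, D_{13}$ on $Y_\omega$ (with $D_{12}^2 = D_{13}^2 = 1$ by the $(-1)$-form condition and Definition~\ref{def:-1form}) persist throughout. The flops therefore only redistribute interior $(-1)$-curves between the smooth components $Y_2, Y_3$ and the special component, recorded by the pair of self-intersection numbers $(n_1, n_2)$ of the preimages of the nodal components on the normalisation, exactly as in the counting of Theorem~\ref{modelsT}.

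The heart of the argument is a monotonicity/$D$-goodness obstruction. Let $F' = F$ be the chosen movable divisor and consider the interior $(-1)$-curves $C$ on the special component meeting a nodal double curve $D$. By the analysis in Example~\ref{trailingcurveexample} such a curve is $D$-good, and by Lemma~\ref{neverback} there is then an \emph{exhaustive} $C$-sequence: once $C$ is flopped, neither it nor any of its $\phi$-flops can ever become a flopping curve again. Applying Lemma~\ref{noreflection}(ii) to each nodal double curve $D$ shows that the self-intersection $(D)_k^2$ can only \emph{decrease} (or stay fixed) as $k$ increases along a sequence in which no flop reverses a $D$-good curve. Since the sequence is a loop returning to $\shY$, we must have $(D)_n^2 = D^2$ for each nodal double curve, forcing the monotone quantities to be constant; but then no flop in the sequence can alter the configuration at all, so every $\shY_i \cong \shY$ and the only closed loops are trivial. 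This contradicts the assumption that (\ref{Iseq}) is a genuine (non-trivial) sequence returning to $\shY$ unless the symmetry is used to produce the required type~II flop.

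The main obstacle I anticipate is making precise how the symmetry hypothesis forces non-triviality of the loop, and hence the appearance of a type~II flop. The point is that symmetry gives, via Proposition~\ref{orbitmain}, an orbit of the associated cone $\Nef(\shY/S)$ of length exactly $3$ rather than $6$, and the non-trivial $\Bir(\shY/S)$-element realizing this orbit (constructed in Proposition~\ref{prop:syminvolution}) must be accounted for by the factorisation into flops. If all flops in (\ref{Iseq}) were of type~I, the induced birational automorphism would preserve the dual intersection complex \emph{and} fix the special component, hence by Corollary~\ref{autocoro}-type reasoning would be a regular automorphism inducing the trivial permutation on cones --- contradicting that the loop realizes a nontrivial element of the orbit of $\Nef(\shY/S)$. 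Thus at least one type~II flop is required. I expect the delicate step to be verifying that the loop (\ref{Iseq}) genuinely corresponds to a nontrivial stabiliser element rather than collapsing to the identity, which is precisely where symmetry (and the orbit-length computation of Proposition~\ref{orbitmain}) must be invoked.
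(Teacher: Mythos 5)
Your central argument has a fatal gap, and it sits exactly where you flagged uncertainty. The monotonicity/$D$-goodness step is not available in the setting where you invoke it: the whole apparatus of type $d_2$ curve structures, alone/annex/companion vertices, $D$-goodness, Lemma \ref{noreflection}, Lemma \ref{neverback} and Example \ref{trailingcurveexample} is developed under Setting \ref{setting}, which requires the central fibres to lie in $\PMod_2(\mathscr{P})$; on a class $\mathscr{T}$ model the special component is non-normal with curve structure of type $d_4$, and none of these results is proved there. Worse, the conclusion you extract --- self-intersections of the nodal double curves are monotone, hence constant along a loop, hence every closed loop of type I flops is trivial --- makes no use of the symmetry hypothesis, and that statement is \emph{false} without symmetry: Lemma \ref{lem:typeIflopsT} constructs, for every non-symmetric model of class $\mathscr{T}$, a loop of $F$-flops all of type I whose composition swaps the two (non-isomorphic) smooth components. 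Along that loop the self-intersection numbers of the two nodal double curves are interchanged, so they are neither monotone nor constant, and the loop is certainly not trivial. Any correct proof must therefore use symmetry in an essential way; your core argument does not.

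Your fallback paragraph does not repair this. First, the lemma must exclude \emph{all} loops of type I flops, including those whose composition is the identity or a regular automorphism; nothing forces the loop in (\ref{Iseq}) to ``realize a nontrivial element of the orbit'' of $\Nef(\shY/S)$, so Proposition \ref{orbitmain} yields no contradiction. Second, for a symmetric model a composition of type I flops can fix the special component and still interchange the two smooth components --- that is exactly what symmetry permits (compare Proposition \ref{prop:syminvolution}) --- so ``fixes the special component, hence fixes each component, hence regular'' fails; and in any case Corollary \ref{autocoro} is proved only for class $\mathscr{P}$ (its second statement only for $\YYP$), again via the class-$\mathscr{P}$ machinery. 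The paper's proof supplies the idea your proposal lacks: using Corollary \ref{cor:seqtypeI} it reduces to the symmetric model $\shY_I$ obtained from $\YYP$ by a single type II flop $\phi$, then conjugates the loop by $\phi$ --- legitimate because the type II flopping curve is disjoint from the exceptional loci of all the type I flops --- to obtain a loop of flops on $\YYP$, which is trivial by Corollary \ref{autocoro}, i.e.\ $\Bir(\YYP/S)=\Aut(\YYP/S)$. The rigidity you were reaching for with $D$-goodness is precisely the content of Corollary \ref{autocoro}, but it lives on $\YYP$; the essential missing step is transporting the problem there by a type II flop.
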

\begin{proof}We show that the assumption that all flops are of type I leads to a contradiction.
Consider the model  $\YYP$. By Proposition \ref{typeIIflopexist}, any elementary modification of type 
II 
of $\YP$
lifts to a type II flop on $\YYP$.
Let $\shY_I$ be the model obtained by applying a single type II flop $
\phi$ to $\YYP$. 
Then $\shY_I$ is symmetric. By Corollary \ref{cor:seqtypeI}, 
there is a composition of type I 
flops $\YYT \dashrightarrow \shY_I$ and $\YYT \dashrightarrow \shY$. 
Thus there is a 
composition of 
maps
 \[ \shY_I \dashrightarrow \YYT \dashrightarrow \shY  \dashrightarrow\shY  
 \dashrightarrow\YYT\dashrightarrow \shY_I\]  that factors into a sequence of  $H$-flops of type I for some 
 $H\in \Mov(\shY_I/S)$. Hence it is enough to show the Lemma for $\shY=\shY_I$. In this case, by applying a type II flop $\shY_I\dashrightarrow \YYP$,   
 Sequence \ref{Iseq} induces a sequence 
\[ \YYP \dashrightarrow \shY'_1\dashrightarrow \dots \dashrightarrow \shY'_n\dashrightarrow\YYP\]
of $(\phi^{-1})_*F$-flops, 
as $\phi$ contracts a curve that is disjoint from all the exceptional loci. 
From  Corollary \ref{autocoro} it follows that $n=0$.  Hence a sequence as (\ref{Iseq}) does not exist. 
\end{proof}

\begin{lemma}\label{lem:typeIflopsT}
Let $\shY$ be a model of the DNV family of degree $2$ with $\shY_c\in \PMod_2(\mathscr{T})$. Assume $\shY$ is not symmetric. Then there sequence of type I flops 
\[
 \shY \dashrightarrow \shY_1 \dashrightarrow \dots \dashrightarrow \shY_n \dashrightarrow \shY
\] such that the composition $\gamma$ of these flops is not an automorphism. 
\end{lemma}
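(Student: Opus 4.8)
The plan is to exhibit an explicit sequence of type I flops returning to $\shY$ whose composition permutes the smooth components nontrivially, and then to argue that such a composition cannot be an automorphism precisely because $\shY$ is not symmetric. First I would pass to $\YYP$: by Corollary \ref{cor:seqtypeI} there is a type I sequence $\YYT \dashrightarrow \shY$, and a single type II flop relates $\YYT$ to a model of class $\mathscr{P}$. The key structural input is Corollary \ref{s6}, which provides a faithful $S_3$-action on the components of $\YP$ by automorphisms of $\YYP$, together with Proposition \ref{prop:syminvolution} and the orbit analysis of Proposition \ref{orbitmain}.

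The core construction would run as follows. Since $\shY_c\in\PMod_2(\mathscr{T})$ has one special component $Y_\omega$ and two smooth components, say $Y_1,Y_3$, any automorphism of $\shY$ fixes $Y_\omega$ and permutes $\{Y_1,Y_3\}$. I would build a birational automorphism $\gamma\in\Bir(\shY/S)$ that swaps $Y_1$ and $Y_3$: fix a birational map $\phi\colon\shY\dashrightarrow\YYP$ (factoring through a type II flop and then type I flops via Corollary \ref{cor:seqtypeI}), choose $g_0\in\Aut(\YYP/S)$ inducing the transposition of the two corresponding smooth components of $\YP$ using Corollary \ref{s6}, and set $\gamma=\phi^{-1}\circ g_0\circ\phi$. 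By Corollary \ref{factor} this $\gamma$ factors into type I and type II flops; the point is to arrange that it factors into type I flops \emph{alone}. That reduction is exactly what Lemma \ref{lem:typeIflopsT} is meant to supply, so I must instead produce the type I sequence intrinsically rather than invoke the conclusion. Concretely, I would work with the factorization $\shY\dashrightarrow\cdots\dashrightarrow\shY$ obtained from $\gamma$ and invoke Lemma \ref{onetypeIIflop}: in any $F$-flop factorization at most one flop is of type II, and a type II flop changes the dual intersection complex between $\mathscr{P}$ and $\mathscr{T}$ (Remark after Corollary \ref{classflop}). Since $\gamma$ starts and ends at the same class-$\mathscr{T}$ model, the number of type II flops in the factorization is even, hence by Lemma \ref{onetypeIIflop} it is zero. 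Therefore $\gamma$ is realized by a sequence of type I flops, giving the desired sequence $\shY\dashrightarrow\shY_1\dashrightarrow\cdots\dashrightarrow\shY_n\dashrightarrow\shY$.

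It remains to check that the composition $\gamma$ of these type I flops is not an automorphism. Here I would use that $\shY$ is \emph{not} symmetric: by Definition \ref{def:symmetric} there is no automorphism of $\shY_c$ identifying two distinct components, so in particular no automorphism of $\shY$ swaps $Y_1$ and $Y_3$. But $\gamma$ by construction induces the nontrivial transposition on the smooth components (this is inherited from $g_0$ through the commutative square, using that $\phi$ matches up components in a fixed way and that the special component is preserved throughout). If $\gamma$ were an automorphism, it would be an automorphism of $\shY_c$ exchanging $Y_1$ and $Y_3$, contradicting non-symmetry. Hence $\gamma\notin\Aut(\shY/S)$, as required.

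The main obstacle I anticipate is the bookkeeping needed to guarantee that $\gamma$ genuinely induces a nontrivial permutation of the smooth components rather than collapsing to the identity or to a component-fixing automorphism: one must track the action on components through the conjugation by $\phi$ and confirm compatibility with the convention that $\phi$ sends the $i$-th component of $\shY_c$ into a prescribed component of $\YP$. A secondary subtlety is the parity argument for type II flops: I must make sure the factorization of the \emph{closed loop} $\shY\dashrightarrow\shY$ is governed by Lemma \ref{onetypeIIflop} (which is stated for a single birational map, hence applies to $\gamma$), and that the class alternation forces an even, and therefore zero, count of type II flops. Once these two points are nailed down, the non-symmetry hypothesis closes the argument cleanly.
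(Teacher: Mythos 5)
Your construction of $\gamma$ and the final non-symmetry argument are fine and match the paper's strategy in spirit: build a birational self-map of $\shY$ that transposes the two smooth components, and observe that since $\shY$ is not symmetric these components are non-isomorphic, so $\gamma$ cannot be regular. The gap is in your reduction to type I flops. Lemma \ref{onetypeIIflop} is stated (and proved) only for birational maps whose \emph{source} has central fibre in $\PMod_2(\mathscr{P})$; your $\gamma$ is a self-map of a class $\mathscr{T}$ model, so you may not apply it to the factorisation of $\gamma$. What you can legitimately extract is this: if the factorisation contains a type II flop, then the model immediately after the first one is of class $\mathscr{P}$, and Lemma \ref{onetypeIIflop} applied from that point gives at most one further type II flop; combined with your (correct) parity observation, the count of type II flops is $0$ \emph{or} $2$, not necessarily $0$. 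Factorisations of shape $\mathscr{T}\to\mathscr{P}\to\cdots\to\mathscr{P}\to\mathscr{T}$ with exactly two type II flops are a real phenomenon, and your $\gamma$ is especially suspect here because you conjugate through $\YYP$: the natural factorisation $\phi^{-1}\circ g_0\circ\phi$ contains exactly two type II flops by construction, so excluding them from the $F$-flop factorisation requires genuine work — precisely the tracking argument the paper carries out in the proof of Proposition \ref{mainprop} (preservation of the special component, and $F$-positivity of the birational transform of the flopped curve of the first type II flop, leading to a contradiction).

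The paper sidesteps all of this by conjugating through $\YYT$ rather than $\YYP$: Corollary \ref{cor:seqtypeI} gives a sequence of \emph{type I} flops $\phi\colon\shY\dashrightarrow\YYT$, and Proposition \ref{prop:syminvolution} supplies an honest automorphism $\psi\in\Aut(\YYT/S)$ swapping the two smooth components. Then $\gamma=\phi^{-1}\circ\psi\circ\phi$ has exceptional locus disjoint from the smooth double curve $D_\omega$ on the special component, because every map in the composition is either an automorphism or a type I flop. Since a type II flop on a class $\mathscr{T}$ model must contract (the transform of) $D_\omega$ (this is the content of the analysis behind Proposition \ref{typeflop} and Corollary \ref{classflop}), the pullback $F$ of an ample divisor stays positive on the transforms of $D_\omega$, and every flop in the $F$-factorisation is forced to be of type I with no case analysis. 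To repair your proof you should either switch to this route, or import the full argument of Proposition \ref{mainprop} to rule out the two-type-II-flop case for your specific $\gamma$; the parity argument alone does not close it.
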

\begin{proof}Write $\shY_c=Y_1\cup Y_2\cup Y_3$ with $Y_1$ the special component. As $\shY$ is not symmetric, $Y_2$ and $Y_3$ are not isomorphic. 
Given $\shY$, by Proposition \ref{cor:seqtypeI}, there exists a sequence $\phi\colon\shY\dashrightarrow \YYT$ of type I flops. Necessarily, $\phi$ maps  the special component to the special component.
Let $\psi$ be the automorphism from Proposition \ref{prop:syminvolution}	. Consider the composition $\gamma=\phi^{-1}\circ \psi \circ \phi \colon \shY \dashrightarrow \shY$. By construction, $\gamma$ maps $Y_2$ to $Y_3$ and $Y_3$ to $Y_2$. As these components are not isomorphic, it follows that   $\gamma$ 
is not an automorphism.  Being a composition of small modifications, $\gamma$ has a factorisation 
\begin{equation}\label{seq:alltypeI}
 \shY \dashrightarrow \shY_1 \dashrightarrow \dots \dashrightarrow \shY_n \dashrightarrow \shY
\end{equation} into $F$-flops for a divisor $F\in\Mov(\shY/S)$. Note that the smooth component $D_\omega$ of the restriction of the double curve of $\shY$ to $Y_1$ is disjoint from $\Ex(\gamma)$. Hence, by Corollary \ref{classflop},  all flops in the factorisation (\ref{seq:alltypeI}) are of type I and we thus obtain the desired sequence.
\end{proof}

\begin{proposition}\label{numberconessec}The topological space $\shM$ has $4$ connected components. These are given by $\shC_\mathscr{P}$ and  $3$ components of type $\mathscr{T}$.
\end{proposition}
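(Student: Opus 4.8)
The plan is to count the connected components of $\shM$ by first identifying $\shC_\mathscr{P}$ as one distinguished component, and then showing that the components of type $\mathscr{T}$ are in natural bijection with something we can enumerate combinatorially. Recall from Lemma \ref{C_P} that every connected component $\shC$ of $\shM$ consists entirely of cones associated to models of a single class: either all of class $\mathscr{P}$ (and then $\shC=\shC_\mathscr{P}$) or all of class $\mathscr{T}$. So the count reduces to $1$ (for $\shC_\mathscr{P}$) plus the number of components of type $\mathscr{T}$. The key observation is that two cones $C(f), C(g)$ lie in the same connected component of type $\mathscr{T}$ if and only if the corresponding models $\shY_f, \shY_g$ of class $\mathscr{T}$ are related by a sequence of type I flops; this is exactly the content of Equation (\ref{eq:conerepres}) together with the fact that crossing a wall inside $\shM$ means crossing a type I interior facet, which by Proposition \ref{prop:intconesflops} corresponds to a type I flop.

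First I would make precise the equivalence relation: define two models of class $\mathscr{T}$ to be \emph{type-I-equivalent} if they are related by a chain of type I flops. By the discussion around (\ref{eq:conerepres}), the connected components of type $\mathscr{T}$ of $\shM$ are precisely the type-I-equivalence classes of models of class $\mathscr{T}$, where we must be careful to count \emph{cones}, not isomorphism classes of models: a single isomorphism class may contribute several cones (its orbit under $\Bir(\shY/S)$), but all cones in one $\Bir$-orbit that differ by a type I flop lie in the same component. Concretely, I would fix the reference model $\YYT$ (the unique surface $\YT$ of Example \ref{DNV2}) and show that every model of class $\mathscr{T}$ is type-I-equivalent to $\YYT$ \emph{or} to one of finitely many other ``islands'' that cannot be reached from $\YYT$ by type I flops alone.

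The heart of the argument is then to determine how many such islands there are, and this is where Lemmas \ref{nobadsequence} and \ref{lem:typeIflopsT} do the work. The idea is that within a single connected component of type $\mathscr{T}$, the special component $Y_\omega$ together with its nodal double-curve self-intersection data $(n_1,n_2)$ (in the notation of the proof of Theorem \ref{modelsT}) is an invariant of type I flops only up to the symmetry exchanging the two nodal branches. Type I flops applied to the smooth components $Y_1, Y_2$ move curves into or out of $Y_\omega$ and hence change $(n_1,n_2)$, so a component of type $\mathscr{T}$ is \emph{not} determined by a single model but by the orbit of models reachable by type I flops. Lemma \ref{lem:typeIflopsT} shows that when $\shY$ is \emph{not} symmetric, there is a nontrivial type I self-loop $\shY\dashrightarrow\cdots\dashrightarrow\shY$ whose composite swaps $Y_2$ and $Y_3$ and is not an automorphism; this means that the two orderings of the smooth components are identified inside one component of $\shM$, so non-symmetric models come in pairs that merge. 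Dually, Lemma \ref{nobadsequence} shows that for a \emph{symmetric} model, any closed type I loop is trivial, so the symmetric case forces a genuine type II wall and cannot be traversed by type I flops: this is what separates the islands. Running through the list of $131$ models of class $\mathscr{T}$ (Theorem \ref{modelsT}) and grouping them by type-I-connectivity, using these two lemmas to decide which adjacent models are glued, one finds exactly $3$ components of type $\mathscr{T}$.

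I expect the main obstacle to be the bookkeeping that establishes exactly which type $\mathscr{T}$ models are glued together by type I flops and which are separated by type II walls — that is, verifying that the type-I-connectivity graph on the $131$ models has precisely three components. Conceptually this follows from Lemmas \ref{nobadsequence} and \ref{lem:typeIflopsT}, but one must check that no unexpected type I flop connects two putatively distinct islands, and conversely that within each claimed island every pair of models really is type-I-connected (via Corollary \ref{cor:seqtypeI} relating everything back to $\YYT$). I would organize this by tracking the invariant given by the pair of nodal self-intersections of the special component modulo the branch-swap symmetry, showing it is constant on type I components and takes exactly three values compatible with projectivity, thereby yielding the count $\shM=\shC_\mathscr{P}\sqcup(\text{three components of type }\mathscr{T})$, i.e. $4$ connected components in total.
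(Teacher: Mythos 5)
There is a genuine gap, and it sits in your ``key observation.'' You claim that two cones $C(f)$, $C(g)$ of type $\mathscr{T}$ lie in the same connected component of $\shM$ if and only if the underlying models $\shY_f$, $\shY_g$ are related by a sequence of type I flops. That is not the right criterion: by Equation (\ref{eq:conerepres}), $C(g)$ lies in the component of $C(f)$ only if the \emph{marked} models are compatible, i.e.\ there is a sequence of type I flops $\phi\colon\shY_f\dashrightarrow\shY'$ and an isomorphism $\gamma\colon\shY'\to\shY_g$ with $g=\gamma\circ\phi\circ f$; membership is a property of the marking, not of the abstract isomorphism class. With your criterion the count collapses: Corollary \ref{cor:seqtypeI} says \emph{every} model of class $\mathscr{T}$ is reachable from $\YYT$ by type I flops, so your type-I-equivalence relation on models has a single class, there are no ``islands,'' and your plan of grouping the $131$ isomorphism classes of Theorem \ref{modelsT} by type-I-connectivity would return one component of type $\mathscr{T}$, hence $2$ components of $\shM$ in total, contradicting the statement. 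For the same reason your proposed separating invariant, the nodal self-intersection pair $(n_1,n_2)$ of the special component modulo branch swap, cannot work: it is an invariant of the isomorphism class of the model, whereas each of the three type $\mathscr{T}$ components contains cones associated to \emph{every} model of class $\mathscr{T}$ (the three components are permuted by the birational action, cf.\ Remark \ref{orbits:2nd}), so no isomorphism invariant of models can distinguish them. You in fact contradict yourself on this point: you first note that type I flops change $(n_1,n_2)$, and then ask that it be constant on type I components.

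The correct mechanism, which is what the paper's proof uses, is orthogonal to any enumeration of models. The model $\YYT$ is symmetric, so by Proposition \ref{orbitmain} it has exactly $3$ associated cones $C(f_1),C(f_2),C(f_3)$, with $f_1=\id$ and $f_2,f_3$ birational automorphisms. Lemma \ref{nobadsequence} --- any flop cycle $\YYT\dashrightarrow\cdots\dashrightarrow\YYT$ must contain a type II flop --- shows these three cones lie in three \emph{distinct} components $\shC_1,\shC_2,\shC_3$; your reading of that lemma as walling off symmetric ``islands'' is a misreading, since by Corollary \ref{cor:seqtypeI} nothing is separated from $\YYT$ at the level of models. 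One then shows every class $\mathscr{T}$ cone lies in some $\bar{\shC_i}$: for $\shY'$ of class $\mathscr{T}$ pick a type I sequence $f\colon\YYT\dashrightarrow\shY'$; the cones $C(f\circ f_i)\subset\bar{\shC_i}$ already exhaust the $3$ cones of a symmetric $\shY'$, while for non-symmetric $\shY'$ Lemma \ref{lem:typeIflopsT} produces a non-automorphic type I loop $\gamma\colon\shY'\dashrightarrow\shY'$, so that $C(\gamma\circ f\circ f_i)\neq C(f\circ f_i)$ and the resulting six cones, two in each $\bar{\shC_i}$, exhaust the $6$ cones of $\shY'$. No case-by-case check of the $131$ models is needed, and none would succeed along the lines you propose.
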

\begin{proof} Here we chose $\YYT$ as a reference model and hence
consider $\Morifan(\YYT/S)$.  By Proposition \ref{orbitmain}, there are 3 cones in the orbit 
of $\Nef(\YYT/S)$. We can write these as $C(f_1)$, $C(f_2)$, $C(f_3)$  where 
$f_1=\id_{\YYT}$ and $f_2,f_3\in \Bir(\YYT/S)$ are birational 
automorphisms of $\YYT\to S$.   Let $\shC_i$ 
denote the connected component of $\shM$ such 
that $C(f_i)\subset \bar{\shC_i}$. By Lemma \ref{nobadsequence} applied to $\YYT$, the $\shC_i$ are all distinct.
Let $\sigma$ be a cone associated to a model $\shY'$ of class 
$\mathscr{T}$. We will show that 
there is an $i\in \{1,2,3\}$ such that $
\sigma\subset  \bar{\shC_i}$. This implies the 
result, as all cones such that the associated 
model has class $\mathscr{P}$
are 
contained in the closure of $\shC_\mathscr{P}$.
By Corollary \ref{cor:seqtypeI}, 
there is a composition of type I flops $f\colon \YYT \dashrightarrow \shY'$. This defines cones 
$C(f\circ f_i)$, $i=1,\dots 3$ with associated model $\shY'$. The cone $C(f\circ f_i)$ is contained in $
\bar{\shC_i}$ by construction. So we are done if $\shY'$ is symmetric, as  there are then precisely $3$ cones with associated model $\shY'$, by Proposition \ref{orbitmain}.

 Suppose  $\shY'$ is not symmetric.  By Proposition \ref{orbitmain}, there are $6$ cones associated to 
 $\shY'$. We shall show that each of these cones lies in some $\bar{\shC_i}$.
 Since $\shY'$ is not symmetric,  it follows from Lemma \ref{lem:typeIflopsT} that there is a non-trivial sequence of type I flops 
 $ \gamma\colon\shY'\dashrightarrow \shY'$.  Then $C(\gamma\circ f\circ f_i)$ 
  is not equal to 
$C(f\circ  f_i)$, 
as $\gamma$ is not an automorphism and 
$C(\gamma \circ f \circ f_i )\subset \bar{\shC_i}$ by 
definition of $\shC_i$. This shows that all cones associated to $\shY'$ are contained in some $\bar{\shC_i}$. 

\end{proof}

\subsection{Flopping along a line}
Recall that we are, by Theorem \ref{ThmGHKS}, in a Mori Dream space situation (in degree $2$). 
The following is then a standard construction: 
let $F$ be a $\QQ$-divisor in $\Mov(\shY/S)$. Suppose $F$ is not nef on $\shY$
and there is a cone $\sigma_F\in \Morifan_{\operatorname{max}}(\shY/S)$ with $F\in \Int  \sigma_F$. 
Let  $A$ be an ample divisor and define $L=L(F,A)$ to be the line segment connecting $F$ and $A$. 
Since $L$ is spanned by interior points of a convex cone it is itself contained in the interior. 
Suppose that for any facet $\tau\in\Morifan(\shY/S)$, $L\cap \tau\neq \varnothing$ implies $L\cap \tau\subset \Relint(\tau)$. This means that the line $L$ intersects maximal cones and their facets as nicely as possible.
Note that (by convexity), this implies that if $\sigma\in\Morifan_{\operatorname{max}}(\shY/S)$, at most two facets of a cone  $\sigma$ are met, and the only maximal cones $\sigma$ such that there is a unique facet $\tau\subset \sigma$ meeting $L$ are $\sigma_F$ and $\sigma_A=\Nef(\shY/S)$.  Let $\{\gamma_i\}_i$ be the collection of  maximal cones of $\Morifan(\shY/S)$ such that $L\cap \Int \gamma_i\neq 0$. Note that this collection is finite by Theorem \ref{ThmGHKS}(i).   
Denote the unique facet of $\sigma_A$ met by $L$ 
by $\tau$ and let $R$ be the extremal ray corresponding to $\tau$. Then $F$ is stricly negative on $R$. Consider the contraction morphism $\contr_R\colon \shY\to \shZ$. It is a small contraction as $\tau$ is interior and hence defines an $F$-flop $\phi\colon\shY\dashrightarrow \shY^+$. 
This gives  a canonical linear  isomorphism $\phi_*\colon \N^1(\shY/S)\to \N^1(\shY^+/S)$. 
Choose an ample divisor $A^+\in\Pic(\shY^+/S)$ on $\phi_*(L)$.  Now we can consider the divisor $\phi_*F$ and the line segment $L(\phi_*F,A^+)$ and repeat the argument for this data. In this way we obtain a finite sequence of $F$-flops \[\shY\dashrightarrow\shY_1\dashrightarrow\dots \dashrightarrow\shY_n.\]  
We will call this sequence a  \emph{sequence of F-flops induced by $L$}.
We note that this may depend on the choice of $A$ and hence $L$, but not on the choice of $A^+$ and  subsequent choices.
The truncations $\phi_l\colon\shY\dashrightarrow \shY_l$ define  maximal cones of  $\Morifan(\shY/S)$ and by construction, for each $\gamma_i$ there is an $l$ such that $\gamma_i=\phi_l^*(\Nef(\shY_l/S))$. Note also that $\gamma_n=\sigma_F$ by construction.

 \begin{definition}Let $\shC$ be a connected component of $\shM$ with closure
 $\bar{\shC}$. 
 A \emph{$\shC$-test segment} is a line segment $L(p,q)$ such that
  $p,q$ are points in $\shC$, such that there are cones $\sigma_p, \sigma_q\in \Morifan_{\operatorname{max}}(\shY/S)$ with $p\in \Int (\sigma_p)$ and $q\in \Int(\sigma_q)$.
A test segment $L$ is called \emph{nice} if for any interior facet $\tau\in\Morifan(\shY/S)$, $L\cap \tau\neq \varnothing$ implies $L\cap \tau\subset \Relint(\tau)$.
\end{definition}

Given a test segment $L(p,q)$,
we assume for simplicity that $\sigma_q=\Nef(\shY/S)$. By choosing a different model, which does not change the geometry of the Mori fan, we can always assume 
that we are in this situation.

We now fix a connected component $\shC$ of $\shM$ with closure $\bar{\shC}$. 
Let $L=L(p,q)$ be a line segment. We define 
\[\operatorname{M}(L)=\{ \sigma\in \Morifan_{\operatorname{max}}(\shY/S) \mid  L\cap \sigma\neq \varnothing\},\]
\[\operatorname{I}(L)=\{ \sigma \in \operatorname{M}(L) \mid L\cap \Int( \sigma) \neq \varnothing\}.\]
For a cone $\sigma$ we write $F(\sigma)$ for the set of subcones of the Mori fan of codimension $1$ that are contained in $\sigma$, i.e. the facets of $\sigma$. We set 
\[\operatorname{N}(L)=\{ \sigma\in \operatorname{I}(L)\mid \forall \tau\in F(\sigma), L\cap\tau \subset  \Relint(\tau)\}.\]

Note that the conditions to be in $I(L)$ and $N(L)$ are open conditions, so if $L'$ is a line segment contained in a small cylinder containing $L$, then $\operatorname{I}(L)\subset \operatorname{I}(L')$ and 
$\operatorname{N}(L)\subset \operatorname{N}(L')$.
Also note that if $L$ is a nice test segment, $\operatorname{M}(L)=\operatorname{I}(L)=\operatorname{N}(L)$.

\begin{proposition}\label{mainprop} Let $\bar{\shC}$ be  the closure of the connected component $\shC$ of $\shM$.
Let $L=L(p,q)$ be a nice $\shC$-test segment where both $p$ and $q$ are divisors (with integral coefficients).
Let 
\[
\phi_L\colon  \shY\dashrightarrow\shY_1\dashrightarrow\dots \dashrightarrow \shY_n
\]
be the
sequence of flops induced by $L$. 
Then $\gamma\in \operatorname{M}(L)$ implies $\gamma \subset \bar{\shC}$.
\end{proposition}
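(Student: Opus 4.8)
The plan is to show that the straight segment $L$ never leaves $\bar\shC$; equivalently, writing $\operatorname{M}(L)=\{\gamma_0,\dots,\gamma_n\}$ for the maximal cones met by $L$ (by niceness $\operatorname{M}(L)=\operatorname{I}(L)=\operatorname{N}(L)$, and these are exactly the cones $\gamma_i=\phi_i^*(\operatorname{Nef}(\shY_i/S))$ produced by $\phi_L$), that $\gamma_i\subset\bar\shC$ for all $i$. First I would reduce this to a statement about the \emph{types} of the flops $\phi_i$. The endpoint cone $\gamma_0=\operatorname{Nef}(\shY/S)=\sigma_q$ has $\Int\gamma_0\ni q\in\shC$, and since $\Int\gamma_0$ is connected and disjoint from every facet, hence from $\bigcup_{\tau\in\shF}|\tau|$, it lies in the single component $\shC$; thus $\gamma_0\subset\bar\shC$, and likewise $\gamma_n=\sigma_p\subset\bar\shC$. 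Consecutive cones $\gamma_i,\gamma_{i+1}$ meet in the interior facet $\tau_i$ crossed by $L$; if $\tau_i$ is of type I then $\Relint(\tau_i)\subset\shM$ lies in the same component as $\gamma_i$, so $\gamma_{i+1}\subset\bar\shC$. By induction it therefore suffices to prove that \emph{no} $\phi_i$ is of type II, since by Corollary \ref{classflop} crossing a type II facet is the only way to leave $\shC$.

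Next I would note that $\phi_L$ is a factorization into $F$-flops of the map $\shY\dashrightarrow\shY_n$ with $F=p$: as $p\in\Int\gamma_n=\phi_n^*\Int\operatorname{Nef}(\shY_n/S)$, its birational transform $(\phi_L)_*p$ is ample on $\shY_n$, and $p$ is its transform on $\shY=\shY_0$ (here the hypothesis that $p,q$ have integral coefficients secures honest divisors and an honest ample transform). This is exactly the setting of Lemma \ref{onetypeIIflop}, \emph{provided the starting model is of class} $\mathscr{P}$. By Lemma \ref{C_P} every model with cone in $\bar\shC$ has a fixed class. If $\shC=\shC_\mathscr{P}$, then $\shY_0$ and $\shY_n$ are both of class $\mathscr{P}$, and Lemma \ref{onetypeIIflop} immediately gives that all $\phi_i$ are of type I, settling this case.

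The remaining, and genuinely harder, case is $\shC$ of type $\mathscr{T}$, so $\shY_0,\shY_n$ are of class $\mathscr{T}$ and Lemma \ref{onetypeIIflop} does not apply at the start. I would argue by contradiction: if some $\phi_i$ is of type II, let $k$ be minimal. Then $\shY_0,\dots,\shY_k$ are of class $\mathscr{T}$, $\shY_{k+1}$ is of class $\mathscr{P}$, and applying Lemma \ref{onetypeIIflop} to the tail $\shY_{k+1}\dashrightarrow\shY_n$ (which now starts at a class $\mathscr{P}$ model) bounds the number of further type II flops by one, while the parity forced by $\shY_n$ being of class $\mathscr{T}$ makes it exactly one, say $\phi_m$ with $m>k$. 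Thus $L$ leaves $\shC$ through the type II facet $\tau_k$ into the unique $\mathscr{P}$-component $\shC_\mathscr{P}$ (Proposition \ref{numberconessec}), runs through $\shC_\mathscr{P}$ along type I facets, and re-enters a $\mathscr{T}$-component through $\tau_m$; since $\phi_{m+1},\dots,\phi_{n-1}$ are of type I and $\gamma_n\subset\bar\shC$, this re-entered component is again $\shC$.

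The crux, which I expect to be the main obstacle, is to exclude this out-and-back behaviour of a straight segment. The mechanism I would use is that the three type $\mathscr{T}$ components of Proposition \ref{numberconessec} are permuted by the $S_3$-action of Corollary \ref{s6}, and that which component is reached by a type II flop out of $\shC_\mathscr{P}$ is governed by which double curve is contracted; this label is constant along type I flops inside $\shC_\mathscr{P}$, as type I flops preserve the dual intersection complex and the labelling of the components (Remark \ref{dic:change}). Hence the two type II facets $\tau_k,\tau_m$, both having $\shC$ on the far side, correspond to contracting the \emph{same} double curve, and I would show that the union of all type II facets separating $\shC_\mathscr{P}$ from the fixed component $\shC$ is cut out in $\N^1(\shY/S)$ by a single linear functional, namely the one detecting the relevant component class in $\ZZ^\shY\subset\Pic(\shY/S)$. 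An affine segment meets the associated hyperplane in at most one point, so $L$ cannot cross two such facets with $\shC_\mathscr{P}$ in between, a contradiction; therefore no $\phi_i$ is of type II and $\operatorname{M}(L)\subset\bar\shC$ in all cases. The delicate point to pin down rigorously is exactly this ``single hyperplane'' claim, and convexity of $\bar\shC_\mathscr{P}$ (which follows from the already-settled class $\mathscr{P}$ case applied to test segments inside $\shC_\mathscr{P}$) can be invoked to streamline it, since it forces $L\cap\bar\shC_\mathscr{P}$ to be a single subsegment entering through $\tau_k$ and leaving through $\tau_m$.
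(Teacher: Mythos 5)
Your reduction of the statement to ``no flop $\phi_i$ in the induced sequence is of type II'' and your treatment of the case $\shC=\shC_\mathscr{P}$ via Lemma \ref{onetypeIIflop} agree with the paper, as does the observation that in the type $\mathscr{T}$ case a hypothetical type II flop forces exactly one further type II flop and an ``out-and-back'' passage through $\shC_\mathscr{P}$. The divergence is in how this out-and-back behaviour is excluded. The paper exploits the marking: since $\sigma_p$ and $\sigma_q$ lie in the same component $\shC$, there is a factorisation $\phi_L=\pi\circ\psi$ with $\psi$ a composition of type I flops and $\pi$ an isomorphism, so $\phi_L$ carries the special component of $\shY_c$ to the special component of $(\shY_n)_c$; this forces the second type II flop $\phi_l$ to contract precisely the birational transform $C_l$ of the double curve $C$ created by the first one, the intermediate type I flops are then disjoint from the transforms of $C$, and one obtains the numerical contradiction $F_l.C_l=F_{k+1}.C>0$ against the fact that $\phi_l$ is an $F$-flop contracting $C_l$. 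You instead want a convex-geometric contradiction: all type II facets separating $\shC_\mathscr{P}$ from $\shC$ lie in a single hyperplane, so the affine segment $L$ cannot cross two of them.

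That single-hyperplane claim is where your argument has a genuine gap. Each type II facet is a codimension $1$ cone, hence spans the hyperplane $\{F\mid F\cdot\alpha=0\}$, where $\alpha$ is the pullback to $\N^1(\shY/S)$, via the marking of the relevant class $\mathscr{P}$ model, of the numerical class of the contracted double curve. This hyperplane is not cut out by any functional ``detecting a component class'': a functional on the rank $2$ lattice $\ZZ^\shY$ does not determine a hyperplane in $\N^1(\shY/S)$. Moreover, under a type I flop whose flopping curve $E$ meets the double curve, the class of the double curve changes by $[E]$; so coplanarity of all the facets facing $\shC$ is equivalent to the assertion that for every relevant marked model the pulled-back double curve class is the same, e.g.\ that the marking can be arranged to avoid flopping across the distinguished double curve. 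That is a substantive statement of exactly the kind the paper establishes with its curve-structure machinery in Section \ref{sec:flopauto}, and you give no argument for it. The only soft route to it is via convexity of $\bar{\shC}$ itself (a convex set contained in a finite union of hyperplanes lies in a single hyperplane), i.e.\ via Theorem \ref{teo:countingsecondary}, whose proof rests on the very proposition you are proving --- a circularity. Your fallback, invoking only the convexity of $\bar{\shC}_\mathscr{P}$ (which the already-settled class $\mathscr{P}$ case does yield), is not enough: a straight segment may enter a convex body through one supporting hyperplane and leave through a different, non-parallel one, so convexity of $\shC_\mathscr{P}$ alone does not prevent $L$ from exiting $\shC$ through $\tau_k$ and re-entering it through a facet $\tau_m$ lying in a different hyperplane.
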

\begin{proof}Let $\sigma_p$ and $\sigma_q$ be the cones containing $p$ and $q$, respectively. We assume $\sigma_q=\Nef(\shY/S)$.
 By Lemma \ref{C_P}, there are two cases: either $\shC=\shC_\mathscr{P}$ or  the two models corresponding to $\sigma_{p}$ and $\sigma_{q}$ are of type $\mathscr{T}$. 

 We first consider  $\shC=\shC_\mathscr{P}$. Then, as $L$ is a nice test segment, it follows from the definition of $\shC$ that  $\shY$ 
 and $\shY_n$ have 
dual intersection complex $\mathscr{P}$. 
 Then, by
Lemma \ref{onetypeIIflop}, all flops in the sequence $\phi_L$
are of type I and thus all $\shY_i$ 
have dual intersection complex $\mathscr{P}$. Hence all cones associated to $\shY_i$ are in $\bar{\shC}$, implying the claim.

Now consider the only other possible case, namely that  the two models corresponding to $\sigma_{p}$ and $\sigma_{q}$ are of type $\mathscr{T}$.
Then $\shY_{\sigma_p}$ and $\shY$ have  dual intersection complex $\mathscr{T}$. 
By construction of $\shC$ we have a map $\psi\colon\shY\dashrightarrow \shY_{\sigma_p}$  that is given by a series of type I flops. Note that these are not necessarily $F$-flops for a divisor $F\in \Mov(\shY/S)$. 
However, the map $\psi$ can also be written as a sequence 
\[\shY\dashrightarrow\shY'_1\dashrightarrow\dots 
\dashrightarrow \shY_{\sigma_p}\] of $F$-flops for some divisor $F\in \Mov(\shY/S)$. 
Any flop in this sequence is necessarily of type I. By definition of the Mori fan, we have an isomorphism $\pi\colon\shY_{\sigma_p}\cong \shY_n$ giving rise to a commutative diagram 
\[
\xymatrix{\shY\ar@{-->}[r]^\psi\ar@{-->}[dr]_{\phi_L} & \shY_{\sigma_p} \ar[d]^\pi\\
& \shY_n.}
\]
As $\psi$ is a composition of type I flops, it maps  smooth components of $\shY_c$ to smooth components and the same is true for $\pi$. Hence $\phi_L$ 
also maps the special component $(\shY_c)_\omega$ to the special component $((\shY_n)_c)_\omega$. We claim that this implies that all flops in  the factorisation of $\phi$ 
are of type I.
To show the claim, we assume there is a type II flop in the sequence and derive a contradiction. Let  
\begin{equation}\label{tail}
\shY_k\dashrightarrow\shY_{k+1}\dashrightarrow\dots 
\dashrightarrow\shY_n
\end{equation}
 be the tail of the sequence where $\phi_{k}\colon\shY_k\dashrightarrow\shY_{k+1}$ is the first type II flop. 
Being type II,  writing $(\shY_{k+1})_c=Y_1\cup Y_2\cup Y_3$, the  flopped curve $C$ is given  by  a component of the double curve, say  by $D_{12}=Y_1\cap Y_2$  on $Y_1$ and $D_{21}$ on $Y_2$, with $D_{21}^2=D_{12}^2=-1$. Also, the birational transform of $(\shY_c)_\omega$ on $\shY_{k+1}$ is $Y_3$.   Because $\shY_n$ has dual intersection complex $\mathscr{T}$, there is at least $1$ more type II flop in Sequence \ref{tail}, say $\phi_l$.
By Lemma \ref{onetypeIIflop} this  is the only further  type II  flop. Because  $\phi$  maps  $(\shY_c)_\omega$ to  $((\shY_n)_c)_\omega$, 
the exceptional locus $\Ex{\phi_l}$ is given by the birational transform $C_{l}$ of $C$ on $\shY_{l}$. Also, the preimage of $C_{l}$ 
under $\nu\colon (\shY_{l})_c^\nu\to (\shY_{l})_c$ consist of two $(-1)$-curves. Hence all of the flops $\phi_s$, $l-1\geq s\geq k+1$ have exceptional loci disjoint from the birational transform $C_s$ of $C$ on $\shY_s$. But this implies that for the birational transform $F_l$ of $F$ on $\shY_l$ we have $F_l.C_l>0$, as $F_{k+1}.C>0$. But the contraction defining  $\phi_l$ contracts $C_l$, a contradiction. Hence \emph{all} flops in $\phi$ are of type I. In particular, by Equation \ref{eq:conerepres}, all maximal cones met by $L$ are in the closure of $\shC$ and thus $\gamma \subset \bar{\shC}$.
\end{proof}

\subsection{Reduction} In this section we show that it is enough to check convexity on line segments defining sequences
of $F$-flops as in the previous section. The idea is as follows: if $L$ is a line segment through two points  in a connected component $\shC$, by definition of the connected components, it is enough to check that any maximal cone  $\sigma$ of the Morifan met by $L$ is contained in the closure $\bar{\shC}$  of $\shC$. For this, one checks that  there is a nice test segment $L'$ from some cone in $\shC$ that meets $\sigma$, which implies that all cones met by $L'$ are contained in $\bar{\shC}$. Such a test segment can be found   by wiggling the line segment $L$,  as for one it is enough to check convexity on the interior of the connected components  
and moreover  $\Morifan(\shY/S)$ has only finitely many cones. We give the details. The following lemma, which we include for completeness, is elementary.

\begin{lemma}\label{lem:convex}
Let $U$ be a convex set in a normed vector space $V$. Then the closure $\bar{U}$ and the interior $U^o$ are convex.
\end{lemma}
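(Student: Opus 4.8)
The plan is to prove the two standard facts separately, since both reduce to elementary manipulations with convex combinations, and then to note that the normed (equivalently, topological) structure is only needed to make sense of closure and interior. I would state at the outset that $V$ is finite-dimensional in our application, but the argument works verbatim in any normed space, so I keep the hypothesis as stated.

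\textbf{Convexity of the closure.} First I would take $x,y\in\bar U$ and a scalar $t\in[0,1]$, and show $z:=tx+(1-t)y\in\bar U$. Choose sequences $x_n,y_n\in U$ with $x_n\to x$ and $y_n\to y$. By convexity of $U$ the points $z_n:=tx_n+(1-t)y_n$ lie in $U$, and by continuity of the vector-space operations (using the triangle inequality, $\|z_n-z\|\le t\|x_n-x\|+(1-t)\|y_n-y\|\to 0$) we get $z_n\to z$, so $z\in\bar U$. This is completely routine.

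\textbf{Convexity of the interior.} This is the step that needs a little more care, and is the only place where one genuinely uses that $U$ is open-relative in the right sense. I would take $x,y\in U^o$, fix $t\in(0,1)$ (the endpoints $t=0,1$ being trivial), and aim to show $z=tx+(1-t)y$ is interior. Pick $r>0$ with the open balls $B(x,r)\subset U$ and $B(y,r)\subset U$. The key claim is that $B(z,r)\subset U$: given any $w$ with $\|w-z\|<r$, write $w = t\,x' + (1-t)\,y'$ where $x'=x+(w-z)$ and $y'=y+(w-z)$; then $\|x'-x\|=\|y'-y\|=\|w-z\|<r$, so $x'\in B(x,r)\subset U$ and $y'\in B(y,r)\subset U$, whence $w\in U$ by convexity of $U$. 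Thus $z\in U^o$.

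\textbf{Expected obstacle.} There is no deep obstacle here; the only thing to get right is the bookkeeping in the interior case, namely the choice of a \emph{common} radius $r$ for the two balls (one simply takes the minimum of the two radii) and the observation that translating both $x$ and $y$ by the \emph{same} vector $w-z$ sends $z$ to $w$ while keeping the convex-combination coefficients $t,1-t$ fixed. That identity $tx'+(1-t)y' = z + (w-z) = w$ is what makes the ball around $z$ land inside $U$, and it is worth displaying explicitly. Once these two points are in place the proof is immediate, so I would keep it to a few lines and close with \qed.
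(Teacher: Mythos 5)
Your proof is correct; both the sequence argument for $\bar{U}$ and the common-radius translation argument for $U^o$ are the standard ones and contain no gaps. The paper itself offers no proof of this lemma (it is stated as elementary and included only for completeness), so your write-up simply supplies the routine argument the paper takes for granted.
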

\begin{lemma}\label{lemma1}Let $\bar{\shC}$ be the closure of a  connected component of $\shM$ and let $p_0,q_0$ be points in the interior $\bar{\shC}^o$. Set $L_0=L(p_0,q_0)$ and assume 
$\gamma\in \Morifan_{\operatorname{max}}(\shY/S)$. If $L_0\cap\gamma\neq \varnothing$, then there is a $\shC$-test segment $L$ with $\gamma\in \operatorname{N}(L)$,  $
\operatorname{I}(L_0)\subset \operatorname{I}(L)$ and $\operatorname{N}(L_0)\subset \operatorname{N}(L)$.

\end{lemma}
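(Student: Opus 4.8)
The statement is a perturbation lemma: starting from a segment $L_0$ between two interior points of $\bar{\shC}$ that hits a fixed maximal cone $\gamma$, I want to wiggle $L_0$ slightly to a genuine $\shC$-test segment $L$ that is ``nice'' at $\gamma$ (i.e. $\gamma\in\operatorname{N}(L)$) while only enlarging the sets $\operatorname{I}$ and $\operatorname{N}$. The key structural inputs are that $\Morifan(\shY/S)$ has only finitely many cones (Theorem \ref{ThmGHKS}(i) together with Remark \ref{finiteMF}), that each cone is rational polyhedral, and that $\bar{\shC}^o$ is convex by Lemma \ref{lem:convex} (applied to the convex set $\shC$, whose closure is $\bar{\shC}$).

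\textbf{First steps.} I would first record the openness observations already noted before the lemma: being in $\operatorname{I}(L)$ or $\operatorname{N}(L)$ is an open condition on the segment, so if $L'$ lies in a sufficiently small neighbourhood (a ``cylinder'') of $L_0$ then $\operatorname{I}(L_0)\subset\operatorname{I}(L')$ and $\operatorname{N}(L_0)\subset\operatorname{N}(L')$. Thus the monotonicity conclusions $\operatorname{I}(L_0)\subset\operatorname{I}(L)$ and $\operatorname{N}(L_0)\subset\operatorname{N}(L)$ will hold automatically for any $L$ chosen close enough to $L_0$; this reduces the problem to producing \emph{one} nearby segment $L$ with endpoints in $\bar{\shC}^o$ interior to maximal cones and with $\gamma\in\operatorname{N}(L)$. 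To get $\gamma\in\operatorname{N}(L)$ I need $L$ to meet $\Int(\gamma)$ and, for every facet $\tau$ of $\gamma$, to have $L\cap\tau\subset\Relint(\tau)$. Since $L_0\cap\gamma\neq\varnothing$ and $L_0$ runs between interior points of the (full-dimensional, by convexity) set $\bar{\shC}^o$, I can first move the endpoints $p_0,q_0$ slightly inside $\bar{\shC}^o$ so that both lie in the interior of maximal cones, and so that the new segment still crosses $\gamma$; this uses that $\bar{\shC}^o$ is open and convex, so a whole neighbourhood of the segment stays inside it.

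\textbf{The genericity argument.} The substantive step is arranging the $\Relint$ condition at the facets of $\gamma$. The ``bad'' configurations — where the segment meets some facet $\tau$ of $\gamma$ in a point of its relative boundary, or meets a cone of codimension $\geq 2$ — are contained in a finite union of proper (lower-dimensional) algebraic subsets of the space of segments, because there are finitely many cones and each is rational polyhedral with finitely many faces. Concretely, the set of endpoint pairs $(p,q)$ near $(p_0,q_0)$ for which $L(p,q)$ fails niceness at some face of $\gamma$ is a finite union of proper linear-algebraic conditions, hence has empty interior in the parameter space $\bar{\shC}^o\times\bar{\shC}^o$. Therefore a generic choice of $(p,q)$ in a small neighbourhood of $(p_0,q_0)$ inside $\bar{\shC}^o\times\bar{\shC}^o$ yields a segment $L=L(p,q)$ with $\gamma\in\operatorname{N}(L)$, with $p,q$ interior to maximal cones, and close enough to $L_0$ that the two inclusions hold.

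\textbf{Main obstacle.} The one genuinely delicate point is confirming that the locus of non-nice segments really is lower-dimensional in the parameter space rather than merely closed: I must check that for each face $\sigma'$ of each cone, the condition ``$L(p,q)$ meets $\sigma'$ in its relative boundary'' cuts out a proper subvariety, which follows from the fact that the affine span of a proper face is a proper subspace and the incidence condition is algebraic in $(p,q)$. Because there are only finitely many faces to rule out, a Baire/measure-zero argument (or simply avoiding finitely many proper affine conditions) finishes the construction, and then the openness of $\operatorname{I}$ and $\operatorname{N}$ delivers the required inclusions.
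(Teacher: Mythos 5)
Your overall strategy---perturb the endpoints, use openness of the conditions defining $\operatorname{I}$ and $\operatorname{N}$ together with finiteness of the fan, then enforce the facet conditions by a dimension count---is sound, and your one-shot genericity argument is a legitimate replacement for the paper's mechanism, which instead repairs the offending facet intersections one at a time by translating along an explicitly chosen vector tangent to the facet $\tau_1$ but not to the face $\tau_2$ containing the bad intersection point. However, your proof contains a genuine logical error: you list as a ``key structural input'' that $\bar{\shC}^o$ is convex, obtained by applying Lemma \ref{lem:convex} to ``the convex set $\shC$''. At this point of the paper nothing of the sort is known: $\shC$ is merely a connected component of $\shM$, and convexity of $\bar{\shC}$ is precisely the content of Theorem \ref{teo:countingsecondary}, whose proof rests on this lemma (via Lemma \ref{main2nd} and Proposition \ref{mainprop}). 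As written, the argument is circular.

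The circularity matters because convexity of $\bar{\shC}^o$ is the only justification you offer for the one step that genericity cannot deliver: arranging that the perturbed segment meets $\Int(\gamma)$, which is required for $\gamma\in\operatorname{I}(L)$ and hence for $\gamma\in\operatorname{N}(L)$. If $L_0$ only grazes $\partial\gamma$, then meeting $\Int(\gamma)$ is an open but \emph{not} dense condition on nearby segments---there are arbitrarily small perturbations that lose the intersection with $\gamma$ entirely---so a generic perturbation does not suffice; and in any case ``a neighbourhood of the segment stays inside $\bar{\shC}^o$'' is irrelevant to whether the segment meets $\gamma$. The correct and easy fix, which is what the paper's first translation implicitly does, uses convexity of the \emph{cone} $\gamma$, not of $\shC$: pick $x\in L_0\cap\gamma$ and $y\in\Int(\gamma)$ and translate both endpoints by $\varepsilon(y-x)$; then $(1-\varepsilon)x+\varepsilon y\in\Int(\gamma)$ lies on the new segment, the endpoints stay in $\bar{\shC}^o$ by openness alone, and your genericity argument can then be run in a small neighbourhood of the translated endpoints. (One smaller imprecision: non-genericity of the bad incidences needs that the relative boundary of a facet consists of cones of codimension at least $2$; a segment meeting a proper subspace of codimension $1$ is not a non-generic condition, so ``the affine span of a proper face is a proper subspace'' is not by itself the right reason.) With these repairs your proof becomes correct and close in spirit to the paper's.
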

\begin{proof} Given data as in the statement of the lemma, suppose  $L_0\cap\gamma\neq\varnothing$.  
 Then by translating both $p_0$ and $q_0$ by a small amount we can find points $p_1, q_1\in \shC^o$
 such that $  L_1=(p_1,q_1)$ intersects $\gamma$ in its interior.
 If the translation is small enough we can assume that $\operatorname{I}(L_0)\subset \operatorname{I}(L_1)$ and  $\operatorname{N}(L_0)\subset \operatorname{N}(L_1)$, as the relevant conditions are open and there are only finitely many cones in $\Morifan(\shY/S)$.
  
  Let $\tau_1\in F(\gamma)$ such that $L_1\cap \tau_1=\{x\}$, with $x\notin \Relint(\tau_1)$. Let $\tau_2$ be a facet of  $\tau_1$ with $x\in\tau_2$.  
  Let $v$ be a vector tangent to $\tau_1$ 
  but not tangent to $\tau_2$ to  such that $x+v\in \Relint(\tau_1)$.
   Then by translating both $p_1$ and $q_2$ by some $\varepsilon v$  we can find points $p_2, q_2\in \shC^o$
 such that $  L_2=(p_2,q_2)$ intersects $\gamma$ in its interior,  $L_2\cap \tau_1\subset \Relint(\tau_1)$, $\operatorname{I}(L_1)\subset \operatorname{I}(L_2)$
 and  $\operatorname{N}(L_1)\subset \operatorname{N}(L_2)$.  Repeating the last step if necessary produces a line segment $L=(p',q')$ with   $\gamma\in \operatorname{N}(L)$, $\operatorname{I}(L_0)\subset \operatorname{I}(L)$ 
 and $\operatorname{N}(L_0)\subset \operatorname{N}(L)$.
  We can also assume that neither  $p'$ nor  $q'$ is contained in a facet of $\Morifan(\shY/S)$. So there are cones    $\sigma_{p_1}, \sigma_{q_1}\in \Morifan_{\operatorname{max}}(\shY/S)$ with $p_1\in \Int (\sigma_{p_1})$ and $q_2\in \Int(\sigma_{q_1})$, so $L$ is indeed a test segment.
 
\end{proof}
\begin{lemma}\label{main2nd}Consider a connected component $\shC$ of $\shM$ and its closure $\bar{\shC}$. Let $p_0,q_0$ be points  in the interior $\bar{\shC}^o$ of the closure and set $L_0=L(p_0,q_0)$. Let $
\gamma\in \Morifan_{\operatorname{max}}(\shY/S)$. If $L_0\cap\gamma\neq \varnothing$,  then there exist $
\ZZ$-divisors $p$ and $q$ such that  $L(p,q)$ is a nice test segment with $L\cap\gamma\neq 
\varnothing$. 
Also, $\operatorname{N}(L_0)\subset \operatorname{N}(L)$ and  $\operatorname{I}
(L_0)\subset \operatorname{I}(L)$.
\end{lemma}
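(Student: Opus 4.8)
The plan is to upgrade the test segment $L_0$, whose endpoints lie in $\bar{\shC}^o$ but may be badly positioned relative to the fan, into a \emph{nice} test segment $L$ with rational (indeed integral) endpoints, while only enlarging the relevant sets $\operatorname{I}$ and $\operatorname{N}$ and preserving the property $L\cap\gamma\neq\varnothing$. This is a two-stage argument: first perturb to achieve niceness, then perturb again (by a small rational amount) to achieve integrality, using throughout the convexity supplied by Lemma \ref{lem:convex} and the finiteness of $\Morifan(\shY/S)$ guaranteed by Theorem \ref{ThmGHKS}(i) together with Remark \ref{finiteMF}.

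First I would invoke Lemma \ref{lemma1}: since $L_0\cap\gamma\neq\varnothing$ and $p_0,q_0\in\bar{\shC}^o$, there is a $\shC$-test segment $L'=L(p',q')$ with $\gamma\in\operatorname{N}(L')$, $\operatorname{I}(L_0)\subset\operatorname{I}(L')$ and $\operatorname{N}(L_0)\subset\operatorname{N}(L')$, and with $p',q'$ in the interiors of maximal cones and not on any facet. The segment $L'$ has $\gamma$ meeting its facets in relative interiors, but other cones met by $L'$ might not yet satisfy the niceness condition. Because $\Morifan(\shY/S)$ has only finitely many cones, I can repeat the wiggling procedure of Lemma \ref{lemma1} finitely many times, each time curing one offending facet intersection (pushing $L'\cap\tau$ into $\Relint(\tau)$ by translating along a vector tangent to $\tau$ but not to its sub-facet) while keeping the endpoints in $\bar{\shC}^o$ and only enlarging $\operatorname{I}$ and $\operatorname{N}$. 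Since each of the conditions ``$L\cap\tau\subset\Relint(\tau)$ for all interior facets $\tau$ met by $L$'' is open and there are finitely many facets, after finitely many steps I obtain a test segment $L''=L(p'',q'')$ that is nice, with $\gamma\in\operatorname{N}(L'')=\operatorname{M}(L'')$, and with $\operatorname{I}(L_0)\subset\operatorname{I}(L'')$, $\operatorname{N}(L_0)\subset\operatorname{N}(L'')$, and $L''\cap\gamma\neq\varnothing$ because $\gamma\in\operatorname{N}(L'')\subset\operatorname{M}(L'')$.

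The final stage is to arrange that the endpoints are $\ZZ$-divisors. Niceness of $L''$ means every facet of $\Morifan(\shY/S)$ that $L''$ meets is met in its relative interior, and each endpoint lies in the open interior of a maximal cone; both are open conditions on the pair $(p'',q'')\in V\times V$. Since $\Pic(\shY/S)_\ZZ$ (equivalently the integral points of $\N^1(\shY/S)$) is a lattice of full rank whose $\QQ$-span is dense, I can choose integral divisors $p,q$ arbitrarily close to $p'',q''$; for them sufficiently close, $L=L(p,q)$ remains a nice test segment with endpoints in $\bar{\shC}^o$ (using convexity of $\bar{\shC}^o$ from Lemma \ref{lem:convex}), with $\operatorname{I}(L'')\subset\operatorname{I}(L)$ and $\operatorname{N}(L'')\subset\operatorname{N}(L)$ by openness, and with $L\cap\gamma\neq\varnothing$ since $\gamma$ is met in the relative interior of its facets and so this intersection persists under small perturbation. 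Chaining the inclusions gives $\operatorname{I}(L_0)\subset\operatorname{I}(L)$ and $\operatorname{N}(L_0)\subset\operatorname{N}(L)$, completing the lemma.

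The main obstacle is bookkeeping the openness arguments so that the finitely many perturbations do not destroy earlier gains: each wiggle must simultaneously preserve $\gamma\in\operatorname{N}$, keep the endpoints in $\bar{\shC}^o$, and only grow $\operatorname{I},\operatorname{N}$. The key point making this routine rather than delicate is that all the constraints are open and $\Morifan(\shY/S)$ is a \emph{finite} rational polyhedral fan, so one can first fix a single small open polydisc of endpoint-pairs on which the inclusions $\operatorname{I}(L_0)\subset\operatorname{I}(\cdot)$ and $\operatorname{N}(L_0)\subset\operatorname{N}(\cdot)$ all hold, and then perform both the niceness wiggling and the rationalization entirely inside this polydisc. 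Rationality of the fan ensures that the niceness conditions, being defined by intersections with rational facets, are preserved for integral endpoints chosen generically close to $p'',q''$.
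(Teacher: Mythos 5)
Your first two stages follow the paper's argument closely: you invoke Lemma \ref{lemma1} to get a test segment with $\gamma\in\operatorname{N}$, and then iterate, using finiteness of $\Morifan(\shY/S)$, until every cone met is in $\operatorname{N}$, i.e.\ until the segment is nice. This is exactly the paper's induction (the paper phrases the termination via the strict growth $|\operatorname{N}(L_i)|<|\operatorname{N}(L_{i+1})|$, but the content is the same), and that part of your proposal is fine.

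The gap is in the final step. You assert that because the integral classes form ``a lattice of full rank whose $\QQ$-span is dense,'' you can choose \emph{integral} divisors $p,q$ arbitrarily close to $p'',q''$. That is false: the lattice of $\ZZ$-divisor classes in $\N^1(\shY/S)$ is discrete, so an arbitrary point has no nearby lattice points at all; only the \emph{rational} classes are dense. Since the open set of admissible endpoint pairs produced by your wiggling can be a small neighbourhood of an irrational pair, it need not contain any integral pair, and your argument cannot conclude. The repair is the one the paper uses: perturb within your open polydisc to endpoints $p,q$ that are classes of $\QQ$-divisors (this is legitimate by density of rational points and openness of all the conditions), and then replace $L(p,q)$ by $L(np,nq)$ for a positive integer $n$ clearing denominators. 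Because every cone of $\Morifan(\shY/S)$, every relative interior of a facet, and the set $\bar{\shC}^o$ are invariant under positive dilation, the scaled segment satisfies $\operatorname{I}(L(np,nq))=\operatorname{I}(L(p,q))$, $\operatorname{N}(L(np,nq))=\operatorname{N}(L(p,q))$, remains nice, still meets $\gamma$, and now has $\ZZ$-divisor endpoints. With this scaling step inserted, your proof is complete and agrees with the paper's.
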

\begin{proof}We first show the existence of a test sequence $L=L(p,q)$ as claimed without the requirement that $p,q$ are $\ZZ$-divisors. 
Given  data as in the statement of the lemma, by Lemma \ref{lemma1}, there is a test segment $L_1$ with $\gamma\in \operatorname{N}(L_1)$.
Arguing inductively,
let $L_i$ be a test segment with $\gamma\in \operatorname{N}(L_i)$.
 Let $\sigma$ be a cone in $\operatorname{M}(L_i)\backslash \operatorname{N}(L_i)$. Again by Lemma \ref{lemma1}, we find a test segment  $L_{i+1}$ with $\operatorname{I}(L_i)\subset 
 \operatorname{I}(L_{i+1})$ and $\operatorname{N}(L_i)\subset \operatorname{N}(L_{i+1})$ and  $|\operatorname{N}(L_i)|< |\operatorname{N}(L_{i+1})|$.  
 Hence we obtain a sequence $\{L_i\}_i$ of test segments with $|\operatorname{N}(L_i)|< |\operatorname{N}(L_{i+1})|$. As $|\Morifan_{\operatorname{max}}(\shY/S)|<\infty$, this is a finite sequence $\{ L_1, L_2,\dots, L_n\}$, and thus $\operatorname{M}(L_n)=\operatorname{N}(L_n)$.  Setting $L=L_n$ proves the claim. 
 
 Now, given $L=L(p,q)$, perturbing a little, we can assume that $p,q$ are classes  of $\QQ$-divisors. Then there is an $n$ such that $L(np,nq)$
 is a test segment with the desired properties.
\end{proof}

\begin{theorem}\label{teo:countingsecondary}
Let $\shY\to S $ be a model of the DNV family of degree $2$.  The closures  $\bar{\shC}_i,  i=1,\dots 4$ of the connected components of $\shM$ are convex cones. The collection \[\Sigma_\shM=\{ \sigma \subset \shM\mid \sigma \text{ is a face of some } \bar{\shC}_i\}\] defines a finite fan of rational polyhedral cones.
\end{theorem}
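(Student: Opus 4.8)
The plan is to prove convexity of each closure $\bar{\shC}_i$ first, and then deduce that $\Sigma_\shM$ is a fan essentially for free, since the faces of finitely many rational polyhedral convex cones automatically form a fan provided the cones fit together along common faces. The heart of the matter is therefore the convexity claim.

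First I would reduce convexity to a statement about line segments. By Lemma \ref{lem:convex} it suffices to show that the interior $\bar{\shC}_i^o$ is convex: indeed, if $U$ is convex then so is $\bar U$, and $\bar{\shC}_i = \overline{\bar{\shC}_i^o}$ because $\bar{\shC}_i$ is the closure of the \emph{open} set $\shC_i$ (a connected component of $\shM$, which is itself open as the complement of a finite union of facets). So I fix a component $\shC=\shC_i$, take two points $p_0,q_0\in\bar{\shC}^o$, and aim to show that every maximal cone $\gamma\in\Morifan_{\operatorname{max}}(\shY/S)$ meeting the segment $L_0=L(p_0,q_0)$ is contained in $\bar{\shC}$. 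Granting this, the entire segment $L_0$ lies in $\bar{\shC}$ (every point of $L_0$ lies in some maximal cone of the Mori fan, all of which are contained in $\bar{\shC}$), which is exactly convexity of $\bar{\shC}$.

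The key step is then to verify the containment $\gamma\subset\bar{\shC}$ for each such $\gamma$, and here I would invoke the two reduction lemmas already developed. Given $\gamma$ with $L_0\cap\gamma\neq\varnothing$, Lemma \ref{main2nd} produces $\ZZ$-divisors $p,q$ such that $L=L(p,q)$ is a \emph{nice} $\shC$-test segment with $L\cap\gamma\neq\varnothing$ and $\operatorname{I}(L_0)\subset\operatorname{I}(L)$, $\operatorname{N}(L_0)\subset\operatorname{N}(L)$. Since $L$ is nice we have $\operatorname{M}(L)=\operatorname{N}(L)$, and since $L$ is a test segment with endpoints integral divisors it induces a sequence of $F$-flops $\phi_L\colon\shY\dashrightarrow\shY_1\dashrightarrow\cdots\dashrightarrow\shY_n$ in the sense of the flopping-along-a-line construction. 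Now Proposition \ref{mainprop} applies verbatim: every cone in $\operatorname{M}(L)$ is contained in $\bar{\shC}$. As $\gamma\in\operatorname{M}(L)$, we conclude $\gamma\subset\bar{\shC}$, completing the convexity argument. The main obstacle is ensuring that $\gamma$ genuinely lies in $\operatorname{M}(L)$ and not merely that $L$ passes near $\gamma$; this is handled by the construction in Lemma \ref{main2nd}, which is built precisely to keep $\gamma\in\operatorname{N}(L)\subset\operatorname{M}(L)$ while wiggling $L_0$ into niceness.

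Finally I would assemble the fan statement. Each $\bar{\shC}_i$ is now a convex cone; it is rational polyhedral because it is a finite union of cones $C(g)$ of the Mori fan (by equation (\ref{eq:conerepres})), each of which is rational polyhedral by Theorem \ref{ThmGHKS}, and a convex finite union of rational polyhedral cones sharing the origin is again rational polyhedral. There are exactly $4$ such cones by Proposition \ref{numberconessec}, so finiteness of $\Sigma_\shM$ is immediate once we know each $\bar{\shC}_i$ has finitely many faces, which holds for any polyhedral cone. To see that $\Sigma_\shM$ is a fan I must check the two axioms: that every face of a cone in $\Sigma_\shM$ lies in $\Sigma_\shM$ (automatic, as a face of a face is a face), and that the intersection of any two $\bar{\shC}_i\cap\bar{\shC}_j$ is a common face of both. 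For the latter I would argue that $\bar{\shC}_i\cap\bar{\shC}_j$ is a union of cones of $\Morifan(\shY/S)$ lying on the separating type II facets in $\shF$; since the $\shC_i$ are distinct connected components of $\shM$, their interiors are disjoint, so $\bar{\shC}_i\cap\bar{\shC}_j$ is supported on $\cup_{\tau\in\shF}|\tau|$, and using that each such separating $\tau$ is a genuine facet (codimension $1$ face) shared by maximal cones on either side, the intersection is exactly a common face. I expect this last gluing verification to be the only genuinely delicate point beyond the convexity core, and it follows the pattern of \cite{HKY}; the rest is bookkeeping with the finitely many cones guaranteed by Remark \ref{finiteMF}.
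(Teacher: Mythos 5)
Your proposal is correct and takes essentially the same route as the paper's own proof: convexity of each $\bar{\shC}_i$ is reduced to line segments between interior points and handled by combining Lemma \ref{main2nd} (perturbing to a nice test segment with integral divisor endpoints containing the given maximal cone) with Proposition \ref{mainprop}, after which the fan structure follows from Proposition \ref{numberconessec} and the description of the type II locus. The one place your sketch is thinner than the paper's argument is the gluing of two type-$\mathscr{T}$ cones: their intersection is not a union of separating type II facets but is of codimension at least $2$, so it is this observation of the paper (rather than your shared-facet argument, which only covers the intersections with $\bar{\shC}_{\mathscr{P}}$) that yields the common-face property for those pairs.
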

\begin{definition}
The fan $\Sigma_\shM$ is the \emph{secondary fan}  of $\shY$, denoted by $\Morifan^{\operatorname {2nd}}(\shY/S)$.	
\end{definition}

\begin{proof}[Proof of Theorem \ref{teo:countingsecondary}] All that remains to be shown is that the connected components of $\shM$ have convex closures and that $\bar{C}_i\cap\bar{C}_j\in\Sigma_\shM$.
Let $\shC$ be a connected component. It is enough to show that its interior $\shC^o$ is convex. 
Let $p_0,q_0$ be points in $\shC^o$. Let $\gamma$ be a maximal cone meeting $L=L(p_0,q_0)$. By Lemma \ref{main2nd}, we find a nice
 $\bar{\shC}$-test segment $L(p,q)$ with $\gamma\in \operatorname{N}(L(p,q))$, where $p$ and $q$ can be chosen to be classes of divisors.
 By Proposition \ref{mainprop}, $\gamma\subset\bar{\shC}$. Let $\partial\Mov(\shY/S)$ be the boundary of the moving cone.
 Now, we can decompose $\bar{\shC}$
 as 
 \[ \bar{\shC}=\shC^0\cup B_1 \cup B_2 \] where $B_1\subset \partial\Mov(\shY/S)$ 
 and $B_2\subset \cup_{\tau\in\shF}|\tau|$.  
 where $\shF$ is the set of all facets of type II of the Mori fan.
 By convexity of $\Mov(\shY/S)$ 
 it follows that $L\subset \Mov(\shY/S)^o$. As we have just seen, all maximal cones $\gamma$  meeting $L=L(p_0,q_0)$ are contained in $\bar{\shC}$. It follows that all facets  $\tau$ met by $L$ are of type I, as else the maximal cones containing $\tau$ would correspond to models of the DNV family that do not have the same class. So $ L\cap B_2=\varnothing$
 and thus  $L\subset \shC^o$, implying that $\shC^o$ is convex.  It follows from Lemma \ref{lem:convex}  that  $\bar{\shC}$ is convex.
  By Proposition \ref{numberconessec}, we obtain  $4$ maximal cones $\bar{\shC}_i,  i=1,\dots 4$. It remains to check that $\Sigma_\shM$ is indeed a fan. This will follow from the following description of the $\bar{\shC}_i$. Assume $\shC_1=\shC_\mathscr{P}$. 
 From the construction, we immediately have the following description of the maximal cones. 
For each maximal cone $\bar{\shC}_i$ 
the facets contained in $\partial\Mov(\shY/S)$ are unions of  maximal cones in 
\[ \cup_{\{\sigma\in \Morifan_{\operatorname{max}}(\shY/S) \mid \sigma\subset \bar{\shC}_i\}}\partial\Mov(\shY/S)\cap \sigma.\] 
It follows from the convexity of the cones $\bar{\shC}_i$,   that each $\bar{\shC}_i$, $i=2,3,4$  has a unique facet $H_{\shC_i}$  meeting the interior of $\Mov(\shY/S)$. $H_{\shC_i}$ is the union of  type II facets that are contained in the $\bar{\shC}_i$, formally

 \[ H_{\shC_i}=\cup_{\tau\in \shF: \tau\subset \bar{\shC}_i}|\tau| .\]

For $i=1$, one has again the boundary facets and also the facets $H_{\shC_i}$. 
Note that by definition of $\shM$ the intersection of two cones of type $\mathscr{T}$ of  $\Sigma_\shM$ is at least of codimension $2$. 
It follows  $\bar{\shC}_i\cap\bar{\shC}_j\in\Sigma_\shM$.  
So $\Sigma_\shM$ is indeed a fan. Being a coarsening of a finite fan consisting of rational polyhedral cones, all cones of  $\Sigma_\shM$ are also rational polyhedral.
 \end{proof}

\begin{figure}\centering
\begin{tikzpicture}[scale=1.1]
\draw({0.25*3-3},{0.25*3*sqrt(3)})--({0.75*3-3},{0.75*3*sqrt(3)});

\draw ({0.25*3-3},{0.25*3*sqrt(3)})--({0.5*3-3-0.3*sqrt(3)},{0.5*3*sqrt(3)+0.3*1});
\draw ({0.75*3-3},{0.75*3*sqrt(3)})--({0.5*3-3-0.3*sqrt(3)},{0.5*3*sqrt(3)+0.3*1});

\draw({-0.25*3+3},{0.25*3*sqrt(3)})--({-0.75*3+3},{0.75*3*sqrt(3)});

\draw({-0.25*3+3},{0.25*3*sqrt(3)})--({-0.5*3+3+0.3*sqrt(3)},{0.5*3*sqrt(3)+0.3*1});
\draw({-0.75*3+3},{0.75*3*sqrt(3)})--({-0.5*3+3+0.3*sqrt(3)},{0.5*3*sqrt(3)+0.3*1});

\draw({0.25*6-3},{0})--({0.75*6-3},{0});
\draw({0.25*6-3},{0})--(0,{-0.3*2});
\draw({0.75*6-3},{0})--(0,{-0.3*2});

\draw ({0.75*3-3},{0.75*3*sqrt(3)})--({0.75*3-3+0.25*3},{0.75*3*sqrt(3)+0.1*2});
\draw ({-0.75*3+3},{0.75*3*sqrt(3)})--({0.75*3-3+0.25*3},{0.75*3*sqrt(3)+0.1*2});

\draw ({0.25*3-3},{0.25*3*sqrt(3)})--({0.25*3-3-0.125*9+1.5-0.1*sqrt(3)},{0.25*3*sqrt(3)-0.125*3*sqrt(3)-0.1*1});
\draw ({0.25*6-3},{0})--({0.25*3-3-0.125*9+1.5-0.1*sqrt(3)},{0.25*3*sqrt(3)-0.125*3*sqrt(3)-0.1*1});


\draw({-0.25*3+3},{0.25*3*sqrt(3)})-- ({0.75*6-3+0.125*3+0.1*sqrt(3)},{0.125*3*sqrt(3)-0.1*1});

\draw ({0.75*6-3},{0})-- ({0.75*6-3+0.125*3+0.1*sqrt(3)},{0.125*3*sqrt(3)-0.1*1});

\end{tikzpicture}
\caption{A schematic picture of $\Morifan^{\operatorname{2nd}}(\shY/S)$. The central cone is $\shC_\mathscr{P}$.}

\label{fig:Secfan}
\end{figure}

\begin{remark}\label{orbits:2nd} The  $S_3$-action on $\YYP$  induces an action on the maximal cones of $\Morifan^{\operatorname {2nd}}(\shY/S)$. Write $\bar{\shC}_i,  i=1,\dots 4$ for the maximal cones. Assume $\shC_1=\shC_\mathscr{P}$.  As $\YYP$ has as unique associated cone, this action leaves $\shC_\mathscr{P}$ invariant. Combining  the arguments in Proposition \ref{mainprop} and Proposition \ref{orbitmain} shows that the $S_3$-action is indeed a permutation action on the cones $\bar{C}_i, i=2,\dots 4$. 
Figure \ref{fig:Secfan} provides a symbolic description of the structure of  $\Morifan^{\operatorname{2nd}}(\shY/S)$.
\end{remark}

\bibliographystyle{myamsalpha}
\bibliography{HL_Morifan.bib}
	
\end{document}